\newtheorem{lemma}{Lemma}
\newtheorem{theorem}{Theorem}
\newtheorem{definition}{Definition}
\newenvironment{proofsketch}{\noindent\textbf{Proof Sketch}}{}
\newcommand{\pref}[1]{\prettyref{#1}}
\newcommand{\savehyperref}[2]{\texorpdfstring{\hyperref[#1]{#2}}{#2}}
\newcommand{\R}{\mathbb{R}}
\newcommand{\E}{\mathbb{E}}
\DeclarePairedDelimiter{\abs}{\lvert}{\rvert} %
\DeclarePairedDelimiter{\brk}{[}{]}
\DeclarePairedDelimiter{\crl}{\{}{\}}
\DeclarePairedDelimiter{\prn}{(}{)}
\DeclarePairedDelimiter{\nrm}{\|}{\|}
\DeclarePairedDelimiter{\ceil}{\lceil}{\rceil}
\DeclarePairedDelimiter{\floor}{\lfloor}{\rfloor}
\let\P\undefined
\DeclareMathOperator{\P}{\mathbb{P}}
\DeclareMathOperator{\spn}{\mathrm{span}}
\DeclareMathOperator{\prox}{\mathrm{prox}}
\DeclareMathOperator*{\argmin}{arg\,min}
\DeclareMathOperator*{\argmax}{arg\,max}             
\newcommand{\mc}[1]{\mathcal{#1}}
\def\ddefloop#1{\ifx\ddefloop#1\else\ddef{#1}\expandafter\ddefloop\fi}
\def\ddef#1{\expandafter\def\csname 
bb#1\endcsname{\ensuremath{\mathbb{#1}}}}
\def\ddefloop#1{\ifx\ddefloop#1\else\ddef{#1}\expandafter\ddefloop\fi}
\def\ddef#1{\expandafter\def\csname 
b#1\endcsname{\ensuremath{\mathbf{#1}}}}
\def\ddef#1{\expandafter\def\csname 
c#1\endcsname{\ensuremath{\mathcal{#1}}}}
\def\ddef#1{\expandafter\def\csname 
h#1\endcsname{\ensuremath{\widehat{#1}}}}
\def\ddef#1{\expandafter\def\csname 
hc#1\endcsname{\ensuremath{\widehat{\mathcal{#1}}}}}
\def\ddef#1{\expandafter\def\csname 
t#1\endcsname{\ensuremath{\widetilde{#1}}}}
\def\ddef#1{\expandafter\def\csname 
tc#1\endcsname{\ensuremath{\widetilde{\mathcal{#1}}}}}
\newcommand{\indicator}[1]{\mathbbm{1}_{#1}}
\newcommand{\inner}[2]{\left\langle #1,\, #2 \right\rangle}
\newcommand{\remove}[1]{}
\newcommand{\tm}[0]{\mathsf{Time}}
\newcommand{\ind}[1]{^{(#1)}}
\newcommand{\mathth}[0]{^{\textrm{th}}}
\newcommand{\bx}[0]{\bar{x}}
\newcommand{\bxt}[0]{\bar{x}_t}
\newcommand{\bgt}[0]{\bar{g}_t}
\newcommand{\pp}[1]{\prn*{#1}_+}
\newcommand{\sdiff}[0]{\zeta_*}
\newcommand{\prog}[1]{\pi_{#1}}
\newcommand{\xag}[0]{x^{\textrm{ag}}}
\newcommand{\xmd}[0]{x^{\textrm{md}}}
\newcommand{\eladreduction}[0]{\textsf{SC}\to\mathsf{Cvx}}
\newcommand{\myreduction}[0]{\textsf{Cvx}\to\mathsf{SC}}
\begin{document}
\begin{titlepage}
\begin{center}
$\ $\\\vspace{3cm}
\textbf{\LARGE The Minimax Complexity of Distributed Optimization}

\vspace{1cm}
BY \\
\vspace{0.2cm}
BLAKE WOODWORTH \\

\vfill

A thesis submitted \\
\vspace{0.2cm}
in partial fulfillment of the requirements for \\
\vspace{0.2cm}
the degree of \\
\vspace{0.75cm}
Doctor of Philosophy in Computer Science \\
\vspace{0.75cm}
at the \\
\vspace{0.75cm}
TOYOTA TECHNOLOGICAL INSTITUTE AT CHICAGO \\
\vspace{0.2cm}
Chicago, Illinois \\
\vspace{0.75cm}
September, 2021 \\
\vspace{2cm}
    
Thesis Committee: \\
\vspace{0.2cm}
Nathan Srebro (Thesis Advisor) \\
\vspace{0.2cm}
Ohad Shamir \\
\vspace{0.2cm}
Stephen Wright \\
\vspace{0.2cm}
Madhur Tulsiani \\
\vspace{3cm}
    
\end{center}
\end{titlepage}

\pagenumbering{roman}\setcounter{page}{2}

\section*{Abstract}\label{sec:abstract}
\addcontentsline{toc}{section}{\nameref{sec:abstract}}

In this thesis, I study the minimax oracle complexity of distributed stochastic optimization. First, I present the ``graph oracle model'', an extension of the classic oracle complexity framework that can be applied to study distributed optimization algorithms. Next, I describe a general approach to proving optimization lower bounds for arbitrary randomized algorithms (as opposed to more restricted classes of algorithms, e.g., deterministic or ``zero-respecting'' algorithms), which is used extensively throughout the thesis. For the remainder of the thesis, I focus on the specific case of the ``intermittent communication setting'', where multiple computing devices work in parallel with limited communication amongst themselves. In this setting, I analyze the theoretical properties of the popular Local Stochastic Gradient Descent (SGD) algorithm in convex setting, both for homogeneous and heterogeneous objectives. I provide the first guarantees for Local SGD that improve over simple baseline methods, but show that Local SGD is not optimal in general. In pursuit of optimal methods in the intermittent communication setting, I then show matching upper and lower bounds for the intermittent communication setting with homogeneous convex, heterogeneous convex, and homogeneous non-convex objectives. These upper bounds are attained by simple variants of SGD which are therefore optimal. Finally, I discuss several additional assumptions about the objective or more powerful oracles that might be exploitable in order to develop better intermittent communication algorithms with better guarantees than our lower bounds allow.

\newpage

\section*{Acknowledgements}\label{sec:acks}
\addcontentsline{toc}{section}{\nameref{sec:acks}}

I am extremely grateful for the support of many people for many things over the past six years, and there is no way that I can adequately express my gratitude in a short acknowledgements section here, but I will try. 

First and foremost, I want to thank my thesis committee members---Nati, Ohad, Steve, and Madhur---for their mentorship, advice, and excellent ideas throughout my PhD and especially in preparing my thesis. 

I especially want to thank Nati---I could not have asked for a better PhD advisor. Adjusting to becoming a PhD student can be difficult and intimidating, but you took a gradual and relaxed approach that made it much easier. I remember emailing you during the summer before I started asking for a list of textbooks and papers that I should read before arriving, and you told me (nicely) to chill out. This really set the tone for a great grad school experience that I know should not be taken for granted. I am inspired by your approach to computer science where it's the answers to the questions that matter much more than the papers written about them, and by your truly incredible committment to rigor in all its forms. At first, I was annoyed by the hour-long tangents in the reading group where the rigor police\textsuperscript{\textregistered} rolled in and demanded to know every detail of what, \emph{exactly}, the authors were saying in Theorem 4b. Now, having been around for a while longer, I have come to appreciate the value in being very careful about and attentive to the minute details of every mathematical statement---it's important! Finally, I have always appreciated your patience and support over the years. For the guy who is always a few minutes late for the next calendar event, you always managed to find a time to squeeze in a short meeting or, at least, a huge email full of equations, even in the middle of the night before a paper deadline. Thank you very much, Nati.

I have been extremely lucky to have worked with a long list of incredible collaborators. Thank you to all of you: Nati, Suriya, Mesrob, Srinadh, Behnam, Vitaly, Saharon, Andy, Maya, Heinrich, Karthik, Serena, Seungil, Jialei, Adam, Brendan, Dylan, Ayush, Ohad, Yossi, Yair, John, Ryan, Aaron, Om, Mark, Elad, Max, Jason, Edward, Pedro, Itay, Daniel, Kshitij, Sebastian, Zhen, Brian, Shahar, Mor, and Amir. In addition to learning a million things from you all, I was so happy to have such smart, kind, and interesting people around me all the time.

I thank the TTIC students, faculty, and staff for making TTIC such a wonderful place to be. My experience having you as peers and professors in courses, sitting with you and hearing from you at talks, and speaking with you in the hallways and during meals has been nothing but positive. TTIC is undoubtedly the most comfortable academic setting I've been in, and I am very sad to be leaving. I also want to give a special shout out to the TTIC administrators who made everything run so smoothly every day. Thanks in particular to Chrissy, Erica, and Mary for your patience all those times that I messed up my forms and forgot to get my course list approved. You all make it very easy to be a student at TTIC and we all really appreciate it!

To the Salonica breakfast club: while the French toast and coffee at Salonica are not actually very good at all, your company and support a few times a week at strangly early hours of the morning were the greatest. Sometimes our conversations were educational, other times philosophical, we laughed, we cried, and we had a great time. I will miss you and I hope we can get together for breakfast again soon.

To my roommates Nick, Shane, Philip, and (for too short a time!) Davis: it was a true pleasure living and working with you over the years. When things were going well, I always had people to celebrate with and if things weren't going well, you guys were always available for commiseration. We had a lot of fun together, running, climbing, gaming, cooking, etc.~and I will always remember 5655 Harper Ave fondly.

To my family: thank you for giving me a great life where I had the chance to do a PhD. You guys got me interested in school, in learning, and in setting high goals, and I owe basically everything ever to all of your love and support. Whether or not it's true, you always made me feel smart and gave me a sense of unlimited possibilities, which I know I am extremely lucky to feel. Thank you so much, I love you.

Kasia: thank you for sharing my life for the last 9(!) years. Before I met you, PhDs were not something that people did in real life, and without you, I wouldn't have had any idea what I was doing for the last six years. Your bottomless love, advice, wisdom, and companionship have been the best thing in my life. You've proofread papers, listened to me ramble on about boring computer science stuff, taken care of me during paper deadlines, taken care of me during all the other times, and all of this while getting your own PhD! Although I won't miss flying back and forth between Chicago and Maryland, we have had a lot of fun and interesting experiences together in the past few years and I am really excited for whatever happens next.

Finally, I thank the NSF Graduate Research Fellowship and Google Research PhD Fellowship programs for providing financial support for my graduate work.

\newpage

\tableofcontents
\newpage

\listoftables
\newpage

\listoffigures
\newpage

\pagenumbering{arabic}

\section{Introduction}

Large-scale optimization plays an vital role in many modern computational applications, and particularly in machine learning. Because of the diversity of use cases, there is great value in developing broadly applicable, general-purpose algorithms that can be readily applied wherever they are needed, without relying on any unique structure. For example, the stochastic gradient descent algorithm (SGD) and its variants can be applied to a huge variety of optimization objectives, and almost all of the recent accomplishments of machine learning owe some of their success to SGD. 

Optimization problems are growing increasingly large and therefore it is often necessary to develop algorithms that leverage parallelism. In machine learning, for example, it has become common to use models that have millions or billions of trainable parameters, and the datasets used for learning often include millions or billions of examples. Training such models gives rise to enormous, very high-dimensional optimization objectives which cannot be tackled on a single machine. 

In this thesis, we are motivated by these massive optimization problems and the need for new and better general-purpose, distributed optimization algorithms to solve them. We will begin by formulating one notion of an optimal algorithm in distributed optimization settings. We will then consider a number of distributed optimization problems and ask for optimal algorithms for them. Identifying optimal algorithms has obvious benefits and the pursuit of new and better algorithms naturally leads to better performance in downstream applications. At the same time, \emph{failing} to identify optimal algorithms highlights settings in which there is room for improvement over the status quo and it motivates further research to find better methods. Finally, even in settings where we can identify an optimal algorithm, there is always an opportunity to be ``better than optimal'' by figuring out additional structure in the problem that can be exploited to yield better methods.

\subsection{Machine Learning as an Optimization Problem}\label{subsec:intro-ml-motivation}

One of the most important applications of optimization is training machine learning models, and because machine learning will serve as a running example throughout, we will take a moment to conceptualize it as fundamentally a stochastic optimization problem.

In supervised machine learning, the user specifies a model---a mapping from parameters to a prediction function---and a loss function---an evaluation metric measuring the accuracy of each prediction---and then ``trains the model'' meaning they find a setting of the parameters that ``fits'' the data in the sense of minimizing the loss function. Naturally, for any given model and loss function, one could come up with a bespoke training algorithm that finds a good setting of the parameters by cleverly exploiting some special structure. However, the typical approach is much more general: we set up training as a continuous optimization problem ``minimize the loss function over the parameters,'' and then we apply a general purpose optimization algorithm, often stochastic gradient descent (SGD), to solve that optimization problem. 

This generalized approach to training machine learning models by reducing it to a generic continuous optimization problem has several advantages. First, this method (in combination with tools like automatic differentiation) allows users to easily change their model or loss function without needing to design a whole new training algorithm. This flexibility has allowed the machine learning community to rapidly switch between different models and loss functions as we learn more about what works for which problems. You can imagine that if we had a killer training algorithm for 2-layer neural networks with the square loss, specifically, then we would likely have been much slower to try deep learning approaches which have led to many of the recent triumphs of machine learning. Second, separating machine learning into orthogonal modelling and optimization components allows all machine learners to benefit from advances in optimization algorithms. In recent years, numerous general-purpose optimization algorithms have been proposed \citep[e.g.][]{duchi2011adaptive,kingma2014adam}, and these have proven highly successful in a wide variety of applications. 

In order for this scheme to work, optimization algorithms should be general-purpose and applicable to as many different objectives as possible. However, there is no optimization algorithm that can be guaranteed to work on every function, and any algorithm needs to exploit \emph{something} about the objective in order to succeed. Therefore, an important aspect of optimization research is identifying a small set of properties that (1) can be exploited by an algorithm to efficiently optimize the objective and (2) can be expected to hold for objectives of interest. Even for broad classes of optimization objectives, there is a lot of plausibly exploitable structure, and it is important to distinguish the relevant from the irrelevant. Simultaneously, there is also a degree to which we \emph{control} the properties of the optimization objectives, for example, in machine learning, our choices of model and loss function give rise to the training objective. Therefore, if we learned that some property A allows for very efficient optimization, that would motivate designing models/loss functions which produce this property.

\subsection{Minimax Optimality in Optimization}

One of the primary goals of optimization research is to seek out better and more efficient optimization algorithms. Doing so of course requires developing and analyzing new and more clever methods, but it is also important to identify where there is room for improvement over the status quo, and what that improvement would look like. Finding such opportunities requires posing and answering questions of optimality---what is the best we can do in a given situation, and do our current methods perform that well? A significant amount of work is necessary to properly formulate a useful notion of optimality, which we discuss in \pref{sec:formulating-the-complexity}. At a high level, we do this by specifying a family of possible optimization objectives and of possible optimization algorithms and then we ask what the \emph{best} of these algorithms can guarantee for the \emph{hardest} objective in the family. This notion of ``minimax complexity'' indicates what is the best we can hope for when trying to optimize those sorts of objectives using that type of optimization algorithm. 

Analyzing the minimax complexity has two parts: ``upper bounds'' and ``lower bounds.'' Whenever we analyze an optimization algorithm and guarantee that it achieves a certain level of accuracy for any objective in the family, this puts an ``upper bound'' on the minimax error because, of course, the best algorithm's guarantee is no worse. On the other hand, a lower bound is a proof that \emph{no} optimization algorithm in the family of algorithms being considered can guarantee better than a certain level of accuracy for \emph{every} objective in the family. 

Matching upper bounds and lower bounds specify the minimax error, and whichever optimization algorithm achieved the upper bound is optimal. There are obvious benefits to identifying optimal methods, after all, everyone wants to use the best possible algorithm. On the other hand, there are frequently gaps between the best known upper bounds and lower bounds on the minimax error and, in a certain sense, these gaps are more exciting because they identify opportunities to design new methods that improve over the current state of the art.

One of the most famous examples of a gap between upper and lower bounds was for optimizing smooth, convex objectives using first-order algorithms. For a very long time, the best known upper bound corresponded to the guarantee of the Gradient Descent algorithm (which dates all the way back to Cauchy in the 1840's), which was known to guarantee error of at most $O(1/T)$ after $T$ iterations. On the other hand, the best known lower bound showed that no first-order method could guarantee error less than $\Omega(1/T^2)$ after $T$ iterations \citep{nemirovskyyudin1983}\footnote{This lower bound was actually originally proven in Russian in 1978---the 1983 citation is for the book's English translation.}. This gap---between $1/T$ and $1/T^2$---persisted for several years, and at the time it was quite unclear what the minimax error would be. Many efforts were made both to design better algorithms with guarantees better than $1/T$ and to prove better lower bounds that showed that it is impossible to do better than $1/T$. It was not until several years later that Nesterov's famous Accelerated Gradient Descent algorithm was proposed and shown to converge at the $1/T^2$ rate after all \citep{nesterov1983method}. In this example, the existence of \citeauthor{nemirovskyyudin1983}'s $1/T^2$ lower bound played an important role in driving optimization research forward, despite the fact that the lower bound did not match anything at the time. 

It is important to properly interpret the meaning of a lower bound. It says that no optimization algorithm \emph{in the considered class of algorithms} is able to provide a better guarantee \emph{for all objectives in the considered family of objectives}. This does \emph{not} mean that continued progress is futile, and that we should give up and settle for whatever ``optimal'' algorithm we have. Instead, it means that additional progress requires identifying additional, useful structures that algorithms can exploit and modifying the classes of algorithms and objectives that we consider accordingly. In this sense, studying minimax optimality and proving lower bounds can be thought of as a task of modelling---out of the many possible properties that an objective might have, which ones are useful and exploitable, and which ones are not? What additional properties would allow for better methods? Conveniently, lower bounds typically identify a particular optimization objective that is hard to optimize, and show us precisely \emph{why} it is hard. Once we know the pitfalls in a given setting, we can identify additional structure that could be used to avoid them.

\subsection{Distributed Optimization}

The field of distributed optimization is marked by a huge diversity of possible forms of parallelism. Parallel optimization algorithms can be implemented on multi-core processors within a single computing device. They can also arise in data center setting where many, very powerful devices are arrayed in the same location. The parallel computers could also be spread around the world, leading to high-latency communication between them. These are just a few examples of the nearly unlimited possible parallelism scenarios that one could face. 

Given this variety and our interest in general-purpose algorithms, we make efforts to study optimization methods in a way that is broadly applicable to many different distributed optimization settings. Accordingly, our framework for studying the complexity of distributed optimization (see \pref{subsec:algorithm-class-and-graph-framework}) is based around the \emph{structure} of the parallelism---e.g.~there are $M$ parallel workers, or the machines communicate with each other every $K$ iterations, or the parallel workers have access to distinct datasets---rather than details of the setting---e.g.~the machines have a low-latency connection with each other, or each worker computes at $X$ petaflops. This allows us to understand distributed optimization in a greater variety of settings, and these general principles can often also be applied to answer questions about specific settings. 

Throughout, we will generally focus on understanding distributed optimization in particular, fixed settings, for example, we might study algorithms that use $M$ parallel workers which each compute $T$ stochastic gradient estimates. In much of our analysis, we would treat the quantities $M$ and $T$ as set in stone for several reasons. First, if we have an algorithm that is optimal for any given $M$ and $T$, this naturally tells us the minimax error as a function of $M$ and $T$, and we can easily tell what would happen if they were changed. Second, this allows us to better capture the tradeoffs that are inherent in distributed optimization. In particular, the answer to ``would using more parallel workers improve my algorithm's performance?'' is almost always ``yes, obviously.'' Similarly, running for more iterations, using larger minibatches, and communicating more frequently will always improve performance. However, the question in distributed optimization is often how can we manage tradeoffs between competing considerations. If I double the number of parallel workers, can I halve the number of iterations---and therefore the total runtime---without hurting performance? If communication between machines takes $X$ms and computing one stochastic gradient on each machine takes $Y$ms, how large of a batchsize would get us to error $\epsilon$ in the shortest amount of time? These are often the most important questions in distributed optimization, and the answer generally depends on the particulars of the situation. Finally, some aspects of the parallel environment are outside of our control---for instance, my department only has so many GPUs available---and it would not be so helpful to know what the best number of machines is when that choice is unavaiable. Nevertheless, again, this is largely a philosophical question since our approach also allows for answering many of these types of questions.

\subsection{Overview of Results}

In this thesis, we build a theory of minimax optimality for distributed stochastic optimization and apply it to several parallel settings. 

In \pref{sec:formulating-the-complexity}, we begin by describing an extension of the classical oracle model \citep{nemirovskyyudin1983} to the distributed setting, which allows us to rigorously pose questions of optimality. The basic oracle model, which allows for proving tight and informative lower bounds in the sequential (i.e.~not distributed) setting, is based on the idea of restricting the means through with an algorithm interacts with the objective function, but \emph{not} what the algorithm is allowed to do with the information it learns about the objective. This allows for strong lower bounds that apply to broad classes of optimization algorithms and give deep insight into the complexity of sequential optimization. However, we describe that the classic oracle model is insufficient for distributed optimization, and we describe in \pref{subsec:algorithm-class-and-graph-framework} an extension of the model to the parallel setting, the ``graph oracle model.'' The idea is to capture the distributed structure of an optimization algorithm using a graph structure, where each vertex in the graph corresponds to a single oracle access, and the edges describe the dependencies between different queries. This approach is highly flexible and allows us to formulate a notion of minimax oracle complexity for many different distributed optimization settings using a single framework. 

In \pref{sec:lower-bound-tools}, we present several generic tools for analyzing the minimax oracle complexity in the graph oracle model which prove useful for our other results and are likely of interest more broadly. First, many existing optimization lower bounds, even in the sequential setting, apply only to fairly resrictive classes of algorithms---typically only deterministic, ``span-restricted,'' or ``zero-respecting'' algorithms. While these families contain many algorithms of interest, they do not answer the question of whether we might be able to do better using other methods. In \pref{subsec:high-level-lower-bound-approach}, we sketch a general approach to proving lower bounds that apply to much larger classes of optimization algorithms, up to and including the class of all randomized algorithms corresponding to a particular graph oracle setting. In \pref{subsec:generic-graph-oracle-lower-bound}, we proceed to use this method to prove a lower bound in the graph oracle model that applies to any randomized distributed first-order method corresponding to any graph. This lower bound only depends on two generic properties of the graph---the number of vertices and its depth---and we apply it extensively in our later results. Finally, in \pref{subsec:reduction-section-all}, we describe a generic reduction that connects the complexity of optimizing convex objectives with the complexity of optimizing strongly convex objectives. In particular, we show that algorithms for convex optimization, when applied to strongly convex objectives, can automatically attain much faster rates of convergence without exploiting the strong convexity in any explicit way.

In \pref{sec:local-sgd}, we study the theoretical properties of the popular Local SGD algorithm. We begin in \pref{subsec:local-sgd-and-baselines} by identifying three natural baseline algorithms, corresponding to other variants of SGD that correspond to the same graph oracle setting. The conventional wisdom says that Local SGD should dominate these baselines, but little of the existing work makes any direct comparison with these methods. 
In \pref{subsec:local-sgd-homogeneous}, we study Local SGD in the ``homogeneous'' setting, where each parallel worker has access to data from the same distribution. We begin by showing that existing analysis of Local SGD fails to show any improvement over the baseline algorithms, which raises serious questions about the idea that Local SGD is uniformly better. We proceed to show that in the special case of least squares problems, Local SGD does indeed dominate the baselines; we prove a new guarantee for Local SGD for general convex objectives that is \emph{sometimes} better the baselines but \emph{sometimes} is not; and we conclude by showing that this was no accident, and Local SGD really is worse than the baselines in some regimes. 
In \pref{subsec:local-sgd-heterogeneous}, we turn to the ``heterogeneous'' setting, where each parallel worker has access to data from a \emph{different} distribution, but where the goal is optimize the average of the local objectives. We show that, as in the homogeneous setting, the existing guarantees for Local SGD fail to improve over the baselines. We also show that under the standard assumptions, Local SGD \emph{might} be able to improve over the baselines in a narrow regime, but will generally perform much worse than a Minibatch SGD baseline. We conclude by introducing a new assumption about the objective which allows for Local SGD to improve over the baselines in certain regimes which we identify.

In \pref{sec:intermittent-communication-setting} we study, more broadly, the ``intermittent communication setting,'' a natural distributed optimization setting that commonly arises in practice. The intermittent communication setting corresponds to the case where $M$ parallel workers collaborate to optimize an objective over the course of $R$ rounds of communication, and in each round of communication, each machine is able to compute $K$ stochastic gradients sequentially. In \pref{subsec:homogeneous-intermittent-minimax}, we study the minimax oracle complexity of optimization in the homogeneous intermittent communication setting. For convex, strongly convex, and non-convex objectives, we tighly characterize the minimax error and we identify optimal algorithms that are given by the combination of two accelerated SGD variants, a ``minibatch'' variant and a ``single-machine'' variant. These results highlight an interesting dichotomy in the homogeneous intermittent communication setting between exploiting the local computation (captured by $K$) and exploiting the parallelism (captured by $M$). In \pref{subsec:heterogeneous-convex-intermittent-minimax}, we look to the heterogeneous intermittent communication setting. Here, we also identify the minimax error and optimal algorithms for convex and strongly convex settings. This time, the optimal algorithm is just the minibatch algorithm, which exploits the parallelism but not the local computation, in contrast to the homogeneous case.

Finally, in \pref{sec:breaking-the-lower-bounds}, we revisit the intermittent communication setting with the goal of ``breaking'' the lower bounds presented in \pref{sec:intermittent-communication-setting}. Specifically, we identify several additional properties of the objective or oracle that allow for better methods whose guarantees are better than the lower bounds would allow. In \pref{subsec:homogeneous-nearly-quadratic-intermittent-minimax}, we show that in the homogeneous intermittent communication setting, it is possible to attain better error when the objective is ``nearly-quadratic.'' In \pref{subsec:bounded-heterogeneous-convex-intermittent-minimax}, we show that when the objective is only boundedly heterogeneous, meaning the local objectives are not arbitrarily different, it is possible to exploit this structure to outperform the optimal algorithm from \pref{subsec:heterogeneous-convex-intermittent-minimax}. Finally, in \pref{subsec:non-convex-with-MSS}, we show that when the oracle satisfies a certain smoothness property, then it is possible to circumvent the lower bound for homogeneous non-convex optimization presented in \pref{subsec:homogeneous-intermittent-minimax}.

\section{Formulating Distributed Stochastic Optimization}\label{sec:formulating-the-complexity}

Throughout this thesis, we consider a stochastic optimization objective, where the goal is to optimize
\begin{equation}\label{eq:intro-stochastic-opt-problem}
\min_x \crl*{F(x) = \E_{z \sim\mc{D}}f(x;z)}
\end{equation}

\paragraph{Machine Learning as Stochastic Optimization}
The optimization problem \eqref{eq:intro-stochastic-opt-problem} naturally captures many machine learning problems. For example, supervised learning corresponds to taking $f(x;z)$ to be the loss of the predictor parametrized by $x$ on the sample $z \sim \mc{D}$, then $F(x)$ corresponds to the expected risk, and our goal is to find parameters that minimize this risk\footnote{Unfortunately, standard notation differs between the optimization and machine learning literature. As is typical for optimization, we use ``$x$'' to denote the optimization variable of interest, and in machine learning other notation---e.g.~$w$, $\theta$, or $h$---are more common, and ``$x$'' is typically used for a feature representation of the data.}. For example, least squares regression from samples $z = (z_{\textrm{features}},z_{\textrm{target}}) \in \R^d \times \R$ would correspond to
\begin{equation}
f(x;z) = \frac{1}{2}\prn*{\inner{x}{z_{\textrm{features}}} - z_{\textrm{target}}}^2
\end{equation}
In the context of machine learning, there are two ways of thinking about the problem \eqref{eq:intro-stochastic-opt-problem} and, in particular, the role of $\mc{D}$. The first is a ``sample average approximation'' (SAA) viewpoint \citep{rubinstein1990optimization,kleijnen1996optimization}, where we take $\mc{D}$ to be the empirical distribution over a training set of i.i.d.~samples from the distribution of interest, and solving \eqref{eq:intro-stochastic-opt-problem} amounts to empirical risk minimization, that is, finding parameters that minimize the training loss. The second is a ``stochastic approximation'' (SA) viewpoint \citep{robbins1951stochastic}, where $\mc{D}$ is the population distribution of interest, from which a collection of i.i.d.~samples are available. There are advantages and disadvantages to both perspectives. In the SAA view, the objective $F$ has a special finite-sum structure and the distribution $\mc{D}$ is ``known'', which opens up various algorithmic possibilities that can allow for substantially faster convergence to a minimizer. For example, variance reduction methods can very efficiently exploit finite-sum structure \citep[e.g.][]{johnson2013accelerating}. On the other hand, solving \eqref{eq:intro-stochastic-opt-problem} in the SAA sense says nothing about how well the model would perform on unseen data, and a separate argument is required to show generalization (e.g.~via uniform convergence or algorithmic stability). Conversely, in the SA setting, solving \eqref{eq:intro-stochastic-opt-problem} directly implies strong performance on unseen data, but SA algorithms typically require fresh samples for each update which may result in worse sample complexity. 

\paragraph{Optimization and Learning}
There are two, mostly orthogonal, sources of difficulty in solving the stochastic optimization problem \eqref{eq:intro-stochastic-opt-problem}. First, there is the challenge of optimizing the function $F$, irrespective of the stochastic nature of the problem. Indeed, even for algorithms that ``know'' the distribution $\mc{D}$, it is far from trivial to find a minimizer of $F$, and the complexity of optimization using exact information about the objective has been studied extensively. Simultaneously, there is an issue of stochasticity---optimization algorithms need to optimize $F$ based on noisy information, and there are fundamental statistical limits to what can be learned about $F$ in this way. As a result of these two challenges, the complexity of stochastic optimization typically involves two pieces: an ``optimization term'' and a ``statistical term,'' which we will highlight in our results.

Our goal is to understand the complexity of stochastic optimization for different classes of optimization problems. However, significant care must be taken to formalize this complexity in a useful way. In this thesis, we define the complexity using three pieces: (1) a class of objectives, (2) a class of oracles, and (3) a class of optimization algorithms. These components together define an ``optimization problem,'' for which we proceed to define and study the minimax complexity. We will now discuss each of these pieces before defining our notion of complexity. 

\subsection{The Function Class}

To define an optimization problem, we first restrict our attention to a set of objective functions satisfying certain properties. There are innumerable conditions that we might impose of the objective---convexity, smoothness, Lipschitzness, etc.---any combination of which may be reasonable. However, we must make \emph{some} assumptions in order to have any hope of optimizing the function because it is possible to cast any number of intractable or even uncomputable problems as (perhaps extremely pathological) instances of \eqref{eq:intro-stochastic-opt-problem}.

Generally, we will try to consider broad function classes that make minimal restrictions on the objective. It is a stronger statement when an algorithm can guarantee good performance on a broader class of functions; and algorithms that rely on less structure are more broadly applicable. As described in \pref{subsec:intro-ml-motivation}, the frequent changes to state-of-the-art machine learning models and loss functions (which, together with the data distribution, determine $F$) means there is great value in general-purpose algorithms that can be readily applied to new objectives. However, there is a balance to be struck since there can also be value in imposing stronger restrictions on the function class, which allows for specialized optimization algorithms that exploit specific properties of the objective to achieve stronger performance. 

We will consider numerous function classes, most of which will be defined as they become relevant. However, there are two function classes to which we will return frequently: the class of smooth and convex objectives and the class of smooth and strongly convex objectives. We recall that a function $F$ is convex when
\begin{equation}
F(\theta x + (1-\theta) y) \leq \theta F(x) + (1-\theta)F(y) \qquad\forall_{x,y}\forall_{\theta\in[0,1]}
\end{equation}
We say that $F$ is $\lambda$-strongly convex when $F(x) - \frac{\lambda}{2}\nrm{x}^2$ is convex. Finally, a function $F$ is $H$-smooth if it is differentiable and its gradient is $H$-Lipschitz with respect to the L2 norm. For convex functions, this is equivalent to the inequality
\begin{equation}
F(y) \leq F(x) + \inner{\nabla F(x)}{y-x} + \frac{H}{2}\nrm{x-y}^2\qquad\forall_{x,y}
\end{equation}
We will often consider the following classes of smooth objectives
\begin{align}
\mc{F}_0(H,B) &= \crl*{F:\R^d\to\R \,:\, d \in \mathbb{N},\, F \textrm{ is convex and } H\textrm{-smooth},\, \exists x^* \in \argmin_x F(x) \textrm{ s.t.~} \nrm{x^*}\leq B} \\
\mc{F}_\lambda(H,\Delta) &= \crl*{F:\R^d\to\R \,:\, d \in \mathbb{N},\, F \textrm{ is } \lambda\textrm{-strongly convex and } H\textrm{-smooth},\, F(0) - \min_x F(x) \leq \Delta}
\end{align}
We note that for both of these function classes, we impose restrictions of $F$, the population objective, only. To provide any meaningful guarantee, it is necessary to bound in some way ``how far away'' the minimizer might be. We follow the standard practice of measuring this via the norm of the solution in the convex case, and the value of $F(0) - F^*$ in the strongly convex case.


\paragraph{Dimension-Free Complexity}
We focus on a function classes where the dimension, $d$, is not explicitly bounded, and throughout this thesis $d$ should be thought of as being ``large.'' Our notion of complexity will therefore be dimension-free, capturing what it is possible to guarantee without relying on the dimension being small in any way. Our complexity lower bounds will hold only in sufficiently high dimensions (typically polynomially-large in the other problem parameters), and our upper bounds hold even in unbounded, even infinite, dimensions. Of course, it is also important and interesting to study the complexity of optimization in a dimension-dependent manner, which opens up the possiblity of algorithms that can take advantage of a bound on the dimension to ensure better performance. Nevertheless, our focus on the dimension-free complexity is motivated by machine learning applications, where the dimension, i.e.~the parameter count, can easily run into the millions or billions. In this context, dimension-dependent rates are often weaker, and algorithms that depend on the dimension would typically incur unreasonably high computational costs.

\subsection{The Oracle}\label{subsec:the-oracle}

We study the complexity of optimization in the context of an oracle model, which specifies through which means the algorithm interacts with the optimization objective, and the oracle essentially specifies what form the ``input'' to the optimization algorithm takes \citep{nemirovskyyudin1983}. As an example, we mostly focus on optimization using a stochastic first-order oracle, which an algorithm can query at a point $x$ to receive a noisy estimate of the gradient $\nabla F(x)$. The classic notion of oracle complexity essentially amounts to counting the number of times that an algorithm needs to interact with the oracle before reaching an approximate solution to \eqref{eq:intro-stochastic-opt-problem}. 

Such oracle models have a long history in the study of optimization, and they generally serve as a proxy for the \emph{computational} complexity of optimization. The number of oracle accesses generally serves as a good proxy for the computational cost because the portions of the algorithm corresponding to oracle accesses typically constitute the bulk of the total computational cost. For instance, each iteration of stochastic gradient descent involves computing one stochastic gradient, one scalar-vector product, and one vector-vector addition, of which computing the stochastic gradient will almost always be the most costly. It is possible, in principle, to study the computational complexity of optimization directly, but there are a number of challenges to doing so, and across computer science it is notoriously difficult to formulate and prove bounds on the computational complexity of almost anything, even for \emph{much} simpler problems than optimizing high-dimensional, real-valued functions.

We focus on optimization using a stochastic first-order oracle which, given a point $x$, simply returns an unbiased, and bounded-variance stochastic estimate of the gradient $\nabla F(x)$. There is, however, some subtlety in the source of the stochasticity. 

The first and most general type of stochastic first-order oracle, which we will refer to as an ``\textbf{independent-noise}'' (I-N) oracle, simply returns any random vector $\mc{O}_g^{\sigma}(x)$ such that (1) $\E \mc{O}_g^{\sigma}(x) = \nabla F(x)$, (2) $\E \nrm{\mc{O}_g^{\sigma}(x) - \nabla F(x)}^2 \leq \sigma^2$, and (3) $\mc{O}_g^{\sigma}(x) | x$ is conditionally independent of both the state of the algorithm and the previous interactions between the algorithm and the oracle. For an I-N oracle, the stochasticity is almost completely unconstrained: it can depend arbitrarily on the query $x$, and repeated queries at the same point $x$ can yield estimates of the gradient with arbitrarily different distributions. The I-N oracle imposes almost no structure on the stochastic gradients besides unbiasedness and bounded variance, but it turns out that $\mc{O}_g^\sigma$ is often sufficient for solving \eqref{eq:intro-stochastic-opt-problem}, and algorithms like stochastic gradient descent require nothing more. Many of the results in this thesis will be stated in terms of an independent-noise oracle.

We will refer to a second variant as a ``\textbf{statistical learning}'' first-order oracle, $\mc{O}_{\nabla f}^\sigma$. This oracle also returns an unbiased and bounded variance estimate of the gradient, but with additional structure. In particular, when queried at $x$, the oracle returns $\nabla f(x;z)$ for an i.i.d.~$z\sim\mc{D}$. Without imposing any assumptions on the function $f$, this structure is little different than the I-N oracle described above. Nevertheless, $\mc{O}_{\nabla f}^\sigma$ opens the door to making assumptions about the components $f$ and/or the distribution $\mc{D}$ which can be exploited by optimization algorithms. For example, in many cases it is reasonable to assume that $f(x;z)$ is convex and smooth in its first argument for each $z$, which introduces a potentially non-trivial constraint on the stochastic gradients, see \pref{subsec:breaking-assumptions-on-components} for further discussion.

Finally, we define an ``\textbf{active statistical learning}'' first-order oracle, $\mc{O}_{\nabla f(\cdot;z)}^\sigma$. This is the same as the statistical learning first-order oracle above, but where the algorithm may either receive $\nabla f(x;z)$ for an i.i.d.~$z\sim\mc{D}$ \emph{or} it may receive $\nabla f(x;z)$ for some previously seen $z$ of its choice. As an example, finite sum optimization corresponds to the case where $\mc{D}$ is the uniform distribution over $\crl{1,\dots,n}$, and an active statistical learning oracle allows an optimization algorithm to calculate the gradient of a chosen component. We discuss active oracles further in \pref{subsec:repeated-accesses}.

\subsection{The Algorithm Class and Oracle Graph Framework}\label{subsec:algorithm-class-and-graph-framework}

The final piece of an ``optimization problem'' is a class of algorithms under consideration. A major advantage of oracle models as a concept is that it allows us to analyze very broad families of algorithms. Restricting how the algorithm gains information about the objective, but not restricting what it can do with that information, allows for proving strong lower bounds that can even apply to the class of \emph{all} optimization algorithms that only interact with the function through the oracle. 

In the context of sequential optimization, it has been common historically to consider a restricted class of algorithms for which each oracle query must be in the linear span of previous oracle responses. This class of span-restricted algorithms is conducive to proving lower bounds, and numerous classic results on the complexity of convex optimization study this family of optimization algorithms \citep[e.g.][]{nemirovskyyudin1983,nesterov2004introductory}. Indeed, this is a natural family of algorithms which contains the vast majority of known optimization methods including gradient descent, accelerated variants of gradient descent, variance reduction methods, etc.~and other methods like coordinate descent belong to a recent generalization of the class of span-restricted algorithms, termed ``zero-respecting'' algorithms \citep{carmon2017lower1}. Nevertheless, such results are still limited and they do not preclude the possibility that algorithms might be able to perform better by exploring points outside the span of the previously seen oracle responses. 

A related simplification is to consider the family of \emph{deterministic} algorithms that is not necessarily span-restricted or zero-respecting. It turns out that this family of algorithms is essentially no more powerful than the class of span-restricted or zero-respecting ones because of their determinism. Specifically, it is possible to prove nearly identical lower bounds by constructing a ``resisting oracle'' which adversarially rotates the objective function so that any time the algorithm deviates from the span of previous oracle responses, those deviations happen only along invariant directions of the objective and therefore reveal no useful information. It is possible to construct resisting oracles for deterministic algorithms since the algorithm's every move can be anticipated from the outset. 

For these reasons, there have been recent efforts to extend results on the oracle complexity of optimization to broader families of algorithms, up to and including the class of all randomized optimization algorithms \citep[e.g.][]{woodworth16tight,carmon2017lower1}. Indeed, in this thesis, we will mostly focus on the complexity of optimization for classes of randomized algorithms that are not necessarily span-restricted or zero-respecting. We note that proving lower bounds for such broad families of algorithms often requires considerably more sophisticated proofs than are needed for deterministic span-restricted or zero-respecting classes, as we discuss in \pref{subsec:high-level-lower-bound-approach}.

Orthogonal to these issues of randomization versus determinism is the question of how to formalize different types of distributed optimization algorithms. There are myriad distributed optimization settings---from parallelization across distant devices, to synchronous single-instruction-multiple-data parallelism, to asynchronous parallel processing---and capturing the complexity of optimization in any given setting requires carefully delineating what exactly what the algorithm is allowed to do. 

A key challenge is capturing the difference between the following two scenarios: (1) two machines query a stochastic gradient oracle $T$ times each, and may communicate whatever and whenever they would like, and (2) two machines query a stochastic gradient oracle $T$ times each, but they may not communicate at all. While it is clear that the first class of algorithms is more powerful, we note that the number of oracle accesses does nothing to distinguish between these two scenarios, which each involve $2T$ stochastic gradient oracle queries. We observe that the relevant distinction is the dependence structure between the queries: in case (1), the second oracle query on the first machine might depend both on the first query on the first machine and the first query on the second machine, whereas in case (2) all of the queries on the first machine are completely independent of all the queries on the second machine.

We therefore introduce a ``graph oracle framework'' which captures the nature of a distributed algorithm using a directed, acyclic graph and we define families of distributed optimization algorithms in terms of the associated graph. At a high level, each vertex in the graph corresponds to a single oracle access, and the result of each oracle access is only available in \emph{descendents} of the corresponding vertex in the graph. 

Let $\mc{G}$ be a directed, acyclic graph with vertices $\mc{V} = \crl{v_1,\dots,v_N}$ and define
\begin{equation}
\textrm{Ancestors}(v) = \crl*{v'\in \mc{V} : \exists \textrm{ a directed path from } v' \textrm{ to } v}
\end{equation}
We associate a query rule $\mc{Q}_v : \prn*{\mathfrak{Q} \times \mathfrak{A}}^{\abs*{\textrm{Ancestors}(v)}} \to \mathfrak{Q}$ and an oracle $\mc{O}_v : \mathfrak{Q} \to \mathfrak{A}$ with each vertex $v$ in the graph. The query rule at vertex $v$ is a mapping from all of the available information about the function---i.e.~the queries and oracle responses in the ancestors of $v$, which are in the set of possible queries, $\mathfrak{Q}$, and set of possible answers, $\mathfrak{A}$---in order to choose a new query $q_v \in \mathfrak{Q}$ to submit to the oracle $\mc{O}_v$:
\begin{equation}
q_v = \mc{Q}_v\prn*{\prn*{q_{v'}, \mc{O}_{v'}(q_{v'}) : v' \in \textrm{Ancestors}(v)}, \xi}
\end{equation}
where $\xi \in \crl{0,1}^*$ is a string of independent, random bits available to the algorithm, which allows us to capture randomized optimization algorithms that nevertheless have deterministic query rules. Finally, the algorithm has an output rule $\hat{X}$ which takes all of the queries and oracle responses and chooses the algorithm's output:
\begin{equation}
\hat{x} = \hat{X}\prn*{\prn*{q_v, \mc{O}_v(q_v) : v \in \mc{V}}, \xi}
\end{equation}
In this way, for a given graph structure $\mc{G}$ and set of associated oracles $\crl*{\mc{O}_v:v\in\mc{V}}$, an optimization algorithm is specified by the query rules $\crl*{Q_v:v\in\mc{V}}$ and the output rule $\hat{X}$. We therefore define the family $\mc{A}(\mc{G},\crl*{\mc{O}_v:v\in\mc{V}})$ as the set of all algorithms that can be implemented in this way. We can, of course, consider subclasses of $\mc{A}(\mc{G},\crl*{\mc{O}_v:v\in\mc{V}})$ consisting of, for example, only deterministic algorithms, or only span-restricted algorithms. However, we will mostly avoid such restrictions, and many of our lower bounds will apply to arbitrary randomized algorithms in $\mc{A}(\mc{G},\crl*{\mc{O}_v:v\in\mc{V}})$.

It will be helpful to consider several examples:

\subsubsection{Example: The Sequential Graph}\label{subsec:sequential-graph}

\begin{figure}
\centering
\includegraphics[width=0.8\textwidth]{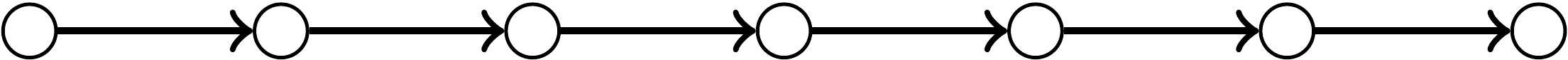}
\vspace{-3mm}\caption{The sequential graph.\label{fig:sequential-graph}}\vspace{4mm}
\includegraphics[width=0.8\textwidth]{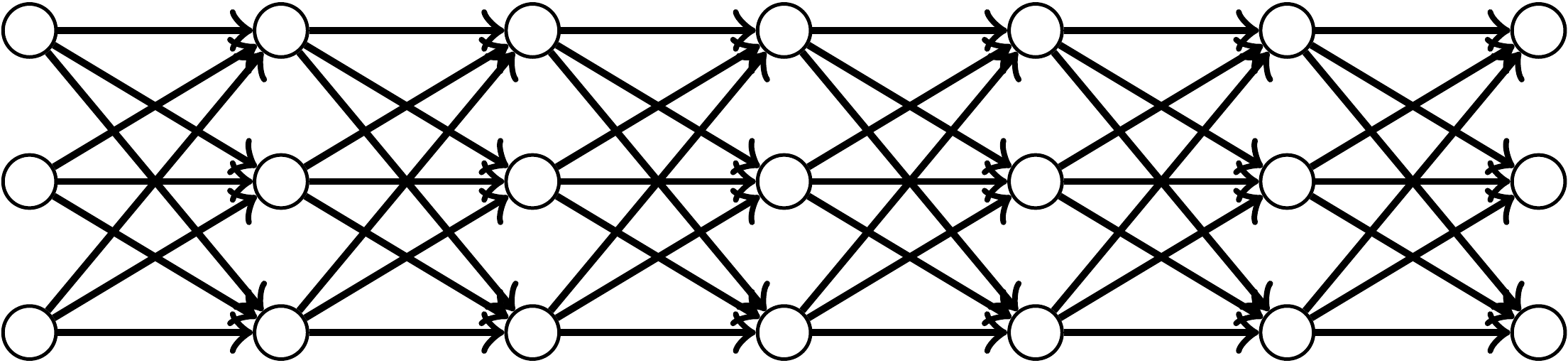}
\vspace{-3mm}\caption{The layer graph.\label{fig:layer-graph}}\vspace{4mm}
\includegraphics[width=0.8\textwidth]{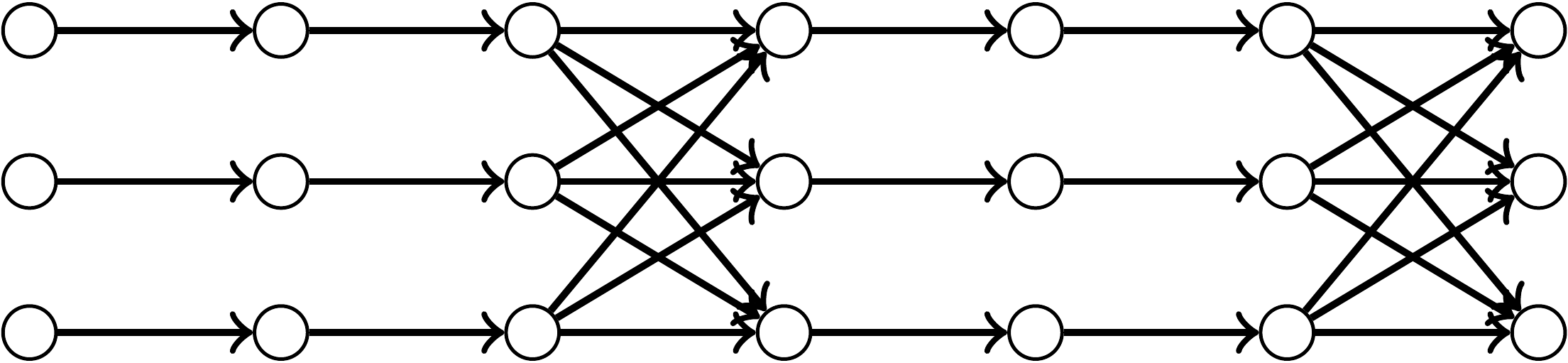}
\vspace{-3mm}\caption{The intermittent communication graph.\label{fig:intermittent-graph}}\vspace{4mm}
\end{figure}

The sequential graph $\mc{G}_{\textrm{seq}}$, depicted in \pref{fig:sequential-graph}, has $T$ vertices labelled $\crl{1,\dots,T}$ with an edge $t \to t+1$ for each $t$. This is the most basic non-trivial graph, and it corresponds to the standard serial optimization setting. In particular, for algorithms that correspond to the sequential graph, the $t\mathth$ oracle access is allowed to depend on all of the first $t-1$ oracle queries and oracle responses. 

This graph specifies the \emph{structure} of the oracle accesses allowed to the algorithm, but to pose useful questions about the complexity of optimization we also need to associate an oracle with each vertex in the graph. Some natural examples include:

\textbf{A Deterministic First-Order Oracle:}
When each vertex is associated with a single deterministic gradient oracle access, $\mc{O}_v = \mc{O} : x\mapsto \nabla F(x)$, the family $\mc{A}(\mc{G}_{\textrm{seq}},\mc{O})$ contains all deterministic first-order serial optimization algorithms that compute at most $T$ gradients. For instance, $T$ steps of Gradient Descent corresponds to query rules 
\begin{equation}
q_{t+1} = q_t - \eta_t \mc{O}(q_t)
\end{equation}
with output rule $\hat{X}((q_t,\mc{O}(q_t):t\in[T])) = q_T$. In a similar way, other query rules can be chosen that capture common algorithms like Accelerated Gradient Descent \citep{nesterov1983method}, Mirror Descent \citep{nemirovskyyudin1983}, and many more.

\textbf{A Stochastic First-Order Oracle:}
Each vertex could instead be associated with a stochastic gradient oracle access, $\mc{O}_v = \mc{O} : x\mapsto g_x$ such that $\E g_x = \nabla F(x)$. In this case, the family $\mc{A}(\mc{G}_{\textrm{seq}},\mc{O})$ contains all stochastic first-order serial optimization algorithms that compute at most $T$ stochastic gradients. With appropriately defined query rules, this allows us to capture a wide range of algorithms including stochastic gradient descent, stochastic mirror descent, accelerated variants of stochastic gradient descent, and more.

\textbf{Finite Sum Optimization:}
When the optimization objective has finite sum structure, i.e.~$F(x) = \frac{1}{n}\sum_{i=1}^n F_i(x)$, we can consider a component gradient oracle $\mc{O}_v = \mc{O} : (x,i) \mapsto \nabla F_i(x)$. With appropriately defined query rules this can capture many of the existing finite sum algorithms like SAG \citep{schmidt2017minimizing}, SAGA \citep{defazio2014saga}, SVRG \citep{johnson2013accelerating}, accelerated variants of these, and more. 

These are only a few examples and there are innumerable other oracles that could be paired with the sequential graph to specify families of serial optimization algorithms. Nevertheless, the graph oracle framework is not actually necessary to understand the complexity of serial optimization. Indeed, for all of the listed examples, the minimax oracle complexity was already well understood before the graph oracle framework was even proposed \citep{nemirovskyyudin1983,woodworth16tight}. For this reason, we will focus on the complexity of optimization in more interesting graphs.

\subsubsection{Example: The Layer Graph} 

The layer graph $\mc{G}_{\textrm{layer}}$, shown in \pref{fig:layer-graph}, has $MT$ vertices labelled $v^m_t$ for $m \in [M]$ and $t \in [T]$, with edges from $v^m_{t-1} \to v^{m'}_t$ for all $m,m'$. This corresponds to simple synchronous parallelism where algorithms can issue $M$ oracle queries in parallel. Such algorithms are natural when using multi-core processors or when multiple computing devices are available. As in the previous section, pairing the layer graph $\mc{G}_{\textrm{layer}}$ with a set of oracles allows us to capture natural families of distributed optimization algorithms, and to study the minimax complexity of optimization for these families of algorithms. As an example, we will discuss stochastic first-order algorithms with this graph:

\textbf{Stochastic First-Order Parallel Optimization}
The layer graph with stochastic gradient oracles $\mc{O}_v = \mc{O} : x \mapsto g_x$ with $\E g_x = \nabla F(x)$ specifies a family of stochastic first-order parallel algorithms $\mc{A}(\mc{G}_{\textrm{layer}},\mc{O})$. A natural algorithm in this family is minibatch stochastic gradient descent, which corresponds to query rules
\begin{equation}
q_{t+1}^m = q_{t+1} = q_t - \frac{\eta_t}{M}\sum_{m=1}^M \mc{O}_{v_t^m}(q_t)
\end{equation}
This family of algorithms also includes Accelerated Minibatch SGD \citep{cotter2011better,lan2012optimal}, Minibatch Stochastic Mirror Descent, and many others. As with Minibatch SGD, any of the stochastic first-order algorithms corresponding to the sequential graph can be naturally extended to the layer graph via minibatching, reducing the variance of the stochastic gradients and generally speeding convergence.

\subsubsection{Example: The Intermittent Communication Graph} \label{subsec:intermittent-graph}

The intermittent communication graph $\mc{G}_{\textrm{I.C.}}$, see \pref{fig:intermittent-graph}, has $MKR$ vertices labelled $v^m_{k,r}$ for $m \in [M]$, $k \in [K]$, and $r \in [R]$, with edges from $v^m_{k,r} \to v^m_{k+1,r}$ and from $v^m_{K,r} \to v^{m'}_{1,r+1}$ for each $m,m',k,r$. In this way, the intermittent communication graph most naturally corresponds to a setting in which $M$ devices work in parallel but where communication between the devices is limited. In contrast to the layer graph where oracle queries are issued in parallel but all of the responses from time $t$ are available for all of the queries at time $t+1$, in the intermittent communication graph the queries are broken up into $R$ ``rounds of communication,'' each of which corresponds to $K$ queries on each machine. So, $\textrm{Ancestors}(v^m_{k,r}) = \crl{v^m_{k',r} : k' < k} \cup \crl{v^{m'}_{k',r'} : r' < r}$, and only the oracle responses obtained on device $m$ and responses from previous rounds ($r' < r$) are available to choose the queries on device $m$.

This natural distributed optimization setting will be our main focus in \pref{sec:intermittent-communication-setting}. As with the previously discussed examples, we can pair the intermittent communication graph with many oracles in order to define families of optimization algorithms and for the most part, we will focus on stochastic first-order oracles. 

\textbf{The Homogeneous Setting:} 
In this setting, each vertex is associated with the same oracle, a stochastic gradient oracle $\mc{O}_v = \mc{O} : x\mapsto g_x$ such that $\E g_x = \nabla F(x)$. This family of algorithms $\mc{A}(\mc{G}_{\textrm{I.C.}},\mc{O})$ includes Minibatch SGD, Local SGD, and many more, which we will discuss in \pref{sec:local-sgd} and \pref{sec:intermittent-communication-setting}. In the context of supervised machine learning, this could correspond to a situation where each of the stochastic gradients is computed using an independent sample from the data distribution.

\textbf{The Heterogeneous Setting:}
However, unlike the previous examples, it is often interesting to associate different vertices in the graph with \emph{different} oracles. In the heterogeneous setting, we suppose that the objective has finite sum structure $F(x) = \frac{1}{M}\sum_{m=1}^M F_m(x)$, and that the oracle queries in vertices corresponding to the $m\mathth$ machine yield stochastic gradient estimates for $F_m$ specifically. That is, $\mc{O}_{v^m_{k,r}} = \mc{O}^m: x \mapsto g^m_x$ such that $\E g^m_x = \nabla F_m(x)$. This should be thought of as the $m\mathth$ machine having stochastic gradient access to $F_m$ and the goal is for the $M$ machines to achieve consensus by finding parameters $x$ that minimize the average of the local objectives. In a supervised learning setting, this corresponds to each machine computing stochastic gradients of the local loss based on a separate samples on each machine. Heterogeneity can arise, for example, when partitioning an i.i.d.~training dataset across the machines, which introduces some (probably ``small'') amount of heterogeneity to the stochastic gradients. Otherwise, when each machine uses data from genuinely different sources, for example from users on different continents, this can also introduce heterogeneity.

\textbf{The Federated Setting:}
We could also consider a stylized version of Federated Learning \citep{kairouz2019advances} that captures some, but not all, of the interesting features of the setting. This version of Federated Learning is similar to the heterogeneous setting, except that the components of the objective are not tied to any particular parallel worker. In particular, we suppose that the objective has the form $F(x) = \E_{i\sim\mc{D}} F_i(x)$ where $\mc{D}$ is an arbitrary distribution (whose support need not be finite or even countable). We then associate with each machine and each round of communication a stochastic gradient oracle for $F_i$ for a random $i \sim \mc{D}$, i.e.~$\mc{O}_{v^m_{k,r}} = \mc{O}^m_r: x \mapsto g^{i^m_r}_x$ such that $i^m_r \sim \mc{D}$ and $\E[g^{i^m_r}_x | i^m_r] = \nabla F_{i^m_r}(x)$. 

The Federated setting captures optimization in the intermittent communication setting when the stochastic gradients on each machine in each round are allowed to be correlated. This can arise, for example, when training a language model using data held on users' cell phones. In each round, $M$ of the available cell phones are randomly chosen and used to compute $K$ stochastic gradients, these gradients are then communicated back to a central coordinator and the process repeats. Since each user will have different language patterns, the stochastic gradients computed on each machine in each round will correspond to somewhat different objectives.

\subsection{The Minimax Complexity}

The combination of a function class, a graph and oracles that define the structure of an algorithm's interaction with the objective, and the associated class of optimization algorithms define ``an optimization problem.'' We proceed to define the minimax oracle complexity of an optimization problem, which asks what is the best guarantee that any algorithm can provide for every function in the class? For a given function class $\mc{F}$, oracle graph $\mc{G}$, assignment of oracles to vertices $\crl{\mc{O}_v}$, and family of algorithms $\mc{A}(\mc{G},\crl{\mc{O}_v})$, we define
\begin{equation}\label{eq:def-minimax-complexity}
\epsilon(\mc{F},\mc{A}(\mc{G},\crl{\mc{O}_v})) = \inf_{A \in \mc{A}(\mc{G},\crl{\mc{O}_v})} \sup_{F\in\mc{F}} \crl*{\E F(\hat{x}) - \min_x F(x)}
\end{equation}
Throughout this thesis, we will bound the quantity \eqref{eq:def-minimax-complexity} for various distributed optimization settings of interest. 

There are other similar but distinct ways that we could have defined the minimax complexity. One minor variation is to require a bound on the suboptimality $F(\hat{x}) - F^*$ with constant or high probability. We note that constant probability bounds are essentially equivalent to in-expectation bounds up to constant factors and, indeed, many of our lower bounds are shown to hold with constant probability. High probability bounds on the suboptimality are often impossible using merely bounded-variance stochastic oracles, and they typically require less standard assumptions like subgaussianity of the oracle. Obtaining high probability bounds is interesting and important, but here we focus on the more standard setting of in-expectation bounds.

It is also common to see the definition of the minimax complexity turned around---rather than asking what is the smallest achievable error with a certain number of oracle queries, asking instead how many oracle queries would be necessary to reach a given suboptimality $\epsilon$. In the context of sequential optimization (i.e.~the sequential graph), it is easy to see that these questions are two sides of the same coin: a bound on \eqref{eq:def-minimax-complexity} in terms of the number of queries, $T$, can be solved for $T$ to yield a bound on the number of queries needed to reach accuracy $\epsilon$ as a function of $\epsilon$. 

However, for more complex distributed optimization settings like the intermittent communication setting, there are multiple dimensions along which the graph could vary ($M$, the number of machines, $R$, the number of rounds of communication, and $K$, the number of queries per round), and it is therefore less obvious how to ``invert'' \eqref{eq:def-minimax-complexity} in a general-purpose way. For this reason, we prefer to think of the graph as fixed and to ask about the minimax complexity with respect to that graph specifically. Of course, by seeing how the minimax complexity depends on various properties of this graph, we can also answer questions about what sort of graph would allow us to reach a particular accuracy $\epsilon$.

\paragraph{Alternatives to the Graph Oracle Model}
Besides the graph oracle model, there are other possible formulations of minimax complexity for distributed optimization algorithms. One alternative is a communication complexity approach \citep{tsitsiklis1987communication,zhang2013information,garg2014communication,braverman2016communication}, where $M$ parallel workers have a local function---perhaps based on a locally held dataset---and the goal is to compute the minimizer of the average of the local functions. In the communication complexity formulation, each worker has unlimited computational power and can compute arbitrary information about its local objective (including, e.g.~its exact minimizer), but it is limited to transmit only a limited number of bits to the other machines. This approach is necessarily dimension-dependent because the number of bits needed just to represent the solution scales with the dimension, and beyond this issue, algorithms in this setting often explicitly rely on the dimension being bounded. Consequently, this approach is not as well suited to our settings.
Another alternative allows the machines to communicate 
real-valued vectors, but restricts the vectors that they are allowed to compute and transmit.
For instance, \citet{arjevani2015communication} presents communication complexity lower bounds for
algorithms that can only compute vectors that lie in a certain subspace, which includes
e.g.~linear combinations of gradients of their local function. \citet{lee2017distributed} 
impose a similar restriction, but allow the data defining the local functions
to be allocated to the different machines in a strategic manner.

Our framework applies to general stochastic optimization problems and does not impose any restrictions on what computation the
algorithm may perform or what it can communicate. Rather, we restrict the means by which the algorithm interacts with the objective and the structure of that interaction. In this way, our lower bounds can apply to very broad classes of algorithms, up to and including the family of all randomized algorithms that correspond to a given graph, whereas previous arguments are typically restricted to substantially smaller families of algorithms.

\section{Tools for Proving Lower Bounds}\label{sec:lower-bound-tools}

We will now introduce several tools that will be useful for analyzing the minimax complexity of optimization. 

First, we will introduce a conceptual approach to proving lower bounds for arbitrary randomized algorithms. As mentioned in \pref{subsec:algorithm-class-and-graph-framework}, optimization lower bounds can be quite simple for classes of zero-respecting algorithms and often require much more sophisticated constructions when dealing with broader families of randomized algorithms. Nevertheless, we will describe a minor modification to a lower bound construction which allows us to argue that \emph{any} randomized algorithm is nearly zero-respecting, which facilitates proving lower bounds. 

Second, we will prove a lower bound on the minimax complexity for classes of algorithms based on very simple and generic properties of the associated graph. These lower bounds are very general, and we argue that they are tight in a certain sense. However, for some graphs, including the intermittent communication graph, they are not tight and a more specialized analysis is required, which we will perform in later sections. Nevertheless, these basic lower bounds are broadly useful and we will refer to them frequently.

Finally, we will describe a method of corresponding algorithmic guarantees for convex objectives with better guarantees in the strongly convex setting. It is well-known that algorithms for strongly convex optimization can be applied to merely convex functions by adding a small regularization term to the objective. We show a reduction that goes in the opposite direction. Of course, since strongly convex objectives are also convex, a convex optimization algorithm will obviously succeed when applied to a strongly convex function. However, it is not at all obvious that the algorithm would obtain better guarantees with strong convexity; we show that it will indeed perform better and identify the better rate.

\subsection{A Technique for Proving Lower Bounds for Randomized Algorithms}\label{subsec:high-level-lower-bound-approach}

Before proceeding to the argument for randomized algorithms, it is worthwhile to describe the basic proof of lower bounds for zero-respecting algorithms. For simplicity, we focus on lower bounds for smooth, convex objectives in $\mc{F}_0(H,B)$, but a similar technique applies more broadly. The classic lower bound for functions in $\mc{F}_0(H,B)$ with a deterministic first-order oracle is based on the following hard instance due to \citet{nesterov2004introductory}:
\begin{equation}
F(x) = -x_1 + x_N^2 + \sum_{i=1}^{N-1} (x_{i+1} - x_i)^2
\end{equation}
The key property of this function $F$ is the ``chain-like'' nature of its gradient. Specifically, it is easy to see from the gradient,
\begin{equation}
\nabla F(x) = -e_1 + 2x_N e_N + 2\sum_{i=1}^{N-1} (x_{i+1} - x_i)(e_{i+1} - e_i)
\end{equation}
that if $x_i = x_{i+1} = \dots = x_N = 0$, then $[\nabla F(x)]_{i+1} = [\nabla F(x)]_{i+2} = \dots = [\nabla F(x)]_N = 0$ too. For this reason, a zero-respecting algorithm---whose queries have non-zero coordinates only where previously seen gradients had non-zero coordinates---can only increase the number of non-zero coordinates in its iterates by one for each gradient it computes. Therefore, any span-restricted or zero-respecting optimization algorithm that makes $T$ first-order oracle queries will have an output with $\hat{x}_{T+1} = \hat{x}_{T+2} = \dots = \hat{x}_{N} = 0$. From here, the rest of the lower bound proof is very simple; all that is necessary is to show that the suboptimality of such a point $\hat{x}$ is relatively large. For this particular function, it can be shown that the suboptimality scales with $T^{-2}$, which gives the classic and well-known lower bound for first-order optimization. This technique of arguing that the algorithm's output will lie in some restricted subspace, and then showing that any vector in that subspace will have high suboptimality is quite powerful, and will form the basis for many of our results.

However, the above argument relied \emph{crucially} on the algorithm being zero-respecting. In particular, there is a non-zero-respecting algorithm that immediately, exactly minimizes this objective without making even a single gradient oracle query, which is the algorithm that just returns $x^* \in \argmin_x F(x)$. Of course, this algorithm only works for this specific objective, but this just goes to show that proving lower bounds for non-zero-respecting algorithms is considerably more difficult. Indeed, such lower bounds cannot be proven using a single hard instance for this reason. Moreover, even if a non-zero-respecting algorithm isn't ``cheating'' by immediately returning the minimizer of $F$, we note that the chain-like property of the gradient is extremely delicate. If an algorithm simply queries the gradient oracle at any point plus almost any miniscule perturbation, then the gradient would be dense, and the algorithm could immediately ``find'' all of relevant coordinates. Fortunately, there is a relatively straightforward fix for this issue, which involves two pieces. 

The first piece is the observation that, in high dimensions, a vector's inner product with a random unit vector will be very small with high probability, specifically, on the order of $1/\sqrt{d}$. To capitalize on this, we introduce a random rotation $U \in \R^{d \times N}$ for large $d$ and take as our hard instance $F(U^\top x)$. Now, optimizing this rotated function requires obtaining a significant inner product with the columns of $U$, which is very unlikely to happen by merely ``guessing.''

Nevertheless, despite the fact that a vector's inner product with each columns of $U$ is likely to be very small, it won't be exactly zero, so a single gradient oracle query can reveal a lot of information about $U$, which can break the lower bound. To address this, the second idea is to ``flatten out'' the objective in such a way that the gradient maintains the important ``chain-like'' property even when the query has a slightly non-zero inner product with potentially all of the columns of $U$. To that end, we can modify the objective $F$ to be
\begin{equation}
\tilde{F}(x) = -x_1 + \psi(x_N) + \sum_{i=1}^{N-1} \psi(x_{i+1} - x_i)
\end{equation}
where
\begin{equation}
\psi(y) = \max\crl*{0, \abs{y} - \alpha}^2
\end{equation}
for some small parameter $\alpha \approx 1/\sqrt{d}$ (see \pref{fig:flattened-out-psi}).
\begin{wrapfigure}{R}{0.3\textwidth}
\centering
\includegraphics[width=0.3\textwidth]{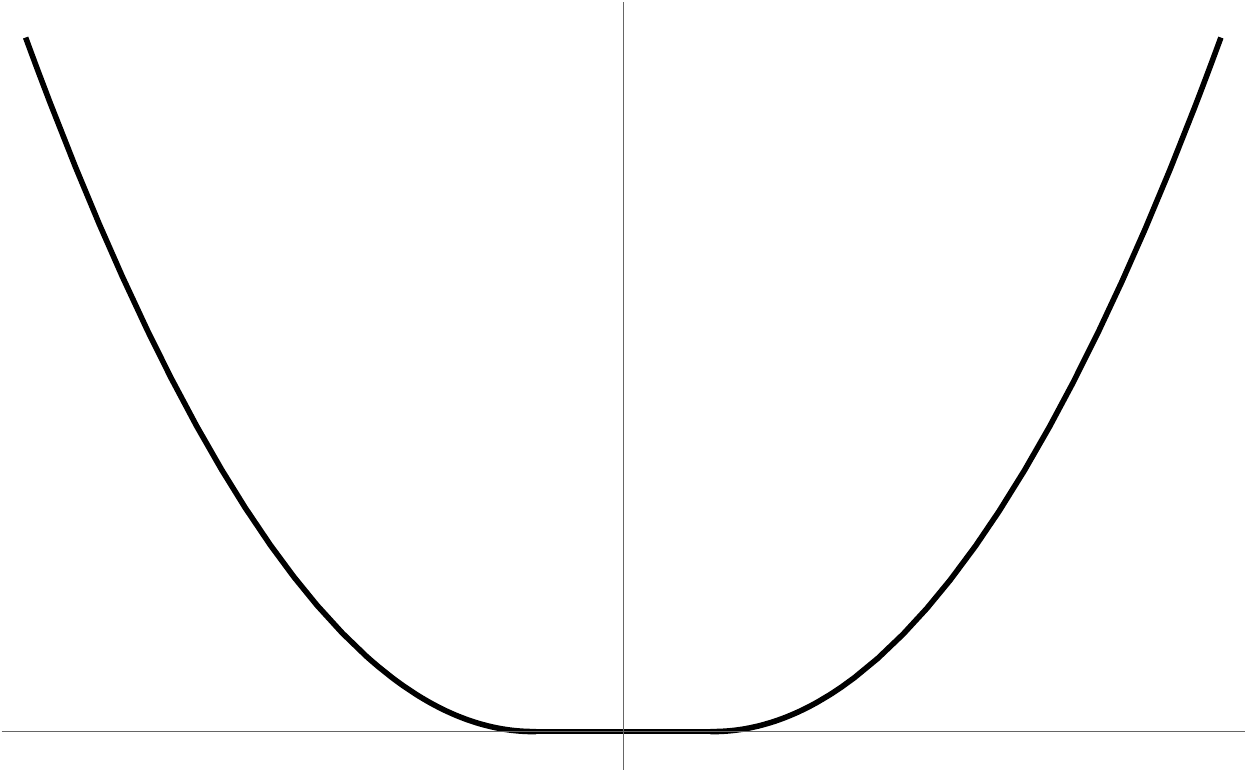}
\caption[The function $\psi(y)$.]{$\psi(y)$\label{fig:flattened-out-psi}}
\vspace{-5mm}
\end{wrapfigure}
Because $\psi'(y) = 0$ for $y \in [-\alpha,\alpha]$, the coordinates of $\nabla \tilde{F}(x)$ are zero until the corresponding coordinates of $x$ are \emph{substantially} non-zero, to an extent that would not happen by chance. Specifically, if $x$ is a vector such that $\abs{\inner{x}{U_j}} \leq \frac{\alpha}{2}$ for $j \geq i$, then the gradient
\begin{equation}
\nabla \tilde{F}(U^\top x) = -U_1 + \psi'(x_N)U_N + \sum_{i=1}^{N-1} \psi'(x_{i+1} - x_i)(U_{i+1} - U_i)
\end{equation}
will be a linear combination of $U_1,\dots,U_i$ only, and very little information about $U_{i+1},\dots,U_N$ is leaked beyond the fact that their inner products with $x$ are small. 

Using this approach, it is generally possible to extend the classic lower bound technique for span-restricted or zero-respecting algorithms to the class of all randomized algorithms. However, the first applications of the random rotation and flattening-out of the objective involved very long and delicate proofs \citep{woodworth16tight,woodworth2017lower,carmon2017lower1,woodworth2018graph,arjevani2019lower}, which hinged on carefully controlling the statistical dependencies between the yet ``unknown'' columns of $U$ and the previous oracle interactions. Since then, the argument has gradually been refined and simplified, culminating in the PhD thesis of \citet{yairthesis}, who shows in a very general manner that any algorithm is almost zero-respecting when optimizing functions like $\tilde{F}(U^\top x)$ above, and provides a concise and simple proof of this fact. 

This idea forms the basis of many of the lower bounds that we will prove. However, there is one additional technicality that requires some attention. In particular, the intuition that the inner product of a vector with a random unit vector is on the order $1/\sqrt{d}$ depends on that vector having bounded norm. Annoyingly, it is still the case that an algorithm can achieve a high inner product with a random vector---even in high dimensions---by simply querying the oracle with a vector that has a huge norm. Although it seems intutitive that, generally speaking, querying the oracle at a point with a super large norm---much larger than the norm of the function's minimizer---should not be an effective strategy, this must be addressed by our lower bound proofs.

The easiest way to deal with this is to further modify the objective such that its gradient at far away points is independent of $U$, and therefore querying there reveals no useable information. When proving lower bounds for non-convex optimization, this can be easily accomplished by introducing a soft projection to the objective so the algorthm must optimize $\tilde{F}(U^\top \rho(x))$ where $\rho(x) = (1 + \gamma^{-2}\nrm{x}^2)^{-1/2}x$ \citep{carmon2017lower1}. This way, the algorithm's queries are essentially bounded by $\gamma$, and the argument goes through. Unfortunately, when the objective is required to be convex, this approach does not work as readily. In the next section, we address the issue of bounding the queries by using a slightly different construction than we have so far described. Nevertheless, the proof follows the same idea: we introduce a random rotation, and we make the objective insensitive to small inner products with the columns of the rotation matrix.

\subsection{A Generic Graph Oracle Lower Bound}\label{subsec:generic-graph-oracle-lower-bound}

In this section, we prove a lower bound in the graph oracle model for any distributed, stochastic first-order optimization algorithm which depends on the associated graph.
Our generic lower bound for first-order algorithms in the graph oracle model is based on a hard instance with the following properties:
\begin{restatable}{lemma}{proxconstruction}\label{lem:prox-construction}
Let $U \in \R^{D\times d}$ for $D \geq d$ be orthogonal so that $U^\top U = I_{d\times d}$, and let $H,B>0$ and $d \in \mathbb{N}$ be given. Then there exists a function $F_U\in \mc{F}_0(H,B)$ such that for any $x$ with $\abs{\inner{U_d}{x}} \leq \frac{B}{12d^{3/2}}$
\[
F_U(x) - \min_x F_U(x) \geq \frac{HB^2}{16d^2}
\]
Furthermore, for each $i$, $\abs{\inner{\nabla F_U(x)}{U_i}} \leq \frac{HB}{8d^{3/2}} + H\abs{\inner{x}{U_i}}$; if $\nrm{x} \geq 5B$ then $\nabla F_U(x) = \frac{H}{2}x$ regardless of $U$; and if $\abs{\inner{U_i}{x}} \leq \frac{B}{12d^{3/2}}$ for all $i \geq j$, then $\nabla F_U(x) \in \textrm{Span}(x,U_1,\dots,U_j)$ and it does not depend on the columns $U_{j+1},\dots,U_d$.
\end{restatable}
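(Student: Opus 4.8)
The plan is to build $F_U$ from a one-dimensional "chain" potential in the rotated coordinates $y = U^\top x \in \R^d$, flattened near the origin in the style of the $\psi$ from \pref{subsec:high-level-lower-bound-approach}, and then grafted onto a pure quadratic far from the origin so that distant queries reveal nothing about $U$. Concretely, I would first work in $\R^d$ with a function of the form
\[
g(y) = c_1\Bigl(-y_1 + \psi_\alpha(y_d) + \sum_{i=1}^{d-1}\psi_\alpha(y_{i+1}-y_i)\Bigr),
\]
where $\psi_\alpha(u) = \max\{0,|u|-\alpha\}^2$ and $\alpha \asymp B/d^{3/2}$, and then define the candidate hard instance on $\R^D$ by $F_U(x) = g(U^\top x)$ on a ball and $F_U(x) = \tfrac{H}{2}\nrm{x}^2$ outside a ball of radius $\Theta(B)$, with a smooth convex interpolation in between (a standard smoothing of $\max$, or an infimal convolution / Moreau-type patching, chosen so the pieces agree to first order on the gluing sphere). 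The scaling constants $c_1$ and the radii are then tuned so that $F_U \in \mc{F}_0(H,B)$: convexity is inherited because $\psi_\alpha$ is convex and $U$ is linear; $H$-smoothness follows from $\psi_\alpha'' \le 2$ plus the bounded curvature of the quadratic cap and the interpolation; and the minimizer of $g$ lies at $y^* = (1,1,\dots,1)/(2) \cdot(\text{scale})$-type point, whose norm I would check is $\le B$ (this is where $\alpha$ and $c_1$ must be balanced against $d$).

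Next I would establish the three structural claims, in order. (i) The "$\nabla F_U(x)=\tfrac H2 x$ for $\nrm{x}\ge 5B$" claim is immediate from the construction once the gluing radius is set below $5B$. (ii) The coordinate bound $\abs{\inner{\nabla F_U(x)}{U_i}} \le \tfrac{HB}{8d^{3/2}} + H\abs{\inner{x}{U_i}}$: since $\nabla F_U(x) = U\,\nabla g(U^\top x)$ on the inner ball, $\inner{\nabla F_U(x)}{U_i} = [\nabla g(y)]_i$, and each coordinate of $\nabla g$ is a sum of at most two terms of the form $c_1\psi_\alpha'(\cdot)$; using $\abs{\psi_\alpha'(u)} = 2\max\{0,|u|-\alpha\} \le 2|u|$ and $\abs{\psi_\alpha'(u)} \le 2|u|\le 2\alpha$ when the flattening is not active, one gets a bound of the shape $c_1(\alpha + |y_{i\pm1}-y_i|)$, which after bookkeeping collapses into the stated $\tfrac{HB}{8d^{3/2}}+H\abs{\inner{x}{U_i}}$; on the cap and interpolation region the gradient is $\tfrac H2 x$ (or a convex combination with it), so the bound only gets easier. (iii) The zero-chain property: if $\abs{\inner{U_i}{x}}=\abs{y_i}\le \tfrac{B}{12d^{3/2}} \le \alpha/2$ for all $i\ge j$, then $y_{i+1}-y_i$ and $y_d$ all lie in the flat region $[-\alpha,\alpha]$ for $i\ge j$, hence $\psi_\alpha'$ vanishes there, so $\nabla g(y)$ is supported on coordinates $1,\dots,j$, giving $\nabla F_U(x) = U(\nabla g(y)) \in \mathrm{Span}(U_1,\dots,U_j)$ — and manifestly independent of $U_{j+1},\dots,U_d$; on the cap/interpolation region one adds the harmless $\tfrac H2 x$ term, explaining the $\mathrm{Span}(x,U_1,\dots,U_j)$ in the statement.

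Finally, the suboptimality lower bound: if $\abs{\inner{U_d}{x}} = \abs{y_d} \le \tfrac{B}{12d^{3/2}}$, I would lower-bound $F_U(x) - \min F_U$ by the standard chain argument — the "$-y_1$" term can be driven down only through the chain of $\psi_\alpha$ couplings, and with $y_d$ forced near zero the best such $y$ is bounded away from the true minimizer; a direct estimate of $g$ at any such $y$ versus $g(y^*)$ yields a gap of order $c_1 \cdot (1/d)$, which with the chosen $c_1 \asymp HB/d$ gives the claimed $\tfrac{HB^2}{16d^2}$ (here one must also confirm that points $x$ of large norm, where $F_U$ is the quadratic $\tfrac H2\nrm{x}^2 \ge \tfrac H2(5B)^2 \gg \tfrac{HB^2}{16d^2}$, are not a cheaper option, so the bound holds for \emph{all} $x$ with small $\inner{U_d}{x}$, not just those near the origin). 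The main obstacle I anticipate is the gluing region: making the transition from the flattened chain potential to the quadratic cap simultaneously (a) convex, (b) exactly $H$-smooth with the constants as stated, (c) first-order independent of $U$ outside radius $5B$, and (d) not introducing a spurious low-value region that ruins the $\tfrac{HB^2}{16d^2}$ bound, is the delicate part — everything else is the by-now-routine rotate-and-flatten computation. I would handle (a)–(d) by taking the cap to be the Moreau envelope / infimal convolution of $g$ with a multiple of $\tfrac12\nrm\cdot^2$, or equivalently by defining $F_U$ directly as $\prox$-type object (the macro \prox\ and the "$\pp{\cdot}$" notation in the preamble hint this is the intended route), which builds in convexity and smoothness automatically and localizes the dependence on $U$ to the region where the envelope is not yet saturated.
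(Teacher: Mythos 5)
There is a genuine gap, and it is in the central quantitative claim. Your construction is additive: $g(y)=c_1\bigl(-\zeta_0 y_1+\psi_\alpha(y_d)+\sum_{i=1}^{d-1}\psi_\alpha(y_{i+1}-y_i)\bigr)$ (you need the small linear coefficient $\zeta_0\asymp B/d^{3/2}$ to keep $\nrm{x^*}\le B$; the prefactor $c_1$ alone cannot do this, since it does not move the minimizer). For such a chain, constraining \emph{only} the last coordinate $\abs{\inner{U_d}{x}}=\abs{y_d}\le \alpha/2$ costs only one link's worth of value, not a constant fraction of the total gap. Concretely, take $y_i=y_i^*$ for $i<d$ and $y_d=0$: only the anchor term and the final link change, and the suboptimality of this point is $\Theta(c_1\zeta_0^2)$. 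With $c_1\asymp H$ (forced by $H$-smoothness) and $\zeta_0\asymp B/d^{3/2}$ (forced by $\nrm{x^*}\le B$), that is $\Theta(HB^2/d^3)$ — a factor of $d$ smaller than the claimed $\frac{HB^2}{16d^2}$. The familiar $HB^2/d^2$ rate for Nesterov-type chains requires the last $\Omega(d)$ coordinates to be small simultaneously, which is exactly what the hypothesis of this lemma does not give you (and this matters downstream: \pref{thm:generic-graph-lower-bound} applies the lemma with $d=N+1$, so only the single last column is controlled). The paper avoids this by using a Nemirovski-style \emph{max} of affine pieces with staggered offsets, $G_U(x)=\max_i\max\{\ell(\inner{U_i}{x}-\alpha\beta(i-1)),\,-\ell B/\sqrt{d},\,\tfrac{H}{2}(\nrm{x}^2-B^2)-\ell B/\sqrt{d}\}$, smoothed by its $H$-Moreau envelope: because the objective is a max, reaching the minimum value $-\ell B/\sqrt d$ requires \emph{every} $\inner{U_i}{x}$ to be about $-B/\sqrt d$, so failing on the single column $U_d$ already forfeits half of the (small, $\Theta(HB^2/d^2)$-sized) total gap. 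A chain-based proof could only salvage the result by weakening the lemma (smallness of the last $\Omega(d)$ inner products) and re-deriving the theorem with $d\approx 2\,\textrm{Depth}(\mc{G})$; it does not prove the lemma as stated.

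Two secondary issues. First, the per-column gradient bound $\abs{\inner{\nabla F_U(x)}{U_i}}\le \frac{HB}{8d^{3/2}}+H\abs{\inner{x}{U_i}}$ does not follow from your construction: $[\nabla g(y)]_i$ contains $\psi_\alpha'(y_{i+1}-y_i)$ and $\psi_\alpha'(y_i-y_{i-1})$, so it is controlled by the \emph{neighboring} coordinates $\abs{y_{i\pm1}}$ as well; if $y_{i+1}$ is large and $y_i=0$ the stated bound fails. In the paper this property is immediate because $\nabla F_U(x)$ lies in the convex hull of $\{\ell U_1,\dots,\ell U_d, Hx\}$. Second, your gluing of $g(U^\top x)$ to $\tfrac{H}{2}\nrm{x}^2$ on an annulus does not automatically preserve convexity; the clean fix — which the paper uses — is to put the quadratic branch inside the max \emph{before} taking the Moreau envelope, so convexity, $H$-smoothness, and exact $U$-independence of the gradient for $\nrm{x}\ge 5B$ come for free. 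You gesture at the Moreau-envelope route, so this part is repairable, but the $d^2$ versus $d^3$ issue above is structural to the chain construction and is where the proposal breaks.
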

The function $F_U$ is constructed as the Moreau envelope \citep{bauschke2011convex} of a function with the form
\begin{equation}
G_U(x) = \max_{1\leq i \leq d} \max\crl*{\frac{HB}{8d^{3/2}}\inner{U_i}{x} - c(i-1),\,-\frac{HB^2}{8d^{2}},\,\frac{H}{2}\prn*{\nrm{x}^2 - B^2}-\frac{HB^2}{8d^{2}}}
\end{equation}
for a small constant $c$. This resembles a classic construction for lower bounds for non-smooth objectives \citep{nemirovskyyudin1983}, and taking $F_U$ to be its Moreau envelope ``smoothes it out'' to also be $H$-smooth. Upon inspection, it is fairly clear that minimizing $G_U$ requires finding a point $x$ whose inner product with each column $U_i$ is substantially negative, on the order of $-B/\sqrt{d}$. The gradient of $F_U$ is related to the subgradients of $G_U$, and when $\nrm{x}^2$ is large, it is easy to see that $G_U(x) = \frac{H}{2}\prn*{\nrm{x}^2 - B^2}-\frac{HB^2}{8d^{2}}$ with $\nabla G_U(x) = Hx$ regardless of $U$. On the other hand, because of the terms $-c(i-1)$, if $\abs{\inner{U_i}{x}}$ is very small, then $i$ will not be in the $\argmax$ and therefore, $U_i$ will play no role in the subgradients of $G_U$. The proof is straightforward but technical, and we defer the details to \pref{app:generic-graph-oracle-lower-bound}. 

As discussed above, the idea of the lower bound is that any algorithm will be almost zero-respecting when optimizing $F_U$ for a uniformly random orthogonal matrix $U$ when the dimension $D$ is sufficiently large. Furthermore, the gradient of $F_U$ has the property that for approximately zero-respecting queries, each query only reveals a single new column of $U$. Consequently, the number of columns that can be learned by the algorithm is bounded by the \emph{depth} of the graph, i.e.~the length of the longest directed path in the graph. 
\begin{restatable}{theorem}{genericgraphlowerbound}\label{thm:generic-graph-lower-bound}
For any graph $\mc{G}$, let $\mc{O}_v$ be an exact gradient oracle for each $v$. For any $H$ and $B$ and any dimension $D \geq c\cdot\textrm{Depth}(\mc{G})^3\log(\abs{\mc{V}})$
\[
\epsilon(\mc{F}_0(H,B), \mc{A}(\mc{G},\crl{\mc{O}_v})) \geq \frac{HB^2}{128\textrm{Depth}(\mc{G})^2}
\]
\end{restatable}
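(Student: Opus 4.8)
The plan is to instantiate the hard function of \pref{lem:prox-construction} with a Haar-random rotation and to argue, following the ``almost zero-respecting'' principle of \pref{subsec:high-level-lower-bound-approach}, that no algorithm corresponding to $\mc{G}$ can learn enough columns of the rotation to escape the suboptimality floor of the lemma. Concretely, fix $d = 2\,\textrm{Depth}(\mc{G})$, assume $D \geq c\cdot\textrm{Depth}(\mc{G})^3\log\abs{\mc{V}}$ for a large enough constant $c$, draw $U\in\R^{D\times d}$ uniformly among matrices with $U^\top U = I_{d\times d}$, and let $F_U\in\mc{F}_0(H,B)$ be the function guaranteed by \pref{lem:prox-construction}, with $\mc{O}_v : x\mapsto\nabla F_U(x)$ at every vertex. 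Since $F_U\in\mc{F}_0(H,B)$ for every $U$,
\[
\epsilon(\mc{F}_0(H,B),\mc{A}(\mc{G},\crl{\mc{O}_v})) \;\geq\; \inf_{A\in\mc{A}(\mc{G},\crl{\mc{O}_v})}\ \E_{U,\xi}\brk*{F_U(\hat{x}) - \min_x F_U(x)},
\]
so it suffices to lower bound the right-hand side by $\frac{HB^2}{128\,\textrm{Depth}(\mc{G})^2}$, uniformly over $A$ and its random bits $\xi$.

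Set $\alpha = \frac{B}{12d^{3/2}}$, and say a query $q$ \emph{exposes} column $i$ if $\nrm{q} < 5B$ and $\abs{\inner{U_i}{q}} > \alpha$. The heart of the argument is the claim that there is an event $\mc{E}$ with $\P_{U,\xi}(\mc{E})\geq\frac12$ on which no query of $A$, nor the ``query'' implicit in its output rule, ever exposes a column of index exceeding $\textrm{Depth}(\mc{G})$; in particular column $d$ is never exposed, so either $\nrm{\hat{x}}\geq 5B$ or $\abs{\inner{U_d}{\hat{x}}}\leq\alpha$. Granting $\mc{E}$: in the first case $\nabla F_U(\hat{x}) = \tfrac H2\hat{x}$ by \pref{lem:prox-construction}, so by convexity and $H$-smoothness $F_U(\hat{x})-\min_x F_U(x)\geq\tfrac{1}{2H}\nrm{\nabla F_U(\hat{x})}^2 = \tfrac{H}{8}\nrm{\hat{x}}^2\geq\tfrac{25}{8}HB^2$; in the second case \pref{lem:prox-construction} directly gives $F_U(\hat{x})-\min_x F_U(x)\geq\frac{HB^2}{16d^2}$. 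Either way the bound is at least $\frac{HB^2}{16d^2}$, so the displayed right-hand side is at least $\tfrac12\cdot\frac{HB^2}{16d^2} = \frac{HB^2}{128\,\textrm{Depth}(\mc{G})^2}$, which is the theorem.

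To establish $\mc{E}$ I would induct over a topological order of $\mc{G}$, tracking for each vertex $v$ the frontier $j_v := \max\crl{i : \text{some query in }\{v\}\cup\textrm{Ancestors}(v)\text{ exposes column }i}$ (and $j_v=0$ if none). The chain property in \pref{lem:prox-construction} says that a query $q$ with $\abs{\inner{U_i}{q}}\leq\alpha$ for every $i>j$ produces a gradient $\nabla F_U(q)$ that is a function of $q$ and $U_1,\dots,U_j$ only, and that a query with $\nrm{q}\geq 5B$ produces a gradient independent of $U$; hence, on the inductive event that every $v'\in\textrm{Ancestors}(v)$ has $j_{v'}\leq L(v')$ (with $L(v')$ the number of vertices on the longest directed path ending at $v'$), all queries and responses in $\textrm{Ancestors}(v)$ — and therefore $q_v$ — are measurable functions of $\xi$ and of $U_1,\dots,U_{j^*}$ with $j^* := \max_{v'\in\textrm{Ancestors}(v)}j_{v'}\leq L(v)-1$. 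Conditioning on $\xi$ and $U_1,\dots,U_{j^*}$ fixes $q_v$ while leaving $U_{j^*+1},\dots,U_d$ Haar-distributed among orthonormal tuples in the $(D-j^*)$-dimensional orthogonal complement, so when $\nrm{q_v} < 5B$ spherical concentration gives $\P(\abs{\inner{U_i}{q_v}}>\alpha)\leq 2\exp\!\big(-(D-d)\alpha^2/(50B^2)\big)$ for each $i>j^*$, and when $\nrm{q_v}\geq 5B$ the query exposes nothing. A union bound over the at most $\abs{\mc{V}}+1$ queries and the at most $d$ candidate indices — this is exactly where $D\geq c\cdot\textrm{Depth}(\mc{G})^3\log\abs{\mc{V}}$ and $\alpha^2 = B^2/(144d^3)$ enter — makes the total failure probability at most $\tfrac12$; on the complement each $q_v$ exposes no column beyond $j^*+1$, so $j_v\leq j^*+1\leq L(v)\leq\textrm{Depth}(\mc{G})<d$ for every $v$, which is $\mc{E}$.

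I expect the main obstacle to be the dependency bookkeeping in the last step: rigorously justifying that, on the ``no lucky guess'' event, every object the algorithm has observed by the time it queries $v$ is a measurable function of only the already-exposed columns $U_1,\dots,U_{j^*}$ and the internal randomness $\xi$, so that the conditional uniformity of $U_{j^*+1}$ used in the concentration step is legitimate even though $j^*$ is itself random. This is precisely the delicate argument alluded to in \pref{subsec:high-level-lower-bound-approach}, and I would discharge it by invoking the streamlined ``almost zero-respecting'' machinery of \citet{yairthesis}; the two hypotheses it requires — insensitivity of the gradient to inner products below $\alpha$, and $U$-independence of the gradient at points of norm at least $5B$ (which closes the large-norm escape route) — are supplied verbatim by \pref{lem:prox-construction}.
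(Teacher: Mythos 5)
Your proposal is correct and follows essentially the same route as the paper's proof: the same hard instance $F_U$ from \pref{lem:prox-construction} under a Haar-random rotation, per-vertex good events tracking how many columns of $U$ can have been revealed (bounded by the depth of each vertex), conditioning plus spherical concentration and a union bound over vertices, and the same use of the property $\nabla F_U(x) = \tfrac{H}{2}x$ for $\nrm{x}\geq 5B$ to neutralize large-norm queries. The only differences are cosmetic — you take $d = 2\,\textrm{Depth}(\mc{G})$ rather than $\textrm{Depth}(\mc{G})+1$, you handle a large-norm output explicitly, and your restatement of the chain property is off by one column (the lemma's condition is $\abs{\inner{U_i}{x}}\leq\alpha$ for all $i\geq j$), but the slack in your choice of $d$ absorbs this and the constants still give $HB^2/(128\,\textrm{Depth}(\mc{G})^2)$.
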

\begin{proof}
Let $N = \textrm{Depth}(\mc{G})$ and for each $v \in \mc{V}$, define $\textrm{Depth}(v)$ to be the length of the longest directed path in $\mc{G}$ that ends at $v$. Let $U \in \mathbb{R}^{D\times d}$ be a uniformly random orthogonal matrix and let $F_U \in \mc{F}_0(H,B)$ be the objective described in \pref{lem:prox-construction}. Finally, consider an arbitrary algorithm in $\mc{A}(\mc{G},\nabla F_U)$. 

For $\alpha = \frac{B}{12d^{3/2}}$, we define the following ``good'' events
\begin{align}
G_v &= \crl*{\max\crl*{1\leq i\leq d:\abs{\inner{U_i}{q_v}} > \alpha} \leq \textrm{Depth}(v)} \\
\hat{G} &= \crl*{\max\crl*{1\leq i\leq d:\abs{\inner{U_i}{\hat{x}}} > \alpha} \leq N}
\end{align}
which indicates that the query $q_v$ does not have a large inner product with $U_i$ for $i$ greater than its depth, and similarly for the algorithm's output. We now proceed to lower bound $\P(\hat{G})$ conditioned on an abritrary realization of the algorithm's coins $\xi$:
\begin{align}
\P\prn*{\lnot \hat{G}\,\middle|\,\xi}
&\leq \P\prn*{\lnot \hat{G} \cup \bigcup_{v\in\mc{V}} \lnot G_v\,\middle|\,\xi} \\
&= \P\prn*{\lnot \hat{G}, \bigcap_{v\in\mc{V}} G_v\,\middle|\,\xi} + \sum_{v\in\mc{V}}\P\prn*{\lnot G_v, \bigcap_{v'\in\textrm{Ancestors}(v)} G_{v'}\,\middle|\,\xi}
\end{align}
Here, we rewrote the union as a disjoint union and applied the union bound, following the clever approach of \citet{diakonikolas2019lower}. Writing it this way is helpful for the following reason: by \pref{lem:prox-construction}, the event $G_{v'}$ implies that $\nabla F_U(q_{v'})$ is a measurable function of $q_{v'}$ and $U_1,\dots,U_{\textrm{Depth}(v')+1}$. Furthermore, for all $v'\in\textrm{Ancestors}(v)$, $\textrm{Depth}(v') + 1 \leq \textrm{Depth}(v)$. We also recall that in the graph oracle model, for each vertex $v$, the query, $q_v$, is generated according to a query rule $\mc{Q}_v$ as
\begin{equation}
q_v = \mc{Q}_v\prn*{\prn*{\prn*{q_{v'},\nabla F_U(q_{v'})}:v'\in\textrm{Ancestors}(v)},\xi}
\end{equation}
where $\xi$ are the random coins of the algorithm. Therefore, under the event $\bigcap_{v'\in\textrm{Ancestors}(v)} G_{v'}$, there there exists a function $\tilde{\mc{Q}}_v$ such that
\begin{equation}
q_v = \mc{Q}_v\prn*{\prn*{\prn*{q_{v'},\nabla F_U(q_{v'})}:v'\in\textrm{Ancestors}(v)},\xi} = \tilde{\mc{Q}}_v(U_1,\dots,U_{\textrm{Depth}(v)},\xi)
\end{equation}
Therefore, we just need to bound
\begin{align}
\P&\prn*{\lnot G_v, \bigcap_{v'\in\textrm{Ancestors}(v)} G_{v'}\,\middle|\,\xi}\nonumber\\
&= \P\prn*{\exists_{j > \textrm{Depth}(v)}\ \abs{\inner{U_j}{q_v}} > \alpha, \bigcap_{v'\in\textrm{Ancestors}(v)} G_{v'}\,\middle|\,\xi} \\
&= \P\prn*{\exists_{j > \textrm{Depth}(v)}\ \abs*{\inner{U_j}{\tilde{\mc{Q}}_v(U_1,\dots,U_{\textrm{Depth}(v)},\xi)}} > \alpha, \bigcap_{v'\in\textrm{Ancestors}(v)} G_{v'}\,\middle|\,\xi} \\
&\leq \sum_{j = \textrm{Depth}(v)+1}^N \P\prn*{\abs*{\inner{U_j}{\tilde{\mc{Q}}_v(U_1,\dots,U_{\textrm{Depth}(v)},\xi)}} > \alpha\,\middle|\,\xi}
\end{align}
Finally, we note that since $U$ is a uniformly random orthogonal matrix, $U_j$ is independent of $\xi$ and conditioned on $U_1,\dots,U_{\textrm{Depth}(v)}$, $U_j$ is uniformly distributed on the unit sphere in the $(D - \textrm{Depth}(v))$-dimensional subspace that is orthogonal to their span. Therefore, the above probability corresponds to the inner product between a fixed vector and a uniformly random unit vector being larger than $\alpha$. For now, assume that the queries $q_v$ have norm bounded by $\nrm{q_v} \leq 5B$. Then, standard results about concentration on the sphere \citep{ball1997elementary} proves that
\begin{align}
\P\prn*{\lnot G_v, \bigcap_{v'\in\textrm{Ancestors}(v)} G_{v'}\,\middle|\,\xi} 
&\leq \sum_{j=\textrm{Depth}(v)+1}^N \P\prn*{\abs*{\inner{U_j}{\tilde{\mc{Q}}_v(U_1,\dots,U_{\textrm{Depth}(v)},\xi)}} > \alpha\,\middle|\,\xi} \\
&\leq 2\abs{\mc{V}}\exp\prn*{-\frac{(D - \textrm{Depth}(v) + 1)\alpha^2}{50B^2}}
\end{align}
By the same argument,
\begin{equation}
\P\prn*{\lnot \hat{G}, \bigcap_{v\in\mc{V}} G_v\,\middle|\,\xi} \leq 2\abs{\mc{V}}\exp\prn*{-\frac{(D - N + 1)\alpha^2}{50B^2}}
\end{equation}
From this and the fact that $\xi$ was arbitrary, we conclude that if $D \geq N + \frac{50B^2}{\alpha^2}\log(8\abs{\mc{V}}^2)$ then
\begin{equation}
\P\prn*{\hat{G}}
\geq 1 - 4\abs{\mc{V}}^2\exp\prn*{-\frac{(D - N + 1)\alpha^2}{50B^2}}
\geq \frac{1}{2}
\end{equation}

Above, we assumed that the algorithm's queries have norm $\nrm{q_v} \leq 5B$. However, this assumption is without loss of generality because, by \pref{lem:prox-construction}, $F_U(q_v) = \frac{H}{2}q_v$ for any query with $\nrm{q_v} > 5B$. Therefore, for any algorithm that makes queries with norm greater than $5B$, there is another equally good algorithm that instead simply assumes that the gradient for such queries would be equal to $\frac{H}{2}q_v$. 

Finally, the event $\hat{G}$ implies that for all $j > N$, $\abs{\inner{U_j}{\hat{x}}} \leq \alpha$. We therefore take $d = N+1$ and conclude from \pref{lem:prox-construction} that under the event $\hat{G}$
\begin{equation}
F_U(\hat{x}) - \min_x F_U(x) \geq \frac{HB^2}{16(N+1)^2}
\end{equation}
This completes the proof.
\end{proof}
This lower bound is tight in the sense that it is the highest lower bound that applies for all graphs with a given depth. Momentarily, we will discuss several examples of graphs where we can identify algorithms whose guarantees match this lower bound, and we will also discuss one example where the lower bound does \emph{not} match known upper bounds, which therefore motivates further study.

It is also worth emphasizing that the lower bound \pref{thm:generic-graph-lower-bound} applies even for an exact gradient oracle, and does not rely at all upon making the problem difficult through stochasticity. Before we proceed, we also provide a simple lower bound on the ``statistical term,'' which corresponds to the information-theoretic difficulty of optimizing on the basis of only $\abs{\mc{V}}$ samples. Lower bounds very similar to this are well-known \citep[see e.g.][]{nemirovskyyudin1983}, but we include it to be self-contained:
\begin{lemma}\label{lem:statistical-term-lower-bound}
Let $H,B,\lambda,\Delta,\sigma^2 > 0$ and an arbitrary graph $\mc{G}$ be given, and let $\mc{O}_v$ be a stochastic gradient oracle with variance bounded by $\sigma^2$. Then in any dimension
\[
\epsilon(\mc{F}_0(H,B),\mc{A}(\mc{G},\crl{\mc{O}_v})) \geq c\cdot\min\crl*{\frac{\sigma B}{\sqrt{\abs{\mc{V}}}}, HB^2}
\]
and
\[
\epsilon(\mc{F}_\lambda(H,B),\mc{A}(\mc{G},\crl{\mc{O}_v})) \geq c\cdot\min\crl*{\frac{\sigma^2}{\lambda \abs{\mc{V}}}, \Delta}
\]
\end{lemma}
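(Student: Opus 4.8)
The plan is a standard two-point (Le Cam) argument. The key observation that makes the graph structure irrelevant here is that any $A\in\mc{A}(\mc{G},\crl{\mc{O}_v})$ observes exactly $N:=\abs{\mc{V}}$ oracle responses, so the dependency structure can only restrict, never help, the algorithm; it therefore suffices to exhibit two hard instances that no algorithm can distinguish from $N$ stochastic gradients. I would work in dimension $d=1$. For the convex case set $\mu=\min\crl*{H,\,c_0\sigma/(B\sqrt{N})}$ and take $F_{\pm}(x)=\tfrac{\mu}{2}(x\mp B)^2$, both of which lie in $\mc{F}_0(H,B)$ (convex, $\mu$-smooth hence $H$-smooth, minimized at $\pm B$ with $\abs{\pm B}\le B$). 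Equip every vertex with the oracle that, on query $x$, returns $\nabla F_{\pm}(x)+\zeta$ for a fresh $\zeta\sim\mathcal{N}(0,\sigma^2)$ independent of everything else; this is a valid independent-noise stochastic gradient oracle of variance exactly $\sigma^2$. The strongly convex case is identical after replacing $F_{\pm}$ by $F_{\pm}(x)=\tfrac{\lambda}{2}(x\mp r)^2$ with $r=\min\crl*{c_0\sigma/(\lambda\sqrt{N}),\,\sqrt{2\Delta/\lambda}}$ (one may assume $\lambda\le H$, else the class is empty), using the same Gaussian oracle; then $F_{\pm}(0)-\min F_{\pm}=\tfrac{\lambda}{2}r^2\le\Delta$, so $F_\pm\in\mc{F}_\lambda(H,\Delta)$.

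Next I would bound the total-variation distance between the two induced transcript distributions. Let $V\in\crl{+,-}$ be uniform and let $P_V$ be the law of $\prn*{(q_v,\mc{O}_v(q_v))_{v\in\mc{V}},\xi}$ when the objective is $F_V$. Conditioning on $\xi$ and applying the KL chain rule along a topological ordering of $\mc{G}$: each query $q_v$ is a deterministic function of the transcript restricted to $\textrm{Ancestors}(v)$ (and $\xi$), hence contributes nothing, while, conditioned on the preceding transcript, the response at $v$ is $\mathcal{N}(\nabla F_V(q_v),\sigma^2)$, and the two conditional laws are Gaussians whose means differ by the \emph{constant} vector $\nabla F_+(q_v)-\nabla F_-(q_v)=-2\mu B$ regardless of $q_v$. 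Hence $\mathrm{KL}(P_+\,\|\,P_-)=N\cdot\tfrac{(2\mu B)^2}{2\sigma^2}\le 2c_0^2$, where the inequality uses that each of the two candidate values of $\mu$ makes this quantity at most $2c_0^2$; the same computation with $\mu B$ replaced by $\lambda r$ gives the identical bound in the strongly convex case. Pinsker's inequality then yields $\mathrm{TV}(P_+,P_-)\le c_0$.

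Finally I would translate indistinguishability into a suboptimality lower bound. If the output has $\hat{x}\ge 0$ then $F_-(\hat{x})-\min F_-\ge\tfrac{\mu}{2}B^2$, and if $\hat{x}<0$ then $F_+(\hat{x})-\min F_+\ge\tfrac{\mu}{2}B^2$; averaging over $V$ and using $\abs*{\P_{P_+}(\hat{x}\ge 0)-\P_{P_-}(\hat{x}\ge 0)}\le\mathrm{TV}(P_+,P_-)\le c_0$ gives $\sup_{F\in\crl{F_+,F_-}}\E\brk*{F(\hat{x})-\min_x F(x)}\ge\tfrac{1-c_0}{4}\,\mu B^2$. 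Since $\crl{F_+,F_-}\subseteq\mc{F}_0(H,B)$ and the algorithm was arbitrary, taking e.g.~$c_0=\tfrac12$ and substituting the value of $\mu$ proves the first bound with an absolute constant; the strongly convex case is identical with $\mu B^2$ replaced by $\lambda r^2$, and substituting $r$ yields $\Omega\prn*{\min\crl*{\sigma^2/(\lambda N),\,\Delta}}$.

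The only step that needs real care is the KL computation across the graph — one must verify that adaptivity over the DAG does not inflate the divergence. This goes through precisely because $\nabla F_+-\nabla F_-$ is a constant vector, so every oracle response is equally (un)informative no matter where it is queried and the per-vertex conditional KL is history-independent; with a curved perturbation this would fail and the accounting would be more delicate. Everything else — checking membership in the function classes and the two elementary case analyses balancing $\mu$ (resp.~$r$) against the smoothness (resp.~$\Delta$) constraint — is routine.
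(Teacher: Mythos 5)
Your proposal is correct and follows essentially the same route as the paper's own proof: a one-dimensional two-point argument with quadratic instances whose gradients differ by a constant, Gaussian-noise oracles, a KL/Pinsker bound over the $\abs{\mc{V}}$ responses, and a sign test to convert indistinguishability into suboptimality. The only differences are cosmetic (you place the minimizers at $\pm B$ rather than using $\frac{a}{2}x^2\pm bx$, and you spell out the chain-rule-over-the-DAG step that the paper states more tersely), so no further changes are needed.
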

\begin{proof}
Consider the following pair of objectives:
\begin{equation}
\begin{aligned}
F_+(x) &= \frac{a}{2}x^2 + bx \\
F_-(x) &= \frac{a}{2}x^2 - bx
\end{aligned}
\end{equation}
with a stochatic gradient oracles
\begin{equation}
\begin{aligned}
z &\sim \mc{N}(0,\sigma^2) \\
g_+(x;z) &= \nabla F_+(x) + z \\
g_-(x;z) &= \nabla F_-(x) + z
\end{aligned}
\end{equation}
First, we note that for any $x$, 
\begin{equation}
F_+(x) - \min_x F_+(x) \leq \frac{b^2}{2a} \implies x \leq 0 \implies F_-(x) - \min_x F_-(x) \geq \frac{b^2}{2a}
\end{equation}
and vice versa. Therefore, any algorithm that succeeds in optimizing both $F_+$ and $F_-$ to accuracy better than $\frac{b^2}{2a}$ with probability at least $\frac{3}{4}$ needs to determine which of the two functions it is optimizing with probability at least $\frac{3}{4}$. However, by the Pinsker inequality, the total variation distance between $\abs{\mc{V}}$ queries to $g_+$ and $g_-$ is at most
\begin{align}
\nrm{\P_+ - \P_-}_{\textrm{TV}} 
&\leq \sqrt{\frac{1}{2}\textrm{D}_{\textrm{KL}}\prn*{\P_+\middle\|\P_-}} \\
&\leq \sqrt{\frac{\abs{\mc{V}}}{2}\textrm{D}_{\textrm{KL}}\prn*{\mc{N}(2b,\sigma^2)\middle\|\mc{N}(0,\sigma^2)}} \\
&= \frac{b}{\sigma}\sqrt{\abs{\mc{V}}}
\end{align}
Therefore, if $b \leq \frac{3\sigma}{4\sqrt{\abs{\mc{V}}}}$, no algorithm can optimize to accuracy better $\frac{b^2}{2a}$ with probability greater than $\frac{3}{4}$. 

Finally, we note that $F_+$ and $F_-$ are $a$-smooth, and have minimizers $\mp \frac{b}{a}$. Therefore, in the smooth convex case we take $b = \min\crl*{aB, \frac{3\sigma}{4\sqrt{\abs{\mc{V}}}}}$ and $a = \min\crl*{H,\ \frac{3\sigma}{4B\sqrt{\abs{\mc{V}}}}}$ so that the objectives are $H$-smooth and have solutions of norm $B$ and with probability at least $\frac{1}{4}$
\begin{equation}
\max_{* \in \crl{+,-}} F_*(\hat{x}) - \min_x F_*(x) 
\geq \min\crl*{\frac{aB^2}{2}, \frac{9\sigma^2}{32aMKR}} 
\geq \min\crl*{\frac{HB^2}{2}, \frac{3\sigma B}{8\sqrt{MKR}}}
\end{equation}

Similarly, $F_+$ and $F_-$ are $a$-smooth, $a$-strongly convex, and $F_*(0) - \min_x F_*(x) = \frac{b^2}{2a}$ for $*\in\crl{+,-}$. Therefore, we take $b = \min\crl*{\sqrt{2\lambda\Delta},\ \frac{3\sigma}{4\sqrt{MKR}}}$ and $a = \lambda$ so that the objectives are $\lambda$-smooth and $\lambda$-strongly convex and so that $F_*(0) - \min_x F_*(x) \leq \Delta$ for $* \in \crl{+,-}$, and so that with probability at least $\frac{1}{4}$
\begin{equation}
\max_{* \in \crl{+,-}} F_*(\hat{x}) - \min_x F_*(x) 
\geq \min\crl*{\Delta, \frac{9\sigma^2}{32\lambda MKR}} 
\end{equation}
This completes the proof.
\end{proof}

\subsubsection{Example: The Sequential Graph}\label{subsec:sequential-graph-minimax-complexity}\label{subsec:initially-introduce-ac-sa-convex-rate}

In the sequential graph, corresponding to $T$ sequential queries to the oracle, the minimax complexity of stochastic first-order optimization is already very well known and our lower bounds are redundant \citep{nemirovskyyudin1983,lan2012optimal}. Neverthless, it is the case that the pair of lower bounds \pref{thm:generic-graph-lower-bound} and \pref{lem:statistical-term-lower-bound} match the known minimax error, i.e.
\begin{equation}
\epsilon(\mc{F}_0(H,B),\mc{A}(\mc{G}_{seq},\mc{O}_g^\sigma)) = c\cdot\prn*{\frac{HB^2}{T^2} + \min\crl*{\frac{\sigma B}{\sqrt{T}},\, HB^2}}
\end{equation}
Our lower bounds are matched by the guarantee of an accelerated variant of SGD, AC-SA \citep{lan2012optimal}, which establishes this as the minimax optimal rate. Since we will be referring to AC-SA frequently, we will also take this opportunity to describe the algorithm in more detail. 
\begin{algorithm}[tb]
\caption{AC-SA}\label{alg:ac-sa}
\begin{algorithmic}
\STATE Initialize: $\xag_0 = x_0$
\FOR{$t = 0,1,\dots,T-1$}
\STATE $\xmd_{t+1} = \beta_t^{-1}x_t + (1-\beta_t^{-1})\xag_t$
\STATE $x_{t+1} = x_t - \gamma_t g(\xmd_t;z_t)$
\STATE $\xag_{t+1} = \beta_t^{-1}x_{t+1} + (1-\beta_t^{-1})\xag_t$
\ENDFOR
\STATE Return: $\xag_T$
\end{algorithmic}
\end{algorithm}

The algorithm is inspired by Nesterov's Accelerated Gradient Descent algorithm, which converges at the optimal $HB^2/T^2$ rate in the case of an exact gradient oracle \citep{nesterov1983method}. In the same way, AC-SA maintains multiple iterates, which are updated using a carefully tuned sequence of momentum parameters. When the momentum and stepsize parameters are optimally tuned, \citet{lan2012optimal} shows that for any $F_0 \in \mc{F}_0(H,B)$,
\begin{equation}
\E F_0 (\xag_T) - F_0^* \leq c\cdot\prn*{\frac{HB^2}{T^2} + \frac{\sigma B}{\sqrt{T}}}
\end{equation}
i.e.~the optimal error. We note that by the $H$-smoothness of $F$, it is always the case that $F(0) - F^* \leq \frac{HB^2}{2}$, so the final term in the lower bound can be attained for free.


\subsubsection{Example: The Layer Graph}\label{subsec:layer-graph-minimax-complexity}
In the layer graph, $\mc{G}_{\textrm{layer}}$, with a stochastic gradient oracle, corresponding to $T$ sequential batches of $M$ parallel queries to the oracle, the depth is $T$ and $\abs{\mc{V}} = TM$. Therefore, by combining \pref{thm:generic-graph-lower-bound} and \pref{lem:statistical-term-lower-bound}, we have the lower bound
\begin{equation}
\epsilon(\mc{F}_0(H,B), \mc{A}(\mc{G}_{\textrm{layer}}, \mc{O})) \geq c\cdot \prn*{\frac{HB^2}{T^2} + \min\crl*{\frac{\sigma B}{\sqrt{MT}},\,HB^2}}
\end{equation}
This lower bound is matched by the minibatch AC-SA algorithm---that is $T$ iterations of AC-SA using minibatches of size $M$, which reduces the stochastic gradient variance by a factor of $M$---so we conclude that the lower bounds are also tight in this setting.

\subsubsection{Example: The Delay Graph}\label{subsec:delay-graph-minimax-complexity}
\begin{figure}
\centering
\includegraphics[width=0.8\textwidth]{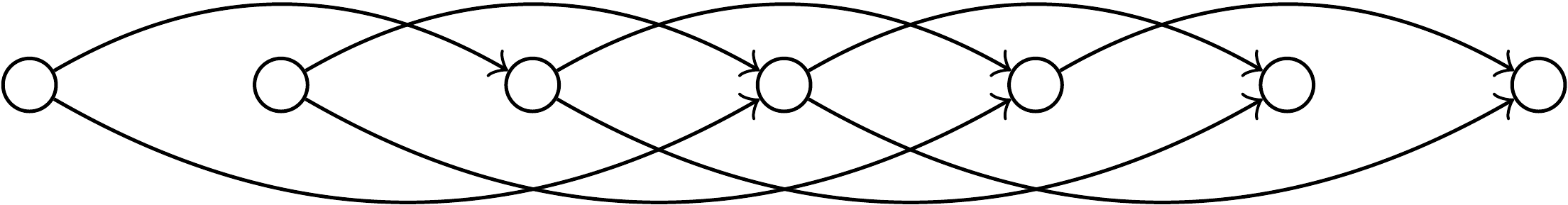}
\caption[The delay graph.]{The delay graph with $T=7$ and $\tau = 2$ \label{fig:delay-graph}}
\end{figure}
The delay graph $\mc{G}_{\textrm{delay}}$, shown in \pref{fig:delay-graph}, has two parameters: $T$, the number of vertices, and a delay $\tau$. The vertices are labelled $1,\dots,T$ and $\textrm{Ancestors}(t) = \crl{1,\dots,t-\tau}$. Therefore, the delay graph might correspond to an asynchronous setting in which the algorithm issues queries to an oracle, but does not receive a response for $\tau$ time steps. The depth of the delay graph is $T/\tau$, so the lower bound from \pref{thm:generic-graph-lower-bound} and \pref{lem:statistical-term-lower-bound} is
\begin{equation}\label{eq:delay-graph-lower-bound}
\epsilon(\mc{F}_0(H,B),\mc{A}(\mc{G}_{\textrm{delay}}, \mc{O})) \geq c \cdot\prn*{\frac{HB^2}{(T/\tau)^2} + \min\crl*{\frac{\sigma B}{\sqrt{T}},\, HB^2}}
\end{equation}
A natural algorithm for this setting is delayed-update SGD, which uses updates $x_{t+1} = x_t - \eta_{t-\tau}\mc{O}_{t-\tau}(x_{t-\tau})$. Early analysis of this algorithm only proved an optimization term scaling with the poor scaling $(T/\tau^2)^{-1}$ \citep{feyzmahdavian2016asynchronous}. In later work, \citet{arjevani2020tight} improved the optimization term to $(T/\tau)^{-1}$ in the special case of quadratic objectives, and most recently, \citet{stich2019error} showed that delayed-update SGD guarantees
\begin{equation}
\E F(\hat{x}) - \min_x F(x) \leq c\cdot\prn*{\frac{HB^2}{T/\tau} + \frac{\sigma B}{\sqrt{T}}}
\end{equation}
for any $F \in \mc{F}_0(H,B)$. However, to have any hope of matching the lower bound would require accelerating the algorithm, and we are not aware of any successful attempts to do this. 

Nevertheless, the following simple approach turns out to be optimal: first, we note that $T/2\tau$ minibatch gradients of size $\tau$ can be computed fully sequentially, and we can therefore implement minibatch AC-SA \citep{lan2012optimal} to guarantee
\begin{equation}
\E F(\hat{x}) - \min_x F(x) \leq c\cdot\prn*{\frac{HB^2}{(T/2\tau)^2} + \frac{\sigma B}{\sqrt{(T/2\tau)\tau}}}
\end{equation}
which matches the lower bound \eqref{eq:delay-graph-lower-bound}\footnote{In case $HB^2\leq \sigma B / \sqrt{T}$, we can always return 0 to match the lower bound since $F(0) - F^* \leq \frac{H}{2}\nrm{x^*}^2 \leq \frac{HB^2}{2}$.}.
We accomplish this by splitting the $T$ steps into $T/\tau$ windows of length $\tau$; during the first window, we query the stochastic gradient oracle $\tau$ times at the same point $x_0$, then during the second window we wait $\tau$ time steps until all of the $\tau$ queries are answered, we take one AC-SA update, and we repeat this. This algorithm is somewhat unnatural, since it wastes half of its allowed oracle queries waiting, and it would be nice if a prettier algorithm like an accelerated variant of delayed-update SGD could be shown to achieve this same rate. Nevertheless, this algorithm is optimal and confirms that \pref{thm:generic-graph-lower-bound} is also tight for the delay graph.

\subsubsection{A Gap: The Intermittent Communication Graph}\label{subsec:a-gap-the-intermittent-communication-graph}

The fact that \pref{thm:generic-graph-lower-bound} and \pref{lem:statistical-term-lower-bound} are tight for the sequential, layer, and delay graphs makes it tempting to hope that they might be tight for all graphs. However, it turns out to be loose for the intermittent communication graph, and much of the rest of this thesis will be dedicated to closing that gap.

We recall from \pref{subsec:intermittent-graph} that the intermittent communication graph, $\mc{G}_{I.C.}$, corresponds to a setting in which $M$ machines work in parallel over $R$ rounds of communication with $K$ oracle queries per round of communication. The number of vertices in the graph is $MKR$ and the depth is equal to $\textrm{Depth}(\mc{G}_{I.C.}) = KR$. The lower bounds \pref{thm:generic-graph-lower-bound} and \pref{lem:statistical-term-lower-bound} indicate
\begin{equation}
\epsilon(\mc{F}_0(H,B),\mc{A}(\mc{G}_{\textrm{I.C.}}, \mc{O}_g^\sigma)) \geq c \cdot\prn*{\frac{HB^2}{K^2R^2} + \min\crl*{\frac{\sigma B}{\sqrt{MKR}},\, HB^2}}
\end{equation} 
However, the depth of the graph already suggests a problem---since the lower bound \pref{thm:generic-graph-lower-bound} depends only on the depth $KR$, it does not distinguish between algorithms that communicate once and make $T$ queries per machine ($R=1$, $K=T$, and $KR=T$) and algorithms that communicate $T$ times and make one query per communication ($R=T$, $K=1$, and $KR=T$). It seems intuitively obvious that the latter family of algorithms can perform better, but the lower bound \pref{thm:generic-graph-lower-bound} fails to show this. 

Indeed, this shortcoming is not limited to our lower bound. The intermittent communication setting has been widely studied for over a decade, with many optimization algorithms proposed and analyzed \citep{zinkevich2010parallelized,cotter2011better,dekel2012optimal,zhang2013divide,zhang2013communication,shamir2014distributed}, and obtaining new methods and improved analysis is still a very active area of research \citep{stich2019error,wang2017memory,stich2018local,wang2018cooperative,khaled2020tighter,haddadpour2019local,woodworth2020local}.
However, despite these efforts, existing results cannot rule out the possibility that the optimal rate for fixed $T=KR$ can be achieved using only a single round of communication ($R=1$). Existing upper bounds do not beat the lower bound \pref{thm:generic-graph-lower-bound}, and existing lower bounds do not distinguish between $R$ and $K$. The possibility that the optimal rate is achievable with $R=1$ was originally suggested by \citet{zhang2013communication}, and indeed \citet{woodworth2020local} proved that an algorithm that communicates just once is optimal in the special case of quadratic objectives. While it seems unlikely that a single round of communication suffices in the general case, existing results cannot answer this extremely basic question. 

We will revisit this issue and establish matching upper and lower bounds which establish the minimax complexity in the intermittent communication setting in \pref{sec:intermittent-communication-setting}.

\subsection{Reductions Between Convex and Strongly Convex Optimization}\label{subsec:reduction-section-all}

Algorithms for convex optimization are typically analyzed in two distinct settings: the convex setting and the strongly convex setting. Since strongly convex functions are also convex, any algorithm that guarantees a particular rate of convergence for arbitrary convex objectives immediately guarantees that same rate when applied to a strongly convex objective. However, it is generally possible to attain a much \emph{better} convergence rate when the objective is strongly convex, and it is not clear that an algorithm designed for merely convex objectives would necessarily do this. For example, it is well known that gradient descent guarantees convergence at a rate $1/T$ for smooth and convex objectives, and it therefore guarantees the same $1/T$ rate for smooth and strongly convex objectives too. However, it is also well known that gradient descent actually converges at the much faster rate $\exp(-T)$ for strongly convex objectives. 

It is not clear if and when algorithmic guarantees in the convex setting can be parlayed into better guarantees for the strongly convex case. Indeed, it would not be surprising if convex guarantees did not, in and of themselves, imply a better strongly convex guarantee, and if guaranteeing faster rates required a direct analysis. Perhaps surprisingly, we show that this is not the case.

We present a generic ``reduction'' from strongly convex optimization to convex optimization which indicates that any algorithm for convex optimization will, with very minor modification, achieve a faster convergence when the objective has strongly convex structure. In fact, our reduction implies faster convergence for a broader class of objectives that increase sufficiently quickly away from their minima (see \pref{def:growth-condition} below). 

Our reduction shows, for example, that \emph{any} algorithm that converges as $1/T$ for convex objectives will also convergence as $\exp(-T)$ for strongly convex objectives. In other words, there is nothing particularly special about gradient descent that enables it to converge linearly in the strongly convex setting. In this way, our reduction directly connects convex guarantees to better strongly convex guarantees, without requiring any special analysis of the algorithm. This conversion appears to be optimal in the sense that algorithms with optimal convergence rates in the convex setting yield optimal strongly convex guarantees.

The reduction in the opposite direction---from convex optimization to strongly convex optimization---is a standard and well-known part of the optimization toolkit. This, combined with our result, identifies a certain equivalence between convex and strongly convex optimization that was not previously recognized. In many cases, it implies that algorithms with a certain rate of convergence for convex objectives exist \emph{if and only if} algorithms with a corresponding, faster rate exist for strongly convex objectives, and the relationship between these rates is given by the pair of reductions. 

\paragraph{Setting:}
We consider optimization problems of the form
\begin{equation}
\min_{x\in\mc{X}} F(x)
\end{equation}
in two cases, one where $F$ is convex, and the other where $F$ satisfies the following condition:
\begin{definition}\label{def:growth-condition}
For $d:\mc{X}\times\mc{X}\to\R$, we say that $F$ satisfies the $(\lambda,d)$-growth condition (hereafter abbreviated $(\lambda,d)$-GC) if there exists a mapping $x^*:\mc{X}\to\mc{X}$ such that $F(x^*(x)) = F^* = \min_x F(x)$ for all $x \in \mc{X}$, and for all $x \in \mc{X}$
\[
F(x) - F^* \geq \lambda d(x,x^*(x))
\]
\end{definition}
For the most part, we will be interested in Euclidean case where $d(x,y) = \frac{1}{2}\nrm*{x-y}^2$. In this case, it is easy to see that the $(\lambda,d)$-GC is implied by $\lambda$-strong convexity with respect to $\nrm{\cdot}$. However, the growth condition can be substantially weaker, in particular, it does not require that $F$ have a unique minimizer. Furthermore, the results are general and extend to any function $d$, including those that do not correspond to a norm such as Bregman divergences.

We consider optimization algorithms, $\mc{A}$, equipped with a convergence guarantee for convex objectives: that is, for a given class of objectives $\mc{F}$---e.g.~the set of all convex and smooth functions---the algorithm guaratees that it will find an $\epsilon$-suboptimal solution $F(\hat{x}) - F^* \leq \epsilon$ in a certain amount of time. We are deliberately vague about the precise meaning of ``time'' here; often, it correspond to the number of iterations of the algorithm, but it could also count the number of times the algorithm accesses a certain oracle, or even the wall-clock time of an implementation of the algorithm.

The amount of time that an algorithm needs to optimize a convex objective must depend in some way on the distance from the algorithm's ``initialization'' to the minimizer of the function. This can be measured in different ways, but in the convex case we focus on the standard setting where the algorithm is provided with a potentially random point $x_0$ such that $\E d(x_0,x^*(x_0)) \leq B$, and the convergence guarantee scales with $B$. When discussing convergence rates for $(\lambda,d)$-GC objectives, we suppose the algorithm is provided with a point $x_0$ for which $\E F(x_0) - F^* \leq \Delta$, and the rate will depend on $\Delta$.

For a given optimization algorithm, we therefore define two types of convergence rates, $\tm$ and $\tm_\lambda$ corresponding to the convex and $(\lambda,d)$-GC settings, respectively:
\begin{definition}\label{def:tm0-tmlambda}
Let $\mc{F}$ be a class of convex objectives, then $\tm(\epsilon,B,d,\mc{F},\mc{A})$ is the time needed by the algorithm $\mc{A}$ to find a point $\hat{x}$ with $\E F(\hat{x}) - F^* \leq \epsilon$ for any objective $F\in\mc{F}$, when provided with a point $x_0$ with $\E d(x_0,x^*(x_0)) \leq B$. Similarly, $\tm_\lambda(\epsilon,\Delta,d,\mc{F},\mc{A})$ is the time needed by the algorithm $\mc{A}$ to find a point $\hat{x}$ with $\E F(\hat{x}) - F^* \leq \epsilon$ for any $F\in\mc{F}$ that also satisfies the $(\lambda,d)$-GC, when provided with a point $x_0$ such that $\E F(x_0) - F^* \leq \Delta$.
\end{definition}

\subsubsection{The Reduction from Convex to Strongly Convex Optimization}\label{subsec:elads-reduction}

Before we present our result, it is helpful to present the well-known existing reduction from convex optimization to strongly convex optimization. Throughout this section, we focus specifically on $(\lambda,d_2)$-GC objectives where $d_2(x,y) = \frac{1}{2}\nrm{x-y}^2$ is the squared Euclidean distance. 

The simplest form of the reduction from convex to strongly convex optimization begins with an algorithm for optimizing $(\lambda,d_2)$-GC objectives and uses it to optimize convex objectives by applying it to the $(\lambda,d_2)$-GC surrogate $F_\lambda(x) = F(x) + \frac{\lambda}{2}\nrm{x}^2$. Since for $x^* \in \argmin_x F(x)$
\begin{equation}
F(x) - F(x^*) = F_\lambda(x) - F_\lambda(x^*) + \frac{\lambda}{2}\prn*{\nrm*{x^*}^2 - \nrm{x}^2} \leq F_\lambda(x) - \min_x F_\lambda(x) + \frac{\lambda}{2}\nrm*{x^*}^2
\end{equation}
when we choose $\lambda \leq \epsilon / \nrm{x^*}^2$, minimizing $F_\lambda$ to accuracy $\epsilon$ also implies minimizing $F$ to accuracy $O(\epsilon)$. This simple idea allows us to apply the algorithm for $(\lambda,d_2)$-GC objectives to convex functions. In some cases, this reduction can be suboptimal, and it is necessary to use the more sophisticated approach of \citet{zeyuan2016optimal}, which involves solving a sequence of regularized problems with an exponentially decreasing regularization parameter. For the purpose of discussion, we will briefly present their reduction:
\begin{algorithm}[tb]
\caption{$\eladreduction(\mc{A},\theta)$}\label{alg:elad-reduction}
\begin{algorithmic}
\STATE Given: $x_0$ s.t.~$\E F(x_0) - F^* \leq \Delta$
\FOR{$t = 1,2,\dots,T=\ceil{\log_\theta\frac{(1+\theta)\Delta}{\epsilon}}$}
\STATE Set $F_t(x) = F(x) + \frac{\Delta(B^2)}{2B^2}\theta^{-t}\nrm{x - x_0}^2$
\STATE Set $x_t$ to be the output of $\mc{A}$ on $F_t$ initialized with $x_{t-1}$ after \\$\qquad\tm_{\frac{\Delta(B^2)}{B^2}\theta^{1-t}}\prn*{\theta^{-t}\Delta(B^2),\theta^{2-t}\Delta(B^2),d_2,\mc{F},\mc{A}}$
\ENDFOR
\STATE Return $x_T$
\end{algorithmic}
\end{algorithm}

\begin{restatable}{theorem}{eladthm}[c.f.~Theorem 3.1 \citep{zeyuan2016optimal}]\label{thm:elad-reduction}
Let $\Delta(B^2)$ satisfy that for all $F \in \mc{F}$, $\E \nrm{x_0 - x^*}^2 \leq B^2$ implies $\E F(x_0) - F^* \leq \Delta (B^2)$. For any algorithm $\mc{A}$ and $\theta > 1$, then $\eladreduction(\mc{A},\theta)$ defined as in \pref{alg:elad-reduction} guarantees
\[
\tm\prn*{\epsilon,B^2,d_2,\mc{F},\eladreduction(\mc{A},\theta)}
\leq \sum_{t=1}^{\ceil{\log_\theta \frac{(1+\theta)\Delta(B^2)}{\epsilon}}} \tm_{\frac{\Delta(B^2)}{B^2}\theta^{1-t}}\prn*{\theta^{-t}\Delta(B^2),\theta^{2-t}\Delta(B^2),d_2,\mc{F},\mc{A}}
\]
\end{restatable}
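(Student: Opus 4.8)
The plan is to analyze \pref{alg:elad-reduction} by induction on the round index $t$, carrying two coupled in-expectation facts: (i) the warm start $x_{t-1}$ passed into round $t$ satisfies $\E[F_t(x_{t-1}) - \min F_t] \le \Delta_t'$, the very quantity supplied to the $\tm$-call; and (ii) as an immediate consequence of (i) and the definition of $\tm_{\sigma_t}$, the output $x_t$ of round $t$ satisfies $\E[F_t(x_t) - \min F_t] \le \epsilon_t$. Here $F_t(x) = F(x) + \tfrac{\sigma_t}{2}\nrm{x-x_0}^2$ is the round-$t$ surrogate, $\sigma_t$-strongly convex with $\sigma_t$ shrinking geometrically in $t$ to match the $\tm_{\sigma_t}$-call in the algorithm, $\epsilon_t = \theta^{-t}\Delta(B^2)$ is the per-round target, and $\Delta_t' = \theta^{2-t}\Delta(B^2)$ is the initialization budget. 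A $\sigma_t$-strongly convex function satisfies the $(\sigma_t,d_2)$-GC with the constant map onto its unique minimizer $x_t^\star$, so the guarantee defining $\tm_{\sigma_t}$ applies to $F_t$ verbatim; the total time of the reduction is then at most $\sum_{t=1}^T \tm_{\sigma_t}(\epsilon_t,\Delta_t',d_2,\mc{F},\mc{A})$, which is the claimed bound. It remains to verify the induction and to check that $T = \lceil\log_\theta\tfrac{(1+\theta)\Delta(B^2)}{\epsilon}\rceil$ rounds drive $\E[F(x_T) - F^*]$ below $\epsilon$.

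The base case $t=1$ is immediate: the regularizer vanishes at its own center, so $F_1(x_0) = F(x_0)$ and $\min F_1 \ge F^*$, giving $\E[F_1(x_0) - \min F_1] \le \E[F(x_0) - F^*] \le \Delta(B^2) \le \Delta_1'$. For the step, I would compare $F_{t-1}$ and $F_t$: since $\sigma_t < \sigma_{t-1}$ we have $F_t \le F_{t-1}$ pointwise, and $\min F_{t-1} - \min F_t \le F_{t-1}(x_t^\star) - F_t(x_t^\star) = \tfrac{\sigma_{t-1} - \sigma_t}{2}\nrm{x_t^\star - x_0}^2$ while $\min F_t \le \min F_{t-1}$. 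Evaluating at $x_{t-1}$ then yields $F_t(x_{t-1}) - \min F_t \le (F_{t-1}(x_{t-1}) - \min F_{t-1}) + \tfrac{\sigma_{t-1} - \sigma_t}{2}\nrm{x_t^\star - x_0}^2$; the expectation of the first term is at most $\epsilon_{t-1}$ by (ii) at round $t-1$. The one genuinely nontrivial ingredient is the uniform bound $\E\nrm{x_t^\star - x_0}^2 \le B^2$, valid for every round no matter how small $\sigma_t$ has become: this follows from optimality of $x_t^\star$ for $F_t$, namely $F_t(x_t^\star) \le F_t(x^*(x_0)) = F^* + \tfrac{\sigma_t}{2}\nrm{x^*(x_0) - x_0}^2$, combined with $F(x_t^\star) \ge F^*$, which forces $\nrm{x_t^\star - x_0}^2 \le \nrm{x^*(x_0) - x_0}^2$, and then $\E\nrm{x_0 - x^*(x_0)}^2 \le B^2$. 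Substituting the geometric values of $\sigma_{t-1} - \sigma_t$ and $\epsilon_{t-1}$ collapses the right-hand side to a $\theta$-dependent constant times $\theta^{1-t}\Delta(B^2)$, and this is at most $\Delta_t' = \theta^{2-t}\Delta(B^2)$ exactly because $\theta \ge 1$ — which is what forces the particular exponents appearing in the algorithm.

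For termination, nonnegativity of the regularizer gives $F(x_T) \le F_T(x_T)$, and the same optimality comparison gives $\E[\min F_T] \le F^* + \tfrac{\sigma_T}{2}B^2$, so $\E[F(x_T) - F^*] \le \epsilon_T + \tfrac{\sigma_T}{2}B^2$; both terms are $\Theta(\theta^{-T}\Delta(B^2))$, and the stated choice of $T$ makes the sum at most $\epsilon$, with the ``$1+\theta$'' inside the logarithm absorbing the $O(1)$ slack. The main obstacle — really the only place any work is needed — is the inductive step: obtaining the uniform distance bound $\E\nrm{x_t^\star - x_0}^2 \le B^2$ and then bookkeeping the geometric factors so that ``error introduced by shrinking the regularizer'' plus ``error left over from the previous round'' still fits inside the next round's budget $\Delta_t'$. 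A secondary point worth stating explicitly is that every suboptimality claim here must be read in expectation over both $\mc{A}$'s internal randomness and the random warm starts, and composed through Definition~\ref{def:tm0-tmlambda}, which is phrased precisely to allow a random initialization obeying an in-expectation bound.
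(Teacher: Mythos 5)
Your proposal is correct and follows essentially the same route as the paper's proof: the pivotal distance bound $\E\nrm{x_t^\star - x_0}^2 \le \E\nrm{x^*(x_0)-x_0}^2 \le B^2$ is exactly the paper's comparison $F_t(x_t^*) \le F_t(x^*)$ (its equation \eqref{eq:convex-to-sc-reduction-closeness}), and your induction coupling the warm-start budget $\theta^{2-t}\Delta(B^2)$ with the per-round target $\theta^{-t}\Delta(B^2)$, together with the final accounting $\E[F(x_T)-F^*] \le \theta^{-T}\Delta(B^2) + \tfrac{\sigma_T}{2}B^2 \le \epsilon$, mirrors the paper's argument step for step (your pointwise $F_t \le F_{t-1}$ decomposition is just a rephrasing of the paper's use of the optimality of $x_{t-1}^*$). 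No gaps.
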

\begin{proof}
Let $x^* \in \argmin_x F(x)$ and for each $t$ let $x^*_t = \argmin_x F_t(x)$ and $\lambda_t = \frac{\Delta(B^2)}{B^2}\theta^{1-t}$. First, we note that by the definition of $\tm_{\frac{\Delta(B^2)}{B^2}\theta^{1-t}}\prn*{\theta^{-t}\Delta(B^2),\theta^{2-t}\Delta(B^2),d_2,\mc{F},\mc{A}}$, if $\E F_t(x_{t-1}) - F_t(x^*_t) \leq \theta^{2-t}\Delta(B^2)$ for every $t$, then
\begin{equation}
F(x_T) - F^* \leq F_T(x_T) - F_T(x^*) + \frac{\lambda_T}{2}\nrm{x_0 - x^*}^2 \leq F_T(x_T) - F_T(x_T^*) + \frac{\lambda_T}{2} B^2 \leq \theta^{-T}\Delta(B^2) + \theta^{1-T}\Delta(B^2) \leq \epsilon
\end{equation}
We now prove that $\E F_t(x_{t-1}) - F_t(x^*_t) \leq \theta^{2-t}\Delta(B^2)$ for every $t$.
For any $x_0$,
\begin{equation}\label{eq:convex-to-sc-reduction-closeness}
0 \geq F_t(x^*_t) - F_t(x^*) = F(x^*_t) - F^* + \frac{\lambda_t}{2}\prn*{\nrm{x_0 - x^*_t}^2 - \nrm{x_0 - x^*}^2}
\end{equation}
Therefore, for each $t$, $\nrm{x_0 - x^*_t}^2 \leq \nrm{x_0 - x^*}^2$. We now bound
\begin{align}
\E&\brk*{ F_t(x_{t-1}) - F_t(x_t^*) } \nonumber\\
&= \E\brk*{ F_{t-1}(x_{t-1}) - \frac{\lambda_{t-1} - \lambda_t}{2}\nrm{x_0 - x_{t-1}}^2 - F_{t-1}(x_t^*) + \frac{\lambda_{t-1} - \lambda_t}{2}\nrm{x_0 - x^*_t}^2 } \\
&\leq \E\brk*{ F_{t-1}(x_{t-1}) - F_{t-1}(x_{t-1}^*) + \frac{\lambda_{t-1} - \lambda_t}{2}\nrm{x_0 - x^*_t}^2 } \\
&\leq \E\brk*{ F_{t-1}(x_{t-1}) - F_{t-1}(x_{t-1}^*) + \frac{\lambda_{t-1} - \lambda_t}{2}\nrm{x_0 - x^*}^2 }\label{eq:convex-to-sc-reduction-delta-bound}
\end{align}
For the first inequality, we used the optimality of $x_{t-1}^*$ and for the third inequality we used \eqref{eq:convex-to-sc-reduction-closeness}. 

As the base case, by the definition of $\Delta(B^2)$, $\E F_1(x_0) - F_1^* \leq \E F(x_0) - F^* \leq \Delta(B^2) \leq \theta\Delta(B^2)$. Therefore, by the definition of $\tm_{\frac{\Delta(B^2)}{B^2}}\prn*{\theta^{-1}\Delta(B^2),\theta^{1}\Delta(B^2),d_2,\mc{F},\mc{A}}$, we have $\E F(x_1) - F^* \leq \theta^{-1}\Delta(B^2)$. Furthermore, for any $t$, if $\E\brk*{ F_{t-1}(x_{t-1}) - F_{t-1}(x_{t-1}^*)} \leq \theta^{1-t}\Delta(B^2)$ then by \eqref{eq:convex-to-sc-reduction-delta-bound}
\begin{equation}
\E\brk*{ F_t(x_{t-1}) - F_t(x_t^*) } \leq \theta^{1-t}\Delta(B^2) + \frac{1}{2}(\theta - 1)\theta^{1-t}\Delta(B^2) \leq \theta^{2-t}\Delta(B^2)
\end{equation}
This proves the claim by induction.
\end{proof}

In this way, the algorithm for $(\lambda,d_2)$-GC objectives can be readily applied to merely convex functions. Our reduction, which we present in the next section is conceptually similar to $\eladreduction$ but it is even simpler.

\subsubsection{The Reduction from Strongly Convex to Convex Optimization}\label{subsec:the-reduction}

\begin{algorithm}[tb]
\caption{$\myreduction(\mc{A},\theta)$}\label{alg:my-reduction}
\begin{algorithmic}
\STATE Given: $x_0$ s.t.~$\E F(x_0) - F^* \leq \Delta$
\FOR{$t = 1,2,\dots,T=\ceil{\log_\theta\frac{\Delta}{\epsilon}}$}
\STATE Set $x_t$ to be the output of $\mc{A}$ initialized with $x_{t-1}$ after running for $\tm\prn*{\theta^{-t}\Delta,\theta^{1-t}\frac{\Delta}{\lambda},d,\mc{F},\mc{A}}$
\ENDFOR
\STATE Return $x_T$
\end{algorithmic}
\end{algorithm}

We start with an objective $F$ that satisfies the $(\lambda,d)$-growth condition and an algorithm $\mc{A}$ equipped with a guarantee for merely convex objectives $\tm(\epsilon, B, d, \mc{F},\mc{A})$. Our reduction, \pref{alg:my-reduction} simply applies $\mc{A}$ to the objective a logarithmic number of times, and we show that each application reduces the suboptimality by a factor of $\theta^{-1}$ for some $\theta > 1$. The key insight is just that decreasing the suboptimality for an objective satisfying the growth condition \emph{also} implies reducing the distance to the minimizer. So, 
\begin{equation}
\E\brk*{F(x) - F^*} \leq \theta^{-t} \E\brk*{F(x_0) - F^*} \implies \E d(x,x^*(x)) \leq \theta^{-t}\frac{1}{\lambda}\E\brk*{F(x_0) - F^*}
\end{equation}
Therefore, with each application of the algorithm, the distance to a minimizer is smaller, meaning that the time needed for the next call to $\mc{A}$ is correspondingly smaller. The following theorem uses this idea to generically upper bound $\tm_\lambda(\epsilon,\Delta,d,\mc{F},\myreduction(\mc{A}))$ in terms of $\tm(\epsilon,B,d,\mc{F},\mc{A})$:
\begin{restatable}{theorem}{stronglyconvextoconvexreduction}\label{thm:my-reduction}
For any algorithm $\mc{A}$ and $\theta > 1$, $\myreduction(\mc{A},\theta)$ as in \pref{alg:my-reduction} guarantees
\[
\tm_\lambda(\epsilon,\Delta,d,\mc{F},\myreduction(\mc{A},\theta))
\leq \sum_{t=1}^{\ceil{\log_\theta \frac{\Delta}{\epsilon}}} \tm\prn*{\theta^{-t}\Delta,\theta^{1-t}\frac{\Delta}{\lambda},d,\mc{F},\mc{A}}
\]
\end{restatable}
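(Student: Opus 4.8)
The plan is to prove, by induction on $t$, the single invariant
\[
\E F(x_t) - F^* \;\leq\; \theta^{-t}\Delta \qquad \text{for } t = 0,1,\dots,T,
\]
where $x_0,\dots,x_T$ are the iterates produced by $\myreduction(\mc{A},\theta)$ and $T = \ceil{\log_\theta \tfrac{\Delta}{\epsilon}}$. Granting this invariant, the theorem is immediate: at the last step $\E F(x_T) - F^* \leq \theta^{-T}\Delta \leq \theta^{-\log_\theta(\Delta/\epsilon)}\Delta = \epsilon$, so $x_T$ is the desired output, and the running time of $\myreduction(\mc{A},\theta)$ is by construction exactly $\sum_{t=1}^{T}\tm\big(\theta^{-t}\Delta,\,\theta^{1-t}\tfrac{\Delta}{\lambda},\,d,\,\mc{F},\,\mc{A}\big)$, matching the claimed bound (the edge case $\epsilon \geq \Delta$, where $T=0$ and the sum is empty, is handled trivially by returning $x_0$).

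The base case $t=0$ is just the hypothesis $\E F(x_0) - F^* \leq \Delta$. For the inductive step, assume $\E F(x_{t-1}) - F^* \leq \theta^{1-t}\Delta$. This is exactly where the growth condition does its work: since $F$ satisfies $(\lambda,d)$-GC, we have $d(x_{t-1},x^*(x_{t-1})) \leq \tfrac{1}{\lambda}\big(F(x_{t-1}) - F^*\big)$ pointwise, hence
\[
\E\, d(x_{t-1},x^*(x_{t-1})) \;\leq\; \tfrac{1}{\lambda}\big(\E F(x_{t-1}) - F^*\big) \;\leq\; \theta^{1-t}\tfrac{\Delta}{\lambda}.
\]
So $x_{t-1}$ is an admissible initialization for a run of $\mc{A}$ with distance budget $B = \theta^{1-t}\tfrac{\Delta}{\lambda}$ in the sense of \pref{def:tm0-tmlambda}. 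The $t$-th iteration of $\myreduction$ runs $\mc{A}$ from $x_{t-1}$ for time $\tm\big(\theta^{-t}\Delta,\,\theta^{1-t}\tfrac{\Delta}{\lambda},\,d,\,\mc{F},\,\mc{A}\big)$, so by the defining property of $\tm$ its output $x_t$ satisfies $\E F(x_t) - F^* \leq \theta^{-t}\Delta$, closing the induction.

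The one place I would be careful — and which I expect to be the only real (if minor) obstacle — is the treatment of randomness: the ``initialization'' $x_{t-1}$ handed to the $t$-th call of $\mc{A}$ is itself a random variable, depending on the coin flips of all earlier calls, so one must invoke $\mc{A}$'s guarantee in the form that tolerates a \emph{random} starting point with bounded \emph{expected} distance to the minimizer, and one must note that $\mc{A}$'s fresh internal randomness in round $t$ is independent of $x_{t-1}$. \pref{def:tm0-tmlambda} is already phrased this way, so the iterated-expectation bookkeeping chains cleanly; it is worth stating explicitly that the same minimizer-selection map $x^*(\cdot)$ fixed in \pref{def:growth-condition} is used throughout, so that the distance appearing in the GC and the distance appearing in $\tm$ refer to the same quantity. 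Beyond this, nothing else is needed — no smoothness, strong convexity, or oracle structure — which is what makes this reduction even lighter than $\eladreduction$.
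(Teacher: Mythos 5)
Your proposal is correct and is essentially the paper's own argument: the paper runs the same induction, merely phrasing the invariant as the distance bound $\E d(x_{t-1},x^*(x_{t-1})) \leq \theta^{1-t}\tfrac{\Delta}{\lambda}$ rather than the function-value bound, while each step uses exactly your two conversions (growth condition in one direction, the definition of $\tm$ in the other). Your remarks on random initialization and the fixed minimizer map $x^*(\cdot)$ are consistent with how \pref{def:tm0-tmlambda} is stated, so nothing further is needed.
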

\begin{proof}
By the definition of $\tm\prn*{\theta^{-t}\Delta,\theta^{1-t}\frac{\Delta}{\lambda},d,\mc{F},\mc{A}}$, if $\E d(x_{t-1}, x^*(x_{t-1})) \leq \theta^{1-t}\frac{\Delta}{\lambda}$ at each iteration, then $\E F(x_t) - F^* \leq \theta^{-t}\Delta$ for each $t$, and $\E F(x_T) - F^* \leq \theta^{-T}\Delta \leq \epsilon$. We now prove by induction that the condition $\E d(x_{t-1}, x^*(x_{t-1})) \leq \theta^{1-t}\frac{\Delta}{\lambda}$ always holds.

As the base case, the $(\lambda,d)$-GC implies that 
\begin{equation}
\lambda d(x_0 - x^*(x_0)) \leq F(x_0) - F^* \implies \E d(x_0,x^*(x_0)) \leq \frac{\Delta}{\lambda}
\end{equation}
Now, suppose that for all $t' < t$, $\E d(x_{t'},x^*(x_{t'})) \leq \theta^{-t'}\frac{\Delta}{\lambda}$. Then, by the definition of $\tm\prn*{\theta^{-t}\Delta,\theta^{1-t}\frac{\Delta}{\lambda},d,\mc{F},\mc{A}}$, we have $\E F(x_t) - F^* \leq \theta^{-t}\Delta$ so by the $(\lambda,d)$-GC
\begin{equation}
\lambda d(x_t,x^*(x_t)) \leq F(x_t) - F^* \implies \E d(x_t, x^*(x_t)) \leq \frac{\E F(x_t) - F^*}{\lambda} \leq \theta^{-t}\frac{\Delta}{\lambda}
\end{equation}
This completes the proof.
\end{proof}
This idea has been applied before in specific cases \citep[see, e.g.][]{ghadimi2013optimal}, but not with this level of generality or simplicity. To understand the utility of the theorem, it is helpful to consider some examples.

\paragraph{Example: Gradient Descent for Smooth Objectives}
Let $\mc{F}_{H}$ be the set of all $H$-smooth (w.r.t.~the Euclidean norm $\nrm{\cdot}_2$), convex objectives, and let $d_2(x,y) = \frac{1}{2}\nrm*{x - y}_2^2$. It is well known that the gradient descent algorithm, which we denote $\mc{A}_{GD}$, requires
\begin{equation}
\tm(\epsilon,B^2,d_2,\mc{F}_{H},\mc{A}_{GD}) \leq c\cdot\frac{HB^2}{\epsilon}
\end{equation}
gradients to find an $\epsilon$-suboptimal point, where $c$ is a universal constant. Applying the reduction, \pref{thm:my-reduction} implies that
\begin{align}
\tm_\lambda(\epsilon,\Delta,d_2,\mc{F}_{H},\myreduction(\mc{A}_{GD},e))
&\leq \sum_{t=1}^{\ceil{\log \frac{\Delta}{\epsilon}}} \tm\prn*{e^{-t}\Delta,e^{1-t}\frac{\Delta}{\lambda},d_2,\mc{F}_{H},\mc{A}_{GD}} \\
&\leq cH\sum_{t=1}^{\ceil{\log \frac{\Delta}{\epsilon}}} \frac{e^{1-t}\frac{\Delta}{\lambda}}{e^{-t}\Delta} \\
&= ec\cdot\frac{H}{\lambda}\left\lceil\log \frac{\Delta}{\epsilon}\right\rceil\label{eq:reduced-to-sc-gd-rate}
\end{align}
Therefore, our reduction recovers (up to constant factors) the $\kappa \log(\Delta/\epsilon)$ convergence rate of gradient descent for $\lambda$-strongly convex (or, more broadly, $(\lambda,d_2)$-GC) objectives. We emphasize that this guarantee \eqref{eq:reduced-to-sc-gd-rate} has nothing to do with gradient descent specifically---for any algorithm $\mc{A}$ with 
\begin{equation}
\tm(\epsilon,B^2,d_2,\mc{F}_{H},\mc{A}) \leq c\cdot\frac{HB^2}{\epsilon},
\end{equation}
the modified algorithm $\myreduction(\mc{A},e)$ will enjoy the same linear convergence as gradient descent, \eqref{eq:reduced-to-sc-gd-rate}.

\paragraph{Example: Accelerated SGD for Smooth Objectives}
For $\mc{F}_H$, the class of convex and $H$-smooth (w.r.t.~the Euclidean norm $\nrm{\cdot}_2$) objectives, \citet{lan2012optimal} proposed an algorithm, AC-SA which, for $d_2(x,y) = \frac{1}{2}\nrm{x-y}^2$, requires
\begin{equation}
    \tm(\epsilon,B^2,d_2,\mc{F}_H,\mc{A}_{AC-SA}) = c\cdot\prn*{\sqrt{\frac{HB^2}{\epsilon}} + \frac{\sigma^2B^2}{\epsilon^2}}
\end{equation}
stochastic gradients with variance bounded by $\sigma^2$ to find an $\epsilon$-suboptimal point, which is optimal. Our reduction says that this algorithm needs
\begin{align}
\tm_\lambda(\epsilon,\Delta,d_2,\mc{F}_H,\myreduction(\mc{A}_{AC-SA},e))
&\leq \sum_{t=1}^{\left\lceil\log\frac{\Delta}{\epsilon}\right\rceil}\tm\prn*{e^{-t}\Delta,e^{1-t}\frac{\Delta}{\lambda},d_2,\mc{F}_{H},\mc{A}_{AC-SA}} \\
&= c\cdot\prn*{\sqrt{H}\sum_{t=1}^{\left\lceil\log\frac{\Delta}{\epsilon}\right\rceil}\sqrt{\frac{e^{1-t}\frac{\Delta}{\lambda}}{e^{-t}\Delta}} + \sigma^2 \sum_{t=1}^{\left\lceil\log\frac{\Delta}{\epsilon}\right\rceil}\frac{e^{1-t}\frac{\Delta}{\lambda}}{e^{-2t}\Delta^2} }  \\
&\leq ec\cdot\prn*{\sqrt{\frac{H}{\lambda}}\left\lceil\log\frac{\Delta}{\epsilon}\right\rceil + \frac{\sigma^2}{\lambda\Delta} \sum_{t=1}^{\left\lceil\log\frac{\Delta}{\epsilon}\right\rceil}e^t} \\
&\leq ec\cdot\prn*{\sqrt{\frac{H}{\lambda}}\left\lceil\log\frac{\Delta}{\epsilon}\right\rceil + \frac{e\sigma^2}{(e-1)\lambda\Delta}\exp\prn*{\left\lceil\log\frac{\Delta}{\epsilon}\right\rceil }} \\
&\leq c'\cdot\prn*{\sqrt{\frac{H}{\lambda}}\left\lceil\log\frac{\Delta}{\epsilon}\right\rceil + \frac{\sigma^2}{\lambda\epsilon}} \label{eq:ac-sa-strongly-convex}
\end{align}
stochastic gradients with variance bounded by $\sigma^2$ for a $\lambda$-strongly convex objective, where $c'$ is a universal constant. This is, up to constant factors, the optimal rate for strongly convex objectives.

In follow-up work to \citet{lan2012optimal}, \citet{ghadimi2013optimal} describe a ``multi-stage'' variant of AC-SA which they show achieves the same rate \eqref{eq:ac-sa-strongly-convex}. This algorithm closely resembles $\myreduction(\mc{A}_{AC-SA},e)$ with some small differences, and their analysis matches at a high level the proof of \pref{thm:my-reduction}. However, their analysis is considerably more complicated, and we feel that there is significant value in our generalized approach which, by reasoning at a higher level of abstraction, results in a much simpler proof.

\subsubsection{A New Minibatch Accelerated SGD Algorithm}

In the context of machine learning, stochastic optimization algorithms are often employed in an ``overparametrized'' regime, where there exist settings of the parameters (typically many of them) that achieve zero, or near zero, loss on the population. An important observation made by \citet{cotter2011better}, is that when the loss function is itself smooth and non-negative, this introduces a useful bound on the variance of the gradient of the instantaneous losses, which decreases as the parameters approach a minimizer. Specifically, for $F(x) = \E_z f(x;z)$ with $f(\cdot;z)$ being $H$-smooth and non-negative for all $z$, we have
\begin{equation}
\E\nrm{\nabla f(x;z)}^2 \leq 2H\E[f(x;z) - \min_x f(x;z)] \leq 2H F(x)
\end{equation}
Therefore, when $\min_x F(x) \approx 0$ and $F(x) - \min_x F(x)$ is small, then the stochastic gradient variance is also small. 

At the same time, given increases in the size of datasets and the availability of parallel computing resources, many training procedures utilize minibatch stochastic gradients which can easily be calculated in parallel. Using minibatch stochastic gradients of size $b$ reduces the variance of the updates by a factor of $b$, which naturally gives faster convergence for any given number of updates, $T$. However, given runtime and sample complexity costs to computing these minibatch gradients, using larger minibatches typically necessitates making fewer updates, and it is important to understand to what extent trading off $b$ and $T$ affects the performance of an algorithm. 

Let $\mc{F}_{H,+,F^*}$ denote the set of objectives, $F$ for which $\min_x F(x) = F^*$ and $F(x) = \E_z f(x;z)$ for some $H$-smooth, non-negative, and convex $f(\cdot;z)$; and let $d_2(x,y) = \frac{1}{2}\nrm{x- y}_2^2$. In their work, \citet{cotter2011better} propose an accelerated SGD variant, AG, which uses minibatch gradients of the form $\frac{1}{b}\sum_{i=1}^b \nabla f(x;z_i)$ for i.i.d.~$z_1,\dots,z_b \sim \mc{D}$ to guarantee
\begin{equation}
\tm\prn*{\epsilon, B^2, d_2, \mc{F}_{H,+,F^*}, \mc{A}_{AG,b}} \leq c \cdot \prn*{\sqrt{\frac{HB^2}{\epsilon}} + \frac{HB^2 F^*}{b\epsilon^2} + \frac{HB^2\prn*{1 + \frac{1}{\sqrt{b}}\sqrt{\log\frac{HB^2}{\epsilon}}}}{\sqrt{b}\epsilon}}
\end{equation}
As is discussed by \citet{cotter2011better}, this bound suggests that this algorithm has two or three regimes of convergence depending on the batchsize, $b$, and the relative scale of $F^*$, $\epsilon$, $H$, and $B$ (ignoring the logarithmic factor in the third term):

The first regime corresponds to convergence after $\frac{HB^2 F^*}{b\epsilon^2}$ iterations, which happens when
\begin{equation}\label{eq:ag-b-phase-1}
b \leq \min\crl*{\frac{F^*\sqrt{HB^2}}{\epsilon^{3/2}},\, \frac{{F^*}^2}{\epsilon^2}}
\end{equation}
When $b$ is this small, the time needed to reach accuracy $\epsilon$ improves linearly with an increase the batchsize, so it would generally be advantageous to take $b$ at least as large as the upper bound \eqref{eq:ag-b-phase-1} to exploit this. Convergence in this regime can be extremely fast when $F^* \approx 0$, as is common in many machine learning applications.

The second regime requires $\frac{HB^2}{\sqrt{b}\epsilon}$ iterations when
\begin{equation}
\frac{{F^*}^2}{\epsilon^2} \leq b \leq \frac{HB^2}{\epsilon}
\end{equation}
This intermediate regime shows that there are diminishing returns to increasing the batchsize beyond a certain point. Once $b$ is moderately large, increasing $b$ results in a sublinear reduction in the number of iterations needed to reach accuracy $\epsilon$ versus a linear reduction in the first case. 

Finally, the third regime has convergence governed by the term $\sqrt{\frac{HB^2}{\epsilon}}$, which occurs once the batchsize is sufficiently large:
\begin{equation}
b \geq \max\crl*{\frac{HB^2}{\epsilon},\, \frac{F^*\sqrt{HB^2}}{\epsilon^{3/2}}}
\end{equation}
Once the batchsize has passed this critical threshold, there is nothing to be gained by increasing it further. Indeed, the rate $\sqrt{\frac{HB^2}{\epsilon}}$ is optimal even for first-order methods with access to exact gradients of the objective \citep{nemirovskyyudin1983}, so this represents a setting where the batchsize is large enough that the noise in the stochastic gradients becomes negligible.

This algorithm achieves fast convergence in the convex setting, but it is not clear how well it would perform when the objective has more favorable properties such as strong convexity. Indeed, \citeauthor{cotter2011better}'s guarantee for the algorithm required many pages of analysis, and it is definitely not trivial to directly extend their proof to the strongly convex case. Luckily, to understand how much improvement is possible, we can simply apply our reduction. By \pref{thm:my-reduction}, for $F \in \mc{F}_{H,+,F^*}$ which also satisfies the $(\lambda,d_2)$-growth condition,
\begin{align}
\tm_\lambda&(\epsilon,\Delta,d_2,\mc{F}_{H,+,F^*},\myreduction(\mc{A}_{AG,b},e)) \nonumber\\
&\leq \sum_{t=1}^{\left\lceil\log\frac{\Delta}{\epsilon}\right\rceil}\tm\prn*{e^{-t}\Delta,e^{1-t}\frac{\Delta}{\lambda},d_2,\mc{F}_{H,+,F^*},\mc{A}_{AG,b}} \\
&\leq c\sum_{t=1}^{\left\lceil\log\frac{\Delta}{\epsilon}\right\rceil} \brk*{\sqrt{\frac{He^{1-t}\frac{\Delta}{\lambda}}{e^{-t}\Delta}} + \frac{HF^* e^{1-t}\frac{\Delta}{\lambda}}{be^{-2t}\Delta^2} + \frac{He^{1-t}\frac{\Delta}{\lambda}\prn*{1 + \frac{1}{\sqrt{b}}\sqrt{\log\frac{He^{1-t}\frac{\Delta}{\lambda}}{e^{-t}\Delta}}}}{\sqrt{b}e^{-t}\Delta}} \\
&\leq ec\sum_{t=1}^{\left\lceil\log\frac{\Delta}{\epsilon}\right\rceil} \brk*{\sqrt{\frac{H}{\lambda}} + \frac{HF^* e^t}{\lambda b\Delta} + \frac{H}{\lambda\sqrt{b}}\prn*{1 + \frac{1}{\sqrt{b}}\sqrt{\log\frac{eH}{\lambda}}}} \\
&\leq ec\cdot\prn*{\prn*{\sqrt{\frac{H}{\lambda}} + \frac{H}{\lambda\sqrt{b}}\prn*{1 + \frac{1}{\sqrt{b}}\sqrt{\log\frac{eH}{\lambda}}}}\left\lceil\log\frac{\Delta}{\epsilon}\right\rceil + \frac{HF^*}{\lambda b\Delta}\frac{e\prn*{\exp\prn*{\left\lceil\log\frac{\Delta}{\epsilon}\right\rceil} - 1}}{e-1}} \\
&\leq c'\cdot\prn*{\prn*{\sqrt{\frac{H}{\lambda}} + \frac{H}{\lambda\sqrt{b}}\prn*{1 + \frac{1}{\sqrt{b}}\sqrt{\log\frac{H}{\lambda}}}}\ceil*{\log\frac{\Delta}{\epsilon}} + \frac{HF^*}{\lambda b\epsilon}}\label{eq:ag-with-growth-condition}
\end{align}
As in the other examples, under the $(\lambda,d_2)$-GC, convergence can be substantially faster and can depend only logarithmically on the accuracy parameter $\epsilon$. We also note that while the $(\lambda,d_2)$-GC is implied by $\lambda$-strong convexity with respect to the L2 norm, the growth condition can apply more broadly. This is of particular importance in the context of training overparametrized machine learning models, for which there are generally many minimizers of the objective so strong convexity will not hold. Despite the fact that the objective may be constant along some directions around its minimizers, as long as it grows sufficiently quickly along the other directions the $(\lambda,d_2)$-GC can still hold. For example, for least squares regression in $\R^d$ with $n < d$ training examples, the sample covariance matrix is rank-deficient, and thus the training loss cannot be strongly convex. Nevertheless, the training loss will satisfy the $(\lambda,d_2)$-GC with $\lambda$ equal to the smallest \emph{non-zero} eigenvalue of the sample covariance matrix, and the modified algorithm can therefore minimize the training loss very efficiently.

Examining the fast rate \eqref{eq:ag-with-growth-condition} and ignoring the $\log (H/\lambda)^{1/2}$ term, we see that as in the convex setting there are three regimes of convergence depending on the batchsize:

In the small batchsize regime, the algorithm requires $\frac{HF^*}{\lambda b\epsilon}$ iterations for
\begin{equation}
b \leq \min\crl*{\frac{\sqrt{H}F^*}{\sqrt{\lambda}\epsilon\log\frac{\Delta}{\epsilon}},\, \frac{{F^*}^2}{\epsilon^2 \log^2 \frac{\Delta}{\epsilon}}}
\end{equation}
As in the convex case, for this regime, the number of iterations needed to reach accuracy $\epsilon$ decreases linearly with an increase in the batchsize, and the rate of convergence depends very favorably on minimal value of the objective when $F^* \approx 0$. 

In the intermediate batchsize regime, the algorithm requires $\frac{H}{\lambda\sqrt{b}}\log\frac{\Delta}{\epsilon}$ iterations when
\begin{equation}
\frac{{F^*}^2}{\epsilon^2 \log^2 \frac{\Delta}{\epsilon}} \leq b \leq \frac{H}{\lambda}
\end{equation}
Again, this shows that there are diminishing returns to increasing the batchsize beyond a certain point, and the iteration complexity decreases only sublinearly in this case. Nevertheless, this still represents rapid convergence to an approximate minimizer of $F$, depending only logarithmically on the accuracy parameter.

Finally, in the large batchsize regime, the algorithm requires $\sqrt{\frac{H}{\lambda}}\log\frac{\Delta}{\epsilon}$ iterations for
\begin{equation}
b \geq \max\crl*{\frac{{F^*}^2}{\epsilon^2 \log^2 \frac{\Delta}{\epsilon}},\, \frac{H}{\lambda}}
\end{equation}
After this point, there is nothing to be gained by further increasing the batchsize and this represents the optimal rate of convergence for first-order algorithms, even when they have access to exact gradients of the objective \citep{nemirovskyyudin1983}\footnote{The lower bound applies to $\lambda$-strongly convex (w.r.t.~L2) objectives, a subclass of $(\lambda,d_2)$-GC objectives which are therefore covered by the same lower bound.}.

\paragraph{Comparison with \citet{liu2018mass}:}
In recent work, \citet{liu2018mass} proposed another stochastic first-order algorithm, MaSS, for optimizing functions in $\mc{F}_{H,+,F^*}$ in the special case $F^* = 0$, which obtains qualitatively similar guarantees. There are, however, several key differences between our modification of \citeauthor{cotter2011better}'s algorithm and MaSS. 

In the special case of least squares problems, where $f(x;z) = \frac{1}{2}(\inner{z_1}{x} - z_2)^2$, and where $F(x) = \frac{1}{N}\sum_{n=1}^N f(x;z^{(n)})$ is the training loss, MaSS requires \citep[Theorem 2][]{liu2018mass}
\begin{equation}
\prn*{\sqrt{\frac{H}{\lambda}} + \frac{H}{\lambda\sqrt{b}}}\log\frac{\Delta}{\epsilon}
\end{equation}
iterations, where $\lambda$ is the smallest non-zero eigenvalue of the Hessian $\E [z_1z_1^\top]$. This matches the guarantee \eqref{eq:ag-with-growth-condition} for the modification of \citeauthor{cotter2011better}'s algorithm, $\myreduction(\mc{A}_{AG,b},e)$. However, this result is limited to least squares problems, to the case that $F^* = 0$, and only guarantees finding an $\epsilon$-suboptimal point with respect to the \emph{training} loss, whereas the guarantee \eqref{eq:ag-with-growth-condition} applies to the population loss. 

In addition to the least squares result, \citeauthor{liu2018mass} also analyze the MaSS algorithm for a more general class of $\lambda$-strongly convex objectives that satisfy a certain smoothness property \citep[see Theorem 3][]{liu2018mass}. However, it requires $\frac{H}{\lambda}\log\frac{\Delta}{\epsilon}$ iterations to reach an $\epsilon$-suboptimal point, i.e.~an ``unaccelerated'' rate. Furthermore, it does not show any benefit from minibatching like \eqref{eq:ag-with-growth-condition} does. Finally, this result is limited to the case of (1) strongly convex objectives and (2) $F^* = 0$, which implies very strong constraints on the objective. In particular, a common source of strong convexity is training a linear model with an L2 regularization penalty $\frac{\lambda}{2}\nrm{x}^2$, in which case these conditions are only satisfied in the trivial case where $x=0$ minimizes the objective. In contrast, our guarantee \eqref{eq:ag-with-growth-condition} applies even when $F^* > 0$, and also to the broader class of $(\lambda,d_2)$-GC objectives.

\subsubsection{Optimality of the Reductions}

The combination of \pref{thm:elad-reduction} and \pref{thm:my-reduction} suggests that there is a certain equivalence between convex and strongly convex optimization. Our reduction shows that the existence of an algorithm that converges like $1/T$ for convex objectives implies the existence of an algorithm that converges like $\exp(-\lambda T)$ for $\lambda$-strongly convex ones. Conversely, \citeauthor{zeyuan2016optimal}'s reduction in the other direction says that the existence of an algorithm that converges as $\exp(-\lambda T)$ for $\lambda$-strongly convex objectives implies the existence of an algorithm with rate $1/T$ in the convex setting. Therefore, these are actually equivalent statements: a $1/T$ algorithm for convex objectives exists \emph{if and only if} an $\exp(-\lambda T)$ algorithm exists for $\lambda$-strongly convex objectives. 

Indeed, we argue that these reductions are optimal in the following sense: if we take an algorithm $\mc{A}$ for optimizing convex objectives and use \pref{alg:my-reduction} to convert it into an algorithm for strongly convex optimization, $\myreduction(\mc{A},e)$, and then we use \pref{alg:elad-reduction} to convert it \emph{back} into an algorithm for convex optimization, $\eladreduction(\myreduction(\mc{A},e),e)$ then the guarantee generally degrades by just a constant factor. 

By \pref{thm:my-reduction} and \pref{thm:elad-reduction}, we have
\begin{align}
\tm&\prn*{\epsilon,B^2,d_2,\mc{F},\eladreduction(\myreduction(\mc{A},e),e)}\nonumber\\
&\leq \sum_{t=1}^{\ceil{\log \frac{(1+e)\Delta(B^2)}{\epsilon}}} \tm_{\frac{\Delta(B^2)}{B^2}e^{1-t}}\prn*{e^{-t}\Delta(B^2),e^{2-t}\Delta(B^2),d_2,\mc{F},\myreduction(\mc{A},e)} \\
&\leq \sum_{t=1}^{\ceil{\log \frac{(1+e)\Delta(B^2)}{\epsilon}}}\sum_{s=1}^{2} \tm\prn*{e^{2-t-s}\Delta(B^2),e^{2-s}B^2,d_2,\mc{F},\mc{A}}
\end{align}
Therefore, in the typical case where $\tm\prn*{\epsilon,B^2,d,\mc{F},\mc{A}} = c\cdot B^{2m}\epsilon^{-n}$ for some $m,n > 0$, depends polynomially on $B^2$ and $\epsilon$, then
\begin{align}
\tm&\prn*{\epsilon,B^2,d_2,\mc{F},\eladreduction(\myreduction(\mc{A},e),e)}\nonumber\\
&\leq c\cdot\sum_{t=1}^{\ceil{\log \frac{(1+e)\Delta(B^2)}{\epsilon}}} \sum_{s=1}^2 \frac{e^{(2-s)m}B^{2m}}{e^{(2-t-s)n}\Delta(B^2)^n} \\
&= c\cdot\frac{(1+e^{m-n}) B^{2m}}{\Delta(B^2)^n} \frac{e^n\prn*{e^{n\ceil{\log \frac{(1+e)\Delta(B^2)}{\epsilon}}} - 1}}{e^n - 1} \\
&\leq c\cdot\frac{(e^{2n}+e^{m+n})(1+e)^n}{e^n - 1} \frac{B^{2m}}{\epsilon^m}
\end{align}
So, applying both reductions maintains the same $B$ and $\epsilon$ dependence, and loses only a ``small constant'' factor assuming that $m$ and $n$ are relatively small, as they typically are. This is evidence that we should not expect there to be a significantly better general purpose reduction in either direction.

\subsubsection{Proving Lower Bounds by Reduction}\label{subsec:lower-bounds-by-reduction}

Here, we consider the implications of these reductions to lower bounds. Specifically, \pref{thm:elad-reduction} shows that algorithms for strongly convex optimization with a guarantee $\tm_\lambda(\epsilon,\Delta)$ implies the existence of an algorithm for convex optimization with a corresponding guarantee $\tm_0(\epsilon,B^2)$. Taking the contrapositive of this statement, we conclude that a lower bound that shows $\tm_0(\epsilon,B^2) \geq T_{0}(\epsilon,B^2)$ implies a certain lower bound on $\tm_\lambda(\epsilon,\Delta)$. However, this lower bound does not apply for all values of $\lambda$, $\epsilon$, and $\Delta$. In particular, we have that for any $\theta > 0$
\begin{equation}
T_{0}(\epsilon,B^2) \leq \tm_0(\epsilon,B^2)
\leq \sum_{t=1}^{\ceil{\log_\theta \frac{(1+\theta)\Delta(B^2)}{\epsilon}}} \tm_{\frac{\Delta(B^2)}{B^2}\theta^{1-t}}\prn*{\theta^{-t}\Delta(B^2),\theta^{2-t}\Delta(B^2)}
\end{equation}
Therefore, the lower bound only really applies to strongly convex functions for parameters $\lambda$, $\epsilon$, and $\Delta$ with a very particular relationship with each other. For instance, with $\Delta = \theta^2\epsilon$, $\lambda = \theta B^2 \epsilon$, etc. Nevertheless, this does tell us that an upper bound with a particular functional form cannot exist. We now show how to apply this idea to derive a lower bound in the strongly convex setting for the graph oracle model:

\begin{restatable}{theorem}{genericgraphoraclestronglyconvex}\label{thm:generic-graph-oracle-strongly-convex}
For any graph $\mc{G}$, let $\mc{O}_v$ be an exact gradient oracle for each $v$. Then for any $H$ and any dimension $D \geq c\cdot \textrm{Depth}(\mc{G})\log(\abs{\mc{V}})$, there exists $\lambda$ and $\Delta$ and a function $F_\lambda \in \mc{F}_\lambda(H,\Delta)$ in dimension $D$ such that the output of any algorithm in $\mc{A}(\mc{G},\mc{O}_v)$ will have suboptimality at least
\[
F_\lambda(\hat{x}) - F_\lambda^* \geq c\cdot\Delta\prn*{-\frac{c'\sqrt{\lambda}\textrm{Depth}(\mc{G})}{\sqrt{H}}}
\]
\end{restatable}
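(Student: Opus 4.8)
(The right-hand side should be read as $c\cdot\Delta\exp\!\big(-c'\sqrt{\lambda}\,\textrm{Depth}(\mc{G})/\sqrt{H}\big)$.) The plan is to derive this as a consequence of the convex graph-oracle lower bound \pref{thm:generic-graph-lower-bound} by running the contrapositive of the regularization/restarting reduction, exactly as sketched in \pref{subsec:lower-bounds-by-reduction}. Fix an arbitrary algorithm $A\in\mc{A}(\mc{G},\crl{\mc{O}_v})$, write $N=\textrm{Depth}(\mc{G})$, and for a value $\lambda\le H/2$ to be chosen, consider the class of $\lambda$-strongly convex, $H$-smooth functions $G$ in dimension $D$. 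Define the worst-case single-pass ratio
\[
\rho \;:=\; \sup_{G}\ \frac{\E\brk*{G(\hat x_G)-\min_x G(x)}}{G(0)-\min_x G(x)},
\]
where $\hat x_G$ is the output of $A$ run with the exact gradient oracle of $G$; this quantity is translation- and scale-invariant, hence a genuine property of $A$ on the function class. Since the supremum is approached by functions $F_\lambda$ (with $\Delta:=F_\lambda(0)-F_\lambda^*$) on which $\E[F_\lambda(\hat x)-F_\lambda^*]\ge(\rho-\epsilon_0)\Delta$ for every $\epsilon_0>0$, it suffices to prove $\rho\ge c\exp(-c'\sqrt{\lambda/H}\,N)$ for a suitable $\lambda$ and universal constants.

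The reduction proceeds as follows. Chaining $T$ independent copies of $A$ — feeding the output of copy $i$ as the origin for copy $i+1$, which is legal since the graph-oracle model imposes no restriction on the query rules — produces an algorithm in $\mc{A}(\mc{G}^{(T)},\crl{\mc{O}_v})$, where $\mc{G}^{(T)}$ is $T$ copies of $\mc{G}$ placed in series, so $\textrm{Depth}(\mc{G}^{(T)})=TN$ and $\abs{\mc{V}(\mc{G}^{(T)})}=T\abs{\mc{V}}$; on any $\lambda$-strongly convex, $H$-smooth $G$ this chained algorithm achieves $\E[G(x_T)-G^*]\le\rho^T(G(0)-G^*)$. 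Now take any convex $(H-\lambda)$-smooth $F$ with a minimizer of norm at most $B$, set $G:=F+\tfrac{\lambda}{2}\nrm{\cdot}^2$, and note $G\in\mc{F}_\lambda(H,\tfrac{HB^2}{2})$, that $\nrm{x^*_G}\le B$ (hence $G(0)-G^*\le\tfrac{HB^2}{2}$), and that $F(x_T)-F^*\le(G(x_T)-G^*)+\tfrac{\lambda B^2}{2}$. Running the chained algorithm on $G$ — which needs only an exact gradient oracle for $F$, since it can add $\lambda x$ itself — therefore gives $\E[F(x_T)-F^*]\le\rho^T\tfrac{HB^2}{2}+\tfrac{\lambda B^2}{2}$, while \pref{thm:generic-graph-lower-bound} applied to $\mc{G}^{(T)}$ (valid once $D\ge c\,(TN)^3\log(T\abs{\mc{V}})$) produces an $F\in\mc{F}_0(H-\lambda,B)$ with $\E[F(x_T)-F^*]\ge\tfrac{(H-\lambda)B^2}{128(TN)^2}\ge\tfrac{HB^2}{256(TN)^2}$. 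Choosing $\lambda:=\tfrac{H}{256(TN)^2}$ absorbs the $\tfrac{\lambda B^2}{2}$ term into half the left-hand side and yields $\rho^T\ge\tfrac{1}{256(TN)^2}$, i.e.\ $\rho\ge\prn*{256(TN)^2}^{-1/T}$. Finally, taking $T=\Theta(\log N)$ makes $\prn*{256(TN)^2}^{-1/T}=\exp\!\big(-\tfrac{\log(256)+2\log(TN)}{T}\big)=\Omega(1)$, whereas $\sqrt{\lambda/H}\,N=\tfrac{1}{16T}=\Theta(1/\log N)$ so that $\exp(-c'\sqrt{\lambda/H}\,N)\le 1$; hence $\rho\ge c\exp(-c'\sqrt{\lambda/H}\,N)$ for universal constants, proving the claim with $\lambda=\Theta\!\big(H/(N\log N)^2\big)$.

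The delicate part of this route is the parameter bookkeeping: one must pick $\lambda$, the number of restarts $T$ (and, if one prefers to invoke the genuine telescoping of \pref{thm:elad-reduction} rather than equal-period restarts, the ratio $\theta$) so that a single clean value of $\lambda$ is isolated from the reduction and the two terms $\rho^T\tfrac{HB^2}{2}$ and $\tfrac{\lambda B^2}{2}$ are correctly balanced against the convex lower bound. A more serious caveat is the dimension: routing through \pref{thm:generic-graph-lower-bound} on $\mc{G}^{(T)}$ forces $D=\tilde\Omega(\textrm{Depth}(\mc{G})^3)\log\abs{\mc{V}}$, larger than the $c\,\textrm{Depth}(\mc{G})\log\abs{\mc{V}}$ claimed, and it only reaches $\lambda$ of order $H/\textrm{Depth}^2$ where the stated rate is merely $\Omega(1)$.

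To get the sharp dimension and the bound for the full range of $\lambda$ up to $\Theta(H)$ — where the rate is genuinely $\exp(-\Theta(\textrm{Depth}(\mc{G})))$ — the cleaner route is a direct construction rather than a reduction: add a quadratic regularizer $\tfrac{\lambda}{2}\nrm{\cdot}^2$ to the Moreau-envelope hard instance $F_U$ of \pref{lem:prox-construction}. This preserves the chain-like, low-leakage gradient structure, since $\nabla\big(F_U+\tfrac{\lambda}{2}\nrm{\cdot}^2\big)=\nabla F_U+\lambda x$ stays in $\textrm{Span}(x,U_1,\dots,U_j)$ whenever $\nabla F_U(x)$ does, so the concentration argument in the proof of \pref{thm:generic-graph-lower-bound} carries over almost verbatim to conclude that $\hat x$ has negligible inner product with $U_i$ for $i>\textrm{Depth}(\mc{G})$. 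One then replaces the convex suboptimality estimate of \pref{lem:prox-construction} with the classical strongly-convex (Chebyshev/Krylov) estimate, which shows that any point with this restricted support incurs suboptimality $\exp\!\big(-\Theta(\sqrt{\lambda/H}\,\textrm{Depth}(\mc{G}))\big)\cdot\Delta$. The main obstacle here is that, unlike the convex case, the instance must remain essentially quadratic to invoke the Chebyshev lower bound and therefore cannot be ``flattened'' as in \pref{subsec:high-level-lower-bound-approach}; one must instead control the cumulative leakage about the as-yet-unrevealed columns of $U$ purely probabilistically over the $\textrm{Depth}(\mc{G})$ rounds, which is exactly what sets the $D\ge c\,\textrm{Depth}(\mc{G})\log\abs{\mc{V}}$ requirement.
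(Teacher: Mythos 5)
Your main argument is correct in its essentials and follows the same overall strategy as the paper: a lower bound by reduction, i.e.\ the contrapositive of a strongly-convex-to-convex conversion played off against the convex lower bound \pref{thm:generic-graph-lower-bound}, exactly as advertised in \pref{subsec:lower-bounds-by-reduction}. The implementations differ, though. The paper argues by contradiction at the level of the time functions: it supposes a uniform guarantee of the form $\tm_\lambda(\epsilon,\Delta)\leq c'\sqrt{H/\lambda}\log(c\Delta/\epsilon)$, pushes it through \pref{thm:elad-reduction} (the telescoping regularization reduction $\eladreduction$, with the geometric sequence of regularizers $\lambda_t=He^{1-t}$), obtains a hypothetical convex guarantee of order $\sqrt{HB^2/\epsilon}$ on the required depth, and contradicts \pref{thm:generic-graph-lower-bound}; the witnessing pair $(\lambda,\Delta)$ is whichever point the hypothetical guarantee fails at, and is left unspecified. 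You instead build a bespoke reduction from scratch: a single fixed regularizer $\frac{\lambda}{2}\nrm{\cdot}^2$ plus $T$ restarts of the algorithm chained in series (legitimate in the graph-oracle model, with depth $TN$ and $T\abs{\mc{V}}$ vertices), quantified through the worst-case contraction ratio $\rho$, and then balance parameters explicitly, ending with a concrete $\lambda=\Theta\prn{H/(N\log N)^2}$. Your route is self-contained (no need for the telescoping machinery of \pref{thm:elad-reduction}) and identifies $\lambda$ explicitly; the price is that at that $\lambda$ the exponential factor is $\Theta(1)$, so what you really certify is a constant-factor non-improvement statement at condition number $(N\log N)^2$. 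That said, this matches what the theorem literally asserts (existence of some $\lambda,\Delta$), and the paper's own proof is no more informative about which $\lambda$ the bound holds at---a limitation the paper acknowledges immediately after its proof.

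Two further remarks. Your reading of the right-hand side as $c\,\Delta\exp\prn{-c'\sqrt{\lambda}\,\textrm{Depth}(\mc{G})/\sqrt{H}}$ is the intended one (the statement is missing an $\exp$). Your dimension caveat is fair but does not separate you from the paper: since the paper's proof also routes through \pref{thm:generic-graph-lower-bound}, it likewise needs dimension of order $\textrm{Depth}^3\log\abs{\mc{V}}$ (indeed for the deeper, post-reduction graph), so the stated requirement $D\geq c\,\textrm{Depth}(\mc{G})\log\abs{\mc{V}}$ is not actually delivered by the paper's argument either. The direct construction you sketch at the end (a $\frac{\lambda}{2}\nrm{\cdot}^2$-regularized version of the hard instance from \pref{lem:prox-construction} combined with a Chebyshev-type estimate) is not what the paper does and is not needed for this theorem; it would give a genuinely stronger result (all $\lambda$ up to $\Theta(H)$ and a sharper dimension), but, as you note, controlling the information leakage without the flattening trick is the real obstacle, and the paper does not attempt it.
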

\begin{proof}
We prove this by contradiction. Let $\mc{F}_\lambda(H,\Delta,D)$ be the subset of $\mc{F}_\lambda(H,\Delta)$ in dimension $D \geq c\cdot \textrm{Depth}(\mc{G})\log(\abs{\mc{V}})$, and suppose there were an algorithm $\mc{A}$ with guarantee
\begin{equation}\label{eq:sc-reduction-proof-contradiction}
\tm_\lambda(\epsilon,\Delta,d_2,\mc{F}_\lambda(H,\Delta,D),\mc{A}) \leq c'\sqrt{\frac{H}{\lambda}}\log\frac{c\Delta}{\epsilon}
\end{equation}
where here $\tm$ refers to the minimum graph depth $\textrm{Depth}(\mc{G})$ needed to guarantee that $\E F(\hat{x}) - F^* \leq \epsilon$. We note that if such an algorithm existed, then its guarantee would imply a guarantee in terms of $\textrm{Depth}(\mc{G})$ that would contradict the claim of the theorem, which can be seen by solving this for $\epsilon$.

Then, \pref{thm:elad-reduction} implies
\begin{align}
\tm_0&(\epsilon,B^2,d_2,\mc{F}_0(H,B),\eladreduction(\mc{A},e)) \nonumber\\
&\leq \sum_{t=1}^{\ceil*{\log\prn*{\frac{(1+e)HB^2}{\epsilon}}}} \tm_{He^{1-t}}\prn*{HB^2e^{-t},HB^2e^{2-t},d_2,\mc{F}_\lambda(H,\Delta,D),\mc{A}} \\
&\leq c'\cdot \sum_{t=1}^{\ceil*{\log\prn*{\frac{(1+e)HB^2}{\epsilon}}}} \sqrt{\frac{1}{e^{1-t}}}\log(ce^2) \\
&\leq c'(2+\log(c)) \frac{1}{\sqrt{e}} \frac{\sqrt{e}}{\sqrt{e} - 1}\prn*{e^{\frac{1}{2}\ceil{\log\prn{\frac{(1+e)HB^2}{\epsilon}}}} - 1} \\
&\leq c'(2+\log(c)) \frac{\sqrt{e^2 + e}}{\sqrt{e} - 1}\sqrt{\frac{HB^2}{\epsilon}}
\end{align}
However, for sufficiently small $c$ and $c'$, this would contradict the lower bound \pref{thm:generic-graph-lower-bound}. We conclude that there are constants $c$ and $c'$ for which no algorithm can provide the guarantee \eqref{eq:sc-reduction-proof-contradiction}.
\end{proof}

While this lower bound is limited in that it only applies for some values $\lambda$ and $\Delta$, it is nevertheless suggestive of a more broad lower bound. For essentially all of the strongly convex optimization settings we are aware of, optimal algorithms provide an upper bound that is a simple, continuous function of $H$, $\lambda$, and $\Delta$, and these upper bounds apply for any value of these parameters. Therefore, although it is certainly possible that this lower bound does not apply for some values of $\lambda$ and $\Delta$, that appears quite unlikely.

\subsubsection{The Practicality of the Reduction}

Our reduction \pref{alg:my-reduction} takes an algorithm for convex optimization and lightly modifies it to create an algorithm for strongly convex (or GC) objectives, but it is important to be clear about what this modified algorithm actually looks like. Consider the case of gradient descent with a constant stepsize $\eta$. In this case, since each of $\mc{A}_{GD}$'s updates have exactly the same form, and since the $t\mathth$ call to $\mc{A}_{GD}$ in \pref{alg:my-reduction} picks up at the final iterate of the $(t-1)\mathth$ run of $\mc{A}_{GD}$, $\myreduction(\mc{A}_{GD})$ is actually exactly the same as gradient descent with the same constant stepsize $\eta$---it's just run for a different number of iterations. 

In contrast, consider an algorithm like SGD. Importantly, in order to achieve the rate $\frac{HB^2}{\epsilon} + \frac{\sigma^2 B^2}{\epsilon^2}$ using SGD, it is necessary to use a stepsize that depends on $\epsilon$ and $B$, and it is also necessary to return an average of the SGD iterates. For this reason, the modified algorithm $\myreduction(\mc{A}_{SGD})$, when viewed as a single unit, will appear somewhat strange. It will still resemble SGD, but the stepsize schedule is separated into several distinct phases and at the end of each phase, the next iterate will be changed to an average of some of the previous iterates. This is, of course, a perfectly valid first-order optimization algorithm, but it is fairly strange, and from a practical perspective, it is quite possible that $\myreduction(\mc{A}_{SGD})$ would not perform as well as $\mc{A}_{SGD}$ itself despite its enhanced guarantee. Nevertheless, our analysis of $\myreduction(\mc{A}_{SGD})$ proves that \emph{some} algorithm with that better guarantee exists, and suggests that a ``more natural'' probably exists too. On the other hand, if a ``more natural'' algorithm \emph{did not} exist, that would also be extremely interesting! For example, the only optimal algorithm for stochastic first-order optimization in the strongly convex setting that we are aware of is either $\myreduction(\mc{A}_{\textrm{AC-SA}},e)$ or \citet{ghadimi2013optimal}'s essentially identical method. Because of the reductions, both of these methods resemble SGD with momentum, but the momentum and stepsize parameters are somewhat crazy and non-monotonic.

\section{Analysis of Local SGD}\label{sec:local-sgd}

Local SGD is a very popular and natural algorithm for the intermittent communication setting with a stochastic first-order oracle. The idea is simple: during a round of communication, each of the $M$ machines independently takes $K$ SGD steps and at the end of the round, their $M$ iterates are averaged together to form the starting point for the next round. 

Local SGD has been widely used in a variety of applications, including in the homogeneous, heterogeneous, and federated settings, and for convex and non-convex objectives. However, while Local SGD is often quite successful in these applications, its theoretical properties were not well understood. Specifically, despite dozens of papers trying to prove convergence guarantees for Local SGD, none show substantial improvement over simple baseline algorithms that we will discuss shortly. Despite its empirical success, this raises the question of whether Local SGD truly has poorer theoretical performance, or if we have just not figured out how to analyze it properly.

We will partially resolve these theoretical questions about Local SGD for smooth and convex/strongly convex objectives in the homogeneous and heterogeneous settings. In \pref{subsec:local-sgd-and-baselines}, we will define Local SGD along with several baseline algorithms which will serve as a useful point of reference in evaluating guarantees for Local SGD. In \pref{subsec:local-sgd-homogeneous}, we analyze Local SGD in the homogeneous setting, where each machine relies on data drawn from the same distribution. For homogeneous objectives, we provide new upper and lower bounds on the accuracy of Local SGD and show how these are the first to significantly improve over the baselines in a certain regime. In \pref{subsec:local-sgd-heterogeneous}, we move on to the heterogeneous setting, where each machine has access to data drawn from different distributions. Here, we also provide new and better upper and lower bounds on the accuracy of Local SGD and we compare with the baseline algorithms.

\subsection{Local, Minibatch, and Single-Machine SGD}\label{subsec:local-sgd-and-baselines}

We will now define the Local SGD algorithm along with two natural baselines and one unnatural baseline. Each of these algorithms belongs to the class $\mc{A}(\mc{G}_{\textrm{I.C.}},\crl{\mc{O}_v})$ as defined in \pref{sec:formulating-the-complexity}. We recall that the intermittent communication graph captures a setting in which $M$ parallel machines collaborate to optimize the objective over the course of $R$ rounds of communication, and in each round of communication, each machine is allowed $K$ queries to a stochastic gradient oracle. 

In the homogeneous setting, the stochastic gradient oracle associated with each vertex is the same---$\mc{O}_v = \mc{O}_g^\sigma$ for all $v$---meaning that each oracle access on each machine provides an unbiased estimate of $\nabla F$ with variance bounded by $\sigma^2$. Conversely, in the heterogeneous setting, the stochastic gradient oracles are different depending on which machine is making a query. Specifically, the objective has the form $F(x) = \frac{1}{M}\sum_{m=1}^M F_m(x)$, and each of the vertices corresponding to the $m\mathth$ machine is associated with an oracle $\mc{O}_{g,m}^\sigma$ that provides an unbiased estimate of $\nabla F_m$ with variance less than $\sigma^2$. Throughout this section, we will use $g(\cdot;z^m_{k,r})$ to denote the stochastic gradient oracle for the $k\mathth$ query on the $m\mathth$ machine during the $r\mathth$ round of communication. 

We ask what is the best guarantee that can be provided for Local SGD in the intermittent communication setting, for a particular and fixed $M$, $K$, and $R$. In order to make a fair comparison, it is therefore important that the baselines algorithms also belong to the same class of algorithms $\mc{A}(\mc{G}_{\textrm{I.C.}},\crl{\mc{O}_v})$. After all, it would not be fair to compare Local SGD against an algorithm which is allowed to use $2M$ parallel workers, or one that is only allowed to communicate $R/2$ times.

\textbf{Local SGD:} 
We first define the Local SGD updates in \pref{alg:local-sgd}. At any given time, Local SGD maintains $M$ iterates---one for each machine---and we use $x^m_{k,r}$ to denote the $k\mathth$ iterate during the $r\mathth$ round of communication on machine $m$. Between communications, each  machine simply updates its iterate according to a standard stochastic gradient step. At the end of each round of communication, the local iterates are averaged and form the starting point for the next round.
\begin{algorithm}[tb]
\caption{Local SGD}\label{alg:local-sgd}
\begin{algorithmic}
\STATE For each $m$: $x^m_{0,0} = x_0$
\FOR{$r = 0,1,\dots,R-1$}
\FOR{$k = 0,1,\dots,K-1$}
\STATE For each $m$ in parallel: $x^m_{k+1,r} = x^m_{k,r} - \eta_{k,r} g(x^m_{k,r};z^m_{k,r})$
\ENDFOR
\STATE Communicate and for each $m$: $x^m_{0,r+1} = \frac{1}{M}\sum_{m=1}^M x^m_{K,r}$
\ENDFOR
\STATE Return: $\hat{x} = \frac{1}{M\sum_{k=1}^K\sum_{r=1}^R w_{k,r}} \sum_{m=1}^M\sum_{k=1}^K\sum_{r=1}^R w_{k,r} x^m_{k,r}$
\end{algorithmic}
\end{algorithm}

\textbf{Minibatch SGD:}
The first baseline algorithm is Minibatch SGD, meaning $R$ steps of SGD using minibatches of size $M\cdot K$. While ``Minibatch SGD'' could refer to many different algorithms (i.e.~many different combinations of a number of steps and a minibatch size), we focus on one specific version that belongs to the family $\mc{A}(\mc{G}_{\textrm{I.C.}},\crl{\mc{O}_v})$. In particular, \pref{alg:minibatch-sgd} can be implemented in the intermittent communication setting by having each of the $M$ machines compute $K$ stochastic gradients, all at the same point. At the end of the round of communication, all of these stochastic gradients can be combined into a single large minibatch of size $M \cdot K$ and a single SGD step can be taken, meaning that $R$ SGD steps can be taken in total. 
\begin{algorithm}[tb]
\caption{Minibatch SGD}\label{alg:minibatch-sgd}
\begin{algorithmic}
\STATE Initialize: $x_0$
\FOR{$r = 0,1,\dots,R-1$}
\FOR{$k = 0,1,\dots,K-1$}
\STATE For each $m$ in parallel: compute $g(x_r;z^m_{k,r})$
\ENDFOR
\STATE $\bar{g}_r = \frac{1}{MK}\sum_{m=1}^M\sum_{k=0}^{K-1} g(x_r;z^m_{k,r})$
\STATE $x_{r+1} = x_r - \eta_r \bar{g}_r$
\ENDFOR
\STATE Return: $\hat{x} = \frac{1}{\sum_{r=1}^R w_{r}} \sum_{r=1}^R w_{r} x_{r}$
\end{algorithmic}
\end{algorithm}

This is a very simple algorithm, and it seems intuitive that Minibatch SGD would be generally be \emph{worse} than Local SGD. After all, Minibatch SGD only takes $R$ SGD steps in total, $K$ times fewer than Local SGD takes. Furthermore, Minibatch SGD only computes stochastic gradients in $R$ locations versus Local SGD which computes stochastic gradients in $\sim MKR$ different locations so, in some sense, it seems that Local SGD should obtain more diverse and informative gradients of the objective.

\textbf{Single-Machine SGD:}
The second baseline algorithm is ``Single-Machine SGD,'' which is just $KR$ steps of SGD using minibatches of size $1$. \pref{alg:single-machine-sgd} is easily implemented in the intermittent communication setting by simply running SGD on one machine, and completely ignoring the remaining $M-1$ machines. 
\begin{algorithm}[tb]
\caption{Single-Machine SGD}\label{alg:single-machine-sgd}
\begin{algorithmic}
\STATE Initialize: $x^1_{0,0} = x_0$
\FOR{$r = 0,1,\dots,R-1$}
\FOR{$k = 0,1,\dots,K-1$}
\STATE On machine $1$: $x^1_{k+1,r} = x^1_{k,r} - \eta_{k,r} g(x^1_{k,r};z_{k,r})$
\STATE On machines $2,\dots,M$: do nothing.
\ENDFOR
\STATE $x^1_{0,r+1} = x^1_{K,r}$
\ENDFOR
\STATE Return: $\hat{x} = \frac{1}{\sum_{k=1}^K\sum_{r=1}^R w_{k,r}} \sum_{k=1}^K\sum_{r=1}^R w_{k,r} x_{k,r}$
\end{algorithmic}
\end{algorithm}

This algorithm seems a bit silly since it does not leverage the available parallelism at all, and for this reason, it seems clearly worse than Local SGD, which does in fact use all $M$ of the machines. Indeed, this intuition is correct and we will see later that Local SGD is never worse than Single-Machine SGD. Nevertheless, many previous Local SGD analyses appear worse than the Single-Machine SGD guarantee so this is an important baseline to keep in mind. 

Furthermore, Minibatch and Single-Machine SGD are useful points of reference because they constitute opposite poles of a spectrum. On the one hand, Minibatch SGD fully exploits the available parallelism but it hardly takes advantage of the availability of local computation on each of the machines between communications. On the other hand, Single-Machine SGD in some sense fully exploits the local computation but does not take advantage of the parallelism. The idea of Local SGD is to find a happy medium between these extremes---exploiting both the local computation and the parallelism at the same time in order to achieve better performance.

\textbf{Thumb-Twiddling SGD:}
The final baseline algorithm is something of a strawman, but one that is nevertheless useful as a point of comparison, and it corresponds simply to $R$ steps of SGD using minibatches of size $M$. \pref{alg:thumb-twiddling-sgd} belongs to the family of intermittent communication algorithms since it can be implemented by each machine computing just a single stochastic gradient during each round of communication and sitting and ``twiddling its thumbs'' rather than computing any of the other $K-1$ stochastic gradients.
\begin{algorithm}[tb]
\caption{Thumb-Twiddling SGD}\label{alg:thumb-twiddling-sgd}
\begin{algorithmic}
\STATE Initialize: $x_0$
\FOR{$r = 0,1,\dots,R-1$}
\FOR{$k = 0,1,\dots,K-1$}
\IF{$k = 0$}
\STATE For each $m$ in parallel: compute $g(x_r;z^m_{0,r})$
\ELSE
\STATE Twiddle thumbs.
\ENDIF
\ENDFOR
\STATE $\bar{g}_r = \frac{1}{M}\sum_{m=1}^M g(x_r;z^m_{0,r})$
\STATE $x_{r+1} = x_r - \eta_r \bar{g}_r$
\ENDFOR
\STATE Return: $\hat{x} = \frac{1}{\sum_{r=1}^R w_{r}} \sum_{r=1}^R w_{k,r} x_{r}$
\end{algorithmic}
\end{algorithm}

If Single-Machine SGD is a bit silly, this algorithm is completely ludicrous and it is obviously strictly worse than Minibatch SGD, which is the same algorithm except with bigger minibatches (and therefore lower-variance stochastic gradient steps). So, there is no reason to use Thumb-Twiddling SGD in the intermittent communication setting but it still serves as a useful---and surprisingly strong---baseline to compare against Local SGD. 

\paragraph{Accelerated Methods:}
It is worth pointing out that none of these methods are actually minimax optimal, and obtaining optimal algorithms requires acceleration. There are accelerated variants of Minibatch and Single-Machine SGD \citep{lan2012optimal,ghadimi2013optimal} that achieve faster rates of convergence than the ``unaccelerated'' methods we disscussed here, and \citet{yuan2020federated} recently proposed an accelerated variant of Local SGD and analyzed it in the homogeneous setting. However, our goal here is to understand the theoretical properties of the popular and natural algorithm Local SGD, and to compare this to other reasonable baseline algorithms. As we will show in \pref{sec:intermittent-communication-setting}, although accelerated Local SGD may be better than unaccelerated Local SGD, it cannot beat an analogous set of accelerated baselines, which are optimal.

\subsubsection{An Alternative Viewpoint: Reducing Communication}

Here, and in much of this thesis, we consider the intermittent graph to be fixed---that is, we think of $M$, $K$, and $R$ as parameters that are beyond our control, and we ask how well we can do with whatever we are given. However, this viewpoint is dual to another, which asks how we should set these parameters $M$, $K$, and $R$ in order to achieve a given level of accuracy. We are particularly interested in how small we can set $R$---i.e.~how little we can communicate---without ``paying for it'' by suffering worse error, since communication is typically expensive.

In this view, we can consider as a baseline the class of algorithms that process $T$ stochastic gradients on each machine, but can communicate at every step, i.e.~$T$ times (this corresponds to the layer graph \pref{fig:layer-graph}). In this setting, we can implement $T$ steps of SGD using minibatches of $M$ samples, and we know that this algorithm is essentially optimal up to acceleration (see \pref{subsec:layer-graph-minimax-complexity}). 

It then makes sense to ask whether we can achieve the same performance as this baseline while communicating less frequently?  Communicating only $R$ times instead of $T$ while maintaining the total number of gradients computed per machine brings us right back to the intermittent communication setting we are considering, and all the methods discussed above (Minibatch SGD, Local SGD, etc) reduce the communication.  Checking whether these algorithms' guarantees for $R<T$ rounds matches the same accuracy as the dense communication baseline is a starting point, but the question is how small can we push $R$ (while keeping $T=KR$ fixed) before accuracy degrades. Better error guarantees (in terms of $K,M$ and $R$) mean we can use a smaller $R$ with less degradation, and the smallest $R$ with no asymptotic degradation can be directly calculated from the error guarantee \citep[see, e.g., discussion in][]{cotter2011better}.

\subsection{Local SGD in the Homogeneous Setting}\label{subsec:local-sgd-homogeneous}

Let us now consider the theoretical performance of Local SGD in the homogeneous setting, that is, when each machine in each round of communication computes stochastic gradients from $\mc{O}_g^\sigma$ which give unbiased estimates of $\nabla F$. Over the years, dozens of papers have attempted to prove convergence guarantees for Local SGD but all of them fail to show improvement over the simple baselines of Minibatch and Single-Machine SGD. In fact, they often fail to show an advantage over Thumb-Twiddling SGD!

This really raises the question of whether the unfavorable comparison between Local SGD's guarantees versus the baselines' is an artifact of our analysis techniques, or if Local SGD really is worse. After surveying some of the existing work, we will now show that the answer depends on the details of the setting. First, we show that Local SGD performs very well in the special case of quadratic objectives, and dominates the baselines. In the case of general convex and smooth objectives, we show a new upper bound which is always at least as good as Single-Machine SGD and \emph{sometimes} improves over Minibatch SGD. Finally, we show a lower bound which indicates that Local SGD is indeed sometimes strictly worse than Minibatch SGD, and even than Thumb-Twiddling SGD.

\subsubsection{Prior Analysis of Local SGD in the Homogeneous Setting}\label{subsec:local-sgd-homogeneous-prior-analysis}

\begin{table}[tb]
\renewcommand{\arraystretch}{1.2}%
\centering
\begin{tabular}{ p{.51\linewidth} p{.3\linewidth}  } 
\toprule  
\textbf{Algorithm} & \textbf{Suboptimality Bound} \\
\midrule
Minibatch SGD & 
$\frac{HB^2}{R} + \frac{\sigma B}{\sqrt{MKR}}$ \\
\cmidrule{2-2}   
Single-Machine SGD & 
$\frac{HB^2}{KR} + \frac{\sigma B}{\sqrt{KR}}$ \\
\cmidrule{2-2}   
Thumb-twiddling SGD & 
$\frac{HB^2}{R} + \frac{\sigma B}{\sqrt{MR}}$ \\
\midrule 
Local SGD: \citet{stich2018local} &
$\frac{HB^2}{R^{2/3}} + \frac{HB^2}{(KR)^{3/5}} + \frac{\sigma B}{\sqrt{MKR}}$ \\
\cmidrule{2-2}  
Local SGD: \citet{stich2019error} & 
$\frac{HB^2 M}{R} + \frac{\sigma B}{\sqrt{MKR}}$ \\
\midrule
Local SGD: \pref{thm:local-sgd-homogeneous-upper-bound} & 
$\frac{HB^2}{KR} + \frac{\sigma B}{\sqrt{MKR}} + \frac{(H\sigma^2B^4)^{1/3}}{K^{1/3}R^{2/3}}$\\
\cmidrule{2-2} 
Local SGD Lower Bound: \pref{thm:local-sgd-homogeneous-lower-bound} & 
$\frac{HB^2}{KR} + \frac{\sigma B}{\sqrt{MKR}} + \frac{(H\sigma^2B^4)^{1/3}}{K^{2/3}R^{2/3}}$\\
\bottomrule
\end{tabular}
\caption[Upper bounds for homogeneous, convex, intermittent communication algorithms.]{A comparison of upper bounds on the suboptimality for the function class $\mc{F}_0(H,B)$, both for the baseline algorithms and Local SGD. Also included is our lower bound on the suboptimality of Local SGD. \label{tab:existing-local-sgd-analysis}}
\end{table}

There is a long history of analyses of Local SGD, going back almost 30 years to \citet{mangasarian1994backpropagation}, who first proposed the algorithm and showed asymptotic convergence to stationary points of arbitrary continuously differentiable objectives. Since then, numerous papers have analyzed Local SGD in a number of settings \citep{khaled2020tighter, stich2018local, stich2019error, haddadpour2019local,wang2018cooperative, dieuleveut2019communication, Zhou2018:Kaveraging, yu2019parallel, wang2017memory, haddadpour2019trading,zinkevich2010parallelized, zhang2012communication, li2014efficient, rosenblatt2016optimality, godichon2017rates, jain2017parallelizing}. The best existing guarantees for Local SGD specialized to the convex, smooth, and homogeneous setting, due to \citet{stich2018local} and \citet{stich2019error}, are included in \pref{tab:existing-local-sgd-analysis}.

It is worth emphasizing that the very best of the existing analysis for Local SGD are actually always strictly worse than Minibatch SGD. The guarantees of \citet{stich2018local} and \citet{stich2019error} do not necessarily even improve over Thumb-Twiddling SGD due to the $R^{-2/3}$ and $MR^{-1}$ dependence in their first terms, versus the $R^{-1}$ rate for Minibatch SGD and Thumb-Twiddling SGD. Finally, while they can sometimes improve over Single-Machine SGD, they can only do so when Single-Machine SGD is already worse than Minibatch SGD. 

While we focus on the function classes $\mc{F}_0(H,B)$ and $\mc{F}_\lambda(H,\Delta)$, there are many other analyses for Local SGD under different sets of assumptions, or with a more detailed dependence on the noise in the stochastic gradients. For instance, \citet{stich2019error,haddadpour2019local} analyze local SGD assuming two notions of not-quite-convexity; and \citet{wang2018cooperative,dieuleveut2019communication} derive guarantees under both multiplicative and additive bounds on the noise.   \citet{dieuleveut2019communication} analyze local SGD with the additional assumption of a bounded third derivative, but even with this assumption do not improve over Minibatch SGD. Numerous works study Local SGD in the non-convex setting \citep{Zhou2018:Kaveraging, yu2019parallel, wang2017memory, stich2019error, haddadpour2019trading}, and although their bounds would apply in our convex setting, they are understandably worse than Minibatch SGD due to the much weaker assumptions.  There is also a large body of work studying the special case $R=1$, i.e.~where the iterates are averaged just one time at the end \citep{zinkevich2010parallelized, zhang2012communication, li2014efficient, rosenblatt2016optimality, godichon2017rates, jain2017parallelizing}. However, these analyses do not easily extend to multiple rounds, and the $R=1$ constraint may harm performance  \citep{shamir2014communication}. Finally, there are many papers analyzing Local SGD in the heterogeneous setting, which we will discuss in \pref{subsec:local-sgd-heterogeneous}.

\subsubsection{Local SGD Analysis for Quadratic Homogeneous Objectives}\label{subsec:local-sgd-quadratic-upper-bound}

To better understand the theoretical performance of Local SGD, we begin by studying the special case of quadratic objectives, e.g.~least squares problems. Here, it turns out that Local SGD is always at least as good as Minibatch SGD, and can be much better. 
More generally, we show that a Local SGD analogue for a large family of serial first-order optimization algorithms enjoys an error guarantee which depends only on the product $KR$ and not on $K$ or $R$ individually. Therefore, such algorithms essentially perfectly parallelize, and a single round of communication allows for equally good performance as many would.

We consider the following family of ``linear update algorithms'':
\begin{definition}\label{def:linear-update-algorithm}
We say that a sequential, stochastic first-order optimization algorithm is a linear update algorithm if, for fixed linear functions $\mc{L}^{(t)}_1,\mc{L}^{(t)}_2$, the algorithm generates its $(t+1)\mathth$ iterate according to 
\[
x_{t+1} = \mc{L}^{(t)}_2\prn*{x_1,\dots,x_t,g\prn*{\mc{L}^{(t)}_1\prn*{x_1,\dots,x_t};z_t}}
\]
\end{definition}
This family captures many standard first-order methods including SGD, which corresponds to the linear mappings $\mc{L}^{(t)}_1\prn*{x_1,\dots,x_t} = x_t$ and $\mc{L}^{(t)}_2\prn*{x_1,\dots,x_t,g_t} = x_t - \eta_t g_t$. Another notable algorithm in this class is AC-SA \citep{lan2012optimal} (see \pref{alg:ac-sa}), an accelerated variant of SGD which also has linear updates. Some important non-examples, however, are adaptive gradient methods like AdaGrad \citep{mcmahan2010adaptive,duchi2011adaptive}---these methods use stepsizes that depend on previous gradients which leads to non-linear updates.

For a linear update algorithm $\mc{A}$, we will use Local-$\mc{A}$ to denote the Local SGD analogue with $\mc{A}$ replacing SGD. 
That is, during each round of communication, each machine independently executes $K$ iterations of $\mc{A}$ and then the $M$ resulting iterates are averaged. For quadratic objectives, we show that this approach inherits the guarantee of $\mc{A}$ with the additional benefit of variance reduction:
\begin{restatable}{theorem}{localsgdquadratics}\label{thm:linear-update-alg-quadratics}
Let $\mc{A}$ be a linear update algorithm which, when executed for $T$ iterations on any quadratic $F \in \mc{F}_0(H,B)$, guarantees $\E F(x_T) - F^* \leq \epsilon_0(T, \sigma^2)$ and for any quadratic $F \in \mc{F}_\lambda(H,\Delta)$, guarantees $\E F(x_T) - F^* \leq \epsilon_\lambda(T, \sigma^2)$. Then, Local-$\mc{A}$'s averaged final iterate $\bar{x}_{KR} = \frac{1}{M}\sum_{m=1}^M x_{KR}^m$ will satisfy $\E F(\bar{x}_{KR}) - F^* \leq \epsilon_0\prn{KR, \frac{\sigma^2}{M}}$ and $\E F(\bar{x}_{KR}) - F^* \leq \epsilon_\lambda\prn{KR, \frac{\sigma^2}{M}}$ in the convex and strongly convex cases, respectively.
\end{restatable}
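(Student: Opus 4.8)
The idea is that on a quadratic objective the gradient map is affine, so a linear update algorithm produces iterates that are affine functions of the oracle noise; averaging over machines then commutes with the whole trajectory and merely replaces the per-step noise by its machine-average, whose variance is $M$ times smaller. Concretely, write the quadratic $F$ as $F(x) - F^* = \tfrac12 (x - x^*)^\top Q (x - x^*)$, where $Q := \nabla^2 F$, $x^*$ is a minimizer, $0 \preceq Q \preceq HI$ by $H$-smoothness, and $Q \succeq \lambda I$ in the strongly convex case; thus $\nabla F(x) = Q(x-x^*)$, and a stochastic gradient at $x$ has the form $Q(x-x^*) + \zeta$ with $\E[\zeta\mid\text{past}]=0$ and $\E\norm{\zeta}^2 \le \sigma^2$. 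Since every update of a linear update algorithm is affine in the past iterates and in the (affine-in-query) stochastic gradient, an easy induction gives $x_t = a_t + \sum_{s<t} B_{t,s}\zeta_s$, where $a_t$ is the iterate produced with \emph{exact} gradients and the matrices $B_{t,s}$ depend only on $Q$ and the algorithm's fixed linear coefficients, not on the realized noise. \pref{def:linear-update-algorithm} is phrased so that all internal state is a function of the iterate history (e.g.\ the auxiliary sequences of AC-SA unroll into the maps $\mc{L}^{(t)}_1,\mc{L}^{(t)}_2$), so there is no hidden state to track.

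Next I would show that averaging commutes with the dynamics. Run Local-$\mc{A}$, set $\bar{x}_{k,r} = \tfrac1M\sum_m x^m_{k,r}$, and index the $KR$ local steps globally. Within a round all machines start from the common point $\bar{x}_{0,r}$ and apply the same affine update rules, so averaging the per-machine recursions over $m$ --- using affineness of $\mc{L}_1^{(t)}, \mc{L}_2^{(t)}$ and of $x\mapsto Q(x-x^*)$ --- shows that $\bar{x}$ satisfies exactly the recursion of $\mc{A}$ run for $KR$ steps on $F$, driven by the averaged noise $\bar\zeta_t := \tfrac1M\sum_m \zeta^m_t$; and at a communication step $\bar{x}_{0,r+1} = \tfrac1M\sum_m x^m_{K,r} = \bar{x}_{K,r}$, so the recursion simply continues unchanged. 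Because the per-machine noises $\zeta^1_t,\dots,\zeta^M_t$ are conditionally mean-zero and pairwise conditionally uncorrelated given the past, $\E[\bar\zeta_t\mid\text{past}]=0$ and $\E\norm{\bar\zeta_t}^2 \le \sigma^2/M$. Moreover, with exact gradients all machines coincide, so the noiseless iterate of this averaged run is the \emph{same} $a_{KR}$, built from the \emph{same} matrices $B_{KR,s}$, as a single $KR$-step run of $\mc{A}$.

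Finally, plugging $x_T = a_T + \sum_{s<T} B_{T,s}\zeta_s$ into $F(x_T)-F^* = \tfrac12(x_T-x^*)^\top Q(x_T-x^*)$ and taking expectations, the martingale property $\E[\zeta_{s'}\mid \zeta_0,\dots,\zeta_{s'-1},q_{s'}]=0$ kills every cross term, leaving
\[
\E[F(x_T) - F^*] = \tfrac12(a_T-x^*)^\top Q(a_T-x^*) + \tfrac12\sum_{s<T}\mathrm{tr}\big(B_{T,s}^\top Q B_{T,s}\,\Sigma_s\big),
\]
with $\Sigma_s = \E[\zeta_s\zeta_s^\top]\succeq 0$ and $\mathrm{tr}(\Sigma_s) = \E\norm{\zeta_s}^2$. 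Hence the expected suboptimality is nondecreasing in the per-step second moments, and subject to $\E\norm{\zeta_s}^2 \le v$ it is maximized by a rank-one covariance, which is realized by the valid independent-noise oracle $\zeta_s = \varepsilon_s\sqrt{v}\,u_s$ with $\varepsilon_s$ i.i.d.\ Rademacher and $u_s$ the (deterministic) top eigenvector of $B_{T,s}^\top Q B_{T,s}$; thus $\epsilon_0(T,v) \ge \tfrac12(a_T-x^*)^\top Q(a_T-x^*) + \tfrac v2\sum_{s<T}\norm{B_{T,s}^\top Q B_{T,s}}_{\mathrm{op}}$ for this $F$. Applying the displayed identity to the averaged run, using $\mathrm{tr}(C\Sigma)\le\norm{C}_{\mathrm{op}}\mathrm{tr}(\Sigma)$ and $\mathrm{tr}(\bar\Sigma_s)\le\sigma^2/M$, and the fact that it has the same $a_{KR}$ and $B_{KR,s}$,
\[
\E[F(\bar{x}_{KR}) - F^*] \le \tfrac12(a_{KR}-x^*)^\top Q(a_{KR}-x^*) + \tfrac{\sigma^2}{2M}\sum_{s<KR}\norm{B_{KR,s}^\top Q B_{KR,s}}_{\mathrm{op}} \le \epsilon_0\big(KR,\tfrac{\sigma^2}{M}\big).
\]
The strongly convex case is identical, with $Q\succeq\lambda I$, $F(0)-F^*\le\Delta$, and $\epsilon_\lambda$ in place of $\epsilon_0$; and the same argument applies if $\mc{A}$ outputs any fixed linear function of its iterates (e.g.\ a weighted average), since that also commutes with averaging over machines.

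\textbf{Main obstacle.} The subtle points are all in the last two paragraphs. The machine-averaged noise $\bar\zeta$ is \emph{not} literally an independent-noise oracle response at the averaged query point (the per-machine gradients are evaluated at different points), so one cannot just invoke $\mc{A}$'s guarantee against $\bar\zeta$ as a black box; this is why I route through the explicit bias--variance identity, which needs only the martingale property, together with an explicitly constructed rank-one oracle that saturates it. A secondary point is ensuring that ``Local-$\mc{A}$'' and ``$\mc{A}$ run for $KR$ steps'' really compare the same quantity despite possible algorithm state, which \pref{def:linear-update-algorithm} is designed to rule out.
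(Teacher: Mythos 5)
Your proposal is correct, but it takes a genuinely different route from the paper's proof. The paper argues directly: by linearity of $\mc{L}_1^{(t)}$, $\mc{L}_2^{(t)}$ and of $\nabla F$ (quadratic $F$), the averaged iterate $\bar{x}_t$ satisfies exactly the update rule of $\mc{A}$ driven by the machine-averaged gradient, which is an unbiased estimate of $\nabla F\prn{\mc{L}_1^{(t)}(\bar{x}_1,\dots,\bar{x}_t)}$ with variance at most $\sigma^2/M$, and then invokes the guarantee $\epsilon(KR,\sigma^2/M)$ as a black box (with the informal caveat, stated after the theorem, that $\epsilon$ must not rely on any finer structure of the stochastic gradients). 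You instead decline to treat the averaged noise as a legitimate oracle response---precisely because its conditional law at step $t$ can depend on the individual machines' trajectories and hence on past noise---and you route through an exact bias--variance identity for affine dynamics on quadratics, $x_T = a_T + \sum_{s<T}B_{T,s}\zeta_s$, together with a worst-case rank-one Rademacher oracle certifying $\epsilon_0(T,v) \geq \tfrac12(a_T-x^*)^\top Q(a_T-x^*) + \tfrac{v}{2}\sum_{s<T}\nrm{B_{T,s}^\top Q B_{T,s}}_{\mathrm{op}}$, and then compare the averaged run against this bound via $\mathrm{tr}(C\Sigma)\le\nrm{C}_{\mathrm{op}}\mathrm{tr}(\Sigma)$. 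What your route buys: you only need $\mc{A}$'s guarantee to hold against genuine independent-noise oracles, whereas the paper implicitly needs the guarantee to extend to martingale-type noise whose conditional distribution may depend on past interactions---this is exactly the subtlety your ``main obstacle'' names, and which the paper absorbs into its informal caveat rather than into the proof. What it costs: your argument is tied to outputs that are fixed affine functions of the iterates (final iterate or a fixed weighted average), since the bias--variance identity must apply to the output, whereas the paper's reduction is a statement about the entire averaged trajectory being a run of $\mc{A}$ and so transfers whatever guarantee $\mc{A}$ has with no extra work; your proof also carries more machinery (the unrolled representation and the trace/operator-norm comparison, including the check that the same $a_{KR}$, $B_{KR,s}$ govern both the sequential and the averaged run). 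Both proofs handle the communication step and the question of algorithmic memory across rounds in the same implicit way (averaging commutes with the fixed linear maps under global time indexing), so there is no gap there relative to the paper.
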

We prove this in \pref{app:local-sgd-homogeneous-quadratics} by showing that the average iterate $\bar{x}_t$ is updated according to $\mc{A}$ using minibatch stochatic gradients of size $M$---even in the middle of rounds of communication when $\bar{x}_t$ is not explicitly computed. The key property that we exploit is that the gradient of a quadratic function is linear. Therefore, we write the updates on the average iterate as
\begin{equation}
\bar{x}_{t+1}
= \mc{L}_2^{(t)}\bigg(\bar{x}_1,\dots,\bar{x}_t,
\frac{1}{M}\sum_{m'=1}^M g\prn*{\mc{L}_1^{(t)}\prn*{x_1^{m'},\dots,x_t^{m'}};z_t^{m'}}\bigg)
\end{equation}
Then, by the linearity of $\nabla F$ and $\mc{L}_1^{(t)}$, we have
\begin{equation}
\E\brk*{\frac{1}{M}\sum_{m'=1}^M g\prn*{\mc{L}_1^{(t)}\prn*{x_1^{m'},\dots,x_t^{m'}};z_t^{m'}}} 
= \nabla F\prn*{\mc{L}_1^{(t)}\prn*{\bar{x}_1,\dots,\bar{x}_t}}
\end{equation}
and we show that the variance is reduced to $\frac{\sigma^2}{M}$. Therefore, $\mc{A}$'s guarantee applies but with the added benefit of smaller variance.

To rephrase \pref{thm:linear-update-alg-quadratics}, on quadratic objectives, Local-$\mc{A}$ is in some sense equivalent to $KR$ iterations of $\mc{A}$ using minibatch stochastic gradients of size $M$. Furthermore, this guarantee depends only on the product $KR$, and not on $K$ or $R$ individually. Thus, averaging the $T\mathth$ iterate of $M$ independent executions of $\mc{A}$, sometimes called ``one-shot averaging,'' enjoys the same error upper bound as $T$ iterations $\mc{A}$ using size-$M$ minibatches.

Nevertheless, it is important to highlight the boundaries of \pref{thm:linear-update-alg-quadratics}. Firstly, $\mc{A}$'s error guarantee $\epsilon(T,\sigma^2)$ must not rely on any particular structure of the stochastic gradients themselves, as this structure might not hold for the implicit updates of Local-$\mc{A}$. 
Furthermore, even if some structure of the stochastic gradients \emph{is} maintained for Local-$\mc{A}$, the particular iterates generated by Local-$\mc{A}$ will generally vary with $K$ and $R$ (even when $KR$ is held constant). Thus, Theorem \ref{thm:linear-update-alg-quadratics} does \emph{not} guarantee that Local-$\mc{A}$ with two different values of $K$ and $R$ would perform the same on any particular instance. We have merely proven matching upper bounds on their worst-case performance.

We apply \pref{thm:linear-update-alg-quadratics} to yield error upper bounds for Local SGD and Local AC-SA:
\begin{restatable}{corollary}{localacsaquadratic}\label{cor:acc-local-sgd-optimal-quadratics}
For any quadratic $F_0 \in \mc{F}_0(H,B)$ and quadratic $F_\lambda \in \mc{F}_\lambda(H,B)$, Local SGD guarantees
\begin{align*}
\E F_0(\hat{x}) - F_0^* &\leq c\cdot \prn*{\frac{HB^2}{KR} + \frac{\sigma B}{\sqrt{MKR}}} \\
\E F_\lambda(\hat{x}) - F_\lambda^* &\leq c\cdot \prn*{\Delta\exp\prn*{-\frac{c'\lambda KR}{H}} + \frac{\sigma^2}{\lambda MKR}}
\end{align*}
and Local AC-SA\footnote{The AC-SA algorithm must be slightly modified in order to achieve linear convergence in the strongly convex setting. This modification is provided by \citet{ghadimi2013optimal} and is also discussed in \pref{subsec:the-reduction}.} guarantees
\begin{align*}
\E F_0(\tilde{x}) - F_0^* &\leq c\cdot \prn*{\frac{HB^2}{K^2R^2} + \frac{\sigma B}{\sqrt{MKR}}} \\
\E F_\lambda(\tilde{x}) - F_\lambda^* &\leq c\cdot \prn*{\Delta\exp\prn*{-\frac{c'\sqrt{\lambda} KR}{\sqrt{H}}} + \frac{\sigma^2}{\lambda MKR}}
\end{align*}
\end{restatable}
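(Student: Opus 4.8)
The plan is to derive all four bounds from \pref{thm:linear-update-alg-quadratics} by instantiating it with SGD and with AC-SA (and the restarted variants of these used for strongly convex objectives), plugging in their known serial rates, and then substituting $T = KR$ and $\sigma^2 \mapsto \sigma^2/M$. The first thing to check is that these algorithms satisfy \pref{def:linear-update-algorithm}. For SGD this is the example noted right after the definition, with the constant stepsize (tuned for the relevant number of steps and variance) and the iterate averaging that realize SGD's optimal serial rate carried along as an auxiliary, linearly updated coordinate. For AC-SA (\pref{alg:ac-sa}), each of $\xmd$, $x$, and $\xag$ is an affine function of earlier iterates together with a single stochastic gradient evaluated at an affine function of the history (using that $\xag$ is itself a running affine combination), so the joint state evolves linearly. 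The restarted algorithms $\myreduction(\mc{A}_{SGD},e)$ and $\myreduction(\mc{A}_{AC-SA},e)$ of \pref{subsec:the-reduction} (the latter being essentially \citeauthor{ghadimi2013optimal}'s multi-stage AC-SA) are likewise linear update algorithms, since each stage has linear updates and the between-stage re-initialization is linear.

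Next I would record the serial guarantees, all of which depend on the stochastic gradients only through the variance bound, as \pref{thm:linear-update-alg-quadratics} requires. On $\mc{F}_0(H,B)$, averaged SGD gives $\epsilon_0(T,\sigma^2) = c(HB^2/T + \sigma B/\sqrt{T})$ and AC-SA gives $\epsilon_0(T,\sigma^2) = c(HB^2/T^2 + \sigma B/\sqrt{T})$ \citep{lan2012optimal}. For quadratic $F_\lambda \in \mc{F}_\lambda(H,\Delta)$, combining these convex rates with \pref{thm:my-reduction} gives $\tm_\lambda(\epsilon,\Delta) \le c(\tfrac{H}{\lambda}\log\tfrac{\Delta}{\epsilon} + \tfrac{\sigma^2}{\lambda\epsilon})$ for $\myreduction(\mc{A}_{SGD},e)$ and $\tm_\lambda(\epsilon,\Delta) \le c(\sqrt{\tfrac{H}{\lambda}}\log\tfrac{\Delta}{\epsilon} + \tfrac{\sigma^2}{\lambda\epsilon})$ for $\myreduction(\mc{A}_{AC-SA},e)$ (cf.\ \eqref{eq:ac-sa-strongly-convex}); inverting these to solve for the accuracy reached after $T$ steps yields $\epsilon_\lambda(T,\sigma^2) = c(\Delta\exp(-c'\lambda T/H) + \sigma^2/(\lambda T))$ and $\epsilon_\lambda(T,\sigma^2) = c(\Delta\exp(-c'\sqrt{\lambda}\, T/\sqrt{H}) + \sigma^2/(\lambda T))$ respectively.

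Now I invoke \pref{thm:linear-update-alg-quadratics}. By its proof, for each of these linear update algorithms $\mc{A}$ the machine-average $\bar x_t = \tfrac1M\sum_m x^m_t$ of the local iterates at every step --- including mid-round --- is exactly a trajectory of $\mc{A}$ run with minibatches of size $M$ for $KR$ steps. Therefore the point obtained by applying $\mc{A}$'s own output rule to this implicit trajectory --- for Local SGD, the weighted average $\hat x$ in \pref{alg:local-sgd} with the weights $\{w_{k,r}\}$ chosen (independently of $m$) to match the serial averaging of the previous paragraph; for Local AC-SA, the machine-average of the final $\xag$ iterate --- satisfies $\E F(\hat x) - F^* \le \epsilon(KR,\sigma^2/M)$ for the relevant $\epsilon \in \{\epsilon_0,\epsilon_\lambda\}$. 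Substituting $T = KR$ and $\sigma^2 \mapsto \sigma^2/M$ into the four rates above, and using $(\sigma/\sqrt{M})\,B/\sqrt{KR} = \sigma B/\sqrt{MKR}$ and $(\sigma^2/M)/(\lambda KR) = \sigma^2/(\lambda MKR)$, gives precisely the four displayed bounds.

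The main obstacle is not analytic --- the derivation is substitution once \pref{thm:linear-update-alg-quadratics} is in hand --- but rather in the first step for the strongly convex case: one has to confirm that the restarted accelerated method attaining the $\Delta\exp(-c'\sqrt{\lambda}\,KR/\sqrt{H})$ bias term genuinely fits \pref{def:linear-update-algorithm} with a guarantee that depends on the gradients only through their variance, so that \pref{thm:linear-update-alg-quadratics} can be applied to it verbatim. Everything else is bookkeeping.
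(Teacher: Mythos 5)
Your proposal is correct and follows essentially the same route as the paper, which proves the corollary in one line by combining the serial guarantees of SGD and (multi-stage) AC-SA with \pref{thm:linear-update-alg-quadratics} under the substitution $T = KR$, $\sigma^2 \mapsto \sigma^2/M$. The additional care you take—checking that the restarted strongly convex variants are linear update algorithms with variance-only guarantees, and that the averaging output rules commute with the implicit machine-averaged trajectory—is exactly the bookkeeping the paper leaves implicit.
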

This follows immediately from the guarantees of SGD and AC-SA in the sequential setting and \pref{thm:linear-update-alg-quadratics}. In the convex case, comparing \pref{cor:acc-local-sgd-optimal-quadratics}'s guarantee for Local SGD with the bound for Minibatch SGD in \pref{tab:existing-local-sgd-analysis}, we see that Local SGD's bound is strictly better, due to the first term scaling as $(KR)^{-1}$ versus $R^{-1}$. At the same time, the Local SGD guarantee is also strictly better than Single-Machine SGD due to the $(MKR)^{-1/2}$ scaling of the statistical term versus just $(KR)^{-1/2}$. We note that Minibatch and Single-Machine SGD can also be accelerated \citep{cotter2011better,lan2012optimal}, which improves their optimization terms, however these guarantees are still outmatched by Local AC-SA.

There is also reason to think that Local AC-SA might be minimax optimal for quadratic objectives. The statistical terms $\frac{\sigma B}{\sqrt{MKR}}$ and $\frac{\sigma^2}{\lambda MKR}$ are tight by \pref{lem:statistical-term-lower-bound}, which was proven using quadratic hard instances. In addition, the optimization terms match the lower bounds \pref{thm:generic-graph-lower-bound} and \pref{thm:generic-graph-oracle-strongly-convex}. These lower bounds were proven using non-quadratic hard instances, so it is possible that the minimax error for quadratic objectives might be lower. However, the generic graph lower bounds \emph{could} have been proven using quadratic hard instances if we restricted our attention to the class of span-restricted/zero-respection/deterministic optimization algorithms. Also, the work of \citet{simchowitz2018randomized} shows that the Local AC-SA guarantee is minimax optimal (up to a log factor) for quadratic objectives and the class of all randomized algorithms, but only in the special case $M=1$. It is certainly plausible, and perhaps even likely, that these results could be extended to show that Local AC-SA is optimal for quadratics for all $M$, although such lower bounds do not yet exist.

\subsubsection{Local SGD Analysis for General Homogeneous Objectives}\label{subsec:local-sgd-homogeneous-upper-bound}

As we just saw, Local SGD is extremely effective for the special case of quadratic objectives. In this section, we turn to general convex and strongly convex objectives. Our main result is the following theorem:
\begin{restatable}{theorem}{localsgdhomogeneousupperbound}\label{thm:local-sgd-homogeneous-upper-bound}
For any $F_0 \in \mc{F}_0(H,B)$ and $F_\lambda \in \mc{F}_\lambda(H,\Delta)$, Local SGD guarantees
\begin{align*}
\E F_0(\hat{x}) - F_0^* &\leq c \cdot \min\crl*{\frac{HB^2}{KR} + \frac{\sigma B}{\sqrt{MKR}} + \frac{\prn*{H\sigma^2B^4}^{1/3}}{K^{1/3}R^{2/3}},\, \frac{HB^2}{KR} + \frac{\sigma B}{\sqrt{KR}}} \\
\E F_\lambda(\hat{x}) - F_\lambda^* &\leq c \cdot \min\crl*{\frac{H\Delta}{\lambda}\exp\prn*{-\frac{\lambda KR}{4H}} + \frac{\sigma^2}{\lambda MKR} + \frac{H\sigma^2\log\prn*{9 + \frac{\lambda KR}{H}}}{\lambda^2 KR^2},\, \frac{H\Delta}{\lambda}\exp\prn*{-\frac{\lambda KR}{4H}} + \frac{\sigma^2}{\lambda KR}}
\end{align*}
\end{restatable}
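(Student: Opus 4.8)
The plan is to prove the two bounds inside each $\min$ separately: Local SGD with a suitably tuned stepsize (and, in the convex case, uniform iterate averaging) achieves each of them, hence their minimum. The second bound in each $\min$ is just the ordinary SGD rate over $KR$ steps, and I would show Local SGD inherits it almost for free. Track the potential $\Phi_t = \tfrac1M\sum_m\E\nrm{x^m_t - x^*}^2$, where $t$ runs over all $KR$ local steps. Within a round each machine runs plain SGD on the common objective $F$, so for $\eta\le 1/H$ the usual smooth–convex descent lemma gives $\tfrac1M\sum_m\E\nrm{x^m_{k+1,r} - x^*}^2 \le \tfrac1M\sum_m\E\nrm{x^m_{k,r}-x^*}^2 - 2\eta\cdot\tfrac1M\sum_m(\E F(x^m_{k,r}) - F^*) + \eta^2\sigma^2$; at a communication step, convexity of $\nrm{\cdot}^2$ gives $\nrm{\bar x_{K,r} - x^*}^2 \le \tfrac1M\sum_m\nrm{x^m_{K,r}-x^*}^2$, so averaging only decreases $\Phi$. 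Telescoping over all $KR$ steps, using convexity of $F$ together with $\hat x = \tfrac{1}{MKR}\sum_{m,k,r}x^m_{k,r}$, and optimizing $\eta\in(0,1/H]$ yields $\tfrac{HB^2}{KR} + \tfrac{\sigma B}{\sqrt{KR}}$; the strongly-convex version follows from the same argument with the strong-convexity contraction and a decaying stepsize, or directly from the convex bound via the reduction \pref{thm:my-reduction}.

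For the first, more interesting bound I would analyze the \emph{virtual averaged iterate} $\bar x_t = \tfrac1M\sum_m x^m_t$. The key structural fact is that the averaging step is transparent to $\bar x_t$: across a communication step $\bar x_{0,r+1} = \bar x_{K,r}$, and within a round $\bar x_{k+1,r} = \bar x_{k,r} - \tfrac\eta M\sum_m g(x^m_{k,r};z^m_{k,r})$, so $\bar x_t$ follows a single unbroken SGD trajectory of length $KR$ driven by $\tfrac1M\sum_m g(x^m_t;z^m_t)$ — an unbiased estimate of $\tfrac1M\sum_m\nabla F(x^m_t)$ with variance at most $\sigma^2/M$, using that the per-machine stochastic gradients are conditionally independent given the iterates (the independent-noise oracle). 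Its mean differs from $\nabla F(\bar x_t)$ by an amount controlled through $H$-smoothness by the consensus error $\mc{E}_t = \tfrac1M\sum_m\nrm{x^m_t - \bar x_t}^2$. Expanding $\nrm{\bar x_{t+1}-x^*}^2$ and bounding, per machine, $\langle\nabla F(x^m_t), x^m_t - x^*\rangle \ge F(x^m_t) - F^* + \tfrac{1}{2H}\nrm{\nabla F(x^m_t)}^2$ (smooth–convex) and $\langle\nabla F(x^m_t), \bar x_t - x^m_t\rangle \ge F(\bar x_t) - F(x^m_t) - \tfrac H2\nrm{\bar x_t - x^m_t}^2$ (smoothness) — the $F(x^m_t)$ terms cancel on averaging, and for $\eta\le 1/H$ the gradient-norm terms cancel against the variance expansion — I arrive at $\E\nrm{\bar x_{t+1}-x^*}^2 \le \E\nrm{\bar x_t-x^*}^2 - 2\eta(\E F(\bar x_t) - F^*) + \eta H\,\E\mc{E}_t + \tfrac{\eta^2\sigma^2}{M}$. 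Telescoping and using convexity of $F$ gives $\E F(\hat x) - F^* \le \tfrac{B^2}{2\eta KR} + \tfrac{H}{2KR}\sum_t\E\mc{E}_t + \tfrac{\eta\sigma^2}{2M}$.

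The crux is a tight bound on $\mc{E}_t$, and this is the step I expect to be the main obstacle: the naive estimate via a Gronwall-type recursion (bounding gradient disagreement by $H$ times the iterate gap and applying a power-mean inequality) only holds for $\eta\lesssim 1/(HK)$, which is too small to produce the $HB^2/(KR)$ term. The resolution I would use is to expand $\tfrac1M\sum_m\nrm{x^m_{k+1,r} - \bar x_{k+1,r}}^2$ exactly: the stochastic increment contributes at most $\eta^2\sigma^2$ per step (the per-machine noises are mean-zero and the averaged noise only helps), while the deterministic part is $\tfrac1M\sum_m\nrm{(x^m_{k,r}-\bar x_{k,r}) - \eta(\nabla F(x^m_{k,r}) - \overline{\nabla F}_{k,r})}^2$ (writing $\overline{\nabla F}_{k,r} = \tfrac1M\sum_m\nabla F(x^m_{k,r})$), which by \emph{co-coercivity of the gradient} of a smooth convex function equals $\mc{E}_{k,r} + (\eta^2 - \tfrac{2\eta}{H})\cdot\tfrac1M\sum_m\nrm{\nabla F(x^m_{k,r}) - \nabla F(\bar x_{k,r})}^2 \le \mc{E}_{k,r}$ for $\eta\le 1/H$ — i.e.\ gradient disagreement \emph{contracts} the consensus error rather than amplifying it. Since every round starts in consensus ($\mc{E}_{0,r}=0$), this gives $\E\mc{E}_{k,r}\le k\eta^2\sigma^2 \le K\eta^2\sigma^2$ for all $\eta\le 1/H$, with no hidden factor exponential in $K$.

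Plugging this in, $\E F(\hat x) - F^* \le \tfrac{B^2}{2\eta KR} + \tfrac{HK\eta^2\sigma^2}{2} + \tfrac{\eta\sigma^2}{2M}$, and choosing $\eta = \min\{\tfrac{1}{2H},\ (\tfrac{B^2}{HK^2R\sigma^2})^{1/3},\ (\tfrac{MB^2}{KR\sigma^2})^{1/2}\}$ balances the three terms to $\tfrac{HB^2}{KR}$, $\tfrac{(H\sigma^2B^4)^{1/3}}{K^{1/3}R^{2/3}}$, and $\tfrac{\sigma B}{\sqrt{MKR}}$, which is the first branch of the $\min$. For the strongly-convex case I would repeat the virtual-iterate analysis, replacing the telescoping with the $(1-\eta\lambda)$-contraction from strong convexity and using a decaying stepsize $\eta_t\sim 1/(\lambda t)$ after an $O(H/\lambda)$-step burn-in with the matching iterate weights (the consensus bound $\E\mc{E}_t\le K\eta_t^2\sigma^2$ still applies, since it only used smoothness and convexity); optimizing produces the $\tfrac{H\Delta}{\lambda}\exp(-\tfrac{\lambda KR}{4H})$ bias term, the $\tfrac{\sigma^2}{\lambda MKR}$ variance floor, and the $\tfrac{H\sigma^2\log(9+\lambda KR/H)}{\lambda^2 KR^2}$ consensus floor exactly as in the standard smooth–strongly-convex SGD analysis with the extra consensus term carried through — and the single-machine branch there follows as in the first paragraph (or again via \pref{thm:my-reduction}).
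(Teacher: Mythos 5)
Your convex analysis and both ``single-machine'' branches are correct and essentially follow the paper's route: the paper also analyzes the virtual averaged iterate via a Stich-style descent lemma with a consensus-error term, proves that Local SGD is never worse than $KR$ steps of serial SGD by the same per-machine recursion plus Jensen at averaging steps, and obtains the same improved $O(K\eta^2\sigma^2)$ consensus bound. Your way of getting that bound---working directly with $\frac1M\sum_m\E\nrm{x^m_t-\bar x_t}^2$ and using co-coercivity plus the variance decomposition to show the gradient-disagreement drift is non-expansive---is a near-equivalent repackaging of the paper's argument, which bounds pairwise distances $\nrm{x^m_t-x^{m'}_t}$ with a contraction-map lemma (co-coercivity again) and converts to distance-to-average; the noise-only additive accumulation within a round is the shared key idea.

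The strongly convex branch, however, has a genuine gap where you write that ``the consensus bound $\E\mc{E}_t\le K\eta_t^2\sigma^2$ still applies, since it only used smoothness and convexity.'' With convexity alone your consensus recursion gives only non-expansion, so for a stepsize that decays \emph{within} a round you get $\E\mc{E}_t\le\sigma^2\sum_{t'}\eta_{t'}^2$ with the \emph{larger, earlier} stepsizes of that round. For $\eta_t\asymp 2/(\lambda(a+t))$ with $a\asymp H/\lambda$ and $K\gg H/\lambda$, this sum exceeds $K\eta_t^2\sigma^2$ by a factor of order $\lambda K/H$ in the early decaying rounds, and carrying the looser bound through the weighted average produces an extra term of order $\frac{K\sigma^2}{H\lambda R^2}$ that is not dominated by $\frac{\sigma^2}{\lambda MKR}+\frac{H\sigma^2\log(\cdot)}{\lambda^2KR^2}$ when $M$ is large and $K\gg H/\lambda$. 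The inequality you want is true, but it requires using the $(1-\lambda\eta)$ contraction from strong convexity inside the consensus recursion (your co-coercivity step upgrades to this immediately), followed by the telescoping specific to $\eta_t=2/(\lambda(a+t+1))$; this is exactly why the paper's distance lemma (\pref{lem:local-sgd-homogeneous-upper-bound-distance-bound-between-local-iterates}) has a separate second claim devoted to the decreasing-stepsize case.

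A second, smaller issue is the schedule itself: ``decaying $\eta_t\sim 1/(\lambda t)$ after an $O(H/\lambda)$-step burn-in'' cannot yield the $\frac{H\Delta}{\lambda}\exp(-\frac{\lambda KR}{4H})$ bias term. Take $\sigma=0$: then all machines coincide and Local SGD is just gradient descent with stepsizes $\asymp 1/(\lambda t)$, whose suboptimality decays only polynomially in $KR$, whereas the claimed bound is purely exponential in that regime. You need the constant-stepsize phase to occupy a constant fraction of the horizon---the paper follows \citet{stich2019unified}: $\eta=1/(4H)$ with zero weights for the first $KR/2$ steps (or geometric weights with constant stepsize throughout when $KR\lesssim H/\lambda$), then $\eta_t=2/(\lambda(a+t+1))$ with linear weights. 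With those two repairs (strong-convexity contraction in the consensus bound, and Stich's two-phase schedule), your outline matches the paper's proof. Also note that invoking \pref{thm:my-reduction} for the strongly convex single-machine branch proves the rate for a \emph{restarted} variant of Local SGD rather than for Local SGD with a single stepsize schedule as in \pref{alg:local-sgd}; the paper instead proves that branch directly.
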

This Theorem is proven in \pref{app:local-sgd-homogeneous-upper-bound}. We use a similar approach as \citet{stich2018local}, who analyzes the behavior of the averaged iterate $\bar{x}_t = \frac{1}{M}\sum_{m=1}^M x_t^m$, even when it is not explicitly computed. \citeauthor{stich2018local} shows, in particular, that the averaged iterate \emph{almost} evolves according to size-$M$-Minibatch SGD updates, up to a term proportional to the dispersion of the individual machines' iterates $\frac{1}{M}\sum_{m=1}^M \nrm{\bar{x}_t - x_t^m}^2$. \citeauthor{stich2018local} bounds this with $O(\eta_t^2 K^2 \sigma^2)$, but this bound is too pessimistic---in particular, it holds even if the gradients are replaced by arbitrary vectors of norm $\sigma$. In \pref{lem:local-sgd-homogeneous-upper-bound-distance-bound-between-local-iterates}, we improve this bound to $O(\eta_t^2 K \sigma^2)$ which allows for our improved guarantee.\footnote{In recent work, \citet{stich2019error} present a new analysis of Local SGD which, in the general convex case has the form $\frac{MHB^2}{R} + \frac{\sigma B}{\sqrt{MKR}}$. As stated, this is strictly worse than Minibatch SGD. However, we suspect that this bound should hold \emph{for any $1 \leq M' \leq M$} because, intuitively, having more machines should not hurt you. If this is true, then optimizing their bound over $M'$ yields a similar result as \pref{thm:local-sgd-homogeneous-upper-bound}.} Our approach resembles the concurrent work of \citet{khaled2020tighter}, however our analysis is more refined and we optimize more carefully over the stepsize to get a better rate.

\paragraph{Comparison with the Baselines} 
We now compare the upper bound from \pref{thm:local-sgd-homogeneous-upper-bound} with the guarantees for Minibatch and Single-Machine SGD. The second term in the $\min$s for Local SGD match the convergence rate for Single-Machine SGD, so we conclude that Local SGD is never worse. On the other hand, its relationship with Minibatch SGD is more complicated.

For clarity, and in order to highlight the role of $M$, $K$, and $R$ in the convergence rate, we will compare rates for general convex objectives when $H = B = \sigma^2 = 1$, and we will also ignore numerical constants. In this setting, the worst-case error of Minibatch SGD is \citep{nemirovskyyudin1983}:
\begin{equation}\label{eq:local-sgd-section-mbsgd-rate}
\epsilon_{\textrm{MB-SGD}} = c\cdot\prn*{\frac{1}{R} + \frac{1}{\sqrt{MKR}}}
\end{equation}
Our guarantee for Local SGD from \pref{thm:local-sgd-homogeneous-upper-bound} reduces to:
\begin{equation}
\epsilon_{\textrm{Local SGD}} \leq c\cdot\left(\frac{1}{KR} + \frac{1}{K^{\frac{1}{3}}R^{\frac{2}{3}}} + \frac{1}{\sqrt{MKR}}\right)
\end{equation}
These guarantees have matching statistical terms of $\frac{1}{\sqrt{MKR}}$, which cannot be improved by any first-order algorithm \citep{nemirovskyyudin1983}. Therefore, in the regime where the statistical term dominates both rates, i.e.~$M^3K\lesssim R$ and $MK\lesssim R$, both algorithms will have similar worst-case performance. When we leave this noise-dominated regime, we see that Local SGD's guarantee $K^{-\frac{1}{3}}R^{-\frac{2}{3}}$ is better than Minibatch SGD's $R^{-1}$ when $K \gtrsim R$ and is worse when $K \lesssim R$. 
This makes sense intuitively: Minibatch SGD takes advantage of very precise stochastic gradient estimates, but pays for it by taking fewer gradient steps; conversely, each Local SGD update is noisier, but Local SGD is able to make $K$ times more updates.

We therefore establish that for general convex objectives in the large-$M$ and large-$K$ regime, Local SGD will strictly outperform Minibatch SGD. However, in the large-$M$ and small-$K$ regime, our Local SGD guarantee does \emph{not} show improvement over Minibatch SGD. We are only comparing upper bounds, so it is not clear that Local SGD is in fact worse, yet it raises the question of whether this is the best we can hope for from Local SGD. Is Local SGD truly better than Minibatch SGD in some regimes but worse in others? Or, should we believe the intuitive argument suggesting that Local SGD is always at least as good as Minibatch SGD in the same way that it is always better than Single-Machine SGD?

\subsubsection{A Lower Bound for Local SGD for Homogeneous Objectives}\label{subsec:local-sgd-homogeneous-lower-bound}

\begin{figure*}[!tb]
\centering
\includegraphics[width=0.92\textwidth]{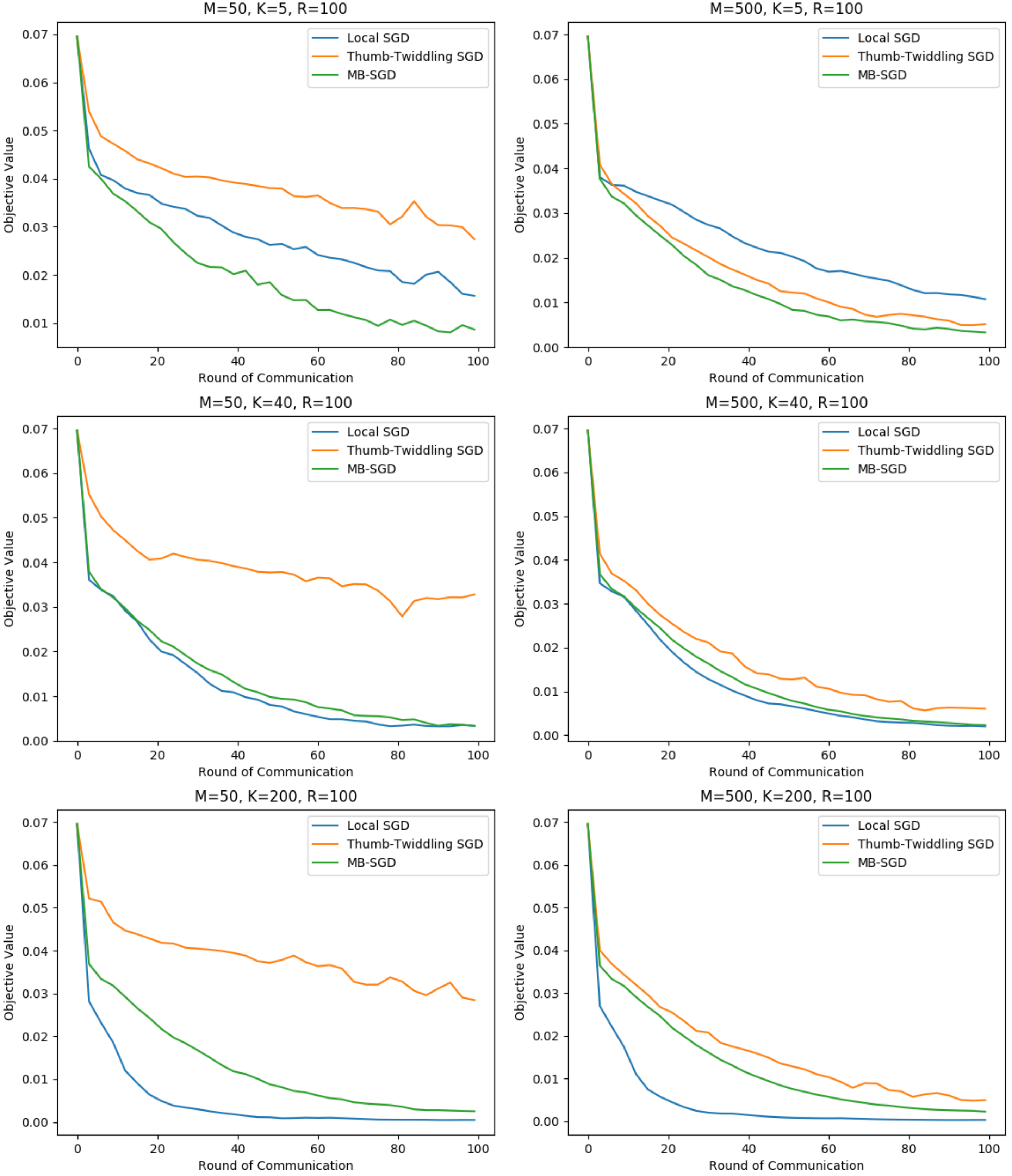}
\caption[Empirical performance of homogeneous intermittent communication algorithms applied to a logistic regression problem.]{\small We constructed a dataset of 50000 points in $\R^{25}$ with the $i$th coordinate of each point distributed independently according to a Gaussian distribution $\mc{N}(0, \frac{10}{i^2})$. The labels are generated via $\mathbb{P}\brk{y = 1\,|\, x} = \sigma(\min\crl{\inner{w_1^*}{x} + b_1^*, \inner{w_2^*}{x} + b_2^*})$ for $w_1^*, w_2^* \sim \mc{N}(0,I_{25\times 25})$ and $b_1^*, b_2^* \sim \mc{N}(0,1)$, where $\sigma(a) = 1/(1+\exp(-a))$ is the sigmoid function, i.e.~the labels correspond to an intersection of two halfspaces with label noise which increases as one approaches the decision boundary. We used each algorithm to train a linear model with a bias term to minimize the logistic loss over the 50000 points, i.e.~$f$ is the logistic loss on one sample and $\mc{D}$ is the empirical distribution over the 50000 samples. For each $M$, $K$, and algorithm, we tuned the constant stepsize to minimize the loss after $r$ rounds of communication individually for each $1 \leq r \leq R$. Let $x_{\mathsf{A},r,\eta}$ denote algorithm $\mathsf{A}$'s iterate after the $r$th round of communication when using constant stepsize $\eta$. The plotted lines are an approximation of $g_{\mathsf{A}}(r) = \min_{\eta} F(x_{\mathsf{A},r,\eta}) - F(x^*)$ for each $\mathsf{A}$ where the minimum is calculated using grid search on a log scale.} 
\label{fig:local-sgd-LR-experiments}
\end{figure*}

We will now show that in a certain regime, Local SGD really is inferior (in the worst-case) to Minibatch SGD, and even to Thumb-Twiddling SGD. We show this by constructing a simple, smooth piecewise-quadratic objective in three dimensions on which Local SGD performs poorly. We define this hard instance as $F(x) = \E_z f(x;z)$ where
\begin{equation}
f(x;z) = \frac{\lambda}{2}\prn*{x_1 - \frac{B}{\sqrt{3}}}^2 + \frac{H}{2}\prn*{x_2 - \frac{B}{\sqrt{3}}}^2 
+ \frac{H}{8}\prn*{\prn*{x_3 - \frac{B}{\sqrt{3}}}^2 + \pp{x_3 - \frac{B}{\sqrt{3}}}^2} + zx_3
\label{eq:local-sgd-homogeneous-lower-bound-construction}
\end{equation}
and where $\P\brk*{z=\sigma} = \P\brk*{z=-\sigma} = \frac{1}{2}$ and $\pp{y} := \max\crl{y,0}$.
\begin{restatable}{theorem}{localsgdhomogeneouslowerbound}\label{thm:local-sgd-homogeneous-lower-bound}
For any $K \geq 2$, $M,R \geq 1$, and dimension at least 3, there exists $F_0 \in \mc{F}_0(H,B)$ such that the final averaged iterate of Local SGD initialized at $0$ with any fixed stepsize will have suboptimality at least
\[
\E F_0(\hat{x}) - F_0^*
\geq c\cdot\prn*{\min\crl*{\frac{\prn*{H\sigma^2B^4}^{1/3}}{K^{2/3}R^{2/3}},\,HB^2} + \min\crl*{\frac{\sigma B}{\sqrt{MKR}},\,HB^2}}
\]
Similarly, for any $\lambda \leq \frac{H}{16}$, there exists $F_\lambda \in \mc{F}_\lambda(H,\Delta)$ such that the final averaged iterate of Local SGD initialized at $0$ with any fixed stepsize will have suboptimality at least
\[
\E F_\lambda(\hat{x}) - F_\lambda^*
\geq c\cdot\prn*{\min\crl*{\frac{H\sigma^2}{\lambda^2K^2R^2},\,\Delta} + \min\crl*{\frac{\sigma^2}{\lambda MKR},\Delta}}
\]
\end{restatable}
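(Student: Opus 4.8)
The plan is to analyze the behavior of Local SGD on the piecewise-quadratic instance \eqref{eq:local-sgd-homogeneous-lower-bound-construction} coordinate by coordinate, exploiting the fact that the three coordinates decouple. The $x_1$ and $x_2$ coordinates are ordinary (strongly) convex quadratics with no noise, so they contribute nothing but a ``distance to minimizer'' term; the entire difficulty is concentrated in the $x_3$ coordinate, which sees the zero-mean noise $z$ and whose local objective $\tfrac{H}{8}((x_3-c)^2 + (x_3-c)_+^2)$ is deliberately asymmetric around its minimizer (with $c=B/\sqrt 3$). First I would reduce to the one-dimensional problem in $x_3$: show that any fixed-stepsize Local SGD run on $F$ projects onto a fixed-stepsize Local SGD run on the scalar function $h(u) = \tfrac{H}{8}((u-c)^2 + (u-c)_+^2) + zu$, so it suffices to lower bound the suboptimality of the $x_3$-iterate. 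The asymmetric ``hinge-quadratic'' is the standard trick: because the curvature differs on the two sides of the minimizer, the expectation of the local iterate drifts away from $c$ even though the noise is mean-zero, and this bias is what Minibatch SGD avoids (it only takes $R$ steps, all at the same point, so no within-round drift accumulates) and Local SGD cannot.

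The core computation has two pieces, matching the two terms of the bound. For the \textbf{statistical term} $\sigma B/\sqrt{MKR}$ (resp. $\sigma^2/(\lambda MKR)$): this is already handled by \pref{lem:statistical-term-lower-bound}, or can be re-derived directly here by noting that the averaged final iterate is a function of at most $MKR$ noisy gradients, so information-theoretically it cannot localize the minimizer of $h$ to better than $\sigma/\sqrt{MKR}$ in the relevant coordinate, giving suboptimality $\gtrsim \sigma B/\sqrt{MKR}$ after rescaling; the $\min\{\cdot, HB^2\}$ cap comes from the trivial bound that $0$ is already $O(HB^2)$-suboptimal. For the \textbf{Local-SGD-specific term} $(H\sigma^2 B^4)^{1/3}/(K^{2/3}R^{2/3})$ (resp. $H\sigma^2/(\lambda^2 K^2 R^2)$): here I would track $\E[x_3^m{}_{,k,r}]$ within a single round. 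Starting each round from the (common) averaged value $\bar u_r$, within $K$ local steps the asymmetry of $h'$ produces a systematic bias in $\E[u - c]$ of order $\eta^2 H^2 K \sigma^2$-ish (the precise order requires keeping the second moment of the within-round fluctuation, which is $\Theta(\eta^2 \sigma^2 k)$ at local step $k$, and feeding it through the curvature gap). Averaging over $M$ machines kills the \emph{variance} of the round's endpoint but \emph{not} this bias, since the bias is the same on every machine. So after $R$ rounds the averaged iterate sits at distance $\gtrsim$ (bias per round, suitably propagated) from $c$, and optimizing the resulting lower bound over the free stepsize $\eta$ — balancing ``too small $\eta$ means we never move from $0$, suboptimality $\gtrsim HB^2 \exp(-\eta H K R)$-type'' against ``too large $\eta$ means large accumulated bias'' — yields the stated $K^{-2/3}R^{-2/3}$ (resp. $K^{-2}R^{-2}$) rate. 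The difference between the convex and strongly convex exponents on $K$ and $R$ comes from whether the ``distance from initialization'' term decays polynomially or exponentially in $KR$, which changes the optimal $\eta$.

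The main obstacle I expect is making the bias lower bound \emph{robust to the adversary's choice of stepsize schedule and averaging weights $w_{k,r}$}, and in particular ruling out that a cleverly decreasing stepsize could suppress the within-round drift without also stalling progress. Concretely, one must show a genuine \emph{tradeoff lower bound}: for \emph{every} fixed stepsize $\eta$ (the theorem restricts to fixed stepsize, which helps, but the averaging weights are still free), either the iterate hasn't moved enough from $0$ in the $x_1,x_2$ coordinates (forcing a lower bound from the strong/weak convexity there), or it has moved enough that the accumulated $x_3$-bias is large. Handling the returned point being an arbitrary weighted average $\hat x = \sum w_{k,r} x^m_{k,r}$ rather than just the last iterate requires that the bias in $\E x_3$ be \emph{one-signed} throughout the run (so averaging can't cancel it) — this is exactly why the hinge $(x_3-c)_+^2$ is placed on one side only, and verifying that the sign is preserved across all rounds and all intermediate iterates is the delicate bookkeeping step. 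The rest (decoupling, the information-theoretic term, and the final optimization over $\eta$) is routine.
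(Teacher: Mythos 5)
Your overall route is the paper's route: the same three-coordinate instance, coordinate decoupling, a one-signed drift of the third coordinate caused by the asymmetric hinge (which averaging over the $M$ machines cannot cancel), the noiseless coordinates used to pin the admissible fixed stepsize, and the statistical term imported from \pref{lem:statistical-term-lower-bound}. However, two points in your quantitative plan would not deliver the stated rates as written. First, the bias of $\E[x_3]$ is not of order $\eta^2H^2K\sigma^2$: because the hinge also exerts a restoring force of order $H\eta\cdot(\text{bias})$, the expected displacement saturates at the fixed point of the within-round recursion, which is of order $\eta\sigma$ (the paper shows $\E[\hat x_3] - c \leq -\eta\sigma/48$ once $\eta\gtrsim 1/(\mu KR)$, uniformly over rounds and independent of $K$), so the resulting suboptimality is $\asymp H\eta^2\sigma^2$; feeding a bias that grows with $K$ through the argument would produce the wrong exponent on $K$. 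Relatedly, the drift analysis is only valid for $\eta\lesssim 1/H$; the regime $\eta > 2/H$ is not covered by ``large accumulated bias'' but by divergence of deterministic gradient descent on the $H$-curvature noiseless coordinate --- that, not a distance-to-minimizer contribution, is the actual role of $x_2$.

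Second, the mechanism you cite for the difference between the convex exponent $K^{-2/3}R^{-2/3}$ and the strongly convex $K^{-2}R^{-2}$ (polynomial versus exponential decay of the distance-from-initialization term) is not what drives it, and with the first coordinate's curvature left fixed the dichotomy over $\eta$ alone only yields a bound of the form $\min\crl*{\lambda B^2,\ H\sigma^2/(\lambda^2K^2R^2)}$. The missing idea is that the curvature $\mu$ of the low-curvature noiseless coordinate is a free design parameter in $[\lambda, H/16]$: for every fixed $\eta$ the dichotomy gives a bound of order $\min\crl*{\mu B^2,\ H\sigma^2/(\mu^2K^2R^2)}$, and in the convex case one tunes $\mu \asymp \prn*{H\sigma^2/(B^2K^2R^2)}^{1/3}$ to balance the two terms, which is precisely what produces $(H\sigma^2B^4)^{1/3}K^{-2/3}R^{-2/3}$; in the strongly convex case $\mu$ is pinned at $\lambda$, giving $H\sigma^2/(\lambda^2K^2R^2)$. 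Finally, a simplification: the theorem concerns the final averaged iterate, so you do not need to handle arbitrary averaging weights $w_{k,r}$ (though your observation that the drift is one-signed is what would extend the bound to such weighted averages if desired).
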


We defer a detailed proof of \pref{thm:local-sgd-homogeneous-lower-bound} to \pref{app:local-sgd-homogeneous-lower-bound}. Intuitively, it relies on the fact that for non-quadratic functions, the SGD updates are no longer linear as in \pref{subsec:local-sgd-quadratic-upper-bound}, and the Local SGD dynamics introduce an additional bias term which does not improve with $M$, and scales poorly with $K,R$. This phenomenon does not seem to be unique to our construction, and should be expected to exist for any ``sufficiently non-quadratic'' function. 

The proof shows specifically that the suboptimality is large unless $x_3 \approx \frac{B}{\sqrt{3}}$, but Local SGD introduces a bias which causes $x_3$ to drift in the negative direction by an amount proportional to the stepsize. On the other hand, optimizing the first term of the objective requires the stepsize to be relatively large. Combining these yields the first term of the lower bound. The second term is classical and holds even for first-order algorithms that compute $MKR$ stochastic gradients sequentially \citep{nemirovskyyudin1983}.

In order to compare this lower bound with \pref{thm:local-sgd-homogeneous-upper-bound} and with Minibatch SGD, we again consider the general convex setting with $H = B = \sigma^2 = 1$. Then, the upper bound reduces to $K^{-\frac{1}{3}}R^{-\frac{2}{3}} + \prn{MKR}^{-\frac{1}{2}}$. Comparing this to \pref{thm:local-sgd-homogeneous-lower-bound}, we see that our upper bound is tight up to a factor of $K^{-\frac{1}{3}}$ in the optimization term. Furthermore, comparing the lower bound to the worst-case error of Minibatch SGD \eqref{eq:local-sgd-section-mbsgd-rate}, we see that Local SGD is indeed worse than Minibatch SGD in the worst case when $K \lesssim \sqrt{R}$. Our lower bound is unable to identify the exact cross-over point, but it is some $K^* \in [\sqrt{R},R]$. For $K \leq K^*$, Minibatch SGD is better than Local SGD in the worst case, for $K \geq K^*$, Local SGD is better. Since the optimization terms of Minibatch SGD and Thumb-Twiddling SGD are identical, this further indicates that Local SGD can even be outperformed by Thumb-Twiddling SGD in the small $K$ and large $M$ regime.

For \pref{thm:local-sgd-homogeneous-lower-bound}, we constructed a hard instance that is convenient for analysis but is maybe somewhat ``artificial.'' We also conducted an experiment showing that the same qualitative picture holds also for a more ``natural'' logistic regression task. In \pref{fig:local-sgd-LR-experiments}, we plot the suboptimality of Local, Minibatch, and Thumb-Twiddling SGD iterates with optimally tuned stepsizes and, as is predicted by \pref{thm:local-sgd-homogeneous-lower-bound}, we see Local SGD performing worse than Minibatch in the small $K=5$ regime, but improving relative to the other algorithms as $K$ increases to $40$ and then $200$, when Local SGD is far superior to Minibatch. For each fixed $K$, increasing $M$ causes Thumb-Twiddling SGD to improve relative to Minibatch SGD, but does not have a significant effect on Local SGD, which is consistent with the method introducing a bias that depends on $K$ but not on $M$. This highlights that the ``problematic regime'' for Local SGD is one where there are few iterations per round.

\subsection{Local SGD in the Heterogeneous Setting}\label{subsec:local-sgd-heterogeneous}

We now move on to optimization in the intermittent communication setting with heterogeneous data, where the objective has the form $F(x) = \frac{1}{M}\sum_{m=1}^M F_m(x)$ and each of the $M$ machines has access to a stochastic gradient oracle for its corresponding objective $F_m$. Here, we focus on the problem of finding a single consensus solution for $F$, which achieves a low value on all of the local objectives $F_1,\dots,F_M$ on average \citep{bertsekas1989parallel,boyd2011distributed}. Because each machine only has access to a single component of the objective, the heterogeneous is substantially harder than the previously considered homogeneous case. 

Like in the homogeneous setting, a number of recent papers have analyzed the convergence properties of Local SGD in the heterogeneous data setting \citep{wang2018cooperative,karimireddy2019scaffold,khaled2020tighter,koloskova2020unified}. Also as in the homogeneous setting, we will show that none of these Local SGD guarantees show improvement over the baseline of Minibatch SGD, even without acceleration, and in many regimes their guarantees are much worse. This again raises the question of whether this a weakness of the analysis or the Local SGD method itself. Can the bounds be improved to show that Local SGD is actually better than Minibatch SGD in certain regimes, or is Local SGD always worse?

Recall that in the homogeneous setting, prior analysis had not been able to show that Local SGD improves over Minibatch SGD yet the combination of \pref{thm:local-sgd-homogeneous-upper-bound} and \pref{thm:local-sgd-homogeneous-lower-bound} showed that Local SGD is better than Minibatch SGD in some regimes and worse in others. Specifically, when communication was relatively infrequent Local SGD improves over Minibatch SGD. 

How does this situation play out in the more challenging, and perhaps more interesting, heterogeneous setting? In several discussions following the publication of \citet{woodworth2020local}, people had suggested that the more difficult heterogeneous setting is where we should expect Local SGD to really shine, and that Minibatch SGD is too na\"ive. 
So, how does heterogeneity affect Local SGD, Minibatch SGD, and the comparison between them? Is Local SGD better than Minibatch SGD when communication is rare as in the homogeneous case?  Does the difficulty introduced by heterogeneity perhaps necessitate the more sophisticated Local SGD approach, as some have suggested?

In recent work, \citet{karimireddy2019scaffold} showed heterogeneity can be problematic for Local SGD, proving a lower bound that indicates some degradation as degree of heterogeneity increases. As we will discuss, this lower bound implies that Local SGD is strictly worse than Minibatch SGD when the level of heterogeneity is very large, but it is not clear whether or not Local SGD can improve over Minibatch SGD for slightly or moderately heterogeneous objectives.

\paragraph{Boundedly Heterogeneous Objectives}
In addition to the usual assumptions of smoothness and convexity/strong convexity, we will introduce a new parameter that captures the extent to which the local objectives disagree about the minimizer. In particular, we will say that a heterogeneous objective $F(x) = \frac{1}{M}\sum_{m=1}^M F_m(x)$ is $\sdiff^2$-heterogeneous if, for some $x^* \in \argmin_x F(x)$
\begin{equation}\label{eq:def-sdiff}
\frac{1}{M}\sum_{m=1}^M \nrm*{\nabla F_m(x^*)}^2 \leq \sdiff^2
\end{equation}
Since $\nabla F(x^*) = 0$, this can be thought of as measuring the variance of the gradient at $x^*$ when selecting a random component. If the objective is $0$-heterogeneous, then all of the objectives share a minimizer, but we note that it does \emph{not} imply that all the local objectives are the same, since \eqref{eq:def-sdiff} is only a statement about the gradients at $x^*$. This assumption of $\sdiff^2$-heterogeneity is common in the Local SGD literature \citep{karimireddy2019scaffold,khaled2020tighter,koloskova2020unified} and it is used to prove most of the convergence guarantees for Local SGD that we are aware of. Nevertheless, in the consensus optimization literature, it is common to analyze distributed algorithms with no such bound on the heterogeneity, and it is certainly still possible to minimize $F$ without this assumption \citep{boyd2011distributed,nedic2009distributed,nedic2010constrained,ram2010distributed}.

We define the function class $\mc{F}_0(H,B,M,\sdiff^2)$ as the class of $\sdiff^2$-heterogeneous objectives of the form $F(x) = \frac{1}{M}\sum_{m=1}^M F_m(x)$ where $F_m$ is $H$-smooth for all $m$, and $F \in \mc{F}_0(H,B)$. Similarly, we define $\mc{F}_\lambda(H,\Delta,M,\sdiff^2)$ as the class of $\sdiff^2$-heterogeneous objectives of the form $F(x) = \frac{1}{M}\sum_{m=1}^M F_m(x)$ where $F_m$ is $H$-smooth for all $m$, and $F \in \mc{F}_\lambda(H,\Delta)$.

\subsubsection{Minibatch SGD in the Heterogeneous Setting}\label{subsec:minibatch-sgd-in-heterogeneous-local-sgd}

To begin, we analyze the worst-case error of our baseline Minibatch SGD and Minibatch AC-SA \citep{lan2012optimal,ghadimi2013optimal} (see \pref{alg:ac-sa}) in the heterogeneous setting. A simple but important observation is that the minibatch gradients used by these algorithms are unbiased estimates of $\nabla F$ despite the heterogeneity of the objective:
\begin{equation}
\E\brk*{\frac{1}{MK}\sum_{m=1}^{M}\sum_{k=1}^{K} g(x_{r}; z_{r,k}^m)} = \frac{1}{M}\sum_{m=1}^{M} \nabla F_m(x_{r}) = \nabla F(x_r)
\end{equation}
Since the minibatch stochastic gradients are unbiased estimates of $\nabla F$, we can simply appeal to the standard analysis for (accelerated) SGD. To do so, we calculate the variance of these estimates:
\begin{align}
\E\nrm*{\frac{1}{MK}\sum_{m=1}^{M}\sum_{k=1}^{K} g(x_{r}; z_{r,k}^m) - \nabla F(x_r)}^2 
&= \frac{1}{M^2K^2}\sum_{m=1}^{M}\sum_{k=1}^{K}\E\nrm*{g(x_{r}; z_{r,k}^m) - \nabla F_m(x_r)}^2 
\leq \frac{\sigma^2}{MK}
\end{align}
So, the variance is always reduced by $MK$ and, importantly, it is not affected by the level of heterogeneity $\sdiff$.  Plugging this calculation into the analysis of SGD and Accelerated SGD \citep{nemirovskyyudin1983,lan2012optimal} yields:
\begin{restatable}{theorem}{mbsgdheterogeneousupperbound}\label{thm:mbsgd-heterogeneous-upper-bound}
For any $F_0 \in \mc{F}_0(H,B,M,\sdiff^2)$ and any $F_\lambda \in \mc{F}_\lambda(H,\Delta,M,\sdiff^2)$, the output of Minibatch SGD guarantees 
\begin{align*}
\E F_0(\hat{x}) - F_0^* &\leq c\cdot\prn*{\frac{HB^2}{R} + \frac{\sigma B}{\sqrt{MKR}}}  \\
\E F_\lambda(\hat{x}) - F_\lambda^* &\leq c\cdot\prn*{\frac{H\Delta}{\lambda}\exp\prn*{-\frac{\lambda R}{2H}} + \frac{\sigma^2}{\lambda MKR}}
\end{align*}
and Minibatch AC-SA guarantees
\begin{align*}
\E F_0(\hat{x}) - F_0^* &\leq c\cdot\prn*{\frac{HB^2}{R^2} + \frac{\sigma B}{\sqrt{MKR}}} \\
\E F_\lambda(\hat{x}) - F_\lambda^* &\leq c\cdot\prn*{\Delta\exp\prn*{-\frac{\sqrt{\lambda}R}{c_3\sqrt{H}}} + \frac{\sigma^2}{\lambda MKR}}
\end{align*}
\end{restatable}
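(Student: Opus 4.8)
The plan is to observe that Minibatch SGD and Minibatch AC-SA, as implemented in the intermittent communication setting (\pref{alg:minibatch-sgd} and the minibatched version of \pref{alg:ac-sa}), are literally $R$ iterations of the ordinary \emph{sequential} SGD and AC-SA algorithms run on the average objective $F = \frac{1}{M}\sum_{m=1}^M F_m$ with a single stochastic first-order oracle whose variance is $\sigma^2/(MK)$. So the heterogeneous guarantees will follow by invoking the corresponding sequential guarantees as black boxes, with the heterogeneity parameter $\sdiff$ playing no role at all.

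First I would record the two structural facts already noted in the text. (i) $F = \frac{1}{M}\sum_m F_m$ lies in $\mc{F}_0(H,B)$ (resp.\ $\mc{F}_\lambda(H,\Delta)$) by the definition of the heterogeneous function class: an average of $H$-smooth functions is $H$-smooth, convexity and $\lambda$-strong convexity are preserved under averaging, and the bound on $\nrm{x^*}$ (resp.\ on $F(0)-F^*$) is inherited since it is imposed on $F$ directly in the class $\mc{F}_0(H,B,M,\sdiff^2)$ (resp.\ $\mc{F}_\lambda(H,\Delta,M,\sdiff^2)$). (ii) Conditioned on $x_r$, the minibatch gradient $\bar g_r = \frac{1}{MK}\sum_{m=1}^M\sum_{k=0}^{K-1} g(x_r;z^m_{k,r})$ is an unbiased estimate of $\nabla F(x_r)$ with $\E\nrm{\bar g_r - \nabla F(x_r)}^2 \le \sigma^2/(MK)$; and since the samples $z^m_{k,r}$ are drawn fresh and independently of everything computed in earlier rounds, $\bar g_r$ is a legitimate independent-noise stochastic first-order oracle of the kind the sequential analyses assume. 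The key point is that this variance bound is uniform over the function class and does \emph{not} depend on $\sdiff$.

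Then I would feed this oracle into the known guarantees. For Minibatch SGD, the textbook SGD analysis on $\mc{F}_0(H,B)$ \citep{nemirovskyyudin1983} with $T=R$ steps and variance $\sigma^2/(MK)$ gives $\E F_0(\hat x)-F_0^* \le c(HB^2/R + \sigma B/\sqrt{MKR})$; the strongly convex rate $c\big(\frac{H\Delta}{\lambda}\exp(-\frac{\lambda R}{2H}) + \frac{\sigma^2}{\lambda MKR}\big)$ follows from the standard SGD guarantee for $\mc{F}_\lambda(H,\Delta)$, which can itself be obtained by applying the reduction \pref{thm:my-reduction} to the convex SGD rate. For Minibatch AC-SA, the \citet{lan2012optimal} guarantee with $T=R$ and variance $\sigma^2/(MK)$ gives $c(HB^2/R^2 + \sigma B/\sqrt{MKR})$, and in the strongly convex case the rate $c\big(\Delta\exp(-\sqrt{\lambda}R/(c_3\sqrt H)) + \sigma^2/(\lambda MKR)\big)$ follows from the accelerated strongly convex rate (\citet{ghadimi2013optimal}, equivalently \eqref{eq:ac-sa-strongly-convex} applied to AC-SA), using the modification of AC-SA required for linear convergence. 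Since the argument is a reduction to established sequential results, there is no genuinely hard step; the only thing that warrants care is verifying point (ii) — that assembling the minibatch from $MK$ fresh samples within a round really does yield an oracle meeting the conditional-independence requirements of those sequential analyses, and that the $\sigma^2/(MK)$ variance bound holds uniformly over the class independent of $\sdiff$ — after which the rest is bookkeeping of constants.
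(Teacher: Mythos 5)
Your proposal is correct and follows essentially the same route as the paper: the paper's argument is precisely the observation that the minibatch gradient $\bar g_r$ is an unbiased estimate of $\nabla F(x_r)$ with variance at most $\sigma^2/(MK)$ regardless of $\sdiff$, followed by plugging $T=R$ and this variance into the standard sequential guarantees for SGD and AC-SA. Your extra care about checking that the fresh per-round samples yield a legitimate independent-noise oracle is exactly the (implicit) content of the paper's one-line justification, so there is nothing missing.
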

The theorem follows immediately from the observation about the unbiaseness and variance of the stochastic gradients above, and previously established convergence rates for SGD \citep{nemirovskyyudin1983,stich2019unified} and AC-SA \citep{lan2012optimal,ghadimi2013optimal}. The most important feature of these guarantees is that they are completely independent of $\sdiff^2$ because of the use of minibatch stochastic gradients. In the following sections, we will see how Local SGD compares.

\subsubsection{Prior Analysis of Local SGD in the Heterogeneous Setting}

\begin{table}
\renewcommand{\arraystretch}{1.2}
\centering
\begin{tabular}{ l l }
\toprule  
\textbf{Algorithm} & \textbf{Suboptimality Bound} \\
\midrule
Minibatch SGD: \pref{thm:mbsgd-heterogeneous-upper-bound}
& $\frac{HB^2}{R} + \frac{\sigma B}{\sqrt{MKR}}$ \\
\midrule
Local SGD: \citet{koloskova2020unified}
& $\frac{HB^2}{R} + \frac{\sigma B}{\sqrt{MKR}} + \frac{(H\sdiff^2B^4)^{1/3}}{R^{2/3}} + \frac{(H\sigma^2B^4)^{1/3}}{K^{1/3}R^{2/3}} $ \\
\cmidrule{2-2}
Local SGD: \citet{khaled2020tighter}
& $\frac{HB^2}{R} + \frac{B\sqrt{\sigma^2 + \sdiff^2} }{\sqrt{MKR}} + \frac{(H(\sigma^2 + \sdiff^2)B^4)^{1/3}}{R^{2/3}} $ \\
\cmidrule{2-2}
\textsc{SCAFFOLD}: \citet{karimireddy2019scaffold} 
& $\frac{HB^2}{R} + \frac{\sigma B}{\sqrt{MKR}} + \frac{\sigma^2}{HKR}$ \\
\midrule
Local SGD: \pref{thm:local-sgd-heterogeneous-uppper-bound}
& $\frac{HB^2}{KR} + \frac{\sigma B}{\sqrt{MKR}} + \frac{(H\bar{\zeta}^2B^4)^{1/3}}{R^{2/3}} + \frac{(H\sigma^2B^4)^{1/3}}{K^{1/3}R^{2/3}}$ \\
\cmidrule{2-2}
Local SGD Lower Bound: \pref{thm:local-sgd-heterogeneous-lower-bound}
& $\min\crl*{\frac{HB^2}{R},\, \frac{(H\sdiff^2 B^4)^{1/3}}{R^{2/3}}} + \frac{\sigma B}{\sqrt{MKR}} + \frac{(H\sigma^2B^4)^{1/3}}{K^{2/3}R^{2/3}}$ \\
\bottomrule
\end{tabular}
\caption[Upper bounds for heterogeneous, convex, intermittent communication algorithms.]{Guarantees for objectives in $\mc{F}_0(H,B,M,\sdiff^2)$. See the discussion around \pref{thm:local-sgd-heterogeneous-uppper-bound} for the definition of $\bar{\zeta}$.
\label{tab:prior-local-sgd-analysis-heterogeneous-convex}}
\end{table}

\begin{table}
\renewcommand{\arraystretch}{1.2}
\centering
\begin{tabular}{ l l }
\toprule
\textbf{Algorithm} & \textbf{Suboptimality Bound}  \\ 
\midrule
Minibatch SGD: \pref{thm:mbsgd-heterogeneous-upper-bound}
& $\frac{H\Delta}{\lambda}\exp\prn*{\frac{-\lambda R}{H}} + \frac{\sigma_*^2}{\lambda MKR}$
\\\midrule
Local SGD: \citet{koloskova2020unified}
& $\Delta\exp\prn*{\frac{-\lambda R}{H}} + \frac{\sigma_*^2}{\lambda MKR} + \frac{H\sdiff^2}{\lambda^2 R^2} + \frac{H\sigma_*^2}{\lambda^2 KR^2}$
\\\cmidrule{2-2}
\textsc{SCAFFOLD}: \citet{karimireddy2019scaffold}
& $\prn*{\Delta + \frac{\lambda\sigma^2}{H^2K}} \exp\prn*{\frac{-\lambda R}{H}} + \frac{\sigma^2}{\lambda MKR}$
\\\midrule
Local SGD: \pref{thm:local-sgd-heterogeneous-uppper-bound}
& $\frac{H\Delta}{\lambda}\exp\prn*{-\frac{c'\lambda KR}{H}} + \frac{\sigma^2}{\lambda MKR} + \frac{H\bar{\zeta}^2}{\lambda^2 R^2} + \frac{H\sigma^2}{\lambda^2 KR^2}$
\\\cmidrule{2-2}
Local SGD Lower Bound: \pref{thm:local-sgd-heterogeneous-lower-bound}
& $\min\crl*{\Delta\exp\prn*{\frac{-\lambda R}{H}},\, \frac{H\sdiff^2}{\lambda^2R^2}} + \frac{\sigma^2}{\lambda MKR} + \min\crl*{\Delta,\,\frac{H\sigma^2}{\lambda^2 K^2R^2}} $
\\
\bottomrule
\end{tabular}
\caption[Upper bounds for heterogeneous, strongly convex, intermittent communication algorithms.]{Guarantees for objectives in $\mc{F}_\lambda(H,\Delta,M,\sdiff^2)$, with some log factors omitted. See the discussion around \pref{thm:local-sgd-heterogeneous-uppper-bound} for the definition of $\bar{\zeta}$. \label{tab:prior-local-sgd-analysis-heterogeneous-strongly-convex}}
\end{table}

Recently, \citet{khaled2020tighter} and \citet{koloskova2020unified} analyzed Local SGD in the heterogeneous and convex setting, and \citeauthor{koloskova2020unified} did also in the strongly convex setting. Their guarantees are summarized in \pref{tab:prior-local-sgd-analysis-heterogeneous-convex} and \pref{tab:prior-local-sgd-analysis-heterogeneous-strongly-convex}.
Also included are guarantees for \textsc{SCAFFOLD}\footnote{\citeauthor{karimireddy2019scaffold} analyze \textsc{SCAFFOLD} in the Federated Learning setting where only a random subset of $S \leq M$ of the machines are available in each round. Here, we present the analysis as it applies to our setting where $S=M$.}, a related method for heterogeneous distributed optimization \citep{karimireddy2019scaffold}.

Upon inspection, \citeauthor{koloskova2020unified}'s guarantee is slightly better than \citeauthor{khaled2020tighter}, but even this guarantee is (up to logarithmic terms) the sum of the Minibatch SGD bound plus additional terms, and is thus always worse in every regime. The question is whether this just reflects a weakness of their analysis, or a true weakness of the Local SGD algorithm? 

Indeed, \pref{thm:local-sgd-homogeneous-upper-bound} shows that a tighter upper bound for Local SGD is possible in the homogeneous case, and this rate is better than \citet{koloskova2020unified}'s when $\sdiff^2=0$\footnote{Although, we remind the reader that $\sdiff^2= 0$ does not imply that the problem is homogeneous.}. Can we generalize \pref{thm:local-sgd-homogeneous-upper-bound} to the heterogeneous case and show improvement over Minibatch SGD?

Optimistically, we might hope that \citeauthor{koloskova2020unified}'s $(\sdiff / R)^{2/3}$ term, in particular, could be improved.
Unfortunately, it is already known that some dependence on $\sdiff$ is necessary, as \citet{karimireddy2019scaffold} shows a lower bound of $\sdiff^2 / (\lambda R^2)$ in the strongly convex case, which suggests a lower bound of $\sdiff B / R$ in the convex case. But perhaps the  \citeauthor{koloskova2020unified} analysis can be improved to match this bound? 

If the $\sdiff B / R$ dependence suggested by \citeauthor{karimireddy2019scaffold}'s lower bound were possible, it would be lower-order than $HB^2/R$ for $\sdiff < HB$, and we would see no slow down until the level of heterogeneity is fairly large. In particular, since the components $F_m$ are $H$-smooth, adding the assumption that the local objectives $F_m$ each have a minimizer of norm at most $O(B)$ would be enough to bound $\sdiff \leq O(HB)$. In this case, the dependence on the level of heterogeneity would be fairly mild, and could be ignored under reasonable circumstances. On the other hand, if the $(\sdiff / R)^{2/3}$ term from \citeauthor{koloskova2020unified} cannot be improved, then we see a slowdown as soon as $\sdiff = \Omega(HB/R)$, which corresponds to a quite low level of heterogeneity! So, what is the correct rate?

\subsubsection{Upper and Lower Bounds for Local SGD in the Heterogeneous Setting}

We now show that the poor dependence on $\sdiff$ from \citeauthor{koloskova2020unified}'s analysis cannot be improved. Consequently, for sufficiently heterogeneous data, Local SGD is strictly worse than Minibatch SGD, regardless of the frequency of communication, unless the level of heterogeneity is very small.
\begin{restatable}{theorem}{localSGDheterogeneouslowerbound}\label{thm:local-sgd-heterogeneous-lower-bound}
For $K \geq 2$ and any dimension at least 4, there exists an objective $F_0 \in \mc{F}_0(H,B,M,\sdiff^2)$ and for any $\lambda \leq \frac{H}{16}$, there exists $F_\lambda \in \mc{F}_\lambda(H,\Delta,M,\sdiff^2)$ such that the final averaged iterate of Local SGD initialized at zero and using any fixed stepsize $\eta$ will have suboptimality at least
\begin{align*}
\E F_0(\hat{x}) - F_0^* &\geq c\cdot\prn*{\min\crl*{\frac{HB^2}{R},\, \frac{\prn*{H\sdiff^2 B^4}^{1/3}}{R^{2/3}}} 
+ \frac{\prn*{H\sigma^2B^4}^{1/3}}{K^{2/3}R^{2/3}} + \frac{\sigma B}{\sqrt{MKR}}} \\
\E F_\lambda(\hat{x}) - F_\lambda^* &\geq c\cdot\prn*{\min\crl*{\Delta\exp\prn*{-\frac{c'\lambda R}{H}},\, \frac{H\sdiff^2}{\lambda^2R^2}} + \min\crl*{\Delta,\,\frac{H\sigma^2}{\lambda^2 K^2R^2}} + \frac{\sigma^2}{\lambda MKR}}
\end{align*}
\end{restatable}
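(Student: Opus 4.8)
The plan is to prove both displays by a direct-sum construction: take $F=\tfrac1M\sum_{m}F_m$ with each $F_m=F_m^{\mathrm A}\oplus F_m^{\mathrm{het}}$ acting on disjoint blocks of coordinates, so that Local SGD initialized at $0$ with any fixed stepsize $\eta$, run on $F$, is exactly Local SGD run independently on each block (the stochastic gradient of a product function is the concatenation of the per-block gradients, and the averaging and output rules likewise decouple), whence $F(\hat x)-F^*$ equals the sum of the per-block suboptimalities. For block $\mathrm A$ I would reuse, essentially verbatim, the homogeneous hard instance of \pref{thm:local-sgd-homogeneous-lower-bound}: the construction \eqref{eq:local-sgd-homogeneous-lower-bound-construction} assigns every machine an i.i.d.\ sample from the same distribution, so it is $0$-heterogeneous and lies in $\mc{F}_0(H,B,M,0)\subseteq\mc{F}_0(H,B,M,\sdiff^2)$ (resp.\ $\mc{F}_\lambda(H,\Delta,M,0)$). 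This immediately transfers the statistical term $\sigma B/\sqrt{MKR}$ and the non-quadratic bias term $(H\sigma^2B^4)^{1/3}/(K^{2/3}R^{2/3})$ in the convex case, and $\sigma^2/(\lambda MKR)$ together with $\min\crl*{\Delta,\,H\sigma^2/(\lambda^2K^2R^2)}$ in the strongly convex case; block $\mathrm A$ uses three coordinates, block $\mathrm{het}$ one more, consistent with the dimension-$4$ requirement.

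The new work is block $\mathrm{het}$, which must produce the heterogeneity floor $\min\crl*{HB^2/R,\ (H\sdiff^2B^4)^{1/3}/R^{2/3}}$ (resp.\ $\min\crl*{\Delta\exp(-c'\lambda R/H),\,H\sdiff^2/(\lambda^2R^2)}$). Here I would take a low-dimensional piecewise-quadratic $\phi$ with a kink in its gradient (a ``one-sided'' quadratic, as in the third coordinate of \eqref{eq:local-sgd-homogeneous-lower-bound-construction} but with the noise removed), split the $M$ machines into two equal groups, and set $F_m^{\mathrm{het}}(x)=\phi(x)+\inner{b_m}{x}$ with $b_m=+b$ on one group and $-b$ on the other, so that $\sum_m b_m=0$, $F^{\mathrm{het}}=\phi$, and $\tfrac1M\sum_m\nrm*{\nabla F_m^{\mathrm{het}}(x^*)}^2\asymp\nrm{b}^2=:\sdiff^2$. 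Tracking the averaged iterate $\bar x_t$ across a round, the local steps pull the two groups toward the (distinct) minimizers of $\phi\pm\inner{b}{\cdot}$, so after $O(1)$ local steps the per-machine iterates sit in an equilibrium displaced by $\Theta(\sdiff)$ (suitably scaled by $H$) from the shared minimizer; because $\phi$ has a kink, the average of the local gradients then differs from $\nabla\phi(\bar x_t)$ by a systematic bias of order $\Theta(H\eta\sdiff\cdot(\text{occupation of the kink region}))$ that is \emph{not} reduced by averaging over $M$ and \emph{does not shrink with $K$}, driving $\bar x_t$ away from the minimizer. I would make this quantitative via a per-round recursion for $\bar x_t$ and for the dispersion $\tfrac1M\sum_m\nrm*{\bar x_t-x^m_{k,r}}^2$, exactly as in the proof of \pref{thm:local-sgd-homogeneous-lower-bound}, then balance the accumulated bias against the requirement that converging on $\phi$ (of scale $B$ over $R$ effective outer steps) forces $\eta R$ to be large; optimizing the instance's scale against $\eta$ yields the claimed floor for \emph{every} fixed $\eta$, with the $HB^2/R$ branch being the trivial ``no progress past $R$ outer steps'' rate that caps the bound when $\sdiff$ is large. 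The strongly convex version follows by adding a $\tfrac\lambda2\nrm{\cdot}^2$-type term (or by invoking the reduction of \pref{subsec:reduction-section-all}), giving $\min\crl*{\Delta\exp(-c'\lambda R/H),\,H\sdiff^2/(\lambda^2R^2)}$ and recovering the spirit of the $\sdiff^2/(\lambda R^2)$ lower bound of \citet{karimireddy2019scaffold}.

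Two points require care throughout. First, the bound must hold for every fixed stepsize, so for each block I need the suboptimality-vs-$\eta$ curve to be ``U-shaped'' with minimum of the claimed order (small $\eta$: iterates barely move, suboptimality is $\Omega$ of the distance term; large $\eta$: either the statistical/non-quadratic noise or the heterogeneity bias blows up), and then check that for \emph{all} $\eta$ the sum of the block contributions is $\Omega$ of the claimed sum — this works because the small-$\eta$ failure thresholds of the blocks are compatible, as they are all governed by the same ``one outer step per round of length $\le\eta$'' accounting. The statistical term itself holds for all $\eta$ unconditionally by \pref{lem:statistical-term-lower-bound}. Second, the smoothness constants and solution-norm budgets of the two blocks must be apportioned (e.g.\ each minimizer of norm $\le B/\sqrt2$, each block $H$-smooth, block $\mathrm{het}$ noiseless so total variance $\sigma^2$) so the assembled $F$ is genuinely in $\mc{F}_0(H,B,M,\sdiff^2)$ (resp.\ $\mc{F}_\lambda(H,\Delta,M,\sdiff^2)$); this is routine rescaling.

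The main obstacle I anticipate is the block-$\mathrm{het}$ bias analysis: pinning down the exact order of the drift-induced bias of $\bar x_t$ for a kinked $\phi$ — in particular showing it scales like a \emph{single} power of the drift $\eta\sdiff$ (which is what yields the $2/3$ exponent after adversarial tuning of $\eta$, and why the term is $K$-independent) rather than like $(\eta\sdiff)^2$, and controlling the error accumulated over $R$ rounds as $\bar x_t$ wanders in and out of the kink region and as the two groups' equilibria chase a moving target. The detailed recursions are deferred to the appendix, following the template of \pref{thm:local-sgd-homogeneous-lower-bound}.
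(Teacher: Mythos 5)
Your overall skeleton matches the paper's: the hard instance is indeed a low-dimensional direct sum (the three-coordinate homogeneous construction of \pref{thm:local-sgd-homogeneous-lower-bound} carried over verbatim to supply the $\sigma B/\sqrt{MKR}$, $(H\sigma^2B^4)^{1/3}/(K^{2/3}R^{2/3})$ and strongly convex analogues, plus one extra coordinate for heterogeneity), the $M$ machines are split into two groups, the averaged iterate is tracked round by round, and the final bound comes from a case analysis over the fixed stepsize anchored by the slow coordinate of curvature $\mu$. However, your heterogeneity block is a genuinely different mechanism from the paper's, and I believe it cannot deliver the claimed rate in the regime that matters. The paper's fourth coordinate is \emph{purely quadratic} on each machine, with \emph{mismatched curvatures}: one group gets $\tfrac{L}{2}x_4^2+\sdiff x_4$ with $L=H/2$, the other gets $\tfrac{\mu}{2}x_4^2-\sdiff x_4$ with $\mu$ small ($\mu=\lambda$ in the strongly convex case). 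No kink is needed there: within a round the stiff group parks at $-\sdiff/L$ while the flat group keeps crawling toward $+\sdiff/\mu$, and \pref{lem:fourth-coordinate} gives a closed-form affine recursion showing the averaged iterate equilibrates at offset $\gtrsim \tfrac{\sdiff}{\mu}\bigl(\tfrac{L-\mu}{L}-(1-\mu\eta)^K\bigr)$, of order $\sdiff/(\mu R)$ in the binding case. Crucially, this displacement (governed by the \emph{flat} curvature $\mu$) is then charged against the \emph{averaged} objective, whose fourth-coordinate curvature is $\Theta(H)$, yielding suboptimality $\Theta\bigl(H\sdiff^2/(\mu^2R^2)\bigr)$, which after tuning $\mu$ gives $(H\sdiff^2B^4)^{1/3}/R^{2/3}$.

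Your design uses the \emph{same} kinked $\phi$ on both groups with opposite linear drifts $\pm b$. You correctly note that with a quadratic $\phi$ the $\pm b$ contributions cancel exactly in the average, which is why you add the kink; but the kink mechanism couples the two scales that the paper's construction deliberately decouples. With a common curvature $\kappa$ for $\phi$, the between-group displacement within a round is at most $\min\{\eta\sdiff K,\,\sdiff/\kappa\}$, and the kink-induced bias on the average switches off once the average has drifted past the kink by more than that displacement, so the achievable offset is capped at roughly the displacement itself and its cost is weighted by the \emph{same} curvature $\kappa$. Taking the smallest stepsize the anchor coordinate allows ($\eta\asymp 1/(\mu KR)$), this yields at best about $\kappa\cdot\min\{\sdiff/(\kappa R R_{\mathrm{sat}}),\,\sdiff/\kappa\}^2$, i.e.\ $\sdiff^2/(\mu R^2)$ if $\kappa\asymp\mu$ or $\sdiff^2/H$ if $\kappa\asymp H$ — in both cases short of the target $H\sdiff^2/(\mu^2R^2)$ by a factor of $\mu/H$ (respectively $\mu^2R^2/H^2$), and this shortfall occurs precisely in the regime $\sdiff\lesssim HB/\sqrt{R}$ where the $(H\sdiff^2B^4)^{1/3}/R^{2/3}$ branch is the binding minimum. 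So the obstacle you flag ("showing the bias scales like a single power of the drift") is not just a technical recursion to be deferred; as set up, the same-curvature-plus-kink construction appears to lose the essential factor $H/\mu$, and the fix is the paper's asymmetric-curvature quadratic design rather than a sharper analysis of your $\phi$. Two smaller points: the strongly convex statement in the paper is obtained by choosing the curvature parameter $\mu=\lambda$ directly in the same construction, not by a reduction (the reductions of \pref{subsec:reduction-section-all} transfer algorithmic guarantees and restart the method, so they do not obviously transfer a lower bound for Local SGD with a single fixed stepsize); and note that your bias analysis cannot simply follow the homogeneous template, since there the $\pm\sigma$ perturbation is fresh i.i.d.\ noise each step (per-step offset $\eta\sigma$), whereas the heterogeneous drift is constant within a group and accumulates and saturates over the round.
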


This is proven in \pref{app:local-sgd-heterogeneous-lower-bound} using a similar approach as \pref{thm:local-sgd-homogeneous-lower-bound}, and it is conceptually similar to the lower bounds for heterogeneous objectives of \citet{karimireddy2019scaffold}.  \citet{koloskova2020unified} also prove a lower bound, but specifically for 1-strongly convex objectives, which obscures the important role of the strong convexity parameter. 

In the convex case, this lower bound closely resembles the upper bound of \citet{koloskova2020unified}. Focusing on the case $H = B = \sigma^2 = 1$ to emphasize the role of $\sdiff^2$, the only gaps are between \textbf{(i)} a term $1/(K^{1/3}R^{2/3})$ vs $1/(K^{2/3}R^{2/3})$---a gap which also exists in the homogeneous case (see \pref{thm:local-sgd-homogeneous-upper-bound} and \pref{thm:local-sgd-homogeneous-lower-bound})---and \textbf{(ii)} another term $1/R + (\sdiff / R)^{2/3}$ vs $\min\crl{1/R,\ (\sdiff / R)^{2/3}}$. 

For $\sdiff^2 \geq 1/R \Rightarrow (\sdiff / R)^{2/3} \geq 1/R$, the lower bound shows that Local SGD has error at least $1/R + 1/\sqrt{MKR}$ and thus performs strictly worse than Minibatch, regardless of $K$. This is quite surprising---Local SGD is often suggested as an improvement over Minibatch SGD for the heterogeneous setting, yet we see that even a small degree of heterogeneity can make it much worse. Furthermore, increasing the duration of each round, $K$, is often thought of as more beneficial for Local SGD than Minibatch SGD, but the lower bound indicates it does little to help Local SGD in the heterogeneous setting. 

Similarly, in the strongly convex case, the lower bound from \pref{thm:local-sgd-heterogeneous-lower-bound} nearly matches the upper bound of \citeauthor{koloskova2020unified}.
Focusing on the case $H=B=\sigma=1$ in order to emphasize the role of $\sdiff$, the only differences are between \textbf{(i)} a term $1/(KR^2)$ versus $1/(K^2R^2)$---a gap which also exists in the homogeneous case (see \pref{thm:local-sgd-homogeneous-upper-bound} and \pref{thm:local-sgd-homogeneous-lower-bound})---and \textbf{(ii)} between $\exp(- \lambda R) + \sdiff^2/(\lambda^2R^2)$ and $\min\crl*{\exp(- \lambda R),\,\sdiff^2/(\lambda^2R^2)}$. The latter gap is more substantial than the convex case, but nevertheless indicates that the $\sdiff^2 / (\lambda^2R^2)$ rate cannot be improved until the number of rounds of communication is at least the condition number or $\sdiff^2$ is very small.

Thus, \pref{thm:local-sgd-heterogeneous-lower-bound} indicates that it is not possible to radically improve over the \citeauthor{koloskova2020unified} analysis, and thus over Minibatch SGD for even moderate heterogeneity, without stronger assumptions.
In order to obtain an improvement over Minibatch SGD in a heterogeneous setting, at least with very low heterogeneity, we introduce a stronger version of the heterogeneity measure $\sdiff$ which bounds the difference between the local objectives' gradients everywhere, not just at $x^*$:
\begin{equation}\label{eq:def-zetabar}
\sup_x \frac{1}{M}\sum_{m=1}^M\nrm*{\nabla F_m(x) - \nabla F(x)}^2 \leq \bar{\zeta}^2
\end{equation}
This quantity precisely captures homogeneity since $\bar{\zeta}^2 = 0$ if and only if $F_m = F$ (up to an irrelevant additive constant). In terms of $\bar{\zeta}^2$, we are able to analyze Local SGD and see a smooth transition from the heterogeneous ($\bar{\zeta}^2$ large) to homogeneous ($\bar{\zeta}^2=0$) setting. 
\begin{restatable}{theorem}{localsgdheterogeneousupperbound}{\label{thm:local-sgd-heterogeneous-uppper-bound}}
For any $F_0 \in \mc{F}_0(H,B,M,\sdiff^2)$ and any $F_\lambda \in \mc{F}_\lambda(H,\Delta,M,\sdiff^2)$ with the additional property that $\sup_x \frac{1}{M}\sum_{m=1}^M\nrm*{\nabla F_m(x) - \nabla F(x)}^2 \leq \bar{\zeta}^2$, Local SGD guarantees
\begin{align*}
\E F_0(\hat{x}) - F_0^* &\leq 
c\cdot\prn*{\frac{HB^2}{KR} + \frac{\prn*{H\bar{\zeta}^2B^4}^{1/3}}{R^{2/3}} + \frac{\prn*{H\sigma^2B^4}^{1/3}}{K^{1/3}R^{2/3}} + \frac{\sigma B}{\sqrt{MKR}}}, \\
\E F_\lambda(\hat{x}) - F_\lambda^* &\leq 
c\cdot\prn*{\frac{H\Delta}{\lambda}\exp\prn*{-\frac{c'\lambda KR}{H}} + \prn*{\frac{H\bar{\zeta}^2}{\lambda^2 R^2} + \frac{H\sigma^2}{\lambda^2 KR^2}} \log\prn*{e + \frac{\lambda KR}{H}} + \frac{\sigma^2}{\lambda MKR}}.
\end{align*}
\end{restatable}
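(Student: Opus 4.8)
The plan is the ``virtual averaged iterate'' analysis used by \citet{stich2018local} and behind \pref{thm:local-sgd-homogeneous-upper-bound}, adapted to heterogeneous oracles, followed by a stepsize optimization reproducing the four summands. Index the $MKR$ local updates by a single counter $t$ and set $\bar{x}_t = \frac{1}{M}\sum_{m=1}^M x^m_t$; although $\bar{x}_t$ is only materialized at communication steps it is well-defined throughout, and $\bar{x}_{t+1} = \bar{x}_t - \eta\,\bar{g}_t$ with $\bar{g}_t = \frac{1}{M}\sum_m g(x^m_t;z^m_t)$, so $\E[\bar{g}_t\mid\mathcal{F}_t] = \frac{1}{M}\sum_m\nabla F_m(x^m_t) = \nabla F(\bar{x}_t) + e_t$ where $e_t = \frac{1}{M}\sum_m(\nabla F_m(x^m_t) - \nabla F_m(\bar{x}_t))$. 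By $H$-smoothness of each $F_m$ and Cauchy--Schwarz, $\E\|e_t\|^2 \le H^2 V_t$ with $V_t := \frac{1}{M}\sum_m\E\|x^m_t - \bar{x}_t\|^2$ the consensus error. Thus $\bar{x}_t$ runs Minibatch SGD on $F$ with an effective batch of size $M$ (hence oracle variance $\sigma^2/M$) perturbed by the bias $e_t$, and since $\frac1M\sum_m x^m_{k,r} = \bar{x}_{k,r}$ the output $\hat{x}$ is a convex combination of the $\bar{x}_t$'s, so a convergence bound on $\{\bar{x}_t\}$ transfers to $\hat{x}$ by Jensen.

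The first and hardest step is to bound $V_t$. Because machines agree at the start of each round, $V_t$ accumulates only over the $K$ steps of the current round; expanding $x^m_{k+1}-\bar{x}_{k+1}$ and taking expectations, the increment splits into (i) a mean-zero, conditionally independent noise part $g(x^m_k;z^m_k)-\nabla F_m(x^m_k)$ whose \emph{variances add} over the $K$ steps and contribute $\lesssim \eta^2 K\sigma^2$ --- the refinement, relative to the crude $\eta^2K^2\sigma^2$, that parallels \citet{woodworth2020local,khaled2020tighter} and relies on summing variances rather than squaring a sum --- and (ii) the systematic term $\nabla F_m(x^m_k) - \frac1M\sum_{m'}\nabla F_{m'}(x^{m'}_k)$, which after adding and subtracting $\nabla F_m$ and $\nabla F$ at the common round-start point and using \eqref{eq:def-zetabar} with $H$-smoothness is controlled by $\bar{\zeta}$ together with $H\sqrt{V_k}$, using convexity wherever available to keep the $V_k$-coupling non-expansive. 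This is exactly where the uniform bound $\bar{\zeta}$, rather than the pointwise $\sdiff$, is needed: the machines drift apart everywhere along the trajectory, not only near $x^*$. Carefully unrolling the resulting recursion over the round --- with a Young's-inequality split tuned to $K$ and an appropriate restriction on the (constant) stepsize --- yields $V_t \lesssim \eta^2 K\sigma^2 + \eta^2 K^2\bar{\zeta}^2$, the heterogeneity term retaining the full $K^2$ because that drift is systematic and does not average down in $M$.

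The second step is the perturbed SGD descent argument. From $\|\bar{x}_{t+1}-x^*\|^2 = \|\bar{x}_t-x^*\|^2 - 2\eta\langle\bar{g}_t,\bar{x}_t-x^*\rangle + \eta^2\|\bar{g}_t\|^2$, take expectations and bound the middle term using the per-component convexity and $H$-smoothness of the $F_m$, which gives $\frac1M\sum_m\langle\nabla F_m(x^m_t),\bar{x}_t-x^*\rangle \ge F(\bar{x}_t) - F^* - \tfrac{H}{2}V_t$ (plus $\tfrac{\lambda}{4}\|\bar{x}_t-x^*\|^2$ in the strongly convex case); note the drift enters \emph{linearly} in $V_t$ with no extra $\eta$ or $H$, which is what makes the constants come out right. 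Bound $\E\|\bar{g}_t\|^2 \le \sigma^2/M + O(H(F(\bar{x}_t)-F^*)) + O(H^2 V_t)$, the first correction absorbed by $\eta\lesssim 1/H$ and the second dominated by the $\tfrac H2 V_t$ term. Telescoping over $t=1,\dots,KR$ and dividing by $KR$ gives, in the convex case with a uniform average, a bound of the shape $\tfrac{B^2}{\eta KR} + \tfrac{\eta\sigma^2}{M} + H\eta^2(K\sigma^2 + K^2\bar{\zeta}^2)$ over the admissible stepsize range; minimizing over $\eta$ --- balancing $\tfrac{B^2}{\eta KR}$ against each of the three remaining terms, and taking the boundary $\eta\asymp 1/H$ when the unconstrained optimum exceeds it (which yields the $\tfrac{HB^2}{KR}$ term and, one checks, dominates the overshoot of the drift term) --- produces exactly $\tfrac{HB^2}{KR} + \tfrac{\sigma B}{\sqrt{MKR}} + \tfrac{(H\sigma^2B^4)^{1/3}}{K^{1/3}R^{2/3}} + \tfrac{(H\bar{\zeta}^2B^4)^{1/3}}{R^{2/3}}$. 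The strongly convex bound follows the same route with a (round-wise) constant stepsize and a geometrically-increasing averaging weight $w_t \propto (1-\tfrac{\eta\lambda}{2})^{-t}$, which both telescopes the distance recursion with the contraction factor and keeps the drift contribution at $\tfrac H2\bar{V}$ without incurring an extra $H/\lambda$ from a distance-to-value conversion; choosing $\eta\asymp\min\{1/H,\ \lambda^{-1}(KR)^{-1}\log(e+\lambda KR/H)\}$ to trade the $\exp(-c'\lambda KR/H)$ term against the rest introduces the logarithmic factor, and plugging in the $V_t$ bound gives the $\tfrac{H\bar{\zeta}^2}{\lambda^2R^2}$, $\tfrac{H\sigma^2}{\lambda^2KR^2}$, and $\tfrac{\sigma^2}{\lambda MKR}$ terms.

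The main obstacle is Step one: setting up the within-round consensus recursion so that the noise contribution scales with $K$ and not $K^2$ (which forces use of the conditional-independence structure of the stochastic gradients), while simultaneously keeping the $H\sqrt{V_k}$ self-coupling and the coupling to $\sum_k(F(\bar{x}_k)-F^*)$ under control for stepsizes as large as $\Theta(1/H)$, all with $\eta$ held fixed throughout the run so the final stepsize optimization is legitimate. Given the $V_t$ bound, the perturbed descent lemma and the stepsize balancing are routine, essentially as sketched. Setting $\bar{\zeta}=0$ recovers the homogeneous rate of \pref{thm:local-sgd-homogeneous-upper-bound}, and \pref{thm:local-sgd-heterogeneous-lower-bound} shows the result is tight up to the $K^{1/3}$-vs-$K^{2/3}$ gap.
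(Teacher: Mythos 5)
Your convex-case argument is essentially the paper's proof: the paper's two key lemmas are exactly your two steps --- a perturbed descent inequality in which the consensus error enters linearly with coefficient $O(H)$ (the paper gets $\frac{2H}{M}\sum_m\E\nrm{\bar{x}_t-x^m_t}^2$), and a within-round consensus bound $\frac{1}{M}\sum_m\E\nrm{x^m_t-\bar{x}_t}^2 \lesssim \eta^2K\sigma^2+\eta^2K^2\bar{\zeta}^2$ proved by a Young/co-coercivity split with parameter tuned to $K$ so the noise variances add rather than the norms --- followed by the same four-way stepsize balancing with $\eta\le\frac{1}{4H}$. Your identification of where $\bar{\zeta}$ (rather than $\sdiff$) is needed, and of the $K$-vs-$K^2$ distinction between noise and systematic drift, matches the paper exactly.

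The strongly convex case is where your route genuinely differs, and as sketched it does not quite deliver the stated bound. You propose a single constant stepsize $\eta\asymp\min\{1/H,\ \lambda^{-1}(KR)^{-1}\log(e+\lambda KR/H)\}$ with geometric weights $w_t\propto(1-\eta\lambda/2)^{-t}$. Telescoping then leaves a term $\frac{\eta\sigma^2}{M}$, which with your choice of $\eta$ becomes $\frac{\sigma^2}{\lambda MKR}\log(\cdot)$, and the drift terms become $\frac{H\sigma^2}{\lambda^2KR^2}\log^2(\cdot)$ and $\frac{H\bar{\zeta}^2}{\lambda^2R^2}\log^2(\cdot)$; but the theorem places \emph{no} logarithm on the statistical term $\frac{\sigma^2}{\lambda MKR}$ and only a single logarithm on the middle terms. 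The constant-stepsize-plus-log tuning cannot avoid this, because the same $\eta$ must simultaneously be large enough to kill the $\exp(-\lambda\eta KR)$ term and small enough that $\eta\sigma^2/M$ matches $\sigma^2/(\lambda MKR)$. The paper instead uses the Stich-style hybrid schedule: when $KR>2H/\lambda$, a constant stepsize $\frac{1}{4H}$ for the first half of the iterations (contracting the initial error to $\exp(-\lambda KR/(8H))$ plus residuals) and then a decreasing stepsize $\eta_t=\frac{2}{\lambda(a+t+1)}$ with polynomial weights $w_t=a+t$ for the second half, which yields $\frac{\sigma^2}{\lambda MKR}$ log-free; this in turn requires the second claim of the consensus lemma, namely that the bound $V_t\lesssim K\sigma^2\eta_{t-1}^2+K^2\bar{\zeta}^2\eta_{t-1}^2$ survives under that decreasing schedule (using $\eta_{t-1}\le 2\eta_t$ within the unrolling). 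So you should either adopt that two-phase schedule (and verify the consensus recursion for decreasing stepsizes, contrary to your stated plan of keeping $\eta$ fixed throughout), or accept a bound that is weaker by logarithmic factors than the one claimed.
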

We prove this in \pref{app:local-sgd-heterogeneous-upper-bound}.  This is the first analysis of Local SGD, or any other method for heterogeneous distributed optimization, which shows any improvement over Minibatch SGD in any heterogeneous regime. When $\bar{\zeta}=0$, \pref{thm:local-sgd-heterogeneous-uppper-bound} reduces to the homogeneous analysis of Local SGD given by \pref{thm:local-sgd-homogeneous-upper-bound}, which already showed that in that case, we see improvement when $K \gtrsim R$. \pref{thm:local-sgd-heterogeneous-uppper-bound} degrades smoothly when $\bar{\zeta}$ increases, and shows improvement for Local SGD over Minibatch SGD also when $\bar{\zeta}^2 \lesssim 1/R$ in the convex case, i.e.~with low, yet positive, heterogeneity. It is yet unclear whether this rate of convergence can be ensured in terms of $\sdiff^2$ rather than $\bar{\zeta}^2$.

\paragraph{Experimental evidence}
Finally, while \pref{thm:local-sgd-heterogeneous-lower-bound} proves that Local SGD is worse than Minibatch SGD unless $\sdiff$ is very small \emph{in the worst case}, one might hope that for ``normal'' heterogeneous problems, Local SGD might perform better than its worst case error suggests. However, a simple binary logistic regression experiment on MNIST indicates that this behavior likely extends significantly beyond the worst case. The results, depicted in \pref{fig:heterogeneous-local-sgd-experiments}, show that Local SGD performs worse than Minibatch SGD unless both $\sdiff$ is very small and $K$ is large. Finally, we also observe that Minibatch SGD's performance is essentially unaffected by $\sdiff$ empirically as predicted by theory.

\begin{figure}
\centering
\includegraphics[width=0.8\textwidth]{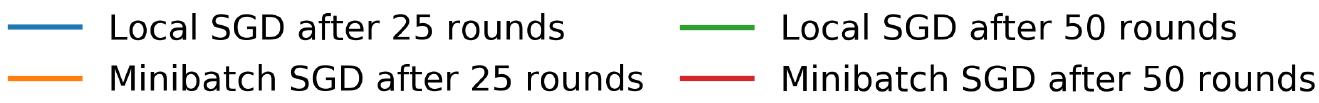}
\includegraphics[width=\textwidth]{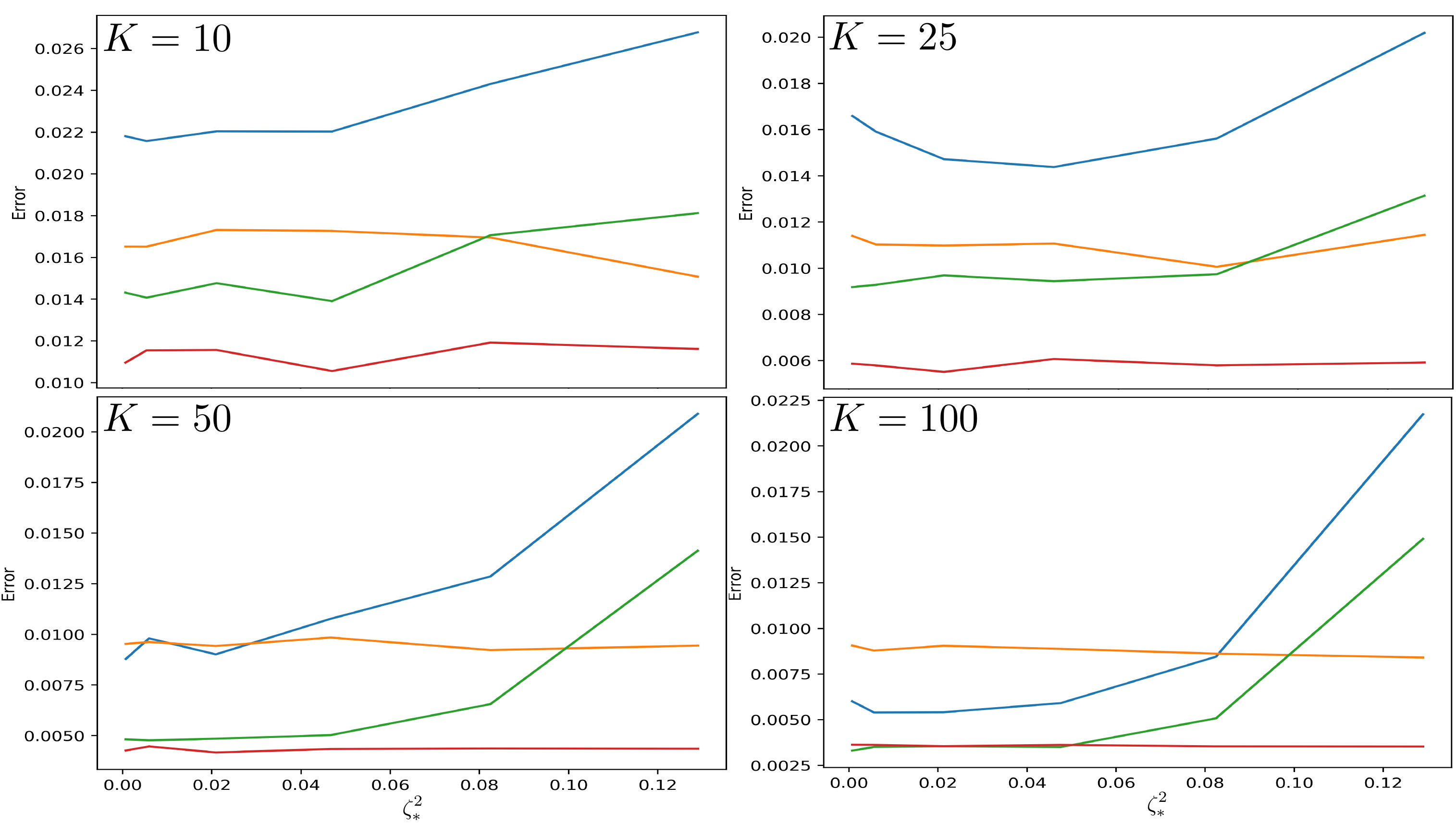}
\small \caption[Empirical performance of heterogeneous intermittent communication algorithms applied to a logistic regression problem.]{Binary logistic regression between even vs odd digits of MNIST. Twenty-five ``tasks'' were constructed, one for each combination of $i$ vs $j$ for even $i$ and odd $j$. For $p \in \{0,20,40,60,80,100\}$, we assigned to each of $M=25$ machines $p\%$ data from task $m$, and $(100-p)\%$ data from a mixture of all tasks. For several choices of $R$ and $K$, we plot the error (averaged over four runs) versus the value of $\sdiff^2$ resulting from each choice of $p$. For both algorithms, we used the best fixed stepsize for each choice of $K$, $R$, and $\sdiff$ individually. Complete details are provided in \pref{app:heterogeneous-experiment-details}.
\label{fig:heterogeneous-local-sgd-experiments}}
\end{figure}

\subsection{Conclusion}\label{subsec:local-sgd-conclusion}\label{subsec:inner-outer}

In the homogeneous setting we showed that: (1) Local SGD attains very low error for quadratic objectives, and strictly dominates the baselines of Single-Machine SGD and Minibatch SGD; (2) for general objectives, Local SGD is always at least as good as Single-Machine SGD, and it is strictly better than Minibatch SGD when communication is relatively infrequent ($K \gtrsim R$); and (3) Local SGD is strictly worse than Minibatch SGD when communication is relatively frequent ($K \lesssim \sqrt{R}$), and it can even be worse than Thumb-Twiddling SGD. 

In the heterogeneous setting, Local SGD compares relatively less favorably against Minibatch SGD. For $\sdiff^2$-heterogeneous functions, no existing analysis shows any improvement over Minibatch SGD in any regime, and our lower bound shows that no such improvement is possible as soon as $\sdiff^2 \gtrsim 1/R$. On the other hand, when the heterogeneity is bounded everywhere by $\bar{\zeta}^2$, then we show that Local SGD can improve over Minibatch SGD, at least when $\bar{\zeta}^2 \lesssim 1/R$.

To better understand the relationship between Minibatch SGD and Local SGD, and for thinking about how to improve over them, it is useful to consider a unified algorithm that interpolates between them.  
This involves taking SGD steps locally with one stepsize, and then when the machines communicate, they take a step in the resulting direction with a second, different stepsize.  Such a dual-stepsize approach was already presented and analyzed as \textsc{FedAvg} by \citet{karimireddy2019scaffold}.
We will refer to these two stepsizes as ``inner'' and ``outer'' stepsizes, respectively, and consider
\begin{equation}\label{eq:inner-outer-updates}
\begin{aligned}
x_{r,k}^m &= x_{r,k-1}^m - \eta_{\textrm{inner}}g(x_{r,k-1}^m; z_{r,k-1}^m) &\quad\forall_{m\in [M], k \in [K]}\\
x_{r+1,0}^m &= x_{r,0}^m - \eta_{\textrm{outer}}\frac{1}{M}\sum_{n=1}^M \sum_{k=1}^{K}g(x_{r,k-1}^m; z_{r,k-1}^m)  &\quad\forall_{m\in [M], r \in [R]}
\end{aligned}
\end{equation}
Choosing $\eta_{\textrm{inner}} = 0$, this is equivalent to Minibatch SGD with stepsize $\eta_{\textrm{outer}}$, and choosing $\eta_{\textrm{inner}} = \eta_{\textrm{outer}}$ recovers Local SGD. Therefore, when the stepsizes are chosen optimally, this algorithm is always at least as good as both Minibatch and Local SGD. Therefore, using the inner-outer algorithm \eqref{eq:inner-outer-updates} with optimal stepsizes guarantees for $F_0 \in \mc{F}_0(H,B,M,\sdiff^2)$
\begin{equation}\label{eq:inner-outer-bound}
\E F_0(\hat{x}) - F_0^* \leq c\cdot\min \bigg\{ \frac{HB^2}{R} + \frac{\sigma B}{\sqrt{MKR}},  
\frac{HB^2}{KR} + \frac{\prn*{H\bar{\zeta}^2B^4}^{1/3}}{R^{2/3}} + \frac{\prn*{H\sigma^2B^4}^{1/3}}{K^{1/3}R^{2/3}} + \frac{\sigma_*B}{\sqrt{MKR}}  \; \bigg\}
\end{equation}
where the first option in the $\min$ is obtained by choosing $\eta_{\textrm{inner}}=0$ and the second by choosing $\eta_{\textrm{inner}}=\eta_{\textrm{outer}}$. We can also get a similar minimum of the Minibatch SGD and Local SGD rates in the strongly convex case and also in the homogeneous setting.

\section{The Intermittent Communication Setting}\label{sec:intermittent-communication-setting}

In the intermittent communication setting, $M$ parallel workers are used to optimize a single objective over the course of $R$ rounds. During each round, each machine sequentially and locally computes $K$ independent unbiased stochastic gradients of the global objective, and then all the machines communicate with each other. This captures, for example, the natural setting where multiple parallel workers  are available, and computation on each worker is much faster than communication between workers. It includes applications ranging from optimization using multiple cores or GPUs, to using a cluster of servers, to Federated Learning where workers are edge devices. 

As a concrete example, \citet{goyal2017accurate} were able to train a large ResNet-50 neural network on Imagenet in under an hour using SGD with very large minibatches, which was implemented in exactly the intermittent communication setting. Specifically, they used $M=256$ GPUs in parallel, computed $K$ stochastic gradients per communication between the GPUs, and performed roughly $R=10000$ updates/communications. This is, of course, just one example, but it is indicative of many practical uses of distributed optimization for training large machine learning models.

The theoretical properties of optimization algorithms in the intermittent communication setting have been widely studied for over a decade, with many approaches proposed and analyzed \citep{zinkevich2010parallelized,cotter2011better,dekel2012optimal,zhang2013divide,zhang2013communication,shamir2014distributed}, and obtaining new methods and improved analysis is still a very active area of research \citep{wang2017memory,stich2018local,wang2018cooperative,khaled2019better,haddadpour2019local,woodworth2020local}.
However, despite these efforts, we do not yet know which methods are optimal, what the minimax complexity is, and what methodological or analytical improvements might allow us to make further progress. 

A key issue in the existing literature is that known lower bounds for the intermittent communication setting depend only on the product $KR$ (i.e.~the total number of gradients computed on each machine over the course of optimization), and not on the number of rounds, $R$, and the number of gradients per round, $K$, separately. 
Thus, existing results cannot rule out the possibility that the optimal rate for fixed $T=KR$ can be achieved using only a single round of communication ($R=1$), since they do not distinguish between methods that communicate very frequently ($R = T$, $K = 1$) and methods that communicate just once ($R = 1$, $K = T$). The possibility that the optimal rate is achievable with $R=1$ was suggested by \citet{zhang2013communication}, and indeed we showed in \pref{subsec:local-sgd-homogeneous} that in the special case of quadratic objectives, Local SGD and Local AC-SA perform just as well with a single communication as they do with many rounds of communication. While it seems unlikely that a single round of communication suffices in the general case, none of our existing lower bounds are able to answer this extremely basic question.

In \pref{subsec:homogeneous-intermittent-minimax}, we resolve (up to a logarithmic factor) the minimax complexity of smooth, convex stochastic optimization in the homogeneous intermittent communication setting and we show that, generally speaking, a single round of communication does not suffice to achieve the min-max optimal rate. In \pref{subsec:homogeneous-convex-intermittent-minimax}, we prove lower bounds on the optimal rate of convergence with matching upper bounds for convex and strongly convex objectives, and in \pref{subsec:homogeneous-non-convex-intermittent-minimax}, we prove matching upper and lower bounds on the optimal rate for finding approximate stationary points of non-convex objectives.

Interestingly, in all of these cases we show that the combination of two extremely simple and na\"ive methods are optimal. In the convex setting, the methods are based on an accelerated SGD variant AC-SA \citep{lan2012optimal}. Specifically, we show that the better of the following methods is optimal: Minibatch AC-SA which executes $R$ steps of AC-SA using minibatch gradients of size $MK$, and Single-Machine AC-SA which executes $KR$ steps of AC-SA on just one of the machines, completely ignoring the other $M-1$. Similarly, in the non-convex setting, the better of Minibatch SGD and Single-Machine SGD are optimal.

These methods might appear suboptimal: the Minibatch methods only perform one update per round of communication, and the Single-Machine methods only use one of the available workers! This perceived inefficiency has prompted many attempts at developing improved methods which take multiple steps on each machine locally in parallel including, in particular, numerous analyses of Local SGD \citep{zinkevich2010parallelized,dekel2012optimal,stich2018local,haddadpour2019local,khaled2019better,woodworth2020local,woodworth2020minibatch}, which we already discussed in \pref{sec:local-sgd}.  Nevertheless, we establish that one or the other is optimal in every regime, so more sophisticated methods cannot yield improved guarantees for arbitrary smooth objectives. Our results therefore highlight an apparent dichotomy between exploiting the available parallelism but not the local computation (Minibatch) and exploiting the local computation but not the parallelism (Single-Machine).

In addition to the homogeneous setting, in \pref{subsec:heterogeneous-convex-intermittent-minimax}, we also study the heterogeneous setting. Here, we also prove matching upper and lower bounds for convex and strongly convex objectives which establishes that Minibatch AC-SA is minimax optimal.

Our lower bounds apply quite broadly, including to the settings covered by the bulk of the existing work on stochastic first-order optimization in the intermittent communication setting. However, like many lower bounds, we should not interpret them to mean that progress is impossible, and that we are stuck with na\"ive algorithms like Minibatch SGD. Instead, these results indicate that we need to modify our assumptions in order to develop better methods.
In \pref{sec:breaking-the-lower-bounds} we explore several additional assumptions that allow---or might plausibly allow---for circumventing the lower bounds in various ways. These include when the third derivative of the objective is bounded (as in recent work by \citet{yuan2020federated}), when the objective has a certain statistical learning-like structure, or when the algorithm has access to a more powerful oracle.

\begin{table}
\renewcommand{\arraystretch}{1.2}
\centering
\begin{tabular}{ l l l }
\toprule
\textbf{Setting} & \textbf{Function Class} & \textbf{Minimax Error}  \\ 
\midrule
\multirow{3}{*}{Homogeneous} & $\mc{F}_0(H,B)$ & $\frac{HB^2}{K^2R^2} + \frac{\sigma B}{\sqrt{MKR}} + \min\crl*{\frac{HB^2}{R^2},\, \frac{\sigma B}{\sqrt{KR}}}$  \\
\cmidrule{2-3}
& $\mc{F}_\lambda(H,\Delta)$ & $\Delta\exp\prn*{-\frac{c'\sqrt{\lambda}KR}{\sqrt{H}}} + \frac{\sigma^2}{\lambda MKR} + \min\crl*{\Delta\exp\prn*{-\frac{c'\sqrt{\lambda}R}{\sqrt{H}}},\, \frac{\sigma^2}{\lambda KR}}$ \\
\cmidrule{2-3}
& $\mc{F}_{-H}(H,\Delta)$ & $\min\crl*{\frac{\sqrt{H\Delta}}{\sqrt{KR}} + \frac{\sqrt{\sigma} (H\Delta)^{1/4}}{(KR)^{1/4}},\ \frac{\sqrt{H\Delta}}{\sqrt{R}} + \frac{\sqrt{\sigma}(H\Delta)^{1/4}}{(MKR)^{1/4}}}$ \\
\midrule
\multirow{2}{*}{Heterogeneous} & $\mc{F}_0(H,B)$ & $\frac{HB^2}{R^2} + \frac{\sigma B}{\sqrt{MKR}}$ \\
\cmidrule{2-3}
& $\mc{F}_\lambda(H,\Delta)$ & $\Delta\exp\prn*{-\frac{c'\sqrt{\lambda}R}{\sqrt{H}}} + \frac{\sigma^2}{\lambda MKR}$ \\
\bottomrule
\end{tabular}
\caption{A summary of the results in \pref{sec:intermittent-communication-setting}, with constant and logarithmic factors omitted. \label{tab:intermittent-communication-setting-results}}
\end{table}

\subsection{The Homogeneous Setting}\label{subsec:homogeneous-intermittent-minimax}

We begin with the homogeneous setting, where all of the algorithm's queries are to the same stochastic gradient oracle which gives an unbiased estimate of $\nabla F$. More precisely, each vertex of the intermittent communication graph corresponds to an oracle $\mc{O}_v = \mc{O}_g^\sigma$, which is an ``independent-noise'' oracle (see \pref{subsec:the-oracle}) which just gives an unbiased estimate of $\nabla F$ with variance bounded by $\sigma^2$ that is independent of all other oracle queries. In \pref{sec:breaking-the-lower-bounds}, we will discuss other types of first-order oracles that have additional structure.

The proofs of our lower bounds generally follow the approach outlined in \pref{subsec:high-level-lower-bound-approach}. As was discussed there, part of the argument hinges on the norm of the algorithm's queries being bounded so that the algorithm cannot ``cheat'' and get a large inner product with the unknown columns of $U$ by simply guessing a random vector with huge norm. In the proof of \pref{thm:generic-graph-lower-bound}, we were able to avoid this issue by constructing a function for which querying the gradient oracle at a point with norm larger than $5\nrm{x^*}$ gives essentially no information. However, for the constructions used in this section this is more difficult, and we will instead rely on an explicit bound on the norm of the algorithm's queries. Specifically, we define $\mc{A}^\gamma(\mc{G}_{\textrm{I.C.}},\mc{O}_g^\sigma)$ to be the class of optimization algorithms in the intermittent communication setting with stochastic gradient oracles $\mc{O}_g^\sigma$ for which all queries are bounded in norm by $\nrm{x^m_{k,r}} \leq \gamma$. The bound $\gamma$ is arbitrary in the sense that our lower bounds apply for any $\gamma$ in a sufficiently large dimension of at least $\Omega(\gamma^2)$. However, our lower bounds do not apply to algorithms which query the oracle at unboundedly large points, or which query the oracle at points with norm that depend on the dimension. The restriction that the norm of the algorithm's queries is bounded can also be removed if the algorithm is deterministic or span-restricted/zero-respecting.

\subsubsection{Convex Objectives}\label{subsec:homogeneous-convex-intermittent-minimax}

We begin with our lower bound in the convex, smooth, and homogeneous intermittent communication setting:
\begin{restatable}{theorem}{homogeneousconvexlowerbound}\label{thm:homogeneous-convex-lower-bound}
For any $H,B,\sigma^2,\gamma$, there exists a function $F_0 \in \mc{F}_0(H,B)$ in any dimension 
\[
D \geq c\cdot\prn*{KR + \prn*{\frac{\gamma^2KR}{B^2} + \frac{H^2\gamma^2KR\prn*{\frac{\sqrt{\sigma}KR}{\sqrt{HB}} + MKR}}{\sigma^2}}\log(MK^2R^2)}
\]
such that the output of any algorithm in $\mc{A}^\gamma(\mc{G}_{\textrm{I.C.}},\mc{O}_g^\sigma)$ will have suboptimality at least
\[
\E F_0(\hat{x}) - F_0^* \geq c\cdot\prn*{\frac{HB^2}{K^2R^2} + \min\crl*{\frac{\sigma B}{\sqrt{MKR}},\ HB^2} + \min\crl*{\frac{\sigma B}{\sqrt{KR}},\ \frac{HB^2}{R^2(1 + \log M)^2}}}
\]
\end{restatable}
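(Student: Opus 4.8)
The bound is a sum of three pieces: an ``acceleration'' term $HB^2/(K^2R^2)$, a ``statistical'' term $\min\crl{\sigma B/\sqrt{MKR},\,HB^2}$, and an ``intermittency'' term $\min\crl{\sigma B/\sqrt{KR},\,HB^2/(R^2(1+\log M)^2)}$. The plan is to build three hard instances, one per term, on mutually orthogonal coordinate blocks, scale them down by a constant so their direct sum lies in $\mc{F}_0(H,B)$, and argue the suboptimalities add (an algorithm facing the sum, restricted to any one block, is effectively an algorithm for that block's instance with the same oracle budget, since the other blocks contribute nothing to the gradient or noise in those coordinates). The first two pieces are delivered by earlier results, so the real work is the third instance. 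Each instance has the form $F(U^\top x)$ for a uniformly random orthogonal $U$, with the objective ``flattened'' as in \pref{subsec:high-level-lower-bound-approach} so that any algorithm in $\mc{A}^\gamma(\mc{G}_{\textrm{I.C.}},\mc{O}_g^\sigma)$ is, with high probability, \emph{approximately zero-respecting}: its queries have inner product at most $\alpha$ with every not-yet-revealed column of $U$. This is exactly where the norm bound $\nrm{x^m_{k,r}}\le\gamma$ and the dimension requirement on $D$ enter — one needs $\alpha\sqrt{D}\gg\gamma$ in the relevant block so a bounded-norm ``guess'' cannot reveal a fresh column, and the $\gamma^2$-factors in the stated $D$ track exactly this.

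\textbf{Terms 1 and 2.} Since $\textrm{Depth}(\mc{G}_{\textrm{I.C.}})=KR$, applying \pref{thm:generic-graph-lower-bound} to $\mc{G}_{\textrm{I.C.}}$ with an exact gradient oracle — which is the $\sigma=0$ special case of $\mc{O}_g^\sigma$, hence a legal instance — gives $\epsilon\ge HB^2/(128K^2R^2)$; the far-field flattening built into \pref{lem:prox-construction} removes any dependence on $\gamma$ here. For the second term, $\abs{\mc{V}}=MKR$, so \pref{lem:statistical-term-lower-bound} gives $\epsilon\ge c\min\crl{\sigma B/\sqrt{MKR},\,HB^2}$.

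\textbf{Term 3: the intermittency construction.} I would use a robustly chain-like function of the \pref{lem:prox-construction} type, but with chain length $N$ left as a free parameter, embedded in a rotated block, and with the stochastic oracle equal to the exact gradient plus independent Gaussian noise of variance $\le\sigma^2$ calibrated so that a \emph{single} query reveals the next link of the chain with probability at most $\tfrac12$. The key lemma to prove is: conditioned on the approximately-zero-respecting event, in each round of communication the largest index $i$ with $\abs{\inner{U_i}{\cdot}}>\alpha$ appearing among the $MK$ within-round queries and the round's consensus point grows by at most $c_0(1+\log M)$. Within a round the $M$ machines cannot communicate, so the only ways to push the revealed prefix forward are (i) to guess a direction, which fails w.h.p.\ for bounded-norm queries in dimension $\Omega(\gamma^2)$, or (ii) to let a noisy gradient reveal a fresh link; since each of the $MK$ within-round queries does the latter with probability $\le\tfrac12$, and advancing the prefix by $j$ fresh links inside one round requires $j$ consecutive successes along a single machine's local trajectory (probability $\le 2^{-j}$), a union bound over the $M$ machines shows the per-round advance exceeds $c_0(1+\log M)$ with probability $<1/(2R)$; a final union bound over the $R$ rounds forces $\abs{\inner{U_N}{\hat x}}\le\alpha$ whenever $N\ge 2c_0 R(1+\log M)$, so the suboptimality estimate of the construction (as in \pref{lem:prox-construction}) yields $\E F_0(\hat x)-F_0^*\gtrsim HB^2/N^2\asymp HB^2/(R^2(1+\log M)^2)$. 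Calibrating the noise instead so that even links \emph{inside} the revealed prefix cannot be driven below their statistical floor — where a single machine has, over a round, collected only $KR$ samples bearing on its portion of the chain — gives the alternative bound $\gtrsim\sigma B/\sqrt{KR}$; the adversary chooses whichever calibration (equivalently, which $N$) is larger, producing the stated $\min$. The dimension requirement in the theorem is precisely what makes all three approximately-zero-respecting events hold simultaneously.

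\textbf{Main obstacle.} The hard part is making the ``per-round advance $\le c_0(1+\log M)$'' lemma rigorous for \emph{arbitrary} randomized algorithms rather than zero-respecting ones: one must track the conditional law of the still-hidden columns of $U$ given the entire within-round transcript across all $M$ machines — these are correlated through the shared consensus point that begins the round — and show the hidden columns stay close to uniform on the orthogonal complement. This is the delicate dependency bookkeeping that made the analogous single-machine arguments long \citep{woodworth16tight,carmon2017lower1}, now compounded by the $M$-fold within-round parallelism. A secondary difficulty is choosing $\alpha$, $N$, and the noise covariance so that all three terms emerge with matching constants and the resulting dimension bound is the one quoted; in particular, extracting $\log M$ (rather than $\log(MK)$ or $M$) from the union bound requires the per-query success probability to be bounded by an absolute constant below $1$ uniformly, which tightly constrains the calibration of the noise against $\alpha$ and $H$.
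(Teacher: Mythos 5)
Your high-level architecture matches the paper's: terms one and two do come from \pref{thm:generic-graph-lower-bound} and \pref{lem:statistical-term-lower-bound}, the third term does require a fresh rotated, flattened chain with a per-round progress bound of order $1+\log M$, and the role of the norm bound $\gamma$ and of the dimension $D\gtrsim \gamma^2/\alpha^2$ is exactly as you describe. But the core of your term-three construction has a genuine gap: an oracle that is ``exact gradient plus independent Gaussian noise calibrated so a single query reveals the next link with probability at most $\tfrac12$'' cannot deliver the bound, for two reasons. First, even granting a clean success/failure structure, your counting is wrong: advancing the prefix by $j$ links within a round does \emph{not} require $j$ consecutive successes (a failed query can simply be retried at the same progress level), it requires only $j$ successes among the $K$ sequential queries of some machine. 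With success probability $\tfrac12$ per query, a single machine advances $\approx\mathrm{Binomial}(K,\tfrac12)\approx K/2$ links per round, so the per-round advance is $\Theta(K)$, not $O(1+\log M)$, and you recover nothing beyond $HB^2/(K^2R^2)$. To get the paper's per-round bound $O(Kp+\log M)$ (a max of $M$ binomials), the reveal probability $p$ must be driven far below a constant, down to $p\approx(H^2B^2/(\sigma^2K^3R^3))^{1/4}$; the $\sigma B/\sqrt{KR}$ branch of the min is then just $HB^2/(KRp)^2$ at this variance-constrained minimum of $p$ — not a per-machine ``statistical floor over the revealed prefix,'' which in the homogeneous setting would anyway be $\sigma B/\sqrt{MKR}$ since all $MKR$ samples bear on the prefix.

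Second, additive query-independent Gaussian noise cannot realize a small reveal probability, or indeed any well-defined Bernoulli ``reveal'' event. The exact gradient at a point of progress $j$ always contains the component $\psi'(\cdot)\,U_{j+1}$ along the next column, so every response leaks it: a single noisy response gives an estimate of $U_{j+1}$ whose correlation with the truth is about $\ell_1/\sigma$ independently of $D$, and querying at norm $\gamma$ along that estimate produces an inner product $\approx\gamma\ell_1/\sigma$ with $U_{j+1}$, which exceeds any admissible flattening threshold $\alpha$ once $\gamma$ is large (the theorem must hold for every $\gamma$, with only $D$ allowed to grow); moreover repeated queries at the same point can be averaged to suppress the noise by $\sqrt{m}$. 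So no calibration with variance $\le\sigma^2$ makes the response, with probability $1-p$, measurable with respect to the revealed prefix alone — which is precisely the property the max-of-binomials argument needs. The paper's proof handles this by replacing additive noise with a censoring oracle (the $(\alpha,p,\delta)$-robust zero chain of \pref{def:robust-zero-chain-oracle}): with probability $1-p$ it returns the gradient of the objective with the next link's term removed, importance-weighted for unbiasedness, and its variance is $O(\ell_1^2/p)$. This is also why the chain is built from a Huber/arctan-type $\psi$ with $\abs{\psi'}\le\ell_1$ rather than the quadratic or \pref{lem:prox-construction}-style pieces you propose to reuse: the uniform bound on $\psi'$ is exactly what lets $p$ be taken that small within the $\sigma^2$ budget, and is the essential non-quadratic ingredient (for quadratics, \pref{cor:acc-local-sgd-optimal-quadratics} shows no such lower bound can hold).
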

\begin{proofsketch}
The first two terms of this lower bound follow directly from \pref{thm:generic-graph-lower-bound} and \pref{lem:statistical-term-lower-bound}; the $\frac{HB^2}{K^2R^2}$ term corresponds to the error when optimizing a function using a deterministic gradient oracle, and the $\frac{\sigma B}{\sqrt{MKR}}$ term is a very well-known statistical limit \citep{nemirovskyyudin1983}. The distinguishing feature of our lower bound is the third term, which depends differently on $K$ than on $R$. For quadratics, Local AC-SA attains the rate given by just the first two terms, and actually does depend only on the product $KR$, as shown in \pref{cor:acc-local-sgd-optimal-quadratics}. Consequently, proving our lower bound necessitates going beyond quadratics. In contrast, all or at least most of the lower bounds for sequential smooth convex optimization apply even for quadratic objectives.

We start by describing the proof of the theorem for zero-respecting algorithms, and we will discuss how it is extended to arbitrary algorithms at the end. The proof uses the following non-quadratic hard instance:
\begin{equation}\label{eq:def-F-main}
F(x) = \psi'(-\zeta)x_1 + \psi(x_N) + \sum_{i=1}^{N-1}\psi(x_{i+1} - x_i)
\end{equation}
where $\psi:\R\to\R$ is defined as
\begin{equation}
\psi(x) := \frac{\sqrt{H}x}{2\beta}\arctan\prn*{\frac{\sqrt{H}\beta x}{2}} - \frac{1}{2\beta^2}\log\prn*{1+\frac{H\beta^2x^2}{4}}
\end{equation}
\begin{wrapfigure}{r}{0.23\textwidth}
\begin{center}
\vspace{-9mm}
{\includegraphics[width=\linewidth]{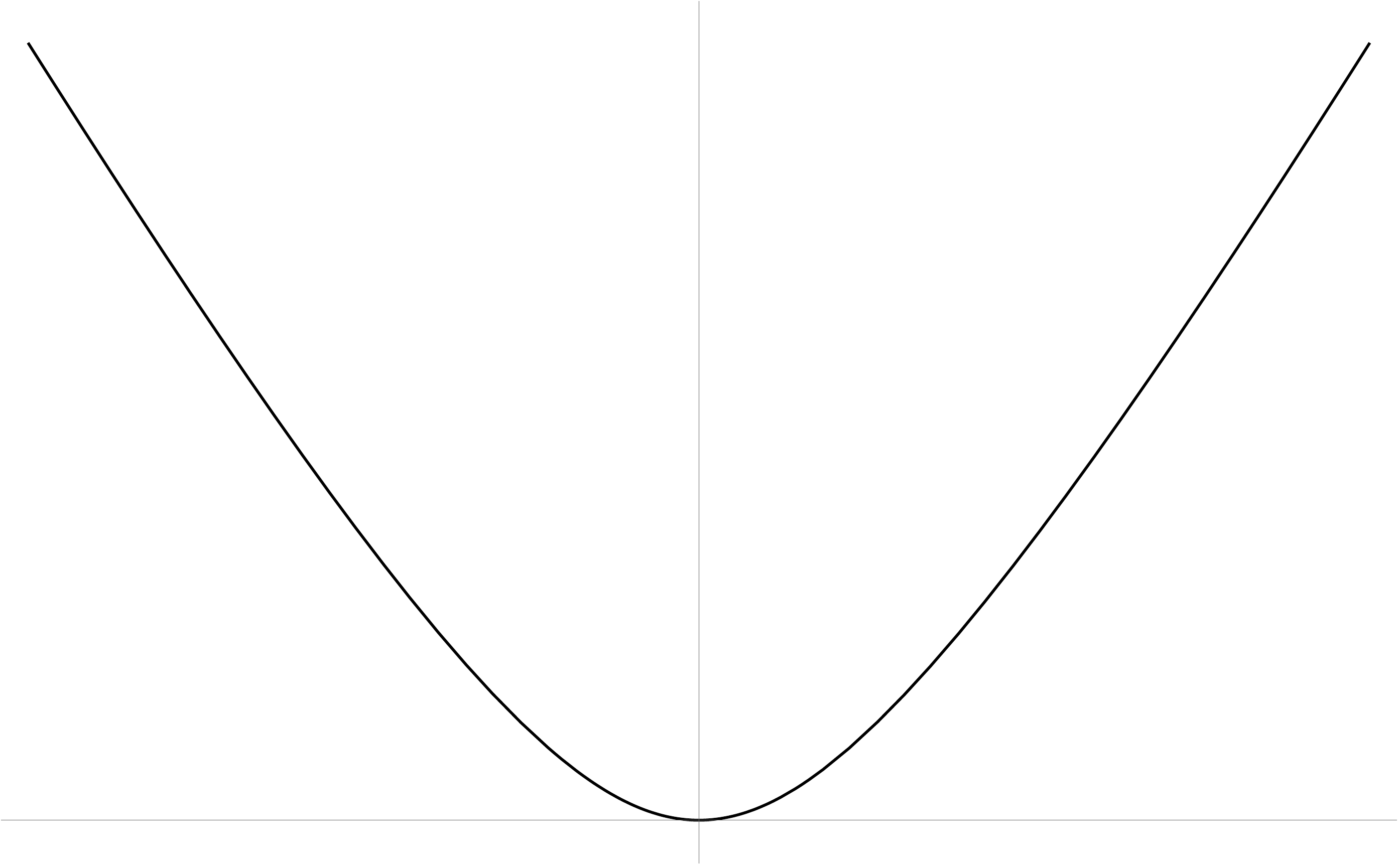}}
\small The function $\psi(x)$
\end{center}
\vspace{-7mm}
\end{wrapfigure}
and where $\beta$, $\zeta$, and $N$ are hyperparameters that are chosen depending on $H,B,\sigma,M,K,R$ so that $F$ satisfies the necessary conditions. This construction closely resembles the classic lower bound for deterministic first-order optimization of \citet{nesterov2004introductory}, which essentially uses $\psi(x) = x^2$. 
To describe our stochastic gradient oracle, we will use $\prog{0}(x) := \max\crl*{j\,:\, x_j \neq 0}$, which denotes the highest index of a non-zero coordinate of $x$. We also define $F^-$ to be equal to the objective with the $\prog{0}(x)^{\textrm{th}}$ term removed:
\begin{equation}\label{eq:thm10-gradient-oracle}
F^-(x) = \psi'(-\zeta) x_1 + \psi(x_N) + \sum_{i=1}^{\prog{0}(x)-1} \psi(x_{i+1} - x_i) + \sum_{i=\prog{0}(x)+1}^{N-1} \psi(x_{i+1} - x_i)
\end{equation}
The stochastic gradient oracle for $F$ is then given by 
\begin{equation}\label{eq:def-g-main}
g(x) = \begin{cases}
\nabla F^-(x) & \textrm{with probability } 1-p \\
\nabla F(x) + \frac{1-p}{p}\prn*{\nabla F(x) - \nabla F^-(x)} & \textrm{with probability } p 
\end{cases}
\end{equation}
This stochastic gradient oracle resembles the one used by \citet{arjevani2019lower} to prove lower bounds for non-convex optimization, and its key property is that $\P\brk*{\prog{0}(g(x)) \leq \prog{0}(x)} = 1 - p$. Therefore, for zero-respecting algorithms, each oracle access only allows the algorithm to increase its progress with probability $p$. 
The rest of the proof revolves around bounding the total progress of the algorithm and showing that if $\prog{0}(x) \leq \frac{N}{2}$, then $x$ has high suboptimality.

Since each machine makes $KR$ sequential queries and only makes progress with probability $p$, the total progress scales like $KRp$. By taking $p$ smaller, we decrease the amount of progress made by the algorithm, and therefore increase the lower bound. Indeed, when $p \approx 1/K$, the algorithm only increases its progress by about $\log M$ per round, which gives rise to the key $(HB^2)/(R^2\log^2 M)$ term in the lower bound. However, we are constrained in how small we can take $p$ since our stochastic gradient oracle has variance 
\begin{equation}
\sup_x \E\nrm*{g(x) - \nabla F(x)}^2 = \frac{2(1-p)}{p}\sup_x \psi'(x)^2
\end{equation}
This is where our choice of $\psi$ comes in. Specifically, we chose the function $\psi$ to be convex and smooth so that $F$ is, but it is also Lipschitz:
\begin{equation}
\psi'(x) = \frac{\sqrt{H}}{2\beta}\arctan\prn*{\frac{\sqrt{H}\beta x}{2}} 
\in \brk*{-\frac{\pi\sqrt{H}}{4\beta},\,\frac{\pi\sqrt{H}}{4\beta}} 
\end{equation}
Notably, this Lipschitz bound on $\psi$, which implies a bound on $\nrm{\nabla F(x)}_{\infty}$, is the key non-quadratic property that allows for our lower bound. Since $\psi'$ is bounded, we are able to able to choose $p \approx \sigma^{-2}\beta^{-2}$ without violating the variance constraint on the stochastic gradient oracle. Carefully balancing $\beta$ completes the argument. 

To extend this argument to randomized algorithms that may not be zero-respecting, we follow the approach described in \pref{subsec:high-level-lower-bound-approach} by introducing a random rotation $U$ and ``flattening out'' the $\psi$ functions around the origin. We defer the remaining details to \pref{app:intermittent-communication-homogeneous-convex-lower-bound}.
\end{proofsketch}

To complement the lower bound \pref{thm:homogeneous-convex-lower-bound} and to establish the minimax error for the convex, smooth, and homogeneous intermittent communication setting, we prove a nearly matching upper bound. This upper bound is attained by either Minibatch AC-SA or Single-Machine AC-SA. We recall from \pref{subsec:initially-introduce-ac-sa-convex-rate} that AC-SA (see \pref{alg:ac-sa}) is an accelerated variant of SGD \citep{lan2012optimal}. 

The Minibatch AC-SA algorithm corresponds to taking $R$ steps of AC-SA using minibatch stochastic gradients of size $MK$. This can be implemented in the intermittent communication setting by having all $M$ machines calculate $K$ stochastic gradients at the same point during each round of communication. When the machine do communicate, they can combine all $MK$ of these gradients into one large minibatch and compute a single AC-SA update.

The Single-Machine AC-SA algorithm corresponds to simply taking $KR$ steps of AC-SA using minibatch stochastic gradients of size $1$. This can be implemented in the intermittent communication setting by simply implementing the algorithm on a single machine, and ignoring the remaining $M-1$ workers altogether. 

While these approaches may seem simple, the following theorem shows that the better of the two is optimal:
\begin{restatable}{theorem}{homogeneousconvexupperbound}\label{thm:homogeneous-convex-upper-bound}
For any $H,B,\sigma^2$, either Minibatch AC-SA or Single-Machine AC-SA guarantees that for any $F_0 \in \mc{F}_0(H,B)$
\[
\E F_0(\hat{x}) - F_0^* \leq c\cdot\min\crl*{\frac{HB^2}{(KR)^2} + \frac{\sigma B}{\sqrt{KR}},\ \frac{HB^2}{R^2} + \frac{\sigma B}{\sqrt{MKR}},\ HB^2}
\]
\end{restatable}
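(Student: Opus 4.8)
The plan is to establish the bound by analyzing the two candidate algorithms separately and taking the better of the two in each regime. The key observation is that both Minibatch AC-SA and Single-Machine AC-SA are instances of AC-SA (\pref{alg:ac-sa}) applied to the population objective $F_0 \in \mc{F}_0(H,B)$ with a stochastic gradient oracle of a particular variance and for a particular number of steps, so I can simply invoke the known guarantee of AC-SA from \citet{lan2012optimal} (quoted in \pref{subsec:initially-introduce-ac-sa-convex-rate}): for any $F_0 \in \mc{F}_0(H,B)$, running AC-SA for $T$ steps with stochastic gradients of variance at most $v$ yields $\E F_0(\xag_T) - F_0^* \leq c\cdot\prn*{\frac{HB^2}{T^2} + \frac{\sqrt{v}\,B}{\sqrt{T}}}$.

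First I would handle Single-Machine AC-SA. This is exactly $KR$ steps of AC-SA run on a single machine, using each round's $K$ local queries as $K$ consecutive AC-SA updates with minibatch size $1$, so the variance per update is $\sigma^2$ and the number of steps is $T = KR$. Plugging in gives $\E F_0(\hat x) - F_0^* \leq c\cdot\prn*{\frac{HB^2}{(KR)^2} + \frac{\sigma B}{\sqrt{KR}}}$, which is the first term in the $\min$. Second, Minibatch AC-SA: here all $M$ machines compute their $K$ gradients at a common point within each round, these $MK$ stochastic gradients are averaged into a single minibatch gradient of variance $\sigma^2/(MK)$, and one AC-SA update is performed per round, so $T = R$ and $v = \sigma^2/(MK)$. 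This gives $\E F_0(\hat x) - F_0^* \leq c\cdot\prn*{\frac{HB^2}{R^2} + \frac{\sigma B}{\sqrt{MKR}}}$, the second term. Finally, the trivial bound $HB^2$ is attained for free: by $H$-smoothness of $F_0$ and $\nrm{x^*} \leq B$, we have $F_0(0) - F_0^* \leq \frac{H}{2}\nrm{x^*}^2 \leq \frac{HB^2}{2}$, so simply returning $0$ achieves suboptimality at most $\frac{HB^2}{2}$. Taking, for each problem instance, whichever of the three outputs is best establishes the claimed minimax upper bound.

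I should also verify that both algorithms genuinely belong to $\mc{A}(\mc{G}_{\textrm{I.C.}},\mc{O}_g^\sigma)$ — i.e., that the dependency structure of their queries respects the intermittent communication graph. For Single-Machine AC-SA this is immediate since it only uses vertices $v^1_{k,r}$ along the sequential path on machine $1$. For Minibatch AC-SA, within round $r$ all $M$ machines query at the same point $x_r$, which depends only on oracle responses from rounds $r' < r$; this is admissible because $\textrm{Ancestors}(v^m_{k,r}) \supseteq \crl{v^{m'}_{k',r'} : r' < r}$. There is no real obstacle here; this step is essentially bookkeeping, but it must be stated to make the comparison with the lower bound \pref{thm:homogeneous-convex-lower-bound} fair. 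The main (and only mildly nontrivial) point is to make sure the variance-reduction accounting for the minibatch gradient is correct — the $MK$ stochastic gradients within a round are mutually independent given the common query point $x_r$, so their average has variance $\sigma^2/(MK)$ — and to observe that since the two candidate rates differ in how they trade off $K$ against $R$, the minimum of the two automatically adapts to whichever regime the parameters lie in, matching the lower bound up to the logarithmic $(1+\log M)^2$ factor appearing in \pref{thm:homogeneous-convex-lower-bound}.
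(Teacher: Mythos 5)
Your proposal is correct and matches the paper's own proof: both simply instantiate the AC-SA guarantee of \citet{lan2012optimal} with $T=KR$, variance $\sigma^2$ (Single-Machine) and $T=R$, variance $\sigma^2/(MK)$ (Minibatch), and obtain the $HB^2$ term from $H$-smoothness via $F_0(0)-F_0^*\leq \frac{H}{2}\nrm{x^*}^2$. The extra remarks on variance accounting and membership in $\mc{A}(\mc{G}_{\textrm{I.C.}},\mc{O}_g^\sigma)$ are fine but not needed beyond what the paper states.
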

A simple proof is given in \pref{app:intermittent-communication-homogeneous-convex-upper-bound}, and requires simply plugging the number of updates and bound on the variance of the minibatch stochastic gradients into the existing guarantee for AC-SA.

\subsubsection{Strongly Convex Objectives}\label{subsec:homogeneous-strongly-convex-intermittent-minimax}

We also show nearly matching upper and lower bounds in the strongly convex setting:
\begin{restatable}{theorem}{homogeneousstronglyconvexlowerbound}\label{thm:homogeneous-strongly-convex-lower-bound}
For any $H,B,\sigma^2,\gamma$, and dimension 
\[
D \geq c\cdot\prn*{KR + \prn*{\frac{\gamma^2KR}{B^2} + \frac{H^2\gamma^2KR\prn*{\frac{\sqrt{\sigma}KR}{\sqrt{HB}} + MKR}}{\sigma^2}}\log(MK^2R^2)}
\]
there exist $\lambda$ and $\Delta$, and an objective $F_\lambda \in \mc{F}_\lambda(H,\Delta)$ in dimension $D$ such that the output of any algorithm in $\mc{A}^\gamma(\mc{G}_{\textrm{I.C.}},\mc{O}_g^\sigma)$ will have suboptimality at least
\[
\E F_\lambda(\hat{x}) - F_\lambda^* \geq c\cdot\prn*{\Delta\exp\prn*{-\frac{c'\sqrt{\lambda}KR}{\sqrt{H}}} + \min\crl*{\frac{\sigma^2}{\lambda MKR},\,\Delta} + \min\crl*{\frac{\sigma^2}{\lambda KR},\, \Delta\exp\prn*{-\frac{c'\sqrt{\lambda}R\log M}{\sqrt{H}}}}}
\]
\end{restatable}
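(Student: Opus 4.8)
The plan is to mirror the structure of the convex lower bound \pref{thm:homogeneous-convex-lower-bound}, decomposing the claimed bound into three contributions handled by three separate arguments. The term $\Delta\exp(-c'\sqrt{\lambda}KR/\sqrt{H})$ is exactly \pref{thm:generic-graph-oracle-strongly-convex} applied with $\textrm{Depth}(\mc{G}_{\textrm{I.C.}}) = KR$, and $\min\crl*{\sigma^2/(\lambda MKR),\Delta}$ is the statistical term of \pref{lem:statistical-term-lower-bound}. The essential new content is the third term $\min\crl*{\sigma^2/(\lambda KR),\ \Delta\exp(-c'\sqrt{\lambda}R\log M/\sqrt{H})}$, which, as \pref{cor:acc-local-sgd-optimal-quadratics} shows (Local AC-SA on quadratics depends only on the product $KR$), necessarily requires a non-quadratic hard instance.

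For the third term I would use a strongly convex analogue of the hard instance \eqref{eq:def-F-main}, namely $F_\lambda(x) = \frac{\lambda}{2}\nrm*{x}^2 + G_U(x)$, where $G_U$ is, up to an overall scaling and a uniformly random rotation $U$ with $U^\top U = I$, the chain $\psi'(-\zeta)\inner{U_1}{x} + \psi(\inner{U_N}{x}) + \sum_{i=1}^{N-1}\psi(\inner{U_{i+1}-U_i}{x})$ built from the flattened-out $\arctan$-based $\psi$ of the convex proof. Writing the quadratic part as $\frac{\lambda}{2}\nrm*{x}^2$ rather than $\frac{\lambda}{2}\nrm*{U^\top x}^2$ is important: its gradient $\lambda x$ is independent of $U$, so only $G_U$ leaks information about the rotation, and near the relevant scale $\psi$ is approximately quadratic so the chain part contributes an effective condition number $\Theta(H/\lambda)$. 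The stochastic gradient oracle is the ``drop-the-leading-nonzero-term'' oracle \eqref{eq:def-g-main}, which satisfies $\P[\prog{0}(g(x)) \le \prog{0}(x)] = 1-p$, so a zero-respecting algorithm advances along the chain only with probability $p$ per query.

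The key steps, in order: (i) show for zero-respecting algorithms that with $p \asymp 1/K$ the total progress $\prog{0}(\hat{x})$ after $R$ rounds of $K$ queries on $M$ machines is $O(R\log M)$ with constant probability --- a Chernoff/union bound over machines and rounds, since within a round the collective advance is the maximum over $M$ independent $\mathrm{Binomial}(K,p)$ variables, which concentrates at $O(\log M)$; (ii) establish the strongly convex chain estimate that any $x$ with $\prog{0}(x) \le j$ has $F_\lambda(x) - F_\lambda^* \ge \Delta\exp(-c'\sqrt{\lambda/H}\,j)$, and take $j = O(R\log M)$ to get the exponential-in-$R\log M$ term; (iii) bound the oracle variance $\sup_x\E\nrm*{g(x)-\nabla F_\lambda(x)}^2 = \tfrac{2(1-p)}{p}\sup_x\psi'(\cdot)^2 \lesssim \tfrac{H}{p\beta^2}$, so that the budget $\sigma^2$ forces $p \gtrsim H/(\sigma^2\beta^2)$ --- and since a smaller $\beta$ also reduces the chain's smoothness and hence speeds its decay, balancing $\beta$ against this constraint is exactly what produces the $\min$ with $\sigma^2/(\lambda KR)$; (iv) calibrate $\lambda,\Delta,N,\beta,\zeta$ so that $F_\lambda \in \mc{F}_\lambda(H,\Delta)$ in the stated dimension; and (v) remove the zero-respecting restriction as in \pref{subsec:high-level-lower-bound-approach} and \pref{thm:generic-graph-lower-bound}, via the random rotation $U$, flattening $\psi$ on $[-\alpha,\alpha]$ with $\alpha \asymp \gamma/\sqrt{D}$, and concentration on the sphere to show every $\gamma$-bounded query is almost zero-respecting, with the $\frac{\lambda}{2}\nrm*{x}^2$ term handled separately because it does not involve $U$; the extra dimension requirement then matches the one in \pref{thm:homogeneous-convex-lower-bound}.

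I expect the main obstacle to be the joint calibration in step (iv) together with reconciling (ii) and (iii): the target rate $\exp(-c'\sqrt{\lambda}R\log M/\sqrt{H})$ wants the chain's effective condition number to be $\Theta(H/\lambda)$, but the variance budget pushes $\psi$ toward being very flat (small $\beta$), which lowers the chain's smoothness and threatens to make the per-coordinate decay too fast. Threading this needle --- choosing $\beta$, $N$, $\zeta$, $\Delta$ so that simultaneously $F_\lambda$ has smoothness exactly $H$, strong-convexity floor exactly $\lambda$, initial gap $\Delta$, oracle variance at most $\sigma^2$, and chain decay $\sqrt{\lambda/H}$ per coordinate --- is the delicate computation, and is precisely where the $\min\crl*{\sigma^2/(\lambda KR),\,\cdot}$ structure appears. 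As an alternative one could try to derive the statement from \pref{thm:homogeneous-convex-lower-bound} through the reduction \pref{thm:elad-reduction}, as was done for \pref{thm:generic-graph-oracle-strongly-convex}, but that would only yield the bound for particular relationships among $\lambda$, $\Delta$, $\sigma$, $M$, $K$, $R$, so the direct construction is preferable.
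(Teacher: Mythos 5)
Your route is genuinely different from the paper's, and the difference matters. The paper does \emph{not} build a strongly convex hard instance at all: it proves \pref{thm:homogeneous-strongly-convex-lower-bound} by contradiction through the $\eladreduction$ reduction (\pref{thm:elad-reduction}, as described in \pref{subsec:lower-bounds-by-reduction}). One assumes an intermittent-communication algorithm achieving $R \lesssim \sqrt{H/(K^2\lambda)}\log(c'\Delta/\epsilon) + \min\crl{\sigma^2/(\lambda K\epsilon),\ \sqrt{H/(\lambda\log^2 M)}\log(c'\Delta/\epsilon)}$, converts it into a convex algorithm whose rate would violate \pref{thm:homogeneous-convex-lower-bound}, solves for $\epsilon$ to get the first and third terms simultaneously, and appends the statistical term from \pref{lem:statistical-term-lower-bound}. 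This is exactly the alternative you dismissed at the end as yielding the bound "only for particular relationships among $\lambda,\Delta,\sigma,M,K,R$" --- but notice that the theorem is phrased as "there exist $\lambda$ and $\Delta$" precisely because it is proved this way (the paper says as much after the theorem statement), so the reduction suffices for the statement as given, and it sidesteps every calibration issue your plan faces.

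As a proof, your proposal has a genuine gap at its crux. The only new content beyond \pref{thm:generic-graph-oracle-strongly-convex} and \pref{lem:statistical-term-lower-bound} is the third term, and for that you must simultaneously ensure: the Lipschitz/flattened $\psi$-chain plus $\frac{\lambda}{2}\nrm{x}^2$ has per-coordinate geometric decay $\exp(-c\sqrt{\lambda/H})$ in the suboptimality-vs-progress estimate (which requires the chain links near the minimizer's tail to retain curvature $\Theta(H)$, while the Lipschitz cap $\ell_1 \asymp \sqrt{H}/\beta$ needed for the variance bound $p \gtrsim \ell_1^2/\sigma^2$ with $p \asymp 1/K$ pushes $\beta$ large and flattens exactly that curvature), the initial gap equals $\Delta$, and the whole thing fits in $\mc{F}_\lambda(H,\Delta)$. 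You correctly identify this tension in step (iv) but do not resolve it, and it is not a routine verification --- it is the entire difficulty of a direct strongly convex IC lower bound, and it is unclear the claimed $\min\crl{\sigma^2/(\lambda KR),\ \Delta\exp(-c'\sqrt{\lambda}R\log M/\sqrt{H})}$ tradeoff falls out of this particular $\psi$ without a substantially new analysis (note that no such direct construction appears anywhere in the paper). A secondary issue: your first term is imported from \pref{thm:generic-graph-oracle-strongly-convex}, which itself only holds for its own reduction-determined pair $(\lambda,\Delta)$; since the theorem asserts all three terms for a \emph{single} $F_\lambda$ with one $(\lambda,\Delta)$, your modular assembly must reconcile these choices --- easiest would be to drop that import and extract the $\Delta\exp(-c\sqrt{\lambda/H}\,KR)$ term from the same construction via the $KR$ cap in \pref{lem:intermittent-communication-progress}, but that again presupposes the unexecuted calibration. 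So either carry out the direct construction in full, or follow the paper and run the contrapositive of \pref{thm:elad-reduction} against \pref{thm:homogeneous-convex-lower-bound}, which proves the theorem as stated with a short computation.
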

This is proven in \pref{app:intermittent-communication-homogeneous-strongly-convex-lower-bound} using the reduction between convex and strongly-convex objectives described in \pref{subsec:lower-bounds-by-reduction}, which explains the weaker statement. Nevertheless, as in the convex case, this lower bound is matched by the better of Minibatch and Single-Machine AC-SA\footnote{The AC-SA algorithm requires a slight modification in order to achieve the optimal rate for strongly convex objectives, see \pref{subsec:the-reduction}.}.
\begin{restatable}{theorem}{homogeneousstronglyconvexupperbound}\label{thm:homogeneous-strongly-convex-upper-bound}
For any $H,\Delta,\sigma^2$, either Minibatch AC-SA or Single-Machine AC-SA guarantees that for any $F_\lambda \in \mc{F}_\lambda(H,\Delta)$
\[
\E F_\lambda(\hat{x}) - F_\lambda^* \leq c\cdot\prn*{\Delta\exp\prn*{-\frac{c'\sqrt{\lambda}KR}{\sqrt{H}}} + \min\crl*{\frac{\sigma^2}{\lambda MKR},\,\Delta} + \min\crl*{\frac{\sigma^2}{\lambda KR},\, \Delta\exp\prn*{-\frac{c'\sqrt{\lambda}R}{\sqrt{H}}}}}
\]
\end{restatable}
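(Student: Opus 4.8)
This is an ``upper bound by instantiation'': the plan is to take the known convergence guarantee for AC-SA (\pref{alg:ac-sa}) on $\lambda$-strongly convex objectives, plug in the two parameterizations corresponding to Minibatch AC-SA and Single-Machine AC-SA, and then combine the resulting two bounds (together with the trivial bound obtained by returning the initialization) to match the min-of-mins form of the stated rate.

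First I would record the strongly convex AC-SA guarantee in a ``for $T$ iterations'' form. Applying the reduction $\myreduction$ of \pref{subsec:the-reduction} to AC-SA (equivalently, Ghadimi and Lan's multi-stage AC-SA) and inverting the iteration-complexity bound \eqref{eq:ac-sa-strongly-convex} to express accuracy as a function of the iteration count $T$, one gets: run for $T$ iterations on any $F \in \mc{F}_\lambda(H,\Delta)$ with stochastic gradients of variance at most $v$, AC-SA returns $\hat x$ with
\[
\E F(\hat x) - F^* \leq c\cdot\prn*{\Delta\exp\prn*{-\frac{c'\sqrt{\lambda}T}{\sqrt{H}}} + \frac{v}{\lambda T}},
\]
the logarithmic factor in \eqref{eq:ac-sa-strongly-convex} being absorbed into the exponent up to a constant. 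Since $F(0) - F^* \leq \Delta$ for every $F \in \mc{F}_\lambda(H,\Delta)$ by definition of the class, the algorithm (whose parameters are chosen knowing $H,\lambda,\Delta,\sigma,M,K,R$) may alternatively just return $x_0 = 0$, achieving error at most $\Delta$; this is the strongly convex analogue of the $HB^2$ option appearing in \pref{thm:homogeneous-convex-upper-bound}, so each variant below may be taken to achieve the minimum of its AC-SA bound and $\Delta$.

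Next I would plug in the two parameterizations. Minibatch AC-SA performs $T = R$ updates, each using the average of $MK$ independent stochastic gradients at a common point; exactly as in the variance computation of \pref{subsec:minibatch-sgd-in-heterogeneous-local-sgd}, this minibatch gradient is unbiased for $\nabla F$ with variance at most $\sigma^2/(MK)$, hence (with the ``return $x_0$'' option) it guarantees $\E F(\hat x) - F^* \leq c\prn*{\Delta\exp(-c'\sqrt{\lambda}R/\sqrt{H}) + \min\crl{\sigma^2/(\lambda MKR),\,\Delta}}$. Single-Machine AC-SA performs $T = KR$ updates on one machine with minibatch size $1$ and variance $\sigma^2$, hence guarantees $\E F(\hat x) - F^* \leq c\prn*{\Delta\exp(-c'\sqrt{\lambda}KR/\sqrt{H}) + \sigma^2/(\lambda KR)}$.

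Finally I would combine the two. Writing $E_R = \Delta e^{-c'\sqrt{\lambda}R/\sqrt H}$, $E_{KR} = \Delta e^{-c'\sqrt\lambda KR/\sqrt H}$, $S_1 = \sigma^2/(\lambda MKR)$, $S_K = \sigma^2/(\lambda KR)$, and using $E_{KR} \leq E_R$ and $S_1 \leq S_K$: if $S_K \leq E_R$ then the Single-Machine bound $E_{KR} + S_K$ is at most $E_{KR} + \min\crl{S_1,\Delta} + \min\crl{S_K,E_R}$; if instead $S_K > E_R$ then the Minibatch bound $E_R + \min\crl{S_1,\Delta}$ is at most $E_{KR} + \min\crl{S_1,\Delta} + \min\crl{S_K,E_R}$. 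In either case the smaller of the two algorithmic bounds is bounded by the claimed expression up to the constant $c$, which finishes the proof. There is no serious obstacle here; the only points needing care are (i) inverting \eqref{eq:ac-sa-strongly-convex} and reconciling the constants $c,c'$ of the AC-SA guarantee with those in the statement, (ii) justifying the $\min\crl{\cdot,\Delta}$ clipping via the ``return $x_0 = 0$'' option, and (iii) the short two-case split above that matches the minimum of the two algorithmic rates to the min-of-mins form of the theorem.
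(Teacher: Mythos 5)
Your proposal is correct and follows essentially the same route as the paper: cite the strongly convex (multi-stage) AC-SA guarantee $c(\Delta\exp(-c'\sqrt{\lambda}T/\sqrt{H}) + v/(\lambda T))$, instantiate it with $T=KR$, $v=\sigma^2$ (Single-Machine) and $T=R$, $v=\sigma^2/(MK)$ (Minibatch), use $F(0)-F^*\leq\Delta$ for the clipping, and take the better of the options. The only difference is that you spell out the two-case comparison matching the min-of-mins form, which the paper leaves implicit; that step is fine as you wrote it.
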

This is proven in \pref{app:intermittent-communication-homogeneous-strongly-convex-upper-bound} by simply plugging the number of steps of AC-SA and the variance of the minibatch stochastic gradients into the existing guarantees for AC-SA.

\subsubsection{Non-Convex Objectives}\label{subsec:homogeneous-non-convex-intermittent-minimax}

We now consider the non-convex setting. Without assuming convexity, it is generally intractable to ensure convergence to a global minimizer of the objective \citep{nemirovskyyudin1983}. For this reason, it is common to analyze algorithms in terms of their ability to find approximate stationary points of the objective \citep{vavasis1993black,nocedal2006numerical,nesterov2006cubic,ghadimi2013stochastic,carmon2017convex,lei2017non,fang2018spider,zhou2018stochastic,fang2019sharp}, i.e.~a point $\hat{x}$ such that
\begin{equation}
\E\nrm{\nabla F(\hat{x})} \leq \epsilon
\end{equation}
It is important to understand optimal algorithms for non-convex since most modern machine learning applications, like training neural networks, involve solving non-convex optimization. Furthermore, given the large scale of these non-convex problems, it is often critical to leverage parallelism in order to speed training.

In the homogeneous intermittent communication setting, we can pose the same questions of minimax optimality for non-convex optimization as we did for the convex case, with this new success criterion of finding approximate stationary points replacing approximate minimization. In order to state our results, we define $\mc{F}_{-H}(H,\Delta)$ to be the class of all $H$-smooth, possibly non-convex objectives such that $F(0) - \min_x F(x) \leq \Delta$. The following theorem proves a lower bound on the optimal rate of convergence to an approximate stationary point for any intermittent communication algorithm:
\begin{restatable}{theorem}{homogeneousnonconvexlowerbound}\label{thm:homogeneous-non-convex-lower-bound}
For any $H,\Delta,\sigma^2$, there exists a function $F \in \mc{F}_{-H}(H,\Delta)$ in a sufficiently large dimension $D \geq c\cdot KR\log(MKR)$ such that for any algorithm in $\mc{A}^{\infty}(\mc{G}_{\textrm{I.C}},\mc{O}_g^\sigma)$
\[
\E\nrm{\nabla F(\hat{x})} \geq c\cdot\min\crl*{\frac{\sqrt{H\Delta}}{\sqrt{KR}} + \frac{\sqrt{\sigma} (H\Delta)^{1/4}}{(KR)^{1/4}},\ \frac{\sqrt{H\Delta}}{\sqrt{R(1 + \log M)}} + \frac{\sqrt{\sigma}(H\Delta)^{1/4}}{(MKR)^{1/4}},\ \sqrt{H\Delta}}
\]
\end{restatable}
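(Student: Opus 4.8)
The plan is to instantiate the general lower-bound recipe of \pref{subsec:high-level-lower-bound-approach} with a non-convex ``chain'' objective and a probabilistic-progress stochastic oracle, and to establish two separate bounds — a ``single-machine'' bound matching the first term of the minimum and a ``minibatch'' bound matching the second — whose maximum is at least the stated three-way minimum. Because the construction will be non-convex, I can use the soft-projection trick $\rho(x) = (1+\gamma^{-2}\nrm{x}^2)^{-1/2}x$ of \citet{carmon2017lower1} to make the objective's gradient independent of the rotation at far-away points, so no \emph{a priori} bound on the query norms is needed and the statement can be phrased for $\mc{A}^\infty(\mc{G}_{\textrm{I.C.}},\mc{O}_g^\sigma)$.

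First I would fix the base object: a ``robust'' non-convex zero-chain $h_N:\R^N\to\R$ in the style of \citet{carmon2017lower1,arjevani2019lower}, rescaled so that $h_N$ is $H$-smooth, $h_N(0)-\min h_N\le\Delta$, and every $x$ with $\prog{0}(x)\le N/2$ satisfies $\nrm{\nabla h_N(x)}\ge c\sqrt{H\Delta/N}$, with $\nabla h_N$ a zero-chain (coordinates $j+1,\dots,N$ of the gradient vanish whenever coordinates $j+1,\dots,N$ of $x$ do). The hard instance is $F_U(x)=h_N(U^\top\rho(x))$ for a uniformly random orthogonal $U\in\R^{D\times N}$; following the argument in the proof of \pref{thm:generic-graph-lower-bound} (or directly the lemma of \citet{yairthesis}), in a sufficiently large dimension $D\gtrsim KR\log(MKR)$ any algorithm in the class is, with probability at least $\tfrac12$, ``approximately zero-respecting'', so it suffices to control the progress $\prog{0}$ of zero-respecting algorithms. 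For the oracle I would reuse the probabilistic-progress construction of the proof of \pref{thm:homogeneous-convex-lower-bound} (cf.\ \eqref{eq:def-g-main}): with probability $1-p$ it returns a gradient whose highest nonzero coordinate is no larger than $\prog{0}(x)$, and otherwise a reweighted gradient that may advance progress by one; its variance is $\Theta\big((H\Delta/N)/p\big)$, so the constraint that the variance be at most $\sigma^2$ forces $p\gtrsim H\Delta/(N\sigma^2)$ (and $p\le\tfrac12$, else we are in the trivial high-noise regime giving the $\sqrt{H\Delta}$ term).

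The heart of the proof is then the progress combinatorics. A zero-respecting algorithm advances $\prog{0}$ by at most one per oracle call and only on the probability-$p$ event, so (a) deterministically $\prog{0}(\hat x)\le KR$ (the graph depth), (b) along any single machine $\prog{0}$ is stochastically dominated by $\mathrm{Bin}(KR,p)$, and (c) — using that in the intermittent graph all $M$ machines share a common progress at the start of each round and communicate only at its end — $\prog{0}(\hat x)$ is stochastically dominated by $\sum_{r=1}^{R}\max_{m\in[M]}\mathrm{Bin}(K,p)$, which a Chernoff-plus-union-bound estimate bounds by $O\big(R(Kp+1+\log M)\big)$, i.e.\ $O\big(R(1+\log M)\big)$ once $p\asymp 1/K$. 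In each case I would pick $N$ to be a small constant times the relevant progress bound, so that $\prog{0}(\hat x)<N/2$ with constant probability and hence $\E\nrm{\nabla F_U(\hat x)}\ge c\sqrt{H\Delta/N}$; substituting and solving for the smallest admissible $N$ gives, from (a) and (b) together with the variance-forced $p$, the single-machine bound $\gtrsim\min\{\sqrt{H\Delta/(KR)}+\sqrt{\sigma}(H\Delta)^{1/4}/(KR)^{1/4},\,\sqrt{H\Delta}\}$, and from (c) with $p\asymp 1/K$ the minibatch optimization bound $\gtrsim\min\{\sqrt{H\Delta/(R(1+\log M))},\,\sqrt{H\Delta}\}$; the $(MKR)^{-1/4}$ statistical term of the second element comes separately from the standard two-point/Le Cam bound on the number of noisy samples needed to locate an $\epsilon$-stationary point (a non-convex analogue of \pref{lem:statistical-term-lower-bound}, depending only on the total count $MKR$). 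Taking the better of the two hard instances and using $\max\{\min\{A,C\},\min\{B,C\}\}\ge\min\{A,B,C\}$ yields the theorem.

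The main obstacle is carrying the progress argument through for arbitrary randomized algorithms in the presence of the probabilistic oracle: one must choose the flattening radius of the chain and the dimension $D$ against the oracle's reweighting factor $\tfrac{1-p}{p}$ — which is large precisely when $p$ is small, i.e.\ exactly the regime that produces the $1+\log M$ improvement — so that the rare ``revealing'' draws do not leak usable information about the still-unseen columns of $U$; and one must obtain concentration for $\sum_{r}\max_{m}\mathrm{Bin}(K,p)$ sharp enough that Markov's inequality on $\prog{0}(\hat x)$ still leaves constant success probability, since this is what separates the minibatch term from the generic depth-only bound of \pref{thm:generic-graph-lower-bound}. The remaining steps — the rescaling that puts $F_U$ in $\mc{F}_{-H}(H,\Delta)$, the variance computation, and the routine algebra that extracts the exponents — follow the pattern already established for the convex case.
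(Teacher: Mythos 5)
Most of your plan coincides with the paper's actual proof: the Carmon-style non-convex chain rescaled to be $H$-smooth with initial gap $\Delta$, the probabilistic-progress oracle whose variance $\asymp (H\Delta/N)/p$ forces $p\gtrsim H\Delta/(N\sigma^2)$, the random rotation plus soft projection to handle arbitrary (unbounded-query) randomized algorithms, and the per-round binomial-max bound $\sum_{r}\max_m \mathrm{Bin}(K,p)\lesssim R(Kp+1+\log M)$ which, balanced against the variance constraint, produces both the $\sqrt{H\Delta/(KR)}+\sqrt{\sigma}(H\Delta)^{1/4}(KR)^{-1/4}$ term and the $\sqrt{H\Delta/(R(1+\log M))}$ term. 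That part of the proposal is sound and is essentially the argument in \pref{app:intermittent-communication-homogeneous-non-convex-lower-bound}.

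The genuine gap is your treatment of the $\sqrt{\sigma}(H\Delta)^{1/4}/(MKR)^{1/4}$ term. You claim it "comes separately from the standard two-point/Le Cam bound," i.e.\ a non-convex analogue of \pref{lem:statistical-term-lower-bound}. A two-point argument with quadratics $F_\pm(x)=\frac{a}{2}x^2\pm bx$ forces $b\lesssim \sigma/\sqrt{MKR}$ for indistinguishability and $b\leq\sqrt{2H\Delta}$ for membership in $\mc{F}_{-H}(H,\Delta)$, so it only yields $\E\nrm{\nabla F(\hat{x})}\gtrsim\min\crl{\sigma/\sqrt{MKR},\,\sqrt{H\Delta}}$, which in the non-trivial regime $\sigma^2\lesssim H\Delta\cdot MKR$ is smaller than the needed $(MKR)^{-1/4}$ rate by the factor $\prn*{\sigma^2/(H\Delta MKR)}^{1/4}$; the $T^{-1/4}$ statistical rate for stationarity is itself a consequence of the high-dimensional probabilistic zero-chain (this is the content of the \citet{arjevani2019lower} lower bound), not of a simple two-point reduction. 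The fix is available inside your own framework and is exactly what the paper does: observe that any intermittent communication algorithm can be simulated by a sequential one, so the bound you have already proven applies with $M'=1$, $K'=MKR$, $R'=1$, giving $\min\crl{\sqrt{\sigma}(H\Delta)^{1/4}/(MKR)^{1/4},\,\sqrt{H\Delta}}$; equivalently, in your progress combinatorics use $\sum_r\max_m \mathrm{Bin}(K,p)\le \mathrm{Bin}(MKR,p)$ so the total progress is $\lesssim MKRp$, and balancing $N\asymp MKRp$ against $p\gtrsim H\Delta/(N\sigma^2)$ yields the same term. With that replacement (and the bookkeeping that the maximum of the two resulting lower bounds dominates, up to constants, the stated three-way minimum), your argument matches the paper's.
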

The construction is based on one used by \citet{carmon2017lower1} to show a lower bound of $H\Delta/\sqrt{T}$ for sequential non-convex optimization using an exact gradient oracle. Beyond extending the argument to the intermittent communication setting, we also augment the construction with a stochastic gradient oracle like was used for the proof of \pref{thm:homogeneous-convex-lower-bound}. The stochastic gradient oracle ``zeros out'' the next relevant component of the gradient with probability $1-p$, which reduces the amount of progress made by the algorithm by a factor of $p$. Setting $p$ as small as possible without violating the gradient variance constraint completes the arguement. We defer additional details of the proof to \pref{app:intermittent-communication-homogeneous-non-convex-lower-bound}.

In this case, we can also identify a pair of algorithms whose combined guarantee matches the lower bound. Indeed, the same pattern holds in the non-convex case and, again, the better of Minibatch SGD and Single-Machine SGD is optimal. While it is perhaps unsurprising that the convex and strongly convex settings have the same punchline, it was less clear that these observations would apply in the non-convex setting, yet the only difference is that for non-convex objectives, there is no need for acceleration, and regular SGD is able to attain the optimal rate.
\begin{restatable}{theorem}{homogeneousnonconvexupperbound}\label{thm:homogeneous-non-convex-upper-bound}
For any $H,\Delta,\sigma^2$, either Minibatch SGD or Single-Machine SGD guarantees that for any objective $F \in \mc{F}_{-H}(H,\Delta)$, 
\[
\E\nrm{\nabla F(\hat{x})} \leq c\cdot\min\crl*{\frac{\sqrt{H\Delta}}{\sqrt{KR}} + \frac{\sqrt{\sigma} (H\Delta)^{1/4}}{(KR)^{1/4}},\ \frac{\sqrt{H\Delta}}{\sqrt{R}} + \frac{\sqrt{\sigma}(H\Delta)^{1/4}}{(MKR)^{1/4}},\ \sqrt{H\Delta}}
\]
\end{restatable}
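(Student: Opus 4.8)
The plan is to reduce the statement to the classical analysis of SGD on smooth nonconvex objectives and then read off what it gives for the two algorithms. Recall the standard fact: if one runs $x_{t+1} = x_t - \eta g_t$ for $T$ steps on an $H$-smooth $F$, where $\E[g_t \mid x_t] = \nabla F(x_t)$ and $\E\nrm*{g_t - \nabla F(x_t)}^2 \le \tilde\sigma^2$, starting from $x_0$ with $F(x_0) - \min_x F(x) \le \Delta$, then the descent lemma together with a telescoping argument shows that for any $\eta \le 1/H$,
\[
\frac{1}{T}\sum_{t=0}^{T-1}\E\nrm*{\nabla F(x_t)}^2 \le \frac{2\Delta}{\eta T} + \eta H \tilde\sigma^2 .
\]
Tuning $\eta = \min\crl*{1/H,\ \sqrt{2\Delta / (H\tilde\sigma^2 T)}}$ gives $\frac{1}{T}\sum_t \E\nrm*{\nabla F(x_t)}^2 \le c\prn*{H\Delta/T + \tilde\sigma\sqrt{H\Delta/T}}$. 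I would then have the algorithm output $\hat x$ drawn uniformly among $x_0,\dots,x_{T-1}$ (or, equivalently for an upper bound, the iterate achieving the minimum), so that $\E\nrm*{\nabla F(\hat x)}^2$ equals this average, and use Jensen's inequality together with $\sqrt{a+b}\le \sqrt a + \sqrt b$ to obtain $\E\nrm*{\nabla F(\hat x)} \le c\prn*{\sqrt{H\Delta/T} + \sqrt{\tilde\sigma}\,(H\Delta)^{1/4}/T^{1/4}}$.

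The second step is to plug in the parameters of the two algorithms from \pref{subsec:local-sgd-and-baselines}, both initialized at $x_0 = 0$ so that $F(x_0) - \min_x F(x) \le \Delta$ by definition of $\mc{F}_{-H}(H,\Delta)$. Single-Machine SGD (\pref{alg:single-machine-sgd}) performs $T = KR$ updates with size-$1$ minibatches, so $\tilde\sigma^2 = \sigma^2$ and it achieves $\E\nrm*{\nabla F(\hat x)} \le c\prn*{\sqrt{H\Delta}/\sqrt{KR} + \sqrt{\sigma}(H\Delta)^{1/4}/(KR)^{1/4}}$. Minibatch SGD (\pref{alg:minibatch-sgd}) performs $T = R$ updates, each using the average of $MK$ independent stochastic gradients at the same point, so its effective variance is $\tilde\sigma^2 = \sigma^2/(MK)$ and it achieves $\E\nrm*{\nabla F(\hat x)} \le c\prn*{\sqrt{H\Delta}/\sqrt{R} + \sqrt{\sigma}(H\Delta)^{1/4}/(MKR)^{1/4}}$. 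Finally, the trivial strategy of ignoring the oracle and returning $\hat x = 0$ gives $\E\nrm*{\nabla F(0)} \le \sqrt{2H\Delta}$, since $H$-smoothness implies $\nrm*{\nabla F(x)}^2 \le 2H\prn*{F(x) - \min_x F(x)}$; this supplies the third entry of the minimum.

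Since $H,\Delta,\sigma^2,M,K,R$ are all known, the algorithm can compute which of these three bounds is smallest and run the corresponding procedure with its optimally tuned stepsize, which yields the claimed minimax bound. I do not expect a genuine obstacle here: the only points requiring care are the passage from the mean-square gradient bound to the expected-norm bound (handled by Jensen and subadditivity of $\sqrt{\cdot}$), verifying that a Minibatch SGD step has effective variance exactly $\sigma^2/(MK)$ (immediate from independence of the $MK$ samples), and observing that the $\sqrt{H\Delta}$ cap is genuinely needed because the noise terms can exceed $\sqrt{H\Delta}$ when $\sigma$ is large relative to $H\Delta$ and the number of updates. The remaining bookkeeping is routine.
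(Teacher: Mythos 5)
Your proposal is correct and follows essentially the same route as the paper: reduce to the standard guarantee for SGD on smooth non-convex objectives, instantiate it with $T=KR$, $\tilde\sigma^2=\sigma^2$ for Single-Machine SGD and $T=R$, $\tilde\sigma^2=\sigma^2/(MK)$ for Minibatch SGD, add the trivial $\sqrt{2H\Delta}$ bound from returning $0$, and take whichever is best. The only difference is that you re-derive the sequential SGD bound from the descent lemma, whereas the paper simply cites \citet{ghadimi2013stochastic}, so there is nothing further to flag.
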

This is proven in \pref{app:intermittent-communication-homogeneous-non-convex-upper-bound} by appealing to existing analysis for SGD for smooth non-convex objectives \cite{ghadimi2013stochastic}.

\subsubsection{Conclusions}

In this section, we identified the minimax error and optimal algorithms in the homogeneous intermittent communication setting with convex, strongly convex, and non-convex objectives up to logarithmic factors. In doing so, we have highlighted several interesting characteristics of the intermittent communication setting:

\paragraph{A Tradeoff Between Parallelism and Local Computation}
In light of \pref{thm:homogeneous-convex-upper-bound} and the third term in the lower bound \pref{thm:homogeneous-convex-lower-bound} (and the analogous terms in the other settings), we see that algorithms are offered the following dilemma: they may either attain the optimal statistical rate $\sigma B / \sqrt{MKR}$ for the convex setting but suffer an optimization rate $HB^2 / (R^2 \log^2 M)$ that does not improve with $K$, or they may attain the optimal optimization rate of $HB^2 / (K^2R^2)$ but suffer a statistical term $\sigma B / \sqrt{KR}$ that does not improve with $M$. In this way, there is a very real dichotomy between exploiting the availability of parallel computation (e.g.~using Minibatch AC-SA) an exploiting the availability of sequential local computation (e.g.~using Single-Machine AC-SA). Importantly, under the conditions we study, it is impossible to exploit both simultaneously, and one of the extremes of this spectrum is always optimal. This is quite surprising, and for a long time we thought that it should be possible to design an algorithm which gets the best of both worlds. 

\paragraph{Mixed Statistical and Optimization Terms}
The structure of the optimal error in the convex and strongly convex settings have a notably different structure than in many other stochastic optimization settings. Typically in convex optimization, the minimax error for stochastic first-order optimization is the sum of two terms, an ``optimization term''---which is equal to the minimax error when using an \emph{exact} first-order oracle---and a ``statistical term''---which is equal is the optimal error of any method using that number of samples, e.g.~of the empirical risk minimizing solution. This holds, for example, in the sequential graph, the layer graph, and the delay graph (see \pref{subsec:sequential-graph-minimax-complexity}, \pref{subsec:layer-graph-minimax-complexity}, and \pref{subsec:delay-graph-minimax-complexity}). For instance, the minimax error for smooth, convex stochastic first-order optimization in the sequential settings is
\begin{equation}
\epsilon(\mc{F}_0(H,B),\mc{A}(\mc{G}_{\textrm{seq}},\mc{O}_g^\sigma)) = c\cdot\prn*{\frac{HB^2}{T^2} + \frac{\sigma B}{\sqrt{T}}}
\end{equation}
which is the sum of the deterministic first-order minimax error plus the error of (regularized) ERM. In contrast, the optimal error in the intermittent communication setting is different. In the convex case, it is (ignoring log factors)
\begin{equation}
\epsilon(\mc{F}_0(H,B),\mc{A}(\mc{G}_{\textrm{I.C.}},\mc{O}_g^\sigma)) = c\cdot\prn*{\frac{HB^2}{K^2R^2} + \frac{\sigma B}{\sqrt{MKR}} +
\min\crl*{\frac{\sigma B}{\sqrt{KR}},\ \frac{HB^2}{R^2}}}
\end{equation}
The first term is the deterministic first-order minimax error (see \pref{thm:generic-graph-lower-bound}) and the second term is the statistical limit (see \pref{lem:statistical-term-lower-bound}), but the third term is something different. In particular, it mixes optimization and statistical terms in a different and interesting manner. 

This suggests that we should think about optimization in the sequential setting as qualitatively different from optimization in the intermittent communication setting. In the former case, optimal deterministic first-order algorithms (e.g.~accelerated gradient descent) are often, perhaps with slight modifications, also optimal stochastic first-order algorithms (e.g.~AC-SA). Furthermore, algorithms can be understood via a bias-variance decomposition---the bias is basically the algorithm's performance if the gradients were exact, and the variance is how much the algorithm's output is affected by the noisy gradients. In contrast, in the intermittent communication setting, the interplay between optimization and statistics appears to be more complex. This shows up also in the proof of our lower bound, where the ``progress'' of the algorithm---which is relevant only to the optimization term in the sequential setting---is hindered by the noise in the stochastic gradient oracle.

\paragraph{Computational Efficiency}
The optimal algorithms in each setting---the better of Minibatch or Single-Machine AC-SA/SGD---are computationally efficient and require no significant overhead. Each machine only needs to store a constant number of vectors, performs only a constant number of vector additions for each stochastic gradient oracle access, and communicates just one vector per round. Therefore, the total storage complexity is just $O(d)$ per machine, the sequential runtime complexity (excluding the oracle computation) is $O(KR\cdot d)$, and the total communication complexity is at most $O(MR\cdot d)$. In fact, the communication complexity is exactly $0$ for the Single-Machine methods. Therefore, we should not expect a substantially better algorithm from the standpoint of computational efficiency either.

\paragraph{Aesthetics}
The optimal algorithms are somewhat ``ugly'' because of the hard switch between the Minibatch and Single-Machine approach. It would be nice, if only aesthetically, to have an algorithm that more naturally transitions between the Minibatch to the Single-Machine rate. Accelerated Local SGD \citep{yuan2020federated} or something similar is a contender for such an algorithm, although it is unclear whether or not this method can match the optimal rate in all regimes. Local SGD methods can also be augmented by using two stepsizes---a smaller, conservative stepsize for the local updates between communications, and a larger, aggressive stepsize when the local updates are aggregated---this two-stepsize approach allows for interpolation between Minibatch-like and Single-Machine-like behavior, and could be used to design a more ``natural'' optimal algorithm (see \pref{subsec:inner-outer}).

\subsection{The Heterogeneous Setting}\label{subsec:heterogeneous-convex-intermittent-minimax}

We now consider the complexity of optimization in the heterogeneous intermittent communication setting. The objective is the average of $M$ components, one for each machine,
\begin{equation}\label{eq:hetero-ic-objective}
F(x) = \frac{1}{M}\sum_{m=1}^M F_m(x)
\end{equation}
We focus on the goal of finding a single consensus solution that achieves low value on all of the machines on average \cite{bertsekas1989parallel,boyd2011distributed}. There are, of course, many other sensible formulations, including ``personalized'' approaches where separate minimizers are computed for each objective while still leveraging relevant information from the others \citep{hanzely2021personalized}. For this section, we consider the class $\mc{F}_0(H,B,M)$ consisting of all objectives \eqref{eq:hetero-ic-objective} where $F \in \mc{F}_0(H,B)$ and $\mc{F}_\lambda(H,\Delta,M)$ where $F \in \mc{F}_\lambda(H,\Delta)$. Notably, we make no assumptions about the individual components $F_1,\dots,F_M$, and we only require that their average is smooth, convex, etc.

In the heterogeneous intermittent communication setting, each machine only has access to information about its corresponding objective. So, the $m\mathth$ machine has access to a stochatic gradient oracle $\mc{O}_{g,m}^\sigma$ to each machine, which provides an unbiased estimate of the $m\mathth$ component $\E \mc{O}_{g,m}^\sigma(x) = \nabla F_m(x)$, with variance bounded by $\sigma^2$. In the language of the graph oracle model, we consider algorithms in $\mc{A}^\gamma(\mc{G}_{\textrm{I.C.}},\crl*{\mc{O}_{g,m}^\sigma})$ where each vertex $v = v^m_{k,r}$ is associated with the oracle $\mc{O}_{v^m_{k,r}} = \mc{O}_{g,m}^\sigma$. As in the homogeneous setting, for technical reasons we also restrict our attention to algorithms whose queries are bounded in norm by $\gamma$, although this restriction can be eliminated by instead considering deterministic or span-restricted/zero-respecting algorithms. 

\subsubsection{Convex Objectives}

We begin with our lower bound for convex objectives:
\begin{restatable}{theorem}{heterogeneousconvexlowerbound}\label{thm:heterogeneous-convex-lower-bound}
For any $M\geq 2$ and $H,B,\sigma^2,\gamma$, there exists a quadratic objective $F \in \mc{F}_0(H,B,M)$ in any dimension
\[
D \geq c\cdot\gamma^2\prn*{\frac{R^3}{B^2} + \frac{H^2MKR^2}{\sigma^2}}\log(MKR)
\]
such that the output of any algorithm in $\mc{A}^\gamma(\mc{G}_{\textrm{I.C.}},\crl*{\mc{O}_{g,m}^\sigma})$ will have suboptimality at least
\[
\E F(\hat{x}) - F^* \geq c\cdot\prn*{\frac{HB^2}{R^2} + \min\crl*{\frac{\sigma B}{\sqrt{MKR}},\ HB^2}}
\]
\end{restatable}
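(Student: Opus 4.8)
The plan is to prove the two terms of the bound separately and then combine them: to lower bound the minimax error by $L_1+L_2$ it suffices to exhibit, for each term, a quadratic instance in $\mc{F}_0(H,B,M)$ on which every algorithm in $\mc{A}^\gamma(\mc{G}_{\textrm{I.C.}},\crl*{\mc{O}_{g,m}^\sigma})$ incurs that much error, since $\max\{L_1,L_2\}\geq\tfrac12(L_1+L_2)$. (Equivalently, and to literally produce a single quadratic $F$, one puts the two constructions on disjoint blocks of coordinates so that $F$ is a sum of two non-interacting quadratics and its suboptimality is exactly the sum of the two.) The statistical term $\min\crl*{\tfrac{\sigma B}{\sqrt{MKR}},HB^2}$ is immediate from \pref{lem:statistical-term-lower-bound} applied with $\abs{\mc{V}}=MKR$: take $F_1=\dots=F_M$ all equal to the one-dimensional noisy quadratic $F_\pm$ used there, which trivially lies in the heterogeneous class. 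The real content is therefore the optimization term $\tfrac{HB^2}{R^2}$, which must depend on $R$ rather than $KR$ and so lies beyond what \pref{thm:generic-graph-lower-bound} gives.

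For this term I would use a split version of the classical chain construction of \citet{nesterov2004introductory}. Work in $N\approx R$ dimensions with the chain $G(x)\propto -\delta x_1 + x_N^2 + \sum_{i=1}^{N-1}(x_{i+1}-x_i)^2$, rescaled so that $G\in\mc{F}_0(H,B)$, and split its terms between two machines: machine $1$ gets the linear term together with the ``odd'' edges $(x_{i+1}-x_i)^2$ for $i$ odd, machine $2$ gets the ``even'' edges, and components $3,\dots,M$ are set to $0$; choosing $F_1=M\cdot G_{\text{odd}}$, $F_2=M\cdot G_{\text{even}}$ gives $\tfrac1M\sum_m F_m=G$, and each machine is handed an exact gradient oracle for its own component (variance $0\leq\sigma^2$, hence permitted). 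The structural point is that $\nabla F_1$ can only activate an even coordinate from an odd one and $\nabla F_2$ only an odd from an even one, so starting from $x=0$: in round $1$ only machine $1$ can move and it extends the support to $\{1,2\}$; after communication machine $2$ extends it to $\{1,2,3\}$; and so on, so after $R$ communication rounds a zero-respecting algorithm's iterates (and its output) are supported on the first $\approx R$ coordinates. Crucially, the number $K$ of local steps per round is irrelevant here — a machine's gradients never leave the span it already knows plus its own edges, so local computation cannot advance the frontier — which is exactly why the term depends on $R$ and not on $KR$. The standard suboptimality estimate for the Nesterov chain then gives $G(\hat{x})-G^*\gtrsim HB^2/R^2$ for any $\hat{x}$ supported on the first $\approx R$ of $N\approx 2R$ coordinates.

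To upgrade from zero-respecting to arbitrary randomized algorithms in $\mc{A}^\gamma$, I would follow \pref{subsec:high-level-lower-bound-approach}: replace each component by $x\mapsto F_m(U^\top x)$ for a uniformly random orthogonal $U\in\R^{D\times N}$ and argue that every algorithm is almost zero-respecting, i.e. with high probability every query has inner products with the not-yet-revealed columns of $U$ below the threshold that could influence any gradient. This is the usual induction over communication rounds — conditioning on the good event through round $r$, every gradient returned so far is (approximately) a measurable function of $U_1,\dots,U_{\text{frontier}}$ and the algorithm's coins, so the next queries are too, and the next inner products are those of a fixed vector of norm $\leq\gamma$ with a uniform unit vector in the complementary subspace, hence $\lesssim\gamma\sqrt{\log(\cdot)/D}$ by spherical concentration; a union bound over all $MKR$ queries and $N$ columns makes this rigorous once $D$ is polynomially large in the problem parameters and $\gamma$, which is the source of the stated dimension requirement. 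The main obstacle is exactly this step for a \emph{quadratic} instance: unlike the flattened-out non-quadratic constructions, a quadratic's gradient is linear and so leaks a small but nonzero amount about \emph{every} column of $U$ on every query, so one cannot make the gradient exactly independent of the unrevealed columns and must instead carefully bound the cumulative leakage over all $MKR$ accesses against the $\Theta(B/\sqrt{R})$ signal that would be needed to make genuine progress, using the norm bound $\gamma$ in an essential way. Handling the rotation for the noisy statistical block is comparatively routine, since $F_\pm$ already depends on only one direction. Combining the two pieces yields the claimed bound, which matches (up to constants) the guarantee of Minibatch AC-SA and therefore pins down the minimax error in this setting.
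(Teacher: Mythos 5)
Your deterministic core coincides with the paper's argument: the paper also splits Nesterov's chain into two quadratics following \citet{arjevani2015communication}, namely $\tilde F_1(x) = -\zeta x_1 + Cx_d^2 + \sum_i (x_{2i+1}-x_{2i})^2$ on half the machines and $\tilde F_2(x) = \sum_i (x_{2i}-x_{2i-1})^2$ on the other half, uses exactly your even/odd alternation to show that progress advances by one coordinate per round of communication regardless of $K$, and obtains the statistical term from \pref{lem:statistical-term-lower-bound}. Up to your choice of zeroing machines $3,\dots,M$ and rescaling (versus the paper's assignment of each half of the chain to $\lfloor M/2\rfloor$ machines, losing a factor $(M-1)/M$), this part is fine.

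The genuine gap is in the step that carries the theorem's actual content: the passage from zero-respecting to arbitrary algorithms in $\mc{A}^\gamma(\mc{G}_{\textrm{I.C.}},\crl{\mc{O}_{g,m}^\sigma})$. You hand each machine an \emph{exact} (variance-zero) oracle for its component and then invoke the rotation-plus-spherical-concentration induction, acknowledging the leakage problem but offering only to ``carefully bound the cumulative leakage.'' That plan does not go through: the almost-zero-respecting machinery requires the oracle response to be an \emph{exact} measurable function of the already-revealed columns of $U$ whenever the query's inner products with the remaining columns are below the threshold $\alpha$ — this is precisely what flattening provides and what an exact gradient of a rotated quadratic destroys, since $\nabla \tilde F_2(U^\top x)$ contains the terms $\prn*{\inner{U_{2i}}{x} - \inner{U_{2i-1}}{x}}(U_{2i}-U_{2i-1})$ for \emph{every} $i$ and is observed without noise, so each query reveals those inner products exactly, the conditional law of the unrevealed columns is no longer uniform on the orthogonal sphere, and the inductive conditioning plus union bound collapse; no bookkeeping of ``small'' leakage over $MKR$ queries repairs this within that framework (indeed, for exact oracles and arbitrary randomized algorithms the quadratic lower bound is only known for $M=1$, via \citet{simchowitz2018randomized}, as the paper itself notes). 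The paper resolves this differently: it keeps the objective exactly quadratic but moves the flattening into the \emph{stochastic} oracle, which with probability $1-\delta$ returns the gradient with all coordinates beyond a parity-corrected function of $\prog{\alpha}(x)$ zeroed out, and with probability $\delta$ returns a compensating correction; the oracle is unbiased, has variance at most $\sigma^2$ once $\alpha^2 \lesssim \sigma^2\delta/(d\,H^2)$, and satisfies the exact zero-chain property needed for the concentration argument. This is also why the stated dimension requirement carries the factor $H^2MKR^2/\sigma^2$: the truncation is paid for out of the noise budget, so your zero-variance oracle (for which that requirement blows up as $\sigma\to 0$) cannot be the right starting point.
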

\begin{proofsketch}
To describe the idea of the proof, we will first focus our attention on the class of span-restricted/zero-respecting algorithms.  
The proof is based on the following two quadratic functions:
\begin{equation}
\begin{aligned}
\tilde{F}_1(x) &= -\zeta x_1 + Cx_d^2 + \sum_{i=1}^{d/2-1}\prn*{x_{2i+1} - x_{2i}}^2 \\
\tilde{F}_2(x) &= \sum_{i=1}^{d/2}\prn*{x_{2i} - x_{2i-1}}^2
\end{aligned}
\end{equation}
These functions are identical to a construction from \citet{arjevani2015communication}, who show similar lower bounds for a different formulation of distributed optimization with an exact gradient oracle. These objectives essentially partition the classic lower bound construction of \citet{nesterov2004introductory} across two functions. Importantly, if $x_i = 0$ for all $i > j$, and $j$ is even, then
\begin{equation}\label{eq:heterogeneous-lower-bound-zero-chain}
\forall_{i>j+1}\ \ [\nabla \tilde{F}_1(x)]_i = 0 \quad\textrm{and}\quad \forall_{i>j}\ \ [\nabla \tilde{F}_2(x)]_i = 0
\end{equation}
and the vice versa when $j$ is odd. In other words, querying the gradient of $\tilde{F}_1$ only reveals an additional coordinate when the current progress is even, and the gradient of $\tilde{F}_2$ only gives a new coordinate when the current progress is odd. This means that making progress requires alternately querying $\tilde{F}_1$ and $\tilde{F}_2$, which can only be done once per round of communication. This argument paired with assigning $\tilde{F}_1$ to the first $M/2$ machines and $\tilde{F}_2$ to the remaining $M/2$ machines essentially completes the proof for the span-restricted/zero-respecting case. We note that this part of the argument applies even when the algorithm has access to exact gradients of the objective, as in the setting of \citeauthor{arjevani2015communication}.

To extend the argument to the case of general randomized algorithms, we again introduce a random rotation matrix $U$ and consider the functions $\tilde{F}_1(U^\top x)$ and $\tilde{F}_2(U^\top x)$. If we followed the ideas in \pref{subsec:high-level-lower-bound-approach}, we would, in addition, ``flatten out'' the objective so that small inner products with the columns of $U$ do not reveal any information about those columns in the gradient. However, in this case we do something different in order to preserve the quadratic nature of the objective. Specifically, we do this ``flattening out'' \emph{only using the stochatic gradient oracle}. Specifically, we construct stochastic gradient oracles for $\tilde{F}_1$ and $\tilde{F}_2$ such that the property \eqref{eq:heterogeneous-lower-bound-zero-chain} is maintained even when $x_i$ is slightly non-zero for $i > j$. We show that it is possible to construct such stochastic gradient oracles without introducing too much variance, and this allows us to prove the result even for quadratic objectives. The remaining details of the proof can be found in \pref{app:heterogeneous-convex-lower-bound}.
\end{proofsketch}

In the homogeneous intermittent communication setting, all of our lower bounds were matched by the better of two algorithms: Minibatch AC-SA and Single-Machine AC-SA. Notably, in the heterogeneous setting, Single-Machine AC-SA is not a sensible algorithm because it will only succeed in optimizing a single component of the objective, which does not imply anything about optimizing $F$. Of course, other approaches like averaging the outputs of Single-Machine AC-SA run on each machine individually, or an accelerated variant of Local SGD \citep{yuan2020federated} are perfectly reasonable methods which could plausibly outperform Minibatch AC-SA in certain regimes. However, we now show that actually Minibatch AC-SA is minimax optimal in the heterogeneous case and no other algorithm can improve over it in any regime
\begin{restatable}{theorem}{heterogeneousconvexupperbound}\label{thm:heterogeneous-convex-upper-bound}
For any $H,B,\sigma^2$ and any $F \in \mc{F}_0(H,B,M)$ the output of Minibatch AC-SA will have suboptimality at most
\[
\E F(\hat{x}) - F^* \leq c\cdot\prn*{\frac{HB^2}{R^2} + \min\crl*{\frac{\sigma B}{\sqrt{MKR}},\ HB^2}}
\]
\end{restatable}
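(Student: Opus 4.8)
The plan is to reduce directly to the known convergence guarantee for AC-SA in the sequential setting (recalled in \pref{subsec:initially-introduce-ac-sa-convex-rate}), by observing that Minibatch AC-SA is literally $R$ steps of AC-SA applied to $F$ with an unbiased stochastic gradient oracle of reduced variance. First I would check that Minibatch AC-SA belongs to the algorithm class $\mc{A}(\mc{G}_{\textrm{I.C.}},\crl{\mc{O}_{g,m}^\sigma})$: during round $r$ every machine $m$ queries its own oracle $\mc{O}_{g,m}^\sigma$ at the common point $x_r$ a total of $K$ times, producing $g(x_r;z^m_{r,k})$ for $k \in [K]$; these queries have no cross-dependencies within the round, so they respect the intermittent communication graph. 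At the communication step all $MK$ gradients are aggregated into $\bar{g}_r = \frac{1}{MK}\sum_{m=1}^M\sum_{k=1}^K g(x_r;z^m_{r,k})$ and a single AC-SA update is performed using only information available at round $r$ — this fits the query rules and output rule of the graph oracle model.

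Next I would compute the two relevant conditional moments of $\bar{g}_r$ given $x_r$, exactly as in \pref{subsec:minibatch-sgd-in-heterogeneous-local-sgd}. Unbiasedness follows from $F = \frac{1}{M}\sum_{m=1}^M F_m$: $\E[\bar{g}_r \mid x_r] = \frac{1}{M}\sum_{m=1}^M \nabla F_m(x_r) = \nabla F(x_r)$. For the variance, since the $z^m_{r,k}$ are mutually independent across both $m$ and $k$, the cross terms vanish and $\E\nrm{\bar{g}_r - \nabla F(x_r)}^2 = \frac{1}{M^2K^2}\sum_{m=1}^M\sum_{k=1}^K \E\nrm{g(x_r;z^m_{r,k}) - \nabla F_m(x_r)}^2 \leq \frac{\sigma^2}{MK}$. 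Crucially, this bound does not involve any heterogeneity parameter, since it only uses the per-oracle variance bounds and independence — not any relation between the individual $F_m$.

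I would then invoke Lan's guarantee for AC-SA: running $T$ iterations on an objective in $\mc{F}_0(H,B)$ with a stochastic gradient oracle of variance $\bar{\sigma}^2$ yields an output $\hat{x}$ with $\E F(\hat{x}) - F^* \leq c\cdot(HB^2/T^2 + \bar{\sigma}B/\sqrt{T})$. Substituting $T = R$ and $\bar{\sigma}^2 = \sigma^2/(MK)$ gives $\E F(\hat{x}) - F^* \leq c\cdot(HB^2/R^2 + \sigma B/\sqrt{MKR})$, which is the first two terms of the claim. To obtain the $\min$ with $HB^2$, I would note that the algorithm can instead return the initialization $x_0 = 0$, for which $H$-smoothness gives $F(0) - F^* \leq \frac{H}{2}\nrm{x^*}^2 \leq \frac{HB^2}{2}$; choosing the better of the two outputs — a choice that depends only on the known quantities $H,B,\sigma,M,K,R$ — yields the stated bound. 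There is essentially no serious obstacle here; the only points that require a little care are confirming that the minibatch construction genuinely respects the intermittent communication graph structure and that the mutual independence of the $MK$ oracle responses justifies the $1/(MK)$ variance reduction without appealing to any structure on the individual components $F_m$.
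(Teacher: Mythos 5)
Your proposal is correct and follows essentially the same route as the paper: observe that the minibatch gradients are unbiased estimates of $\nabla F$ with variance reduced to $\sigma^2/(MK)$ despite heterogeneity (as in \pref{subsec:minibatch-sgd-in-heterogeneous-local-sgd}), then plug $T=R$ into Lan's AC-SA guarantee; the $\min$ with $HB^2$ via smoothness at $0$ is the same standard observation the paper uses elsewhere.
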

\begin{proof}
The result follows from the observation that was made in \pref{subsec:minibatch-sgd-in-heterogeneous-local-sgd}: that the minibatch stochastic gradients used by Minibatch AC-SA are actually unbiased estimates of $\nabla F$ despite the heterogeneity of the problem, and their variance is also reduced by a factor of $MK$. The result then follows immediately from the guarantee for $R$ steps of AC-SA with stochastic gradients of variance $\sigma^2/MK$ \citep{lan2012optimal}.
\end{proof}

\subsubsection{Strongly Convex Objectives}

Unsurprisingly, the picture is qualitatively very similar in the strongly convex setting. We have an analogous lower bound
\begin{restatable}{theorem}{heterogeneousstronglyconvexlowerbound}\label{thm:heterogeneous-strongly-convex-lower-bound}
For any $M\geq 2$ and $H \geq 9\lambda,\Delta,\sigma^2$, there exists an objective $F \in \mc{F}_\lambda(H,\Delta,M)$ in any dimension
\[
D \geq c\cdot\gamma^2\prn*{\frac{\sqrt{H\lambda}}{\Delta}\prn*{1-\sqrt{\frac{\lambda}{H}}}^{-2R} + \frac{H^2MKR^2}{\sigma^2}}\log(MKR)
\]
such that the output of any algorithm in $\mc{A}^\gamma(\mc{G}_{\textrm{I.C.}},\crl*{\mc{O}_{g,m}^\sigma})$ will have suboptimality at least
\[
\E F(\hat{x}) - F^* \geq c\cdot\prn*{\Delta\exp\prn*{\frac{-c'\sqrt{\lambda}R}{\sqrt{H}}} + \min\crl*{\frac{\sigma^2}{\lambda MKR},\ \Delta}}
\]
\end{restatable}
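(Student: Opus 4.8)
The plan is to mirror the proof of \pref{thm:heterogeneous-convex-lower-bound}, replacing the two convex quadratic ``half-chains'' $\tilde F_1,\tilde F_2$ by \emph{strongly convex} quadratic half-chains whose average is the classical smooth strongly convex hard instance, the one whose minimizer has geometrically decaying coordinates. Concretely, I would take
\[
\tilde F_1(x) = \frac{\lambda}{2}\nrm{x}^2 + \frac{H-\lambda}{8}\prn*{-2\zeta x_1 + \sum_{i\textrm{ odd}} (x_{i+1}-x_i)^2},\quad
\tilde F_2(x) = \frac{\lambda}{2}\nrm{x}^2 + \frac{H-\lambda}{8}\sum_{i\textrm{ even}} (x_{i+1}-x_i)^2,
\]
(with an appropriate endpoint modification at coordinate $N$), assign $\tilde F_1$ to the first $M/2$ machines and $\tilde F_2$ to the rest, so $F = \tfrac1M\sum_m F_m = \tfrac12(\tilde F_1+\tilde F_2)$. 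Each $\tilde F_i$ is $\lambda$-strongly convex (from the $\tfrac\lambda2\nrm{x}^2$ term) and $H$-smooth (the odd/even link Hessians are block-diagonal with eigenvalues in $[0,4]$, so after the $\tfrac{H-\lambda}{4}$ scaling the spectrum lies in $[\lambda,H]$); the hypothesis $H\ge 9\lambda$ keeps the condition number bounded away from $1$ so that $\argmin F$ genuinely decays, with $x^*_i \propto q^i$ for $q = \tfrac{\sqrt{H/\lambda}-1}{\sqrt{H/\lambda}+1}\ge\tfrac12$, and choosing $\zeta$ and a large $N$ makes $F(0)-F^* = \Theta(\Delta)$.

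The core combinatorial step is exactly \eqref{eq:heterogeneous-lower-bound-zero-chain}: the $\tfrac\lambda2\nrm{x}^2$ terms do not enlarge supports, so if $x$ is supported on $\{1,\dots,j\}$ with $j$ even then $\nabla\tilde F_1(x)$ is still supported on $\{1,\dots,j\}$ while $\nabla\tilde F_2(x)$ reaches coordinate $j+1$, and conversely for odd $j$. Hence a zero-respecting algorithm can only advance the ``frontier'' by alternating between the two functions, and since a given machine holds only one of them and a round consists of $K$ queries to a single function, the shared frontier advances by at most one coordinate per round, \emph{independent of $K$}. After $R$ rounds the output is supported on $\{1,\dots,R\}$, so by strong convexity and geometric decay $F(\hat x)-F^* \gtrsim \lambda\sum_{i>R}(x^*_i)^2 \gtrsim \Delta q^{2R} = \Delta\exp(-\Theta(\sqrt{\lambda}R/\sqrt H))$, the first term. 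The statistical term $\min\crl*{\sigma^2/(\lambda MKR),\,\Delta}$ is obtained on a disjoint block of coordinates by invoking \pref{lem:statistical-term-lower-bound} (its homogeneous two-point instance is $0$-heterogeneous, hence lies in $\mc{F}_\lambda(H,\Delta,M)$ with all the $\mc{O}_{g,m}^\sigma$ equal), and since the combined instance is separable its suboptimality is the sum of the two pieces, yielding the stated sum up to constants.

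To remove the zero-respecting restriction and reach all of $\mc{A}^\gamma(\mc{G}_{\textrm{I.C.}},\crl*{\mc{O}_{g,m}^\sigma})$, I would use the device from the proof sketch of \pref{thm:heterogeneous-convex-lower-bound} and the proof of \pref{thm:generic-graph-lower-bound}: apply a uniformly random orthogonal $U$ to obtain $\tilde F_1(U^\top x),\tilde F_2(U^\top x)$, and---crucially, to keep the objective quadratic---perform the ``flattening'' only inside the stochastic gradient oracle, constructing unbiased estimators of $\nabla F_m(U^\top\,\cdot\,)$ that suppress the contribution of any column $U_i$ whose inner product with the current query falls below a threshold $\alpha$. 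Then the disjoint-union/union-bound argument of \citet{diakonikolas2019lower} together with concentration of $\inner{U_i}{q_v}$ on the sphere (queries have norm $\le\gamma$) shows that with constant probability every query and the output have small inner product with the undiscovered columns, so the effective support in the $U$-basis is $\le R$ and the bound above applies. The threshold $\alpha$ must be small relative to the scale $q^R$ of the $R$-th coordinate of $x^*$, which is exactly what forces $D = \Omega(\gamma^2 q^{-2R}\log(MKR)) = \Omega(\gamma^2(1-\sqrt{\lambda/H})^{-2R}\log(MKR))$ after the $\sqrt{H\lambda}/\Delta$ normalization, while the variance budget of the oracle contributes the $H^2MKR^2/\sigma^2$ term.

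I expect the main obstacle to be precisely this last piece: designing the stochastic gradient oracles for $\tilde F_1,\tilde F_2$ so that they simultaneously remain unbiased for $\nabla F_m$, preserve the zero-chain property \eqref{eq:heterogeneous-lower-bound-zero-chain} under small perturbations of the query, and have variance at most $\sigma^2$. The geometric decay makes this more delicate than in the convex case, since the relevant coordinate scales range over $q^1,\dots,q^R$ and both $\alpha$ and the oracle's ``zeroing'' rule must be uniformly small enough for the deepest relevant coordinate while the induced variance stays within budget; balancing these parameters is where most of the technical work lies. As an alternative giving a weaker statement (valid only for particular triples $(\lambda,\Delta,\epsilon)$), one could instead feed a hypothetical strongly convex algorithm into the reduction $\eladreduction$ of \pref{thm:elad-reduction} and contradict \pref{thm:heterogeneous-convex-lower-bound}, exactly as in the proof of \pref{thm:generic-graph-oracle-strongly-convex}.
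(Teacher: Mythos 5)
Your proposal matches the paper's own argument: the paper proves this theorem (as a corollary of the bounded-heterogeneity version, \pref{thm:bounded-heterogeneous-convex-lower-bound}) with exactly this construction---quadratic odd/even half-chains plus a $\frac{\lambda}{2}\nrm{x}^2$ term split across the two halves of the machines, progress of one coordinate per communication round, geometric decay of the minimizer yielding the $\Delta\exp(-c'R\sqrt{\lambda/H})$ term, a random rotation with the ``flattening'' performed only inside the stochastic gradient oracle to preserve the quadratic structure, and \pref{lem:statistical-term-lower-bound} for the statistical term. The only discrepancies are constant-level bookkeeping (e.g., putting the linear seed and the $(1,2),(3,4),\dots$ links in the same component lets that component advance twice in the first round, and the decay rate $q$ should be computed for the averaged objective rather than from $H/\lambda$ directly), none of which affects the stated bound.
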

This algorithm is proven in \pref{app:heterogeneous-convex-lower-bound} with essentially the same argument as was used for \pref{thm:heterogeneous-convex-lower-bound}. The only difference is that $\frac{\lambda}{2}\nrm{x}^2$ was added to the hard instance to make the function strongly convex. Similarly, we have a matching upper bound from the guarantee of Minibatch AC-SA:
\begin{restatable}{theorem}{heterogeneousstronglyconvexupperbound}\label{thm:heterogeneous-strongly-convex-upper-bound}
For any $H \geq 9\lambda,\Delta,\sigma^2$, and any $F \in \mc{F}_\lambda(H,\Delta,M)$, the output of Minibatch will have suboptimality at most
\[
\E F(\hat{x}) - F^* \leq c\cdot\prn*{\Delta\exp\prn*{\frac{-c'\sqrt{\lambda}R}{\sqrt{H}}} + \min\crl*{\frac{\sigma^2}{\lambda MKR},\ \Delta}}
\]
\end{restatable}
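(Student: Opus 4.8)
The plan is to reduce this entirely to the guarantee for (the strongly convex variant of) AC-SA in the sequential setting, exactly as the convex case \pref{thm:heterogeneous-convex-upper-bound} reduced to \citet{lan2012optimal}. The first step is to recall the observation already made in \pref{subsec:minibatch-sgd-in-heterogeneous-local-sgd}: although each machine only has a stochastic gradient oracle for its own component $F_m$, the minibatch gradient
\[
\bar g_r = \frac{1}{MK}\sum_{m=1}^M\sum_{k=1}^K g(x_r; z^m_{r,k})
\]
satisfies $\E \bar g_r = \frac{1}{M}\sum_m \nabla F_m(x_r) = \nabla F(x_r)$, so it is an \emph{unbiased} estimate of $\nabla F$ with no dependence on the heterogeneity, and by independence of the $z^m_{r,k}$ its variance is at most $\sigma^2/(MK)$. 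Consequently Minibatch AC-SA is \emph{literally} $R$ iterations of AC-SA applied to $F$ itself using a stochastic first-order oracle with variance bounded by $\sigma^2/(MK)$.

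The second step is to invoke the known strongly convex rate for this algorithm. One can cite \citet{ghadimi2013optimal} directly, or — more in keeping with the development in \pref{subsec:the-reduction} — apply $\myreduction(\mc{A}_{\textrm{AC-SA}},e)$ together with \pref{thm:my-reduction} and the convex AC-SA guarantee; since $F\in\mc{F}_\lambda(H,\Delta,M)$ means $F$ is $\lambda$-strongly convex, $H$-smooth, and $F(0)-F^*\leq\Delta$, the resulting bound \eqref{eq:ac-sa-strongly-convex}, now with variance $\sigma^2/(MK)$ and run for $R$ iterations (solving that complexity bound for $\epsilon$ as a function of the number of iterations), gives
\[
\E F(\hat x) - F^* \leq c\cdot\prn*{\Delta\exp\prn*{-\frac{c'\sqrt{\lambda}R}{\sqrt{H}}} + \frac{\sigma^2}{\lambda MKR}}.
\]
The condition $H\geq 9\lambda$ is only needed to keep the condition number well behaved so that this rate holds in the stated clean form. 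Finally, the $\min\crl*{\cdot,\Delta}$ in the statement is handled trivially: the algorithm may always fall back to returning the initialization $x_0 = 0$, which by definition of $\mc{F}_\lambda(H,\Delta,M)$ has $F(0)-F^*\leq\Delta$, so one simply takes the better of the two outputs.

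I do not expect any real obstacle here — the content is bookkeeping. The one point worth stating carefully is \emph{why} heterogeneity disappears: it is because the minibatch pools gradients across all $M$ machines, so the per-machine discrepancies $\nabla F_m - \nabla F$ cancel exactly in expectation (leaving zero bias regardless of $\sdiff$), while the stochastic noise merely adds independently and is thus reduced by the full factor $MK$. This is precisely the feature that Local SGD lacks (cf.\ \pref{thm:local-sgd-heterogeneous-lower-bound}), and it is what makes the Minibatch rate here independent of $\sdiff^2$ and matched by the lower bound \pref{thm:heterogeneous-strongly-convex-lower-bound}.
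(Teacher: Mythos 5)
Your proposal is correct and matches the paper's proof: the paper likewise observes that the minibatch gradients are unbiased estimates of $\nabla F$ with variance $\sigma^2/(MK)$ despite heterogeneity, and plugs $R$ iterations into the strongly convex AC-SA guarantee (via \citet{ghadimi2013optimal} / the reduction of \pref{subsec:the-reduction}), handling the $\min\crl*{\cdot,\Delta}$ term by falling back to the initialization. No gaps.
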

As for \pref{thm:heterogeneous-convex-upper-bound}, we observe that the minibatch stochastic gradients are unbiased estimates of $\nabla F$ with variance $\sigma^2 /MK$ and plug this into the AC-SA guarantee (see \pref{subsec:the-reduction}).

\subsubsection{Conclusions}

These results tightly bound the minimax error for optimization in the heterogeneous intermittent communication setting with convex and strongly convex objectives, and establish that Minibatch AC-SA is an optimal algorithm. By analogy, we speculate that Minibatch SGD would be minimax optimal in the non-convex setting too. These results are particularly interesting in comparison with the homogeneous intermittent communication setting:

\paragraph{Sequential Local Computation Does Not Help}
In the homogeneous setting, the optimal algorithm was the better of Minibatch AC-SA and Single-Machine AC-SA. This demonstrated a tradeoff between exploiting the parallelism (using Minibatch AC-SA) and exploiting the available sequential local computation (Single-Machine AC-SA). In contrast, in the heterogeneous regime, where the optimal algorithm is just Minibatch AC-SA, we see that there is no such tradeoff. In particular, it is actually impossible to exploit the availability of sequential local computation at all! To see this, we note that if each machine could only query its stochastic gradient oracle once, but with variance $\sigma^2/K$, the Minibatch AC-SA guarantee would be unchanged. In this way, the sequential nature of the $K$ local stochastic gradient queries is useless beyond its ability to reduce the variance. 

\paragraph{Quadratic Structure Does Not Help}
In a similar vein, in the homogeneous convex setting, the optimal rate could be substantially different when the objective is quadratic versus when it is an arbitrary convex function. Indeed, our lower bounds \pref{thm:homogeneous-convex-lower-bound} and \pref{thm:homogeneous-strongly-convex-lower-bound} were based on decidedly non-quadratic constructions, and their proofs relied critically on their non-quadratic nature. In contrast, the lower bounds \pref{thm:heterogeneous-convex-lower-bound} and \pref{thm:heterogeneous-strongly-convex-lower-bound} apply even for quadratic objectives, so this additional structure does not help the algorithm at all.

\section{Better than Optimal: Breaking The Lower Bounds}\label{sec:breaking-the-lower-bounds}

Many of the results so far presented have been lower bounds on the minimax error in various optimization settings. In a certain sense, these are positive results since they are used to identify optimal algorithms, but, they also have a negative interpretation, as they show fundamental limits on optimization algorithms for these settings. For the intermittent communication setting in particular, the optimal algorithms---a combination of Minibatch AC-SA and Single-Machine AC-SA---are disappointingly na\"ive, and their guarantees unable to simultaneously exploit the availability of local computation and parallelism. 

However, one of the most important uses of optimization lower bounds is to identify how to break them. Rather than viewing them as impossibility results that show when we should throw up our hands because we can do no better than what we have, we should view them as a hint about which of our assumptions should be modified or strengthened. One way to break a lower bound---by designing an algorithm that achieves a better guarantee---is to impose additional structure on the problem that an algorithm might exploit. Lower bounds  identify particular hard objectives and show us \emph{why} those objectives, specifically, are hard to optimize, which in turn motivates new assumptions that (1) obviate the need to deal with those particular hard functions and (2) allow for more effective algorithms more broadly.

Of course, it is trivial to introduce new assumptions that make optimization easy---for instance, we could assume that the function has a minimizer $x^*=0$, which would make optimization very easy indeed!---but these assumptions are too strong. Therefore, it is important to identify new assumptions that allow for the circumvention of the lower bounds by making the problem ``easier'' while simultaneously applying to the objectives that we are actually interested in optimizing. We should therefore think of this moreso as modelling rather than assuming---we want to distill the relevant properties of the objectives we are interested in down to a short list that are amenable for analyzing algorithms. In most of the results so far, we have relied on a very short list of properties: smoothness, convexity, a bound on the gradient variance, and a bound on $\nrm{x^*}$. In this section, we will explore some possible additions to this list that will allow us to break the lower bounds.

\subsection{Intermittent Communication and Near-Quadratic Objectives}\label{subsec:homogeneous-nearly-quadratic-intermittent-minimax}

In the homogeneous intermittent communication setting, \pref{thm:homogeneous-convex-lower-bound} and \pref{thm:homogeneous-strongly-convex-lower-bound} show that the better of Minibatch and Single-Machine AC-SA is optimal. However, we had already seen from \pref{cor:acc-local-sgd-optimal-quadratics} that Local AC-SA could achieve substantially lower error when the objective is quadratic. Naturally, this suggests that if the objective were ``nearly'' quadratic in some way, then Local AC-SA or some other similar method should be able to defeat the lower bounds \pref{thm:homogeneous-convex-lower-bound} and \pref{thm:homogeneous-strongly-convex-lower-bound}. Indeed, the constructions used to prove those lower bounds were not quadratic, and their proofs relied crucially on this fact.

One means of quantifying ``near quadratic'' is to impose a bound on the third derivative of the objective---a quadratic objective has a constant Hessian, and therefore its third derivative is uniformly zero. We therefore introduce the class of $\beta$-second order smooth convex objectives $\mc{F}_0(H,B,\beta)$ and strongly convex objectives $\mc{F}_\lambda(H,\Delta,\beta)$, which contain all twice-differentiable $F_0 \in \mc{F}_0(H,B)$ and $F_\lambda \in \mc{F}_\lambda(H,\Delta)$ for which the Hessian is $\beta$-Lipschitz. 
The assumption of a bounded third derivative is often reasonable and, for example, it holds for training generalized linear models with sufficiently smooth link functions, such as logistic regression. 

In recent work, \citet{yuan2020federated} proposed an accelerated Local SGD variant \textrm{FedAc} and analyzed its error for objectives in $\mc{F}_0(H,B,\beta)$ and $\mc{F}_\lambda(H,\Delta,\beta)$. They showed that
\begin{theorem}[c.f.~Theorems C.1 and E.3 \citep{yuan2020federated}]
For any $H,B,\Delta,\sigma,\beta$, and any $F_0 \in \mc{F}_0(H,B,\beta)$ and $F_\lambda \in \mc{F}_\lambda(H,\Delta,\beta)$, \textsc{FedAc}, using a stochastic gradient oracle with bounded $4\mathth$ moment $\E\nrm{\mc{O}_g(x) - \nabla F(x)}^4 \leq \sigma^4$ guarantees (omitting logarithmic factors)
\begin{align*}
\E F_0(\hat{x}) - F_0^*
&\leq c\cdot\prn*{\frac{HB^2}{KR^2} + \frac{\sigma B}{\sqrt{MKR}} + \frac{\prn*{H\sigma^2B^4}^{1/3}}{M^{1/3}K^{1/3}R} + \frac{\prn*{\beta\sigma^2B^5}^{1/3}}{K^{1/3}R^{4/3}}} \\
\E F_\lambda(\hat{x}) - F^*
&\leq c\cdot\prn*{\Delta\exp\prn*{-\frac{c'\lambda KR}{H}-\frac{c'\sqrt{\lambda K}R}{\sqrt{H}}} + \frac{\sigma^2}{\lambda MKR} + \frac{H\sigma^2}{\lambda^2 MKR^3} + \frac{\beta^2\sigma^4}{\lambda^5K^2R^8}}
\end{align*}
\end{theorem}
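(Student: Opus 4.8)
The plan is to re-derive this along the standard template for analyzing accelerated local-update methods: pass to the cross-machine \emph{averaged} sequences, show they obey an inexact accelerated recursion, and control the two sources of inexactness — a mean-zero noise term and a gradient-heterogeneity bias — where the decisive point is that the bias is governed by the second-order smoothness constant $\beta$ rather than by $H$. Concretely, I would first set up \textsc{FedAc} as a distributed instance of AC-SA: each machine $m$ carries iterates $x^m_t, x^{ag,m}_t$, forms $x^{md,m}_t$ as the usual convex combination, takes a local gradient step with a small stepsize $\eta$ to update $x^{ag,m}$ and a coupled step with a larger stepsize $\gamma$ to update $x^m$, and at the end of each round of $K$ local steps all three sequences are averaged across the $M$ machines. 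Define $\bar{x}_t = \frac1M\sum_m x^m_t$ and likewise $\bar{x}^{ag}_t,\bar{x}^{md}_t$; since the update maps are affine in the gradients, these averaged sequences satisfy \emph{exactly} the AC-SA recursion on $F$ driven by the aggregate "oracle" $\frac1M\sum_m g(x^{md,m}_t)$ — even in the middle of a round, when the averages are not materialized. Writing this oracle as $\nabla F(\bar{x}^{md}_t) + \xi_t + b_t$, with $\xi_t$ the mean-zero part (conditional variance $\le \sigma^2/M$) and $b_t = \frac1M\sum_m \nabla F(x^{md,m}_t) - \nabla F(\bar{x}^{md}_t)$ the bias, the task reduces to: (i) run the accelerated-descent potential argument of Lan / Ghadimi--Lan with a perturbed gradient; (ii) bound $\E\nrm{\xi_t}^2$, $\E\nrm{b_t}$, and $\E\nrm{b_t}^2$; (iii) telescope and optimize $\eta,\gamma$ and the momentum parameter.

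The crucial step — the one using $\mc{F}_0(H,B,\beta)$ and $\mc{F}_\lambda(H,\Delta,\beta)$ rather than mere $H$-smoothness — is the bias bound. Because $\sum_m (x^{md,m}_t - \bar{x}^{md}_t) = 0$, Taylor-expanding $\nabla F$ about $\bar{x}^{md}_t$ annihilates both the constant term and the term linear in the deviations (the Hessian contribution), leaving $\nrm{b_t} \le \tfrac{\beta}{2}\cdot\frac1M\sum_m \nrm{x^{md,m}_t - \bar{x}^{md}_t}^2$: second order in the discrepancy and scaled by $\beta$, not $H$. Next I would prove a stability lemma bounding $\frac1M\sum_m\nrm{x^{md,m}_t - \bar{x}^{md}_t}^2$ over a round: starting from a common iterate, the local sequences drift apart because of independent noise (contributing $\sim \gamma^2 k\sigma^2$ after $k$ local steps) and because of differing gradients (controlled via $H$-smoothness and the accumulated suboptimality, using $\nrm{\nabla F(x^{md})}^2 \lesssim H(F(x^{md}) - F^*)$). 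Since \textsc{FedAc} has three coupled sequences with the acceleration parameters entering the recursion, closing this cleanly requires a joint potential over $\frac1M\sum_m\nrm{x^m - \bar{x}}^2$, $\frac1M\sum_m\nrm{x^{ag,m}-\bar{x}^{ag}}^2$, and $\frac1M\sum_m\nrm{x^{md,m}-\bar{x}^{md}}^2$.

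With these two lemmas in hand, I would plug the discrepancy bound into $\nrm{b_t}$, plug $b_t$ and $\xi_t$ into the accelerated descent inequality (where the perturbation enters linearly and, after completing the square, quadratically — and the quadratic-in-bias term carries $\E\nrm{\text{noise}}^4$, which is exactly why the stronger $\E\nrm{\mc{O}_g(x)-\nabla F(x)}^4\le\sigma^4$ hypothesis is imposed), telescope over $t=1,\dots,KR$ and over the $R$ rounds, and finally tune $\eta,\gamma$ and the momentum schedule — in the strongly convex case using the multi-stage restart of $\myreduction$ / Ghadimi--Lan, which produces the geometric factor $\exp(-c'\lambda KR/H - c'\sqrt{\lambda K}R/\sqrt{H})$ (the first exponent reflecting the $KR$ unaccelerated local steps, the second the $R$ accelerated rounds with condition number effectively reduced by $K$). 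This yields the four-term bounds as stated: the accelerated deterministic rate $HB^2/(KR^2)$, the optimal statistical term $\sigma B/\sqrt{MKR}$, the $H$-driven local-drift term $(H\sigma^2B^4)^{1/3}/(M^{1/3}K^{1/3}R)$, and the $\beta$-driven near-quadratic correction $(\beta\sigma^2B^5)^{1/3}/(K^{1/3}R^{4/3})$ — and their strongly convex analogues.

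The hardest part will be the coupled stability lemma together with the bookkeeping that keeps the $\beta$-dependence confined to a single lower-order term while routing everything else through $H$-smoothness: done naively one simply recovers the Local SGD bound of \pref{thm:local-sgd-homogeneous-upper-bound}, with no visible benefit from near-quadraticity. Pinning down the exact powers of $M,K,R$ in each term — in particular the partial variance reduction $M^{1/3}$ and the improvement of $R^{2/3}$ (Local SGD) to $R^{4/3}$ — hinges on choosing the two stepsizes and the momentum parameter \emph{jointly} rather than sequentially, and on carefully tracking which contributions are $M$-averaged (the master recursion) versus not (the per-machine discrepancy).
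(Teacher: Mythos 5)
This theorem is not proven in the thesis at all: it is quoted directly from \citet{yuan2020federated}, so there is no "paper's own proof" to match against. Your sketch essentially reconstructs the analysis of the original FedAc paper: a virtual averaged sequence that obeys a perturbed accelerated recursion (possible because the updates are affine in the gradients), a decomposition of the aggregate gradient into a mean-zero part with variance $\sigma^2/M$ and a bias $b_t$, the Taylor-expansion observation that averaging annihilates the Hessian term so $\nrm{b_t}\le \tfrac{\beta}{2}\cdot\tfrac1M\sum_m\nrm{x^{md,m}_t-\bar x^{md}_t}^2$, a discrepancy/stability lemma, and the fourth-moment hypothesis entering through $\E\nrm{b_t}^2$ — all of these are the load-bearing ingredients in \citeauthor{yuan2020federated}'s Theorems C.1 and E.3, so in that sense your route is the intended one.

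Two caveats are worth flagging. First, \citeauthor{yuan2020federated} analyze the strongly convex case directly with a designed Lyapunov potential and obtain the general convex bound by adding a small regularizer $\tfrac{\lambda}{2}\nrm{x-x_0}^2$ and tuning $\lambda$, whereas you propose the opposite direction (prove the convex bound, then restart via the $\myreduction$-type argument to get linear convergence); your direction is dimensionally consistent with the stated rates (each noise term maps correctly under $B^2\sim\epsilon/\lambda$), but you would have to verify that the restart actually produces the combined exponent $\exp(-c'\lambda KR/H - c'\sqrt{\lambda K}R/\sqrt{H})$ rather than only its accelerated half, so this step is not automatic. Second, and more substantively, your plan treats FedAc as "AC-SA run locally and averaged," and defers the coupled three-sequence discrepancy potential as the hard part. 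That is precisely where a naive federated AC-SA fails: with the standard acceleration parameters the momentum/extrapolation amplifies the per-machine deviations over a round, and \citeauthor{yuan2020federated} introduce FedAc's decoupled parameterization specifically so that the joint stability recursion closes (their analysis trades some acceleration for stability, which is why the deterministic term is $HB^2/(KR^2)$ rather than the $HB^2/(K^2R^2)$ of Local AC-SA on quadratics). So the sketch is the right skeleton, but the stability lemma is not merely bookkeeping — it dictates the algorithm's parameter choices, and without adopting FedAc's particular coupling the argument as outlined would not go through.
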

The first two terms of both guarantees come close to matching the guarantee of Local AC-SA for quadratic objectives, except with the optimization terms scaling with $K^{-1}$ and $\exp(-\sqrt{K})$ rather than $K^{-2}$ and $\exp(-K)$ as for Local AC-SA. The third term in each rate is reminiscent of the (unaccelerated) Local SGD rates (see \pref{thm:local-sgd-homogeneous-upper-bound}), but with a better dependence on $M$ and $R$. The final terms of these rates are where $\beta$ comes in, and they vanish as $\beta \to 0$. 

It is perhaps not immediately obvious, but these guarantees can be better than the lower bounds in certain parameter regimes. For example, in the convex case, when $H=B=\sigma=1$ and $1 \ll K \leq R^3 \ll MK$, the lower bound reduces to (ignoring logarithmic factors) 
\begin{equation}
\E F(\hat{x}) - F^* \geq \Omega\prn*{\frac{1}{K^2R^2} + \frac{1}{\sqrt{MKR}} + \min\crl*{\frac{1}{R^2},\frac{1}{\sqrt{KR}}}} = \Omega\prn*{\frac{1}{R^2}}
\end{equation}
On the other hand, the \textsc{FedAc} guarantee is
\begin{equation}
\E F_0(\hat{x}) - F_0^* \leq O\prn*{\frac{1}{KR^2} + \frac{1}{\sqrt{MKR}} + \frac{1}{M^{1/3}K^{1/3}R} + \frac{\beta^{1/3}}{K^{1/3}R^{4/3}}} = o\prn*{\frac{1}{R^2}} + O\prn*{\frac{\beta^{1/3}}{K^{1/3}R^{4/3}}}
\end{equation}
Therefore, whenever $\beta \ll \frac{K}{R^2}$, the \textsc{FedAc} guarantee is strictly better than the lower bound. This is just one example, but it illustrates that improvement over the lower bounds is possible when the second-order smoothness parameter is sufficiently small. 

A relevent question is how tight \citeauthor{yuan2020federated}'s guarantee is, and whether their algorithm might be optimal. It is actually clear that their method is \emph{not} optimal in all cases, because the guarantee does not match Local AC-SA's (\pref{cor:acc-local-sgd-optimal-quadratics}) for $\beta = 0$, i.e.~when the objective is quadratic, but in other regimes is it less clear. In order to understand to what extent a bounded third derivative might help, we provide the following lower bound:
\begin{restatable}{theorem}{homogeneousconvexlowerboundthirdorder}\label{thm:homogeneous-convex-lower-bound-third-order-smooth}
For any $H,B,\sigma,\beta,\gamma$, there is an objective $F_0 \in \mc{F}_0(H,B,\beta)$ in any dimension
\[
D \geq c\cdot\gamma^2\prn*{\frac{KR}{B^2} + \frac{H^2MK^2R^2}{\sigma^2} + \frac{H^2KR}{\sigma^2}\min\crl*{\frac{\sqrt{\sigma}(KR)^{3/4}}{\sqrt{HB}},\, \frac{\sigma^2\beta^2}{H^4}}}\log(MKR)
\]
such that the output of any algorithm in $\mc{A}^\gamma(\mc{G}_{\textrm{I.C.}},\mc{O}_g^\sigma)$ will have suboptimality at least
\[
\E F_0(\hat{x}) - F_0^* \geq c\cdot\prn*{\frac{HB^2}{K^2R^2} + \min\crl*{\frac{\sigma B}{\sqrt{MKR}},HB^2} + \min\crl*{\frac{\sigma B}{\sqrt{KR}},\ \frac{HB^2}{R^2(1+\log M)^2},\ \frac{\sqrt{\sigma \beta}B^2}{K^{1/4}R^2(1 + \log M)^{7/4}}}}
\]
\end{restatable}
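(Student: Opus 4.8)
The plan is to adapt the proof of \pref{thm:homogeneous-convex-lower-bound} essentially verbatim, re-using the ``chain'' hard instance \eqref{eq:def-F-main}, the sparsifying stochastic gradient oracle \eqref{eq:def-g-main}, and the random-rotation/flattening machinery of \pref{subsec:high-level-lower-bound-approach}, with exactly one new ingredient: I must also track the third derivative of the hard instance so that it genuinely lies in $\mc{F}_0(H,B,\beta)$. The first two terms, $\frac{HB^2}{K^2R^2}$ and $\min\{\frac{\sigma B}{\sqrt{MKR}},HB^2\}$, require essentially no new work --- they are obtained, as in \pref{thm:generic-graph-lower-bound} and \pref{lem:statistical-term-lower-bound}, from a quadratic (Nesterov-chain) instance with an exact gradient oracle and from the quadratics $\frac a2 x^2 \pm bx$ with Gaussian-noise oracles, both of which have zero third derivative and hence sit in $\mc{F}_0(H,B,\beta)$ for every $\beta$; equivalently, they are the degenerate limit of the $\psi$-chain in which the internal width parameter of $\psi$ goes to $0$ and the sparsification probability goes to $1$. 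So the whole content is the third, $\beta$-sensitive alternative in the last $\min$.

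For that alternative I would revisit the parameter trade-offs in the proof of \pref{thm:homogeneous-convex-lower-bound}. There the function $\psi$ carries an internal width parameter $\nu$ (written ``$\beta$'' in that proof; I rename it to avoid the clash) that controls both the Lipschitz constant of $\psi'$ --- hence the oracle variance $\Theta(\psi'^2_{\max}/p) = \Theta\!\big(\tfrac{H}{p\nu^2}\big)$ --- and the curvature transition of $\psi$. The new observation is that the chain function's Hessian-Lipschitz constant is $\Theta(H^{3/2}\nu)$ (a direct computation from the explicit $\psi$, or from the smoothed flattening used in the randomized case, whichever dominates), so membership in $\mc{F}_0(H,B,\beta)$ forces $\nu \lesssim \beta/H^{3/2}$, and feeding this into the variance budget $\tfrac{H}{p\nu^2} \le \sigma^2$ forces the sparsification probability up to $p \gtrsim H^4/(\sigma^2\beta^2)$. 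As before, any algorithm's output has suboptimality $\gtrsim HB^2/N^2$ once the chain length $N$ exceeds roughly twice the algorithm's ``progress'', and the progress after $R$ rounds is $R$ times a per-round term that increases with $p$ (the typical maximum over the $M$ machines of the $\mathrm{Binomial}(K,p)$ count of progress-making queries); additionally, the flattening deadzone must be widened to $\alpha \sim H/\beta$, which is exactly what produces the new $\beta^2/H^2$ factor inside the dimension requirement. Choosing $\nu$ at the cap $\beta/H^{3/2}$, $p$ at $\max\{1/K,\ H^4/(\sigma^2\beta^2)\}$, and optimizing $\zeta$ and $N$ against $\norm{x^*}\le B$, the variance bound, and the deadzone constraint, and then running the almost-zero-respecting argument under the random rotation, yields the stated alternative $\frac{\sqrt{\sigma\beta}B^2}{K^{1/4}R^2(1+\log M)^{7/4}}$; when $\beta$ is large enough that the third-derivative constraint is slack (roughly $\beta \gtrsim H^2\sqrt{K}/\sigma$ up to logs) the optimization instead returns $\frac{HB^2}{R^2(1+\log M)^2}$ exactly as in \pref{thm:homogeneous-convex-lower-bound}, and when few rounds and large $K$ make the oracle noise dominate the chain, the Single-Machine-style statistical floor $\frac{\sigma B}{\sqrt{KR}}$ takes over --- giving the threefold $\min$.

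The step I expect to be the main obstacle is the technical interplay between the third-derivative bound and the randomization-handling machinery. The standard flattening $\psi_{\mathrm{flat}}(y) = \max\{0,\abs{y}-\alpha\}^2$ used in \pref{subsec:high-level-lower-bound-approach} is only $C^{1,1}$: its second derivative jumps, so it has no bounded third derivative and cannot be used for $\mc{F}_0(H,B,\beta)$. One must instead build a genuinely $C^{2,1}$ flattening with third derivative $O(\beta)$ --- which forces the deadzone to width $\Theta(H/\beta)$ --- and then re-verify the two properties the flattening exists to provide: that gradient coordinates stay exactly zero until the corresponding rotated coordinate is substantially nonzero (now $\gtrsim H/\beta$ rather than $\gtrsim 1/\sqrt{D}$), and that this wider deadzone does not destroy the $\Omega(HB^2/N^2)$ suboptimality scale of the chain (it does not, because the relevant coordinate jumps at the frontier of the chain are $\Theta(1) \gg H/\beta$ in the chosen normalization). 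Stitching this to the right dimension requirement --- in particular explaining why the needed dimension is the \emph{minimum} of the two displayed expressions, since the wide-deadzone sub-construction is only invoked in the small-$\beta$ regime, where it needs \emph{less} room --- and re-running the sphere-concentration and union-bound estimates of \pref{thm:generic-graph-lower-bound} with the $\gamma$-bounded queries is where the bulk of the careful work lies; the remaining exponent bookkeeping parallels \pref{thm:homogeneous-convex-lower-bound} closely.
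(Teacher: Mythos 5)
Your core mechanism for the new term is the same as the paper's: a bounded third derivative forces the ``cap'' on $\psi'$ upward (the paper uses a quadratic--cubic--linear $\psi$ with $|\psi'''|\le\ell_3=\beta/16$, so $\ell_1=\ell_2\zeta+\ell_2^2/(2\ell_3)\gtrsim H^2/\beta$), which through the variance budget forces $p\gtrsim H^4/(\sigma^2\beta^2)$, more progress per round, and hence a weaker bound. But your parameter optimization as described does not reach the stated term. With the curvature pinned at $\ell_2=\Theta(H)$, the choice $p\asymp H^4/(\sigma^2\beta^2)$ gives suboptimality $\asymp HB^2/(KRp)^2=\sigma^4\beta^4B^2/(H^7K^2R^2)$, which in the small-$\beta$ regime is far smaller than the claimed $\sqrt{\sigma\beta}\,B^2/(K^{1/4}R^2(1+\log M)^{7/4})$; optimizing $\zeta$ and $N$ cannot fix this because the $H^{-7}$ dependence is set by the curvature. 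The missing idea is that the adversary should also lower the smoothness of the hard instance: re-instantiate the whole construction with $H'\le H$, where $H'\asymp(\sigma\beta)^{1/2}(1+\log M)^{1/4}K^{-1/4}$ balances $\frac{H'B^2}{R^2(1+\log M)^2}$ against $\frac{\sigma^4\beta^4B^2}{H'^7K^2R^2}$; this is exactly how the paper produces the exponents $K^{1/4}$ and $(1+\log M)^{7/4}$, and without it the theorem as stated is not obtained.

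Separately, the obstacle you flag as the main technical work --- building a $C^{2,1}$ flattening of the objective with deadzone width $\Theta(H/\beta)$ --- is not the route the paper takes, and your justification for it is shaky. In this family of homogeneous lower bounds the objective is never flattened: $F$ is exactly the smooth $\psi$-chain (so the third-derivative bound is checked directly on $\psi$, giving $\nrm{\nabla^3F}\le 16\ell_3$), and the insensitivity to small inner products is implemented entirely in the stochastic oracle, which truncates coordinates beyond $\prog{\alpha}(x)$ and restores unbiasedness via a rare correction event of probability $\delta$ (the $(\alpha,p,\delta)$-robust-zero-chain of \pref{lem:homogeneous-convex-generic-psi-construction} and \pref{lem:intermittent-communication-progress}). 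Consequently $\alpha$ is only constrained by a variance budget and is taken tiny (in particular $\alpha\le\zeta/N$), and the $\min\crl{\cdot,\sigma^2\beta^2/H^4}$ factor in the dimension arises from $1/p$ entering that budget, not from a widened deadzone. Your version, with a deadzone of width $\Theta(H/\beta)$ built into $F$, would need the chain increments $\zeta=B/N^{3/2}$ to dominate $H'/\beta$, and your claim that the relevant jumps are ``$\Theta(1)\gg H/\beta$'' is not justified in the small-$\beta$ regime where the new term is active; at minimum that comparison would have to be verified against the optimized $H'$, $N$, and it does not hold uniformly. Keeping the truncation in the oracle, as the paper does, avoids this issue entirely.
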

This lower bound is identical to \pref{thm:homogeneous-convex-lower-bound} plus the addition of the final term in the $\min$, and indeed we prove this using the same proof as for \pref{thm:homogeneous-convex-lower-bound}. In fact, \pref{thm:homogeneous-convex-lower-bound} is proven as a corollary to this by taking $\beta$ sufficiently large that the corresponding term is irrelevant. The details of the proof can be found in \pref{app:intermittent-communication-homogeneous-convex-lower-bound-third-order-smooth}. There is a significant gap between the lower bound and existing upper bounds like \textsc{FedAc}, so this lower bound is not the final word, and there is additional work to be done.

\subsection{Boundedly-Heterogeneous Objectives}\label{subsec:bounded-heterogeneous-convex-intermittent-minimax}

In the heterogeneous intermittent communication setting, we proved that Minibatch AC-SA is an optimal algorithm. We also showed that it is essentially impossible to leverage the $K$ sequential stochastic gradients that each machine is allowed to compute in each round beyond simply computing (non-adaptively) a minibatch stochastic gradient with lower variance. However, we already saw in \pref{subsec:local-sgd-heterogeneous} that when the heterogeneous objective is not \emph{arbitrarily} heterogeneous, then can be opportunities for improvement. 

To that end, we will now consider heterogeneous intermittent communication optimization under a bounded-heterogeneity assumption. In particular, we will say that a heterogeneous objective $F(x) = \frac{1}{M}\sum_{m=1}^M F_m(x)$ is $\sdiff^2$-heterogeneous if for some $x^* \in \argmin_x F(x)$
\begin{equation}
\frac{1}{M}\sum_{m=1}^M \nrm{\nabla F_m(x^*)}^2 \leq \sdiff^2
\end{equation} 
and we will say that it is $\bar{\zeta}^2$-uniformly heterogeneous if
\begin{equation}
\sup_x \frac{1}{M}\sum_{m=1}^M \nrm{\nabla F_m(x)}^2 \leq \bar{\zeta}^2
\end{equation}
We then define the function classes $\mc{F}_0(H,B,M,\sdiff^2)$ and $\mc{F}_0(H,B,M,\bar{\zeta}^2)$ as the class of $\sdiff^2$-heterogeneous and $\bar{\zeta}^2$-uniformly heterogeneous objectives, respectively, where $F_m$ is $H$-smooth for all $m$, and $F \in \mc{F}_0(H,B)$. 

We already saw in \pref{thm:local-sgd-heterogeneous-uppper-bound} that Local SGD guarantees for any $F \in \mc{F}_0(H,B,M,\bar{\zeta}^2)$ that
\begin{equation}
\E F(\hat{x}) - F^* \leq c\cdot\prn*{\frac{HB^2}{KR} + \frac{\sigma B}{\sqrt{MKR}} + \frac{\prn*{H\sigma^2B^4}^{1/3}}{K^{1/3}R^{2/3}} + \frac{\prn*{H\bar{\zeta}^2B^4}^{1/3}}{R^{2/3}}}
\end{equation}
This algorithm is not accelerated and it is definitely not going to be optimal. However, even this algorithm's guarantee can improve over the lower bound \pref{thm:heterogeneous-convex-lower-bound} in certain regimes. For example, if $H=B=\sigma=1$, and $K \gg R^4$, then the lower bound reduces to
\begin{equation}
\E F(\hat{x}) - F^* \geq \Omega\prn*{\frac{1}{R^2} + \frac{\sigma}{\sqrt{MKR}}} = \Omega\prn*{\frac{1}{R^2}}
\end{equation}
and Local SGD's guarantee to
\begin{equation}
\E F(\hat{x}) - F^* \leq O\prn*{\frac{1}{KR} + \frac{\sigma}{\sqrt{MKR}} + \frac{1}{K^{1/3}R^{2/3}} + \frac{\bar{\zeta}^{2/3}}{R^{2/3}}} = o\prn*{\frac{1}{R^2}} + O\prn*{\frac{\bar{\zeta}^{2/3}}{R^{2/3}}}
\end{equation}
Therefore, when $\bar{\zeta}^2 \ll \frac{1}{R^4}$, then the upper bound breaks the lower bound. Of course, this constraint on $\bar{\zeta}^2$ is very tight and it requires the problem be very nearly homogeneous. Nevertheless, it indicates that improvement \emph{is} possible when the problem is not arbitrarily heterogeneous. Indeed, this raises the possibility that the assumption of $\bar{\zeta}^2$-uniform heterogeneity or the weaker constraint of $\sdiff^2$-heterogeneity might be sufficient to develop better algorithms that circumvent the pessimistic lower bound \pref{thm:heterogeneous-convex-lower-bound}. 

To test the limits of how far this could take us, we prove the following lower bound for optimizing $\sdiff^2$-heterogeneous objectives:
\begin{restatable}{theorem}{boundedheterogeneousconvexlowerbound}\label{thm:bounded-heterogeneous-convex-lower-bound}
For any $H,B,\sigma,\sdiff,\gamma$, there exists an objective $F_0 \in \mc{F}_0(H,B,M,\sdiff^2)$ in any dimension
\[
D \geq R + c\cdot\gamma^2\max\crl*{\frac{R^3}{B^2},\, \frac{H^2R}{\sdiff},\, \frac{H^2MKR^2}{\sigma^2}}\log(MKR)
\]
and an objective $F_\lambda \in \mc{F}_\lambda(H,B,M,\sdiff^2)$ in any dimension
\[
D \geq R + c\cdot\gamma^2\max\crl*{\max\crl*{\frac{\sqrt{H\lambda}}{\Delta},\,\frac{H\sqrt{H\lambda}}{\sdiff^2}}\prn*{1-\sqrt{\frac{\lambda}{H}}}^{-2R},\,\frac{H^2MKR^2}{\sigma^2}}\log(MKR)
\]
such that the output of any algorithm in $\mc{A}(\mc{G}_{I.C.},\mc{O}_{g,m}^\sigma)$ will have suboptimality at least
\begin{align*}
\E F_0(\hat{x}) - F_0^* &\geq c\cdot\prn*{\min\crl*{\frac{HB^2}{R^2},\ \frac{\sdiff B}{R}} + \min\crl*{\frac{\sigma B}{\sqrt{MKR}},\ HB^2}} \\
\E F_\lambda(\hat{x}) - F_\lambda^* &\geq c\cdot\prn*{\min\crl*{\Delta,\ H\sdiff^2}\exp\prn*{\frac{-18R\sqrt{\lambda}}{\sqrt{H}}} + \min\crl*{\frac{\sigma^2}{\lambda MKR},\ \Delta}}
\end{align*}
\end{restatable}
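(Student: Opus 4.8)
The plan is to adapt the ``split Nesterov chain'' construction behind \pref{thm:heterogeneous-convex-lower-bound} (and its strongly convex analogue \pref{thm:heterogeneous-strongly-convex-lower-bound}), but to additionally track the heterogeneity functional $\frac1M\sum_m\nrm*{\nabla F_m(x^*)}^2$ and to re-optimize the scale of the construction subject to the new constraint that this functional be at most $\sdiff^2$. Recall that in \pref{thm:heterogeneous-convex-lower-bound} one assigns the quadratic $\tilde F_1(x) = -\zeta x_1 + C x_d^2 + \sum_i (x_{2i+1}-x_{2i})^2$ to the first $M/2$ machines and $\tilde F_2(x) = \sum_i (x_{2i}-x_{2i-1})^2$ to the rest, applies a uniformly random rotation $U$, and ``flattens out'' the chain \emph{only through the stochastic gradient oracle} so that the objectives remain exactly quadratic while a query reveals a new coordinate of $U$ only after the appropriate machine is queried, and then only with the small probability dictated by the variance budget. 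Because the chain alternates parity between $\tilde F_1$ and $\tilde F_2$, advancing progress by one coordinate requires one query to each function, i.e.\ one full round of communication; after $R$ rounds only $\sim R$ coordinates are ``found'' and the returned point inherits the suboptimality of the tail of the chain.

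The new ingredient is an explicit amplitude parameter $a$ scaling the chain, chosen together with $C$ and the chain length $N$ to maximize the lower bound subject to: (i) $F=\frac1M\sum_m F_m$ is $H$-smooth; (ii) $F$ has a minimizer of norm at most $B$; and (iii) $\frac1M\sum_m\nrm*{\nabla F_m(x^*)}^2\le\sdiff^2$. Since $\nabla F(x^*)=0$ and the machines split into two groups carrying $\tilde F_1$ and $\tilde F_2$, constraint (iii) reduces up to constants to $\nrm*{\nabla\tilde F_1(x^*)}^2\le\sdiff^2$, which is proportional to $a^2$ times a fixed function of $(\zeta,C,N)$ and therefore caps the amplitude. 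Carrying out this constrained optimization should yield exactly two regimes: when $\sdiff\gtrsim HB/R$ constraint (iii) is slack and one recovers the $HB^2/R^2$ bound of \pref{thm:heterogeneous-convex-lower-bound}; when $\sdiff\lesssim HB/R$ the amplitude is pinned by (iii) and the bound degrades to $\sdiff B/R$ — together giving the $\min\crl*{HB^2/R^2,\ \sdiff B/R}$ term. The statistical term $\min\crl*{\sigma B/\sqrt{MKR},\,HB^2}$ follows exactly as in \pref{lem:statistical-term-lower-bound} by appending the independent one-dimensional $F_\pm$ instance on a fresh coordinate, assigned identically to all machines so that it contributes nothing to the heterogeneity; the random-rotation and variance-budget bookkeeping of \pref{subsec:high-level-lower-bound-approach} produces the stated dimension requirement.

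For the strongly convex claim I would add $\frac{\lambda}{2}\nrm*{x}^2$ to every $F_m$ as in \pref{thm:heterogeneous-strongly-convex-lower-bound}, replace the polynomial tail estimate by the geometric contraction estimate governed by the condition number (one coordinate of progress per round $\Rightarrow$ suboptimality $\gtrsim \Delta_{\mathrm{eff}}\exp(-c'\sqrt{\lambda}R/\sqrt H)$), and redo the same constrained amplitude optimization; here constraint (iii) caps the ``effective initial suboptimality'' $\Delta_{\mathrm{eff}}$ at $\min\crl*{\Delta,\ H\sdiff^2}$, which yields the first term, while the statistical term $\min\crl*{\sigma^2/(\lambda MKR),\,\Delta}$ again comes from \pref{lem:statistical-term-lower-bound}. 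The strongly convex bound is otherwise a mechanical re-run of the convex argument with the regularizer present.

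I expect the main obstacle to be the constrained parameter balancing: one must simultaneously meet smoothness, the norm (or $\Delta$) bound, the heterogeneity bound \emph{and} the oracle's variance budget, and then show that the optimum over $(a,\zeta,C,N)$ collapses cleanly to the advertised $\min$ expressions rather than leaving a messy intermediate regime. A secondary subtlety is checking that the oracle-only flattening used to handle general (non--zero-respecting) randomized algorithms — which perturbs the stochastic gradients but not the $F_m$ themselves — does not inflate $\nrm*{\nabla F_m(x^*)}$ beyond $\sdiff$ while keeping the variance at most $\sigma^2$; since the flattening lives at the oracle level and the $F_m$ stay exactly quadratic, the heterogeneity is evaluated on the unperturbed quadratics, but one still has to verify that the coupling between the perturbation probability, the Lipschitz scale of the chain, and $\sdiff$ is consistent.
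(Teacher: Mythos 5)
Your proposal is correct and follows essentially the same route as the paper: the split two-function chain assigned to the two halves of the machines, a random rotation with oracle-level flattening, the one-coordinate-per-round progress argument, and then a re-optimization of the construction's scale subject to the heterogeneity constraint $\frac1M\sum_m\nrm*{\nabla F_m(x^*)}^2\leq\sdiff^2$ (which, as you note, reduces to bounding $\nrm{\nabla F_1(x^*)}$ since the two groups' gradients at $x^*$ cancel), with the statistical term imported from \pref{lem:statistical-term-lower-bound}. The only execution detail worth mentioning is that the paper handles the convex case by adding a small tuned regularizer $\frac{\lambda}{2}\nrm{x}^2$ with $\lambda\approx \ell_2/R^2$ even there, so that the suboptimality of low-progress points follows from strong convexity of the geometric-tail minimizer—precisely the kind of balancing your constrained optimization over the amplitude, chain scale, and $\ell_2$ would arrive at.
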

This is proven in \pref{app:heterogeneous-convex-lower-bound} using the same approach as for \pref{thm:heterogeneous-convex-lower-bound} and \pref{thm:heterogeneous-strongly-convex-lower-bound}. In fact, those theorems are proven as a corollary to this one by taking $\sdiff$ large enough that the $\sdiff$-dependent terms drop out. 

We see from the lower bounds that once $\sdiff$ becomes sufficiently small---smaller than $\frac{HB}{R}$ in the convex case and smaller than $\sqrt{\Delta/H}$ in the strongly convex case---there is potential for improvement over the lower bounds \pref{thm:heterogeneous-convex-lower-bound} and \pref{thm:heterogeneous-strongly-convex-lower-bound}. Of course, the lower bounds \pref{thm:homogeneous-convex-lower-bound} and \pref{thm:homogeneous-strongly-convex-lower-bound} also apply here since homogeneous objectives are a special case of heterogeneous ones, so the possibility of improvement has limits. 

This result raises the question of how large or small we should expect $\sdiff^2$ to be ``typically.'' The answer to this depends significantly on how the heterogeneity arises. We will focus on three cases in the context of machine learning training: different data sources, underdetermined problems, and randomly partitioned data.

\paragraph{Different Data Sources}
The first and probably most obvious way that heterogeneity can arise is when each parallel worker is computing stochastic gradients using data drawn from genuinely different sources. For instance, when an English language model is being trained in parallel on servers in various Anglophone countries. In this case, while the data sources are presumably somewhat related (or else why try to minimize the average of the local objectives?), there is no reason to think that it would be particularly small. Nevertheless in the convex case, if we make the assumption that the local objectives have minimizers $x^*_m$ with norm $\nrm{x^*_m} \approx \nrm{x^*}$ then by the $H$-Lipschitzness of $\nabla F_m$, we can bound
\begin{equation}
\nrm{\nabla F_m(x^*)} = \nrm{\nabla F_m(x^*) - \nabla F_m(x^*_m)} \leq H\nrm{x^*_m - x^*} \lesssim HB
\end{equation}
Therefore, it is reasonable to expect $\sdiff^2 \lesssim H^2B^2$ in the convex case, which is substantially larger than would allow for improvement over the original lower bound \pref{thm:heterogeneous-convex-lower-bound} by \pref{thm:bounded-heterogeneous-convex-lower-bound}.

\paragraph{Underdetermined Problems}
If we consider just the task of minimizing the training loss over $NM$ samples, $N$ per machine, then the local objectives will naturally be heterogeneous since they are based on different data. However, even if each machine's data comes from a completely different source, when the problem is underdetermined---meaning that there are many solutions which minimize the training loss---then $\sdiff^2 = 0$ because there will be at least one minimizer that is shared amongst all of the local objectives. However, when training machine learning models in this underdetermined regime, it is typically necessary to introduce a regularizer, often an L2 regularizer of the form $\lambda\nrm{x}^2$, to allow for better generalization performance, and the optimal regularization parameter typically scales with $\lambda \approx H/\sqrt{NM}$. In this case, since $x^*$ minimizes the unregularized local objectives, we would have
\begin{equation}
\nrm{\nabla F_m(x^*)} = \nrm*{\frac{H}{\sqrt{NM}}x^*} = \frac{HB}{\sqrt{NM}}
\end{equation}
Therefore, we can expect $\sdiff^2 \lesssim \frac{H^2B^2}{NM}$ in this regime, which would generally be small enough to hope for some improvement over the original lower bound \pref{thm:heterogeneous-convex-lower-bound}. 

\paragraph{Randomly Partitioned Data}
The final example of how heterogeneity might arise is when a large training set, all from the same source, is randomly partitioned across the $M$ machines, with $N$ samples per machine. Even when the problem is not underdetermined as in the previous example, we can again expect the level of heterogeneity to be small. In particular, for each individual sample, the expectation of the gradient of the loss of that sample at $x^*$ is zero, and $\nabla F_m(x^*)$ is the average of $N$ independent samples' gradients. Therefore, when the sample gradients have bounded variance $\sigma^2$, we would have 
\begin{equation}
\E\nrm*{\nabla F_m(x^*)}^2 = \E\nrm*{\frac{1}{N}\sum_{n=1}^N\nabla f(x^*;z^m_n)}^2 \leq \frac{\sigma^2}{N}
\end{equation}
where the expectation is over the draw of the $N$ i.i.d.~samples. Therefore, the level of heterogeneity would be bounded by $\sdiff^2 \lesssim \frac{\sigma^2}{N}$. Whether or not this is small enough for improvement over the original lower bound, of course, depends on $\sigma$ and the number of samples per machine, but it would certainly not require an unreasonably large number of samples.

\subsection{The Statistical Learning Setting: Assumptions on Components}\label{subsec:breaking-assumptions-on-components}

Stochastic optimization commonly arises in the context of statistical learning, where the goal is to minimize the expected loss with respect to a model's parameters. In this case, the objective can be written $F(x) = \E_{z\sim\mc{D}} f(x;z)$, where $z\sim\mc{D}$ represents data drawn i.i.d.~from an unknown distribution, and the ``components'' $f(x;z)$ represent the loss of the model parametrized by $x$ on the example $z$. 


For most of the results that have been presented so far, 
we only placed restrictions on the objective $F$ itself, and on the first and second moments of $g$. However, in the statistical learning setting, it is often natural to assume that the loss function $f(\cdot;z)$ itself satisfies particular properties \emph{for each $z$ individually}. For instance, for many machine learning problems, the loss $f$ is convex and smooth and furthermore, the most natural implementation of a gradient oracle is to compute $\nabla f(x;z)$ for an i.i.d.~$z\sim\mc{D}$. This is a non-trivial restriction on the stochastic gradient oracle, and it is conceivable that this property could be leveraged to design and analyze methods that converges faster than lower bounds like, for example, \pref{thm:homogeneous-convex-lower-bound} would allow. 

The specific stochastic gradient oracle \eqref{eq:thm10-gradient-oracle} used to prove \pref{thm:homogeneous-convex-lower-bound}, which zeroed out particular coordinates of the gradient depending on the query point, \emph{cannot} be written as the gradient of a random smooth function. Similarly, the gradient oracles used for several of the other lower bounds are also not expressible as the gradient of a smooth function. In this sense, these lower bound constructions are somewhat ``unnatural.'' However, we are not aware of any analysis that meaningfully exploits the fact that the gradient is given by $\nabla f(\cdot;z)$ for a smooth $f$. There are numerous papers that make this exact assumption: that $F(x) = \E_{z\sim\mc{D}}f(x;z)$ and that the stochastic gradients are given by $g = \nabla f(\cdot;z)$ for some smooth, convex $f$ \citep[e.g.][]{bottou2018optimization,nguyen2019new,koloskova2020unified,woodworth2020minibatch}. However, the \emph{purpose} of this assumption is just to bound quantities like $\E\nrm{g(x)}^2$ or $\E\nrm{g(x) - \nabla F(x)}^2$ in terms of $\sigma_*^2 = \E\nrm{g(x^*)}^2$, i.e.~the variance of the gradients at the optimum. It is, of course, useful to provide guarantees in terms just of $\sigma_*$, but we point out that the components do not necessarily have to be smooth to attain such bounds. For example, the stochatic gradients satisfying $\E\nrm{g(x)}^2 \leq \sigma_*^2 + c\cdot\nrm{x-x^*}^2$ is enough to obtain guarantees in terms just of the variance at the minimizer, and this is only a condition on the second moment of the gradient, not the components per se. Furthermore, in all of our lower bound constructions, the variance of the stochastic gradient oracles is bounded uniformly by $\sigma^2$, so $\sigma$ can always be replaced by $\sigma_*$ in our theorems. 

A very interesting question is what sorts of assumptions about the components can be leveraged to obtain better rates in the various settings we have considered, and under what conditions. Alternatively, it would also be interesting to find situations where properties like smooth components do not allow for any improvement. For example, perhaps it is possible to prove the same result as \pref{thm:homogeneous-convex-lower-bound} using a smooth gradient oracle?

\subsection{The Statistical Learning Setting: Repeated Access to Components}\label{subsec:repeated-accesses}

In the statistical learning setting, it is also natural to consider algorithms that can evaluate the gradient at multiple points for the same datum $z$. Specifically, allowing the algorithm access to a pool of samples $z_1,\dots,z_N$ drawn i.i.d.~from $\mc{D}$ and to compute $\nabla f(x;z)$ for any chosen $x$ and $z_n$ opens up additional possibilities. Indeed, \citet{arjevani2019lower} showed that multiple---even just two---accesses to each component enables substantially faster convergence ($T^{-1/3}$ vs.~$T^{-1/4}$) in sequential stochastic non-convex optimization. Similar results have been shown for zeroth-order and bandit convex optimization \citep{agarwal2010optimal,duchi2015optimal,shamir2017optimal,nesterov2017random}, where accessing each component twice allows for a quadratic improvement in the dimension-dependence. 

In sequential smooth convex optimization, if $F$ has ``finite-sum'' structure (i.e.~$\mc{D}$ is the uniform distribution on $\crl{1,\dots,N}$), then allowing the algorithm to pick a component and access it multiple times opens the door to variance-reduction techniques like SVRG \citep{johnson2013accelerating}. These methods have updates of the form:
\begin{equation}
x_{t+1} = x_t - \eta_t \prn*{\nabla f(x_t;z_t) - \nabla f(\tilde{x};z_t) + \nabla F(\tilde{x})}
\end{equation}
Computing this update therefore requires evaluating the gradient of $f(\cdot;z_t)$ at two different points, which necessitates multiple accesses to a chosen component. For finite sums, this stronger oracle access allows faster rates compared with a single-access oracle \citep[see discussion in, e.g.,][]{arjevani2020complexity}. 

Most relevantly, in the intermittent communication setting, distributed variants of SVRG are able to improve over the lower bound in \pref{thm:homogeneous-convex-lower-bound} \citep{wang2017memory,lee2017distributed,shamir2016without,woodworth2018graph}. Specifically, when the components are $H$-smooth and $L$-Lipschitz, and when the algorithm can make multiple stochastic gradient queries the same $z$, \citeauthor{woodworth2018graph} show that using distributed SVRG to optimize an empirical objective composed of suitably many samples is able to achieve convergence at the rate
\begin{equation}
\E F(\hat{x}) - F^* \leq c\cdot\prn*{\prn*{\frac{HB^2}{RK} + \frac{LB}{\sqrt{MKR}}}\log\frac{MKR}{LB}}
\end{equation}
While this guarantee (necessarily!) holds in a different setting than \pref{thm:homogeneous-convex-lower-bound}, the Lipschitz bound $L$ is generally analogous to the standard deviation of the stochastic gradient variance, $\sigma$ (indeed, $L$ is an upper bound on $\sigma$). With this in mind, this distributed SVRG algorithm can beat the lower bound in \pref{thm:homogeneous-convex-lower-bound} when $\sigma$, $L$, and $K$ are sufficiently large.

\subsection{Non-Convex Optimization with Mean Squared Smoothness}\label{subsec:non-convex-with-MSS}

We will now revisit the homogeneous intermittent communication setting with non-convex objectives. We recall that \pref{thm:homogeneous-non-convex-lower-bound} proved a lower bound on how small any intermittent communication algorithm can make the gradient of
\begin{equation}\label{eq:original-nc-lowerbound}
\E\nrm{\nabla F(\hat{x})} \geq c\cdot\min\crl*{\frac{\sqrt{H\Delta}}{\sqrt{KR}} + \frac{\sqrt{\sigma} (H\Delta)^{1/4}}{(KR)^{1/4}},\ \frac{\sqrt{H\Delta}}{\sqrt{R(1 + \log M)}} + \frac{\sqrt{\sigma}(H\Delta)^{1/4}}{(MKR)^{1/4}},\ \sqrt{H\Delta}}
\end{equation}
Our proof, which followed the general scheme described in \pref{subsec:high-level-lower-bound-approach}, involved constructing an objective whose argument is rotated by some unknown matrix $U$, and showing that any algorithm that finds a point where the gradient is small must essentially be able to  identify all of the columns of $U$. To make this more difficult, a stochastic gradient oracle was constructed such that the influence of the ``yet-unknown'' columns of $U$ is erased from the gradient with probability $1-p$, slowing progress by a factor of $p$. 

However, the responses of this stochastic gradient oracle are very sensitive to their input, because the columns of $U$ that were determined to be ``unknown'' based on a query $x$---specifically, those columns for which $\abs{\inner{U_i}{x}} \leq \alpha$---can change sharply with $x$. Consequently, the stochastic gradient oracle used in the proof of \pref{thm:homogeneous-non-convex-lower-bound} was highly non-smooth---discontinuous actually---as a function of $x$. Of course, this is allowed the context of ``independent noise'' oracles (see \pref{subsec:the-oracle}), and a reasonable algorithm (Minibatch/Single-Machine SGD) was able to match the lower bound, so there is nothing wrong with this setting. 

Nevertheless, in the statistical learning setting, we can identify a setting in which it is possible to improve over the lower bound \eqref{eq:original-nc-lowerbound} by using an algorithm which exploits a certain smoothness property of the stochastic gradients in addition to multiple queries for the same $z$. Specifically, 
we will consider the complexity of non-convex optimization in the homogeneous intermittent communication setting under the condition that the stochastic gradient oracle available to the algorithm is smooth. To quantify this, we use the notion of ``mean squared smoothness'' which has been previously considered in the non-convex optimization literature \citep{fang2018spider,lei2017non}. 
\begin{definition}\label{def:MSS}
For $F(x) = \E_{z\sim\mc{D}}f(x;z)$ equipped with a statistical learning first-order oracle which returns $\nabla f(x;z)$ for an i.i.d.~$z\sim\mc{D}$, we say that the oracle $\mc{O}_{\nabla f}^\sigma$ is $L^2$-mean squared smooth (MSS) if for all $x,y$
\[
\E_{z\sim\mc{D}}\nrm*{\nabla f(x;z) - \nabla f(y;z)}^2 \leq L^2\nrm{x-y}^2
\]
\end{definition}
We will use $\mc{O}_{\nabla f}^{\sigma,L}$ to denote an arbitrary $L^2$-MSS statistical learning first-order oracle for $F$ with variance bounded by $\sigma^2$, and we define $\mc{F}_{-L}(L,\Delta)$ to be the class of all $L$-smooth, possibly non-convex objectives with $F(0) - \min_x F(x) \leq \Delta$. We note that $L^2$-MSS is implied by $f(\cdot;z)$ being $L$-smooth, but can apply more broadly. We also note that by Jensen's inequality, $L^2$-MSS implies that $F$ is $L$-smooth. 

We also consider algorithms that may access the stochastic gradient oracle for the same $z$ multiple times. Specifically, algorithm has access to an oracle $\mc{O}_{\nabla f(\cdot;z)}^{\sigma,L}$ which, when queried with a vector $x$ returns $\nabla f(x;z)$ for an i.i.d.~$z\sim\mc{D}$, and when queried with $(x,z)$ for any previously seen $z$ returns $\nabla f(x;z)$ for the chosen $z$. 

In prior work, \citet{fang2018spider} analyzed an algorithm, \textsc{Spider}, which uses $T$ sequential queries to an oracle $\mc{O}_{\nabla f(\cdot;z)}^{\sigma,L}$ to find an approximate stationary point for any $F \in \mc{F}_{-L}(L,\Delta)$ of norm
\begin{equation}
\E\nrm*{\nabla F(\hat{x})} \leq c\cdot\min\crl*{\sqrt{L\Delta},\,\frac{\sqrt{L\Delta} + \sigma}{\sqrt{T}} + \prn*{\frac{L\sigma\Delta}{T}}^{1/3}}
\end{equation}
In the sequential seting, \citet{arjevani2019lower} show that this rate is essentially optimal and cannot be improved.

In the intermittent communication setting, as before, we consider two variants of this algorithm: Minibatch \textsc{Spider} and Single-Machine \textsc{Spider}. Minibatch \textsc{Spider} corresponds to $R$ steps of \textsc{Spider} using minibatches of size $MK$, and Single-Machine \textsc{Spider} corresponds to $KR$ steps of \textsc{Spider} using minibatches of size just $1$. Plugging the number of steps and the variance reduction implied by minibatching, we can guarantee using the better of these methods that
 \begin{equation}\label{eq:mss-upper-bound}
\E\nrm*{\nabla F(\hat{x})} \leq c\cdot\min\crl*{\frac{\sqrt{L\Delta} + \sigma}{\sqrt{KR}} + \prn*{\frac{L\sigma\Delta}{KR}}^{1/3},\, \frac{\sqrt{L\Delta}}{\sqrt{R}} + \frac{\sigma}{\sqrt{MKR}} + \prn*{\frac{L\sigma\Delta}{R\sqrt{MK}}}^{1/3},\,\sqrt{L\Delta}}
\end{equation}
The first term corresponds in the $\min$ to Single-Machine \textsc{Spider}; the second term corresponds to Minibatch \textsc{Spider}; and the last term corresponds to simply returning $0$ which, by the $L$-smoothness of $F$, has gradient norm at most $\sqrt{L\Delta}$.

Comparing this to \eqref{eq:original-nc-lowerbound}, we can see that these methods, which leverage the mean squared smoothness of the stochastic gradient oracle, are sometimes able to break the lower bound. Specifically, the upper bound \eqref{eq:mss-upper-bound} avoids dependence on any terms that scale with $(KR)^{-1/4}$ and replace them with potentially better $(KR)^{-1/3}$ or $(R\sqrt{MK})^{-1/3}$ terms instead. 

It is interesting to ask whether the combination of Minibatch and Single-Machine \textsc{Spider} might be optimal in the mean squared smooth intermittent communication setting, as \textsc{Spider} is in the sequential setting. To try to answer this question, we prove the following lower bound
\begin{restatable}{theorem}{homogeneousnonconvexMSSlowerbound}\label{thm:homogeneous-non-convex-MSS-lower-bound}
For any $L,\Delta,\sigma^2$, there exists a function $F \in \mc{F}_{-L}(L,\Delta)$ in a sufficiently large dimension $D \geq c\cdot KR\log(MKR)$ such that for any algorithm in $\mc{A}^{\infty}(\mc{G}_{\textrm{I.C}},\mc{O}_{\nabla f}^{\sigma,L})$
\[
\E\nrm*{\nabla \hat{F}_{T,U}(\hat{x})} 
\geq c\cdot\min\crl*{\frac{\sqrt{L\Delta}}{\sqrt{KR}} + \prn*{\frac{L\sigma\Delta}{KR}}^{1/3},\, \frac{\sqrt{L\Delta}}{K^{1/4}\sqrt{R}(1+\log M)^{1/4}}} + c\cdot \min\crl*{\frac{\prn*{L\sigma\Delta}^{1/3}}{\prn*{MKR}^{1/3}},\, \frac{\sqrt{L\Delta}}{(MKR)^{1/4}}}
\]
\end{restatable}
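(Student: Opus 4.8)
The plan is to combine three separate hard-instance constructions, one for each term appearing in the claimed bound, and take the objective to be (a rescaled, rotated copy of) whichever is relevant---or more precisely, to prove each term separately, since a lower bound of the form $\max$ of several quantities follows from proving each one individually, and the stated bound is (up to constants) such a max with the terms grouped into two $\min$s. First I would handle the ``sequential'' term $\min\{\sqrt{L\Delta}/\sqrt{KR} + (L\sigma\Delta/KR)^{1/3},\ \sqrt{L\Delta}\}$: this is exactly the sequential \textsc{Spider}-type lower bound of \citet{arjevani2019lower} for mean-squared-smooth stochastic non-convex optimization, which applies along a single machine since the intermittent communication graph contains a directed path of length $KR$; one embeds their construction unchanged and observes that the extra $M-1$ machines cannot help because the hard function and its MSS stochastic gradient oracle are the same everywhere, so the additional vertices are simply independent samples (the argument that extra parallel samples only reduce variance, not the optimization bottleneck, is the same one used in \pref{thm:homogeneous-non-convex-lower-bound}). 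The ``statistical'' term $\min\{(L\sigma\Delta)^{1/3}/(MKR)^{1/3},\ \sqrt{L\Delta}/(MKR)^{1/4}\}$ likewise follows from a low-dimensional information-theoretic argument in the spirit of \pref{lem:statistical-term-lower-bound} but adapted to the stationarity criterion and to $\abs{\mc{V}} = MKR$ total samples---this is the part that is purely about having only $MKR$ noisy gradient evaluations total, regardless of graph structure, and the MSS constraint is easy to satisfy for the simple (nearly-quadratic) hard instances used there.

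The genuinely new term, and the main obstacle, is the ``communication-limited'' optimization term $\sqrt{L\Delta}/(K^{1/4}\sqrt{R}(1+\log M)^{1/4})$. The plan here is to reuse the construction and oracle from the proof of \pref{thm:homogeneous-non-convex-lower-bound}: the rotated Carmon--Hinder--Sidford-style chain function $\hat F_{T,U}$, together with the progress-zeroing stochastic gradient oracle that, with probability $1-p$, erases the influence of the next unrevealed column of $U$. As argued there, zero-respecting algorithms increase their ``progress'' (the number of revealed columns) by at most one per oracle call that ``fires'' (probability $p$), and the flattening/rotation trick of \pref{subsec:high-level-lower-bound-approach} reduces arbitrary randomized algorithms to nearly-zero-respecting ones. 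Within a single round, each machine makes $K$ sequential queries, so the per-machine progress in a round is at most $\mathrm{Binomial}(K,p)$; with $M$ machines the round's progress is the max of $M$ such binomials, which concentrates around $\max\{Kp,\ \log M\}$ when $Kp \lesssim \log M$. Choosing $p \approx (\log M)/K$ then gives total progress $O(R\log M)$ after $R$ rounds, and the Carmon et al.\ analysis translates ``progress $\le N/2$'' into a gradient-norm lower bound of order $\sqrt{L\Delta/N}$ with $N \asymp R\log M$, i.e.\ $\sqrt{L\Delta}/\sqrt{R\log M}$. The extra $K^{1/4}$ in the denominator and the $(1+\log M)^{1/4}$ versus $\log M$ discrepancy come from a more careful balancing of the variance budget: the MSS oracle constraint $\E\nrm{\nabla f(x;z)-\nabla f(y;z)}^2 \le L^2\nrm{x-y}^2$ together with the variance bound $\sigma^2$ constrains how aggressively we may scale the chain function and how small $p$ can be (the progress-zeroing oracle's variance scales like $(1-p)/p$ times the squared magnitude of the erased gradient component), and unlike the convex case in \pref{thm:homogeneous-convex-lower-bound} one must verify the MSS property of the resulting oracle, not merely a second-moment bound---this is the delicate point, since the zeroing operation is discontinuous in $x$, so one needs to smooth it (e.g.\ interpolate the zeroing probability as a function of $\abs{\inner{U_i}{x}}$, or push the ``flattening'' into the oracle as was done in the heterogeneous proof \pref{app:heterogeneous-convex-lower-bound}) to make $\nabla f(\cdot;z)$ genuinely mean-squared-smooth while preserving the progress-limiting property with probability $1-p$. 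I would carry out the steps in the order: (1) state the three sub-lemmas; (2) dispatch the sequential term by citation/embedding; (3) dispatch the statistical term via Pinsker as in \pref{lem:statistical-term-lower-bound}; (4) build the MSS-smoothed progress-zeroing oracle and verify \pref{def:MSS}; (5) run the per-round max-of-binomials progress bound and the concentration-on-the-sphere argument (exactly as in \pref{thm:generic-graph-lower-bound} and \pref{thm:homogeneous-non-convex-lower-bound}) to control non-zero-respecting deviations; (6) optimize $p$, the chain length $N$, and the scaling parameters against the $\sigma^2$ and $L^2$ budgets to extract the stated term; (7) combine, noting the dimension requirement $D \ge c\cdot KR\log(MKR)$ is inherited from the rotation argument. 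I expect step (4)---making the zeroing oracle mean-squared-smooth without destroying the progress bottleneck---and step (6)---the three-way parameter balancing that produces the unusual $K^{1/4}$ and $(1+\log M)^{1/4}$ exponents---to be where essentially all the work lies; everything else is assembly of tools already developed in the excerpt.
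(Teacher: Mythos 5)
Your central technical idea—smoothing the progress-zeroing operation (your step (4)) so that $\nabla f(\cdot;z)$ becomes genuinely mean-squared-smooth while retaining the robust-zero-chain property, then re-running the max-of-binomials progress bound of \pref{lem:intermittent-communication-progress} and balancing $p$ against the variance and MSS budgets—is exactly the paper's mechanism (it uses smoothed indicators $\Theta_i$ built from bump functions, and the MSS constant of the resulting oracle scales like $\gamma^2/p$, which forces the function scaling $\gamma \propto L\sqrt{p}$ and hence the $p^{1/4}$, i.e.\ the $K^{1/4}$ and $(1+\log M)^{1/4}$, in the final bound). However, your decomposition into three independently provable sub-lemmas is structurally flawed. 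Neither entry of the first $\min$ is a valid lower bound on its own: Minibatch \textsc{Spider}'s guarantee $\frac{\sqrt{L\Delta}}{\sqrt{R}} + \frac{\sigma}{\sqrt{MKR}} + \prn[\big]{\frac{L\sigma\Delta}{R\sqrt{MK}}}^{1/3}$ beats $\frac{\sqrt{L\Delta}}{\sqrt{KR}} + \prn[\big]{\frac{L\sigma\Delta}{KR}}^{1/3}$ when $M$ is large (e.g.\ $K=1$, $M\to\infty$), so the ``embed Arjevani et al.\ unchanged, the extra $M-1$ machines cannot help'' step would fail—extra parallel queries \emph{do} help advance the zero chain (each round's progress is the max of $M$ binomials, which is precisely the source of the $\log M$ factor), and the sequential-looking term is only valid in the regime where the noise/MSS budget forces $p$ to be large. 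Symmetrically, $\frac{\sqrt{L\Delta}}{K^{1/4}\sqrt{R}(1+\log M)^{1/4}}$ exceeds the rate $\frac{\sqrt{L\Delta}}{\sqrt{KR}}$ achievable with exact gradients whenever $K \gg 1+\log M$ and $\sigma$ is tiny, so it too cannot be proven unconditionally. The $\min$ must emerge from a single construction in which the admissible $p$ is tied to $\sigma$ and $L$; the paper does this via a three-case analysis on $K$ vs.\ $\log M$ and $\sigma^2$ vs.\ $L\Delta/(KR)$, choosing $p=1$ in the low-noise/small-$K$ cases and $p \asymp \mu^4\prn[\big]{\frac{L\Delta}{\sigma^2 KR}}^{2/3}$ otherwise.

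The second $\min$ is also not obtainable by your proposed route. A Pinsker-style two-point nearly-quadratic instance in the spirit of \pref{lem:statistical-term-lower-bound} measures distinguishability and yields a gradient-norm term of order $\min\crl{\sigma/\sqrt{MKR},\ \sqrt{L\Delta}}$, which is far weaker than $\min\crl[\big]{\prn{L\sigma\Delta}^{1/3}(MKR)^{-1/3},\ \sqrt{L\Delta}\,(MKR)^{-1/4}}$ in essentially all regimes; the $n^{-1/3}$ sample term for MSS oracles requires the full probability-$p$ zero-chain machinery. The paper instead gets this term almost for free by a serialization argument: any intermittent communication algorithm can be simulated with $M'=1$, $K'=MKR$, $R'=1$, and re-applying the already-proven bound in that instantiation yields exactly the second $\min$. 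So your step (2) and step (3) should be replaced by (a) folding the first-term case analysis into the single smoothed-oracle construction, and (b) serializing (or citing the sequential MSS lower bound with $n=MKR$) for the second term; with those repairs the rest of your outline matches the paper's proof.
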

The proof of this lower bound is similar to the proof of \pref{thm:homogeneous-non-convex-lower-bound}, and is also very similar to the proof we used in the sequential setting to show the optimality of \textsc{Spider} \citep{arjevani2019lower}. In the proof of \pref{thm:homogeneous-non-convex-lower-bound} we used the stochastic gradient oracle to zero out the next relevant direction that the gradient might reveal using something like a non-smooth, discontinuous indicator function, which led to that oracle being highly non-mean squared smooth. This time, we instead use a smoothed out indicator function, which makes the oracle mean squared smooth but, of course, it makes the lower bound lower. Details of the proof can be found in \pref{app:intermittent-communication-homogeneous-non-convex-MSS-lower-bound}.
While a very similar argument sufficed to prove a lower bound that precisely matched the \textsc{Spider} guarantee in the sequential setting, there are some gaps between the upper bound \eqref{eq:mss-upper-bound} and the lower bound \pref{thm:homogeneous-non-convex-MSS-lower-bound}.

\section{Conclusion}\label{sec:conclusion}

This thesis addresses a number of theoretical questions in distributed stochastic optimization, with particular emphasis on understanding the minimax oracle complexity of distributed optimization. Answers to these theoretical questions are quite useful---they can be used to identify optimal algorithms; to identify gaps in our understanding which can prompt further study; and even when optimal algorithms are known, to shed light on additional problem structure that can be introduced and exploited to develop better, more specialized methods. 

Nevertheless, there are limits to how much we can learn from pure theory and from the concept of minimax oracle complexity in particular. In fact, there are frequently mismatches between theoretical prescriptions and practical observations. As an example, accelerated variants of common optimization algorithms like Accelerated Gradient Descent, Accelerated Stochastic Gradient Descent, Accelerated SVRG, etc.~require very carefully chosen momentum parameters in order for their convergence guarantees to hold. However, any practitioner will tell you that you should just set the momentum to some cross-validated constant value. 

Another example that is perhaps more consequential is the case of Local SGD in the intermittent communication setting. As discussed in \pref{sec:local-sgd}, the theoretical guarantees for Local SGD are not particularly impressive. In certain cases, Local SGD can fail to improve over very simple and naive baselines, and in \pref{sec:intermittent-communication-setting} we show that accelerated variants of these baselines will always dominate Local SGD or any of its accelerated variants. \emph{However}, all sorts of people use Local SGD to solve all sorts of optimization problems all the time, and it often works very well and better than the available alternatives \citep{lin2018don,zhang2016parallel,ijcai2018-447}. This suggests that there is more to Local SGD than just its worst case convergence guarantees under the particular set of assumptions that we consider. 

Moving forward, there are a number of interesting questions about the relationship between theoretical and practical properties of optimization algorithms. It is apparent that optimization algorithms are very often deployed outside of the worst-case, how should we think about studying and understanding their performance in the ``average case''? Proving theorems about optimization algorithms often requires choosing stepsizes/momentum parameters/etc.~very carefully, but how important is this, really? Does the algorithm not work with simpler parameter choices? When and why? Also, as we alluded to in \pref{sec:breaking-the-lower-bounds}, the particular details of the assumptions about the objective and oracle can have a substantial impact on the minimax oracle complexity, and on which algorithms are or are not optimal. This raises questions about which assumptions we should make and empirical questions about which choices best correspond with ``typical'' applications.

\newpage
\bibliographystyle{plainnat}
\bibliography{bibliography}
\appendix

\section{Proofs from \pref{sec:lower-bound-tools}}

\subsection{Proof of \pref{lem:prox-construction}}\label{app:generic-graph-oracle-lower-bound}

\proxconstruction*
\begin{proof}
Consider the function $G_U:\R^D\to\R$
\begin{equation}
G_U(x) = \max_{1\leq i \leq d} \max\crl*{\ell\prn*{\inner{U_i}{x} - \alpha\beta(i-1)},\,-\frac{\ell B}{\sqrt{d}},\,\frac{H}{2}\prn*{\nrm{x}^2 - B^2}-\frac{\ell B}{\sqrt{d}}}
\end{equation}
where the parameters $\beta$, $\alpha$, and $\ell$ satisfy
\begin{align}
\beta &= 3 + \frac{2\ell}{H\alpha} \\
\alpha &= \frac{B}{12d^{3/2}} \\
\ell &\leq \frac{HB}{8d^{3/2}}
\end{align}
We then define $F_U:\R^D\to\R$ to be the $H$-Moreau envelope of $G_U$:
\begin{equation}
F_U(x) = \inf_{y\in\R^D}\brk*{G_U(y) + \frac{H}{2}\nrm{x-y}^2}
\end{equation}
A key property of the $H$-Moreau envelope is that $F_U$ is $H$-smooth \citep{bauschke2011convex}. Furthermore, $G_U$ is the maximum of maxima of convex functions, so $G_U$ is convex, and therefore $F_U$ is too \citep{bauschke2011convex}. It is also easy to see that $\min_x F_U(x) = \min_x G_U(x) \geq -\frac{\ell B}{\sqrt{d}}$. Furthermore, 
\begin{equation}
G_U\prn*{-\frac{B}{\sqrt{d}}\sum_{i=1}^d U_i} = \max\crl*{-\frac{\ell B}{\sqrt{d}},\, -\frac{\ell B}{\sqrt{d}},\,\frac{H}{2}\prn*{B^2 - B^2}-\frac{\ell B}{\sqrt{d}}} = -\frac{\ell B}{\sqrt{d}}
\end{equation}
Therefore, $\min_x F_U(x) = \min_x G_U(x) = -\frac{\ell B}{\sqrt{d}}$ and
\begin{equation}
x^* = -\frac{B}{\sqrt{d}}\sum_{i=1}^d U_i \in \argmin_x G_U(x) \subseteq \argmin_x F_U(x)
\end{equation}
with $\nrm{x^*} = B$, so $F_U \in \mc{F}_0(H,B)$. 
Finally, if $\inner{U_d}{x} \geq -\alpha$ then 
\begin{equation}
F_U(x) \geq G_U(x) \geq \max\crl*{-\ell\alpha\beta d,\,-\frac{\ell B}{\sqrt{d}},\,\frac{H}{2}\prn*{\nrm{x}^2 - B^2}-\frac{\ell B}{\sqrt{d}}} \geq -\ell\alpha\beta d
\end{equation} 
so
\begin{equation}
F_U(x) - \min_x F_U(x) \geq \frac{\ell B}{\sqrt{d}} - \ell\alpha\beta d \geq \frac{\ell B}{2\sqrt{d}}
\end{equation}

We now show that the gradient mostly just depends on columns of $U$ for which the inner product with $x$ is already large. Let $x$ be any point such that $\abs{\inner{U_i}{x}} \leq \alpha$ for all $i \geq j$. The gradient $\nabla F_U(x)$ is given by \citep{bauschke2011convex}
\begin{equation}
\nabla F_U(x) = H\prn*{x - y^*}
\end{equation}
where
\begin{equation}
y^* = \argmin_y\crl*{G_U(y) + \frac{H}{2}\nrm{x-y}^2}
\end{equation}
The first order optimality condition for $y^*$ is
\begin{equation}
H(x-y^*) \in \partial G_U(y^*)
\end{equation}
Therefore, we observe that
\begin{equation}
\nabla F_U(x) \in \partial G_U(y^*) \subseteq \textrm{conv}\prn*{\ell U_1,\dots,\ell U_d,Hx}
\end{equation}
Therefore, since $U_1,\dots,U_d$ are orthogonal, $\abs{\inner{\nabla F_U(x)}{U_i}} \leq \ell + H\abs{\inner{x}{U_i}}$.

We now consider three cases:

Case 1:
\begin{equation}
G_U(y^*) = \frac{H}{2}\prn*{\nrm{y^*}^2 - B^2}-\frac{\ell B}{\sqrt{d}} \implies H y^* \in \partial G_U(y^*) \implies y^* = \frac{1}{2}x \implies \nabla F_U(x) = \frac{H}{2} x
\end{equation}
Case 2:
\begin{equation}
G_U(y^*) = -\frac{\ell B}{\sqrt{d}} \implies 0 \in \partial G_U(y^*) \implies y^* = x \implies \nabla F_U(x) = 0
\end{equation}
Case 3:
\begin{equation}
G_U(y^*) = \max_{1\leq i \leq d} \ell\prn*{\inner{U_i}{y^*} - \alpha\beta(i-1)} \implies \sup_{v \in \partial G_U(y^*)} \nrm{v} \leq \ell
\end{equation}
therefore, $\nrm{x - y^*} \leq \frac{\ell}{H}$, so $\abs{\inner{U_i}{y^*} - \inner{U_i}{x}} \leq \frac{\ell}{H}$ for all $i$, and thus
\begin{equation}
\inner{U_j}{y^*} - \alpha\beta(j-1) \geq \inner{U_i}{x} - \frac{\ell}{H} - \alpha\beta(j-1) \geq -\alpha - \frac{\ell}{H} - \alpha\beta(j-1) 
\end{equation}
and for $i > j$
\begin{equation}
\inner{U_i}{y^*} - \alpha\beta(i-1) \leq \inner{U_i}{x} + \frac{\ell}{H} - \alpha\beta(i-1) \geq \alpha + \frac{\ell}{H} - \alpha\beta(i-1)
\end{equation}
Since $\beta > 2 + \frac{2\ell}{H\alpha}$, this implies 
\begin{equation}
\inner{U_i}{y^*} - \alpha\beta(i-1) \leq \alpha + \frac{\ell}{H} - \alpha\beta(j-1) - \alpha\beta < -\alpha - \frac{\ell}{H} - \alpha\beta(j-1) \leq \inner{U_j}{y^*} - \alpha\beta(j-1)
\end{equation}
Therefore, for any $i > j$, $i \not\in \argmax_{1\leq i' \leq d} \ell\prn*{\inner{U_{i'}}{y^*} - \alpha\beta(i'-1)}$. It follows that
\begin{equation}
\nabla F_U(x) = H(x - y^*) \in \partial G_U(y^*) \subseteq \textrm{Conv}\prn*{\ell U_1,\dots,\ell U_j}
\end{equation}
Combining these three cases, we see that in any case, $\abs{\inner{U_i}{x}} \leq \alpha$ for all $i \geq j$ implies that $\nabla F_U(x)$ is a function of $x$ and $U_1,\dots,U_j$ only, and it does not depend at all on the columns $U_{j+1},\dots,U_d$. 

Furthermore, let $x$ be any point with
\begin{equation}
\nrm{x} \geq 5B \geq \frac{B}{d^{3/2}} + 4B \geq 2\prn*{\frac{2\ell}{H} + B + \frac{\sqrt{2\ell B}}{\sqrt{H}d^{1/4}}} 
\end{equation}
and let $y = \frac{1}{2}x$. Then, 
\begin{align}
\frac{H}{2}\prn*{\nrm{y}^2 - B^2}-\frac{\ell B}{\sqrt{d}} &- \max_{1\leq i \leq d} \max\crl*{\ell\prn*{\inner{U_i}{y} - \alpha\beta(i-1)},\,-\frac{\ell B}{\sqrt{d}}}\nonumber\\
&\geq \frac{H}{2}\nrm{y}^2 - \ell\nrm{y} - \frac{H}{2} B^2-\frac{\ell B}{\sqrt{d}} \\
&= \nrm{y}\prn*{\frac{H}{2}\nrm{y} - \ell} - \frac{H}{2} B^2-\frac{\ell B}{\sqrt{d}} \\
&\geq \prn*{\frac{2\ell}{H} + B + \frac{\sqrt{2\ell B}}{\sqrt{H}d^{1/4}}}\prn*{\frac{H}{2} B + \frac{\sqrt{H \ell B}}{\sqrt{2}d^{1/4}}} - \frac{H}{2} B^2-\frac{\ell B}{\sqrt{d}}
\geq 0
\end{align}
Therefore, $G_U(y) = \frac{H}{2}\prn*{\nrm{y}^2 - B^2}-\frac{\ell B}{\sqrt{d}}$ and $\partial G_U(y) = \crl{H y}$, and
\begin{equation}
H\prn*{x - y} = \frac{H}{2}x = H y \in \partial G_U(y) \implies y = \prox_{G_U}(x) \implies \nabla F_U(x) = \frac{H}{2}x
\end{equation}
Therefore, for $x$ with norm $\nrm{x} \geq \gamma$, the gradient of $F_U$ is a function of $x$ only and is independent of $U$.
\end{proof}

\section{Proofs from \pref{sec:local-sgd}}

\subsection{Proofs of \pref{thm:linear-update-alg-quadratics}} \label{app:local-sgd-homogeneous-quadratics}

\localsgdquadratics*
\begin{proof}
We will show that the average of the iterates at any particular time $\bar{x}_t = \frac{1}{M}\sum_{m=1}^M x_t^m$ evolves according to $\mc{A}$ with a lower variance stochastic gradient, even though this average iterate is not explicitly computed by the algorithm at every step. It is easily confirmed from \pref{def:linear-update-algorithm} that
\begin{align}
\bar{x}_{t+1} &= 
\frac{1}{M}\sum_{m'=1}^M\mc{L}^{(t)}_2\prn*{x_1^{m'},\dots,x_t^{m'},g\prn*{\mc{L}^{(t)}_1\prn*{x_1^{m'},\dots,x_t^{m'}};z_t^{m'}}} \\
&= \mc{L}^{(t)}_2\prn*{\bar{x}_1,\dots,\bar{x}_t,\frac{1}{M}\sum_{m'=1}^Mg\prn*{\mc{L}^{(t)}_1\prn*{x_1^{m'},\dots,x_t^{m'}};z_t^{m'}}}
\end{align}
where we used that $\mc{L}^{(t)}_2$ is linear. We will now show that $\frac{1}{M}\sum_{m'=1}^Mg\prn*{\mc{L}^{(t)}_1\prn*{x_1^{m'},\dots,x_t^{m'}};z_t^{m'}}$ is an unbiased estimate of $\nabla F\prn*{\mc{L}^{(t)}_1\prn*{\bar{x}_1,\dots,\bar{x}_t}}$ with variance bounded by $\frac{\sigma^2}{M}$. 

By the linearity of $\mc{L}^{(t)}_1$ and $\nabla F$ 
\begin{equation}
\E\brk*{\frac{1}{M}\sum_{m'=1}^Mg\prn*{\mc{L}^{(t)}_1\prn*{x_1^{m'},\dots,x_t^{m'}};z_t^{m'}}} 
= \frac{1}{M}\sum_{m'=1}^M\nabla F\prn*{\mc{L}^{(t)}_1\prn*{x_1^{m'},\dots,x_t^{m'}}} 
= \nabla F\prn*{\mc{L}^{(t)}_1\prn*{\bar{x}_1,\dots,\bar{x}_t}}
\end{equation}
Furthermore, since the stochastic gradients on each machine are independent with variance less than $\sigma^2$,
\begin{multline}
\E\nrm*{\frac{1}{M}\sum_{m'=1}^Mg\prn*{\mc{L}^{(t)}_1\prn*{x_1^{m'},\dots,x_t^{m'}};z_t^{m'}} - \E\brk*{\frac{1}{M}\sum_{m'=1}^Mg\prn*{\mc{L}^{(t)}_1\prn*{x_1^{m'},\dots,x_t^{m'}};z_t^{m'}}}}^2\\
= \frac{1}{M^2}\sum_{m=1}^M \E\nrm*{g\prn*{\mc{L}^{(t)}_1\prn*{x_1^{m},\dots,x_t^{m}};z_t^{m}} - \nabla F\prn*{\mc{L}^{(t)}_1\prn*{x_1^{m},\dots,x_t^{m}}}}^2 \leq \frac{\sigma^2}{M}
\end{multline}
Therefore, $\bar{x}_{t+1}$ is updated exactly according to $\mc{A}$ with a lower-variance stochastic gradient, and it therefore inherits the same guarantee. 
\end{proof}


\subsection{Proof of \pref{thm:local-sgd-homogeneous-upper-bound}} \label{app:local-sgd-homogeneous-upper-bound}

To prove \pref{thm:local-sgd-homogeneous-upper-bound}, we introduce some notation. Recall that the objective is of the form $F(x) := \E_{z\sim\mc{D}}\brk*{f(x;z)}$. Let $\eta_t$ denote the stepsize used for the $t\mathth$ overall iteration (i.e.~$t = k + (r-1)K$). Let $x_t^m$ denote the $t\mathth$ iterate on the $m\mathth$ machine, and let $\bxt = \frac{1}{M}\sum_{m=1}^M x_t^m$ denote the averaged $t\mathth$ iterate. The vector $\bxt$ may not actually be computed by the algorithm, but it will be central to our analysis. We will use $g(x_t^m; z_t^m)$ to denote the stochastic gradient computed at $x_t^m$ by the $m\mathth$ machine at iteration $t$, and $g_t = \frac{1}{M}\sum_{m=1}^M g(x_t^m; z_t^m)$ will denote the average of the stochastic gradients computed at time $t$. Finally, let $\bgt = \frac{1}{M}\sum_{m=1}^M \nabla F(x_t^m)$ denote the average of the exact gradients computed at the individual iterates. 

\begin{lemma}[c.f.~Lemma 3.1 \citep{stich2018local}]\label{lem:local-sgd-homogeneous-upper-bound-ourlemma31}
Let $F$ be $H$-smooth and $\lambda$-strongly convex, let\\ $\sup_x \E\nrm*{g(x;z) - \nabla F(x)}^2 \leq \sigma^2$, and let $\eta_t \leq \frac{1}{4H}$, then the iterates of Local SGD satisfy
\[
\E\brk*{F(\bxt) - F^*} \leq \prn*{\frac{2}{\eta_t} - 2\lambda}\E\nrm*{\bxt - x^*}^2 - \frac{2}{\eta_t}\E\nrm*{\bx_{t+1} -x^*}^2 + \frac{2\eta_t\sigma^2}{M} + \frac{4H}{M}\sum_{m=1}^M \E\nrm*{\bxt - x_t^m}^2
\]
\end{lemma}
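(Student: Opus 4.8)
The plan is to run the standard one-step descent analysis, but applied to the \emph{virtual} averaged iterate $\bxt = \frac1M\sum_{m=1}^M x_t^m$, exploiting the fact that even across a communication round the averaged iterate obeys the clean recursion $\bar{x}_{t+1} = \bxt - \eta_t g_t$, where $g_t = \frac1M\sum_{m=1}^M g(x_t^m;z_t^m)$; this is because communication only replaces each $x_t^m$ by their common mean, which does not change the mean. So first I would expand
\[
\E\nrm*{\bar{x}_{t+1}-x^*}^2 = \E\nrm*{\bxt - x^*}^2 - 2\eta_t\,\E\inner{\bgt}{\bxt - x^*} + \eta_t^2\,\E\nrm*{g_t}^2,
\]
using $\E[g_t\mid\mc{F}_t] = \bgt := \frac1M\sum_{m=1}^M \nabla F(x_t^m)$, where $\mc{F}_t$ is the history up to time $t$.

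Then I would bound the three pieces. For the variance term, independence of the stochastic gradients across machines gives $\E\nrm*{g_t}^2 \le \E\nrm*{\bgt}^2 + \sigma^2/M$; then $\nrm*{\bgt}^2 \le \frac1M\sum_m\nrm*{\nabla F(x_t^m)}^2 \le \frac{2H}{M}\sum_m(F(x_t^m)-F^*)$ by Jensen and the $H$-smoothness inequality $\nrm*{\nabla F(x)}^2 \le 2H(F(x)-F^*)$ (valid since $\lambda$-strong convexity ensures the global minimum is attained); and finally expanding smoothness around $\bxt$ together with $\sum_m(x_t^m-\bxt)=0$ yields $\frac1M\sum_m(F(x_t^m)-F^*) \le F(\bxt)-F^* + \frac{H}{2M}\sum_m\nrm*{\bxt - x_t^m}^2$. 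For the inner-product term, I would write $\inner{\nabla F(x_t^m)}{\bxt - x^*} = \inner{\nabla F(x_t^m)}{x_t^m - x^*} + \inner{\nabla F(x_t^m)}{\bxt - x_t^m}$, lower-bound the first summand by $F(x_t^m)-F^* + \frac\lambda2\nrm*{x_t^m-x^*}^2$ ($\lambda$-strong convexity) and the second by $F(\bxt) - F(x_t^m) - \frac H2\nrm*{\bxt - x_t^m}^2$ ($H$-smoothness), average over $m$, and use $\frac1M\sum_m\nrm*{x_t^m-x^*}^2 \ge \nrm*{\bxt-x^*}^2$ to obtain $\E\inner{\bgt}{\bxt-x^*} \ge \E[F(\bxt)-F^*] + \frac\lambda2\E\nrm*{\bxt-x^*}^2 - \frac{H}{2M}\sum_m\E\nrm*{\bxt-x_t^m}^2$. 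Substituting everything into the expansion, collecting the coefficient of $\E[F(\bxt)-F^*]$ as $2\eta_t(1-H\eta_t)$ and the coefficient of $\frac1M\sum_m\E\nrm*{\bxt-x_t^m}^2$ as $\eta_t H(1+H\eta_t)$, invoking $\eta_t\le\frac1{4H}$ (so that $2\eta_t(1-H\eta_t)\ge\eta_t$ and $\eta_t H(1+H\eta_t)\le\frac54\eta_t H\le 2\eta_t H$), rearranging, and dividing through by $\eta_t/2$ gives exactly the claimed inequality, with $\frac2{\eta_t}(1-\eta_t\lambda)=\frac2{\eta_t}-2\lambda$ emerging as the coefficient of $\E\nrm*{\bxt-x^*}^2$.

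The argument is essentially routine; the only real care is in the bookkeeping. The dispersion quantity $\frac1M\sum_m\nrm*{\bxt-x_t^m}^2$ enters from three separate places — the drift correction in the inner product, the drift correction in $\frac1M\sum_m F(x_t^m)$, and the $H^2$-term hidden in $\nrm*{\bgt}^2$ — each with a stepsize-dependent coefficient, and one must verify that $\eta_t\le 1/(4H)$ is exactly what is needed to fold them all into the single clean coefficient $4H$ after the final division, while simultaneously leaving a coefficient at least $\eta_t$ on the suboptimality term so that $F(\bxt)-F^*$ survives with a positive constant. I would also flag explicitly that this lemma does \emph{not} control $\frac1M\sum_m\E\nrm*{\bxt-x_t^m}^2$ itself: that consensus error is bounded separately (cf.~\pref{lem:local-sgd-homogeneous-upper-bound-distance-bound-between-local-iterates}), and here it is simply carried along as an error term to be summed over $t$ later.
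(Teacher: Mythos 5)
Your proposal is correct and follows essentially the same perturbed-iterate, one-step analysis as the paper: the same expansion of $\E\nrm*{\bx_{t+1}-x^*}^2$ using $\bx_{t+1}=\bxt-\eta_t g_t$, the same variance reduction to $\sigma^2/M$, the same split of $\inner{\nabla F(x_t^m)}{\bxt-x^*}$ into a strong-convexity part and a drift part, and the same final rearrangement after shrinking the (nonnegative) suboptimality coefficient to $\eta_t/2$. The only cosmetic difference is in the drift term: you lower-bound $\inner{\nabla F(x_t^m)}{\bxt-x_t^m}$ via the smoothness descent inequality (so the $F(x_t^m)$ terms cancel, and you convert $\frac{1}{M}\sum_m F(x_t^m)$ to $F(\bxt)$ inside the $\nrm*{\bgt}^2$ bound), whereas the paper uses Young's inequality with $\gamma=2H$ together with $\nrm*{\nabla F(x_t^m)}^2\le 2H(F(x_t^m)-F^*)$ and invokes convexity only at the end; both yield coefficients within the stated $4H$ and $2\eta_t\sigma^2/M$.
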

\begin{proof}
This proof is nearly identical to the proof of Lemma 3.1 due to \citet{stich2018local}, but we include it in order to be self-contained. We begin by analyzing the distance of $\bx_{t+1}$ from the optimum. Below, expectations are taken over the all of the random variables $\crl*{z_t^m}$ which determine the iterates $\crl*{x_t^m}$.
\begin{align}
&\E\nrm*{\bx_{t+1} - x^*}^2 \nonumber\\
&= \E\nrm*{\bxt - \eta_t g_t - x^*}^2 \\
&= \E\nrm*{\bxt - x^*} + \eta_t^2\E\nrm*{\bgt}^2 + \eta_t^2\E\nrm*{g_t - \bgt}^2 - 2\eta_t\E\inner{\bxt - x^*}{\bgt} \\
&\leq \E\nrm*{\bxt - x^*} + \eta_t^2\E\nrm*{\bgt}^2 + \frac{\eta_t^2\sigma^2}{M} - \frac{2\eta_t}{M}\sum_{m=1}^M\E\inner{\bxt - x^*}{g(x_t^m;z_t^m)} \\
&= \E\nrm*{\bxt - x^*} + \eta_t^2\E\nrm*{\bgt}^2 + \frac{\eta_t^2\sigma^2}{M} - \frac{2\eta_t}{M}\sum_{m=1}^M\brk*{\E\inner{x_t^m - x^*}{\nabla F(x_t^m)} + \E\inner{\bxt - x_t^m}{\nabla F(x_t^m)}}\label{eq:lemma31-initial-bound}
\end{align}
For the second equality, we used that $\E\brk*{g_t - \bgt} = 0$; for the first inequality, we used that $\E\nrm*{g_t - \bgt}^2 = \E\nrm*{\frac{1}{M}\sum_{m=1}^M g(x_t^m;z_t^m) - \nabla F(x_t^m)}^2 \leq \frac{\sigma^2}{M}$ since the individual stochastic gradient estimates are independent; and for the final equality, we used that $z_t^m$ is independent of $\bxt$.

For any vectors $v_m$, $\nrm*{\sum_{m=1}^M v_m}^2 \leq M\sum_{m=1}^M \nrm*{v_m}^2$. In addition, for any point $x$ and $H$-smooth $F$, $\nrm*{\nabla F(x)}^2 \leq 2H(F(x) - F(x^*))$, thus
\begin{equation}
\eta_t^2\E\nrm*{\bgt}^2 \leq \eta_t^2M\sum_{m=1}^M \nrm*{\frac{1}{M}\nabla F(x_t^m)}^2 \leq \frac{2H\eta_t^2}{M}\sum_{m=1}^M F(x_t^m) - F(x^*)
\end{equation}
By the $\lambda$-strong convexity of $F$, we have that
\begin{multline}
    -\frac{2\eta_t}{M}\sum_{m=1}^M\inner{x_t^m - x^*}{\nabla F(x_t^m)} \leq -\frac{2\eta_t}{M}\sum_{m=1}^M \brk*{F(x_t^m) - F(x^*) + \frac{\lambda}{2}\nrm*{x_t^m - x^*}^2} \\
    \leq -\frac{2\eta_t}{M}\sum_{m=1}^M \brk*{F(x_t^m) - F(x^*)} - \lambda\eta_t\nrm*{\bxt - x^*}^2
\end{multline}
Finally, using the fact that for any vectors $a,b$ and any $\gamma > 0$, $2\inner{a}{b} \leq \gamma \nrm{a}^2 + \gamma^{-1}\nrm{b}^2$ we have
\begin{equation}
-2\eta_t\inner{\bxt - x_t^m}{\nabla F(x_t^m)} \leq \eta_t\gamma\nrm*{\bxt - x_t^m}^2 + \frac{\eta_t}{\gamma}\nrm*{\nabla F(x_t^m)}^2
\leq \eta_t\gamma\nrm*{\bxt - x_t^m}^2 + \frac{2H\eta_t}{\gamma}[F(x_t^m) - F(x^*)]
\end{equation}
Combining these with \eqref{eq:lemma31-initial-bound}, we conclude that for $\gamma = 2H$
\begin{align}
\E\nrm*{\bx_{t+1} - x^*}^2 
&\leq \prn*{1-\lambda\eta_t}\E\nrm*{\bxt - x^*} - \frac{2\eta_t\prn*{1-H\eta_t}}{M}\sum_{m=1}^M \E\brk*{F(x_t^m) - F(x^*)} + \frac{\eta_t^2\sigma^2}{M} \nonumber\\
&\qquad\qquad+ \frac{\eta_t}{M}\sum_{m=1}^M\brk*{2H\E\nrm*{\bxt - x_t^m}^2 + \E\brk*{F(x_t^m) - F(x^*)}} \\
&= \prn*{1-\lambda\eta_t}\E\nrm*{\bxt - x^*} - \frac{\eta_t\prn*{1-2H\eta_t}}{M}\sum_{m=1}^M \E\brk*{F(x_t^m) - F(x^*)} \nonumber\\
&\qquad\qquad+ \frac{\eta_t^2\sigma^2}{M} + \frac{2H\eta_t}{M}\sum_{m=1}^M\E\nrm*{\bxt - x_t^m}^2
\end{align}
By the convexity of $F$ and the fact that $\eta_t \leq \frac{1}{4H}$, this implies
\begin{align}
\E\nrm*{\bx_{t+1} - x^*}^2 
&\leq \prn*{1-\lambda\eta_t}\E\nrm*{\bxt - x^*} - \frac{\eta_t}{2}\E\brk*{F(\bxt) - F(x^*)} + \frac{\eta_t^2\sigma^2}{M} + \frac{2H\eta_t}{M}\sum_{m=1}^M\E\nrm*{\bxt - x_t^m}^2
\end{align}
Rearranging completes the proof.
\end{proof}

We will proceed to bound the final term in \pref{lem:local-sgd-homogeneous-upper-bound-ourlemma31} more tightly than was done by \citet{stich2018local}, which allows us to improve on their upper bound. To do so, we will use the following technical lemmas:
\begin{lemma}[Co-Coercivity of the Gradient]\label{lem:local-sgd-homogeneous-upper-bound-co-coercivity}
For any $H$-smooth and convex $F$, and any $x,y$
\begin{align*}
\nrm*{\nabla F(x) - \nabla F(y)}^2 &\leq H\inner{\nabla F(x) - \nabla F(y)}{x - y}\\
\nrm*{\nabla F(x) - \nabla F(y)}^2 &\leq 2H\prn*{F(x) - F(y) - \inner{\nabla F(y)}{x-y}}
\end{align*}
\end{lemma}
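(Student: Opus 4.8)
The plan is to deduce both inequalities from the standard quadratic upper bound for smooth functions together with a well-chosen linear shift. First I would recall that $H$-smoothness yields, for every $u,v$, the descent-lemma inequality $F(v) \leq F(u) + \inner{\nabla F(u)}{v-u} + \frac{H}{2}\nrm*{v-u}^2$ (this is exactly the characterization of smoothness recorded in the preliminaries for convex $F$, and is what we will use).

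The main idea, which does essentially all the work, is to fix $x,y$ and introduce the shifted function $\phi(z) := F(z) - \inner{\nabla F(y)}{z}$. Since $F$ is convex and we subtracted only a linear term, $\phi$ is convex; since $\nabla\phi(z) = \nabla F(z) - \nabla F(y)$ is $H$-Lipschitz, $\phi$ is $H$-smooth; and since $\nabla\phi(y) = 0$, the point $y$ is a global minimizer of $\phi$. I would then apply the descent lemma to $\phi$ at $u = x$ with the ``gradient step'' $v = x - \frac1H \nabla\phi(x)$, which gives $\phi(v) \leq \phi(x) - \frac{1}{2H}\nrm*{\nabla\phi(x)}^2$. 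Combining $\phi(y) \leq \phi(v)$ with this bound and unpacking the definitions of $\phi$ and $\nabla\phi$ yields precisely the second claimed inequality, $\nrm*{\nabla F(x) - \nabla F(y)}^2 \leq 2H\prn*{F(x) - F(y) - \inner{\nabla F(y)}{x-y}}$.

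To obtain the first inequality I would apply the second one a second time with the roles of $x$ and $y$ exchanged, giving $\nrm*{\nabla F(x) - \nabla F(y)}^2 \leq 2H\prn*{F(y) - F(x) - \inner{\nabla F(x)}{y-x}}$, and then add the two inequalities: the function-value terms cancel and the inner-product terms combine into $H\inner{\nabla F(x) - \nabla F(y)}{x-y}$, leaving $2\nrm*{\nabla F(x) - \nabla F(y)}^2 \leq 2H\inner{\nabla F(x) - \nabla F(y)}{x-y}$, which is the first inequality after dividing by two. I do not expect any genuine obstacle here; the only point needing a line of care is checking that $\phi$ inherits convexity and $H$-smoothness and that $y$ minimizes it, all immediate from $\nabla\phi = \nabla F - \nabla F(y)$.
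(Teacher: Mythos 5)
Your proposal is correct and follows essentially the same route as the paper: both arguments introduce the linearly shifted function $z \mapsto F(z) - \inner{\nabla F(\cdot)}{z}$, observe that the shift point is its global minimizer, invoke the smoothness bound $\nrm*{\nabla \phi(x)}^2 \leq 2H(\phi(x) - \min_z \phi(z))$ to get the second inequality, and then symmetrize and add to obtain the first. The only cosmetic difference is that you derive that smoothness bound inline via a single $1/H$ gradient step in the descent lemma, whereas the paper cites it as a known fact.
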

\begin{proof}
This proof follows closely from \citet{vandenbergheLecture}. Define the $H$-smooth, convex function
\begin{equation}
F_x(z) = F(z) - \inner{\nabla F(x)}{z}
\end{equation}
By setting its gradient equal to zero, it is clear that $x$ minimizes $F_x$ and $y$ minimizes $F_y$. For any $H$-smooth and convex $F$, for any $z$, $\nrm*{\nabla F(z)}^2 \leq 2H(F(z) - \min_x F(x))$, therefore,
\begin{equation}
F(y) - F(x) - \inner{\nabla F(x)}{y-x} 
= F_x(y) - F_x(x) 
\geq \frac{1}{2H}\nrm*{\nabla F_x(y)}^2 
= \frac{1}{2H}\nrm*{\nabla F(y) - \nabla F(x)}^2
\end{equation}
This establishes the second claim. Reversing the roles of $x$ and $y$, we also have 
\begin{equation}
F(x) - F(y) - \inner{\nabla F(y)}{x-y} 
\geq \frac{1}{2H}\nrm*{\nabla F(y) - \nabla F(x)}^2
\end{equation}
Adding these inequalities proves the first claim.
\end{proof}

\begin{lemma}[c.f.~Lemma 6 \citep{karimireddy2019scaffold}]\label{lem:local-sgd-homogeneous-upper-bound-contraction-map}
Let $F$ be any $H$-smooth and $\lambda$-strongly convex function, and let $\eta \leq \frac{1}{H}$. Then for any $x,y$
\[
\nrm*{x - \eta \nabla F(x) - y + \eta\nabla F(y)}^2 \leq \prn*{1-\lambda\eta}\nrm*{x - y}^2
\]
\end{lemma}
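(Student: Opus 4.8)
The plan is to expand the squared norm and control the resulting inner-product cross term by combining two facts about $F$: its $\lambda$-strong convexity and the co-coercivity of $\nabla F$ that follows from $H$-smoothness and convexity (\pref{lem:local-sgd-homogeneous-upper-bound-co-coercivity}). Writing $g := \nabla F(x) - \nabla F(y)$ and $d := x - y$, I would first record the identity
\[
\nrm*{x - \eta\nabla F(x) - y + \eta\nabla F(y)}^2 = \nrm*{d}^2 - 2\eta\inner{d}{g} + \eta^2\nrm*{g}^2 ,
\]
so that it suffices to show $2\eta\inner{d}{g} \ge \lambda\eta\nrm*{d}^2 + \eta^2\nrm*{g}^2$.

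The key idea is to split the cross term $2\eta\inner{d}{g}$ into two equal halves and bound each with a different inequality. For the first half, $\lambda$-strong convexity of $F$ gives $\inner{d}{g} = \inner{x-y}{\nabla F(x) - \nabla F(y)} \ge \lambda\nrm*{x-y}^2$, hence $\eta\inner{d}{g} \ge \lambda\eta\nrm*{d}^2$. For the second half, \pref{lem:local-sgd-homogeneous-upper-bound-co-coercivity} applied to the $H$-smooth convex function $F$ gives $\nrm*{g}^2 \le H\inner{d}{g}$, i.e.\ $\inner{d}{g} \ge \tfrac{1}{H}\nrm*{g}^2$; combined with the hypothesis $\eta \le \tfrac{1}{H}$ this yields $\eta\inner{d}{g} \ge \tfrac{\eta}{H}\nrm*{g}^2 \ge \eta^2\nrm*{g}^2$. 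Adding the two bounds establishes the required inequality, and substituting back gives $\nrm*{x - \eta\nabla F(x) - y + \eta\nabla F(y)}^2 \le (1-\lambda\eta)\nrm*{d}^2$, as claimed.

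There is essentially no serious obstacle here: the lemma is a short computation once one sees the ``split the cross term'' trick and invokes co-coercivity. The only points requiring a bit of care are making sure the co-coercivity inequality is applied to $F$ itself — which is convex and $H$-smooth by hypothesis — rather than to a strongly-convex-shifted variant, and checking that the step $\tfrac{\eta}{H}\nrm*{g}^2 \ge \eta^2\nrm*{g}^2$ uses $\eta \le 1/H$ in the correct direction.
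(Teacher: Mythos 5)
Your proof is correct and follows essentially the same route as the paper's: expand the squared norm, use the co-coercivity lemma together with $\eta \le 1/H$ to absorb the $\eta^2\nrm{\nabla F(x)-\nabla F(y)}^2$ term into (half of) the cross term, and use the strong-convexity monotonicity bound $\inner{\nabla F(x)-\nabla F(y)}{x-y}\ge\lambda\nrm{x-y}^2$ on what remains. The only difference — explicitly splitting the cross term into two halves rather than first substituting the co-coercivity bound and then simplifying — is purely cosmetic.
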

\begin{proof}
This Lemma and its proof are essentially identical to \citep[Lemma 6][]{karimireddy2019scaffold}, we include it here in order to keep our results self-contained, and we are more explicit about the steps used. First,
\begin{multline}
\nrm*{x - \eta \nabla F(x) - y + \eta\nabla F(y)}^2
= \nrm*{x-y}^2 + \eta^2 \nrm*{\nabla F(x) - \nabla F(y)}^2 - 2\eta\inner{\nabla F(x) - \nabla F(y)}{x-y} \\
\leq \nrm*{x-y}^2 + \eta^2H\inner{\nabla F(x) - \nabla F(y)}{x - y}  - 2\eta\inner{\nabla F(x) - \nabla F(y)}{x-y}
\end{multline}
where the inequality follows from \pref{lem:local-sgd-homogeneous-upper-bound-co-coercivity}. Since $\eta H \leq 1$, we further conclude that
\begin{equation}
\nrm*{x - \eta \nabla F(x) - y + \eta\nabla F(y)}^2
\leq \nrm*{x-y}^2 - \eta\inner{\nabla F(x) - \nabla F(y)}{x-y}
\end{equation}
Finally, by the $\lambda$-strong convexity of $F$
\begin{gather}
\inner{\nabla F(x)}{x-y} \geq F(x) - F(y) + \frac{\lambda}{2}\nrm*{x-y}^2 \\
-\inner{\nabla F(y)}{x-y} \geq F(y) - F(x) + \frac{\lambda}{2}\nrm*{x-y}^2
\end{gather}
Combining these, we conclude
\begin{align}
\nrm*{x - \eta \nabla F(x) - y + \eta\nabla F(y)}^2
&\leq \nrm*{x-y}^2 - \eta\inner{\nabla F(x) - \nabla F(y)}{x-y} \\
&\leq \nrm*{x-y}^2 - \eta\lambda \nrm*{x-y}^2
\end{align}
which completes the proof.
\end{proof}

\begin{lemma}\label{lem:local-sgd-homogeneous-upper-bound-distance-to-average-distance-to-other}
For any $t$ and $m \neq m'$
\[
\E\nrm*{x_t^m - \bxt}^2 \leq \frac{M-1}{M}\E\nrm*{x_t^m - x_t^{m'}}^2
\]
\end{lemma}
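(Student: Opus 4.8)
The plan is to prove this using only the permutation symmetry of the $M$ per-machine iterates together with convexity of $\nrm{\cdot}^2$; no quantitative property of SGD is needed. In the homogeneous setting all machines start from the common point $x_0$, run mutually independent and identically-distributed stochastic gradient trajectories between communications, and are re-synchronized at each round boundary by an averaging step that is itself symmetric in the machine indices. Hence the joint law of $\prn*{x_t^1,\dots,x_t^M}$ is invariant under permutations of $\crl{1,\dots,M}$ for every $t$, so in particular $\E\nrm*{x_t^m - x_t^j}^2$ takes the same value for every pair $j\neq m$. Establishing this exchangeability rigorously is a short induction on $t$ using the symmetry of the Local SGD update and the common initialization, and it is the only point requiring any care.

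First I would write the centering identity $x_t^m - \bxt = \frac{1}{M}\sum_{j=1}^M\prn*{x_t^m - x_t^j} = \frac{M-1}{M}\cdot\frac{1}{M-1}\sum_{j\neq m}\prn*{x_t^m - x_t^j}$, dropping the vanishing $j=m$ term. Applying Jensen's inequality (convexity of $\nrm{\cdot}^2$) to the average over the $M-1$ indices $j\neq m$ gives
\[
\nrm*{x_t^m - \bxt}^2 \le \frac{(M-1)^2}{M^2}\cdot\frac{1}{M-1}\sum_{j\neq m}\nrm*{x_t^m - x_t^j}^2 = \frac{M-1}{M^2}\sum_{j\neq m}\nrm*{x_t^m - x_t^j}^2.
\]
Taking expectations and using exchangeability to replace each of the $M-1$ summands by $\E\nrm*{x_t^m - x_t^{m'}}^2$ yields $\E\nrm*{x_t^m - \bxt}^2 \le \frac{(M-1)^2}{M^2}\E\nrm*{x_t^m - x_t^{m'}}^2$, and since $\frac{M-1}{M}\le 1$ this is at most $\frac{M-1}{M}\E\nrm*{x_t^m - x_t^{m'}}^2$, which is the claim.

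There is essentially no hard step here; the exchangeability remark above is the only thing worth spelling out. (If one prefers, one can instead compute $\E\nrm*{x_t^m-\bxt}^2$ exactly by expanding $\nrm*{\frac1M\sum_{j\neq m}(x_t^m - x_t^j)}^2$ and using mutual independence of the trajectories within a round, which gives the sharper constant $\tfrac{M-1}{2M}$ in place of $\tfrac{M-1}{M}$; but the weaker bound above is all that is used subsequently in bounding the dispersion term of \pref{lem:local-sgd-homogeneous-upper-bound-ourlemma31}.)
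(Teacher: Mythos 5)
Your proof is correct and takes essentially the same route as the paper's: both write $x_t^m - \bxt$ as an average of the pairwise differences $x_t^m - x_t^j$ and exploit the symmetry (identical distribution of the iterate pairs) to reduce every term to $\E\nrm*{x_t^m - x_t^{m'}}^2$, arriving at the sharper constant $\frac{(M-1)^2}{M^2}$ before relaxing it to $\frac{M-1}{M}$. The only cosmetic difference is that you apply Jensen's inequality to the average over $j\neq m$, whereas the paper expands the square and bounds the cross terms via Cauchy--Schwarz; these steps are equivalent.
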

\begin{proof}
First, we note that $x_t^1,\dots,x_t^M$ are identically distributed. Therefore,
\begin{align}
\E\nrm*{x_t^m - \bxt}^2
&= \E\nrm*{x_t^m - \frac{1}{M}\sum_{m'=1}^M x_t^{m'}}^2 \\
&= \frac{1}{M^2}\E\nrm*{\sum_{m'=1}^M x_t^m - x_t^{m'}}^2 \\
&= \frac{1}{M^2}\brk*{\sum_{m'=1}^M\E\nrm*{x_t^m - x_t^{m'}}^2 + \sum_{m'\neq m''} \E\inner{x_t^m - x_t^{m'}}{x_t^m - x_t^{m''}}} \\
&\leq \frac{1}{M^2}\brk*{(M-1)\E\nrm*{x_t^m - x_t^{m'}}^2 + \sum_{m'\neq m''} \sqrt{\E\nrm*{x_t^m - x_t^{m'}}^2\E\nrm*{x_t^m - x_t^{m''}}^2}} \\
&= \frac{1}{M^2}\brk*{(M-1)\E\nrm*{x_t^m - x_t^{m'}}^2 + 2\binom{M-1}{2} \E\nrm*{x_t^m - x_t^{m'}}^2} \\
&= \frac{(M-1)^2}{M^2}\E\nrm*{x_t^m - x_t^{m'}}^2 \\
&\leq \frac{M-1}{M}\E\nrm*{x_t^m - x_t^{m'}}^2
\end{align}
\end{proof}

\begin{lemma}\label{lem:local-sgd-homogeneous-upper-bound-distance-bound-between-local-iterates}
Under the conditions of \pref{lem:local-sgd-homogeneous-upper-bound-ourlemma31}, with the additional condition that the sequence of stepsizes $\eta_1,\eta_2,\dots$ is non-increasing and $\eta_t \leq \frac{1}{H}$ for all $t$, for any $t$ and any $m$
\[
\E\nrm*{x_t^m - \bxt}^2 \leq \frac{2(M-1)(K-1)\eta_{t-K+1 \land 0}^2\sigma^2}{M}
\]
If $\eta_t = \frac{2}{\lambda(a+t+1)}$ for $a \geq \frac{2H}{\lambda}$, then it further satisfies
\[
\E\nrm*{x_t^m - \bxt}^2 \leq \frac{2(M-1)(K-1)\eta_{t-1}^2\sigma^2}{M}
\]
\end{lemma}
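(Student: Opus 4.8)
The plan is to control the pairwise dispersion of the local iterates within a single round of communication. At the start of each round all machines hold the common iterate $\bxt$ (for $t$ a multiple of $K$), and they drift apart only through the independent stochastic noise. Fix $t$, let $s$ be the index of the most recent synchronization, so that $s \ge t-K+1$, $s \ge 0$, and $x_s^m = \bxt[s]$ for all $m$, and for $s \le \tau < t$ write $e_\tau^m = g(x_\tau^m;z_\tau^m) - \nabla F(x_\tau^m)$, which given the past is conditionally zero-mean, has conditional second moment at most $\sigma^2$, and is conditionally independent across machines. Since $x_\tau^1,\dots,x_\tau^M$ are exchangeable, $D_\tau := \E\nrm*{x_\tau^m - x_\tau^{m'}}^2$ does not depend on the choice of $m\neq m'$, and $D_s = 0$. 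Subtracting one Local SGD step on machine $m'$ from the step on machine $m$, the difference equals $\bigl(x_\tau^m - \eta_\tau\nabla F(x_\tau^m)\bigr) - \bigl(x_\tau^{m'} - \eta_\tau\nabla F(x_\tau^{m'})\bigr) - \eta_\tau(e_\tau^m - e_\tau^{m'})$; expanding the square and conditioning on the past, the cross term vanishes and $\E\nrm*{e_\tau^m - e_\tau^{m'}}^2 \le 2\sigma^2$, while the deterministic part is contracted by the factor $1-\lambda\eta_\tau$ via \pref{lem:local-sgd-homogeneous-upper-bound-contraction-map} (applicable since $\eta_\tau \le \frac1H$). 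This gives the one-step recursion $D_{\tau+1} \le (1-\lambda\eta_\tau)D_\tau + 2\eta_\tau^2\sigma^2$, with $D_s = 0$.

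For the first bound I unroll this recursion using $(1-\lambda\eta_\tau)\le 1$, obtaining $D_t \le 2\sigma^2\sum_{\tau=s}^{t-1}\eta_\tau^2$. Then, because the stepsizes are non-increasing, $t-s \le K-1$, and $\eta_s \le \eta_{t-K+1\land 0}$ (the stepsize at or before the start of the round), $\sum_{\tau=s}^{t-1}\eta_\tau^2 \le (K-1)\eta_s^2 \le (K-1)\eta_{t-K+1\land 0}^2$. Combining with \pref{lem:local-sgd-homogeneous-upper-bound-distance-to-average-distance-to-other}, which gives $\E\nrm*{x_t^m - \bxt}^2 \le \frac{M-1}{M}D_t$, yields $\frac{2(M-1)(K-1)\eta_{t-K+1\land 0}^2\sigma^2}{M}$, as claimed.

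For the sharper bound under $\eta_\tau = \frac{2}{\lambda(a+\tau+1)}$ with $a \ge \frac{2H}{\lambda}$, the unrolling above is too lossy since $\eta_s$ can greatly exceed $\eta_{t-1}$ when $a$ is small; instead I prove by induction on $\tau \in \{s,\dots,t\}$ the stronger statement $D_\tau \le 2(\tau-s)\eta_{\tau-1}^2\sigma^2$. The base case $\tau = s$ is $D_s = 0$, and plugging the recursion into the inductive step reduces the claim to the pointwise inequality $(1-\lambda\eta_\tau)\eta_{\tau-1}^2 \le \eta_\tau^2$, which with the given schedule is exactly $\frac{a+\tau-1}{a+\tau+1} \le \frac{(a+\tau)^2}{(a+\tau+1)^2}$, i.e. $(a+\tau)^2 - 1 \le (a+\tau)^2$ — always true. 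Setting $\tau = t$ gives $D_t \le 2(t-s)\eta_{t-1}^2\sigma^2 \le 2(K-1)\eta_{t-1}^2\sigma^2$, and \pref{lem:local-sgd-homogeneous-upper-bound-distance-to-average-distance-to-other} finishes the proof. The only delicate point — and the source of the improvement over the $O(\eta_t^2K^2\sigma^2)$ bound of \citet{stich2018local} — is choosing the inductive hypothesis so that the step collapses to this one-line algebraic identity, while the fact that the noise enters the per-step error only as an additive variance term (rather than interacting with its magnitude) is what removes one factor of $K$; everything else is the routine bias–variance decomposition of the step together with the cited contraction lemma.
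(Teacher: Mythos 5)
Your proof is correct and follows essentially the same route as the paper: the same reduction to the pairwise distance via \pref{lem:local-sgd-homogeneous-upper-bound-distance-to-average-distance-to-other}, the same one-step recursion $D_{\tau+1}\le(1-\lambda\eta_\tau)D_\tau+2\eta_\tau^2\sigma^2$ obtained from \pref{lem:local-sgd-homogeneous-upper-bound-contraction-map} together with the bias--variance split, and the same reset at the most recent synchronization. The only difference is cosmetic: for the decreasing-stepsize case the paper unrolls the recursion and telescopes the product $\prod_{j}\frac{a+j-1}{a+j+1}$, bounding each resulting term by $\eta_{t-1}^2$, whereas you package the same estimate as an induction that reduces to the pointwise inequality $(1-\lambda\eta_\tau)\eta_{\tau-1}^2\le\eta_\tau^2$ --- equivalent bookkeeping, arguably a bit tidier.
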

\begin{proof}
By \pref{lem:local-sgd-homogeneous-upper-bound-distance-to-average-distance-to-other}, we can upper bound
\begin{equation}
    \E\nrm*{x_t^m - \bxt}^2 \leq \frac{M-1}{M}\E\nrm*{x_t^m - x_t^{m'}}^2
\end{equation}
for all $t$ and $m\neq m'$. In addition,
\begin{align}
\E\nrm*{x_t^m - x_t^{m'}}^2
&= \E\nrm*{x_{t-1}^m - \eta_{t-1} g(x_{t-1}^m;z_{t-1}^m) - x_{t-1}^{m'} + \eta_{t-1} g(x_{t-1}^{m'};z_{t-1}^{m'})}^2 \\
&\leq \E\nrm*{x_{t-1}^m - \eta_{t-1} \nabla F(x_{t-1}^m) - x_{t-1}^{m'} + \eta_{t-1} \nabla F(x_{t-1}^{m'})}^2 + 2\eta_{t-1}^2\sigma^2 \\
&\leq \prn*{1-\lambda\eta_{t-1}}\E\nrm*{x_{t-1}^m - x_{t-1}^{m'}}^2 + 2\eta_{t-1}^2\sigma^2
\end{align}
where for the final inequality we used \pref{lem:local-sgd-homogeneous-upper-bound-contraction-map} and the fact that the stepsizes are less than $\frac{1}{H}$. Since the iterates are averaged every $K$ iterations, for each $t$, there must be a $t_0$ with $0 \leq t-t_0 \leq K-1$ such that $x_{t_0}^m = x_{t_0}^{m'}$. Therefore, we can unroll the recurrence above to conclude that
\begin{equation}
\E\nrm*{x_t^m - x_t^{m'}}^2
\leq \sum_{i=t_0}^{t-1}2\eta_i^2\sigma^2\prod_{j=i+1}^{t-1}\prn*{1-\lambda\eta_{j}} \leq 2\sigma^2\sum_{i=t_0}^{t-1}\eta_i^2
\end{equation}
where we define $\sum_{i=a}^b c_i = 0$ and $\prod_{i=a}^b c_i = 1$ for all $a > b$ and all $\crl{c_i}_{i\in\mathbb{N}}$. Therefore, for any non-increasing stepsizes, we conclude
\begin{equation}
\E\nrm*{x_t^m - \bxt}^2 \leq \frac{2\eta_{t-K+1 \land 0}^2\sigma^2(M-1)(K-1)}{M}
\end{equation}
This implies the first claim.

In the special case $\eta_t = \frac{2}{\lambda \prn*{a + t + 1}}$, we have 
\begin{align}
\E\nrm*{x_t^m - x^{m'}}^2 
&\leq 2\sigma^2 \sum_{i = t_0}^{t-1} \eta_i^2 \prod_{j=i+1}^{t-1}\prn*{1-\lambda\eta_j} \\
&= 2\sigma^2 \sum_{i = t_0}^{t-1} \eta_i^2 \prod_{j=i+1}^{t-1}\prn*{\frac{a + j - 1}{a + j + 1}} \\
&= 2\sigma^2\eta_{t-1}^2 + \frac{2\sigma^2\eta_{t-2}^2(a+t-2)}{a+t} + 2\sigma^2 \sum_{i = t_0}^{t-3} \eta_i^2 \frac{(a + i)(a + i + 1)}{(a+t-1)(a+t)} \\
&= 2\sigma^2\eta_{t-1}^2\prn*{1 + \frac{(a+t)(a+t-2)}{(a+t-1)^2} + \sum_{i = t_0}^{t-3}\frac{(a + i)(a + t)}{(a+t-1)(a+i+1)}} \\
&\leq 2\sigma^2\eta_{t-1}^2\prn*{t-t_0} \\
&\leq 2(K-1)\sigma^2\eta_{t-1}^2
\end{align}
This implies the second claim.
\end{proof}

Next, we will show that Local SGD is always at least as good as $KR$ steps of sequential SGD. To do so, we use the following result from \citet{stich2019unified}:
\begin{lemma}[Lemma 3 \citep{stich2019unified}]\label{lem:local-sgd-homogeneous-upper-bound-stich-recurrence}
For any recurrence of the form
\[
r_{t+1} \leq (1-a\gamma_t)r_t - b\gamma_t s_t + c\gamma_t^2
\]
with $a, b > 0$, there exists a sequence $0 < \gamma_t \leq \frac{1}{d}$ and weights $w_t > 0$ such that
\[
\frac{b}{W_T}\sum_{t=0}^T \brk*{s_t w_t + a r_{t+1}} \leq 32d r_0 \exp\prn*{-\frac{aT}{2d}} + \frac{36c}{aT}
\] 
where $W_T := \sum_{t=0}^T w_t$.
\end{lemma}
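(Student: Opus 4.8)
This is, almost verbatim, Lemma~3 of \citet{stich2019unified}, so the shortest honest route is to invoke that reference; I sketch here the argument one would give from scratch. It is the standard ``weighted telescoping'' device for stochastic recurrences with a contraction, and the plan has three ingredients: rearrange the one-step inequality, choose a two-phase stepsize/weight schedule, and sum. First I would rewrite the hypothesis as
\[
b\gamma_t s_t + r_{t+1} \le (1-a\gamma_t) r_t + c\gamma_t^2 ,
\]
where — exactly as in the intended application, with $r_t = \E\nrm{\bxt-x^*}^2$ and $s_t = \E[F(\bxt)-F^*]$ — both $r_t$ and $s_t$ are nonnegative; nonnegativity of $r_{T+1}$ is what lets me discard a leftover term at the end, and $b>0$ lets me carry $s_t$ and $r_{t+1}$ together in the final sum.

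Next I would fix the schedule. The exponential term $32 d\, r_0 \exp(-aT/(2d))$ must come from a \emph{burn-in} phase of length $t_0 = \Theta\big(\tfrac{d}{a}\big)$ during which $\gamma_t \equiv 1/d$: iterating the contraction $r_{t+1}\le (1-a/d)r_t + c/d^2$ over these steps drives $r_0$ down by $(1-a/d)^{t_0}\le \exp(-at_0/d)$ while adding only an $O(c/(ad))$ floor. After the burn-in I would switch to the decreasing schedule $\gamma_t = \tfrac{2}{a(t+1)}$ (note $\gamma_t\le 1/d$ once $t+1\ge 2d/a$, so this respects the constraint and dovetails with the burn-in length). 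For the weights I would take $w_t$ geometric, $w_t\propto (1-a/d)^{-t}$, on the burn-in phase and polynomial, $w_t\propto (t+1)$, afterward — the constant of proportionality in each phase being pinned down by the requirement that the coefficient of $r_t$ produced by multiplying the rearranged inequality at step $t$ by $w_t/\gamma_t$ is no larger than minus the coefficient of $r_t$ produced at step $t-1$; e.g.\ in the polynomial phase with $w_t\propto(t+1)$ one uses $(t+1)(t-1)\le t^2$ to get precisely this.

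Finally I would sum $t=0,\dots,T$. By construction the $r$-terms telescope, so the total collapses to a constant times $w_0 r_0$ minus a nonnegative multiple of $r_{T+1}$, which I discard; the $a r_{t+1}$ part of the conclusion comes along for free because the telescoping already leaves a negative $\sum_t a w_t r_{t+1}$-type term on the right that I move to the left. The noise terms contribute $c\sum_{t=0}^T\gamma_t w_t$, and the schedule is arranged so that $\gamma_t w_t$ is constant within each phase (it equals $2/a$ in the decreasing phase), whence $\sum_t\gamma_t w_t = O(W_T/(aT))$. Dividing through by $W_T=\sum_t w_t$ then yields $\tfrac{b}{W_T}\sum_t(s_t w_t + a r_{t+1}) \lesssim d r_0\exp(-aT/(2d)) + c/(aT)$, and tracking the numerical constants across the phase boundary produces the stated $32$, the $\tfrac{1}{2d}$ inside the exponential, and the $36$.

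The main obstacle is exactly this last bookkeeping: one must pick the burn-in length and the two weight families so that (i) $\gamma_t\le 1/d$ holds throughout, (ii) the coefficient-matching inequalities hold in both phases and at the boundary, and (iii) the constants line up simultaneously. Nothing here is deep, but it is fiddly, which is presumably why the paper simply cites \citet{stich2019unified}. Once the lemma is in hand it is the engine behind \pref{thm:local-sgd-homogeneous-upper-bound}: one applies it to the averaged-iterate recurrence of \pref{lem:local-sgd-homogeneous-upper-bound-ourlemma31} with $d=4H$, $a=\lambda$, $b=\tfrac12$, after using the dispersion bound \pref{lem:local-sgd-homogeneous-upper-bound-distance-bound-between-local-iterates} together with $\eta_t\le 1/(4H)$ to control the term $\tfrac{4H}{M}\sum_m\E\nrm{\bxt-x_t^m}^2$ — though, since that term is cubic rather than quadratic in $\eta_t$, it must be carried carefully rather than crudely folded into the $c\gamma_t^2$ slot, and this is where the extra $\log(9+\lambda KR/H)$ factor in the strongly convex bound originates.
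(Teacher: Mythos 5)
Your instinct to simply cite \citet{stich2019unified} matches the paper, which imports this lemma verbatim and never reproves it; as a citation the proposal is fine. But the from-scratch sketch you attach has a genuine flaw in its core design choice: the length of the constant-stepsize phase. You propose a burn-in of length $t_0 = \Theta(d/a)$ with $\gamma_t \equiv 1/d$, followed by $\gamma_t = 2/(a(t+1))$. Over such a short burn-in the contraction only attenuates $r_0$ by $(1-a/d)^{\Theta(d/a)} = e^{-\Theta(1)}$, a constant, and during the $1/t$ phase the contraction product is $\prod_{t\ge t_0}\frac{t-1}{t+1} \approx t_0^2/T^2$, which is only polynomial in $1/T$. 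Concretely, take $c=0$, $s_t \equiv 0$, and run the recurrence with equality: then $r_{T+1} \approx r_0\, e^{-\Theta(1)}\, d^2/(a^2T^2)$, and since the $r_t$ are nonincreasing the left-hand side of the conclusion is at least $b\,a\,r_{T+1} \gtrsim b\,r_0\, d^2/(aT^2)$, which for $T \gg (d/a)\log(d/a)$ is exponentially larger than the claimed bound $32\, d\, r_0 \exp(-aT/(2d))$ (the $36c/(aT)$ term vanishes here). So your schedule can only yield the weaker $O\big(d^2 r_0/(aT^2) + c/(aT)\big)$-type statement, not the lemma as stated.

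The fix — and what Stich actually does, and what this paper itself spells out each time it invokes the lemma — is to make the constant-stepsize phase a constant fraction of the horizon, with a case split: if $T \le 2d/a$, use $\gamma_t = 1/d$ throughout with geometric weights $w_t = (1-a\gamma)^{-(t+1)}$; if $T > 2d/a$, use $\gamma_t = 1/d$ with weights $w_t = 0$ for $t \le T/2$, and then $\gamma_t = \frac{2}{a(\kappa + t - T/2)}$ with polynomial weights for $t > T/2$ (where $\kappa \approx 2d/a$ so that $\gamma_t \le 1/d$ at the switch). The factor $\exp(-aT/(2d))$ is precisely $(1-a/d)^{T/2}$ from contracting over the first \emph{half} of the horizon; it cannot be produced by a burn-in whose length does not scale with $T$. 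Your remaining mechanics (rearranging, telescoping via the coefficient inequality $(t+1)(t-1)\le t^2$, $\gamma_t w_t$ constant so the noise sum is $O(W_T/(aT))$, discarding $r_{T+1}\ge 0$) are the right ingredients, and your closing remark about the application — that the $\eta_t^3$ dispersion term must be tracked separately and is the source of the logarithmic factor in the strongly convex rate of \pref{thm:local-sgd-homogeneous-upper-bound} — is accurate, but as written the schedule in your sketch does not prove the lemma.
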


We now argue that Local SGD is never worse than $KR$ steps of sequential SGD:
\begin{lemma}\label{lem:local-sgd-homogeneous-upper-bound-local-not-worse-than-single}
Let $(f,\mc{D}) \in \mc{F}(H,\lambda,B,\sigma^2)$. When $\lambda=0$, an appropriate average of the iterates of Local SGD with an optimally tuned constant stepsize satisfies for a universal constant $c$
\[
F(\hat{x}) - F^* \leq c\cdot \frac{HB^2}{KR} + c\cdot\frac{\sigma B}{\sqrt{KR}}
\]
In the case $\lambda > 0$, then an appropriate average of the iterates of Local SGD with decreasing stepsize $\eta_t \asymp (\lambda t)^{-1}$ satisfies for a universal constant $c$
\[
F(\hat{x}) - F^* \leq c\cdot HB^2 \exp\prn*{-\frac{\lambda KR}{4H}} + c\cdot\frac{\sigma^2}{\lambda KR}
\]
\end{lemma}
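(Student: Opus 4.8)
The plan is to avoid analyzing the averaged iterate $\bar x_t$ directly (which would force us to control the inter-machine dispersion and would not yield the Single-Machine rate), and instead to reduce everything to a single sequential SGD trajectory by using convexity plus symmetry across machines, after which the periodic averaging steps are shown to be harmless.

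First I would use convexity of $F$ twice. Since the returned point $\hat x$ is a $w$-weighted average of the per-step averages $\bar x_{k,r}=\frac1M\sum_m x^m_{k,r}$, and each $\bar x_{k,r}$ is in turn an average of the $M$ machine iterates, $F(\hat x)\le \frac{1}{\sum w_{k,r}}\sum_{k,r}\frac{w_{k,r}}{M}\sum_m F(x^m_{k,r})$. Taking expectations and using that $x^1_{k,r},\dots,x^M_{k,r}$ are identically distributed gives $\E F(\hat x)-F^*\le \frac{1}{\sum w_{k,r}}\sum_{k,r}w_{k,r}\,\E[F(x^1_{k,r})-F^*]$, reducing the problem to machine $1$'s trajectory. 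That trajectory is exactly $KR$ stochastic gradient steps, except that at the end of each round the iterate $x^1_{K,r}$ is replaced by $\bar x_{K,r}=\frac1M\sum_m x^m_{K,r}$ before the next round. The key observation is that this replacement only decreases the expected squared distance to the minimizer: by Cauchy--Schwarz in $L^2$ together with symmetry (the computation underlying \pref{lem:local-sgd-homogeneous-upper-bound-distance-to-average-distance-to-other}),
\[
\E\nrm{\bar x_{K,r}-x^*}^2=\frac{1}{M^2}\sum_{m,m'}\E\inner{x^m_{K,r}-x^*}{x^{m'}_{K,r}-x^*}\le \E\nrm{x^1_{K,r}-x^*}^2 .
\]

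Next I would invoke the textbook one-step inequality for SGD on an $H$-smooth, $\lambda$-strongly convex objective with $\sigma^2$-bounded gradient variance (which is the $M=1$, dispersion-free specialization of the calculation in the proof of \pref{lem:local-sgd-homogeneous-upper-bound-ourlemma31}): for $\eta_t\le \tfrac1{2H}$, $\E\nrm{y_{t+1}-x^*}^2\le (1-\lambda\eta_t)\E\nrm{y_t-x^*}^2-\eta_t\,\E[F(y_t)-F^*]+\eta_t^2\sigma^2$. Defining the Lyapunov sequence $r_t$ to be $\E\nrm{\cdot-x^*}^2$ \emph{after} any round-boundary averaging, the displayed inequality above shows that at a boundary $r_{t+1}$ is no larger than what the plain gradient step would give, so the same recursion $r_{t+1}\le (1-\lambda\eta_t)r_t-\eta_t s_t+\eta_t^2\sigma^2$ (with $s_t=\E[F(y_t)-F^*]$) holds at \emph{every} step. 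Thus machine $1$'s trajectory obeys precisely the recursion of $KR$ sequential SGD steps.

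Finally I would solve this recursion and match the free weights $w_{k,r}$. In the convex case ($\lambda=0$) I take a constant stepsize $\eta=\min\{\tfrac1{2H},\tfrac{B}{\sigma\sqrt{KR}}\}$ with uniform weights, telescope to get $\frac{1}{KR}\sum_t s_t\le \frac{B^2}{\eta KR}+\eta\sigma^2$, and check the two stepsize regimes to conclude $\E F(\hat x)-F^*\le \tfrac{2HB^2}{KR}+\tfrac{2\sigma B}{\sqrt{KR}}$. In the strongly convex case I take $\eta_t\asymp(\lambda t)^{-1}$ and apply \pref{lem:local-sgd-homogeneous-upper-bound-stich-recurrence} with $a=\lambda$, $b=1$, $c=\sigma^2$, $d=2H$, $T=KR-1$, $r_0\le B^2$, which (absorbing $e^{\lambda/(4H)}\le e^{1/4}$ and $\tfrac{KR}{KR-1}\le 2$ into the constant) gives $\E F(\hat x)-F^*\le c\,(HB^2\exp(-\lambda KR/(4H))+\sigma^2/(\lambda KR))$. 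I do not expect a genuine obstacle here; the only thing requiring care is the round-boundary bookkeeping --- defining $r_t$ as the post-averaging potential, verifying the recursion form holds across boundaries, and choosing the algorithm's weights $w_{k,r}$ to be exactly the weights these standard arguments require --- and the conceptual crux is simply the observation that averaging $M$ identically distributed iterates never increases $\E\nrm{\cdot-x^*}^2$, so one may pretend one is running plain SGD.
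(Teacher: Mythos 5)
Your proposal is correct and follows essentially the same route as the paper's proof: reduce to a single machine's trajectory via convexity and the fact that the machines' iterates are identically distributed, observe that the round-boundary averaging cannot increase the expected squared distance to $x^*$ (the paper uses Jensen's inequality where you use Cauchy--Schwarz plus symmetry, which is equivalent), and then solve the resulting sequential-SGD recursion with a constant stepsize in the convex case and with \pref{lem:local-sgd-homogeneous-upper-bound-stich-recurrence} in the strongly convex case. No gaps to flag.
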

\begin{proof}
Define $T := KR$ and consider the $(t+1)$st iterate on some machine $m$, $x_{t+1}^m$. If $t+1 \mod K \neq 0$, then $x_{t+1}^m = x_t^m - \eta_tg(x_t^m;z_t^m)$. In this case, for $\eta_t \leq \frac{1}{2H}$
\begin{align}
\E\nrm*{x_{t+1}^m - x^*}^2
&= \E\nrm*{x_t^m - \eta_tg(x_t^m;z_t^m) - x^*}^2 \\
&= \E\nrm*{x_t^m - x^*}^2 + \eta_t^2 \E\nrm*{g(x_t^m;z_t^m)}^2 - 2\eta_t\E\inner{g(x_t^m;z_t^m)}{x_t^m - x^*} \\
&\leq \E\nrm*{x_t^m - x^*}^2 + \eta_t^2\sigma^2 + \eta_t^2 \E\nrm*{\nabla F(x_t^m)}^2 - 2\eta_t\E\inner{\nabla F(x_t^m)}{x_t^m - x^*} \\
&\leq \E\nrm*{x_t^m - x^*}^2 + \eta_t^2\sigma^2 + 2H\eta_t^2 \E\brk*{F(x_t^m) - F^*} - 2\eta_t\E\brk*{F(x_t^m) - F^* + \frac{\lambda}{2}\nrm*{x_t^m - x^*}^2} \\
&= (1-\lambda\eta_t)\E\nrm*{x_t^m - x^*}^2 + \eta_t^2\sigma^2 - 2\eta_t(1-H\eta_t)\E\brk*{F(x_t^m) - F^*} 
\end{align}
Therefore, 
\begin{equation}
\E\brk*{F(x_t^m) - F^*} 
\leq \prn*{\frac{1}{\eta_t}-\lambda}\E\nrm*{x_t^m - x^*}^2 - \frac{1}{\eta_t}\E\nrm*{x_{t+1}^m - x^*}^2 + \eta_t\sigma^2
\end{equation}
For the first inequality above, we used the variance bound on the stochastic gradients; for the second inequality we used the $H$-smoothness and $\lambda$-strong convexity of $F$; and for the final inequality we used that $H\eta_t \leq \frac{1}{2}$ and rearranged.

If, on the other hand, $t+1 \mod K = 0$, then $x_{t+1}^m = \frac{1}{M}\sum_{m'=1}^M x_t^{m'} - \eta_tg(x_t^{m'};z_t^{m'})$. Since the local iterates on the different machines are \emph{identically distributed},
\begin{align}
\E\nrm*{x_{t+1}^m - x^*}^2 
&= \E\nrm*{\frac{1}{M}\sum_{m'=1}^M x_t^{m'} - \eta_tg(x_t^{m'};z_t^{m'}) - x^*}^2 \\
&\leq \frac{1}{M}\sum_{m'=1}^M\E\nrm*{x_t^{m'} - \eta_tg(x_t^{m'};z_t^{m'}) - x^*}^2 \\
&= \E\nrm*{x_t^m - \eta_tg(x_t^m;z_t^m) - x^*}^2
\end{align}
Where for the first inequality we used Jensen's inequality, and for the final equality we used that the local iterates are identically distributed. From here, using the same computation as above, we conclude that in either case
\begin{equation}\label{eq:local-sgd-homogeneous-upper-bound-serial-sgd-recurrence}
\E\brk*{F(x_t^m) - F^*} 
\leq \prn*{\frac{1}{\eta_t}-\lambda}\E\nrm*{x_t^m - x^*}^2 - \frac{1}{\eta_t}\E\nrm*{x_{t+1}^m - x^*}^2 + \eta_t\sigma^2
\end{equation}

\paragraph{Weakly Convex Case $\lambda = 0$:}
Choose a constant learning rate $\eta_t = \eta = \min\crl*{\frac{1}{2H}, \frac{B}{\sigma\sqrt{T}}}$ and define the averaged iterate
\begin{equation}
    \hat{x} = \frac{1}{MT}\sum_{m=1}^M\sum_{t=1}^T x_t^m
\end{equation}
Then, by the convexity of $F$:
\begin{align}
\E F(\hat{x}) - F^*
&\leq \frac{1}{MT}\sum_{m=1}^M\sum_{t=1}^T\E\brk*{F(x_t^m) - F^*} \\
&\leq \frac{1}{MT}\sum_{m=1}^M\sum_{t=1}^T\frac{1}{\eta}\E\nrm*{x_t^m - x^*}^2 - \frac{1}{\eta}\E\nrm*{x_{t+1}^m - x^*}^2 + \eta\sigma^2 \\
&= \frac{\nrm*{x_0 - x^*}^2}{T\eta} + \eta\sigma^2 \\
&= \max\crl*{\frac{2H\nrm*{x_0 - x^*}^2}{T}, \frac{\sigma \nrm*{x_0 - x^*}}{\sqrt{T}}} + \frac{\sigma \nrm*{x_0 - x^*}}{\sqrt{T}} \\
&\leq \frac{2H\nrm*{x_0 - x^*}^2}{T} + \frac{2\sigma \nrm*{x_0 - x^*}}{\sqrt{T}}
\end{align}

\paragraph{Strongly Convex Case $\lambda > 0$:}
Rearranging \eqref{eq:local-sgd-homogeneous-upper-bound-serial-sgd-recurrence}, we see that it has the same form as the recurrence analyzed in \pref{lem:local-sgd-homogeneous-upper-bound-stich-recurrence} with $r_t = \E\nrm*{x_t^m - x^*}^2$, $s_t = \E\brk*{F(x_t^m) - F^*}$, $a = \lambda$, $c = \sigma^2$, and $\gamma_t = \eta_t$ with the requirement that $\eta_t \leq \frac{1}{2H}$, i.e.~$d=2H$. Consequently, by \pref{lem:local-sgd-homogeneous-upper-bound-stich-recurrence}, we conclude that there is a sequence of stepsizes and weights $w_t$ such that
\begin{align}
\E\brk*{F\prn*{\frac{1}{M\sum_{t=0}^{KR} w_t} \sum_{m=1}^M\sum_{t=0}^{KR} w_t x_t^m} - F^*}
&\leq \frac{1}{M\sum_{t=0}^{KR} w_t} \sum_{m=1}^M\sum_{t=0}^{KR} \E\brk*{F\prn*{w_t x_t^m} - F^*} \\
&\leq 64H\E\nrm*{x_0 - x^*}^2 \exp\prn*{-\frac{\lambda KR}{4H}} + \frac{36\sigma^2}{\lambda KR}
\end{align}
The stepsizes and weights are chosen as follows:
If $KR \leq \frac{2H}{\lambda}$, then $\eta_t = \frac{1}{2H}$ and $w_t = (1-\lambda\eta)^{-t-1}$.
If $KR > \frac{2H}{\lambda}$ and $t < KR/2$, then $\eta_t = \frac{1}{2H}$ and $w_t = 0$.
If $KR > \frac{2H}{\lambda}$ and $t \geq KR/2$, then $\eta_t = \frac{2}{4H + \lambda(t - KR/2)}$ and $w_t = (4H/\lambda + t - KR/2)^2$.
This completes the proof.
\end{proof}

Finally, we prove our main analysis of Local SGD. Portions of the analysis of the strongly convex case follow closely the proof of \citep[Lemma 3][]{stich2019unified}.
\localsgdhomogeneousupperbound*
\begin{proof}
We will prove the first terms in the $\min$'s in Theorem in two parts, first for the convex case $\lambda = 0$, then for the strongly convex case $\lambda > 0$. Then, we conclude by invoking \pref{lem:local-sgd-homogeneous-upper-bound-local-not-worse-than-single} showing that Local SGD is never worse than $KR$ steps of SGD on a single machine, which corresponds to the second terms in the $\min$'s in the Theorem statement.

\paragraph{Convex Case $\lambda = 0$:}
By \pref{lem:local-sgd-homogeneous-upper-bound-ourlemma31} and the first claim of \pref{lem:local-sgd-homogeneous-upper-bound-distance-bound-between-local-iterates}, the mean iterate satisfies
\begin{equation}
\E\brk*{F(\bxt) - F^*} \leq \frac{2}{\eta_t}\E\nrm*{\bxt - x^*}^2 - \frac{2}{\eta_t}\E\nrm*{\bx_{t+1} -x^*}^2 + \frac{2\eta_t\sigma^2}{M} + \frac{8H(M-1)(K-1)\eta_{t-K+2 \land 0}^2\sigma^2}{M}
\end{equation}
Consider a fixed stepsize $\eta_t = \eta$ which will be chosen later, and consider the average of the iterates
\begin{equation}
    \hat{x} = \frac{1}{KR}\sum_{t=1}^{KR} \bxt
\end{equation}
By the convexity of $F$,
\begin{align}
\E \brk*{F(\hat{x}) - F^*}
&\leq \frac{1}{KR}\sum_{t=1}^{KR} \E\brk*{F(\bxt) - F^*} \\
&\leq \frac{1}{KR}\sum_{t=1}^{KR} \brk*{\frac{2}{\eta}\E\nrm*{\bxt - x^*}^2 - \frac{2}{\eta}\E\nrm*{\bx_{t+1} -x^*}^2 + \frac{2\eta\sigma^2}{M} + \frac{8H(M-1)(K-1)\eta^2\sigma^2}{M}} \\
&\leq \frac{2B^2}{\eta KR} + \frac{2\eta\sigma^2}{M} + \frac{8H(M-1)(K-1)\eta^2\sigma^2}{M}
\end{align}
Choose as a stepsize 
\begin{equation}
\eta = \begin{cases}
\min\crl*{\frac{1}{4H},\ \frac{B\sqrt{M}}{\sigma\sqrt{KR}}} & K=1\textrm{ or }M=1 \\
\min\crl*{\frac{1}{4H},\ \frac{B\sqrt{M}}{\sigma\sqrt{KR}},\ \prn*{\frac{B^2}{H\sigma^2K^2 R}}^{\frac{1}{3}}} & \textrm{Otherwise }
\end{cases}
\end{equation}
Then,
\begin{align}
\E \brk*{F(\hat{x}) - F^*}
&\leq \frac{2B^2}{\eta KR} + \frac{2\eta\sigma^2}{M} + \frac{8H(M-1)(K-1)\eta^2\sigma^2}{M} \\
&\leq \max\crl*{\frac{8HB^2}{KR},\ \frac{2\sigma B}{\sqrt{MKR}},\ \frac{2\prn*{H\sigma^2B^4}^{\frac{1}{3}}}{K^{1/3}R^{2/3}}} + \frac{2\sigma B}{\sqrt{MKR}} + \frac{8\prn*{H\sigma^2B^4}^{\frac{1}{3}}}{K^{1/3}R^{2/3}} \\
&\leq \frac{8HB^2}{KR} + \frac{4\sigma B}{\sqrt{MKR}} + \frac{10\prn*{H\sigma^2B^4}^{\frac{1}{3}}}{K^{1/3}R^{2/3}}
\end{align}

\paragraph{Strongly Convex Case $\lambda > 0$:}
For the strongly convex case, following \citet{stich2019unified}'s proof of \pref{lem:local-sgd-homogeneous-upper-bound-stich-recurrence}, we choose stepsizes according to the following set of cases:
If $KR \leq \frac{2H}{\lambda}$, then $\eta_t = \frac{1}{4H}$ and $w_t = (1-\lambda\eta)^{-t-1}$.
If $KR > \frac{2H}{\lambda}$ and $t \leq KR/2$, then $\eta_t = \frac{1}{4H}$ and $w_t = 0$.
If $KR > \frac{2H}{\lambda}$ and $t > KR/2$, then $\eta_t = \frac{2}{8H + \lambda(t - KR/2)}$ and $w_t = (8H/\lambda + t - KR/2)$.
We note that in the second and third cases, the stepsize is either constant or equal to $\eta_t = \frac{2}{\lambda(a + t - KR/2)}$ (for $a = \frac{8H}{\lambda}$) within each individual round of communication. 

By \pref{lem:local-sgd-homogeneous-upper-bound-ourlemma31} and the first claim of \pref{lem:local-sgd-homogeneous-upper-bound-distance-bound-between-local-iterates}, during the rounds of communication for which the stepsize is constant, we have the recurrence:
\begin{equation}\label{eq:local-sgd-homogeneous-upper-bound-recurrent-small-t}
\E\nrm*{\bx_{t+1} - x^*}^2 \leq \prn*{1 - \lambda\eta_t}\E\nrm*{\bx_t - x^*}^2 - \frac{\eta_t}{2}\E\brk*{F(\bx_t) - F^*} + \frac{\eta_t^2\sigma^2}{M} + 4HK\eta_t^3\sigma^2
\end{equation}
On the other hand, during the rounds of communication in which the stepsize is decreasing, we have by \pref{lem:local-sgd-homogeneous-upper-bound-ourlemma31} and the second claim of \pref{lem:local-sgd-homogeneous-upper-bound-distance-bound-between-local-iterates} that:
\begin{equation}
\E\nrm*{\bx_{t+1} - x^*}^2 \leq \prn*{1 - \lambda\eta_t}\E\nrm*{\bx_t - x^*}^2 - \frac{\eta_t}{2}\E\brk*{F(\bx_t) - F^*} + \frac{\eta_t^2\sigma^2}{M} + 4HK\eta_t\eta_{t-1}^2\sigma^2
\end{equation}
Furthermore, during the rounds (i.e.~when $t > KR$) where the stepsize is decreasing, 
\begin{equation}
\eta_{t-1}^2 = \eta_t^2 \frac{\prn*{a + t - KR/2}^2}{\prn*{a - 1 + t - KR/2}^2} \leq 4\eta_t^2
\end{equation}
So, for every $t$ we conclude
\begin{equation}
\E\nrm*{\bx_{t+1} - x^*}^2 \leq \prn*{1 - \lambda\eta_t}\E\nrm*{\bx_t - x^*}^2 - \frac{\eta_t}{2}\E\brk*{F(\bx_t) - F^*} + \frac{\eta_t^2\sigma^2}{M} + 16HK\eta_t^3\sigma^2
\end{equation}

First, suppose $KR > \frac{2H}{\lambda}$, and consider the steps during which $\eta_t = \frac{1}{4H}$:
\begin{align}
\E\nrm*{\bx_{KR/2} - x^*}^2 
&\leq \prn*{1 - \frac{\lambda}{4H}}\E\nrm*{\bx_t - x^*}^2 - \frac{1}{8H}\E\brk*{F(\bx_t) - F^*} + \frac{\sigma^2}{16H^2 M} + \frac{K\sigma^2}{4H^2} \\
&\leq \prn*{1 - \frac{\lambda}{4H}}\E\nrm*{\bx_t - x^*}^2 + \frac{\sigma^2}{16H^2 M} + \frac{K\sigma^2}{4H^2} \\
&\leq \prn*{1 - \frac{\lambda}{4H}}^{KR/2}\E\nrm*{\bx_0 - x^*}^2 + \prn*{\frac{\sigma^2}{16H^2 M} + \frac{K\sigma^2}{4H^2}}\sum_{t=0}^{KR/2 - 1} \prn*{1 - \frac{\lambda}{4H}}^{t} \\
&\leq \prn*{1 - \frac{\lambda}{4H}}^{KR/2}\E\nrm*{\bx_0 - x^*}^2 + \frac{4H}{\lambda}\prn*{\frac{\sigma^2}{16H^2 M} + \frac{K\sigma^2}{4H^2}} \\
&\leq \E\nrm*{\bx_0 - x^*}^2\exp\prn*{-\frac{\lambda KR}{8H}} + \frac{\sigma^2}{4H\lambda M} + \frac{K\sigma^2}{H\lambda}\label{eq:local-sgd-homogeneous-upper-bound-recurrence-mess}
\end{align}

Now, consider the remaining steps. Rearranging, we have
\begin{align}
\E\brk*{F(\bx_t) - F^*}
&\leq \prn*{\frac{2}{\eta_t} - \frac{\lambda}{2}}\E\nrm*{\bx_t - x^*}^2 - \frac{2}{\eta_t}\E\nrm*{\bx_{t+1} - x^*}^2 + \frac{2\eta_t\sigma^2}{M} + 32HK\eta_t^2\sigma^2
\end{align}
So, since $\eta_t = \frac{2}{\lambda(a+t)}$ where $a = \frac{8H}{\lambda} - \frac{KR}{2}$ and $w_t = (a+t)$, we have
\begin{align}
&\frac{1}{W_T}\sum_{t=KR/2}^{KR} w_t \E\brk*{F(\bx_t) - F^*} \nonumber\\
&\leq \frac{1}{W_T}\sum_{t=KR/2}^{KR}w_t\brk*{\prn*{\frac{2}{\eta_t} - 2\lambda}\E\nrm*{\bx_t - x^*}^2 - \frac{2}{\eta_t}\E\nrm*{\bx_{t+1} - x^*}^2 + \frac{2\eta_t\sigma^2}{M} + 32HK\eta_t^2\sigma^2} \\
&= \frac{1}{W_T}\sum_{t=KR/2}^{KR}\lambda(a+t)(a+t-2)\E\nrm*{\bx_t - x^*}^2 - \lambda(a+t)^2\E\nrm*{\bx_{t+1} - x^*}^2 + \frac{2\sigma^2}{\lambda M} + \frac{32HK\eta_t\sigma^2}{\lambda} \\
&\leq \frac{1}{W_T}\sum_{t=KR/2}^{KR}\lambda(a+t-1)^2\E\nrm*{\bx_t - x^*}^2 - \lambda(a+t)^2\E\nrm*{\bx_{t+1} - x^*}^2 + \frac{2\sigma^2}{\lambda M} + \frac{32HK\eta_t\sigma^2}{\lambda} \\
&\leq \frac{\lambda(a+KR/2-1)^2}{W_T}\E\nrm*{\bx_{KR/2} - x^*}^2 + \frac{2\sigma^2 (KR/2)}{W_T\lambda M} + \frac{64HK\sigma^2}{W_T\lambda^2}\sum_{t=KR/2}^{KR}\frac{1}{a+t} \\
&= \frac{\lambda\prn*{\frac{8H}{\lambda}-1}^2}{W_T}\E\nrm*{\bx_{KR/2} - x^*}^2 + \frac{2\sigma^2 (KR/2)}{W_T\lambda M} + \frac{64HK\sigma^2}{W_T\lambda^2}\sum_{t'=1}^{KR/2}\frac{1}{\frac{8H}{\lambda}+t'} \\
&\leq \frac{64H^2}{W_T \lambda}\E\nrm*{\bx_{KR/2} - x^*}^2 + \frac{2\sigma^2 (KR/2)}{W_T\lambda M} + \frac{64HK\sigma^2}{W_T\lambda^2}\log\prn*{e + \frac{\lambda KR}{4H}}
\end{align}
Finally, we recall \eqref{eq:local-sgd-homogeneous-upper-bound-recurrence-mess}, $KR > \frac{2H}{\lambda}$, and note that $W_T = \sum_{t=KR/2}^{KR} a+t \geq \frac{3K^2R^2}{8} + \frac{aKR}{2} = \frac{K^2R^2}{8} + \frac{4HKR}{\lambda} \geq \frac{8H^2}{\lambda^2}$ thus
\begin{align}
&\frac{1}{W_T}\sum_{t=KR/2}^{KR} w_t \E\brk*{F(\bx_t) - F^*} \nonumber\\
&\leq \frac{64H^2}{W_T \lambda}\prn*{\E\nrm*{\bx_0 - x^*}^2\exp\prn*{-\frac{\lambda KR}{8H}} + \frac{\sigma^2}{4H\lambda M} + \frac{K\sigma^2}{H\lambda}} + \frac{2\sigma^2 (KR/2)}{W_T\lambda M} + \frac{64HK\sigma^2}{W_T\lambda^2}\log\prn*{e + \frac{\lambda KR}{4H}}\\
&\leq \frac{64H^2}{W_T \lambda}\E\nrm*{\bx_0 - x^*}^2\exp\prn*{-\frac{\lambda KR}{8H}} + \frac{16H\sigma^2}{\lambda^2 M W_T} + \frac{64HK\sigma^2}{\lambda^2 W_T} + \frac{8\sigma^2}{\lambda MKR} + \frac{512H\sigma^2}{\lambda^2KR^2}\log\prn*{e + \frac{\lambda KR}{4H}} \\
&\leq 8\lambda\E\nrm*{\bx_0 - x^*}^2\exp\prn*{-\frac{\lambda KR}{8H}} + \frac{4\sigma^2}{\lambda M KR} + \frac{512H\sigma^2}{\lambda^2KR^2} + \frac{8\sigma^2}{\lambda MKR} + \frac{512H\sigma^2}{\lambda^2KR^2}\log\prn*{e + \frac{\lambda KR}{4H}} \\
&\leq 8\lambda\E\nrm*{\bx_0 - x^*}^2\exp\prn*{-\frac{\lambda KR}{8H}} + \frac{12\sigma^2}{\lambda M KR} + \frac{512H\sigma^2}{\lambda^2KR^2}\log\prn*{9 + \frac{\lambda KR}{H}}
\end{align}
This concludes the proof for the case $KR > \frac{2H}{\lambda}$. 

If $KR \leq \frac{2H}{\lambda}$, we use the constant stepsize $\eta_t = \eta$ and weights $w_t = (1-\lambda\eta)^{-t-1}$. Rearranging \eqref{eq:local-sgd-homogeneous-upper-bound-recurrent-small-t} therefore gives
\begin{align}
\E\brk*{F(\bx_t) - F^*} 
&\leq \frac{2}{\eta}\prn*{1 - \lambda\eta}\E\nrm*{\bx_t - x^*}^2 - \frac{2}{\eta}\E\nrm*{\bx_{t+1} - x^*}^2 + \frac{2\eta\sigma^2}{M} + 8HK\eta^2\sigma^2
\end{align}
so
\begin{align}
&\frac{1}{W_T}\sum_{t=1}^{KR}w_t\E\brk*{F(\bx_t) - F^*} \nonumber\\
&\leq \frac{1}{W_T}\sum_{t=1}^{KR}w_t\brk*{\frac{2}{\eta}\prn*{1 - \lambda\eta}\E\nrm*{\bx_t - x^*}^2 - \frac{2}{\eta}\E\nrm*{\bx_{t+1} - x^*}^2 + \frac{2\eta\sigma^2}{M} + 8HK\eta^2\sigma^2} \\
&= \frac{1}{W_T}\sum_{t=1}^{KR}\brk*{\frac{2}{\eta}\prn*{1 - \lambda\eta}^{-t}\E\nrm*{\bx_t - x^*}^2 - \frac{2}{\eta}\prn*{1 - \lambda\eta}^{-(t+1)}\E\nrm*{\bx_{t+1} - x^*}^2} + \frac{2\eta\sigma^2}{M} + 8HK\eta^2\sigma^2 \\
&\leq \frac{2\E\nrm*{\bx_0 - x^*}^2}{\eta W_T} + \frac{2\eta\sigma^2}{M} + 8HK\eta^2\sigma^2
\end{align}
Finally, we note that $W_T \geq (1-\lambda\eta)^{-KR-1}$ so
\begin{align}
\frac{1}{W_T}\sum_{t=1}^{KR}w_t\E\brk*{F(\bx_t) - F^*}
&\leq \frac{2\E\nrm*{\bx_0 - x^*}^2}{\eta}\exp\prn*{-\lambda\eta(KR+1)} + \frac{2\eta\sigma^2}{M} + 8HK\eta^2\sigma^2
\end{align}
We also observe that $2H \geq \lambda KR$ so with $\eta = \frac{1}{4H} \leq \frac{1}{2\lambda KR}$ we have
\begin{align}
\frac{1}{W_T}\sum_{t=1}^{KR}w_t\E\brk*{F(\bx_t) - F^*} 
&\leq 8H\E\nrm*{\bx_0 - x^*}^2\exp\prn*{-\frac{\lambda KR}{4H}}+ \frac{\sigma^2}{\lambda MKR} + \frac{2H\sigma^2}{\lambda^2 KR^2}
\end{align}
This completes the proof.
\end{proof}

\subsection{Proof of \pref{thm:local-sgd-homogeneous-lower-bound}} \label{app:local-sgd-homogeneous-lower-bound}

Here, we will prove the lower bound in \pref{thm:local-sgd-homogeneous-lower-bound}. Recall the objective and stochastic gradient estimator for the hard instance are defined by 
\begin{equation}\label{eq:local-sgd-homogeneous-lower-bound-construction-appendix}
F(x) = \frac{\mu}{2}\prn*{x_1 - b}^2 + \frac{H}{2}\prn*{x_2 - b}^2 + \frac{L}{2}\prn*{\prn*{x_3 - c}^2 + \pp{x_3 - c}^2}
\end{equation}
and
\begin{equation}\label{eq:local-sgd-homogeneous-lower-bound-stochastic-gradient}
g(x;z) = \nabla F(x) + \begin{bmatrix}0\\0\\z\end{bmatrix} \qquad\textrm{where}\qquad \P\brk*{z=\sigma} = \P\brk*{z=-\sigma} = \frac{1}{2}
\end{equation}
Due to the structure of the objective \eqref{eq:local-sgd-homogeneous-lower-bound-construction-appendix}, which decomposes as a sum over three terms which each depend only on a single coordinate, the Local SGD dynamics on each coordinate of the optimization variable are independent of each other. For this reason, we are able to analyze Local SGD on each coordinate separately.

Define the $2L$-smooth and $L$-strongly convex function
\begin{equation}\label{eq:local-sgd-homogeneous-lower-bound-def-gl}
g_L(x) = \frac{L}{2}x^2 + \frac{L}{2}\pp{x}^2
\end{equation}
Define a stochastic gradient estimator for $g_L$ via 
\begin{equation}\label{eq:local-sgd-homogeneous-lower-bound-def-glprime}
    g_L'(x,z) = g_L'(x) + z
\end{equation}
for $z \sim \textrm{Uniform}(\pm\sigma)$. Observe that the third coordinate of Local SGD on $F$ evolves exactly the same as Local SGD on the univariate function $g_L$. In the next three lemmas, we analyze the behavior of Local SGD on $g_L$:

\begin{lemma}\label{lem:local-sgd-homogeneous-lower-bound-sgd-two-step}
Fix $L,\eta,\sigma > 0$ such that $L\eta \leq \frac{1}{2}$. Let $x_0$ denote a random initial point with $\E x_0 \leq 0$, and let $x_2 = x_0 - \eta g_L'(x_0,z_0) - \eta g_L'(x_0 - \eta g_L'(x_0,z_0), z_1)$ be the second iterate of stochastic gradient descent with fixed stepsize $\eta$ intialized at $x_0$, and let $x_3 = x_2 - \eta g_L'(x_2, z_2)$ be the third iterate. Then
\begin{gather*}
\E x_2 \leq \begin{cases}
\frac{-\eta\sigma}{48} & \E x_0 \leq \frac{-\eta\sigma}{48} \\
\frac{-\eta\sigma}{4} + \prn*{1-L\eta}\prn*{\E x_0 + \frac{\eta\sigma}{4}} & \E x_0 \in \left(\frac{-\eta\sigma}{48}, 0\right]
\end{cases} \\
\E x_3 \leq \begin{cases}
\frac{-\eta\sigma}{48} & \E x_0 \leq \frac{-\eta\sigma}{48} \\
\frac{-\eta\sigma}{4} + \prn*{1-L\eta}^2\prn*{\E x_0 + \frac{\eta\sigma}{4}} & \E x_0 \in \left(\frac{-\eta\sigma}{48}, 0\right]
\end{cases}
\end{gather*}
\end{lemma}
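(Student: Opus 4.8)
The plan is to reduce everything to the scalar recursion for the coordinate of interest: writing $h(x):=x-\eta g_L'(x)=(1-L\eta)x-L\eta\pp{x}$, the iterates obey $x_{t+1}=h(x_t)-\eta z_t$ with $z_t$ i.i.d.\ uniform on $\pm\sigma$. Note $h$ is affine on each of $\{x\le 0\}$ and $\{x\ge 0\}$, and $1-2L\eta\ge 0$ by hypothesis. Since $\E z_t=0$ and $z_t\perp x_t$, taking expectations gives the one-step identity
\[
\mu_{t+1}:=\E x_{t+1}=\E h(x_t)=(1-L\eta)\mu_t-L\eta\,\E\pp{x_t}\;\le\;(1-L\eta)\mu_t ,
\]
and, keeping only the branch $z_t=-\sigma$ and applying Jensen's inequality to the convex map $\pp{\cdot}$,
\[
\E\pp{x_{t+1}}\;\ge\;\tfrac12\,\E\pp{h(x_t)+\eta\sigma}\;\ge\;\tfrac12\,\pp{\E h(x_t)+\eta\sigma}\;=\;\tfrac12\,\pp{\mu_{t+1}+\eta\sigma}.
\]
Combining these, for every $t\ge 1$ (when $x_t=h(x_{t-1})-\eta z_{t-1}$ has the required form) I obtain the closed recursion
\[
\mu_{t+1}\;\le\;(1-L\eta)\mu_t-\tfrac{L\eta}{2}\,\pp{\mu_t+\eta\sigma}
\qquad(\star)
\]
plus the base case $\mu_1\le(1-L\eta)\mu_0$. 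The crucial feature of $(\star)$ is that it keeps the $O(\eta^2\sigma L)$ ``drift'' produced by the asymmetric term $\pp{x}^2$ of $g_L$; bare contraction $\mu_{t+1}\le(1-L\eta)\mu_t$ is not enough, because the stated constants are essentially tight.

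Everything then reduces to bookkeeping with $(\star)$. Put $u:=L\eta\in(0,\tfrac12]$. For $\mu_2$: if $\mu_1\le-\eta\sigma$ then $\pp{\mu_1+\eta\sigma}=0$ and $\mu_2\le(1-u)\mu_1\le-(1-u)\eta\sigma\le-\tfrac{\eta\sigma}{2}$, below both targets. If $-\eta\sigma<\mu_1\le0$ then $(\star)$ reads $\mu_2\le(1-\tfrac{3u}{2})\mu_1-\tfrac{u\eta\sigma}{2}$, whose right side is increasing in $\mu_1$ since $1-\tfrac{3u}{2}\ge\tfrac14>0$; using $\mu_1\le(1-u)\mu_0$ gives $\mu_2\le(1-\tfrac{3u}{2})(1-u)\mu_0-\tfrac{u\eta\sigma}{2}$. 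In Case 1 ($\mu_0\le-\eta\sigma/48$) this is $\le(1-\tfrac{3u}{2})(1-u)(-\tfrac{\eta\sigma}{48})-\tfrac{u\eta\sigma}{2}$, and the claim $\mu_2\le-\eta\sigma/48$ is exactly $(1-\tfrac{3u}{2})(1-u)+24u\ge1$, i.e.\ $\tfrac{3u^2}{2}+\tfrac{43u}{2}\ge0$. In Case 2 ($-\eta\sigma/48<\mu_0\le0$) the target $-\tfrac{\eta\sigma}{4}+(1-L\eta)(\mu_0+\tfrac{\eta\sigma}{4})$ equals $(1-u)\mu_0-\tfrac{u\eta\sigma}{4}$; subtracting, the needed inequality is $-\tfrac{3u}{2}(1-u)\mu_0\le\tfrac{u\eta\sigma}{4}$, which follows from $-\mu_0<\eta\sigma/48$ since then the left side is at most $\tfrac{u(1-u)\eta\sigma}{32}<\tfrac{u\eta\sigma}{4}$.

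The bound on $\mu_3$ comes from applying $(\star)$ once more, $\mu_3\le(1-u)\mu_2-\tfrac{u\eta\sigma}{2}\pp{\mu_2+\eta\sigma}$, and feeding in the bound on $\mu_2$ just proved. In Case 1 we have $\mu_2\le-\eta\sigma/48$; the sub-case $\mu_2\le-\eta\sigma$ gives $\mu_3\le-(1-u)\eta\sigma\le-\tfrac{\eta\sigma}{2}\le-\tfrac{\eta\sigma}{48}$, and for $-\eta\sigma<\mu_2\le-\eta\sigma/48$ monotonicity gives $\mu_3\le(1-\tfrac{3u}{2})(-\tfrac{\eta\sigma}{48})-\tfrac{u\eta\sigma}{2}$, and $\mu_3\le-\tfrac{\eta\sigma}{48}$ reduces to the trivial $\tfrac{u\eta\sigma}{32}\le\tfrac{u\eta\sigma}{2}$. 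In Case 2 write $\beta:=(1-u)\mu_0-\tfrac{u\eta\sigma}{4}$ for the established bound on $\mu_2$; from $-\mu_0<\eta\sigma/48$ and $u\le\tfrac12$ one gets $\beta\ge-\tfrac{7\eta\sigma}{48}$, the sub-case $\mu_2\le-\eta\sigma$ gives $\mu_3\le-\tfrac{\eta\sigma}{2}\le\beta$, and for $-\eta\sigma<\mu_2\le\beta$ monotonicity plus $\mu_2\le\beta$ gives $\mu_3\le(1-\tfrac{3u}{2})\beta-\tfrac{u\eta\sigma}{2}\le\beta$ exactly when $\beta\ge-\tfrac{\eta\sigma}{3}$, which holds. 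Finally, since the three coordinates of $F$ in \eqref{eq:local-sgd-homogeneous-lower-bound-construction-appendix} decouple, the third coordinate of Local SGD on $F$ is precisely the iteration $x_{t+1}=h(x_t)-\eta z_t$, so the above is exactly the claimed statement. The main obstacle throughout is the first paragraph: recognizing that a plain contraction estimate loses the decisive drift and that the correct substitute is the Jensen lower bound on $\E\pp{x_t}$, after which the constants $\tfrac1{48}$ and $\tfrac14$ fall out of tight but elementary scalar inequalities.
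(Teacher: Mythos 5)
Your reduction to the closed mean recursion $\mu_{t+1}\le(1-L\eta)\mu_t-\frac{L\eta}{2}\pp{\mu_t+\eta\sigma}$ (valid for $t\ge1$), obtained by keeping only the $z_t=-\sigma$ branch and applying Jensen to $\pp{\cdot}$, is essentially the paper's own argument: the paper evaluates $\E_z\pp{y-\eta z}$ exactly as a three-case convex function of $y$ and applies Jensen to that, but since $\mu_t\le0$ throughout, the extra branch is never active and your simpler two-branch bound loses nothing. Your bookkeeping for $\E x_2$ (both cases) and for the Case-1 bound on $\E x_3$ is correct.

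There is, however, a genuine slip in Case 2 of the $\E x_3$ bound: you compare against $\beta=(1-L\eta)\E x_0-\frac{L\eta^2\sigma}{4}$, i.e.\ you conclude only $\E x_3\le-\frac{\eta\sigma}{4}+(1-L\eta)\bigl(\E x_0+\frac{\eta\sigma}{4}\bigr)$, whereas the lemma claims the strictly stronger bound with $(1-L\eta)^2$ (strictly stronger here because $\E x_0+\frac{\eta\sigma}{4}>0$ when $\E x_0>-\frac{\eta\sigma}{48}$); this exponent is what feeds the $(1-L\eta)^{k/2}$ decay for odd $k$ in the subsequent many-steps lemma, so it cannot simply be dropped. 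Fortunately your own intermediate estimates close the gap once you compare against the correct target $(1-L\eta)\beta-\frac{L\eta^2\sigma}{4}=-\frac{\eta\sigma}{4}+(1-L\eta)^2\bigl(\E x_0+\frac{\eta\sigma}{4}\bigr)$: in the sub-case $-\eta\sigma<\mu_2$, the bound $\mu_3\le\bigl(1-\frac{3L\eta}{2}\bigr)\beta-\frac{L\eta^2\sigma}{2}$ lies below this target iff $\beta\ge-\frac{\eta\sigma}{2}$, which follows from your $\beta\ge-\frac{7\eta\sigma}{48}$; and in the sub-case $\mu_2\le-\eta\sigma$, $\mu_3\le-\frac{\eta\sigma}{2}$ while $\beta\ge-\frac{7\eta\sigma}{48}$ and $L\eta\le\frac12$ give $(1-L\eta)\beta-\frac{L\eta^2\sigma}{4}\ge-\frac{13\eta\sigma}{48}>-\frac{\eta\sigma}{2}$. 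So replace the comparison target $\beta$ by $(1-L\eta)\beta-\frac{L\eta^2\sigma}{4}$ in that paragraph and the proof is complete. (Also fix the typo where you invoke $(\star)$ for $\mu_3$: the coefficient of the positive part should be $\frac{L\eta}{2}$, not $\frac{L\eta\,\eta\sigma}{2}$; your subsequent computations already use the correct form.)
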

\begin{proof}
Consider the $2$nd iterate of SGD with fixed stepsize $\eta$:
\begin{align}
x_2 
&= x_{1} - \eta g_L'(x_{1},z_1) \\
&= (1-L\eta)x_{1} -L\eta\pp{x_{1}} - \eta z_{1} \\
&= (1-L\eta)\prn*{x_{0} - \eta g_L'(x_{0},z_0)} - L\eta\pp{x_{0} - \eta g_L'(x_{0},z_0)} - \eta z_{1} \\
&= (1-L\eta)^2x_{0} - L\eta(1-L\eta)\pp{x_{0}}  - L\eta\pp{(1-L\eta)x_{0} - L\eta\pp{x_{0}} - \eta z_{0}} - \eta(1-\eta)z_{0} - \eta z_{1}
\end{align}
Thus,
\begin{equation}
\E x_2 
= (1-L\eta)^2\E x_{0} - L\eta(1-L\eta)\E \pp{x_{0}}
- L\eta\E \pp{(1-L\eta)x_{0} - L\eta\pp{x_{0}} - \eta z_{0}} 
\end{equation}
Define $y := (1-L\eta)x_{0} - L\eta\pp{x_{0}}$, then
\begin{align}
\E \pp{(1-L\eta)x_{0} - L\eta\pp{x_{0}} - \eta z_{0}}
&= \E\pp{y - \eta z_{0}} \\
&= \frac{1}{2}\E\pp{y - \eta\sigma} + \frac{1}{2}\E\pp{y + \eta\sigma} \\
&= \E\begin{cases}
y & y > \eta\sigma \\
\frac{y + \eta\sigma}{2} & \abs{y} \leq \eta\sigma \\
0 & y < -\eta\sigma
\end{cases}
\end{align}
The function 
\begin{equation}
z \mapsto \begin{cases}
z & z > \eta\sigma \\
\frac{z + \eta\sigma}{2} & \abs{z} \leq \eta\sigma \\
0 & z < -\eta\sigma
\end{cases}
\end{equation}
is convex, so by Jensen's inequality
\begin{align}
\E x_2 
&= (1-L\eta)\E y -  L\eta\E\begin{cases}
y & y > \eta\sigma \\
\frac{y + \eta\sigma}{2} & \abs{y} \leq \eta\sigma \\
0 & y < -\eta\sigma
\end{cases} \\
&\leq (1-L\eta)\E y -  L\eta\begin{cases}
\E y & \E y > \eta\sigma \\
\frac{\E y + \eta\sigma}{2} & \abs{\E y} \leq \eta\sigma \\
0 & \E y < -\eta\sigma
\end{cases} \\
&= \begin{cases}
(1-2L\eta)\E y & \E y > \eta\sigma \\
\prn*{1-\frac{3}{2}L\eta}\E y - \frac{L\eta^2\sigma}{2} & \abs{\E y} \leq \eta\sigma \\
(1-L\eta)\E y & \E y < -\eta\sigma
\end{cases} \\
&\leq \begin{cases}
(1-2L\eta)\E y & \E y > \eta\sigma \\
\prn*{1-\frac{3}{2}L\eta}\E y - \frac{L\eta^2\sigma}{2} & \abs{\E y} \leq \eta\sigma \\
\frac{-\eta\sigma}{2} & \E y < -\eta\sigma
\end{cases}\label{eq:local-sgd-homogeneous-lower-bound-one-step-lemma-eq1}
\end{align} 
where we used that $L\eta \leq \frac{1}{2}$ for the final inequality. Suppose $\E x_0 \leq \frac{-\eta\sigma}{48}$ which implies $\E y \leq \frac{-(1-L\eta)\eta\sigma}{48}$. Then we are in either the second or third case of \eqref{eq:local-sgd-homogeneous-lower-bound-one-step-lemma-eq1}. If we are in the third case then
\begin{equation}
\E x_2 \leq \frac{-\eta\sigma}{2} \leq \frac{-\eta\sigma}{48}
\end{equation}
If we are in the second case, then
\begin{align}
\E x_2 
&\leq \prn*{1-\frac{3}{2}L\eta}\E y - \frac{L\eta^2\sigma}{2} \\
&\leq \prn*{1-\frac{3}{2}L\eta}\frac{-(1-L\eta)\eta\sigma}{48} - \frac{L\eta^2\sigma}{2} \\
&= \frac{-\eta\sigma}{48} + \frac{3(1-L\eta)L\eta^2\sigma}{96} + \frac{L\eta^2\sigma}{48} - \frac{L\eta^2\sigma}{2} \\
&\leq \frac{-\eta\sigma}{48}
\end{align}
Either way, $\E x_2 \leq \frac{-\eta\sigma}{48}$. 

Suppose instead that $\E x_0 \in \left(\frac{-\eta\sigma}{48}, 0\right]$. Then,
\begin{align}
\E x_2 
&\leq \prn*{1-\frac{3}{2}L\eta}\E y - \frac{L\eta^2\sigma}{2} \\
&\leq (1-L\eta)\E x_0 - \frac{3L\eta(1-L\eta)}{2}\E x_0 - \frac{L\eta^2\sigma}{2} \\
&\leq (1-L\eta)\E x_0 + \frac{3L\eta}{2}\cdot\frac{\eta\sigma}{48} - \frac{L\eta^2\sigma}{2} \\
&\leq (1-L\eta)\E x_0 - \frac{L\eta^2\sigma}{4} \\
&= -\frac{\eta\sigma}{4} + \prn*{1-L\eta}\prn*{\E x_0 + \frac{\eta\sigma}{4}}
\end{align}
We conclude that
\begin{equation}\label{eq:local-sgd-homogeneous-lower-bound-one-step-lemma-x2}
\E x_2 \leq \begin{cases}
\frac{-\eta\sigma}{48} & \E x_0 \leq \frac{-\eta\sigma}{48} \\
\frac{-\eta\sigma}{4} + \prn*{1-L\eta}\prn*{\E x_0 + \frac{\eta\sigma}{4}} & \E x_0 \in \left( \frac{-\eta\sigma}{48}, 0 \right]
\end{cases}
\end{equation}

Now, consider the third iterate of SGD, $x_3$:
\begin{align}
\E x_3 
&= \E x_2 - \eta \E g'_L(x_2, z_2) \\
&= (1-L\eta)\E x_2 - L\eta\E \pp{x_2} \\
&= (1-L\eta)\E x_2 - L\eta\E \pp{\E[x_2\,|\,x_1] - \eta z_1} \\
&\leq (1-L\eta)\E x_2 - \frac{L\eta}{2}\E \pp{\E[x_2\,|\,x_1] + \eta \sigma}
\end{align}
Since $z \mapsto \pp{z}$ is convex, by Jensen's inequality
\begin{align}
\E x_3 
&\leq (1-L\eta)\E x_2 - \frac{L\eta}{2}\pp{\E x_2 + \eta\sigma} \\
&\leq \begin{cases}
\prn*{1-\frac{3L\eta}{2}}\E x_2 - \frac{L\eta^2\sigma}{2} & \E x_2 > -\eta\sigma \\
(1-L\eta)\E x_2 & \E x_2 \leq -\eta\sigma 
\end{cases} \\
&\leq \begin{cases}
\prn*{1-\frac{3L\eta}{2}}\E x_2 - \frac{L\eta^2\sigma}{2} & \E x_2 > -\eta\sigma \\
\frac{-\eta\sigma}{2} & \E x_2 \leq -\eta\sigma
\end{cases}\label{eq:local-sgd-homogeneous-lower-bound-one-step-lemma-eq2}
\end{align}
To complete the proof, we must show that 
\begin{equation}\label{eq:local-sgd-homogeneous-lower-bound-one-step-lemma-x3}
\E x_3 \leq \begin{cases}
\frac{-\eta\sigma}{48} & \E x_0 \leq \frac{-\eta\sigma}{48} \\
\frac{-\eta\sigma}{4} + \prn*{1-L\eta}^2\prn*{\E x_0 + \frac{\eta\sigma}{4}} & \E x_0 \in \left(\frac{-\eta\sigma}{48}, 0\right]
\end{cases}
\end{equation}
Returning to \eqref{eq:local-sgd-homogeneous-lower-bound-one-step-lemma-eq2}, note that if $\E x_2 \leq -\eta \sigma$ then $\E x_3 \leq \frac{-\eta\sigma}{2}$ implies \eqref{eq:local-sgd-homogeneous-lower-bound-one-step-lemma-x3}. Therefore, we only need to consider the first case of \eqref{eq:local-sgd-homogeneous-lower-bound-one-step-lemma-eq2}.

Suppose first that $\E x_0 \leq \frac{-\eta\sigma}{48}$, then by \eqref{eq:local-sgd-homogeneous-lower-bound-one-step-lemma-x2} we have $\E x_2 \leq \frac{-\eta\sigma}{48}$, thus
\begin{align}
\E x_3 
&\leq \prn*{1-\frac{3L\eta}{2}}\E x_2 - \frac{L\eta^2\sigma}{2} \\
&\leq \prn*{1-\frac{3L\eta}{2}}\frac{-\eta\sigma}{48} - \frac{L\eta^2\sigma}{2} \\
&\leq \frac{-\eta\sigma}{48}
\end{align}
If instead $\E x_0 \in \left(\frac{-\eta\sigma}{48}, 0\right]$, then by \eqref{eq:local-sgd-homogeneous-lower-bound-one-step-lemma-x2} we have $\E x_2 \leq \frac{-\eta\sigma}{4} + \prn*{1-L\eta}\prn*{\E x_0 + \frac{\eta\sigma}{4}}$, thus
\begin{align}
\E x_3 
&\leq \prn*{1-\frac{3L\eta}{2}}\E x_2 - \frac{L\eta^2\sigma}{2} \\
&\leq \prn*{1-\frac{3L\eta}{2}}\frac{-\eta\sigma}{4} + \prn*{1-\frac{3L\eta}{2}}\prn*{1-L\eta}\prn*{\E x_0 + \frac{\eta\sigma}{4}} - \frac{L\eta^2\sigma}{2}  \\
&\leq \frac{-\eta\sigma}{4} + \frac{3L\eta^2\sigma}{8} - \frac{L\eta^2\sigma}{2} + \prn*{1-L\eta}^2\prn*{\E x_0 + \frac{\eta\sigma}{4}} \\
&\leq \frac{-\eta\sigma}{4} + \prn*{1-L\eta}^2\prn*{\E x_0 + \frac{\eta\sigma}{4}}
\end{align}
This completes both cases of \eqref{eq:local-sgd-homogeneous-lower-bound-one-step-lemma-x3}.
\end{proof}

\begin{lemma}\label{lem:local-sgd-homogeneous-lower-bound-sgd-many-steps}
Fix $L,\eta,\sigma > 0$ such that $L\eta \leq \frac{1}{2}$ and let $k \geq 2$. Let $x_0$ denote a random initial point with $\E x_0 \leq 0$ and let $x_k$ denote the $k$th iterate of stochastic gradient descent on $g_L$ with fixed stepsize $\eta$ intialized at $x_0$. Then
\begin{gather*}
\E x_k \leq \begin{cases}
\frac{-\eta\sigma}{48} & \E x_0 \leq \frac{-\eta\sigma}{48} \\
\frac{-\eta\sigma}{4} + \prn*{1-L\eta}^{k/2}\prn*{\E x_0 + \frac{\eta\sigma}{4}} & \E x_0 \in \left(\frac{-\eta\sigma}{48}, 0\right]
\end{cases}
\end{gather*}
\end{lemma}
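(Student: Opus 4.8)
The plan is to induct on $k$, peeling off two SGD iterations at a time and invoking \pref{lem:local-sgd-homogeneous-lower-bound-sgd-two-step} at every step. A preliminary observation makes the bookkeeping clean: writing one step of SGD on $g_L$ as $x_{j+1}=(1-L\eta)x_j-L\eta\pp{x_j}-\eta z_j$ and taking expectations gives $\E x_{j+1}=(1-L\eta)\E x_j-L\eta\E\pp{x_j}$, so since $L\eta\le\frac12<1$ the hypothesis $\E x_0\le 0$ propagates: $\E x_j\le 0$ for every $j$. The base cases $k=2$ and $k=3$ are precisely the two conclusions of \pref{lem:local-sgd-homogeneous-lower-bound-sgd-two-step}, where for $k=3$ we weaken the exponent using $(1-L\eta)^2\le (1-L\eta)^{3/2}$ so that it takes the claimed form $(1-L\eta)^{k/2}$. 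For odd $k\ge 5$ we peel pairs starting from $x_3$, and for even $k\ge 4$ we peel pairs starting from $x_0$; in both cases each peeled pair multiplies the geometric factor by $(1-L\eta)$, which is exactly what the exponent $k/2$ records.

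For the inductive step (say $k\ge 4$), view $x_k$ as the second SGD iterate started from the random point $x_{k-2}$, which has $\E x_{k-2}\le 0$. If $\E x_0\le -\tfrac{\eta\sigma}{48}$, then the Case-A branch of \pref{lem:local-sgd-homogeneous-lower-bound-sgd-two-step} applied to $x_0$ gives $\E x_2,\E x_3\le -\tfrac{\eta\sigma}{48}$, and since $-\tfrac{\eta\sigma}{48}$ is a fixed point of that branch, re-applying it inductively yields $\E x_k\le -\tfrac{\eta\sigma}{48}$ for all $k\ge 2$ — the first claimed bound. If instead $\E x_0\in(-\tfrac{\eta\sigma}{48},0]$, the inductive hypothesis gives $\E x_{k-2}\le -\tfrac{\eta\sigma}{4}+(1-L\eta)^{(k-2)/2}\bigl(\E x_0+\tfrac{\eta\sigma}{4}\bigr)\le 0$, and it suffices to prove the one-pair recursion
\[
\E x_k \;\le\; -\frac{\eta\sigma}{4}+(1-L\eta)\Bigl(\E x_{k-2}+\frac{\eta\sigma}{4}\Bigr),
\]
since then, using the inductive hypothesis and the monotonicity of $a\mapsto -\tfrac{\eta\sigma}{4}+(1-L\eta)\bigl(a+\tfrac{\eta\sigma}{4}\bigr)$, the right-hand side is at most $-\tfrac{\eta\sigma}{4}+(1-L\eta)^{k/2}\bigl(\E x_0+\tfrac{\eta\sigma}{4}\bigr)$, closing the induction.

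The main obstacle is proving that displayed recursion once $\E x_{k-2}$ has dipped below $-\tfrac{\eta\sigma}{48}$ (which genuinely happens — e.g.\ for $x_0=0$ and moderate $L\eta$ one already gets $\E x_2<-\tfrac{\eta\sigma}{48}$), because in that range the Case-A branch of \pref{lem:local-sgd-homogeneous-lower-bound-sgd-two-step} only yields the much weaker $\E x_k\le -\tfrac{\eta\sigma}{48}$. I would resolve this by re-examining the proof of \pref{lem:local-sgd-homogeneous-lower-bound-sgd-two-step}: the core estimate \eqref{eq:local-sgd-homogeneous-lower-bound-one-step-lemma-eq1} shows that for any starting point, either $\E x_2\le -\tfrac{\eta\sigma}{4}+(1-L\eta)\bigl(\E x_0+\tfrac{\eta\sigma}{4}\bigr)$ (the two ``non-bouncing'' cases) or $\E x_2\le -\tfrac{\eta\sigma}{2}$ (the ``bouncing'' case). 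Since $-\tfrac{\eta\sigma}{2}\le -\tfrac{\eta\sigma}{4}+(1-L\eta)\bigl(a+\tfrac{\eta\sigma}{4}\bigr)$ whenever $a\ge -\tfrac{\eta\sigma}{2}$ and $L\eta\le\tfrac12$, the displayed recursion holds for all $\E x_{k-2}\in[-\tfrac{\eta\sigma}{2},0]$. It then only remains to maintain the auxiliary invariant $\E x_j\ge -\tfrac{\eta\sigma}{2}$, which I would get by an easy induction from $\E x_{j+1}=(1-L\eta)\E x_j-L\eta\E\pp{x_j}$ together with a uniform bound $\E\pp{x_j}\le\tfrac{\eta\sigma}{2}$ following from the contractivity of the SGD map (each step multiplies the fluctuation by $|1-L\eta|\le 1$) and the boundedness of the noise $|z_j|\le\sigma$. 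The remaining work is constant-chasing (the $48$, the $4$, and the $\pp{x_3}$ term appearing in the three-step estimate), which is tedious but routine.
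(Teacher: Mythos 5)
Your overall route is the same as the paper's: base cases $k=2,3$ from \pref{lem:local-sgd-homogeneous-lower-bound-sgd-two-step} (weakening $(1-L\eta)^2$ to $(1-L\eta)^{k/2}$ for odd $k$), then peeling off two SGD steps at a time with $x_{k-2}$ treated as a fresh initialization. In fact the paper does \emph{nothing more} than this chaining: it applies the second branch of the two-step lemma to each intermediate $x_{2j}$ without checking that $\E x_{2j}$ stays in $\left(-\tfrac{\eta\sigma}{48},0\right]$, and the possibility of dipping below $-\tfrac{\eta\sigma}{48}$ is only acknowledged downstream, in the proof of \pref{lem:local-sgd-homogeneous-lower-bound-local-sgd-on-stochastic-coordinate}, where the final target is just $-\tfrac{\eta\sigma}{48}$ so the weaker branch suffices. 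So the obstacle you flag is legitimate and is not resolved by the paper's own write-up.

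The trouble is that your proposed repair does not work, so your case-2 argument is incomplete exactly where you identified the obstacle. The recursion $\E x_2 \le -\tfrac{\eta\sigma}{4}+(1-L\eta)\prn*{\E x_0+\tfrac{\eta\sigma}{4}}$ is \emph{false} on $[-\tfrac{\eta\sigma}{2},0]$: take $x_0=-\tfrac{\eta\sigma}{2}$ deterministic and $L\eta=\tfrac12$; then $x_1=-\tfrac{\eta\sigma}{4}\mp\eta\sigma$, $\E[x_2\mid x_1]=\tfrac12 x_1-\tfrac12\pp{x_1}$, and hence $\E x_2=-\tfrac{5}{16}\eta\sigma$, whereas the recursion would require $\E x_2\le-\tfrac38\eta\sigma$. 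The gap in your reading of \eqref{eq:local-sgd-homogeneous-lower-bound-one-step-lemma-eq1} is the middle (non-bouncing) case: it gives $\prn*{1-\tfrac32 L\eta}\E y-\tfrac{L\eta^2\sigma}{2}$ with $\E y\le(1-L\eta)\E x_0$, and comparing with the recursion one must absorb the term $\tfrac32 L\eta(1-L\eta)\abs{\E x_0}$ into the slack $\tfrac{L\eta^2\sigma}{4}$, which only works for $\abs{\E x_0}\le\tfrac{\eta\sigma}{6(1-L\eta)}$ — this is precisely why the two-step lemma carries the hypothesis $\E x_0>-\tfrac{\eta\sigma}{48}$, and the explicit example shows it is not mere slack. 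Your auxiliary invariant also fails: $\E\pp{x_j}\le\tfrac{\eta\sigma}{2}$ does not follow from contractivity plus bounded noise, because the noise accumulates like a contracting random walk whose spread equilibrates at order $\eta\sigma/\sqrt{L\eta}$; consequently both $\E\pp{x_j}$ and $-\E x_j$ can exceed $\tfrac{\eta\sigma}{2}$ by a large factor when $L\eta$ is small, so the invariant $\E x_j\ge-\tfrac{\eta\sigma}{2}$ cannot be maintained. If you want to close the regime where intermediate expectations leave $\left(-\tfrac{\eta\sigma}{48},0\right]$, you need a different device — for instance a two-step recursion with degraded constants valid on a wider interval, or an argument that once $\E x_{2j}$ falls below the claimed bound (whose limiting value is $-\tfrac{\eta\sigma}{4}$) it never rises back above it — rather than the $[-\tfrac{\eta\sigma}{2},0]$ extension as stated.
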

\begin{proof}
The idea of this proof is simple: $k$ steps of SGD initialized at some point $x_0$ is equivalent to doing two steps of SGD initialized at $x_0$ to get $x_2$, then doing two more steps initialized at $x_2$ to get $x_4$, and so forth until $k$ steps have been completed. The only minor complication is if $k$ is odd, in which case we start by doing three steps initialized at $x_{0}$ to get $x_3$ and continue in steps of two.

We will consider two cases, either $\E x_0 \leq \frac{-\eta\sigma}{48}$ or $\E x_0 \in \left(\frac{-\eta\sigma}{48}, 0\right]$. 
In the first case, $\E x_0 \leq \frac{-\eta\sigma}{48}$, if $k$ is even then by \pref{lem:local-sgd-homogeneous-lower-bound-sgd-two-step}
\begin{equation}
\E x_0 \leq \frac{-\eta\sigma}{48} \implies \E x_2 \leq \frac{-\eta\sigma}{48} \implies \E x_4 \leq \frac{-\eta\sigma}{48} \implies \dots \implies \E x_k \leq \frac{-\eta\sigma}{48}
\end{equation}
If $k$ is odd then
\begin{equation}
\E x_0 \leq \frac{-\eta\sigma}{48} \implies \E x_3 \leq \frac{-\eta\sigma}{48} \implies \E x_5 \leq \frac{-\eta\sigma}{48} \implies \dots \implies \E x_k \leq \frac{-\eta\sigma}{48}
\end{equation}

In the second case, $\E x_0 \in \left(\frac{-\eta\sigma}{48}, 0\right]$. Then, when $k$ is even, by repeatedly invoking \pref{lem:local-sgd-homogeneous-lower-bound-sgd-two-step} we get
\begin{align}
\E x_2 &\leq \frac{-\eta\sigma}{4} + \prn*{1-L\eta}\prn*{\E x_0 + \frac{\eta\sigma}{4}} \\
\E x_4 &\leq \frac{-\eta\sigma}{4} + \prn*{1-L\eta}\prn*{\E x_2 + \frac{\eta\sigma}{4}} \leq \frac{-\eta\sigma}{4} + \prn*{1-L\eta}^2\prn*{\E x_0 + \frac{\eta\sigma}{4}} \\
\E x_6 &\leq \frac{-\eta\sigma}{4} + \prn*{1-L\eta}\prn*{\E x_4 + \frac{\eta\sigma}{4}} \leq \frac{-\eta\sigma}{4} + \prn*{1-L\eta}^3\prn*{\E x_0 + \frac{\eta\sigma}{4}} \\
&\ \ \vdots \\
\E x_k &\leq \frac{-\eta\sigma}{4} + \prn*{1-L\eta}^{k/2}\prn*{\E x_0 + \frac{\eta\sigma}{4}}
\end{align}
The same argument applies when $k$ is odd (using the bound on $\E x_3$) to prove 
\begin{equation}
\E x_k \leq \frac{-\eta\sigma}{4} + \prn*{1-L\eta}^{(k+1)/2}\prn*{\E x_0 + \frac{\eta\sigma}{4}} \leq \frac{-\eta\sigma}{4} + \prn*{1-L\eta}^{k/2}\prn*{\E x_0 + \frac{\eta\sigma}{4}}
\end{equation}
\end{proof}

\begin{lemma}\label{lem:local-sgd-homogeneous-lower-bound-local-sgd-on-stochastic-coordinate}
Let $K \geq 2$ and let $\hat{x}$ be the output of Local SGD on $F$ using a fixed stepsize $\eta \leq \frac{1}{2L}$ and initialized at zero. Then
\[
\E\brk*{\frac{L}{2}\prn*{\prn*{\hat{x}_3 - c}^2 + \pp{\hat{x}_3 - c}^2}} 
\geq \frac{L\eta^2\sigma^2}{4608} \indicator{\eta \leq \frac{1}{2L}}\indicator{c \geq \frac{\eta\sigma}{48} \lor \eta \geq \frac{2}{LRK}}
\]
\end{lemma}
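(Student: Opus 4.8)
The plan is to exploit the coordinate-separable structure of the hard instance \eqref{eq:local-sgd-homogeneous-lower-bound-construction-appendix}: since $F$ is a sum of three one-dimensional functions, the third coordinate of the Local SGD iterates evolves exactly like Local SGD applied to the univariate function $g_L$ defined in \eqref{eq:local-sgd-homogeneous-lower-bound-def-gl} with the stochastic gradient oracle \eqref{eq:local-sgd-homogeneous-lower-bound-def-glprime}. So the entire argument reduces to understanding the behavior of the scalar sequence produced by Local SGD on $g_L$, initialized at $0$, after $K$ local steps per round and $R$ rounds. The key structural fact, captured by \pref{lem:local-sgd-homogeneous-lower-bound-sgd-many-steps}, is that within each round the expected iterate is pulled toward a fixed negative equilibrium value near $-\eta\sigma/4$ (or stays below $-\eta\sigma/48$ once it gets there), because the nonlinearity $\pp{x}$ makes SGD ``over-shoot'' to the negative side: the asymmetry of the one-sided quadratic means that the positive and negative noise realizations do not cancel in expectation, creating a persistent bias of size $\Theta(\eta\sigma)$.

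First I would argue that the communication/averaging step does not destroy this bias. Since all $M$ machines start each round from the same averaged point and run the identical (stochastic, but identically distributed) local dynamics, the expectation of each local iterate is the same across machines, and hence the expectation of the averaged iterate at the end of a round equals the expectation of any single machine's end-of-round iterate. Thus, at the level of expectations, Local SGD on this coordinate behaves exactly like $KR$ sequential SGD steps on $g_L$ — averaging gives no help in expectation. Combined with \pref{lem:local-sgd-homogeneous-lower-bound-sgd-many-steps} applied repeatedly across rounds (the end of one round is the start of the next, with expected value $\le 0$), this shows that after the first round the expected third coordinate of any iterate, call it $\E\hat{x}_3$ for the (weighted average) output, is at most $-\eta\sigma/48 + (1-L\eta)^{\text{something}}(\cdot)$, which we can absorb: the geometric contraction $(1-L\eta)^{K/2}$ with $K \ge 2$ is at most $(1-L\eta)$, and after one full round (i.e.\ $KR$ total steps with $KR \ge K \ge 2$) the transient term decays, so $\E\hat{x}_3 \le -\eta\sigma/48$ whenever $\eta \ge 2/(LKR)$ suffices for the decay to kick in — this is exactly where the indicator $\indicator{\eta \ge 2/(LRK)}$ comes from. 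When $\eta < 2/(LRK)$, the contraction hasn't had time to act, but then the other branch of the indicator, $c \ge \eta\sigma/48$, handles it directly: if $c$ is large enough relative to $\eta\sigma$ then even staying near $0$ leaves $\hat{x}_3$ far from $c$. The upshot of this step is a lower bound of the form $\E[c - \hat{x}_3] \ge \max\{c, \eta\sigma/48\} - (\text{small})$, so that $\E[(c - \hat{x}_3)] = \Omega(c \lor \eta\sigma)$ under the stated indicator conditions.

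Next I would convert this bound on the first moment into a bound on the objective value. By convexity of $t \mapsto \tfrac{L}{2}(t^2 + \pp{t}^2)$ and Jensen's inequality, $\E[\tfrac{L}{2}((\hat{x}_3 - c)^2 + \pp{\hat{x}_3 - c}^2)] \ge \tfrac{L}{2}((\E\hat{x}_3 - c)^2 + \pp{\E\hat{x}_3 - c}^2) \ge \tfrac{L}{2}(\E[c - \hat{x}_3])^2$ (using $\E\hat{x}_3 \le c$, which holds since $\E\hat{x}_3 \le 0 \le c$ under either indicator branch, so the "plus part" is also nonnegative and contributes). Plugging in $\E[c - \hat{x}_3] \gtrsim \eta\sigma$ from the previous step gives a bound of order $L\eta^2\sigma^2$, and tracking the constants (the $1/48$ from the equilibrium, the $1/2$ from convexity, and whatever slack is lost) yields the claimed $L\eta^2\sigma^2/4608$. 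The indicators $\indicator{\eta \le 1/(2L)}$ and $\indicator{c \ge \eta\sigma/48 \lor \eta \ge 2/(LRK)}$ are carried through verbatim: the first is the stepsize restriction needed for \pref{lem:local-sgd-homogeneous-lower-bound-sgd-many-steps}, and the second is the dichotomy resolved above.

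The main obstacle I anticipate is the careful bookkeeping of the transient versus equilibrium terms across rounds — specifically verifying that one round of $K \ge 2$ steps is enough to drive the expected iterate from $0$ down to (essentially) the equilibrium $-\eta\sigma/48$, and handling the boundary case where the stepsize is so small that the contraction is negligible (the $c \ge \eta\sigma/48$ branch). One must be slightly delicate because \pref{lem:local-sgd-homogeneous-lower-bound-sgd-many-steps} gives a clean statement only when $\E x_0 \le 0$, so I need the invariant that the expected end-of-round iterate stays nonpositive, which follows inductively since the equilibrium is itself negative. Beyond this, the remaining steps — the coordinate decoupling and the Jensen argument — are routine, so the real content is in chaining the single-round analysis of \pref{lem:local-sgd-homogeneous-lower-bound-sgd-many-steps} correctly and extracting the indicator conditions from the two regimes.
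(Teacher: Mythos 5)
Your overall skeleton matches the paper's proof: decouple the third coordinate, reduce to the scalar process on $g_L$, chain \pref{lem:local-sgd-homogeneous-lower-bound-sgd-many-steps} across rounds (using that averaging identically distributed end-of-round iterates preserves the bound on the expectation), and finish with Jensen applied to the convex map $t\mapsto\frac{L}{2}(t^2+\pp{t}^2)$, which is exactly how the $48^2\cdot 2=4608$ appears. However, there is a genuine gap in how you set up the reduction and, consequently, in how you handle the branch $c \geq \frac{\eta\sigma}{48}$. After the change of variables $y = x_3 - c$, the scalar Local SGD process on $g_L$ is initialized at $-c$, not at $0$; this is the entire role of $c$ in the lemma. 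Your argument for that branch ("even staying near $0$ leaves $\hat{x}_3$ far from $c$") is not valid: nothing in the dynamics keeps $x_3$ near its initialization $0$ — the gradient of $\frac{L}{2}\prn{(x_3-c)^2+\pp{x_3-c}^2}$ attracts $x_3$ toward $c$, and over $KR$ steps with $\eta$ as large as $\frac{1}{2L}$ the expected displacement toward $c$ can be a constant fraction of $c$, so the claimed $\E[c-\hat{x}_3]=\Omega(c\lor\eta\sigma)$ is false in general. The correct mechanism (the paper's Case 1) is that the shifted process starts at $-c \leq -\frac{\eta\sigma}{48}$, and Case 1 of \pref{lem:local-sgd-homogeneous-lower-bound-sgd-many-steps} shows the region $\crl{\E y \leq -\frac{\eta\sigma}{48}}$ is absorbing in expectation, round after round and through the averaging, which is what yields $\E[\hat{x}_3-c]\leq-\frac{\eta\sigma}{48}$ for arbitrary stepsize $\eta\leq\frac{1}{2L}$ in that branch. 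Your treatment of the other branch ($\eta \geq \frac{2}{LRK}$, starting value in $(-\frac{\eta\sigma}{48},0]$) is essentially right, though the contraction $(1-L\eta)^{RK/2}$ accumulates over all $R$ rounds, not "one full round."

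Two smaller imprecisions worth fixing: the statement that "at the level of expectations, Local SGD behaves exactly like $KR$ sequential SGD steps" is not literally true, since $\E x_{t+1}$ depends on $\E\pp{x_t}$ and hence on the whole law of $x_t$, which averaging does change; what the paper (and implicitly you) actually uses is that \pref{lem:local-sgd-homogeneous-lower-bound-sgd-many-steps} only requires a bound on the mean of the common start point of each round, and that bound is preserved by averaging. Also, in the final Jensen step the relevant fact is $\E\hat{x}_3 - c \leq -\frac{\eta\sigma}{48} < 0$ (so the $\pp{\cdot}$ term can simply be dropped), not $\E\hat{x}_3\leq 0$.
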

\begin{proof}
Since each coordinate evolves independently when optimizing $F$ using Local SGD, we can ignore the first two coordinates and focus only on the third. Observe that using Local SGD on $F$ with a fixed stepsize $\eta$ and initialized at zero to obtain $\hat{x}_3$ is exactly equivalent to using Local SGD on $g_{L}$ with the same fixed stepsize $\eta$ and initialized at $-c$. The different initialization is due to the fact that the Local SGD dynamics do not change with the change of variables $x - c \rightarrow x$. Let $\bar{x}_r$ denote the averaged iterate of Local SGD initialized at $-c$ with stepsize $\eta$ after the $r$th round of communication and let $x_{r,k,m}$ denote its $k$th iterate during the $r$th round of communication on the $m$th machine. We will start by proving that when $\eta \leq \frac{1}{2L}$ and \emph{either} $c \geq \frac{\eta\sigma}{8}$ \emph{or} $\eta \geq \frac{2}{LRK}$ then
\begin{equation}
    \E \hat{x}_3 - c = \E\bar{x}_R \leq \frac{-\eta\sigma}{48}
\end{equation}

Consider first the case $\E x_0 = -c \leq \frac{-\eta\sigma}{48}$. Then by \pref{lem:local-sgd-homogeneous-lower-bound-sgd-many-steps}
\begin{equation}
\E x_0 = -c \leq \frac{-\eta\sigma}{48} \implies \E x_{1,K,m} \leq \frac{-\eta\sigma}{48} \ \ \forall m
\end{equation}
therefore
\begin{equation}
\E \bar{x}_1 = \E\brk*{\frac{1}{M}\sum_{m=1}^M x_{1,K,m}} \leq \frac{-\eta\sigma}{48}
\end{equation}
Repeatedly applying \pref{lem:local-sgd-homogeneous-lower-bound-sgd-many-steps} shows that for each $r$
\begin{equation}
\E \bar{x}_r \leq \frac{-\eta\sigma}{48} \implies \E x_{r+1,K,m} \leq \frac{-\eta\sigma}{48} \implies \E \bar{x}_{r+1} = \E\brk*{\frac{1}{M}\sum_{m=1}^M x_{r+1,K,m}} \leq \frac{-\eta\sigma}{48}
\end{equation}
We conclude $\E \bar{x}_R \leq \frac{-\eta\sigma}{48}$.

Consider instead the case that $\E x_0 = -c \in \left( \frac{-\eta\sigma}{48}, 0 \right]$  and $\eta \geq \frac{2}{LRK}$. Then, by \pref{lem:local-sgd-homogeneous-lower-bound-sgd-many-steps}
\begin{equation}
\E x_0 = -c \in \left( \frac{-\eta\sigma}{48}, 0 \right] \implies \E x_{1,K,m} \leq \frac{-\eta\sigma}{4} + \prn*{1-L\eta}^{K/2}\prn*{\frac{\eta\sigma}{4} - c} \ \ \forall m
\end{equation}
and so
\begin{equation}
\E \bar{x}_1 = \E\brk*{\frac{1}{M}\sum_{m=1}^M x_{1,K,m}} \leq \frac{-\eta\sigma}{4} + \prn*{1-L\eta}^{K/2}\prn*{\frac{\eta\sigma}{4} - c}
\end{equation}
Again, we can repeatedly apply \pref{lem:local-sgd-homogeneous-lower-bound-sgd-many-steps} to show 
\begin{align}
\E \bar{x}_2 &\leq \frac{-\eta\sigma}{4} + \prn*{1-L\eta}^{K/2}\prn*{\E \bar{x}_1 + \frac{\eta\sigma}{4}} 
\leq \frac{-\eta\sigma}{4} + \prn*{1-L\eta}^{2K/2}\prn*{\frac{\eta\sigma}{4} - c} \\
\E \bar{x}_3 &\leq \frac{-\eta\sigma}{4} + \prn*{1-L\eta}^{K/2}\prn*{\E \bar{x}_2 + \frac{\eta\sigma}{4}} 
\leq \frac{-\eta\sigma}{4} + \prn*{1-L\eta}^{3K/2}\prn*{\frac{\eta\sigma}{4} - c} \\
&\ \ \vdots\\
\E \bar{x}_R 
&\leq \frac{-\eta\sigma}{4} + \prn*{1-L\eta}^{RK/2}\prn*{\frac{\eta\sigma}{4} - c} \\
&\leq -\prn*{1 - \prn*{1-L\eta}^{RK/2}}\frac{\eta\sigma}{4} \\
&\leq -\prn*{1 - \prn*{1-\frac{2}{RK}}^{RK/2}}\frac{\eta\sigma}{4} \\
&\leq \frac{\eta\sigma}{48}
\end{align}
These inequalities hold only as long as $\E \bar{x}_r > \frac{-\eta\sigma}{48}$. But, if for some $r$, $\E \bar{x}_r \leq \frac{-\eta\sigma}{48}$ then $\E \bar{x}_R \leq \frac{-\eta\sigma}{48}$ by the same argument as above.
We conclude that
\begin{equation}
\E \bar{x}_R \leq
\frac{-\eta\sigma}{48}\indicator{\eta \leq \frac{1}{2L}}\indicator{c \geq \frac{\eta\sigma}{48} \lor \eta \geq \frac{2}{LRK}}
\end{equation}
Since $\E \hat{x}_3 - c = \E \bar{x}_R$, by Jensen's inequality
\begin{align}
\E\brk*{\frac{L}{2}\prn*{\prn*{\hat{x}_3 - c}^2 + \pp{\hat{x}_3 - c}^2}} 
&\geq \frac{L}{2}\prn*{\prn*{\E \bar{x}_R}^2 + \pp{\E \bar{x}_R}^2} \\
&\geq \frac{L\eta^2\sigma^2}{4608} \indicator{\eta \leq \frac{1}{2L}}\indicator{c \geq \frac{\eta\sigma}{48} \lor \eta \geq \frac{2}{LRK}}
\end{align}
\end{proof}

We now analyze the progress of SGD on the first two coordinates of $F$ in the following lemma:
\begin{lemma}\label{lem:local-sgd-homogeneous-lower-bound-sgd-on-deterministic-coordinates}
Let $\hat{x}$ be the output of Local SGD on $F$ using a fixed stepsize $\eta$ and initialized at zero. Then with probability 1,
\[
\frac{\mu}{2}\prn*{\hat{x}_1 - b}^2 \geq \frac{\mu b^2}{8}\indicator{\eta < \frac{1}{2\mu KR}}
\]
and
\[
\frac{H}{2}\prn*{\hat{x}_2 - b}^2 \geq \frac{H b^2}{2}\indicator{\eta > \frac{2}{H}}.
\]
\end{lemma}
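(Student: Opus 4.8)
The plan is to use the key structural feature of the hard instance \eqref{eq:local-sgd-homogeneous-lower-bound-construction-appendix}: it is separable across the three coordinates, and the stochastic gradient oracle \eqref{eq:local-sgd-homogeneous-lower-bound-stochastic-gradient} injects noise only into the third coordinate. Hence on coordinates $1$ and $2$ every machine runs the \emph{same deterministic} gradient-descent recursion on a one-dimensional quadratic, all starting from $0$ and all using the same fixed stepsize $\eta$. A trivial induction (shared initialization $x^m_{0,0}=0$, identical updates) shows that at every step all $M$ machines hold identical values on these two coordinates, so the inter-round averaging is a no-op there; consequently the coordinates $\hat x_1$ and $\hat x_2$ of the final averaged iterate are exactly the $(KR)$-th iterates of plain gradient descent on $\tfrac\mu2(x-b)^2$ and $\tfrac H2(x-b)^2$, respectively.

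First I would solve the scalar recursion for coordinate $1$. The gradient step on $\tfrac{\mu}{2}(x-b)^2$ is $x\mapsto(1-\eta\mu)x+\eta\mu b$, so starting from $0$ one gets $\hat x_1-b=-b(1-\eta\mu)^{KR}$. On the event $\{\eta<\tfrac{1}{2\mu KR}\}$ we have $\eta\mu\in[0,1]$ and $KR\eta\mu<\tfrac12$, so Bernoulli's inequality gives $(1-\eta\mu)^{KR}\ge 1-KR\eta\mu>\tfrac12$; therefore $|\hat x_1-b|>|b|/2$ and $\tfrac\mu2(\hat x_1-b)^2>\tfrac{\mu b^2}{8}$, while off this event the asserted lower bound is $0$ and hence trivial.

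Coordinate $2$ is handled symmetrically but through \emph{divergence} rather than slow contraction. The gradient step on $\tfrac H2(x-b)^2$ is $x\mapsto(1-\eta H)x+\eta Hb$, so $\hat x_2-b=-b(1-\eta H)^{KR}$. On the event $\{\eta>\tfrac2H\}$ we have $1-\eta H<-1$, so $|1-\eta H|>1$ and $|1-\eta H|^{KR}\ge|1-\eta H|>1$ (using $KR\ge1$); therefore $|\hat x_2-b|>|b|$ and $\tfrac H2(\hat x_2-b)^2>\tfrac{Hb^2}{2}$, and again the bound is trivial off the event. Both inequalities hold deterministically, i.e.\ with probability $1$, because no randomness enters coordinates $1$ and $2$.

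The only point needing care --- and it is minor --- is the reduction to a scalar recursion: one must check that averaging across machines really does nothing on the noise-free coordinates (the easy induction above) and that ``the output of Local SGD'' here is to be read as the final averaged iterate, as in \pref{lem:local-sgd-homogeneous-lower-bound-local-sgd-on-stochastic-coordinate}. Given that, everything reduces to the closed form of an affine recursion together with Bernoulli's inequality, so no genuine obstacle arises.
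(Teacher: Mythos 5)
Your proposal is correct and follows essentially the same route as the paper: reduce coordinates 1 and 2 to the deterministic scalar gradient-descent recursion (noise-free, identical across machines, so averaging is a no-op), solve the affine recursion in closed form, and use $(1-\eta\mu)^{KR}\ge 1-KR\eta\mu$ on the small-stepsize event and $|1-\eta H|>1$ (divergence) on the large-stepsize event. No gaps.
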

\begin{proof}
Since the stochastic gradient estimator has no noise along the first and second coordinates, and since the separate coordinates evolve independently, $\hat{x}_1$ is exactly the output of $KR$ steps of deterministic gradient descent with fixed stepsize $\eta$ on the univariate function $x \mapsto \frac{\mu}{2}\prn*{x - b}^2$. Similarly, $\hat{x}_2$ is the output of $KR$ steps of deterministic gradient descent with fixed stepsize $\eta$ on $x \mapsto \frac{H}{2}\prn*{x - b}^2$. Thus,
\begin{multline}
x_1\ind{t+1} - b = x_1\ind{t} - b - \eta\mu\prn*{x_1\ind{t} - b} 
\implies \hat{x}_1 = b + \prn*{1-\eta\mu}^{KR}\prn*{x_1\ind{0} - b} = b\prn*{1 - \prn*{1-\eta\mu}^{KR}}
\end{multline}
Thus, if $\eta < \frac{1}{2\mu KR}$, then 
\begin{equation}
\hat{x}_1 \leq b\eta\mu KR< \frac{b}{2} \implies \frac{\mu}{2}\prn*{\hat{x}_1 - b}^2 \geq \frac{\mu b^2}{8}\indicator{\eta < \frac{1}{2\mu KR}}
\end{equation}
Similarly,
\begin{multline}
x_2\ind{t+1} - b = x_2\ind{t} - b - \eta H\prn*{x_2\ind{t} - b} 
\implies \hat{x}_2 - b = \prn*{1-\eta H}^{KR}\prn*{x_2\ind{0} - b} = - b\prn*{1-\eta H}^{KR}
\end{multline}
Thus, if $\eta > \frac{2}{H}$, then 
\begin{equation}
\abs{\hat{x}_2 - b} \geq b \implies \frac{H}{2}\prn*{\hat{x}_2 - b}^2 \geq \frac{H b^2}{2}\indicator{\eta > \frac{2}{H}}
\end{equation}
\end{proof}

Combining \pref{lem:local-sgd-homogeneous-lower-bound-local-sgd-on-stochastic-coordinate} and \pref{lem:local-sgd-homogeneous-lower-bound-sgd-on-deterministic-coordinates}, we are ready to prove the theorem:
\localsgdhomogeneouslowerbound*
\begin{proof}
Consider optimizing the objective $F$ defined in \eqref{eq:local-sgd-homogeneous-lower-bound-construction-appendix} using the stochastic gradient oracle \eqref{eq:local-sgd-homogeneous-lower-bound-stochastic-gradient} initialized at zero and using a fixed stepsize $\eta$. The variance of the stochastic gradient oracle is equal to $\sigma^2$. This function is $\max\crl*{\mu, H, 2L}$-smooth, and $\min\crl*{\mu, H, L}$-strongly convex. We will be choosing $L = \frac{H}{4}$ and $\mu \in \left[\lambda, \frac{H}{16}\right]$ so that $F$ is $H$-smooth and $\lambda$-strongly convex.
Finally, the objective $F$ is minimized at the point $x^* = [b,b,c]^\top$ and $F(x^*) = 0$. This point has norm $\nrm*{x^*} = \sqrt{2b^2 + c^2}$ we will choose $b = c = \frac{B}{\sqrt{3}}$ so that $\nrm*{x^*} = B$.

By \pref{lem:local-sgd-homogeneous-lower-bound-local-sgd-on-stochastic-coordinate}, the output of Local SGD, $\hat{x}$ satisfies
\begin{equation}\label{eq:needed-for-heterogeneous-lower-bound-1}
\E\brk*{\frac{L}{2}\prn*{\prn*{\hat{x}_3 - c}^2 + \pp{\hat{x}_3 - c}^2}} 
\geq \frac{L\eta^2\sigma^2}{4608} \indicator{\eta \leq \frac{1}{2L}}\indicator{c \geq \frac{\eta\sigma}{48} \lor \eta \geq \frac{2}{LRK}}
\end{equation}
By \pref{lem:local-sgd-homogeneous-lower-bound-sgd-on-deterministic-coordinates}, the output of Local SGD, $\hat{x}$ satisfies
\begin{equation}\label{eq:needed-for-heterogeneous-lower-bound-2}
\frac{\mu}{2}\prn*{\hat{x}_1 - b}^2 + \frac{H}{2}\prn*{\hat{x}_2 - b}^2 
\geq \frac{\mu b^2}{8}\indicator{\eta < \frac{1}{2\mu KR}} + \frac{H b^2}{2}\indicator{\eta > \frac{2}{H}}
\end{equation}
Combining these, we have
\begin{equation}
\E F(\hat{x}) - \min_x F(x) 
\geq \frac{\mu b^2}{8}\indicator{\eta < \frac{1}{2\mu KR}} + \frac{H b^2}{2}\indicator{\eta > \frac{2}{H}} + \frac{L\eta^2\sigma^2}{4608} \indicator{\eta \leq \frac{1}{2L}}\indicator{\eta \leq \frac{48c}{\sigma} \lor \eta \geq \frac{2}{LRK}}
\end{equation}
Consider two cases: first, suppose that $\eta \not\in \brk*{\frac{1}{2\mu KR}, \frac{2}{H}}$. Then,
\begin{equation}
\E F(\hat{x}) - \min_x F(x)
\geq \min\crl*{\frac{\mu b^2}{8}, \frac{H b^2}{2}} 
= \frac{\mu b^2}{8}\label{eq:local-sgd-homogeneous-lower-bound-eq1}
\end{equation}
Suppose instead that $\eta \in \brk*{\frac{1}{2\mu KR}, \frac{2}{H}}$. Since $L = \frac{H}{4}$, $\eta \leq \frac{2}{H} \leq \frac{1}{2L}$. Similarly, since $\mu \leq \frac{H}{16} = \frac{L}{4}$, $\eta \geq \frac{1}{2\mu KR} \geq \frac{2}{LRK}$. Therefore, $\eta \in \brk*{\frac{1}{2\mu KR}, \frac{2}{H}}$ implies
\begin{align}
\E F(\hat{x}) - \min_x F(x) 
&\geq \min_{\eta \in \brk*{\frac{1}{2\mu KR}, \frac{2}{H}}} \frac{L\eta^2\sigma^2}{4608} \indicator{\eta \leq \frac{1}{2L}}\indicator{\eta \leq \frac{48c}{\sigma} \lor \eta \geq \frac{2}{LRK}} \\
&= \min_{\eta \in \brk*{\frac{1}{2\mu KR}, \frac{2}{H}}} \frac{L\eta^2\sigma^2}{4608} \\
&= \frac{L\sigma^2}{18432\mu^2K^2R^2}\label{eq:local-sgd-homogeneous-lower-bound-eq2}
\end{align}
Combining \eqref{eq:local-sgd-homogeneous-lower-bound-eq1} and \eqref{eq:local-sgd-homogeneous-lower-bound-eq2} yields
\begin{equation}
\E F(\hat{x}) - \min_x F(x)
\geq \min\crl*{\frac{\mu B^2}{24}, \frac{H\sigma^2}{73728\mu^2K^2R^2}}
\end{equation}
This statement holds for any $\mu \in \brk*{\lambda, \frac{H}{16}}$. Consider three cases: first, suppose $\mu = \prn*{\frac{H\sigma^2}{3072B^2K^2R^2}}^{1/3} \in \brk*{\lambda, \frac{H}{16}}$. Then
\begin{equation}
\E F(\hat{x}) - \min_x F(x)
\geq \frac{H^{1/3}\sigma^{2/3}B^{4/3}}{350K^{2/3}R^{2/3}}
\end{equation}
Consider next the case that $\prn*{\frac{H\sigma^2}{3072B^2K^2R^2}}^{1/3} > \frac{H}{16}$ $\implies$ $\frac{\sigma^2}{192B^2K^2R^2} > \frac{H^2}{256}$ and choose $\mu = \frac{H}{16}$. Then
\begin{equation}
\E F(\hat{x}) - \min_x F(x)
\geq \min\crl*{\frac{H B^2}{384}, \frac{H\sigma^2}{73728K^2R^2\cdot\frac{H^2}{256}}} 
= \frac{H B^2}{384}
\end{equation}
Finally, consider the case that $\prn*{\frac{H\sigma^2}{3072B^2K^2R^2}}^{1/3} < \lambda$ and choose $\mu = \lambda$. Then,
\begin{equation}
\E F(\hat{x}) - \min_x F(x)
\geq \min\crl*{\frac{\lambda B^2}{24}, \frac{H\sigma^2}{73728\lambda^2K^2R^2}} = \frac{H\sigma^2}{73728\lambda^2K^2R^2}
\end{equation}
Combining these cases proves that for a universal constant $c$,
\begin{equation}\label{eq:local-sgd-lower-bound-intermed-1}
\E F(\hat{x}) - \min_x F(x)
\geq c\cdot\min\crl*{\frac{\prn*{H\sigma^2B^4}^{1/3}}{K^{2/3}R^{2/3}},\,\frac{H\sigma^2}{\lambda^2K^2R^2},\,HB^2}
\end{equation}
Therefore, in the convex setting, where we are free to take $\lambda$ small enough that the second term of the $\min$ is never the minimizing term, we have
\begin{equation}
\E F(\hat{x}) - \min_x F(x)
\geq c\cdot\min\crl*{\frac{\prn*{H\sigma^2B^4}^{1/3}}{K^{2/3}R^{2/3}},\,HB^2}
\end{equation}
Finally, combining this with \pref{lem:statistical-term-lower-bound} we conclude that for some $F_0 \in \mc{F}_0(H,B)$
\begin{equation}
\E F_0(\hat{x}) - F_0^*
\geq c\cdot\prn*{\min\crl*{\frac{\prn*{H\sigma^2B^4}^{1/3}}{K^{2/3}R^{2/3}},\,HB^2} + \min\crl*{\frac{\sigma B}{\sqrt{MKR}},\,HB^2}}
\end{equation}

Returning to \eqref{eq:local-sgd-lower-bound-intermed-1}, we can also consider lower bounds in the strongly convex setting, where we require the objective to be in $\mc{F}_\lambda(H,\Delta)$. In this case, we can set $B^2 = \frac{\Delta}{H}$ so that since $\mu,L\leq H$
\begin{equation}
F(0) = F(0) - F^* = \frac{\mu}{2}\frac{B^2}{3} + \frac{H}{2}\frac{B^2}{3} + \frac{\mu}{2}\frac{B^2}{3} \leq HB^2 = \Delta
\end{equation}
Therefore, $F \in \mc{F}_\lambda(H,\Delta)$ and the lower bound \eqref{eq:local-sgd-lower-bound-intermed-1} becomes
\begin{equation}\label{eq:local-sgd-lower-bound-intermed-2}
\E F(\hat{x}) - \min_x F(x)
\geq c\cdot\min\crl*{\frac{\prn*{\sigma^2\Delta^2}^{1/3}}{H^{1/3}K^{2/3}R^{2/3}},\,\frac{H\sigma^2}{\lambda^2K^2R^2},\,\Delta}
\end{equation}
Furthermore,
\begin{equation}
\frac{\prn*{\sigma^2\Delta^2}^{1/3}}{H^{1/3}K^{2/3}R^{2/3}} \leq \frac{H\sigma^2}{\lambda^2K^2R^2} 
\implies
\Delta \leq \frac{H\sigma^2}{\lambda^2K^2R^2}
\end{equation}
so the first term of the $\min$ is irrelevant.
Combining this with \pref{lem:statistical-term-lower-bound}, we conclude that for some $F_\lambda \in \mc{F}_\lambda(H,\Delta)$
\begin{equation}
\E F_\lambda(\hat{x}) - F_\lambda^*
\geq c\cdot\prn*{\min\crl*{\frac{H\sigma^2}{\lambda^2K^2R^2},\,\Delta} + \min\crl*{\frac{\sigma^2}{\lambda MKR},\Delta}}
\end{equation}
This completes the proof.
\end{proof}

\subsection{Proof of \pref{thm:local-sgd-heterogeneous-lower-bound}}\label{app:local-sgd-heterogeneous-lower-bound}

Consider the following function $F:\R^4\rightarrow\R$:
\begin{equation}
F(x) = \frac{1}{2}\prn*{F_1(x) + F_2(x)} = \frac{1}{2}\prn*{\E_{z^1\sim\mc{D}^1}f(x;z^1) + \E_{z^2\sim\mc{D}^2}f(x;z^2)}
\end{equation}
The distribution $z^1 \sim \mc{D}^1$ is described by $z^1 = (1, z)$ for $z \sim \mc{N}(0,\sigma^2)$. Similarly, $z^2 \sim \mc{D}^2$ is specified by $z^2 = (2, z)$ for $z \sim \mc{N}(0,\sigma^2)$. The lower bound construction will be based on just two functions. For $M > 2$ machines, we simply assign the first $\lfloor M/2 \rfloor$ machines $F_1$ and the next $\lfloor M/2 \rfloor$ machines $F_2$. This diminishes the lower bound by at most a $(M-1)/M$ factor. Therefore, we continue with the case $M=2$.

Similar to the proof of \pref{thm:local-sgd-homogeneous-lower-bound}, we define the local functions $F_1$ and $F_2$ via the auxiliary function
\begin{gather}
g(x_1,x_2,x_3,z) = \frac{\mu}{2}\prn*{x_1 - c}^2 + \frac{H}{2}\prn*{x_2-\frac{\sqrt{\mu} c}{\sqrt{H}}}^2 + \frac{H}{8}\prn*{x_3^2 + \pp{x_3}^2} + z^\top x_3 \\
G(x_1,x_2,x_3) = \E_z g(x_1,x_2,x_3,z)
\end{gather}
where $c > 0$ and $\mu \in \brk*{\lambda, \frac{H}{16}}$ are parameters to be determined later, and where $\pp{x} := \max\crl{x,0}$.
Then, we define
\begin{gather}
f(x;(1,z)) = g(x_1,x_2,x_3,z)  + \frac{Lx_4^2}{2} + \sdiff x_4  \\
f(x;(2,z)) = g(x_1,x_2,x_3,z)  + \frac{\lambda x_4^2}{2} - \sdiff x_4 
\end{gather}
for a parameter $L \in [\lambda, H]$ to be determined later. Therefore,
\begin{gather}
F_1(x) = \E_{z^1\sim\mc{D}^1}f(x;z^1) = G(x_1,x_2,x_3)  + \frac{Lx_4^2}{2} + \sdiff x_4 \\
F_2(x) = \E_{z^2\sim\mc{D}^2}f(x;z^2) = G(x_1,x_2,x_3)  + \frac{\lambda x_4^2}{2} - \sdiff x_4 
\end{gather}
It is clear from inspection that both $F_1$ and $F_2$, and consequently $F$, are $H$-smooth and $\lambda$-strongly convex. Furthermore, the variance of the gradients is bounded by $\sigma^2$ for both $\mc{D}^1$ and $\mc{D}^2$.

The function $G$ attains its minimum of zero at $\brk*{c,\frac{\sqrt{\mu} c}{\sqrt{H}},0}$ so $\nabla G\prn*{c,\frac{\sqrt{\mu} c}{\sqrt{H}},0} = 0$, and thus
\begin{equation}
\nabla F\prn*{c,\frac{\sqrt{\mu} c}{\sqrt{H}},0,0} = \nabla G\prn*{c,\frac{\sqrt{\mu} c}{\sqrt{H}},0} + \prn*{\frac{\sdiff}{2} - \frac{\sdiff}{2}}e_4 = 0
\end{equation}
From now on, we use $x^* = \brk*{c,\frac{\sqrt{\mu} c}{\sqrt{H}},0,0}$ to denote the minimizer of $F$, which has norm
\begin{equation}
\nrm{x^*}^2 = \prn*{1 + \frac{\mu}{H}}c^2 \leq 2c^2
\end{equation}
We can therefore ensure $\nrm{x^*}^2 \leq B^2$ by choosing $c^2 \leq \frac{B^2}{2}$.
Furthermore, the initial suboptimality
\begin{align}
F(0,0,0,0) - F^* = \mu c^2
\end{align}
Therefore, we can ensure $F(0,0,0,0) - F^* \leq \Delta$ by choosing $c^2 \leq \frac{\Delta}{\mu}$.
We conclude by showing that for this objective, $\sdiff^2$ bounded by
\begin{align}
\frac{1}{2}\sum_{m=1}^2 \nrm*{\nabla F_m(x^*)}^2
= \nrm*{\nabla F_2(x^*)}^2 
= \nrm*{\nabla F_1(x^*)}^2 
= \sdiff^2
\end{align}
Therefore, this objective has the desired level of heterogeneity. We have shown that the objective satisfies all of the necessary conditions for the lower bound. All that remains is to lower bound the error of Local SGD with a constant stepsize $\eta$ applied to this function.

\begin{lemma}\label{lem:fourth-coordinate}
For $\mu \leq 2L$, Local SGD with any constant stepsize $\eta \leq \frac{1}{L}$ applied to $F_1$ and $F_2$ after being initialized at zero results in $\hat{x}_4$ such that
\[
\frac{(L+\mu)\hat{x}_4^2}{4} \geq 
\frac{\sdiff^2(L+\mu)}{16\mu^2}\prn*{\frac{L-\mu}{L} - \prn*{1-\mu\eta}^{K}}^2\indicator{\eta \leq \frac{1}{L}}\indicator{\prn*{1-\mu\eta}^{K} \leq \frac{L-\mu}{L}}
\] 
\end{lemma}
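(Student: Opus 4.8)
My plan is to prove the bound by isolating the fourth coordinate of the Local SGD iterates, which decouples from the other three and (on this coordinate) evolves deterministically, so the whole argument is a one-dimensional affine-recursion computation.

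First I would observe that both $f(\cdot;(1,z))$ and $f(\cdot;(2,z))$ are additively separable across coordinates and that their stochastic gradients carry noise only in the third coordinate; hence the fourth coordinate of every iterate is deterministic and evolves on its own. A machine running $F_1$ applies the affine local update $x_4 \mapsto (1-\eta L)x_4 - \eta\sdiff$ (fixed point $-\sdiff/L$), and a machine running $F_2$ applies $x_4 \mapsto (1-\eta\mu)x_4 + \eta\sdiff$ (fixed point $\sdiff/\mu$). By symmetry the machines in each of the two groups stay synchronized, so I reduce to two ``super-machines'' and let $a_r$ be their common fourth coordinate at the start of round $r$, with $a_0 = 0$. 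Solving each affine recursion for $K$ steps from $a_r$ and then averaging (the communication step) gives the exact one-round recursion $a_{r+1} = c_1 a_r + c_0$ with $c_1 = \tfrac12\big((1-\eta L)^K + (1-\eta\mu)^K\big)$ and $c_0 = \tfrac{\sdiff}{2}\big(h(\mu) - h(L)\big)$, where $h(t) = \tfrac1t\big(1 - (1-\eta t)^K\big) = \eta\sum_{j=0}^{K-1}(1-\eta t)^j$.

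Next, since $h$ is nonincreasing in $t$ on $(0,1/\eta]$, in the regime $\mu < L$, $\eta \le 1/L$, $\eta\mu < 1$ I get $c_0 \ge 0$ and $c_1 \in [0,1)$. Unrolling, $a_R = c_0\sum_{j=0}^{R-1}c_1^j \ge c_0$ (all terms nonnegative, $R \ge 1$). Then, dropping the nonnegative term $(1-\eta L)^K/L$ from $h(L)$, I bound $c_0 \ge \tfrac{\sdiff}{2\mu}\big(\tfrac{L-\mu}{L} - (1-\eta\mu)^K\big)$, so that the fourth coordinate $\hat x_4 = a_R$ of the final averaged iterate $\bar x_{KR}$ satisfies $\hat x_4 \ge \tfrac{\sdiff}{2\mu}\big(\tfrac{L-\mu}{L} - (1-\eta\mu)^K\big)$. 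On the event defining the indicator the right-hand side is nonnegative, so squaring and multiplying by $(L+\mu)/4$ yields exactly the claimed inequality; off that event the claim is vacuous since its left side is nonnegative.

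The main obstacle is the bookkeeping needed to legitimately restrict to the regime $\mu < L$, $\eta\mu < 1$: for large stepsizes ($\eta\mu \ge 1$) the factor $(1-\eta\mu)^K$ can be negative for odd $K$, which breaks both $c_0 \ge 0$ and $c_1 \ge 0$. I plan to dispatch this by showing that whenever $\eta\mu \ge 1$ — combined with $\eta \le 1/L$, hence $\mu/L \ge \eta\mu \ge 1$ — one has $\tfrac{L-\mu}{L} \le 1 - \eta\mu \le 0$, and then a short case check using $K \ge 2$ and $|1-\eta\mu| \le 1$ (this is exactly where $\mu \le 2L$ enters) shows that either $(1-\eta\mu)^K > \tfrac{L-\mu}{L}$, so the indicator is off, or $(1-\eta\mu)^K = \tfrac{L-\mu}{L}$, in which case the right-hand side of the lemma is zero. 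Thus the bound is nontrivial only in the benign regime, where the computation above applies. The remaining steps — solving the two affine recursions in closed form and the monotonicity of $h$ — are routine, and this lemma then feeds the fourth-coordinate contribution into the proof of Theorem~\ref{thm:local-sgd-heterogeneous-lower-bound} alongside analogues of the homogeneous-case lemmas for the other coordinates.
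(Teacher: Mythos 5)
Your proposal is correct and follows essentially the same route as the paper's proof: isolate the deterministic fourth coordinate, derive the one-round averaged affine recursion $a_{r+1} = c_1 a_r + c_0$, unroll from $a_0=0$ to get $\hat{x}_4 = a_R \ge c_0 \ge \frac{\sdiff}{2\mu}\left(\frac{L-\mu}{L} - (1-\mu\eta)^{K}\right)$, and square under the indicator. Your explicit dispatch of the $\eta\mu \ge 1$ edge case is actually more careful than the paper's argument (which implicitly uses $(1-\mu\eta)^K \ge 0$ in its induction); just note for completeness that the remaining sub-case $L \le \mu < 1/\eta$ is also vacuous, since there $(1-\mu\eta)^K > 0 \ge \frac{L-\mu}{L}$ and the indicator is off.
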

\begin{proof}
Since the coordinates of $F_1$ and $F_2$ are completely decoupled, the behavior of the fourth coordinate of the iterates can be analyzed separately from the others. 

Let $x_{k,r}^{(1)}$ denote the fourth coordinate of machine 1's iterate at the $k$th iteration of round $r$, and similarly for $x_{k,r}^{(2)}$. The local SGD dynamics give
\begin{gather}
x_{k+1,r}^{(1)} = x_{k,r}^{(1)} - \eta\prn*{L x_{k,r}^{(1)} + \sdiff} = - \frac{\sdiff}{L} + (1-L\eta)\prn*{x_{k,r}^{(1)} + \frac{\sdiff}{L}} \\
x_{k,r}^{(2)} = x_{k,r}^{(2)} - \eta\prn*{-\sdiff + \mu x_{k,r}^{(2)}} = \frac{\sdiff}{\mu} + \prn*{1-\mu\eta}\prn*{x_{k,r}^{(2)} - \frac{\sdiff}{\mu}}
\end{gather}
and $\hat{x}_4 = \frac{1}{2}\prn*{x_{K,R}^{(1)} +  x_{K,R}^{(2)}} = x_{0,R+1}$.
Unravelling this recursion, we have that 
\begin{equation}
x_{0,r+1} = x_{0,r+1}^{(1)} = x_{0,r+1}^{(2)} = \frac{1}{2}\prn*{\frac{\sdiff}{\mu} - \frac{\sdiff}{L} + \prn*{1-\mu\eta}^K\prn*{x_{0,r} - \frac{\sdiff}{\mu}} + \prn*{1-L\eta}^K \prn*{x_{0,r} + \frac{\sdiff}{L}}}
\end{equation}
Furthermore, if $\eta \leq \frac{1}{L}$ then $(1-L\eta) \geq 0$, so if $x_{0,r} \geq 0$ then
\begin{equation}
x_{0,r+1} \geq \frac{\sdiff}{2\mu} - \frac{\sdiff}{2L} + \prn*{1-\mu\eta}^K\prn*{\frac{x_{0,r}}{2} - \frac{\sdiff}{2\mu}} \geq \frac{\sdiff}{2\mu}\prn*{\frac{L-\mu}{L} - \prn*{1-\mu\eta}^{K}}
\end{equation}
Finally, since $x_{0,0} = 0 \geq 0$, the condition $x_{0,r} \geq 0$ will hold throughout optimization, so
\begin{equation}
\hat{x}_4 \geq \frac{\sdiff}{2\mu}\prn*{\frac{L-\mu}{L} - \prn*{1-\mu\eta}^{K}}
\end{equation}
Therefore, if $\eta \leq \frac{1}{L}$ and $\prn*{1-\mu\eta}^{K} \leq \frac{L-\mu}{L}$ then
\begin{align}
\frac{(L+\mu)\hat{x}_4^2}{4}
&\geq \frac{\sdiff^2(L+\mu)}{16\mu^2}\prn*{\frac{L-\mu}{L} - \prn*{1-\mu\eta}^{K}}^2 \label{eq:fourth-coord-lower-bound}
\end{align}
This completes the proof.
\end{proof}

\localSGDheterogeneouslowerbound*
\begin{proof}
Since the four different coordinates are completely decoupled from each other, it suffices to analyze each coordinate separately. 

From the proof of \pref{thm:local-sgd-homogeneous-lower-bound}, \eqref{eq:needed-for-heterogeneous-lower-bound-1} and \eqref{eq:needed-for-heterogeneous-lower-bound-2} (with $L = H/4$) imply that
\begin{equation}
\E G(\hat{x}_1, \hat{x}_2, \hat{x}_3) - G\prn*{c,\frac{\sqrt{\mu}c}{\sqrt{H}},0}
\geq 
\frac{\mu c^2\prn*{1-\mu\eta}^{KR}}{2} + \frac{\mu c^2}{2}\indicator{\eta > \frac{2}{H}} + \frac{H\eta^2\sigma^2}{18432}\indicator{\eta \leq \frac{2}{H}}\indicator{\eta \geq \frac{8}{HKR}}
\end{equation}

Furthermore, by \pref{lem:fourth-coordinate}
\begin{equation}
\frac{(L+\lambda)\hat{x}_4^2}{4} \geq 
\frac{\sdiff^2(L+\mu)}{16\mu^2}\prn*{\frac{L-\mu}{L} - \prn*{1-\mu\eta}^{K}}^2\indicator{\eta \leq \frac{1}{L}}\indicator{\prn*{1-\mu\eta}^{K} \leq \frac{L-\mu}{L}}
\end{equation}
Therefore, choosing $L = \frac{H}{2}$
\begin{align}
\E F(\hat{x}) - F^* 
&= \E G(\hat{x}_1, \hat{x}_2, \hat{x}_3) - G\prn*{c,\frac{\sqrt{\mu}c}{\sqrt{H}},0} + \frac{H+2\lambda}{8}\hat{x}_4^2 \\
&\geq \frac{\mu c^2\prn*{1-\mu\eta}^{KR}}{2} + \frac{\mu c^2}{2}\indicator{\eta > \frac{2}{H}} + \frac{H\eta^2\sigma^2}{18432}\indicator{\eta \leq \frac{2}{H}}\indicator{\eta \geq \frac{8}{HKR}} \nonumber\\
&\qquad+ \frac{\sdiff^2(H+2\mu)}{32\mu^2}\prn*{\frac{H-2\mu}{H} - \prn*{1-\mu\eta}^{K}}^2\indicator{\eta \leq \frac{2}{H}}\indicator{\prn*{1-\mu\eta}^{K} \leq \frac{H-2\mu}{H}}\label{eq:raw-lower-bound}
\end{align}

\subsubsection*{Stochastic terms}
First, we will show a lower bound in terms of $\sigma^2$ using solely the first three terms of \eqref{eq:raw-lower-bound}. Consider three cases:

\paragraph{Case 1 $\eta \geq \frac{2}{H}$:}
In this case, from the second term of \eqref{eq:raw-lower-bound} we see that 
\begin{equation}
\E F(\hat{x}) - F^* \geq \frac{\mu c^2}{2}
\end{equation}

\paragraph{Case 2 $\frac{1}{2\mu KR} \leq \eta \leq \frac{2}{H}$:}
In this case, the third term of \eqref{eq:raw-lower-bound} shows
\begin{align}
\E F(\hat{x}) - F^*
\geq  \frac{H\eta^2\sigma^2}{18432}
\end{align}
where we recalled that $\mu \leq \frac{H}{16}$, so $\eta \geq \frac{1}{2\mu KR} \geq \frac{8}{HKR}$. This is non-decreasing in $\eta$, so for any $\eta$
\begin{align}
\E F(\hat{x}) - F^* 
\geq \frac{H\sigma^2}{73728 \mu^2 K^2R^2}
\end{align}

\paragraph{Case 3 $\eta \leq \frac{2}{H}$ and $\eta \leq \frac{1}{2\mu KR}$:}
In this case, from the first term of \eqref{eq:raw-lower-bound},
\begin{align}
\E F(\hat{x}) - F^* 
\geq \frac{\mu c^2\prn*{1-\mu\eta}^{KR}}{2} 
\geq \frac{\mu c^2\prn*{1-\frac{1}{2KR}}^{KR}}{2} 
\geq \frac{\mu c^2}{4}
\end{align}

\paragraph{Combination:}
Combining these three cases, we conclude that for any $\eta$
\begin{align}
\E F(\hat{x}) - F^* 
\geq \min\crl*{\frac{\mu c^2}{2},\, \frac{H\sigma^2}{73728 \mu^2 K^2R^2},\, \frac{\mu c^2}{4}}
= \min\crl*{\frac{\mu c^2}{3},\, \frac{H\sigma^2}{73728 \mu^2 K^2R^2}}
\end{align}
This lower bound holds for any stepsize, and any $\mu \in \brk*{\lambda, \frac{H}{16}}$ and regardless of $\sdiff$. In the strongly convex case, we recall that $F(0) - F(x^*) = \mu c^2$, therefore, we choose $\mu = \lambda$, and $c^2 = \frac{\Delta}{\lambda}$ so the lower bound reads (for a universal constant $\beta$)
\begin{align}
\E F(\hat{x}) - F^* 
\geq \beta\cdot \min\crl*{\Delta,\, \frac{H\sigma^2}{\lambda^2 K^2R^2}}
\end{align}
To conclude, it is well known that any first-order method which accesses at most $MKR$ stochastic gradients with variance $\sigma^2$ for a $\lambda$-strongly convex objective will suffer error at least $\beta\frac{\sigma^2}{\lambda MKR}$ in the worst case \cite{nemirovskyyudin1983} for a universal constant $\beta$.
Therefore, the strongly convex lower bound is
\begin{align}
\E F(\hat{x}) - F^* 
\geq \beta\cdot \min\crl*{\Delta,\, \frac{H\sigma^2}{\lambda^2 K^2R^2}} + \beta\cdot\frac{\sigma^2}{\lambda MKR}
\end{align}

In the convex case, we recall that $\nrm*{x^*}^2 \leq 2c^2$, so we choose $c^2 = \frac{B^2}{2}$, and set $\mu = \prn*{\frac{H\sigma^2}{B^2K^2R^2}}^{1/3}$ so the lower bound reads
\begin{align}
\E F(\hat{x}) - F^* 
\geq \beta\cdot \frac{\prn*{H\sigma^2B^4}}{K^{2/3}R^{2/3}}
\end{align}
To conclude, it is well known that any first-order method which accesses at most $MKR$ stochastic gradients with variance $\sigma^2$ for a convex objective with $\nrm{x^*}\leq B$ will suffer error at least $\beta\frac{\sigma B}{\sqrt{MKR}}$ in the worst case \cite{nemirovskyyudin1983}. Therefore, the convex lower bound is
\begin{align}
\E F(\hat{x}) - F^* 
\geq \beta\cdot \frac{\prn*{H\sigma^2B^4}}{K^{2/3}R^{2/3}} + \beta\cdot \frac{\sigma B}{\sqrt{MKR}}
\end{align}

\subsubsection*{Heterogeneity terms}
Next, we consider solely the first, second, and fourth terms of \eqref{eq:raw-lower-bound} in order to show a lower bound with respect to $\sdiff$. Again, we consider three cases:

\paragraph{Case 1 $\eta \geq \frac{2}{H}$:}
Again, in this case, from the second term of \eqref{eq:raw-lower-bound} we see that 
\begin{equation}
\E F(\hat{x}) - F^* \geq \frac{\mu c^2}{2} 
\end{equation}

\paragraph{Case 2 $\eta \leq \frac{2}{H}$ and $\prn*{1-\mu\eta}^{K} > \frac{H-2\mu}{H}$:}
In this case, from the first term of \eqref{eq:raw-lower-bound}, we have
\begin{align}
\E F(\hat{x}) - F^* 
&\geq \frac{\mu c^2\prn*{1-\mu\eta}^{KR}}{2} \\
&\geq \frac{\mu c^2}{2}\prn*{1-\frac{2\mu}{H}}^R \\
&\geq \frac{\mu c^2}{2}\prn*{\prn*{1-\frac{4\mu}{H}\prn*{1-\frac{1}{e}}}^{\frac{H}{4\mu}}}^{\frac{4\mu R}{H}} \\
&\geq \frac{\mu c^2}{2}\exp\prn*{-\frac{4\mu R}{H}}
\end{align}

\paragraph{Case 3 $\eta \leq \frac{2}{H}$ and $\prn*{1-\mu\eta}^{K} \leq \frac{H-2\mu}{H}$:}
In this case, from the first and fourth terms of \eqref{eq:raw-lower-bound}, we have
\begin{align}
\E F(\hat{x}) - F^* 
\geq \frac{\mu c^2}{2}\prn*{1-\mu\eta}^{KR} + \frac{\sdiff^2(H+2\mu)}{32\mu^2}\prn*{\frac{H-2\mu}{H} - \prn*{1-\mu\eta}^{K}}^2
\end{align}
Suppose that $\prn*{1-\mu\eta}^{K} \geq \frac{H-2\mu}{H} - \frac{1}{4R}$, then 
\begin{align}
\frac{\mu c^2}{2}\prn*{1-\mu\eta}^{KR}
\geq \frac{\mu c^2}{2}\prn*{1-\frac{2\mu}{H} - \frac{1}{4R}}^R
\end{align}
Then, if $R \geq \frac{H}{4\mu}$, then
\begin{align}
\frac{\mu c^2}{2}\prn*{1-\mu\eta}^{KR}
\geq \frac{\mu c^2}{2}\prn*{1-\frac{3\mu}{H}}^R 
\geq \frac{\mu c^2}{2}\prn*{\prn*{1-\frac{6\mu}{H}\prn*{1-\frac{1}{e}}}^{\frac{H}{6\mu}}}^{\frac{6\mu R}{H}} 
\geq \frac{\mu c^2}{2}\exp\prn*{-\frac{6\mu R}{H}}
\end{align}
Otherwise, if $R \leq \frac{H}{4\mu}$, then
\begin{align}
\frac{\mu c^2}{2}\prn*{1-\mu\eta}^{KR}
\geq \frac{\mu c^2}{2}\prn*{1-\frac{1}{2R}}^R \geq \frac{\mu c^2}{4} \geq \frac{\mu c^2}{4}\exp\prn*{-\frac{6\mu R}{H}}
\end{align}
Therefore, when $\prn*{1-\mu\eta}^{K} \geq \frac{H-2\mu}{H} - \frac{1}{4R}$,
\begin{equation}
\E F(\hat{x}) - F^* \geq \frac{\mu c^2}{4}\exp\prn*{-\frac{6\mu R}{H}}
\end{equation}

On the other hand, if $\prn*{1-\mu\eta}^{K} \leq \frac{H-2\mu}{H} - \frac{1}{4R}$, then
\begin{align}
\E F(\hat{x}) - F^* 
&\geq \frac{\sdiff^2(H+2\mu)}{32\mu^2}\prn*{\frac{H-2\mu}{H} - \prn*{1-\mu\eta}^{K}}^2 \\
&\geq \frac{\sdiff^2(H+2\mu)}{32\mu^2}\prn*{\frac{1}{4R}}^2 \\
&\geq\frac{H\sdiff^2}{512\mu^2R^2}
\end{align}

\paragraph{Combination:}
Combining these three cases, we conclude that
\begin{align}
\E F(\hat{x}) - F^* 
&\geq \min\crl*{\frac{\mu c^2}{4}\exp\prn*{-\frac{6\mu R}{H}},\, \frac{H\sdiff^2}{512\mu^2R^2}}
\end{align}

In the strongly convex case, we recall that $F(0) - F(x^*) = \mu c^2$, so we choose $\mu = \lambda$ and $c^2 = \frac{\Delta}{\lambda}$ so that the objective satisfies the strongly convex assumptions. Now, the lower bound reads (for a universal constant $\beta$)
\begin{align}
\E F(\hat{x}) - F^* 
&\geq \beta\cdot\min\crl*{\Delta\exp\prn*{-\frac{6\lambda R}{H}},\, \frac{H\sdiff^2}{512\lambda^2R^2}}
\end{align}

In the convex case, we recall that $\nrm{x^*}^2 \leq 2c^2$, so we choose $c^2 = \frac{B}{2}$ so that the convex assumptions are satisfied. We now have two options, if $R \leq \frac{H^2B^2}{\sdiff^2}$, then we pick $\mu = \prn*{\frac{H\sdiff^2}{B^2R^2}}^{1/3}$ so that the lower bound reads
\begin{align}
\E F(\hat{x}) - F^* 
&\geq \beta\cdot \frac{\prn*{H\sdiff^2B^4}^{1/3}}{R^{2/3}}\exp\prn*{-\frac{6\sdiff^{2/3}R^{1/3}}{H^{2/3}B^{2/3}}} \\
&\geq \beta\cdot \frac{\prn*{H\sdiff^2B^4}^{1/3}}{R^{2/3}}\exp\prn*{-6} \\
&\geq \beta'\cdot \frac{\prn*{H\sdiff^2B^4}^{1/3}}{R^{2/3}}
\end{align}
On the other hand, if $R \geq \frac{H^2B^2}{\sdiff^2}$, then we pick $\mu = \frac{H}{6R}$ so the lower bound reads
\begin{align}
\E F(\hat{x}) - F^* 
\geq \beta\cdot\min\crl*{\frac{HB^2}{R},\, \frac{\sdiff^2}{H}} 
= \beta\cdot\frac{HB^2}{R}
\end{align}
Consequently, 
\begin{align}
\E F(\hat{x}) - F^* 
\geq \beta\cdot\min\crl*{\frac{HB^2}{R},\, \frac{\prn*{H\sdiff^2 B^4}^{1/3}}{R^{2/3}}}
\end{align}
Combining these with the stochastic terms completes the proof.
\end{proof}

\subsection{Proof of \pref{thm:local-sgd-heterogeneous-uppper-bound}}\label{app:local-sgd-heterogeneous-upper-bound}

We prove the theorem with the help of several technical lemmas.
\begin{lemma}\label{lem:zeta-everywhere-progress}
For any stepsize $\eta_t \leq \frac{1}{4H}$
\[
\E\brk*{F(\bx_t) - F^*} \leq \prn*{\frac{1}{\eta_t} - \lambda}\E\nrm*{\bx_t - x^*}^2 - \frac{1}{\eta_t}\E\nrm*{\bx_{t+1} - x^*}^2 + \frac{\eta_t\sigma^2}{M} + \frac{2H}{M}\sum_{m=1}^M\E\nrm*{\bx_t - x_t^m}^2
\]
\end{lemma}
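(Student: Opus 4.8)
This lemma is the heterogeneous analogue of \pref{lem:local-sgd-homogeneous-upper-bound-ourlemma31}, so the plan is to mimic that proof, tracking the one place where heterogeneity enters. Recall the notation: $x_t^m$ is the $t$-th iterate on machine $m$, $\bx_t = \frac{1}{M}\sum_m x_t^m$, and now $g(x_t^m;z_t^m)$ is an unbiased estimate of $\nabla F_m(x_t^m)$ (not $\nabla F(x_t^m)$) with variance at most $\sigma^2$, while $\bg_t = \frac{1}{M}\sum_m \nabla F_m(x_t^m)$. The key algebraic identity that makes the heterogeneous case work exactly like the homogeneous one is that averaging the local gradients gives an estimate of $\nabla F$ at the average-of-iterates point up to a ``client drift'' correction: $\E[\frac{1}{M}\sum_m g(x_t^m;z_t^m)] = \frac{1}{M}\sum_m \nabla F_m(x_t^m) = \bg_t$, and crucially $\frac{1}{M}\sum_m \nabla F_m(x^*)$ need not vanish --- but we never need it to, because we will only use $\nabla F(x^*) = 0$, which does hold.

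First I would expand $\E\nrm{\bx_{t+1}-x^*}^2 = \E\nrm{\bx_t - \eta_t g_t - x^*}^2$ exactly as in \eqref{eq:lemma31-initial-bound}, using $\E[g_t - \bg_t]=0$ and independence of the per-machine noise to get the $\eta_t^2\sigma^2/M$ term, landing at
\[
\E\nrm{\bx_{t+1}-x^*}^2 = \E\nrm{\bx_t-x^*}^2 + \eta_t^2\E\nrm{\bg_t}^2 + \frac{\eta_t^2\sigma^2}{M} - \frac{2\eta_t}{M}\sum_{m=1}^M\brk*{\E\inner{x_t^m-x^*}{\nabla F_m(x_t^m)} + \E\inner{\bx_t-x_t^m}{\nabla F_m(x_t^m)}}.
\]
Now the bound $\nrm{\bg_t}^2 \le \frac{1}{M}\sum_m\nrm{\nabla F_m(x_t^m)}^2$ together with $H$-smoothness and non-negativity-type reasoning: since each $F_m$ is $H$-smooth, $\nrm{\nabla F_m(x_t^m)}^2 \le 2H(F_m(x_t^m) - F_m(x^*_m))$ where $x^*_m$ minimizes $F_m$ --- but that introduces $F_m(x^*_m)$ which is not controlled. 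The cleaner route, matching the homogeneous proof exactly, is to use $\lambda$-strong convexity and convexity of $F$ at the level of the \emph{averages}. Specifically, $\frac{1}{M}\sum_m \inner{x_t^m-x^*}{\nabla F_m(x_t^m)} \ge \frac{1}{M}\sum_m\brk*{F_m(x_t^m) - F_m(x^*) + \frac{\lambda_m}{2}\nrm{x_t^m-x^*}^2}$ where we may take each $F_m$ merely convex (so $\lambda_m=0$) and push the strong convexity onto $F$ only --- actually, to keep it simple I would assume, as is standard and as the statement of \pref{thm:local-sgd-heterogeneous-uppper-bound} only requires convexity/strong-convexity of $F$, that we just use $\frac{1}{M}\sum_m F_m(x_t^m) \ge \frac{1}{M}\sum_m F(x^*) + \ldots$; the honest handling is to note $\sum_m \nabla F_m(x^*) = M\nabla F(x^*) = 0$ so the cross terms involving $x^*$ telescope correctly. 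This is the one genuinely delicate bookkeeping step. The remaining pieces --- bounding $-2\eta_t\inner{\bx_t-x_t^m}{\nabla F_m(x_t^m)}$ by $\eta_t\gamma\nrm{\bx_t-x_t^m}^2 + \frac{\eta_t}{\gamma}\nrm{\nabla F_m(x_t^m)}^2$ with $\gamma$ chosen so the smoothness-generated function-value terms are absorbed, then invoking convexity of $F$ and $\eta_t\le\frac{1}{4H}$ --- go through verbatim as in the proof of \pref{lem:local-sgd-homogeneous-upper-bound-ourlemma31}, and the client-drift term comes out as $\frac{2H}{M}\sum_m\E\nrm{\bx_t-x_t^m}^2$ (a factor of $2H$ rather than the $4H$ in the homogeneous lemma, reflecting the slightly different constant; I would just carry whichever constant the computation produces and state it as written).

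The main obstacle is therefore purely notational: in the homogeneous case every $\nabla F(x_t^m)$ could be compared to $\nabla F$ at other points using a single function, whereas here one must be careful that the ``benchmark'' function-value terms $F_m(x_t^m) - F_m(x^*)$ reassemble into $\frac{1}{M}\sum_m(F_m(x_t^m) - F_m(x^*))$ and then, via convexity of each $F_m$ (or of $F$ applied after Jensen), into something lower-bounded by $F(\bx_t) - F^*$ — and that the $x^*$-dependent inner-product terms cancel because they sum to a multiple of $\nabla F(x^*)=0$. Once that is checked, rearranging yields exactly the claimed inequality. I would write the proof as ``This proof is identical to that of \pref{lem:local-sgd-homogeneous-upper-bound-ourlemma31} except that $\nabla F(x_t^m)$ is replaced by $\nabla F_m(x_t^m)$ throughout; the only point requiring care is that $\frac{1}{M}\sum_m\nabla F_m(x^*) = \nabla F(x^*) = 0$, so all terms involving $x^*$ aggregate as in the homogeneous case,'' and then reproduce the four displayed inequalities with that substitution.
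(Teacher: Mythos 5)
You correctly identify the central obstacle---that $\nrm*{\nabla F_m(x_t^m)}^2 \leq 2H\prn*{F_m(x_t^m)-\min_x F_m(x)}$ is useless here because $\min_x F_m(x)$ bears no relation to $F_m(x^*)$---but your proposal then asserts that the remaining steps of \pref{lem:local-sgd-homogeneous-upper-bound-ourlemma31} ``go through verbatim,'' and that is exactly where the argument breaks. The homogeneous proof invokes the gradient-norm-to-suboptimality bound in two places: to control $\eta_t^2\E\nrm*{\bar{g}_t}^2$, and, after Young's inequality with $\gamma=2H$, to control $-2\eta_t\inner{\bar{x}_t - x_t^m}{\nabla F(x_t^m)}$; in both places the resulting function-value terms are absorbed by the negative term $-\tfrac{2\eta_t}{M}\sum_m\brk*{F(x_t^m)-F^*}$. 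With heterogeneous components neither absorption is available: after Young's inequality you are left with $\tfrac{1}{M}\sum_m\nrm*{\nabla F_m(x_t^m)}^2$, which even at $x_t^m=\bar{x}_t$ equals $\nrm*{\nabla F(\bar{x}_t)}^2$ plus the heterogeneity variance $\tfrac{1}{M}\sum_m\nrm*{\nabla F_m(\bar{x}_t)-\nabla F(\bar{x}_t)}^2$, and so cannot be bounded without introducing a $\sdiff^2$ or $\bar{\zeta}^2$ term---but no such term appears in the lemma. Your appeal to ``strong convexity and convexity of $F$ at the level of the averages'' and to $\tfrac1M\sum_m\nabla F_m(x^*)=0$ concerns only the $\inner{x_t^m-x^*}{\nabla F_m(x_t^m)}$ piece (where in fact the relevant identity is the function-value one, $\tfrac1M\sum_m F_m(x^*)=F^*$, not the gradient one); it says nothing about how $\E\nrm*{\bar{g}_t}^2$ or the drift inner product are to be handled, which is where the new idea is required.

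The paper's proof replaces both problematic steps with decompositions anchored at $\bar{x}_t$. For the squared norm it writes $\nabla F_m(x_t^m)=\bigl(\nabla F_m(x_t^m)-\nabla F_m(\bar{x}_t)\bigr)+\nabla F_m(\bar{x}_t)$, bounds the first piece by $H^2\nrm*{x_t^m-\bar{x}_t}^2$ using smoothness of $F_m$ (this is harmless, since it feeds the drift term already present in the claim), and notes that the second piece averages to $\nabla F(\bar{x}_t)-\nabla F(x^*)$, which is bounded by $2H\prn*{F(\bar{x}_t)-F^*}$ via co-coercivity of $F$ (\pref{lem:local-sgd-homogeneous-upper-bound-co-coercivity}). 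For the inner product it avoids Young's inequality entirely: it splits $\bar{x}_t-x^* = (x_t^m-x^*)+(\bar{x}_t-x_t^m)$ and uses the smoothness bound in function-value form, $\inner{x_t^m-\bar{x}_t}{\nabla F_m(x_t^m)} \leq F_m(x_t^m)-F_m(\bar{x}_t)+\tfrac{H}{2}\nrm*{x_t^m-\bar{x}_t}^2$, together with (strong) convexity at $x^*$, $\inner{x_t^m-x^*}{\nabla F_m(x_t^m)} \geq F_m(x_t^m)-F_m(x^*)+\tfrac{\lambda}{2}\nrm*{x_t^m-x^*}^2$, so that the uncontrollable $F_m(x_t^m)$ terms cancel exactly and only $F(\bar{x}_t)-F^*$, the drift, and the strong-convexity term survive. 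These two substitutions---not a transcription of the homogeneous computation, whose constants ($4H$ versus $2H$) do not even match---are what produce the stated bound; as written, your sketch does not contain them.
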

\begin{proof}
This lemma and its proof are quite similar \cite[Lemma 8][]{koloskova2020unified}. Let $\bx_{t+1} = \frac{1}{M}\sum_{m=1}^Mx_t^m$ be the average of the machines' local iterates at time $t$. Then,
\begin{align}
\E\nrm*{\bx_{t+1} - x^*}^2
&= \E\nrm*{\bx_t - \frac{\eta_t}{M}\sum_{m=1}^M\nabla F_m(x_t^m) - x^*}^2 + \eta_t^2\E\nrm*{\frac{1}{M}\sum_{m=1}^M\nabla f(x_t^m;z_t^m) - \nabla F_m(x_t^m)}^2 \label{eq:koloskova8-1} \\
&\leq \E\nrm*{\bx_t - \frac{\eta_t}{M}\sum_{m=1}^M\nabla F_m(x_t^m) - x^*}^2 + \frac{\eta_t^2\sigma^2}{M}
\end{align}
Focusing on the first term of \eqref{eq:koloskova8-1}:
\begin{align}
\E&\nrm*{\bx_t - \frac{\eta_t}{M}\sum_{m=1}^M\nabla F_m(x_t^m) - x^*}^2\nonumber\\
&= \E\nrm*{\bx_t - x^*}^2 + \eta_t^2\E\nrm*{\frac{1}{M}\sum_{m=1}^M\nabla F_m(x_t^m)}^2 - \frac{2\eta_t}{M}\sum_{m=1}^M\E\inner{\bx_t - x^*}{\nabla F_m(x_t^m)} \label{eq:koloskova8-2}
\end{align}
We can bound the second term of \eqref{eq:koloskova8-2} with:
\begin{align}
\eta_t^2&\E\nrm*{\frac{1}{M}\sum_{m=1}^M\nabla F_m(x_t^m)}^2 \nonumber\\
&\leq 2\eta_t^2\E\nrm*{\frac{1}{M}\sum_{m=1}^M\nabla F_m(x_t^m) - \nabla F_m(\bx_t)}^2 + 2\eta_t^2\E\nrm*{\frac{1}{M}\sum_{m=1}^M\nabla F_m(\bx_t) - \nabla F_m(x^*)}^2 \\
&\leq \frac{2\eta_t^2}{M}\sum_{m=1}^M\E\nrm*{\nabla F_m(x_t^m) - \nabla F_m(\bx_t)}^2 + 2\eta_t^2\E\nrm*{\nabla F(\bx_t) - \nabla F(x^*)}^2 \\
&\leq \frac{2H^2\eta_t^2}{M}\sum_{m=1}^M\E\nrm*{x_t^m - \bx_t}^2 + 4H\eta_t^2\E\brk*{F(\bx_t) - F(x^*)}
\end{align}
For the third term of \eqref{eq:koloskova8-2}:
\begin{align}
-&\frac{2\eta_t}{M}\sum_{m=1}^M\E\inner{\bx_t - x^*}{\nabla F_m(x_t^m)} \nonumber\\
&= -\frac{2\eta_t}{M}\sum_{m=1}^M\E\inner{x_t^m - x^*}{\nabla F_m(x_t^m)} + \frac{2\eta_t}{M}\sum_{m=1}^M\E\inner{x_t^m - \bx_t}{\nabla F_m(x_t^m)} \\
&\leq -\frac{2\eta_t}{M}\sum_{m=1}^M\E\brk*{F_m(x_t^m) - F_m(x^*) + \frac{\lambda}{2}\nrm*{x_t^m - x^*}^2} \nonumber\\
&\qquad+ \frac{2\eta_t}{M}\sum_{m=1}^M\E\brk*{F_m(x_t^m) - F_m(\bx_t) + \frac{H}{2}\nrm*{x_t^m - \bx_t}^2} \\
&\leq -2\eta_t\E\brk*{F(\bx_t) - F(x^*) + \frac{\lambda}{2}\nrm*{\bx_t - x^*}^2} +  \frac{H\eta_t}{M}\sum_{m=1}^M\nrm*{x_t^m - \bx_t}^2
\end{align}
Combining all these results back into \eqref{eq:koloskova8-1}, we have
\begin{align}
\E\nrm*{\bx_{t+1} - x^*}^2
&\leq \prn*{1-\lambda\eta_t}\E\nrm*{\bx_t - x^*}^2 + \frac{H\eta_t + 2H^2\eta_t^2}{M} \sum_{m=1}^M \E\nrm*{x_t^m - \bx_t}^2 \nonumber\\
&\quad+ (4H\eta_t^2 - 2\eta_t)\E\brk*{F(\bx_t) - F(x^*)} + \frac{\eta_t^2\sigma^2}{M} \\
&\leq \prn*{1-\lambda\eta_t}\E\nrm*{\bx_t - x^*}^2 + \frac{2H\eta_t}{M} \sum_{m=1}^M \E\nrm*{x_t^m - \bx_t}^2 \nonumber\\
&\quad - \eta_t\E\brk*{F(\bx_t) - F(x^*)} + \frac{\eta_t^2\sigma^2}{M}
\end{align}
where for the final line we used that $\eta_t \leq \frac{1}{4H}$. Rearranging completes the proof.
\end{proof}

\begin{lemma}\label{lem:zeta-everywhere-divergence}
If $\sup_{x,m}\nrm*{\nabla F_m(x) - \nabla F(x)}^2 \leq \bar{\zeta}^2$, then for any fixed stepsize $\eta$
\[
\frac{1}{M}\sum_{m=1}^M \E\nrm*{x_t^m - \bx_t}^2 \leq 6K\sigma^2\eta^2 + 6K^2\eta^2\bar{\zeta}^2
\]
Similarly, the decreasing stepsize $\eta_t = \frac{2}{\lambda(a + t + 1)}$ for any $a$
\[
\frac{1}{M}\sum_{m=1}^M \E\nrm*{x_t^m - \bx_t}^2 \leq 6K\sigma^2\eta_{t-1}^2 + 6K^2\bar{\zeta}^2\eta_{t-1}^2
\]
\end{lemma}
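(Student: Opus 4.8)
The quantity to control is the per-round consensus error $\Xi_t := \frac1M\sum_{m=1}^M\E\nrm{x_t^m-\bxt}^2$. Since the iterates are reset to a common point at each communication, it suffices to fix a round, let $t_0$ be its first index (so $x_{t_0}^m=\bx_{t_0}$ for all $m$ and $0\le t-t_0\le K-1$), and bound $\Xi_t$ for $t$ in that round. Inside the round the dynamics are plain SGD, $x_{t+1}^m=x_t^m-\eta\,g(x_t^m;z_t^m)$ and $\bx_{t+1}=\bxt-\tfrac{\eta}{M}\sum_m g(x_t^m;z_t^m)$, so $x_{t+1}^m-\bx_{t+1}=(x_t^m-\bxt)-\eta\prn*{g(x_t^m;z_t^m)-\tfrac1M\sum_{m'}g(x_t^{m'};z_t^{m'})}$. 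The plan is to expand the square, take conditional expectations to get a one-step recursion for $\Xi_t$, and unroll it over the at most $K$ steps of the round, handling the noise-driven and heterogeneity-driven contributions on two different levels so as to produce the $K$ and $K^2$ scalings respectively.

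\textbf{The one-step recursion.} Write $g(x_t^m;z_t^m)=\nabla F_m(x_t^m)+\xi_t^m$ with $\E\brk{\xi_t^m\mid\mathcal F_t}=0$ and $\E\brk{\nrm{\xi_t^m}^2\mid\mathcal F_t}\le\sigma^2$, and take $\frac1M\sum_m\E\brk{\cdot\mid\mathcal F_t}$ of $\nrm{x_{t+1}^m-\bx_{t+1}}^2$. The cross term contributes $-2\eta\cdot\frac1M\sum_m\inner{x_t^m-\bxt}{\nabla F_m(x_t^m)-\tfrac1M\sum_{m'}\nabla F_{m'}(x_t^{m'})}$; because $\frac1M\sum_m(x_t^m-\bxt)=0$ one may freely replace the second slot by $\nabla F_m(x_t^m)-\nabla F_m(\bxt)+\nabla F_m(\bxt)-\nabla F(\bxt)$, and these are controlled by $H\nrm{x_t^m-\bxt}$ ($H$-smoothness) and $\bar\zeta$ (the hypothesis) respectively --- this is precisely why no $\nrm{\nabla F(\bxt)}^2$ term survives. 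The quadratic term, after discarding the noise--gradient cross term (zero in conditional expectation), is $\eta^2\,\frac1M\sum_m\E\brk{\nrm{g_t^m-\bar g_t}^2\mid\mathcal F_t}\le\eta^2\sigma^2+2\eta^2H^2\Xi_t^{\mathrm{cond}}+2\eta^2\bar\zeta^2$, where one uses the variance-minimizing property of the mean to bound $\frac1M\sum_m\nrm{\nabla F_m(x_t^m)-\bar\nabla}^2$ by $\frac1M\sum_m\nrm{\nabla F_m(x_t^m)-\nabla F(\bxt)}^2$ and then smoothness and $\bar\zeta$ as above, together with independence of the $\xi_t^m$ for the noise part. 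Combining and taking full expectations (Jensen to pass $\E\sqrt{\Xi_t^{\mathrm{cond}}}\le\sqrt{\Xi_t}$) yields
\[
\Xi_{t+1}\le(1+c_1\eta H)\,\Xi_t+2\eta\bar\zeta\sqrt{\Xi_t}+\eta^2\sigma^2+2\eta^2\bar\zeta^2 .
\]

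\textbf{Unrolling.} First I extract a crude first-moment estimate: rewrite the recursion as $\Xi_{t+1}\le\prn*{\sqrt{1+c_1\eta H}\sqrt{\Xi_t}+\eta\bar\zeta}^2+3\eta^2\bar\zeta^2+\eta^2\sigma^2$, so $\sqrt{\Xi_{t+1}}\le\sqrt{1+c_1\eta H}\,\sqrt{\Xi_t}+\eta\sigma+c_2\eta\bar\zeta$; since $\Xi_{t_0}=0$ and there are at most $K$ steps, and since in the stepsize range in which this lemma is applied $K\eta H$ is bounded by an absolute constant (so $(1+c_1\eta H)^K=O(1)$), this gives $\sqrt{\Xi_t}\le c_3K\eta(\sigma+\bar\zeta)$. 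Substituting this \emph{back} into the $2\eta\bar\zeta\sqrt{\Xi_t}$ term linearizes the recursion to $\Xi_{t+1}\le(1+c_1\eta H)\Xi_t+\eta^2\sigma^2+c_4K\eta^2\bar\zeta^2$ (absorbing $\bar\zeta\sigma\le\tfrac12(\bar\zeta^2+\sigma^2)$); unrolling this geometric sum over the $\le K$ steps, again using $(1+c_1\eta H)^K=O(1)$, yields $\Xi_t\le c_5K\eta^2\sigma^2+c_5K^2\eta^2\bar\zeta^2$, and a careful accounting of the constants pins this down at $6K\sigma^2\eta^2+6K^2\bar\zeta^2\eta^2$.

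\textbf{Decreasing stepsizes and the main obstacle.} For $\eta_t=\frac{2}{\lambda(a+t+1)}$ the identical argument applies once one notes that all stepsizes used during the round containing index $t$ are decreasing and lie within a constant factor of $\eta_{t-1}$ (their ratio is at most $\frac{a+t}{a+t-K}$), so each $\eta_s$ may be replaced by $\eta_{t-1}$ at the cost of absorbing constants, giving the second displayed bound. The main obstacle is the bookkeeping that keeps the two powers of $K$ apart: the noise contribution must be handled at the level of second moments, where variances add and produce $K\eta^2\sigma^2$, whereas the heterogeneity contribution must be pushed through the first-moment ($\sqrt{\Xi_t}$) recursion, where the $O(\bar\zeta)$ drift accumulates linearly over the round to produce $K^2\eta^2\bar\zeta^2$ --- and one must simultaneously verify that the amplification factors $(1+c_1\eta H)^K$ stay $O(1)$ so that the final constants can be held at $6$.
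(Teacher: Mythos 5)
Your overall plan (a one-step recursion for the consensus error, then unrolling over the at most $K$ steps of a round) is in the right spirit, but two steps break down as written. First, the bootstrap/linearization is lossy in a way that the target bound does not tolerate. Your crude estimate $\sqrt{\Xi_t}\le c_3K\eta(\sigma+\bar\zeta)$ accumulates the noise linearly in $K$ (a triangle-inequality estimate), whereas the noise only contributes $\sqrt{K}\eta\sigma$ at the level of standard deviations; substituting it into $2\eta\bar\zeta\sqrt{\Xi_t}$ therefore produces a per-step term of order $K\eta^2\bar\zeta\sigma$, not the claimed $\eta^2\sigma^2+c_4K\eta^2\bar\zeta^2$. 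Summed over the round this is $\asymp K^2\eta^2\bar\zeta\sigma$, and no choice of weights in AM--GM absorbs it into $C(K\eta^2\sigma^2+K^2\eta^2\bar\zeta^2)$: taking $\sigma^2\asymp K\bar\zeta^2$ the ratio of the mixed term to the target grows like $\sqrt{K}$. So your route, as executed, can only yield $O(K\eta^2\sigma^2+K^3\eta^2\bar\zeta^2)$ or $O(K^2\eta^2(\sigma^2+\bar\zeta^2))$, which is strictly weaker than the lemma. The fix is to dispense with the bootstrap entirely and split the cross term directly with Young's inequality at parameter $\asymp K$, e.g.\ $2\eta\bar\zeta\sqrt{\Xi_t}\le \tfrac{1}{K}\Xi_t+K\eta^2\bar\zeta^2$, so that the $(1+1/K)^{K}$ amplification is bounded by $e$; this is exactly the role of the choice $\gamma=K-1$ in the paper's proof, which works with pairwise differences $\E\nrm{x_t^m-x_t^n}^2$ (via Jensen) rather than deviations from the mean.

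Second, your recursion expands by $(1+c_1\eta H)$ per step because you bound the cross term $\inner{x_t^m-\bx_t}{\nabla F_m(x_t^m)-\nabla F_m(\bx_t)}$ by smoothness in absolute value, and you then need $K\eta H=O(1)$ to control $(1+c_1\eta H)^K$. That restriction is not available: the lemma is invoked in \pref{thm:local-sgd-heterogeneous-uppper-bound} with stepsizes as large as $\eta=\tfrac{1}{4H}$, so $K\eta H$ can be $\Theta(K)$. The paper avoids any such blow-up by using the nonexpansiveness of the exact gradient step (the contraction property \pref{lem:local-sgd-homogeneous-upper-bound-contraction-map}, valid whenever $\eta\le 1/H$), so the deterministic part of the pairwise recursion contracts by $(1-\lambda\eta)$ rather than expanding; in your centered formulation you would instead need to exploit convexity of each $F_m$ to give that cross term a favorable sign. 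The same issue infects your decreasing-stepsize argument: the claim that all stepsizes in a round are within a constant factor of $\eta_{t-1}$ is false when $a+t\lesssim K$ (the ratio can be of order $K$); the paper instead retains the factors $\prod_j(1-\lambda\eta_j)=\prod_j\tfrac{a+j-1}{a+j+1}$ and lets them telescope against $\eta_i^2$ to yield $\eta_i^2\prod_{j>i}(1-\lambda\eta_j)\le\eta_{t-1}^2$, which is precisely what is lost once the contraction factors are replaced by expansion factors.
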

\begin{proof}
By Jensen's inequality
\begin{equation}
\E\nrm*{x_t^m - \bx_t}^2 \leq \frac{1}{M}\sum_{n=1}^M\E\nrm*{x_t^m - x_t^n}^2
\end{equation}
Therefore, it suffices to bound $\E\nrm*{x_t^m - x_t^n}^2$, which we do now:
\begin{align}
&\E\nrm*{x_t^m - x_t^n}^2 \nonumber\\
&\leq \E\left\|x_{t-1}^m - x_{t-1}^n - \eta_{t-1}\prn*{\nabla F(x_{t-1}^m) - \nabla F(x_{t-1}^n)} \right.\nonumber\\
&+ \left.\eta_{t-1}\prn*{\nabla F(x_{t-1}^m) - \nabla F_m(x_{t-1}^m) - \nabla F(x_{t-1}^n) + \nabla F_n(x_{t-1}^n)}\right\|^2 + 2\eta_{t-1}^2\sigma^2 \\
&\leq \inf_{\gamma > 0} \prn*{1 + \frac{1}{\gamma}}\E\nrm*{x_{t-1}^m - x_{t-1}^n - \eta_{t-1}\prn*{\nabla F(x_{t-1}^m) - \nabla F(x_{t-1}^n)}}^2 \nonumber\\
&\qquad+ \prn*{1+\gamma}\eta_{t-1}^2\E\nrm*{\nabla F(x_{t-1}^m) - \nabla F_m(x_{t-1}^m) - \nabla F(x_{t-1}^n) + \nabla F_n(x_{t-1}^n)}^2 + 2\eta_{t-1}^2\sigma^2 \\
&\leq  \inf_{\gamma > 0} \prn*{1 + \frac{1}{\gamma}}\prn*{1 - \lambda\eta_{t-1}}\E\nrm*{x_{t-1}^m - x_{t-1}^n}^2 + 2\eta_{t-1}^2\sigma^2 \nonumber\\
&+ \prn*{1+\gamma}\eta_{t-1}^2\E\nrm*{\nabla F(x_{t-1}^m) - \nabla F_m(x_{t-1}^m)}^2 \nonumber\\
&+ \prn*{1+\gamma}\eta_{t-1}^2\E\nrm*{\nabla F(x_{t-1}^n) - \nabla F_n(x_{t-1}^n)}^2 \nonumber\\
&- 2\prn*{1+\gamma}\eta_{t-1}^2\E\inner{\nabla F(x_{t-1}^m) - \nabla F_m(x_{t-1}^m)}{\nabla F(x_{t-1}^n) - \nabla F_n(x_{t-1}^n)} 
\end{align}
For the third inequality we used \pref{lem:local-sgd-homogeneous-upper-bound-co-coercivity}. Therefore,
\begin{align}
\frac{1}{M^2}&\sum_{m=1}^M\sum_{n=1}^M \E\nrm*{x_t^m - x_t^n}^2 \nonumber\\
&\leq \frac{1}{M^2}\sum_{m=1}^M\inf_{\gamma > 0} \prn*{1 + \frac{1}{\gamma}}\prn*{1 - \lambda\eta_{t-1}}\E\nrm*{x_{t-1}^m - x_{t-1}^n}^2 + 2\eta_{t-1}^2\sigma^2 + 2\prn*{1+\gamma}\eta_{t-1}^2\bar{\zeta}^2
\end{align}
We will unroll this recurrence, using that $x_{t_0}^m = x_{t_0}^n$ for all $m,n$ where $t_0$ is the most recent time that the iterates were synchronized, so $t - t_0 \leq K-1$. 
Taking $\gamma = K-1$, we have
\begin{align}
\frac{1}{M^2}\sum_{m=1}^M\sum_{n=1}^M \E\nrm*{x_t^m - x_t^n}^2
&= \sum_{i=t_0}^{t-1} \prn*{2\eta_i^2\sigma^2 + 2(1+\gamma)\eta_i^2\bar{\zeta}^2}\prod_{j=i+1}^{t-1}\prn*{1+\frac{1}{\gamma}}\prn*{1-\lambda\eta_j}\\
&\leq \sum_{i=t_0}^{t-1} 2\prn*{\eta_i^2\sigma^2 + K\eta_i^2\bar{\zeta}^2}\prod_{j=i+1}^{t-1}\prn*{1+\frac{1}{K-1}}\prn*{1-\lambda\eta_j} \\
&\leq \sum_{i=t_0}^{t-1} 2\prn*{\eta_i^2\sigma^2 + K\eta_i^2\bar{\zeta}^2}\prn*{1+\frac{1}{K-1}}^{K-1} \prod_{j=i+1}^{t-1}\prn*{1-\lambda\eta_j} \\
&\leq 6\prn*{\sigma^2 + K\bar{\zeta}^2}\sum_{i=t_0}^{t-1} \eta_i^2\prod_{j=i+1}^{t-1}\prn*{1-\lambda\eta_j}
\end{align}
For a constant stepsize $\eta$, 
\begin{equation}
\frac{1}{M^2}\sum_{m=1}^M\sum_{n=1}^M \E\nrm*{x_t^m - x_t^n}^2
\leq 6\prn*{\sigma^2 + K\bar{\zeta}^2}\sum_{i=t_0}^{t-1} \eta^2 
\leq 6K\prn*{\sigma^2 + K\bar{\zeta}^2}\eta^2
\end{equation}
For decreasing stepsize $\eta_t = \frac{2}{\lambda(a+t+1)}$
\begin{align}
\frac{1}{M^2}\sum_{m=1}^M\sum_{n=1}^M \E\nrm*{x_t^m - x_t^n}^2
&\leq 6\prn*{\sigma^2 + K\bar{\zeta}^2}\sum_{i=t_0}^{t-1} \eta_i^2\prod_{j=i+1}^{t-1}\frac{a+j-1}{a+j+1} \\
&= 6\prn*{\sigma^2 + K\bar{\zeta}^2}\sum_{i=t_0}^{t-1} \eta_i^2\frac{(a+i)(a+i+1)}{(a+t)(a+t+1)} \\
&= 6\prn*{\sigma^2 + K\bar{\zeta}^2}\sum_{i=t_0}^{t-1} \eta_i^2\frac{\eta_{t-1}\eta_t}{\eta_{i-1}\eta_i} \\
&\leq 6\prn*{\sigma^2 + K\bar{\zeta}^2}\sum_{i=t_0}^{t-1} \eta_i^2\frac{\eta_{t-1}^2}{\eta_i^2} \\
&\leq 6K\prn*{\sigma^2 + K\bar{\zeta}^2}\eta_{t-1}^2
\end{align}
This completes the proof.
\end{proof}

\localsgdheterogeneousupperbound*
\begin{proof}
By \pref{lem:zeta-everywhere-progress}, for any $\eta_t \leq \frac{1}{4H}$
\begin{equation}
\E\brk*{F(\bx_t) - F^*} \leq \prn*{\frac{1}{\eta_t} - \lambda}\E\nrm*{\bx_t - x^*}^2 - \frac{1}{\eta_t}\E\nrm*{\bx_{t+1} - x^*}^2 + \frac{\eta_t\sigma^2}{M} + \frac{2H}{M}\sum_{m=1}^M\E\nrm*{\bx_t - x_t^m}^2
\end{equation}
By \pref{lem:zeta-everywhere-divergence}, when $\eta_t=\eta$ is constant then
\begin{equation}
\frac{1}{M}\sum_{m=1}^M \E\nrm*{x_t^m - \bx_t}^2 \leq 6K\sigma^2\eta^2 + 6K^2\eta^2\bar{\zeta}^2
\end{equation}
and when $\eta_t = \frac{2}{\lambda(a + t + 1)}$
\begin{equation}
\frac{1}{M}\sum_{m=1}^M \E\nrm*{x_t^m - \bx_t}^2 \leq 6K\sigma^2\eta_{t-1}^2 + 6K^2\bar{\zeta}^2\eta_{t-1}^2
\end{equation}
We now consider the convex and strongly convex cases separately:

\paragraph{Convex Case:}
In the convex case, we use a constant stepsize $\eta$, so 
\begin{align}
\E\brk*{F(\bx_t) - F^*} 
&\leq \prn*{\frac{1}{\eta_t} - \lambda}\E\nrm*{\bx_t - x^*}^2 - \frac{1}{\eta_t}\E\nrm*{\bx_{t+1} - x^*}^2 + \frac{\eta_t\sigma^2}{M} + \frac{2H}{M}\sum_{m=1}^M\E\nrm*{\bx_t - x_t^m}^2 \\
&\leq \frac{1}{\eta}\E\nrm*{\bx_t - x^*}^2 - \frac{1}{\eta}\E\nrm*{\bx_{t+1} - x^*}^2 + \frac{\eta\sigma^2}{M} + 12HK\sigma^2\eta^2 + 12HK^2\eta^2\bar{\zeta}^2
\end{align}
Therefore, by the convexity of $F$
\begin{align}
\E\brk*{F\prn*{\frac{1}{KR}\sum_{t=1}^{KR} \bx_t} - F^*} 
&\leq \frac{1}{KR}\sum_{t=1}^{KR}\E\brk*{F(\bx_t) - F^*} \\
&\leq \frac{B^2}{\eta KR} + \frac{\eta\sigma^2}{M} + 12HK\sigma^2\eta^2 + 12HK^2\eta^2\bar{\zeta}^2
\end{align}
Choosing
\begin{equation}
\eta = \min\crl*{\frac{1}{4H},\, \frac{B\sqrt{M}}{\sigma\sqrt{KR}},\, \prn*{\frac{B^2}{HK^2\sigma^2}}^{1/3},\, \prn*{\frac{B^2}{HK^2\bar{\zeta}^2}}^{1/3}}
\end{equation}
then ensures
\begin{align}
\E\brk*{F\prn*{\frac{1}{KR}\sum_{t=1}^{KR} \bx_t} - F^*} 
&\leq c\cdot\prn*{\frac{HB^2}{KR} + \frac{\prn*{H\bar{\zeta}^2B^4}^{1/3}}{R^{2/3}} + \frac{\prn*{H\sigma^2B^4}^{1/3}}{K^{1/3}R^{2/3}} + \frac{\sigma B}{\sqrt{MKR}}}
\end{align}

\paragraph{Strongly Convex Case:}
Following the approach of \citet{stich2019unified}, we consider three cases:

If $KR \leq \frac{2H}{\lambda}$, then we use a constant stepsize $\eta_t = \eta = \frac{1}{4H}$ and weights $w_t = (1-\lambda\eta)^{-t-1}$.

If $KR > \frac{2H}{\lambda}$ and $t \leq KR/2$, then we take $\eta_t = \eta = \frac{1}{4H}$ and weights $w_t = 0$.

If $KR > \frac{2H}{\lambda}$ and $t > KR/2$, then we take $\eta_t = \frac{2}{\lambda(a + t + 1)}$ for $a = \frac{8H}{\lambda} - \frac{KR}{2} - 1$ so that $\eta_t \leq \frac{1}{4H}$ and we use weights $w_t = a+t$.

From above, during iterations $t$ in which the stepsize is constant, we have the recurrence
\begin{align}
\E\nrm*{\bx_{t+1} - x^*}^2 
&\leq \prn*{1 - \eta\lambda}\E\nrm*{\bx_t - x^*}^2 - \eta\E\brk*{F(\bx_t) - F^*} + \frac{\eta^2\sigma^2}{M} + 12HK\sigma^2\eta^3 + 12HK^2\eta^3\bar{\zeta}^2 \\
&\leq \prn*{1 - \eta\lambda}\E\nrm*{\bx_t - x^*}^2 + \frac{\eta^2\sigma^2}{M} + 12HK\sigma^2\eta^3 + 12HK^2\eta^3\bar{\zeta}^2
\end{align}
and for the steps when the stepsize is decreasing like $\eta_t = \frac{2}{\lambda(a + t + 1)}$ we have
\begin{align}
\E\brk*{F(\bx_t) - F^*} 
&\leq \prn*{\frac{1}{\eta_t} - \lambda}\E\nrm*{\bx_t - x^*}^2 - \frac{1}{\eta_t}\E\nrm*{\bx_{t+1} - x^*}^2 + \frac{\eta_t\sigma^2}{M} + 12HK\sigma^2\eta_{t-1}^2 + 12HK^2\bar{\zeta}^2\eta_{t-1}^2 \\
&\leq \prn*{\frac{1}{\eta_t} - \lambda}\E\nrm*{\bx_t - x^*}^2 - \frac{1}{\eta_t}\E\nrm*{\bx_{t+1} - x^*}^2 + \frac{\eta_t\sigma^2}{M} + 24HK\sigma^2\eta_t^2 + 24HK^2\bar{\zeta}^2\eta_t^2 \label{eq:hetero-local-sgd-eq1}
\end{align}
where we used that
\begin{equation}
\frac{\eta_{t-1}}{\eta_t} = \frac{a+t+1}{a+t} \leq \frac{8}{7} \implies \eta_{t-1}^2 \leq 2\eta_t^2
\end{equation}

First, consider the case that $KR > \frac{2H}{\lambda}$, and consider the first half of the steps when $\eta_t = \eta = \frac{1}{4H}$:
\begin{align}
\E&\nrm*{\bx_{KR/2+1} - x^*}^2 \nonumber\\
&\leq \prn*{1 - \frac{\lambda}{4H}}\E\nrm*{\bx_{KR/2} - x^*}^2 + \frac{\sigma^2}{16H^2M} + \frac{3K\sigma^2}{16H^2} + \frac{3K^2\bar{\zeta}^2}{16H^2} \\
&\leq \E\nrm*{\bx_0 - x^*}^2\prn*{1 - \frac{\lambda}{4H}}^{KR/2} + \prn*{\frac{\sigma^2}{16H^2M} + \frac{3K\sigma^2}{16H^2} + \frac{3K^2\bar{\zeta}^2}{16H^2}}\sum_{t=0}^{KR/2} \prn*{1 - \frac{\lambda}{4H}}^t \\
&\leq \E\nrm*{\bx_0 - x^*}^2\prn*{1 - \frac{\lambda}{4H}}^{KR/2} + \frac{\sigma^2}{4H\lambda M} + \frac{3K\sigma^2}{4H \lambda} + \frac{3K^2\bar{\zeta}^2}{4 H\lambda}\label{eq:hetero-local-sgd-eq2}
\end{align}
Now, we consider the weighted average iterate with $W = \sum_{t=0}^{KR} w_t = \sum_{t=KR/2+1}^{KR}(a+t) \geq \frac{K^2R^2}{4}$, and apply \eqref{eq:hetero-local-sgd-eq1}
\begin{align}
&\E F\prn*{\frac{1}{W}\sum_{t=0}^{KR} w_t \bx_t} - F^* \nonumber\\
&\leq \frac{1}{W}\sum_{t=0}^{KR} w_t \E\brk*{ F(\bx_t) - F^* }\\
&\leq \frac{1}{W}\sum_{t=KR/2+1}^{KR} w_t\brk*{\prn*{\frac{1}{\eta_t} - \lambda}\E\nrm*{\bx_t - x^*}^2 - \frac{1}{\eta_t}\E\nrm*{\bx_{t+1} - x^*}^2 + \frac{\eta_t\sigma^2}{M} + 24HK\sigma^2\eta_t^2 + 24HK^2\bar{\zeta}^2\eta_t^2} \\
&\leq \frac{w_{KR/2+1}\E\nrm*{\bx_{KR+1} - x^*}^2}{\eta_{KR/2+1}W} + \frac{1}{W}\sum_{t=KR/2+1}^{KR} \brk*{\frac{2\sigma^2}{\lambda M} + \frac{96HK\sigma^2}{\lambda^2(a+t+1)} + \frac{96HK^2\bar{\zeta}^2}{\lambda^2(a+t+1)}} \nonumber\\
&\qquad+ \frac{1}{W}\sum_{t=KR/2+2}^{KR} \prn*{\frac{w_t}{\eta_t} - \frac{w_{t-1}}{\eta_{t-1}} - w_t\lambda}\E\nrm*{\bx_t - x^*}^2 \\
&\leq \frac{2\lambda\prn*{a + \frac{KR}{2}+2}^2\E\nrm*{\bx_{KR+1} - x^*}^2}{K^2R^2} + \frac{4\sigma^2}{\lambda MKR} + \prn*{\frac{384H\sigma^2}{\lambda^2KR^2} + \frac{384H\bar{\zeta}^2}{\lambda^2R^2}}\log\prn*{\frac{e\prn*{a + KR + 1}}{\prn*{a + \frac{KR}{2} + 1}}} \\
&\leq \frac{162H^2\E\nrm*{\bx_{KR+1} - x^*}^2}{\lambda K^2R^2} + \frac{4\sigma^2}{\lambda MKR} + \prn*{\frac{384H\sigma^2}{\lambda^2KR^2} + \frac{384H\bar{\zeta}^2}{\lambda^2R^2}}\log\prn*{e + \frac{\lambda KR}{H}}\label{eq:hetero-local-sgd-eq3}
\end{align}
From here, we bound the first term by substituting \eqref{eq:hetero-local-sgd-eq2}:
\begin{align}
&\frac{162H^2\E\nrm*{\bx_{KR+1} - x^*}^2}{\lambda K^2R^2}\nonumber\\
&\leq \frac{162H^2}{\lambda K^2R^2}\prn*{\E\nrm*{\bx_0 - x^*}^2\prn*{1 - \frac{\lambda}{4H}}^{KR/2} + \frac{\sigma^2}{4H\lambda M} + \frac{3K\sigma^2}{4H \lambda} + \frac{3K^2\bar{\zeta}^2}{4 H\lambda}} \\
&\leq \frac{162H^2\E\nrm*{\bx_0 - x^*}^2}{\lambda K^2R^2}\prn*{1 - \frac{\lambda}{4H}}^{KR/2} + \frac{81H\sigma^2}{2\lambda^2 MK^2R^2} + \frac{243 H\sigma^2}{2 \lambda^2 KR^2} + \frac{243H\bar{\zeta}^2}{2\lambda^2R^2} \\
&\leq \frac{81\lambda\E\nrm*{\bx_0 - x^*}^2}{2}\exp\prn*{- \frac{\lambda KR}{8H}} + \frac{81\sigma^2}{4\lambda MKR} + \frac{243 H\sigma^2}{2 \lambda^2 KR^2} + \frac{243H\bar{\zeta}^2}{2\lambda^2R^2} \\
&\leq 81\Delta\exp\prn*{- \frac{\lambda KR}{8H}} + \frac{81\sigma^2}{4\lambda MKR} + \frac{243 H\sigma^2}{2 \lambda^2 KR^2} + \frac{243H\bar{\zeta}^2}{2\lambda^2R^2}
\end{align}
Combining this with \eqref{eq:hetero-local-sgd-eq3} completes the proof in the case that $KR > \frac{2H}{\lambda}$.

Consider now the case that $KR \leq \frac{2H}{\lambda}$. Then, we have $\eta_t = \eta = \frac{1}{4H}$ and $w_t = (1-\eta\lambda)^{-t-1}$ and the recurrence
\begin{align}
\E\brk*{F(\bx_t) - F^*}
&\leq \frac{1}{\eta}\prn*{1 - \eta\lambda}\E\nrm*{\bx_t - x^*}^2 - \frac{1}{\eta}\E\nrm*{\bx_{t+1} - x^*}^2  + \frac{\eta\sigma^2}{M} + 12HK\sigma^2\eta^2 + 12HK^2\eta^2\bar{\zeta}^2 
\end{align}
So, for $W = \sum_{t=0}^{KR}w_t$ we have 
\begin{align}
\E& F\prn*{\frac{1}{W}\sum_{t=0}^{KR} w_t \bx_t} - F^* \nonumber\\
&\leq \frac{1}{W}\sum_{t=0}^{KR} w_t \E\brk*{ F(\bx_t) - F^* }\\
&\leq \frac{1}{W}\sum_{t=0}^{KR} w_t\brk*{\frac{1}{\eta}\prn*{1 - \eta\lambda}\E\nrm*{\bx_t - x^*}^2 - \frac{1}{\eta}\E\nrm*{\bx_{t+1} - x^*}^2  + \frac{\eta\sigma^2}{M} + 12HK\sigma^2\eta^2 + 12HK^2\eta^2\bar{\zeta}^2} \\
&= \frac{1}{W}\sum_{t=0}^{KR} \brk*{\frac{1}{\eta}\prn*{1 - \eta\lambda}^{-t}\E\nrm*{\bx_t - x^*}^2 - \frac{1}{\eta}\prn*{1 - \eta\lambda}^{-(t+1)}\E\nrm*{\bx_{t+1} - x^*}^2} \nonumber\\
&\qquad+ \frac{\eta\sigma^2}{M} + 12HK\sigma^2\eta^2 + 12HK^2\eta^2\bar{\zeta}^2 \\
&\leq \frac{4H\E\nrm*{\bx_0 - x^*}^2}{W} + \frac{\sigma^2}{4HM} + \frac{3K\sigma^2}{4H} + \frac{3K^2\bar{\zeta}^2}{4H}
\end{align}
From here, we recall that $KR \leq \frac{2H}{\lambda}$ so
\begin{equation}
\E F\prn*{\frac{1}{W}\sum_{t=0}^{KR} w_t \bx_t} - F^* 
\leq \frac{4H\E\nrm*{\bx_0 - x^*}^2}{W} + \frac{\sigma^2}{2\lambda MKR} + \frac{3H\sigma^2}{\lambda^2 KR^2} + \frac{3H\bar{\zeta}^2}{\lambda^2R^2}
\end{equation}
Finally, we have
\begin{equation}
W = \sum_{t=0}^{KR}\prn*{1 - \frac{\lambda}{4H}}^{-t-1} \geq \prn*{1 - \frac{\lambda}{4H}}^{-KR-1}
\end{equation}
So, we conclude that when $KR \leq \frac{2H}{\lambda}$
\begin{align}
\E F\prn*{\frac{1}{W}\sum_{t=0}^{KR} w_t \bx_t} - F^* 
&\leq H\E\nrm*{\bx_0 - x^*}^2\prn*{1 - \frac{\lambda}{4H}}^{KR+1} + \frac{\sigma^2}{2\lambda MKR} + \frac{3H\sigma^2}{\lambda^2 KR^2} + \frac{3H\bar{\zeta}^2}{\lambda^2R^2} \\
&\leq \frac{2H\Delta}{\lambda}\exp\prn*{-\frac{\lambda KR}{4H}} + \frac{\sigma^2}{2\lambda MKR} + \frac{3H\sigma^2}{\lambda^2 KR^2} + \frac{3H\bar{\zeta}^2}{\lambda^2R^2}
\end{align}
This completes the proof.
\end{proof}

\subsection{Additional Details for \pref{fig:heterogeneous-local-sgd-experiments}}\label{app:heterogeneous-experiment-details}
The training set of MNIST (60,000 examples) was divided by digit into ten groups of equal size $n \approx 6,000$ (which required discarding some examples from the more common digits). PCA was used to reduce the dimensionality to 100, but no other preprocessing was used.

Then, for each of the 25 combinations ($i$,$j$) for even $i$ and odd $j$, a binary classification ``task'' was created, i.e.~classifying even ($+1$) versus odd ($-1$). These tasks were arbitrarily labelled task $1,2,\dots,25$.

For each $p \in [0.0, 0.2, 0.4, 0.6, 0.8, 1.0]$, machine $m$ was assigned data composed of $p\cdot 2n$ random examples from task $m$, and $(1-p)\cdot 2n$ random examples from a mixture of all the tasks. 

Local and Minibatch SGD were then used to optimize the logistic loss for each of the six described local datasets. The constant stepsize was tuned (from a log-scale grid of 10 points ranging from $e^{-6},\dots,e^{0}$ for Minibatch SGD, and a log-scale grid of 10 points ranging from $e^{-8},\dots,e^{-1}$ for Local SGD) for each value of $p$, $K$, and $R$ individually, and the average loss over four runs is reported for the best stepsize for each point in the plot. That is, each point in the plot represents the best possible performance of the algorithm for that $p$, $K$, and $R$ specifically.

Finally, we computed the value of $\zeta_*^2$ as a function of $p$ by using Newton's method to compute a very accurate estimate of the minimizer, and then explicitly calculating $\zeta_*^2(p)$ at that point.

\section{Proofs from \pref{sec:intermittent-communication-setting}}

Several of the lower bounds in \pref{sec:intermittent-communication-setting} are based on the same framework which we introduce here. For a vector $x$, we define its progress as
\begin{equation}
\prog{\alpha}(x) := \max\crl*{i\,:\, \abs{x_i} > \alpha}
\end{equation}
Our lower bound approach, as explained in \pref{subsec:high-level-lower-bound-approach}, is to show that any intermittent communication algorithm will fail to achieve a high amount of progress, even for randomized algorithms that leave the span of previous stochastic gradient queries. To formalize this, we define
\begin{definition}\label{def:robust-zero-chain-oracle}
A stochastic gradient oracle $g(x;z)$ is an $(\alpha,p,\delta)$-robust-zero-chain if there exists $\mc{Z}_0, \mc{Z}_1$ such that 
\begin{enumerate}
\item $\P(z \in \mc{Z}_0 \cup \mc{Z}_1) \geq 1-\delta$
\item $\P(z\in\mc{Z}_0 | z \in \mc{Z}_0 \cup \mc{Z}_1) \geq 1-p$
\item For all $z \in \mc{Z}_0$ and all $x$, $\prog{0}(g(x;z)) \leq \prog{\alpha}(x)$ and there exist functions $G_1,\dots,G_d$ such that
\[
\prog{\alpha}(x) \leq i \implies g(x;z) = G_i(x_1,x_2,\dots,x_i;z)
\]
\item For all $z \in \mc{Z}_1$ and all $x$, $\prog{0}(g(x;z)) \leq \prog{\alpha}(x) + 1$ and there exist functions $G_1,\dots,G_d$ such that
\[
\prog{\alpha}(x) \leq i \implies g(x;z) = G_i(x_1,x_2,\dots,x_{i+1};z)
\]
\end{enumerate}
\end{definition}
We now show that any algorithm that interacts with a robust zero chain will have a low amount of progress:
\begin{restatable}{lemma}{intermittentcommunicationprogress}\label{lem:intermittent-communication-progress}
Let $g(x;z)$ be an $(\alpha,p,\delta)$-robust-zero-chain, let $U\in\R^{D\times d}$ be a uniformly random orthogonal matrix with $U^\top U = I_{d\times{}d}$ for $D \geq d + \frac{2B^2}{\alpha^2}\log(32MKRd)$, and let $x^m_{k,r}$ be the $k\mathth$ oracle query on the $m\mathth$ machine during the $r\mathth$ round of communication for an intermittent communication algorithm that interacts with the stochastic gradient oracle $g_U(x;z) := U g(U^\top x;z)$. Then if $\max_{m,k,r} \nrm{x^m_{k,r}} \leq B$, the algorithm's output $\hat{x}$ will have progress
\[
\P\prn*{\prog{\alpha}(U^\top \hat{x}) \leq \min\crl*{KR,\ 8KRp + 12R\log M + 12R}} \geq \frac{5}{8} - 2MKR\delta
\]
\end{restatable}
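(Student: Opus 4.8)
The plan is to combine the random-rotation/almost-zero-respecting machinery from \pref{subsec:high-level-lower-bound-approach} with a careful accounting of how progress can advance within and across rounds of communication, exactly in the spirit of the proof of \pref{thm:generic-graph-lower-bound}. The key structural fact I would exploit is that the intermittent communication graph has a very specific shape: within a single round, the $M$ machines proceed independently along paths of length $K$, and information only crosses between machines at the $R$ communication points. So the ``depth'' relevant to a single machine's progress is not $KR$ but rather $K$ per round, while the number of distinct columns of $U$ that a machine can learn about is governed by how much progress was communicated to it at the start of the round plus how many $\mc{Z}_1$-type oracle responses it happens to draw during the round.

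First I would set up the good events. Following the proof of \pref{thm:generic-graph-lower-bound} (and \citet{diakonikolas2019lower}'s disjoint-union trick), for $\alpha$ as given I would define for each vertex $v = v^m_{k,r}$ a good event $G_v$ asserting that $\prog{\alpha}(U^\top q_v)$ does not exceed a suitable depth-like quantity, and a good event $\hat G$ for the output. Using the robust-zero-chain property (\pref{def:robust-zero-chain-oracle}, parts 3--4), conditioned on all ancestors' good events the gradient $g_U(q_v;z)$ is a measurable function of $U_1,\dots,U_{j+1}$ where $j$ is the current progress, so the query rule $\mc{Q}_v$ becomes a function only of the relevant columns of $U$ plus the algorithm's coins and the oracle randomness. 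Then, since $U$ is uniformly random orthogonal and $\nrm{q_v}\le B$, the probability that $\abs{\inner{U_i}{q_v}} > \alpha$ for an $i$ beyond the ``known'' range is at most $2\exp(-(D-d)\alpha^2/(2B^2)) \le 1/(16MKRd)$ by concentration on the sphere; a union bound over at most $d$ columns and all $MKR$ vertices (and the output) absorbs the $5/8$ slack, contributing the $-2MKR\delta$ term from the failures of the robust-zero-chain structure itself (parts 1--2).

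Next I would bound the actual progress. Fix a round $r$ and a machine $m$. At the start of round $r$, machine $m$ receives the iterates from all machines from rounds $< r$; by the induction hypothesis their progress is at most some value $P_{r-1}$. During the $K$ local steps of round $r$, each oracle response is in $\mc{Z}_0$ (progress cannot advance) except with probability $p$ given it lies in $\mc{Z}_0\cup\mc{Z}_1$, in which case progress can advance by at most one. So the progress gained on machine $m$ during round $r$ is stochastically dominated by $1 + \mathrm{Bin}(K,p')$ for $p'$ slightly larger than $p$ — the extra $+1$ because even a single $\mc{Z}_1$ draw, or the leftover probability mass, lets the chain advance once per round in the worst case; more carefully, I would bound the number of $\mc{Z}_1$-responses across all $M$ machines in round $r$. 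The total new progress available to be communicated at the end of round $r$ is then $P_{r-1}$ plus the \emph{maximum} over machines of the within-round gain, and here the $\log M$ enters: the max of $M$ roughly-$\mathrm{Bin}(K,p)$-type variables concentrates around $Kp$ plus an $O(\log M)$ fluctuation. Summing over $R$ rounds gives total progress at most $O(KRp + R\log M + R)$, and a Chernoff/Bernstein bound on the sum of these (conditionally) bounded increments, together with the constants $8, 12, 12$ and the $KR$ cap, yields the high-probability statement with the stated $5/8$ constant.

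The main obstacle will be the concentration argument for the across-round, across-machine progress: I need a clean stochastic-dominance statement that the per-round progress increment is dominated by $1$ plus the maximum over $M$ machines of a binomial-like count of ``lucky'' ($\mc{Z}_1$) oracle responses, and this must hold even though the algorithm is adaptive and the queries themselves depend on $U$ and past responses. The resolution is that conditioning on the good events $\bigcap G_{v'}$ decouples things enough: the \emph{event} that a given response advances the chain is, by \pref{def:robust-zero-chain-oracle}, controlled by $p$ uniformly over all queries and all history, so I can couple the progress process to an i.i.d.\ sequence of Bernoulli($p$) ``advance'' flags per oracle access regardless of the adaptivity. Once that coupling is in place, the rest is a routine maximal inequality for $M$ binomials and a Bernstein bound for the sum over $R$ rounds; I would absorb all the slack into the generous constants $8, 12, 12$ and the $5/8$.
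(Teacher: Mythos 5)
Your proposal is correct and follows essentially the same route as the paper's proof: condition on all oracle draws lying in $\mc{Z}_0\cup\mc{Z}_1$, use the independence of the oracle's coins to treat the per-query "advance" indicators as Bernoulli($p$) despite adaptivity, bound $\sum_{r}\max_m$ of the per-round $\mc{Z}_1$ counts by $8KRp+12R\log M+12R$ (capped at $KR$), and combine with the good-event/measurability argument and sphere concentration (with the disjoint-union decomposition) to rule out lucky guesses of unknown columns. The minor looseness in your sketch (the extra $+1$ per round and Bernstein in place of the paper's Markov-on-the-expected-max) is harmlessly absorbed by the stated constants.
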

The main ideas leading to \pref{lem:intermittent-communication-progress} stem from \cite{woodworth16tight} and \cite{carmon2017lower1}, who show that when a random rotation is applied to the objective and the dimension is sufficiently large, every algorithm behaves essentially as if its queries remained in the span of previously seen gradients. In the original arguments, the proof of this claim was extremely complicated and required a great deal of care due to subtleties with conditioning on the stochastic gradient oracle queries. Since then, the argument has gradually be refined and simplified, culminating in \citet{yairthesis} who presents the simplest argument to date. The proof of \pref{lem:intermittent-communication-progress} therefore resembles the proof of \citep[Proposition 2.4][]{yairthesis}, however, the arguments must be extended to accomodate the intermittent communication setting.

To facilitate our proofs, we introduce some notation. Recalling $\mc{Z}_0$ and $\mc{Z}_1$ from \pref{def:robust-zero-chain-oracle}, we define
\begin{equation}
S^m_{k,r} = \min\crl*{d,\ \sum_{k'=1}^{k-1} \indicator{z^m_{k',r} \in \mc{Z}_1} + \sum_{r'=1}^{r-1}\max_{1\leq m'\leq M}\sum_{k'=1}^K \indicator{z^{m'}_{k',r'} \in \mc{Z}_1}}
\end{equation}
We also define the following ``good events'' where the progress of the algorithm's oracle queries remains small
\begin{align}
\mc{G}^m_{k,r} &= \crl*{\prog{\alpha}(U^\top x^m_{k,r}) \leq S^m_{k,r}} \\
\bar{\mc{G}}^m_{k,r} &= \bigcap_{k'<k}\mc{G}^m_{k',r} \cap \bigcap_{r'<r}\bigcap_{m',k'} \mc{G}^{m'}_{k',r'}
\end{align}
We also define the event
\begin{equation}
Z = \crl*{\forall_{m,k,r}\ z^m_{k,r} \in \mc{Z}_0 \cup \mc{Z}_1}
\end{equation}
Finally, we use
\begin{equation}
U_{\leq i} = \brk*{U_1,U_2,\dots,U_i,0,\dots,0}
\end{equation}
to denote the matrix $U$ with the $(i+1)\mathth$ through $d\mathth$ columns replaced by zeros.

We begin by showing that when the good events $\bar{\mc{G}}^m_{k,r}$ happen, the algorithm's queries are determined by only a subset of the columns of $U$.
\begin{lemma}\label{lem:robust-zero-chain-u-dependence}
Let $g(x;z)$ be an $(\alpha,p,\delta)$-robust-zero-chain, let $U \in \R^{D\times d}$ be a uniformly random orthogonal matrix with $U^\top U = I_{d\times d}$, and let $x^m_{k,r}$ be the $k\mathth$ oracle query on the $m\mathth$ machine during the $r\mathth$ round of communication for an intermittent communication algorithm that interacts with the stochastic gradient oracle $g_U(x;z) := U g(U^\top x;z)$. Then conditioned on $\mc{S}$, the $\sigma$-algebra generated by the sequence $\crl{S^m_{k,r}}_{m,k,r}$; the event $\bar{G}^m_{k,r}$; and the event $Z$, the query $x^m_{k,r}$ is a measurable function of $\xi$ and $U_{\leq S^m_{k,r}}$. Similarly, conditioned on $\mc{S}$, $Z$, and $\bigcap_{m=1}^M\bar{G}^m_{K,R}$, the output of the algorithm, $\hat{x}$ is a measurable function of $\xi$ and $U_{\leq \max_m S^m_{K,R}}$.
\end{lemma}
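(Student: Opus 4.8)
\textbf{Proof proposal for \pref{lem:robust-zero-chain-u-dependence}.}

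The plan is to induct over the vertices of the intermittent communication graph in topological order, i.e.~to process the queries $x^m_{k,r}$ in order of increasing round $r$, and within a round in order of increasing local step $k$. The key structural facts I would use are: (i) the query rule for $x^m_{k,r}$ depends only on the queries and oracle responses at ancestor vertices, which in the intermittent communication graph are exactly $\crl{x^m_{k',r} : k' < k} \cup \crl{x^{m'}_{k',r'} : r' < r}$ together with their stochastic gradients $g_U(\cdot;z)$; and (ii) the robust-zero-chain property of \pref{def:robust-zero-chain-oracle}, which says that under the conditioning, a gradient $g(U^\top x; z)$ at a point with $\prog{\alpha}(U^\top x) \le i$ is a function of only the first $i$ (or $i+1$, if $z \in \mc{Z}_1$) coordinates of $U^\top x$, hence $g_U(x;z) = U g(U^\top x;z)$ is a function of $U_{\le i}$ (resp.~$U_{\le i+1}$) applied to $x$.

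The inductive step would go as follows. Fix a vertex $v = v^m_{k,r}$ and assume the claim holds for all ancestors. Under $\bar{G}^m_{k,r}$, every ancestor query $x^{m'}_{k',r'}$ of $v$ satisfies $\prog{\alpha}(U^\top x^{m'}_{k',r'}) \le S^{m'}_{k',r'}$, and the construction of $S$ ensures $S^{m'}_{k',r'} \le S^m_{k,r}$ for all such ancestors (because $S^m_{k,r}$ accumulates the $\mc{Z}_1$-counts of all strictly-earlier local steps in round $r$ plus the worst-machine $\mc{Z}_1$-counts of all earlier rounds, which dominates any ancestor's $S$-value). Now under $Z$ each $z^{m'}_{k',r'} \in \mc{Z}_0 \cup \mc{Z}_1$, so the robust-zero-chain property gives $g_U(x^{m'}_{k',r'}; z^{m'}_{k',r'})$ as a measurable function of $\xi$, $U_{\le S^m_{k,r}}$ (via the inductive hypothesis for $x^{m'}_{k',r'}$), and $z^{m'}_{k',r'}$; crucially, when $z^{m'}_{k',r'} \in \mc{Z}_1$ the gradient reveals at most one extra coordinate of $U^\top x^{m'}_{k',r'}$, i.e.~depends on $U_{\le \prog{\alpha}(U^\top x^{m'}_{k',r'}) + 1} \subseteq U_{\le S^{m'}_{k',r'}+1}$, and the definition of $S^m_{k,r}$ already budgets for this ``+1'' from every earlier $\mc{Z}_1$-event, so it stays within $U_{\le S^m_{k,r}}$. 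Composing the query rule $\mc{Q}_v$ with these expressions, $x^m_{k,r}$ becomes a measurable function of $\xi$ and $U_{\le S^m_{k,r}}$. The output statement follows identically: under $\bigcap_m \bar{G}^m_{K,R}$ and $Z$, every vertex query and its gradient is a function of $\xi$ and $U_{\le \max_m S^m_{K,R}}$, hence so is $\hat{x} = \hat{X}((q_v, g_U(q_v)):v \in \mc{V}, \xi)$.

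The main obstacle I anticipate is bookkeeping the ``+1'' coordinate leakage correctly across the two-level structure of rounds and local steps — specifically, verifying that $S^m_{k,r}$ is defined exactly so that it (a) dominates the $S$-values of all ancestors and (b) still has enough headroom to absorb one extra revealed coordinate per $\mc{Z}_1$-event along any ancestor chain. The within-round part is clean because local steps on machine $m$ form a path and each contributes at most its own $\indicator{z \in \mc{Z}_1}$; the cross-round part requires the $\max_{m'}$ over machines, since a query in round $r$ can depend (through the communication averaging) on \emph{every} machine's round-$(r-1)$ gradients, so one must take the worst machine's $\mc{Z}_1$-count in each earlier round. Once the arithmetic $S^{m'}_{k',r'} + \indicator{z^{m'}_{k',r'} \in \mc{Z}_1} \le S^m_{k,r}$ for ancestors is checked — which is immediate from the definition of $S$ as a cumulative sum/max of exactly these indicators — the measurability argument is a routine composition, and the conditioning on $\mc{S}$ makes all the $S$-values deterministic so there is no circularity between the ``good events'' and the random counts.
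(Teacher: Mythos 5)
Your proposal is correct and matches the paper's argument: the paper performs exactly the same substitution, replacing each ancestor's oracle response $g_U(x^{m'}_{k',r'};z^{m'}_{k',r'})$ by $g_{U_{\leq S^m_{k,r}}}(x^{m'}_{k',r'};z^{m'}_{k',r'})$ under $Z$ and $\bar{\mc{G}}^m_{k,r}$, using precisely your key inequality $S^{m'}_{k',r'} + \indicator{z^{m'}_{k',r'}\in\mc{Z}_1} \leq S^m_{k,r}$ for ancestors, and then composing with the query rule $\mc{Q}^m_{k,r}$ (and with $\hat{X}$ for the output). The only difference is presentational: you make the topological induction over vertices explicit, whereas the paper leaves the recursive unrolling of the query rules implicit.
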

\begin{proof}
By the definition of an intermittent communication algorithm, the oracle queries are generated according to query rules as
\begin{equation}
x^m_{k,r} = \mc{Q}^m_{k,r}\prn*{\crl*{x^m_{k',r}, g_U(x^m_{k',r},z^m_{k',r}):k'<k} \cup \crl*{x^{m'}_{k',r'},g_U(x^{m'}_{k',r'},z^{m'}_{k',r'}):r'<r}, \xi}
\end{equation}
The question is: upon which columns of $U$ does the righthand side of this equation depend when we condition on $\mc{S}$ and $\bar{\mc{G}}^m_{k,r}$? To answer this, we note that by \pref{def:robust-zero-chain-oracle}, if $z \in \mc{Z}_0$ then for any $x$
\begin{equation}
\prog{\alpha}(U^\top x) \leq i \implies 
\left\{
\begin{gathered}
\prog{0}(g(U^\top x;z)) \leq i \implies U g(U^\top x;z) = U_{\leq i} g(U^\top x;z) \\
g(U^\top x;z) = g(U_{\leq i}^\top x;z) 
\end{gathered}  
\right\}
\implies g_U(x;z) = g_{U_{\leq i}}(x;z)
\end{equation}
By a similar argument, even for $z \in \mc{Z}_1$
\begin{equation}
\prog{\alpha}(U^\top x) \leq i \implies g_U(x;z) = g_{U_{\leq i+1}}(x;z)
\end{equation}
Therefore, conditioned on the events $Z$ and $\bar{\mc{G}}^m_{k,r}$, for each $m',k',r'$ such that $m'=m,r'=r,k'<k$ or $r'<r$, let $i^{m'}_{k',r'} = \min\crl*{d,\ S^{m'}_{k',r'} + \indicator{z^{m'}_{k',r'} \in \mc{Z}_1}} \leq S^m_{k,r}$ then
\begin{equation}
g_U(x^{m'}_{k',r'}; z^{m'}_{k',r'}) = g_{U_{\leq i^{m'}_{k',r'}}}(x^{m'}_{k',r'};z^{m'}_{k',r'}) = g_{U_{\leq S^m_{k,r}}}(x^{m'}_{k',r'};z^{m'}_{k',r'})
\end{equation}
We conclude that
\begin{align}
x^m_{k,r} 
&= \mc{Q}^m_{k,r}\prn*{\crl*{x^m_{k',r}, g_U(x^m_{k',r},z^m_{k',r}):k'<k} \cup \crl*{x^{m'}_{k',r'},g_U(x^{m'}_{k',r'},z^{m'}_{k',r'}):r'<r}, \xi} \\
&= \mc{Q}^m_{k,r}\prn*{\crl*{x^m_{k',r}, g_{U_{\leq S^m_{k,r}}}(x^m_{k',r},z^m_{k',r}):k'<k} \cup \crl*{x^{m'}_{k',r'},g_{U_{\leq S^m_{k,r}}}(x^{m'}_{k',r'},z^{m'}_{k',r'}):r'<r}, \xi}
\end{align}
so conditioned on $\mc{S}$, $Z$, and $\bar{\mc{G}}^m_{k,r}$, $x^m_{k,r}$ is a measurable function of $U_{\leq S^m_{k,r}}$ and $\xi$.

We can apply the same argument to the algorithm's output
\begin{equation}
\hat{x} = \hat{X}\prn*{\crl*{x^m_{k,r}, g_U(x^m_{k,r},z^m_{k,r})}_{m,k,r}, \xi} = \hat{X}\prn*{\crl*{x^m_{k,r}, g_{U_{\leq \max_m S^m_{K,R}}}(x^m_{k,r},z^m_{k,r})}_{m,k,r}, \xi}
\end{equation}
which completes the proof.
\end{proof}

Next, we upper bound the random variables $S^m_{k,r}$: 
\begin{lemma}\label{lem:sum-max-binomials}
For any $(\alpha,p,\delta)$-robust-zero-chain,
\[
\P\prn*{\max_{m,k,r} S^m_{k,r} \geq \min\crl*{KR,\ 8KRp + 12R\log M + 12R}\,\middle|\,Z} \leq \frac{1}{4}
\]
\end{lemma}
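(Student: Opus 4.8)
\textbf{Proof proposal for \pref{lem:sum-max-binomials}.}

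The plan is to bound $\max_{m,k,r} S^m_{k,r}$ by controlling two sources of progress separately: the within-round contributions $\sum_{k'<k}\indicator{z^m_{k',r}\in\mc{Z}_1}$ and the accumulated between-round contributions $\sum_{r'<r}\max_{m'}\sum_{k'=1}^K\indicator{z^{m'}_{k',r'}\in\mc{Z}_1}$. Conditioned on $Z$, each indicator $\indicator{z^m_{k,r}\in\mc{Z}_1}$ is (by property 2 of \pref{def:robust-zero-chain-oracle}) stochastically dominated by an independent $\mathrm{Bernoulli}(p)$ variable, since $\P(z\in\mc{Z}_1\mid z\in\mc{Z}_0\cup\mc{Z}_1)\le p$. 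So the total number of ``$\mc{Z}_1$-events'' across all $MKR$ oracle queries is dominated by a $\mathrm{Binomial}(MKR,p)$ random variable. The crude bound $\max_{m,k,r}S^m_{k,r}\le KR$ is immediate from the definition (the outer $\min$ with $d$ is irrelevant here since we can also take $d\ge KR$, or simply note each $S^m_{k,r}$ counts at most $KR$ indicator terms once we observe that $\max_{m'}\sum_{k'=1}^K(\cdot)\le K$ per round and there are at most $R$ rounds plus $K$ within-round terms — actually at most $(r-1)K + (k-1) < KR$). The real work is the second bound $8KRp + 12R\log M + 12R$.

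The key steps, in order: First, I would condition on $Z$ throughout and replace the $\mc{Z}_1$-indicators by dominating i.i.d.\ Bernoulli$(p)$ variables, so it suffices to prove the tail bound for $\tilde S^m_{k,r}$ built from these. Second, handle the between-round term: for each fixed $r'$, $\max_{m'}\sum_{k'=1}^K\indicator{z^{m'}_{k',r'}\in\mc{Z}_1}$ is the maximum of $M$ independent $\mathrm{Binomial}(K,p)$ variables. A union bound with a Chernoff/Bernstein tail gives that, with probability at least $1-M\exp(-t)$ for suitable $t$, this max is at most $Kp + O(\sqrt{Kp\,t} + t)$; choosing $t\asymp \log M + \log(1/\text{target failure})$ and summing a geometric-style/union argument over the $R$ rounds yields that $\sum_{r'<r}(\cdot) \le 4KRp + 8R\log M + 8R$ with probability at least $1 - \tfrac18$, say, using that $\sqrt{Kp\,t}\le Kp + t$. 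Third, handle the within-round term $\sum_{k'<k}\indicator{\cdot}$: this is a single $\mathrm{Binomial}(K,p)$, and there are $MR$ such partial sums to union over; another Chernoff bound plus union over $MKR$ indices shows $\max$ of these is $\le 4KRp/(R) + \dots$ — more carefully, each is $\le Kp + O(\sqrt{Kp\log(MKR)}+\log(MKR))$, which we fold into the $12R\log M + 12R$ budget since $K\le KR$ and $\log(MKR)\lesssim \log M + \log(KR)$; here one uses $R\ge 1$ and absorbs $\log(KR)$-type terms into the generous constants. Fourth, combine: on the intersection of these high-probability events, $\max_{m,k,r}S^m_{k,r}\le 8KRp + 12R\log M + 12R$, and also trivially $\le KR$, so it is at most the min; the total failure probability is at most $\tfrac18 + \tfrac18 = \tfrac14$ after tuning the constants in the Chernoff exponents.

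The main obstacle I anticipate is the between-round term: it is a sum over $R$ rounds of \emph{maxima} of $M$ binomials, so naively each round contributes up to $K$ and the sum could be $KR$ — to get the $R\log M$ scaling instead of $KR$ one must carefully exploit that with high probability \emph{every} round's max is close to its mean $Kp$, and the $\log M$ only enters the fluctuation term, not multiplicatively with $K$. Getting the union bound over all $R$ rounds to cost only an additive $O(R\log M)$ (rather than, say, $O(R\log(MR))$, which would still be fine but needs the constants checked) and verifying that the Bernstein-type deviation $\sqrt{Kp\log M}$ is dominated by $Kp + \log M$ so it collapses into the stated form, is the delicate bookkeeping. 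Everything else — the stochastic domination reduction, the crude $KR$ bound, and the within-round union bound — is routine. I would present the Chernoff bounds in the form ``$\P(\mathrm{Bin}(n,p)\ge np + s)\le \exp(-s^2/(2np+2s/3))\le\exp(-\min\{s^2/(4np),\,3s/4\})$'' and pick thresholds so each group of events fails with probability $\le\tfrac18$.
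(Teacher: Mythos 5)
Your reduction to i.i.d.\ Bernoulli$(p)$ dominating variables, the trivial bound $\max_{m,k,r}S^m_{k,r}\le KR$, and the per-round Chernoff bound all match the paper, but the key step — the between-round term — has a genuine gap exactly where you flagged ``delicate bookkeeping.'' If you want \emph{every} round's maximum to lie below a threshold simultaneously with total failure probability $\le\tfrac18$, the union bound over $R$ rounds forces a per-round failure probability of $\tfrac{1}{8R}$, hence $t\gtrsim\log M+\log R$, and the resulting bound on the sum carries an additive term of order $R\log R$. Your remark that ``$O(R\log(MR))$ would still be fine'' is not correct: the target $8KRp+12R\log M+12R$ has no slack for $R\log R$. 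Concretely, take $M=1$ and $Kp=O(1)$ with $R$ large: the threshold is $8KRp+12R=O(R)$, but the maximum over $R$ rounds of a $\mathrm{Bin}(K,p)$ variable genuinely grows with $R$ (on the order of $\log R/\log\log R$), so the event ``every round's max is at most $Kp+O(\log M+1)$'' is simply false in that regime — pointwise per-round control cannot certify the lemma. The same issue undercuts your within-round step, where $\log(MKR)$-type deviations cannot be ``absorbed into the constants'' of $12R\log M+12R$ when $K$ is huge and $R,M$ small; this part is moot anyway, since $\max_{m,k,r}S^m_{k,r}\le\sum_{r=1}^R\max_{m'}\sum_{k'=1}^K\indicator{z^{m'}_{k',r}\in\mc{Z}_1}$, so the within-round partial sums are already subsumed and need no separate treatment (the paper makes this reduction at the outset).

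The fix is to control the \emph{sum} directly rather than each round. The paper integrates the Chernoff tail to bound the expectation of each per-round maximum, $\E\brk*{\max_{m}\sum_{k=1}^K\indicator{z^m_{k,r}\in\mc{Z}_1}\,\middle|\,Z}\le 2Kp+3\log M+3$, sums these over $r$ by linearity (no union bound, hence no $\log R$), and applies Markov's inequality with the factor-$4$ threshold $8KRp+12R\log M+12R$ to get probability $\le\tfrac14$. If you prefer a high-probability argument, an MGF route also works and avoids $\log R$: since $\E e^{\lambda\max_m X_m}\le M\,\E e^{\lambda X_1}$, one gets $\P\prn*{\sum_{r}\max_m X_{m,r}\ge s\,\middle|\,Z}\le\exp\prn*{R\log M+R\log\E e^{\lambda X_1}-\lambda s}$, which with $\lambda=1$ already beats the stated threshold exponentially — but the per-round-threshold union bound as you wrote it does not deliver the lemma.
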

\begin{proof}
The claim is equivalent to
\begin{equation}
\P\prn*{\sum_{r=1}^R\max_{1\leq m \leq M} \sum_{k=1}^K \indicator{z^m_{k,r} \in \mc{Z}_1} \geq \min\crl*{KR,\ 8KRp + 12R\log M + 12R}\,\middle|\,Z} \leq \frac{1}{4}
\end{equation}
Since $g(x;z)$ is an $(\alpha,p,\delta)$-robust-zero-chain, the random coins of the stochastic gradient oracles are independent, so, conditioned on $Z$ the indicators $\indicator{z^m_{k,r} \in \mc{Z}_1}$ are independent Bernoulli random variables with success probability at most $p$. It follows that for each $m$ and $r$, $\sum_{k=1}^K \indicator{z^m_{k,r} \in \mc{Z}_1}$ are independent $\textrm{Binomial}(K,p)$ random variables. 

Therefore, for each $r$, by the union bound and then the Chernoff bound, for any $c \geq 0$
\begin{equation}
\P\prn*{\max_{1\leq m \leq M} \sum_{k=1}^K \indicator{z^m_{k,r} \in \mc{Z}_1} \geq (1+c)Kp\,\middle|\,Z}
\leq M\exp\prn*{-\frac{c^2 Kp}{2 + c}}
\end{equation}
Furthermore, for any random variable $X \in [0,K]$, $\E X = \int_0^K \P\prn*{X \geq x} dx$. Therefore, for any $\epsilon > 0$
\begin{align}
\E\brk*{\max_{1\leq m \leq M} \sum_{k=1}^K \indicator{z^m_{k,r} \in \mc{Z}_1}\,\middle|\,Z}
&= \int_0^K \P\prn*{\max_{1\leq m \leq M} \sum_{k=1}^K \indicator{z^m_{k,r} \in \mc{Z}_1} \geq x\,\middle|\,Z} dx \\
&= Kp\int_{-1}^{\frac{1-p}{p}} \P\prn*{\max_{1\leq m \leq M} \sum_{k=1}^K \indicator{z^m_{k,r} \in \mc{Z}_1} \geq (1+c)Kp\,\middle|\,Z} dc \\
&\leq (1+\epsilon)Kp + MKp\int_{\epsilon}^{\frac{1-p}{p}} \exp\prn*{-\frac{cKp}{2 + c}} dc \\
&\leq (1+\epsilon)Kp + MKp\int_{\epsilon}^{\infty} \exp\prn*{-\frac{c\epsilon Kp}{2 + \epsilon}} dc \\
&= (1+\epsilon)Kp + \frac{M(2+\epsilon)}{\epsilon}\exp\prn*{-\frac{\epsilon^2Kp}{2+\epsilon}}
\end{align}
For the second line we used the change of variables $x \to (1+c)Kp$. We take $\epsilon = 1 + \frac{3}{Kp}\log M$ to conclude
\begin{equation}
\E\brk*{\max_{1\leq m \leq M} \sum_{k=1}^K \indicator{z^m_{k,r} \in \mc{Z}_1}\,\middle|\,Z}
\leq (1+\epsilon)Kp + \frac{M(2+\epsilon)}{\epsilon}\exp\prn*{-\frac{\epsilon^2Kp}{2+\epsilon}} 
\leq 2Kp + 3\log M + 3
\end{equation}
It follows that 
\begin{equation}
\E\brk*{\sum_{r=1}^R \max_{1\leq m \leq M} \sum_{k=1}^K \indicator{z^m_{k,r} \in \mc{Z}_1}\,\middle|\,Z}
\leq 2KRp + 3R\log M + 3R
\end{equation}
Markov's inequality along with the observation that $S^m_{k,r} \leq KR$ for all $m,k,r$ completes the proof.
\end{proof}

Using the previous lemmas, we prove the main result:
\intermittentcommunicationprogress*
\begin{proof}
We begin by conditioning on $Z$ and $\mc{S}$, the $\sigma$-algebra generated by $\crl{S^m_{k,r}}_{m,k,r}$ and bounding
\begin{align}
\P&\prn*{\prog{\alpha}(U^\top \hat{x}) > \max_m S^m_{K,R} \lor \exists_{m,k,r}\ \prog{\alpha}(U^\top x^m_{k,r}) > S^m_{k,r}\,\middle|\,Z,\mc{S}}\nonumber\\
&= \P\prn*{\crl*{\prog{\alpha}(U^\top \hat{x}) > \max_m S^m_{K,R}} \cup \bigcup_{m,k,r} \crl*{\prog{\alpha}(U^\top x^m_{k,r}) > S^m_{k,r}}\,\middle|\,Z,\mc{S}} \\
&= \P\prn*{\crl*{\crl*{\prog{\alpha}(U^\top \hat{x}) > \max_m S^m_{K,R}}\cap \bigcap_{m=1}^M\bar{\mc{G}}^m_{K,R}}  \cup \bigcup_{m,k,r}\crl*{\prog{\alpha}(U^\top x^m_{k,r}) > S^m_{k,r}} \cap \bar{\mc{G}}^m_{k,r}\,\middle|\,Z,\mc{S}} \\
&\leq \P\brk*{\prog{\alpha}(U^\top \hat{x}) > \max_m S^m_{K,R},\ \bigcap_{m=1}^M\bar{\mc{G}}^m_{K,R}\,\middle|\,Z,\mc{S}} + \sum_{m,k,r} \P\prn*{\crl*{\prog{\alpha}(U^\top x^m_{k,r}) > S^m_{k,r}} \cap \bar{\mc{G}}^m_{k,r}\,\middle|\,Z,\mc{S}} \\
&\leq \sum_{i > \max_m S^m_{K,R}} \P\prn*{\abs*{\inner{U_i}{\hat{x}}} > \alpha, \bigcap_{m=1}^M \bar{\mc{G}}^m_{K,R}\,\middle|\,Z,\mc{S}} + \sum_{m,k,r} \sum_{i > S^m_{k,r}} \P\prn*{\abs*{\inner{U_i}{x^m_{k,r}}} > \alpha, \bar{\mc{G}}^m_{k,r}\,\middle|\,Z,\mc{S}} 
\end{align}
By \pref{lem:robust-zero-chain-u-dependence}, there exist measurable functions $\mathsf{A}^m_{k,r}$ and $\mathsf{B}^m_{k,r}$ such that
\begin{equation}
x^m_{k,r} = \mathsf{A}^m_{k,r}(U_{\leq S^m_{k,r}},\xi) \indicator{Z,\bar{\mc{G}}^m_{k,r}} + \mathsf{B}^m_{k,r}(U,\xi) \indicator{\lnot Z \lor \lnot \bar{\mc{G}}^m_{k,r}}
\end{equation}
Therefore, 
\begin{align}
\P&\prn*{\prog{\alpha}(U^\top \hat{x}) > \max_m S^m_{K,R} \lor \exists_{m,k,r}\ \prog{\alpha}(U^\top x^m_{k,r}) > S^m_{k,r}\,\middle|\,Z,\mc{S}}\nonumber\\
&\leq \sum_{i > \max_m S^m_{K,R}} \P\prn*{\abs*{\inner{U_i}{\hat{\mathsf{A}}(U_{\leq \max_m S^m_{K,R}},\xi)}} > \alpha,\ \bigcap_{m=1}^M \bar{\mc{G}}^m_{K,R}\,\middle|\,Z,\mc{S}} \nonumber\\
&\qquad+ \sum_{m,k,r} \sum_{i > S^m_{k,r}} \P\prn*{\abs*{\inner{U_i}{\mathsf{A}^m_{k,r}(U_{\leq S^m_{k,r}},\xi)}} > \alpha, \bar{\mc{G}}^m_{k,r}\,\middle|\,Z,\mc{S}} \\
&\leq \sum_{i > \max_m S^m_{K,R}} \P\prn*{\abs*{\inner{U_i}{\hat{\mathsf{A}}(U_{\leq \max_m S^m_{K,R}},\xi)}} > \alpha\,\middle|\,Z,\mc{S}} \nonumber\\
&\qquad+ \sum_{m,k,r} \sum_{i > S^m_{k,r}} \P\prn*{\abs*{\inner{U_i}{\mathsf{A}^m_{k,r}(U_{\leq S^m_{k,r}},\xi)}} > \alpha\,\middle|\,Z,\mc{S}}
\end{align}
The algorithm's random coins, $\xi$, and the stochastic gradient oracles' random coins, $\crl{z^m_{k,r}}_{m,k,r}$ which determine $Z$ and $\mc{S}$, are independent of the random rotation $U$. Furthermore, for $i > S^m_{k,r}$, $U_i$ conditioned on $U_{\leq S^m_{k,r}}$ is a uniformly random vector on the $(D-S^m_{k,r})$-dimensional unit sphere orthogonal to the range of $U_{\leq S^m_{k,r}}$. Furthermore, by assumption $\nrm{\mathsf{A}^m_{k,r}(U_{\leq S^m_{k,r}},\xi)} \leq B$. Therefore, following \citet{yairthesis} concentration of measure on the sphere implies \citep{ball1997elementary}
\begin{equation}
\P\prn*{\abs*{\inner{U_i}{\mathsf{A}^m_{k,r}(U_{\leq S^m_{k,r}},\xi)}} > \alpha\,\middle|\,Z,\mc{S}}
\leq 2\exp\prn*{-\frac{(D - S^m_{k,r} + 1)\alpha^2}{2B^2}}
\end{equation}
Using the fact that $S^m_{k,r} \leq d$ and $D \geq d + \frac{2B^2}{\alpha^2}\log(32MKRd)$, we conclude that
\begin{align}
\P&\prn*{\prog{\alpha}(U^\top \hat{x}) > \max_m S^m_{K,R} \lor \exists_{m,k,r}\ \prog{\alpha}(U^\top x^m_{k,r}) > S^m_{k,r}\,\middle|\,Z,\mc{S}}\nonumber\\
&\leq 2(MKR + 1) d\exp\prn*{-\frac{(D - d + 1)\alpha^2}{2B^2}} \\
&\leq 2(MKR+1)d\exp\prn*{-\frac{(d + \frac{2B^2}{\alpha^2}\log(32MKRd) - d + 1)\alpha^2}{2B^2}} 
\leq \frac{1}{8} \label{eq:intermittent-communication-progress-lemma-term1}
\end{align}

To complete the proof of the lemma, we note that for
\begin{equation}
T = \min\crl*{KR,\ 8KRp + 12R\log M + 12R}
\end{equation}
we can upper bound
\begin{align}
&\P\prn*{\prog{\alpha}(U^\top \hat{x}) > T} \nonumber\\
&\leq \P\prn*{\prog{\alpha}(U^\top \hat{x}) > \max_{m,k,r} S^m_{k,r} \lor \exists_{m,k,r} \prog{\alpha}(U^\top x^m_{k,r}) > S^m_{k,r},\ \max_{m,k,r} S^m_{k,r} \leq T} + \P\prn*{\max_{m,k,r} S^m_{k,r} > T} \\
&\leq \P\prn*{\prog{\alpha}(U^\top \hat{x}) > \max_{m,k,r} S^m_{k,r} \lor \exists_{m,k,r} \prog{\alpha}(U^\top x^m_{k,r}) > S^m_{k,r}} + \P\prn*{\max_{m,k,r} S^m_{k,r} > T} \\
&= \P\prn*{\prog{\alpha}(U^\top \hat{x}) > \max_{m,k,r} S^m_{k,r} \lor \exists_{m,k,r} \prog{\alpha}(U^\top x^m_{k,r}) > S^m_{k,r}\,\middle|\,Z}\P(Z) + \P\prn*{\max_{m,k,r} S^m_{k,r} > T\,\middle|\,Z}\P(Z) \nonumber\\
&+ \P\prn*{\prog{\alpha}(U^\top \hat{x}) > \max_{m,k,r} S^m_{k,r} \lor \exists_{m,k,r} \prog{\alpha}(U^\top x^m_{k,r}) > S^m_{k,r}\,\middle|\,\lnot Z}\P(\lnot Z) + \P\prn*{\max_{m,k,r} S^m_{k,r} > T\,\middle|\,\lnot Z}\P(\lnot Z) \\
&\leq \P\prn*{\prog{\alpha}(U^\top \hat{x}) > \max_{m,k,r} S^m_{k,r} \lor \exists_{m,k,r} \prog{\alpha}(U^\top x^m_{k,r}) > S^m_{k,r}\,\middle|\,Z} + \P\prn*{\max_{m,k,r} S^m_{k,r} > T\,\middle|\,Z} + 2(1 - \P(Z))
\end{align}
By \eqref{eq:intermittent-communication-progress-lemma-term1}, the first term is bounded by $\frac{1}{8}$, by \pref{lem:sum-max-binomials} the second term is at most $\frac{1}{4}$, and by the union bound, 
\begin{equation}
\P(Z) \geq (1-\delta)^{MKR} \geq 1 - MKR\delta
\end{equation}
This completes the proof.
\end{proof}

\subsection{Proof of \pref{thm:homogeneous-convex-lower-bound}} \label{app:intermittent-communication-homogeneous-convex-lower-bound}

For a scalar function $\psi:\R\to\R$, $\zeta > 0$, and $N \geq 2$ to be defined later, we will construct the hard instance
\begin{equation}\label{eq:def-F-generic-psi-homogeneous-lower-bound}
F(x) = -\psi'(\zeta)x_1 + \psi(x_N) + \sum_{i=1}^{N-1}\psi(x_{i+1} - x_i)
\end{equation}
We also define a stochastic gradient oracle for $F$ 
\begin{equation}\label{eq:oracle-homogeneous-convex-lower-bound}
\begin{aligned}
z &= \begin{cases} 
0 & \textrm{with probability } (1-p)(1-\delta) \\ 
1 & \textrm{with probability } p(1-\delta) \\
2 & \textrm{with probability } \delta
\end{cases} \\
g(x;0) &= \sum_{i=1}^{\prog{\alpha}(x)}e_ie_i^\top \nabla F(\brk*{x_1,\dots,x_{\prog{\alpha}(x)},0,\dots,0}) \\
g(x;1) &= \frac{1}{p}\sum_{i=1}^{\prog{\alpha}(x) + 1}e_ie_i^\top\nabla F(\brk*{x_1,\dots,x_{\prog{\alpha}(x)+1},0,\dots,0}) - \frac{1-p}{p}g(x;0) \\
g(x;2) &= \frac{1}{\delta}\nabla F(x) - \frac{1-\delta}{\delta}\sum_{i=1}^{\prog{\alpha}(x) + 1}e_ie_i^\top\nabla F(\brk*{x_1,\dots,x_{\prog{\alpha}(x)+1},0,\dots,0})
\end{aligned}
\end{equation}
The following lemma relates the properties of $\psi$ to those of $F$ and $g$:
\begin{lemma}\label{lem:homogeneous-convex-generic-psi-construction}
Let $\psi$ be convex, twice-differentiable, and even, and let it satisfy $\psi(0) = \psi'(0) = 0$, and for all $x$, $\abs{\psi'(x)} \leq \ell_1$, and $\psi''(x) \leq \ell_2$. Then, for $F$ defined as in \eqref{eq:def-F-generic-psi-homogeneous-lower-bound}
\begin{enumerate}
\item $F$ is convex and $4\ell_2$-smooth
\item $\nrm{x^*}^2 \leq \zeta^2N^3$
\item $F(0) - F^* \leq N(\zeta \psi'(\zeta) - \psi(\zeta))$
\item For any $x$, $\prog{\alpha}(x) \leq \frac{N}{2} \implies F(x) - F^* \geq \psi'(\zeta)\brk*{N\zeta - \alpha - {\psi^*}'\prn*{\frac{2}{N}\psi'(\zeta)}} + \frac{N}{2}\brk*{\psi\prn*{{\psi^*}'\prn*{\frac{2}{N}\psi'(\zeta)}} - 2\psi(\zeta)}$
\item $\E_z g(x;z) = \nabla F(x)$
\item $\sup_x\E_z\nrm*{g(x;z) - \nabla F(x)} \leq \frac{6(1-p)\ell_1^2}{p} + \prn*{\frac{160}{p} + \frac{32}{\delta}}N\ell_2^2\alpha^2$
\item $g$ is an $(\alpha,p,\delta)$-robust-zero-chain
\end{enumerate}
\end{lemma}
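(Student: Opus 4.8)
\textbf{Proof proposal for \pref{lem:homogeneous-convex-generic-psi-construction}.}

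The plan is to verify the seven claims essentially one at a time, since most are direct consequences of the structure of $F$ as a sum of one-dimensional "chain" terms plus a linear term, together with the stated bounds on $\psi', \psi''$. First I would observe that $F(x) = -\psi'(\zeta)x_1 + \psi(x_N) + \sum_{i=1}^{N-1}\psi(x_{i+1}-x_i)$ is a composition of the convex function $y \mapsto \psi'(\zeta)\cdot(-y_1) + \psi(y_N) + \sum \psi(y_{i+1}-y_i)$ with a linear map, hence convex. For the smoothness constant in (1), I would write $\nabla^2 F = \psi''(x_N)e_Ne_N^\top + \sum_{i=1}^{N-1}\psi''(x_{i+1}-x_i)(e_{i+1}-e_i)(e_{i+1}-e_i)^\top$; since each $\psi'' \le \ell_2$ and the matrices $(e_{i+1}-e_i)(e_{i+1}-e_i)^\top$ are each PSD with operator norm $2$ and the "chain" sum is a tridiagonal operator whose norm is at most $4$ (standard bound for the path-graph Laplacian), I get $\nrm{\nabla^2 F} \le 4\ell_2$. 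For (2), I would compute $x^*$ by the first-order conditions: since $\psi$ is even and convex with $\psi'(0)=0$, the minimizer satisfies the telescoping relations forcing $x^*_{i+1}-x^*_i$ to be constant-ish, giving $x^*_i$ of magnitude $O(\zeta(N-i))$ and hence $\nrm{x^*}^2 = \sum_i O(\zeta^2(N-i)^2) \le \zeta^2 N^3$. For (3), evaluate $F(0) = 0$ (using $\psi(0)=0$) and lower bound $F^* \ge -N(\zeta\psi'(\zeta)-\psi(\zeta))$ using convexity of $\psi$ and the Fenchel-Young inequality applied termwise; this is the standard Nesterov-style computation.

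Claim (4) is the heart of the construction and I expect it to be the main obstacle. The idea: if $\prog{\alpha}(x) \le N/2$, then $\abs{x_i} \le \alpha$ for all $i > N/2$, so roughly half the chain terms are "stuck near zero." I would lower bound $F(x) - F^*$ by dropping the terms $i > N/2$ appropriately and using convexity to reduce to a scalar optimization: the linear term $-\psi'(\zeta)x_1$ wants $x_1$ large, but propagating a large $x_1$ through the chain while keeping $x_i$ small for $i > N/2$ forces the increments $x_{i+1}-x_i$ to be sizeable somewhere, costing $\sum\psi(x_{i+1}-x_i)$. Quantitatively I would argue that $x_1 \le \alpha + \sum_{i=1}^{N/2}\abs{x_{i+1}-x_i}$ (telescoping up to index $N/2$), combine with Jensen/convexity of $\psi$ to turn $\sum\psi(x_{i+1}-x_i) \ge \tfrac{N}{2}\psi\!\big(\tfrac{2}{N}\sum\abs{x_{i+1}-x_i}\big)$, and then minimize $-\psi'(\zeta)\big(\alpha + s\big) + \tfrac{N}{2}\psi(\tfrac{2s}{N})$ over $s \ge 0$; the minimizing $s$ is where $\psi'(2s/N) = \tfrac{2}{N}\psi'(\zeta)$, i.e. $2s/N = {\psi^*}'\!\big(\tfrac{2}{N}\psi'(\zeta)\big)$, which after rearranging gives exactly the stated bound (the comparison with $F^*$ contributes the $-N\psi(\zeta)$ type terms via (3)). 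Care is needed with signs, with the fact that the dropped terms $i > N/2$ only help (they are nonnegative since $\psi \ge 0$), and with the term $\psi(x_N)$; I would keep $\psi(x_N) \ge 0$ and discard it.

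For (5), unbiasedness is immediate by construction: $\E_z g(x;z) = (1-p)(1-\delta)g(x;0) + p(1-\delta)g(x;1) + \delta g(x;2)$, and the definitions of $g(x;1)$ and $g(x;2)$ are exactly the affine combinations that telescope back to $\nabla F(x)$. For (6), I would bound $\E_z\nrm{g(x;z)-\nabla F(x)}^2$ by expanding the three cases; the dominant $\frac{6(1-p)\ell_1^2}{p}$ term comes from the $z=1$ branch where the $\frac{1}{p}$ and $\frac{1-p}{p}$ coefficients amplify a difference of two "truncated gradients" that differ in at most a couple of coordinates, each bounded by $2\ell_1$ (since $\psi'$ is bounded by $\ell_1$ and each coordinate of $\nabla F$ is a sum of at most two $\psi'$ values); the $\alpha^2 N\ell_2^2$ terms come from the mismatch between $\nabla F$ evaluated at the true $x$ versus at the truncated $\brk{x_1,\dots,x_{\prog{\alpha}(x)+1},0,\dots}$ — the entries beyond the truncation are bounded using $\abs{\psi'(x_i - x_{i+1})} \le \ell_2\abs{x_i - x_{i+1}} \le 2\ell_2\alpha$ and there are $O(N)$ of them, and the $\frac{1}{p},\frac{1}{\delta}$ prefactors carry through. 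I would just be somewhat generous with constants to land under the stated $\frac{160}{p} + \frac{32}{\delta}$. Finally (7): checking the robust-zero-chain property amounts to setting $\mc{Z}_0 = \{0\}$, $\mc{Z}_1 = \{1\}$, verifying $\P(z \in \mc{Z}_0 \cup \mc{Z}_1) = 1-\delta$ and $\P(z = 0 \mid z \in \{0,1\}) = 1-p$, and then reading off from the definitions that $g(x;0)$ only has nonzero coordinates up to $\prog{\alpha}(x)$ and depends only on $x_1,\dots,x_{\prog{\alpha}(x)}$ (because of the truncation, and because coordinate $i$ of $\nabla F$ of a truncated vector depends only on coordinates $\le i$ of that vector — this uses the chain structure), while $g(x;1)$ advances the progress by exactly one and depends on one extra coordinate; this is a routine bookkeeping check against \pref{def:robust-zero-chain-oracle}.
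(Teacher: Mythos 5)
Your handling of claims 1, 2, 3, 5, 6, and 7 follows essentially the same route as the paper: the tridiagonal Hessian bound for $4\ell_2$-smoothness, exhibiting the explicit minimizer (the paper simply verifies that $x^*=\zeta\sum_{i=1}^N(N-i+1)e_i$ is stationary and computes $\nrm{x^*}^2=\zeta^2\sum_i(N-i+1)^2\le\zeta^2N^3$ — note claim 2 needs this exact constant, so replace your ``$O(\zeta(N-i))$'' estimate by the explicit computation), $F(0)=0$ together with the value at the minimizer for claim 3 (your termwise Fenchel--Young alternative is fine and yields the same quantity), telescoping for unbiasedness, the same case analysis for the variance (a bounded-by-$\ell_1$ boundary coordinate plus $O(N)$ tail coordinates of size $O(\ell_2\alpha)$, with the $1/p$ and $1/\delta$ prefactors), and the same bookkeeping for the robust-zero-chain property with $\mc{Z}_0=\crl{0}$, $\mc{Z}_1=\crl{1}$.

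The genuine gap is in claim 4. With your (correctly normalized) Jensen step, $\sum_{i\le \lceil N/2\rceil}\psi(x_{i+1}-x_i)\ge\frac{N}{2}\psi\prn*{\frac{2}{N}(x_1-x_{\lceil N/2\rceil+1})}$, the scalar problem is to minimize $-\psi'(\zeta)(\alpha+s)+\frac{N}{2}\psi(2s/N)$, and the stationarity condition is $\psi'(2s/N)=\psi'(\zeta)$ (the chain rule cancels the $\frac{N}{2}$), not $\psi'(2s/N)=\frac{2}{N}\psi'(\zeta)$ as you wrote. Carrying the minimization out correctly gives $F(x)-F^*\ge\frac{N}{2}\prn*{\zeta\psi'(\zeta)-\psi(\zeta)}-\psi'(\zeta)\alpha$, which is \emph{not} the expression stated in the lemma; your claim that rearranging ``gives exactly the stated bound'' therefore does not go through. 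The stated expression is what results from minimizing $-\psi'(\zeta)y+\frac{N}{2}\psi(y-\alpha)$, i.e., from applying Jensen \emph{without} rescaling the argument by $\frac{2}{N}$ — which is what the paper's proof does, and that step, $\sum_{i\le m}\psi(d_i)\ge m\,\psi\prn*{\sum_i d_i}$, is not a valid inequality.

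In fact the stated bound in claim 4 can exceed the true restricted minimum: take $\psi$ quadratic near the origin (Huber with very large $\ell_1$), $\ell_2=1$, $N=4$, $\zeta=1$, $\alpha=0$; then $x=(2,1,0,0)$ has $\prog{0}(x)=2=N/2$ and $F(x)-F^*=1$, whereas the right-hand side of claim 4 evaluates to $1.75$. So you should not try to force your argument to reproduce the stated expression. Your normalized-Jensen bound $\frac{N}{2}\prn*{\zeta\psi'(\zeta)-\psi(\zeta)}-\psi'(\zeta)\alpha$ is the correct conclusion, and it suffices for everything downstream: in the proof of \pref{thm:homogeneous-convex-lower-bound} it only changes the suboptimality guarantee for low-progress points by a constant factor (still of order $HB^2/N^2$ for the Huber instance), so the theorem is unaffected. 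The fix you need in your own write-up is simply the corrected stationarity condition and the corresponding corrected form of claim 4.
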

\begin{proof}
We will prove each property one by one.

\textbf{1)} Because $\psi$ is convex, $F$ is the sum of convex functions and is thus convex. In addition,
\begin{equation}
\nabla^2 F(x) = \psi''(x_N)e_Ne_N^\top + \sum_{i=1}^{N-1}\psi''(x_{i+1} - x_i)(e_{i+1} - e_i)(e_{i+1} - e_i)^\top
\end{equation}
Therefore, for any unit vector $u$
\begin{align}
u^\top \nabla^2 F(x) u
&= \psi''(x_N)u_N^2 + \sum_{i=1}^{N-1}\psi''(x_{i+1} - x_i)(u_{i+1} - u_i)^2 \\
&\leq \ell_2\prn*{u_N^2 + \sum_{i=1}^{N-1}2u_{i+1}^2 + 2u_i^2} 
\leq 4\ell_2\nrm{u}^2 
= 4\ell_2
\end{align}
Therefore, $F$ is $4\ell_2$-smooth.

For properties 2 and 3, we will compute the minimizer of $F$, which satisfies $\nabla F(x^*) = 0$, i.e.
\begin{equation}
\begin{aligned}
0 &= -\psi'(\zeta) - \psi'(x^*_2 - x^*_1) \\
0 &= -\psi'(x^*_{i+1} - x^*_i) + \psi'(x^*_i - x^*_{i-1}) \qquad2 \leq i \leq N-1 \\
0 &= \psi'(x^*_N - x^*_{N-1}) + \psi'(x^*_N)
\end{aligned}
\end{equation}
Therefore, 
\begin{equation}
x^* = \zeta\sum_{i=1}^N (N - i + 1)e_i
\end{equation}
is a minimizer of $F$.

\textbf{2)} The solution has squared norm 
\begin{equation}
\nrm{x^*}^2 = \zeta^2 \sum_{i=1}^N (N - i + 1)^2 \leq \zeta^2 N^3
\end{equation}

\textbf{3)} Because $\psi(0) = 0$, the value of $F(0) = 0$, and the value at the optimum is
\begin{equation}
F(x^*) = -N\zeta \psi'(\zeta) + \psi(\zeta) + \sum_{i=1}^N \psi(-\zeta) = N(\psi(\zeta) - \zeta \psi'(\zeta))
\end{equation}

\textbf{4)} By Jensen's inequality and the convexity of $\psi$
\begin{equation}
\sum_{i=1}^{\lceil N/2 \rceil} \psi(x_{i+1} - x_i) = \left\lceil \frac{N}{2} \right\rceil \cdot \frac{1}{\lceil N/2 \rceil}\sum_{i=1}^{\lceil N/2 \rceil} \psi(x_{i+1} - x_i) \geq \frac{N}{2} \psi(x_{\lceil N/2 \rceil+1} - x_1)
\end{equation}
Therefore, for any $x$ with $\prog{\alpha}(x) \leq \frac{N}{2}$,
\begin{align}
F(x) 
&= -\psi'(\zeta)x_1 + \psi(x_N) + \sum_{n=1}^{N-1}\psi(x_{i+1} - x_i) \\
&\geq -\psi'(\zeta)x_1 + \frac{N}{2} \psi(x_1 - x_{\lceil N/2 \rceil+1}) \\
&\geq -\psi'(\zeta)x_1 + \frac{N}{2} \psi(x_1 - \alpha) \\
&\geq \inf_y -\psi'(\zeta)y + \frac{N}{2} \psi(y - \alpha) \\
&= -\psi'(\zeta)\brk*{\alpha + {\psi^*}'\prn*{\frac{2}{N}\psi'(\zeta)}} + \frac{N}{2} \psi\prn*{{\psi^*}'\prn*{\frac{2}{N}\psi'(\zeta)}}
\end{align}
where $\psi*$ is the Fenchel conjugate of $\psi$: $\psi^*(y) = \sup_x \inner{y}{x} - \psi(x)$, which satisfies ${\psi^*}'(\psi'(x)) = x$. Therefore,
\begin{equation}
F(x) - F^* \geq \psi'(\zeta)\brk*{N\zeta - \alpha - {\psi^*}'\prn*{\frac{2}{N}\psi'(\zeta)}} + \frac{N}{2}\brk*{\psi\prn*{{\psi^*}'\prn*{\frac{2}{N}\psi'(\zeta)}} - 2\psi(\zeta)}
\end{equation}

\textbf{5)} Let $\nabla_0 = \sum_{i=1}^{\prog{\alpha}(x)}e_ie_i^\top \nabla F(\brk*{x_1,\dots,x_{\prog{\alpha}(x)},0,\dots,0})$ and $\nabla_1 = \sum_{i=1}^{\prog{\alpha}(x)+1}e_ie_i^\top\nabla F(\brk*{x_1,\dots,x_{\prog{\alpha}(x)+1},0,\dots,0})$, then
\begin{equation}
\begin{aligned}
\E_z g(x;z) 
&= (1-p)(1-\delta)g(x;0) + p(1-\delta)g(x;1) + \delta g(x;2) \\
&= (1-p)(1-\delta)\nabla_0 + p(1-\delta)\prn*{\frac{1}{p}\nabla_1 - \frac{1-p}{p}\nabla_0} + \delta\prn*{\frac{1}{\delta}\nabla F(x) - \frac{1-\delta}{\delta}\nabla_1} = \nabla F(x)
\end{aligned}  
\end{equation}

\textbf{6)} Using the same $\nabla_0$ and $\nabla_1$ as above, we first expand
\begin{align}
&\E_z \nrm*{g(x;z) - \nabla F(x)}^2 \nonumber\\
&\leq (1-p)\nrm*{\nabla_0 - \nabla F(x)}^2 + \frac{1}{p}\nrm*{\nabla_1 - (1-p)\nabla_0 - p\nabla F(x)}^2 + \frac{(1-\delta)^2}{\delta}\nrm*{\nabla_1 - \nabla F(x)}^2 \\
&\leq \frac{3(1-p)}{p}\nrm*{\nabla_0 - \nabla F(x)}^2 + \prn*{\frac{2}{p} + \frac{1}{\delta}}\nrm*{\nabla_1 - \nabla F(x)}^2 
\end{align}
Because $\psi''(x) \leq \ell_2$, $\psi'$ is $\ell_2$-Lipschitz, and we also have $\abs{\psi'(x)} \leq \ell_1$. Therefore, for $j = \prog{\alpha}(x)$,
\begin{align}
&\nrm*{\nabla_0 - \nabla F(x)}^2 \nonumber\\
&= \left\|-\psi'(\zeta)e_1 + \sum_{i=1}^{j-1}\psi'(x_{i+1} - x_i)(e_{i+1} - e_i) + \psi'(-x_j)( - e_j)\right.\nonumber\\
&\qquad\qquad\qquad\left.+ \psi'(\zeta)e_1 - \psi'(x_N)e_N - \sum_{i=1}^{N-1}\psi'(x_{i+1} - x_i)(e_{i+1} - e_i)\right\|^2 \\
&= \nrm*{-e_j\psi'(-x_j) - \psi'(x_N)e_N - \sum_{i=j}^{N-1}\psi'(x_{i+1} - x_i)(e_{i+1} - e_i)}^2 \\
&\leq 2\nrm*{-\psi'(-x_j)e_j - \psi'(x_{j+1} - x_j)(e_{j+1} - e_j)}^2 + 2\nrm*{\psi'(x_N)e_N + \sum_{i=j+1}^{N-1}\psi'(x_{i+1} - x_i)(e_{i+1} - e_i)}^2 \\
&\leq 2(\ell_1^2 + \ell_2^2\alpha^2) + 32(N-j-1)\ell_2^2\alpha^2 \\
&\leq 2\ell_1^2 + 32N\ell_2^2\alpha^2
\end{align}
Similarly, 
\begin{align}
&\nrm*{\nabla_1 - \nabla F(x)}^2 \nonumber\\
&= \left\|-\psi'(\zeta)e_1 + \sum_{i=1}^{j}\psi'(x_{i+1} - x_i)(e_{i+1} - e_i) + \psi'(-x_{j+1})( - e_{j+1})\right.\nonumber\\
&\qquad\qquad\qquad\left.+ \psi'(\zeta)e_1 - \psi'(x_N)e_N - \sum_{i=1}^{N-1}\psi'(x_{i+1} - x_i)(e_{i+1} - e_i)\right\|^2 \\
&= \nrm*{-e_{j+1}\psi'(-x_{j+1}) - \psi'(x_N)e_N - \sum_{i=j+1}^{N-1}\psi'(x_{i+1} - x_i)(e_{i+1} - e_i)}^2 \\
&\leq 2\nrm*{-e_{j+1}\psi'(-x_{j+1}) - \psi'(x_{j+2} - x_{j+1})(e_{j+2} - e_{j+1})}^2 + 2\nrm*{\psi'(x_N)e_N + \sum_{i=j+2}^{N-1}\psi'(x_{i+1} - x_i)(e_{i+1} - e_i)}^2 \\
&\leq 10\ell_2^2\alpha^2 + 32(N-j-2)\ell_2^2\alpha^2 \\
&\leq 32N\ell_2^2\alpha^2
\end{align}
We conclude that
\begin{equation}
\E_z \nrm*{g(x;z) - \nabla F(x)}^2
\leq \frac{3(1-p)}{p}\prn*{2\ell_1^2 + 32N\ell_2^2\alpha^2} + \prn*{\frac{2}{p} + \frac{1}{\delta}}32N\ell_2^2\alpha^2
\leq \frac{6(1-p)\ell_1^2}{p} + \prn*{\frac{160}{p} + \frac{32}{\delta}}N\ell_2^2\alpha^2
\end{equation}

\textbf{7)} Comparing \eqref{eq:oracle-homogeneous-convex-lower-bound} to \pref{def:robust-zero-chain-oracle}, it is clear that $g$ is an $(\alpha,p,\delta)$-robust-zero-chain with $\mc{Z}_0 = \crl{0}$ and $\mc{Z}_1 = \crl{1}$.
\end{proof}

\homogeneousconvexlowerbound*
\begin{proof}
To prove the theorem, we instantiate $F$ as defined in \eqref{eq:def-F-generic-psi-homogeneous-lower-bound} using 
\begin{equation}
\psi(x) = \begin{cases}
\frac{\ell_2}{2}x^2 & \abs{x} \leq \frac{\ell_1}{\ell_2} \\
\ell_1\abs{x} - \frac{\ell_1^2}{2\ell_2} & \abs{x} > \frac{\ell_1}{\ell_2}
\end{cases}
\end{equation}
It is easy to confirm that this $\psi$ satisfies all of the conditions of \pref{lem:homogeneous-convex-generic-psi-construction}. We set 
\begin{align}
\zeta &= \frac{B}{N^{3/2}} \\
\ell_2 &= \frac{H}{4} \\
\ell_1 &= \ell_2 \zeta = \frac{HB}{4N^{3/2}} \\
p &\geq \frac{12\ell_1^2}{12\ell_1^2 + \sigma^2} = \frac{3H^2B^2}{3H^2B^2 + 4N^3\sigma^2} \\
\delta &= \frac{16}{MKR} \\
\alpha^2 &= \min\crl*{\frac{\sigma^2}{2N\ell_2^2\prn*{\frac{160}{p} + \frac{32}{\delta}}},\ \frac{B^2}{64N}}
\end{align}
By \pref{lem:homogeneous-convex-generic-psi-construction}, this ensures that $F$ is convex, $H$-smooth, and $\nrm{x^*}\leq B$. Furthermore, the stochastic gradient oracle variance is bounded by
\begin{equation}
\frac{6(1-p)\ell_1^2}{p} + \prn*{\frac{160}{p} + \frac{32}{\delta}}N\ell_2^2\alpha^2 \leq \frac{\sigma^2}{2} + \frac{\sigma^2}{2} = \sigma^2
\end{equation}
We also note that for $y \in [-\ell_1,\ell_1]$,
\begin{equation}
{\psi^*}'(y) = \frac{y}{\ell_2}
\end{equation}
Therefore, by \pref{lem:homogeneous-convex-generic-psi-construction} for $x$ such that $\prog{\alpha}(x) \leq \frac{N}{2}$, if $N > 2$ then
\begin{align}
F(x) - F^*
&\geq \psi'(\zeta)\brk*{N\zeta - \alpha - {\psi^*}'\prn*{\frac{2}{N}\psi'(\zeta)}} + \frac{N}{2}\brk*{\psi\prn*{{\psi^*}'\prn*{\frac{2}{N}\psi'(\zeta)}} - 2\psi(\zeta)} \\
&= \min\crl*{\ell_2\zeta, \ell_1}\brk*{N\zeta - \alpha - \frac{2\min\crl*{\zeta, \frac{\ell_1}{\ell_2}}}{N}} + \frac{N}{2}\brk*{\psi\prn*{\frac{2\min\crl*{\zeta, \frac{\ell_1}{\ell_2}}}{N}} - 2\psi(\zeta)} \\
&= \begin{cases}
\frac{N\ell_2\zeta^2}{2} - \ell_2\alpha\zeta - \frac{\ell_2\zeta^2}{N} & \zeta \leq \frac{\ell_1}{\ell_2} \\
\frac{N\ell_1^2}{2\ell_2} - \ell_1\alpha - \frac{\ell_1^2}{\ell_2 N} & \zeta > \frac{\ell_1}{\ell_2}
\end{cases} \\
&\geq \begin{cases}
\frac{N\ell_2\zeta^2}{4} - \ell_2\alpha\zeta & \zeta \leq \frac{\ell_1}{\ell_2} \\
\frac{N\ell_1^2}{6\ell_2} - \ell_1\alpha & \zeta > \frac{\ell_1}{\ell_2}
\end{cases} \\
&\geq \frac{N\ell_2\min\crl*{\zeta,\ \frac{\ell_1}{\ell_2}}^2}{6} - \ell_1\alpha 
= \frac{HB^2}{24N^2} - \frac{HB\alpha}{4N^{3/2}}
\geq \frac{HB^2}{96N^2}\label{eq:homogeneous-convex-lower-bound-eq1}
\end{align}

Because all of the algorithm's queries to the gradient oracle have norm bounded by $\gamma$, on the way to applying \pref{lem:intermittent-communication-progress} we introduce a uniformly random orthogonal matrix $U \in \R^{D\times N}$ for 
\begin{equation}
D = N + \frac{2\gamma^2}{\alpha^2}\log(32MKRN) \leq 4KR + \prn*{\frac{4\gamma^2KR}{B^2} + \frac{H^2\gamma^2KR\prn*{\frac{160}{p} + 512MKR}}{\sigma^2}}\log(128MK^2R^2)
\end{equation}
Then, since $g$ is an $(\alpha,p,\delta)$-robust-zero-chain, by \pref{lem:intermittent-communication-progress}, any intermittent communication algorithm that interacts with $Ug(U^\top x;z)$ will have progress at most
\begin{equation}
\prog{\alpha}(U^\top \hat{x}) \leq \min\crl*{KR,\ 8KRp + 12R\log M + 12R}
\end{equation}
with probability at least $\frac{5}{8} - 2MKR\delta = \frac{1}{2}$. We therefore take 
\begin{equation}
N = 2\ceil{\min\crl*{KR,\ 8KRp + 12R\log M + 12R}}
\end{equation}
which means that
\begin{equation}
\prog{\alpha}(U^\top \hat{x}) \leq \frac{N}{2}
\end{equation}
Therefore, by \eqref{eq:homogeneous-convex-lower-bound-eq1} we conclude
\begin{align}
F(U^\top \hat{x}) - F^*
&\geq \frac{HB^2}{96(2\min\crl*{KR,\ 8KRp + 12R\log M + 12R}+1)^2} \\
&\geq \frac{HB^2}{\min\crl*{864(KR)^2,\ 12288(KR)^2p^2 + 27648R^2(1+\log M)^2}} \\
&\geq \frac{HB^2}{1728(KR)^2} + \min\crl*{\frac{HB^2}{49152(KR)^2p^2},\ \frac{HB^2}{110592R^2(1+\log M)^2}} \label{eq:homogeneous-convex-lower-bound-eq2}
\end{align}
From here, we recall that $p$ needs to be chosen so that
\begin{equation}
p \geq \frac{3H^2B^2}{3H^2B^2 + 4N^3\sigma^2} 
\end{equation}
The difficulty here is that $N$ is defined in terms of $p$, however, we observe that
\begin{equation}
N \geq 2KRp
\end{equation}
therefore, choosing
\begin{equation}
p \geq \frac{3H^2B^2}{3H^2B^2 + 32\sigma^2K^3R^3p^3}
\end{equation}
satisfies the requirement on $p$. To that end, we set
\begin{equation}
p =\min\crl*{1,\ \prn*{\frac{3H^2B^2}{32\sigma^2K^3R^3}}^{1/4}}
\end{equation}
Therefore, returning to \eqref{eq:homogeneous-convex-lower-bound-eq2} we conclude
\begin{align}
F(U^\top \hat{x}) - F^*
&\geq \frac{HB^2}{1728(KR)^2} + \min\crl*{\frac{\sigma B}{15050\sqrt{KR}},\ \frac{HB^2}{110592R^2(1+\log M)^2}}
\end{align}
In addition, by \pref{lem:statistical-term-lower-bound}, the minimax error is also lower bounded by
\begin{equation}
F(\hat{x}) - F^* \geq c\cdot\min\crl*{\frac{\sigma B}{\sqrt{MKR}},\ HB^2}
\end{equation}
with probability at least $\frac{1}{4}$.
\end{proof}

\subsection{Proof of \pref{thm:homogeneous-convex-upper-bound}} \label{app:intermittent-communication-homogeneous-convex-upper-bound}

\homogeneousconvexupperbound*
\begin{proof}
The Accelerated SGD variant AC-SA \citep{lan2012optimal} run for $T$ iterations with stochastic gradient variance bounded by $\sigma^2$ guarantees \citep[Corollary 1][]{lan2012optimal}
\begin{equation}
\E F(\hat{x}) - F^* \leq c \cdot \frac{HB^2}{T^2} + c \cdot \frac{\sigma B}{\sqrt{T}}
\end{equation}
Therefore, Single-Machine Accelerated SGD, which corresponds to $KR$ steps with stochastic gradient variance bounded by $\sigma^2$, gives
\begin{equation}
\E F(\hat{x}) - F^* \leq c \cdot \frac{HB^2}{(KR)^2} + c \cdot \frac{\sigma B}{\sqrt{KR}}
\end{equation}
Likewise, Minibatch Accelerated SGD, which corresponds to $R$ steps with stochastic gradient variance bounded by $\frac{\sigma^2}{MK}$, gives
\begin{equation}
\E F(\hat{x}) - F^* \leq c \cdot \frac{HB^2}{R^2} + c \cdot \frac{\sigma B}{\sqrt{MKR}}
\end{equation}
Finally, by the smoothness of $F$,
\begin{equation}
F(0) - F^* \leq \frac{H}{2}\nrm{0 - x^*}^2 = \frac{HB^2}{2}
\end{equation}
Therefore, the rate claimed by the theorem can be achieved by using whichever of these methods has the smallest upper bound.
\end{proof}

\subsection{Proof of \pref{thm:homogeneous-strongly-convex-lower-bound}} \label{app:intermittent-communication-homogeneous-strongly-convex-lower-bound}

\homogeneousstronglyconvexlowerbound*
\begin{proof}
We prove this from \pref{thm:homogeneous-convex-lower-bound} using the reduction of \citet{zeyuan2016optimal} (see also \pref{thm:elad-reduction} and the discussion in \pref{subsec:lower-bounds-by-reduction}). In particular, we begin by supposing that there were an algorithm $\mc{A}$ which, for any $H$-smooth and $\lambda$-strongly convex objective, guarantees finding an $\epsilon$-suboptimal point using at most 
\begin{equation}\label{eq:homogeneous-sc-lower-bound-wrong}
R \leq \tm_\lambda(\epsilon,\Delta) := c \cdot\prn*{\sqrt{\frac{H}{K^2\lambda}}\log\frac{c'\Delta}{\epsilon} + \min\crl*{\frac{\sigma^2}{\lambda K\epsilon},\, \sqrt{\frac{H}{\lambda\log^2 M}}\log\frac{c'\Delta}{\epsilon}}}
\end{equation}
rounds of communication, when given an initial point $x_0$ with $\E F(x_0) - F^* \leq \Delta$. Here, $c$ and $c'$ are some universal constants. Then, \pref{thm:elad-reduction} implies the existence of an algorithm, $\eladreduction(\mc{A},e)$, that guarantees finding a point with expected suboptimality at most $\epsilon$ for any $H$-smooth and convex objective when given $x_0$ such that $\E\nrm{x_0 - x^*}^2 \leq B^2$ using the following number of rounds of communication:
\begin{align}
R 
&\leq \sum_{t=1}^{\ceil{\log \frac{4HB^2}{\epsilon}}} \tm_{H e^{1-t}}(HB^2e^{-t},HB^2e^{2-t}) \\
&= c\sum_{t=1}^{\ceil{\log \frac{4HB^2}{\epsilon}}} \brk*{\sqrt{\frac{1}{K^2 e^{1-t}}}\log(c'e^2) + \min\crl*{\frac{\sigma^2e^{2t}}{H^2B^2Ke},\, \sqrt{\frac{1}{e^{1-t} \log^2 M }}\log(c'e^2)}} \\
&\leq c\log(c'e^2)\brk*{\frac{1}{K}\sum_{t=1}^{\ceil{\log \frac{4HB^2}{\epsilon}}}e^{\frac{t}{2}} + \min\crl*{\frac{\sigma^2}{H^2B^2K} \sum_{t=1}^{\ceil{\log \frac{4HB^2}{\epsilon}}}e^{2t},\, \frac{1}{\log M}\sum_{t=1}^{\ceil{\log \frac{4HB^2}{\epsilon}}}e^{\frac{t}{2}} }} \\
&\leq \frac{16e^2\sqrt{e}c\log(c'e^2)}{\sqrt{e}-1}\brk*{\sqrt{\frac{HB^2}{K^2\epsilon}} + \min\crl*{\frac{\sigma^2 B^2}{K\epsilon^2},\, \sqrt{\frac{HB^2}{\epsilon \log^2 M}}}}
\end{align}
However, solving this expression for $\epsilon$, this implies that for some positive constant $c''(c,c')$, the algorithm $\eladreduction(\mc{A},e)$ will converge at a rate 
\begin{equation}
\E F(\hat{x}) - F^* \leq c''(c,c')\cdot\prn*{\frac{HB^2}{K^2R^2} + \min\crl*{\frac{\sigma B}{\sqrt{KR}},\, \frac{HB^2}{R^2\log^2 M}}}
\end{equation}
However, when the dimension is at least
\begin{equation}
D \geq c\cdot\prn*{KR + \prn*{\frac{\gamma^2KR}{B^2} + \frac{H^2\gamma^2KR\prn*{\frac{\sqrt{\sigma}KR}{\sqrt{HB}} + MKR}}{\sigma^2}}\log(MK^2R^2)}
\end{equation}
this contradicts the lower bound \pref{thm:homogeneous-convex-lower-bound} when $c''(c,c')$ is too small. We conclude that for some universal constants $c$ and $c'$, the guarantee \eqref{eq:homogeneous-sc-lower-bound-wrong} cannot hold. Solving for $\epsilon$, we conclude that any intermittent communication algorithm must have
\begin{equation}
\E F(\hat{x}) - F^* \geq c'\cdot\prn*{\Delta\exp\prn*{-\frac{c\sqrt{\lambda}KR}{\sqrt{H}}} + \min\crl*{\frac{\sigma^2}{\lambda KR},\, \Delta\exp\prn*{-\frac{c\sqrt{\lambda}R\log M}{\sqrt{H}}}}}
\end{equation}
Finally, by \pref{lem:statistical-term-lower-bound}, we also have that 
\begin{equation}
\E F(\hat{x}) - F^*  \geq c'\cdot\min\crl*{\frac{\sigma^2}{\lambda MKR},\,\Delta}
\end{equation}
in the worst case, which completes the proof of the lower bound.
\end{proof}

\subsection{Proof of \pref{thm:homogeneous-strongly-convex-upper-bound}} \label{app:intermittent-communication-homogeneous-strongly-convex-upper-bound}

\homogeneousstronglyconvexupperbound*
\begin{proof}
The Multi-stage AC-SA algorithm of \citet{ghadimi2013optimal}, run for $T$ iterations with stochastic gradient variance bounded by $\sigma^2$ guarantees \citep[Proposition 7][]{ghadimi2013optimal}
\begin{equation}
\E F(\hat{x}) - F^* \leq c \cdot \prn*{\Delta \exp\prn*{-\frac{c'\sqrt{\lambda}T}{\sqrt{H}}} + \frac{\sigma^2}{\lambda T}}
\end{equation}
Therefore, Single-Machine Accelerated SGD, which corresponds to $KR$ steps with stochastic gradient variance bounded by $\sigma^2$, gives
\begin{equation}
\E F(\hat{x}) - F^* \leq c \cdot \prn*{\Delta \exp\prn*{-\frac{c'\sqrt{\lambda}KR}{\sqrt{H}}} + \frac{\sigma^2}{\lambda KR}}
\end{equation}
Likewise, Minibatch Accelerated SGD, which corresponds to $R$ steps with stochastic gradient variance bounded by $\frac{\sigma^2}{MK}$, gives
\begin{equation}
\E F(\hat{x}) - F^* \leq c \cdot \prn*{\Delta \exp\prn*{-\frac{c'\sqrt{\lambda}R}{\sqrt{H}}} + \frac{\sigma^2}{\lambda MKR}}
\end{equation}
Finally, by assumption $F(0) - F^* \leq \Delta$. Therefore, the claimed rate can be achieved by using whichever of these methods has the smallest suboptimality upper bound.
\end{proof}

\subsection{Proof of \pref{thm:homogeneous-non-convex-lower-bound}} \label{app:intermittent-communication-homogeneous-non-convex-lower-bound}

Our strategy is to construct (1) a non-convex function $F$ such that $\prog{\alpha}(x) \leq T \implies \nrm{\nabla F(x)} \geq \epsilon$ for an appropriate $T$ and $\epsilon$ and (2) an $(\alpha,p,0)$-robust-zero-chain stochastic gradient oracle for $F$. Any intermittent communication algorithm using this oracle will therefore have small progress by \pref{lem:intermittent-communication-progress}, which implies that the gradient is large by the property (1).

Our construction originates with \citet{carmon2017lower1}, for each $T$ we define
\begin{equation}
F_T(x) = -\Psi(1)\Phi(x_1) + \sum_{i=2}^T \bigg[ \Psi(-x_{i-1})\Phi(-x_i) - \Psi(x_{i-1})\Phi(x_i) \bigg]
\end{equation}
where
\begin{align}
\Psi(x) &= \begin{cases} 0 & x \leq \frac{1}{2} \\ \exp\prn*{1 - \frac{1}{(2x-1)^2}} & x > \frac{1}{2} \end{cases} \\
\Phi(x) &= \sqrt{e}\int_{-\infty}^x e^{-\frac{1}{2}t^2}dt
\end{align}
The following lemma summarizes the relevant properties of $F_T$:
\begin{restatable}{lemma}{nonconvexconstruction}\label{lem:non-convex-homogeneous-construction-properties}
The function $F_T$ satisfies
\begin{enumerate}
\item $F_T(0) - \min_x F_T(x) \leq 12 T$
\item $F_T$ is $152$-smooth
\item $\sup_x \nrm{\nabla F(x)}_\infty \leq 23$
\item For all $x$, $\prog{0}(\nabla F_T(x)) \leq \prog{\frac{1}{2}}(x) + 1$
\item For all $x$, $\prog{\frac{1}{2}}(x) < T \implies \nrm{\nabla F_T(x)} > \abs*{\brk*{\nabla F_T(x)}_{\prog{\frac{1}{2}}(x)+1}} > 1$.
\end{enumerate}
\end{restatable}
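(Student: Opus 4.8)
The plan is to deduce all five items from elementary one-variable facts about the scalar bump functions $\Psi,\Phi$ together with the explicit ``chain'' form of $\nabla F_T$. First I would record the properties of $\Psi$ and $\Phi$ that drive everything (these are calculus exercises, essentially as in the construction of \citet{carmon2017lower1}): $\Psi$ is $C^\infty$, nonnegative, nondecreasing, and vanishes together with all of its derivatives on $(-\infty,\tfrac12]$ — the last point from the standard estimate on $\exp(-1/u^2)$ as $u\to0^+$ — with $\Psi(1)=1$, $0\le\Psi<e$, $\abs{\Psi'}\le\sqrt{54/e}$, $\abs{\Psi''}\le 32.5$; and $\Phi'(y)=\sqrt e\,e^{-y^2/2}$ is even and positive with $\Phi'(y)\ge 1$ for $\abs y\le 1$, $\Phi'(\tfrac12)=\sqrt e\,e^{-1/8}$, $0<\Phi<\sqrt{2\pi e}$, $\abs{\Phi''}\le 1$. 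The various sup bounds are attained at interior critical points that can be located explicitly.

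Next I would write $\nabla F_T(x)$ coordinatewise: for $2\le i\le T-1$,
\[
[\nabla F_T(x)]_i = -\Phi'(x_i)\bigl(\Psi(x_{i-1})+\Psi(-x_{i-1})\bigr) - \Psi'(-x_i)\Phi(-x_{i+1}) - \Psi'(x_i)\Phi(x_{i+1}),
\]
with the modifications for $i=1$ (an extra $-\Psi(1)\Phi'(x_1)$, i.e.\ the ``$x_0=1$'' convention) and $i=T$ (no $x_{T+1}$ terms). Property~4 then follows immediately: a coordinate can be nonzero only if $\Psi$ or $\Psi'$ is activated at $x_{i-1}$ or $x_i$, which forces $\abs{x_{i-1}}>\tfrac12$ or $\abs{x_i}>\tfrac12$, hence $i\le\prog{\frac{1}{2}}(x)+1$. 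Property~3 uses the same formula plus the observation that $\Psi(x_{i-1})$ and $\Psi(-x_{i-1})$ have disjoint supports, and likewise $\Psi'(x_i),\Psi'(-x_i)$; so each coordinate is bounded by $e\sqrt e+\sqrt{54/e}\cdot\sqrt{2\pi e}<23$. For Property~1, $\Psi(0)=0$ collapses all but the first summand, giving $F_T(0)=-\Psi(1)\Phi(0)$, while bounding each summand below by $-\Psi(x_{i-1})\Phi(x_i)\ge-e\sqrt{2\pi e}$ gives $\inf_x F_T\ge -\,Te\sqrt{2\pi e}$; since $e\sqrt{2\pi e}<12$ this yields $F_T(0)-\inf_x F_T\le 12T$. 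For Property~2 I would decompose $\nabla^2 F_T(x)=\sum_i B_i(x)$ with $B_i$ supported on the coordinate pair $\{i-1,i\}$; each $B_i$ has entries of the form $\Psi''\Phi$, $\Psi'\Phi'$, $\Psi\Phi''$, hence explicitly bounded operator norm, and since each coordinate lies in at most two of the $B_i$'s a Gershgorin-type argument gives $\nrm{\nabla^2 F_T(x)}_{\mathrm{op}}\le 152$.

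The main obstacle is Property~5. Fix $x$ with $i:=\prog{\frac{1}{2}}(x)<T$, so $\abs{x_j}\le\tfrac12$ for all $j>i$; in particular the arguments $\pm x_{i+1}$ (and $\pm x_{i+2}$) lie in $[-\tfrac12,\tfrac12]$, so the two $\Psi'$-terms at coordinate $i+1$ vanish and
\[
[\nabla F_T(x)]_{i+1} = -\Phi'(x_{i+1})\bigl(\Psi(x_i)+\Psi(-x_i)\bigr),
\]
with the parenthesis read as $\Psi(1)=1$ when $i=0$ (this $i=0$ case is clean, since then $[\nabla F_T(x)]_1=-\Psi(1)\Phi'(x_1)$ has magnitude $\ge\sqrt e\,e^{-1/8}>1$). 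For $i\ge 1$ it remains to show $\Phi'(x_{i+1})\bigl(\Psi(x_i)+\Psi(-x_i)\bigr)>1$; since $\abs{x_i}>\tfrac12$ exactly one of the two $\Psi$ values survives and equals $\Psi(\abs{x_i})$, so this is the quantitative estimate ``$\Phi'(y)\,\Psi(t)>1$'' for $\abs y\le\tfrac12$ and $t$ in the relevant range, and this is the one place the precise constants in $\Psi,\Phi$ are used. I would carry it out with a short case split on $\abs{x_i}$ — the large range handled directly from $\Phi'(x_{i+1})\ge\sqrt e\,e^{-1/8}$ and monotonicity of $\Psi$, the moderate range by instead examining coordinate $i$ itself and using the surviving $\Phi'(x_i)(\Psi(x_{i-1})+\Psi(-x_{i-1}))$ and $\Psi'(x_i)\Phi(x_{i+1})$ terms — and then $\nrm{\nabla F_T(x)}\ge\abs{[\nabla F_T(x)]_{\,i+1}}$ finishes. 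Getting this strict lower bound to hold at the stated threshold, rather than with a slightly larger one, is the delicate point, and is where I would spend the bulk of the effort.
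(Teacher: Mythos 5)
The paper itself proves this lemma almost entirely by citation: parts 1, 3, 4, and 5 are attributed to Lemma 3, Observation 3, and Lemma 2 of \citet{carmon2017lower1}, and the only computation given is the smoothness constant in part 2, obtained from the Gershgorin circle theorem applied to the tri-diagonal Hessian --- exactly the computation you sketch. Your self-contained treatment of parts 1--4 is correct and reproduces those cited facts.

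The genuine gap is in part 5, and it is not one you can close by tightening constants. With $i=\prog{\frac{1}{2}}(x)\ge 1$, your (correct) formula gives $[\nabla F_T(x)]_{i+1}=-\Phi'(x_{i+1})\prn*{\Psi(x_i)+\Psi(-x_i)}$, so the estimate you need is $\Phi'(x_{i+1})\,\Psi(\abs{x_i})>1$. This fails badly whenever $\abs{x_i}$ is only slightly above $\tfrac12$: since $\Psi(t)=\exp\prn*{1-(2t-1)^{-2}}\to 0$ as $t\downarrow\tfrac12$ (e.g.\ $\Psi(0.51)=e^{-2499}$) while $\Phi'\le\sqrt{e}$, the point $x=(0.51,0,\dots,0)$ has $\abs{[\nabla F_T(x)]_{\prog{\frac{1}{2}}(x)+1}}$ essentially zero, so the middle inequality in item 5 cannot be proved at the threshold $\tfrac12$ for that specific coordinate. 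Your fallback of ``examining coordinate $i$ itself'' fails for the same reason, because nothing forces $\abs{x_{i-1}}$ to be large: for $x=(0,0.51,0,\dots,0)$ both coordinates $i$ and $i+1$ of the gradient are tiny, and the large coordinate is the first one. The argument that actually works --- and is what Lemma 2 of \citet{carmon2017lower1}, invoked by the paper, establishes --- takes $j$ to be the \emph{smallest} index with $\abs{x_j}<1$ (with the convention $x_0=1$), observes that every contribution to $[\nabla F_T(x)]_j$ is non-positive, and uses $\Psi(\abs{x_{j-1}})\ge\Psi(1)=1$ together with $\Phi'(x_j)>\sqrt{e}\,e^{-1/2}=1$ to conclude $\nrm{\nabla F_T(x)}\ge\abs{[\nabla F_T(x)]_j}>1$ for some $j\le\prog{\frac{1}{2}}(x)+1$. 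That threshold-$1$, ``smallest uncaptured coordinate'' version is the statement you should prove (and is what the downstream soft-projection argument really needs, up to a constant in the bound on $\abs{\inner{U_j}{x}}$); do not spend effort trying to force the strict bound at exactly the coordinate $\prog{\frac{1}{2}}(x)+1$, since no case analysis can rescue it there.
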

\begin{proof}
Parts 1, 2, and 3 follow from the proof of \citep[Lemma 3][]{carmon2017lower1}. We derive the smoothness constant $152$ for part 2 by observing that the (symmetric) Hessian of $F_T$ is tri-diagonal and therefore by the Gershgorin circle theorem, for any $x$
\begin{align}
\nrm*{\nabla^2 F_T(x)}_{\textrm{op}}
&= \max_i \abs*{\lambda_i\prn*{\nabla^2 F_T(x)}} \\
&\leq \max_i \brk*{\abs*{\brk*{\nabla^2 F_T(x)}_{i,i-1}} + \abs*{\brk*{\nabla^2 F_T(x)}_{i,i}} + \abs*{\brk*{\nabla^2 F_T(x)}_{i,i+1}}} \\
&\leq \max_z\abs{\Psi''(z)}\max_z\abs{\Phi(z)} + \max_z\abs{\Psi(z)}\max_z\abs{\Phi''(z)} + 2\max_z\abs{\Psi'(z)}\max_z\abs{\Phi'(z)} \\
&\leq \frac{65}{2}\cdot \sqrt{2\pi e} + e\cdot 1 + 2 \cdot \sqrt{\frac{54}{e}} \cdot \sqrt{e} \leq 152
\end{align}
Finally, part 4 of the lemma follows from \citep[Observation 3][]{carmon2017lower1} and part 5 from \citep[Lemma 2][]{carmon2017lower1}.
\end{proof}

In addition to the function $F_T$, we also define a stochastic gradient oracle $g_T(x;z)$
\begin{align}
z &= \begin{cases} 0 & \textrm{with probability } 1-p \\ 1 & \textrm{with probability } p \end{cases} \\
g_T(x;0) &= \sum_{i=1}^{\prog{\frac{1}{2}(x)}}e_ie_i^\top \nabla F_T\prn*{\brk*{x_1,x_2,\dots,x_{\prog{\frac{1}{2}}(x)},0,\dots,0}} \\
g_T(x;1) &= \frac{1}{p}\nabla F_T(x) - \frac{1-p}{p}g_T(x;0)
\end{align}
The following lemma confirms that $g_T$ has the desired properties:
\begin{lemma}\label{lem:homogeneous-non-convex-oracle-properties}
For any $p$, the stochastic gradient oracle $g_T$ satisfies
\begin{enumerate}
\item $\E_z g_T(x;z) = \nabla F_T(x)$
\item $\E_z \nrm*{g_T(x;z) - \nabla F_T(x)}^2 \leq \frac{1058(1-p)}{p}$
\item $g_T$ is a $(\frac{1}{2},p,0)$-robust-zero-chain oracle.
\end{enumerate}
\end{lemma}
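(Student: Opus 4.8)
\textbf{Proof plan for \pref{lem:homogeneous-non-convex-oracle-properties}.}
The plan is to verify each of the three claimed properties directly from the definition of $g_T$, mirroring the structure of the analogous verification in \pref{lem:homogeneous-convex-generic-psi-construction} (property 5--7) but adapted to the non-convex construction and to the zero-respecting-chain property of $\nabla F_T$ established in \pref{lem:non-convex-homogeneous-construction-properties}(4). Throughout, abbreviate $j := \prog{\frac{1}{2}}(x)$ and $\nabla_0 := \sum_{i=1}^{j}e_ie_i^\top \nabla F_T(\brk*{x_1,\dots,x_j,0,\dots,0})$, so that $g_T(x;0) = \nabla_0$ and $g_T(x;1) = \frac{1}{p}\nabla F_T(x) - \frac{1-p}{p}\nabla_0$.

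First, unbiasedness (property 1) is immediate by linearity: $\E_z g_T(x;z) = (1-p)\nabla_0 + p\prn*{\frac{1}{p}\nabla F_T(x) - \frac{1-p}{p}\nabla_0} = \nabla F_T(x)$, with no cancellation subtleties. Second, for the variance bound (property 2) I would write
\begin{align*}
\E_z \nrm*{g_T(x;z) - \nabla F_T(x)}^2
&= (1-p)\nrm*{\nabla_0 - \nabla F_T(x)}^2 + p\nrm*{\tfrac{1}{p}\nabla F_T(x) - \tfrac{1-p}{p}\nabla_0 - \nabla F_T(x)}^2 \\
&= (1-p)\nrm*{\nabla_0 - \nabla F_T(x)}^2 + \tfrac{(1-p)^2}{p}\nrm*{\nabla F_T(x) - \nabla_0}^2
= \tfrac{1-p}{p}\nrm*{\nabla F_T(x) - \nabla_0}^2,
\end{align*}
so everything reduces to bounding $\nrm*{\nabla F_T(x) - \nabla_0}^2$. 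Here I use that $\nabla F_T$ has the chain structure of \pref{lem:non-convex-homogeneous-construction-properties}(4): coordinates of $\nabla F_T(x)$ beyond index $j+1$ vanish, and coordinates up to $j$ of $\nabla F_T(x)$ depend only on $x_1,\dots,x_{j+1}$. The difference $\nabla F_T(x) - \nabla_0$ is therefore supported on coordinates $j-1,\ j,\ j+1$ (the tri-diagonal Hessian structure means zeroing out $x_{j+1},x_{j+2},\dots$ can only perturb a gradient coordinate adjacent to index $j$), and each such coordinate is bounded in absolute value by $\sup_x\nrm{\nabla F_T(x)}_\infty \le 23$ from \pref{lem:non-convex-homogeneous-construction-properties}(3). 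Hence $\nrm*{\nabla F_T(x) - \nabla_0}^2 \le 3 \cdot 23^2 = 1587$; a slightly more careful accounting of how many coordinates actually change, together with Lipschitzness of $\Psi,\Phi$ and their derivatives on the relevant ranges, should sharpen the constant to the stated $1058$. Third, property 3 is a matter of reading off \pref{def:robust-zero-chain-oracle} with $\mc{Z}_0 = \{0\}$, $\mc{Z}_1 = \{1\}$, $\alpha = \frac12$, $\delta = 0$: condition (1) is trivial, condition (2) holds since $\P(z = 0) = 1-p$, condition (3) follows because $g_T(x;0) = \nabla_0$ visibly has $\prog{0} \le j = \prog{\frac12}(x)$ and is a function of $x_1,\dots,x_j$ only, and condition (4) follows from \pref{lem:non-convex-homogeneous-construction-properties}(4) which gives $\prog{0}(\nabla F_T(x)) \le j+1$, together with the fact that $g_T(x;1)$ is a linear combination of $\nabla F_T(x)$ and $\nabla_0$ and hence depends only on $x_1,\dots,x_{j+1}$.

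The main obstacle is the bookkeeping in property 2: precisely identifying which coordinates of $\nabla F_T(x) - \nabla_0$ are nonzero and pinning down the constant. The cleanest route is to exploit the tri-diagonal Hessian: writing $\tilde{x} := \brk*{x_1,\dots,x_j,0,\dots,0}$, the vectors $\nabla F_T(x)$ and $\nabla F_T(\tilde x)$ agree on coordinates $1,\dots,j-1$ (those gradient entries depend only on coordinates $\le j$, hence are unchanged by zeroing coordinates $> j$), while $\nabla_0$ is $\nabla F_T(\tilde x)$ truncated to its first $j$ entries—and since $\prog{\frac12}(\tilde x) \le j$, \pref{lem:non-convex-homogeneous-construction-properties}(4) gives that $\nabla F_T(\tilde x)$ already vanishes beyond coordinate $j+1$, so $\nabla_0$ drops at most the single entry $[\nabla F_T(\tilde x)]_{j+1}$. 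Thus $\nabla F_T(x) - \nabla_0$ is supported on coordinates $j, j+1$ only, each bounded by $23$ in magnitude, giving $\nrm*{\nabla F_T(x) - \nabla_0}^2 \le 2 \cdot 23^2 = 1058$ exactly, which matches the stated bound. This also confirms there are no hidden issues with the definition of $\prog{\frac12}$ when coordinate $x_{j+1}$ happens to have magnitude just below $\frac12$, since the argument only uses $\prog{0}$ bounds on the gradients, not the precise value of $j$. Everything else is routine.
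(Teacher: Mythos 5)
Your proposal follows essentially the same route as the paper's proof: unbiasedness by linearity, reduction of the variance to $\frac{1-p}{p}\nrm*{g_T(x;0)-\nabla F_T(x)}^2$, identification (via the tri-diagonal structure of $\nabla F_T$ and part 4 of \pref{lem:non-convex-homogeneous-construction-properties}) of the difference as supported only on coordinates $j$ and $j+1$ where $j=\prog{\frac{1}{2}}(x)$, the bound $2\cdot 23^2=1058$ from $\sup_x\nrm*{\nabla F_T(x)}_\infty\le 23$, and verification of \pref{def:robust-zero-chain-oracle} with $\mc{Z}_0=\crl{0}$, $\mc{Z}_1=\crl{1}$. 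Parts 1 and 2 are correct, and your refined support argument is exactly the paper's (your appeal to the $\ell_\infty$ bound on the coordinate-$j$ difference is at the same level of rigor as the paper's own proof, which cites the same bound).

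The one incomplete step is in condition (4) of the robust-zero-chain check. You assert that since $\prog{0}(\nabla F_T(x))\le j+1$ and $g_T(x;1)$ is a linear combination of $\nabla F_T(x)$ and $\nabla_0$, it ``hence depends only on $x_1,\dots,x_{j+1}$.'' That inference does not follow: knowing that the nonzero entries of $\nabla F_T(x)$ sit in the first $j+1$ coordinates says nothing about which coordinates of $x$ those entries depend on. Concretely, $[\nabla F_T(x)]_{j+1}$ contains the terms $-\Psi'(-x_{j+1})\Phi(-x_{j+2})-\Psi'(x_{j+1})\Phi(x_{j+2})$, which formally involve $x_{j+2}$. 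The missing observation---precisely what the paper's proof verifies in its final display---is that $j=\prog{\frac{1}{2}}(x)$ forces $\abs{x_{j+1}}\le\frac{1}{2}$, hence $\Psi'(\pm x_{j+1})=0$, so the $x_{j+2}$-dependence vanishes and $\nabla F_T(x)=\nabla F_T([x_1,\dots,x_{j+1},0,\dots,0])$, which is what condition (4) actually requires. With that one line added, your part 3 is complete.
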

\begin{proof}
A simple calculation shows that
\begin{equation}
\E_z g_T(x;z) = (1-p)g_T(x;0) + p\prn*{\frac{1}{p}\nabla F_T(x) - \frac{1-p}{p}g_T(x;0)} = \nabla F_T(x)
\end{equation}
Furthermore, 
\begin{align}
\E_z &\nrm*{g_T(x;z) - \nabla F_T(x)}^2\nonumber\\
&= (1-p)\nrm*{g_T(x;0) - \nabla F_T(x)}^2 + p\nrm*{\frac{1}{p}\nabla F_T(x) - \frac{1-p}{p}g_T(x;0) - \nabla F_T(x)}^2 \\
&= \frac{1-p}{p}\nrm*{g_T(x;0) - \nabla F_T(x)}^2
\end{align}
Each coordinate of the gradient is given by
\begin{equation}
\brk*{\nabla F_T(x)}_i = - \Psi(-x_{i-1})\Phi'(-x_i) - \Psi(x_{i-1})\Phi'(x_i) - \Psi'(-x_i)\Phi(-x_{i+1}) - \Psi'(x_i)\Phi(x_{i+1})
\end{equation}
so the $i\mathth$ coordinate of $\nabla F_T$ only depends on the $i-1$, $i$ and $i+1$ coordinates of $x$. Furthermore, most of the coordinates of $g_T(x;0)$ are equal to the corresponding coordinates of $\nabla F_T(x)$. Specifically, let 
\begin{equation}
\tilde{x} = \brk*{x_1,x_2,\dots,x_{\prog{\frac{1}{2}}(x)},0,\dots,0}
\end{equation}
Then for $i < \prog{\frac{1}{2}}(x)$, $\tilde{x}_{i-1} = x_{i-1}$, $\tilde{x}_i = x_i$, and $\tilde{x}_{i+1} = x_{i+1}$, so $\brk*{g_T(x;0)}_i = \brk*{\nabla F_T(\tilde{x})}_i = \brk*{\nabla F_T(x)}_i$. Similarly, for $i > \prog{\frac{1}{2}}(x) + 1 \geq \prog{\frac{1}{2}}(\tilde{x}) + 1$, by part 4 of \pref{lem:non-convex-homogeneous-construction-properties}, $\brk*{g_T(x;0)}_i = \brk*{\nabla F_T(\tilde{x})}_i = \brk*{\nabla F_T(x)}_i = 0$. 

Therefore, $g(x;0)$ and $\nabla F_T(x)$ differ on at most two coordinates, the $\prog{\frac{1}{2}}(x)\mathth$ and $(\prog{\frac{1}{2}}(x) + 1)\mathth$, so part 3 of \pref{lem:non-convex-homogeneous-construction-properties} implies
\begin{align}
\E_z \nrm*{g_T(x;z) - \nabla F_T(x)}^2
= \frac{1-p}{p}\nrm*{g_T(x;0) - \nabla F_T(x)}^2
\leq \frac{2\cdot23^2(1-p)}{p}
\end{align}

Finally, for $\mc{Z}_0 = \crl{0}$, $\mc{Z}_1 = \crl{1}$, $g_T$ is a $(\frac{1}{2},p,0)$-robust zero chain because $\P(z \in \mc{Z}_0\cup\mc{Z}_1) = 1$ and $\P(z \in \mc{Z}_0\,|\,z\in\mc{Z}_0\cup\mc{Z}_1) = 1-p$. Furthermore, when $z = 0 \in \mc{Z}_0$ it is obvious that
\begin{equation}\label{eq:homogeneous-non-convex-g-is-robust-zc}
\prog{\frac{1}{2}}(x) \leq i \implies g(x;0) = g([x_1,\dots,x_i,0,\dots,0],0)
\end{equation}
Finally, when $z = 1 \in \mc{Z}_1$, $g(x;1) = \frac{1}{p}\nabla F_T(x) - \frac{1-p}{p}g_T(x;0)$. The second term depends only on $x_1,\dots,x_{\prog{\frac{1}{2}}(x)}$ by \eqref{eq:homogeneous-non-convex-g-is-robust-zc}. Furthermore,
\begin{equation}
\nabla F_T(x) = \nabla F_T([x_1,\dots,x_{\prog{\frac{1}{2}}(x) + 1},0,\dots,0])
\end{equation}
because the only the $\prog{\frac{1}{2}}(x) + 1,\dots,T$ coordinates of $\nabla F_T(x)$ depend on the $\prog{\frac{1}{2}}(x) + 2,\dots,T$ coordinates of $x$. By part 4 of \pref{lem:non-convex-homogeneous-construction-properties}, the $\prog{\frac{1}{2}}(x) + 2,\dots,T$ coordinates of $\nabla F_T(x)$ and $\nabla F_T([x_1,\dots,x_{\prog{\frac{1}{2}}(x) + 1},0,\dots,0])$ are all zero. Finally, for $i = \prog{\frac{1}{2}}(x)$, since $\Psi(z) = \Psi'(z) = 0$ for $z \leq \frac{1}{2}$,
\begin{align}
&\brk*{\nabla F_T([x_1,\dots,x_{i+1},0,\dots,0])}_{i+1} \nonumber\\
&= -\Psi(-x_i)\Phi'(-x_{i+1}) - \Psi(x_i)\Phi'(x_{i+1}) - \Psi'(-x_{i+1})\Phi(0) - \Psi'(x_{i+1})\Phi(0) \\
&= -\Psi(-x_i)\Phi'(-x_{i+1}) - \Psi(x_i)\Phi'(x_{i+1}) \\
&= -\Psi(-x_i)\Phi'(-x_{i+1}) - \Psi(x_i)\Phi'(x_{i+1}) - \Psi'(-x_{i+1})\Phi(x_{i+2}) - \Psi'(x_{i+1})\Phi(x_{i+2}) \\
&= \brk*{\nabla F_T(x)}_{i+1}
\end{align}
This establishes that $g_T$ is a $(\frac{1}{2},p,0)$-robust-zero-chain.
\end{proof}

We will proceed to combine \pref{lem:non-convex-homogeneous-construction-properties} and \pref{lem:homogeneous-non-convex-oracle-properties} with \pref{lem:intermittent-communication-progress} allows us to prove the lower bound. However, one of the conditions of \pref{lem:intermittent-communication-progress} is that the norm of the oracle queries is bounded. To enforce the we introduce an additional modification to the objective along with the random rotation. Specifically, we introduce the soft projection
\begin{equation}
\rho(x) = \frac{x}{\sqrt{1 + \frac{\nrm{x}^2}{\beta^2}}}
\end{equation}
where $\beta = \frac{240\sqrt{T}}{\zeta}$ and we define
\begin{align}
\hat{F}_{T,U}(x) &= \frac{\gamma}{\zeta^2} F_T(\zeta U^\top \rho(x)) + \frac{\gamma}{10\zeta^2}\nrm{\zeta x}^2 \\
\hat{g}_{T,U}(x;z) &= \frac{\gamma}{\zeta}\nabla\rho(x)U g_T(\zeta U^\top \rho(x);z) + \frac{\gamma}{5}x
\end{align}
We now verify that $\hat{F}_{T,U}$ and $\hat{g}_{T,U}(x;z)$ satisfy essentially the same properties as $F_T$ and $g_T$:
\begin{lemma}\label{lem:soft-projection-properties}
For any $T \geq 1$, $\gamma,\zeta \geq 0$, and $U$ with $U^\top U = I_{d\times{}d}$,
\begin{enumerate}
\item $\hat{F}_{T,U}(0) - \min_x \hat{F}_{T,U}(x) \leq \frac{12\gamma T}{\zeta^2}$
\item $F_{T,U}$ is $154\gamma$-smooth
\item $\E_z \hat{g}_{T,U}(x;z) = \nabla \hat{F}_{T,U}(x)$
\item $\E_z \nrm*{\hat{g}_{T,U}(x;z) - \nabla \hat{F}_{T,U}(x)}^2 \leq \frac{1058\gamma^2(1-p)}{\zeta^2 p}$
\item For any $x$, $\prog{\frac{1}{2}}(\zeta U^\top \rho(x)) < T \implies \nrm{\nabla \hat{F}_{T,U}(x)} \geq \frac{\gamma}{2\zeta}$
\end{enumerate}
\end{lemma}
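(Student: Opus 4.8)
The plan is to verify the five claims one at a time, each reducing — via the chain rule plus the elementary properties of the soft projection $\rho(x) = x/\sqrt{1+\nrm{x}^2/\beta^2}$ established in \citep{carmon2017lower1} — to the corresponding fact about $F_T$ and $g_T$ from \pref{lem:non-convex-homogeneous-construction-properties} and \pref{lem:homogeneous-non-convex-oracle-properties}. The facts I will use about $\rho$ are: it is the gradient of the convex function $x\mapsto \beta^2\sqrt{1+\nrm{x}^2/\beta^2}$, so its Jacobian $\nabla\rho(x) = aI - b\,xx^\top$ is symmetric positive semidefinite with $a = (1+\nrm{x}^2/\beta^2)^{-1/2}\in(0,1]$ and $b = \beta^{-2}(1+\nrm{x}^2/\beta^2)^{-3/2}$; in particular $\nrm{\nabla\rho(x)}_{\mathrm{op}}\le 1$, the image of $\rho$ lies in the ball of radius $\beta$, and the third derivative of $\rho$ has magnitude $O(1/\beta)$. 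Write $y := \zeta U^\top\rho(x)\in\R^T$; since $U\in\R^{D\times T}$ has orthonormal columns, $\nrm{Uv}=\nrm{v}$ and $\nrm{U}_{\mathrm{op}}=1$.

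Claims (1), (3), (4) are essentially immediate. For (1), $\rho(0)=0$ gives $\hat F_{T,U}(0) = (\gamma/\zeta^2)F_T(0)$, while for every $x$ the quadratic term is nonnegative and $F_T(y)\ge\min_z F_T(z)$, so the gap is at most $(\gamma/\zeta^2)(F_T(0)-\min_z F_T(z))\le 12\gamma T/\zeta^2$ by \pref{lem:non-convex-homogeneous-construction-properties}. For (3), the chain rule gives $\nabla\hat F_{T,U}(x) = (\gamma/\zeta)\nabla\rho(x)U\nabla F_T(y) + (\gamma/5)x$, and pushing $\E_z$ inside together with $\E_z g_T(y;z) = \nabla F_T(y)$ yields $\E_z\hat g_{T,U}(x;z) = \nabla\hat F_{T,U}(x)$. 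For (4), the noise is $\hat g_{T,U}(x;z)-\nabla\hat F_{T,U}(x) = (\gamma/\zeta)\nabla\rho(x)U(g_T(y;z)-\nabla F_T(y))$, so $\nrm{\hat g_{T,U}(x;z)-\nabla\hat F_{T,U}(x)}^2\le(\gamma/\zeta)^2\nrm{g_T(y;z)-\nabla F_T(y)}^2$ using $\nrm{\nabla\rho(x)}_{\mathrm{op}}\le1$ and $\nrm{U\cdot}=\nrm{\cdot}$; taking $\E_z$ and invoking the variance bound in \pref{lem:homogeneous-non-convex-oracle-properties} gives the claim. For (2), I differentiate once more: $\nabla^2\hat F_{T,U}(x)$ is the sum of $\gamma\,\nabla\rho(x)U\nabla^2 F_T(y)U^\top\nabla\rho(x)$, a term $(\gamma/\zeta)\sum_k [U\nabla F_T(y)]_k\,\nabla^2\rho_k(x)$, and $(\gamma/5)I$. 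The first has operator norm at most $152\gamma$ by the $152$-smoothness of $F_T$ and $\nrm{\nabla\rho(x)}_{\mathrm{op}}\le1$; the last contributes $\gamma/5$; and for the middle term the $O(1/\beta)$ curvature bound for $\rho$, together with $\nrm{U\nabla F_T(y)} = \nrm{\nabla F_T(y)}\le 23\sqrt{T}$ (from $\nrm{\nabla F_T}_\infty\le23$) and the choice $\beta = 240\sqrt{T}/\zeta$, yields a bound of the form $(23C/240)\gamma$ for an absolute constant $C$, safely below $2\gamma$. Summing gives $154\gamma$-smoothness.

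Claim (5) is the real work and needs a case split on $\nrm{x}$. If $\nrm{x}\ge\beta/2 = 120\sqrt{T}/\zeta$, the regularizing gradient dominates: $\nrm{\nabla\hat F_{T,U}(x)}\ge(\gamma/5)\nrm{x} - (\gamma/\zeta)\nrm{\nabla\rho(x)U\nabla F_T(y)}\ge 24\gamma\sqrt{T}/\zeta - 23\gamma\sqrt{T}/\zeta = \gamma\sqrt{T}/\zeta\ge\gamma/(2\zeta)$, using $\nrm{\nabla\rho(x)}_{\mathrm{op}}\le1$ and $\nrm{\nabla F_T(y)}\le23\sqrt{T}$. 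If $\nrm{x}<\beta/2$, set $j := \prog{\frac{1}{2}}(y)+1$; the hypothesis $\prog{\frac{1}{2}}(y)<T$ makes $j$ a legitimate coordinate, and \pref{lem:non-convex-homogeneous-construction-properties} gives $\abs{[\nabla F_T(y)]_j}>1$. Since $j>\prog{\frac{1}{2}}(y)$ we have $\abs{y_j}\le\tfrac{1}{2}$, i.e.\ $\abs{\inner{U_j}{\rho(x)}}\le 1/(2\zeta)$; as $\rho(x) = a\,x$ with $a>2/\sqrt{5}$ on $\nrm{x}<\beta/2$, this forces $\abs{\inner{U_j}{x}}\le\sqrt{5}/(4\zeta)$. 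I then lower-bound $\nrm{\nabla\hat F_{T,U}(x)}$ by its component along the unit vector $U_j$: using $\nabla\rho(x) = aI - b\,xx^\top$ and orthonormality of the columns of $U$, this component equals $(\gamma/\zeta)(a[\nabla F_T(y)]_j - b\inner{U_j}{x}\inner{U\nabla F_T(y)}{x}) + (\gamma/5)\inner{U_j}{x}$. The leading term has absolute value exceeding $(\gamma/\zeta)a > 2\gamma/(\sqrt{5}\,\zeta)$, while the two corrections are each a small multiple of $\gamma/\zeta$, controlled via $\abs{\inner{U_j}{x}}\le\sqrt{5}/(4\zeta)$, $b\nrm{x}\le 1/(2\beta)$, $\nrm{U\nabla F_T(y)}\le 23\sqrt{T}$ and $\beta = 240\sqrt{T}/\zeta$; the difference is at least $\gamma/(2\zeta)$. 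The step I expect to be most delicate is precisely this bounded-norm case: one must keep both correction terms strictly below the available $1/2$ slack, which is exactly why $\beta$ was taken proportional to $\sqrt{T}/\zeta$ with that particular constant, and why the coefficient of the $(\gamma/10\zeta^2)\nrm{\zeta x}^2$ penalty was calibrated to $1/10$.
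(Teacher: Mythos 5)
Your proposal is correct and follows essentially the same route as the paper: parts (1), (3), (4) reduce immediately to \pref{lem:non-convex-homogeneous-construction-properties} and \pref{lem:homogeneous-non-convex-oracle-properties} via the chain rule and $\nrm{\nabla\rho(x)}_{\mathrm{op}}\leq 1$, part (2) uses the same three ingredients (smoothness of $F_T$, the $O(1/\beta)$ curvature of $\rho$, and $\nrm{\nabla F_T}\leq 23\sqrt{T}$ with $\beta=240\sqrt{T}/\zeta$) — you bound the Hessian where the paper bounds gradient differences, which is equivalent — and part (5) uses the same split at $\nrm{x}=\beta/2$ with the same arithmetic in the large-norm case. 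The one place you genuinely deviate is the small-norm case of (5): the paper passes from $\nabla\rho(x)\succeq\frac{7}{10}I$ directly to a lower bound on the bilinear quantity $\abs{\inner{U_j}{\nabla\rho(x)U\nabla F_T(y)}}$, a step that is not valid for general PSD matrices and really relies on the rank-one structure $\nabla\rho(x)=aI-b\,xx^\top$; your explicit expansion of the $U_j$-component and separate control of the $b\inner{U_j}{x}\inner{x}{U\nabla F_T(y)}$ correction makes that step rigorous, at the cost of slightly worse but still sufficient constants.
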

\begin{proof}
For property 1, we note that $\hat{F}_{T,U}(0) = \frac{\gamma}{\zeta^2}F_T(0) \leq 0$ and by part 1 of \pref{lem:non-convex-homogeneous-construction-properties}
\begin{equation}
\min_x \hat{F}_{T,U}(x) = \min_x \frac{\gamma}{\zeta^2}F_T(\zeta U^\top \rho(x)) + \frac{\gamma}{10}\nrm{x}^2 \geq \min_{x:\nrm{x}\leq\zeta\beta} \frac{\gamma}{\zeta^2} F_T(x) \geq \frac{\gamma}{\zeta^2}\min_x F_T(x) \geq -\frac{12\gamma T}{\zeta^2}
\end{equation}
For property 2, we note that for any $x,y$
\begin{align}
&\nrm*{\nabla \hat{F}_{T,U}(x) - \nabla \hat{F}_{T,U}(y)}\nonumber\\
&= \nrm*{\frac{\gamma}{\zeta}\nabla \rho(x) U\nabla F_T(\zeta U^\top\rho(x)) - \frac{\gamma}{\zeta}\nabla \rho(y) U\nabla F_T(\zeta U^\top\rho(y)) + \frac{\gamma}{5}(x-y)} \\
&\leq \frac{\gamma}{\zeta}\nrm*{\nabla \rho(x) U\prn*{\nabla F_T(\zeta U^\top\rho(x)) - \nabla F_T(\zeta U^\top\rho(y))} + \prn*{\nabla \rho(x) - \nabla \rho(y)}U\nabla F_T(\zeta U^\top\rho(y))} + \frac{\gamma}{5}\nrm{x-y} \\
&\leq \frac{\gamma}{\zeta}\nrm*{\nabla \rho(x)}_{\textrm{op}}\nrm*{\nabla F_T(\zeta U^\top\rho(x)) - \nabla F_T(\zeta U^\top\rho(y))} + \frac{\gamma}{\zeta}\nrm*{\nabla \rho(x) - \nabla \rho(y)}_{\textrm{op}}\nrm*{U\nabla F_T(\zeta U^\top\rho(y))} + \frac{\gamma}{5}\nrm{x-y} \label{eq:thm-homogeneous-non-convex-thm-1}
\end{align}
First, 
\begin{equation}
\nrm*{\nabla \rho(x)}_{\textrm{op}} = \nrm*{\frac{1}{\sqrt{1 + \frac{\nrm{x}^2}{\beta^2}}}I - \frac{xx^\top}{\beta^2\prn*{1 + \frac{\nrm{x}^2}{\beta^2}}^{3/2}}}_{\textrm{op}} \leq 1
\end{equation}
Therefore, since $F_T$ is $152$-smooth by \pref{lem:non-convex-homogeneous-construction-properties},
\begin{equation}
\nrm*{\nabla F_T(\zeta U^\top\rho(x)) - \nabla F_T(\zeta U^\top\rho(y))} \leq 152\zeta\nrm{U^\top\rho(x) - U^\top\rho(y)} \leq 152\zeta \nrm{x-y}
\end{equation}
Next, we define $h(t) = \frac{1}{\sqrt{1+t^2}}$, which is $1$-Lipschitz, and bound:
\begin{align}
&\nrm*{\nabla \rho(x) - \nabla \rho(y)}_{\textrm{op}}\nonumber\\
&= \nrm*{h\prn*{\frac{\nrm{x}}{\beta}}I - h\prn*{\frac{\nrm{x}}{\beta}}\frac{\rho(x)\rho(x)^\top}{\beta^2} - h\prn*{\frac{\nrm{y}}{\beta}}I + h\prn*{\frac{\nrm{y}}{\beta}}\frac{\rho(y)\rho(y)^\top}{\beta^2}}_{\textrm{op}} \\
&\leq h\prn*{\frac{\nrm{y}}{\beta}}\nrm*{\frac{\rho(x)\rho(x)^\top}{\beta^2} - \frac{\rho(y)\rho(y)^\top}{\beta^2}}_{\textrm{op}} + \abs*{h\prn*{\frac{\nrm{x}}{\beta}} - h\prn*{\frac{\nrm{y}}{\beta}}}\nrm*{I - \frac{\rho(x)\rho(x)^\top}{\beta^2}}_{\textrm{op}} \\
&\leq \nrm*{\frac{\rho(x)\rho(x)^\top}{\beta^2} - \frac{\rho(y)\rho(y)^\top}{\beta^2}}_{\textrm{op}} + \abs*{\frac{\nrm{x}}{\beta} - \frac{\nrm{y}}{\beta}} \\
&\leq \frac{1}{\beta}\nrm{x-y} + \sup_{v:\nrm{v}\leq 1}\nrm*{\frac{\rho(x)\rho(x)^\top}{\beta^2}v - \frac{\rho(y)\rho(y)^\top}{\beta^2}v} \\
&= \frac{1}{\beta}\nrm{x-y} + \sup_{v:\nrm{v}\leq 1}\nrm*{\prn*{\frac{\rho(x)}{\beta} - \frac{\rho(y)}{\beta}}\inner{\frac{\rho(x)}{\beta}}{v} + \frac{\rho(y)}{\beta}\inner{\frac{\rho(x)}{\beta} - \frac{\rho(y)}{\beta}}{v}} \\
&\leq \frac{1}{\beta}\nrm{x-y} + \sup_{v:\nrm{v}\leq 1}\nrm*{\frac{\rho(x)}{\beta} - \frac{\rho(y)}{\beta}}\nrm*{\frac{\rho(x)}{\beta}}\nrm{v} + \nrm*{\frac{\rho(x)}{\beta} - \frac{\rho(y)}{\beta}}\nrm*{\frac{\rho(y)}{\beta}}\nrm*{v} \\
&\leq \frac{3}{\beta}\nrm{x-y}
\end{align}
Finally, we use \pref{lem:non-convex-homogeneous-construction-properties} to bound
\begin{equation}
\nrm*{U\nabla F_T(\zeta U^\top\rho(y))} \leq \sqrt{T}\nrm{\nabla F_T(\zeta U^\top\rho(y))}_\infty \leq 23\sqrt{T}
\end{equation}
Therefore, returning to \eqref{eq:thm-homogeneous-non-convex-thm-1}, we conclude that
\begin{align}
\nrm*{\nabla \hat{F}_{T,U}(x) - \nabla \hat{F}_{T,U}(y)}
&\leq 152\gamma\nrm{x-y} + \frac{69\gamma\sqrt{T}}{\zeta\beta}\nrm{x-y} + \frac{\gamma}{5}\nrm{x-y} \\
&\leq 154\gamma\nrm{x-y}
\end{align}
where we used that $\beta = \frac{240\sqrt{T}}{\zeta} > \frac{69\sqrt{T}}{\zeta}$. Therefore, we conclude that $\nabla \hat{F}_{T,U}(x)$ is $154\gamma$-smooth.

Property 3 follows immediately from \pref{lem:homogeneous-non-convex-oracle-properties}, and to show property 4 we bound
\begin{align}
\sup_x&\E_z \nrm*{\hat{g}_{T,U}(x;z) - \nabla \hat{F}_{T,U}(x)}^2\nonumber\\
&= \sup_x \E_z \nrm*{\frac{\gamma}{\zeta}\nabla \rho(x)U g_T(\zeta U^\top\rho(x);z) + \frac{\gamma}{5}x - \frac{\gamma}{\zeta}\nabla \rho(x)U\nabla F_T(\zeta U^\top\rho(x)) - \frac{\gamma}{5}x}^2 \\
&\leq \frac{\gamma^2}{\zeta^2}\sup_x \E_z \nrm*{g_T(\zeta U^\top\rho(x);z) - \nabla F_T(\zeta U^\top\rho(x))}^2 \\
&\leq \frac{\gamma^2}{\zeta^2}\sup_y \E_z \nrm*{g_T(y;z) - \nabla F_T(y)}^2 \\
&\leq \frac{1058\gamma^2(1-p)}{\zeta^2 p}
\end{align}
where the last line follows from \pref{lem:homogeneous-non-convex-oracle-properties}.

Finally, for property 5, we note that
\begin{equation}
\nrm*{\nabla \hat{F}_{T,U}(x)} = \nrm*{\frac{\gamma}{\zeta}\nabla \rho(x) U \nabla F_T(\zeta U^\top \rho(x)) + \frac{\gamma}{5}x}
\end{equation}
We now consider two cases. First, if $\nrm{x} \leq \frac{\beta}{2}$ then 
\begin{equation}
\nabla \rho(x) = \frac{1}{\sqrt{1 + \frac{\nrm{x}^2}{\beta^2}}}I - \frac{xx^\top}{\beta^2\prn*{1 + \frac{\nrm{x}^2}{\beta^2}}^{3/2}} \succeq \prn*{\inf_{z\in[0,1/2]}\frac{1}{\sqrt{1 + z^2}} - \frac{z^2}{(1 + z^2)^{3/2}}}I 
\succeq \frac{7}{10}I
\end{equation}
Let $j = \prog{\frac{1}{2}}(\zeta U^\top \rho(x)) + 1 \leq T$ so
\begin{align}
\nrm*{\nabla \hat{F}_{T,U}(x)}
&\geq \abs*{\inner{U_j}{\nabla \hat{F}_{T,U}(x)}} \\
&\geq \frac{7\gamma}{10\zeta}\abs*{\brk*{\nabla F_T(\zeta U^\top \rho(x))}_j} - \frac{\gamma}{5}\abs*{\inner{U_j}{x}} \\
&\geq \frac{7\gamma}{10\zeta} - \frac{\gamma}{5}\abs*{\inner{U_j}{x}}
\end{align}
where for the final inequality we used \pref{lem:non-convex-homogeneous-construction-properties}. In addition, $\nrm{x} \leq \frac{\beta}{2}$ and $\prog{\frac{1}{2}}(\zeta U^\top \rho(x)) < j$ implies
\begin{equation}
\abs*{\inner{U_j}{x}} \leq \frac{1}{2\zeta}\sqrt{1 + \frac{\nrm{x}^2}{\beta^2}} \leq \frac{6}{10\zeta}
\end{equation}
so if $\nrm{x} \leq \frac{\beta}{2}$ then
\begin{equation}
\nrm*{\nabla \hat{F}_{T,U}(x)} \geq \frac{\gamma}{2\zeta}
\end{equation}
On the other hand, if $\nrm{x} > \frac{\beta}{2}$ then by part 3 of \pref{lem:non-convex-homogeneous-construction-properties} and the choice $\beta = \frac{240\sqrt{T}}{\zeta}$,
\begin{align}
\nrm*{\nabla \hat{F}_{T,U}(x)} 
&= \nrm*{\frac{\gamma}{\zeta}\nabla \rho(x) U \nabla F_T(\zeta U^\top \rho(x)) + \frac{\gamma}{5}x} \\
&\geq \frac{\gamma}{5}\nrm{x} - \frac{\gamma}{\zeta}\nrm*{\nabla \rho(x) U \nabla F_T(\zeta U^\top \rho(x))} \\
&\geq \frac{\gamma\beta}{10} - \frac{\gamma}{\zeta}\nrm*{\nabla F_T(\zeta U^\top \rho(x))} \\
&\geq \frac{\gamma\beta}{10} - \frac{23\gamma\sqrt{T}}{\zeta} \\
&= \frac{24\gamma\sqrt{T}}{\zeta} - \frac{23\gamma\sqrt{T}}{\zeta} \\
&= \frac{\gamma\sqrt{T}}{\zeta} > \frac{\gamma}{2\zeta}
\end{align}
This completes the proof.
\end{proof}

\homogeneousnonconvexlowerbound*
\begin{proof}
We prove the lower bound using $\hat{F}_{T,U}$ and $\hat{g}_{T,U}$ for a uniformly random orthogonal $U \in \R^{D\times T}$ for 
\begin{equation}
D = T + 8\zeta^2\beta^2\log(32MKRT)
\end{equation}
Since
\begin{equation}
\hat{g}_{T,U}(x;z) = \frac{\gamma}{\zeta}\nabla\rho(x)U g_T(\zeta U^\top \rho(x);z) + \frac{\gamma}{5\zeta}x
\end{equation}
an intermittent communication algorithm that interacts with $\hat{g}_{T,U}(x;z)$ is equivalent to one that interacts with $U g_T(U^\top x;z)$ using queries of norm less than $\zeta\beta$ since $\rho(x)$, $\nabla \rho(x)$, and $x$ are invertible and ``known'' to the algorithm. Therefore, since $g_T$ is a $(\frac{1}{2},p,0)$-robust-zero-chain, \pref{lem:intermittent-communication-progress} ensures that with probability at least $\frac{5}{8}$, the output of the intermittent communication algorithm, $\hat{x}$, will have progress at most
\begin{equation}
\prog{\frac{1}{2}}(\zeta U^\top \rho(\hat{x})) \leq \min\crl*{KR,\ 8KRp + 12R\log M + 12R}
\end{equation}
We therefore take
\begin{equation}
T = \min\crl*{KR,\ 8KRp + 12R\log M + 12R} + 1
\end{equation} 
Therefore, by \pref{lem:soft-projection-properties}, with probability at least $\frac{5}{8}$,
\begin{equation}
\nrm*{\nabla \hat{F}_{T,U}(\hat{x})} \geq \frac{\gamma}{2\zeta}
\end{equation}
In light of \pref{lem:soft-projection-properties}, it is easy to confirm that if we take 
\begin{align}
\gamma &= \frac{H}{156} \\
\zeta &= \sqrt{\frac{12\gamma T}{\Delta}} = \sqrt{\frac{HT}{13\Delta}} \\
p &\geq \frac{\frac{1058\gamma^2}{\zeta^2}}{\sigma^2 + \frac{1058\gamma^2}{\zeta^2}} = \frac{529H\Delta}{936T\sigma^2 + 529H\Delta}
\end{align}
then $\hat{F}_{T,U}(0) - \min_x \hat{F}_{T,U}(x) \leq \Delta$, $\hat{F}_{T,U}$ is $H$-smooth, $\hat{g}_{T,U}$ is an unbiased estimate of $\nabla \hat{F}_{T,U}$, and $\E_z \nrm*{\hat{g}_{T,U}(x;z) - \nabla \hat{F}_{T,U}(x)}^2 \leq \sigma^2$, and the lower bound is
\begin{equation}
\nrm*{\nabla \hat{F}_{T,U}(\hat{x})} \geq \frac{\gamma}{2\zeta} = \frac{\sqrt{13H\Delta}}{312\sqrt{T}}
\end{equation}
with probability at least $\frac{5}{8}$. The difficulty is that $T$ is defined implicitly in terms of $p$ since we require that
\begin{align}
p &\geq \frac{529H\Delta}{936T\sigma^2 + 529H\Delta} \label{eq:homogeneous-non-convex-p-condition}\\
&= \max\crl*{\frac{529H\Delta}{936(KR+1)\sigma^2 + 529H\Delta},\ \frac{529H\Delta}{936(8KRp + 12R\log M + 12R + 1)\sigma^2 + 529H\Delta}}
\end{align}
We define $p_1$ to be the first term of the maximum, and $p_2$ to be the positive solution to the equation
\begin{equation}
p_2 = \frac{529H\Delta}{936(8KRp_2 + 12R\log M + 12R + 1)\sigma^2 + 529H\Delta}
\end{equation}
We note that since the right hand side is decreasing in $p_2$, 
\begin{equation}
\frac{529H\Delta}{936(8KRp_1 + 12R\log M + 12R + 1)\sigma^2 + 529H\Delta} < p_1
\iff p_2 < p_1
\end{equation}
and
\begin{equation}
\frac{529H\Delta}{936(8KRp_1 + 12R\log M + 12R + 1)\sigma^2 + 529H\Delta} \geq p_1
\iff p_2 \geq p_1
\end{equation}
We therefore take $p = \max\crl{p_1,p_2}$ and consider two cases:
\begin{align}
p_1 &> \frac{529H\Delta}{936(8KRp_1 + 12R\log M + 12R + 1)\sigma^2 + 529H\Delta}  \\
\iff \frac{K - 12 - 12\log M}{K} &\leq \frac{4224H\Delta}{936(KR+1)\sigma^2 + 529H\Delta} \label{eq:homogeneous-non-convex-p-inequality} \\
\iff K \leq 12(1 + \log M) \qquad&\textrm{or}\qquad \sigma^2 \leq \frac{4224H\Delta K}{936(KR+1)(K - 12 - 12\log M)} - \frac{529H\Delta}{936(KR+1)} \\
\implies K \leq 24(1 + \log M) \qquad&\textrm{or}\qquad \sigma^2 \leq \frac{10H\Delta}{KR}
\end{align}
In this case, since $p_1 > p_2$ we take 
\begin{equation}
p = p_1 = \frac{529H\Delta}{936(KR+1)\sigma^2 + 529H\Delta}
\end{equation}
which means $T = KR$ and the lower bound is
\begin{align}
\nrm*{\nabla \hat{F}_{T,U}(\hat{x})} 
&\geq \frac{\sqrt{13H\Delta}}{312\sqrt{KR}}
\geq \begin{cases}
\frac{\sqrt{H\Delta}}{424\sqrt{R(1+\log M)}} & K \leq 24(1 + \log M) \\
\frac{\sqrt{H\Delta}}{174\sqrt{KR}} + \frac{\sqrt{\sigma}(H\Delta)^{1/4}}{308(KR)^{1/4}} & \sigma^2 \leq \frac{10H\Delta}{KR}
\end{cases} \\
&\geq \frac{\sqrt{H\Delta}}{174\sqrt{KR}} + \frac{\sqrt{\sigma}(H\Delta)^{1/4}}{308(KR)^{1/4}},\ \frac{\sqrt{H\Delta}}{424\sqrt{R(1+\log M)}} \label{eq:homogeneous-non-convex-case-1}
\end{align}

Otherwise, $p_1 < p_2$ and we take 
\begin{align}
p &= p_2 \\ 
&= -\frac{936(12R(1+\log M) + 1)\sigma^2 + 529H\Delta}{16\cdot936KR\sigma^2} \nonumber\\
&\qquad+ \frac{\sqrt{\prn*{936(12R(1+\log M) + 1)\sigma^2 + 529H\Delta}^2 + 32\cdot529\cdot936 H\Delta KR\sigma^2}}{16\cdot936KR\sigma^2} \\
&= -\frac{3(1+\log M)}{4K} - \frac{1}{16KR} - \frac{529H\Delta}{14976KR\sigma^2} \nonumber\\
&\qquad+ \sqrt{\prn*{\frac{3(1+\log M)}{4K} + \frac{1}{16KR} + \frac{529H\Delta}{14976KR\sigma^2}}^2 + \frac{529H\Delta}{7488KR\sigma^2}}  \\
&\leq \frac{\sqrt{H\Delta}}{3\sigma\sqrt{KR}} 
\end{align}
Therefore,
\begin{align}
T 
&= 8KRp + 12R\log M + 12 R + 1 \\
&\leq \frac{3\sqrt{H\Delta KR}}{\sigma} + 12R\log M + 12 R + 1 \\
&\leq \max\crl*{\frac{6\sqrt{H\Delta KR}}{\sigma},\ 26R(1+\log M)}
\end{align}
This gives the lower bound
\begin{align}
\nrm*{\nabla \hat{F}_{T,U}(\hat{x})} 
&\geq \frac{\sqrt{13H\Delta}}{312\sqrt{T}} \\
&\geq \min\crl*{\frac{\sqrt{13H\Delta}}{312\sqrt{\frac{6\sqrt{H\Delta KR}}{\sigma}}},\ \frac{\sqrt{13H\Delta}}{312\sqrt{26R(1+\log M)}}} \\
&\geq \min\crl*{\frac{\sqrt{\sigma}(H\Delta)^{1/4}}{212(KR)^{1/4}},\ \frac{\sqrt{H\Delta}}{442\sqrt{R(1+\log M)}}} \label{eq:homogeneous-non-convex-case-2}
\end{align}
Finally, by the same argument that led to \eqref{eq:homogeneous-non-convex-p-inequality},
\begin{align} 
p_2 &\geq p_1 \\
\implies 1 &\geq \frac{K - 12 - 12 \log M}{K} \geq \frac{4224H\Delta}{936(KR+1)\sigma^2 + 529H\Delta} \\
\implies
\sigma^2 &\geq \frac{3695H\Delta}{936(KR+1)} \geq \frac{H\Delta}{KR} \\
\implies \frac{\sqrt{\sigma}(H\Delta)^{1/4}}{212(KR)^{1/4}} &\geq \frac{\sqrt{H\Delta}}{424\sqrt{KR}} + \frac{\sqrt{\sigma}(H\Delta)^{1/4}}{424(KR)^{1/4}}
\end{align}
This, combined with \eqref{eq:homogeneous-non-convex-case-1} and \eqref{eq:homogeneous-non-convex-case-2} establishes a lower bound of
\begin{equation}\label{eq:homogeneous-non-convex-lb-1}
\nrm*{\nabla \hat{F}_{T,U}(\hat{x})} \geq c\cdot \min\crl*{\frac{\sqrt{H\Delta}}{\sqrt{KR}} + \frac{\sqrt{\sigma}(H\Delta)^{1/4}}{(KR)^{1/4}},\ \frac{\sqrt{H\Delta}}{\sqrt{R(1+\log M)}}}
\end{equation}
This holds for any intermittent communication algorithm. We will now argue that there is an additional term to the lower bound resembling
\begin{equation}
\nrm*{\nabla \hat{F}_{T,U}(\hat{x})} \geq c\cdot \frac{\sqrt{\sigma}(H\Delta)^{1/4}}{(MKR)^{1/4}}
\end{equation}
The argument is simple: any intermittent communication algorithm with a given $M$, $K$, and $R$ can be implemented using $MKR$ sequential calls to the oracle, which is equivalent to a different intermittent communication setting with $M' = 1$, $K' = MKR$, and $R'= 1$. Applying our lower bound in this case gives
\begin{align}
\nrm*{\nabla \hat{F}_{T,U}(\hat{x})} 
&\geq c\cdot\min\crl*{\frac{\sqrt{H\Delta}}{\sqrt{K'R'}} + \frac{\sqrt{\sigma}(H\Delta)^{1/4}}{(K'R')^{1/4}},\ \frac{\sqrt{H\Delta}}{\sqrt{R'(1+\log M')}}} \\
&= c\cdot\min\crl*{\frac{\sqrt{H\Delta}}{\sqrt{MKR}} + \frac{\sqrt{\sigma}(H\Delta)^{1/4}}{(MKR)^{1/4}},\ \frac{\sqrt{H\Delta}}{\sqrt{1(1+\log 1)}}} \\
&\geq c\cdot\min\crl*{\frac{\sqrt{\sigma}(H\Delta)^{1/4}}{(MKR)^{1/4}},\ \sqrt{H\Delta}}
\end{align}
Combining this with \eqref{eq:homogeneous-non-convex-lb-1} completes the proof.
\end{proof}


\subsection{Proof of \pref{thm:homogeneous-non-convex-upper-bound}} \label{app:intermittent-communication-homogeneous-non-convex-upper-bound}

\homogeneousnonconvexupperbound*
\begin{proof}
Stochastic gradient descent using $T$ sequential steps when the variance is bounded by $\sigma^2$ guarantees \citep[Corollary 2.2][]{ghadimi2013stochastic}
\begin{equation}
\E\nrm*{\nabla F(\hat{x})} \leq c\cdot\frac{\sqrt{H\Delta}}{\sqrt{T}} + c\cdot\frac{\sqrt{\sigma}(H\Delta)^{1/4}}{T^{1/4}}
\end{equation}
Therefore, Single-Machine SGD, which is equivalent to $KR$ steps of SGD with variance bounded by $\sigma^2$, guarantees
\begin{equation}
\E\nrm*{\nabla F(\hat{x})} \leq c\cdot\frac{\sqrt{H\Delta}}{\sqrt{KR}} + c\cdot\frac{\sqrt{\sigma}(H\Delta)^{1/4}}{(KR)^{1/4}}
\end{equation}
Similarly, Minibatch SGD, which is equivalent to $R$ steps of SGD with variance bounded by $\frac{\sigma^2}{MK}$, guarantees
\begin{equation}
\E\nrm*{\nabla F(\hat{x})} \leq c\cdot\frac{\sqrt{H\Delta}}{\sqrt{R}} + c\cdot\frac{\sqrt{\sigma}(H\Delta)^{1/4}}{(MKR)^{1/4}}
\end{equation}
Finally, by the smoothness of $F$,
\begin{equation}
\nrm*{\nabla F(0)} \leq \sqrt{2H(F(0) - \min_x F(x))} = \sqrt{2H\Delta}
\end{equation}
Therefore, the rate claimed by the theorem can be achieved by using whichever of these three methods has the smallest upper bound.
\end{proof}

\section{Proofs from \pref{sec:breaking-the-lower-bounds}}

\subsection{Proof of \pref{thm:homogeneous-convex-lower-bound-third-order-smooth}}\label{app:intermittent-communication-homogeneous-convex-lower-bound-third-order-smooth}

\homogeneousconvexlowerboundthirdorder*
\begin{proof}
To prove the theorem, we instantiate $F$ as defined in \eqref{eq:def-F-generic-psi-homogeneous-lower-bound} using 
\begin{equation}
\psi(x) = \begin{cases}
\frac{\ell_2}{2}x^2 & \abs{x}\leq\zeta \\
\frac{\ell_2}{2}x^2 - \frac{\ell_3}{6}\prn*{\abs*{x} - \zeta}^3 & \abs{x} \in \brk*{\zeta,\ \zeta + \frac{\ell_2}{\ell_3}} \\
\prn*{\ell_2\zeta + \frac{\ell_2^2}{2\ell_3}}\abs{x} - \frac{\ell_2\zeta^2}{2} - \frac{\ell_2^2\zeta}{2\ell_3} - \frac{\ell_2^3}{6\ell_3^2} & \abs{x} > \zeta + \frac{\ell_2}{\ell_3}
\end{cases}
\end{equation}
It is easy to confirm that this $\psi$ satisfies all of the conditions of \pref{lem:homogeneous-convex-generic-psi-construction}, that is, $\psi$ is convex, twice-differentiable, and even, and $\psi(0) = \psi'(0) = 0$, $\abs{\psi'(x)} \leq \ell_1 := \ell_2\zeta + \frac{\ell_2^2}{2\ell_3}$, and $\psi''(x) \leq \ell_2$. 
In addition to these properties, $\psi$ has a bounded third derivative $\abs{\psi'''(x)} \leq \ell_3$. Computing the third derivative of $F$, we have for any unit vector $u$
\begin{align}
\abs*{\nabla^3 F(x)[u,u,u]}
&= \abs*{\psi'''(x_N)u_N^3 + \sum_{i=1}^{N-1}\psi'''(x_{i+1} - x_i)(u_{i+1} - u_i)^3} \\
&\leq \ell_3 \prn*{\abs{u_N}^3 + \sum_{i=1}^{N-1}\abs*{u_{i+1} - u_i}^3} \\
&\leq \ell_3 \prn*{\abs{u_N}^3 + 8\sum_{i=1}^{N-1}\abs*{u_{i+1}}^3 - \abs*{u_i}^3} \\
&\leq 16\ell_3\nrm{u}^3 = 16\ell_3 \label{eq:third-order-smooth-lower-bound}
\end{align}
We then set 
\begin{align}
\zeta &= \frac{B}{N^{3/2}} \\
\ell_2 &= \frac{H}{4} \\
\ell_3 &= \frac{\beta}{16} \\
p &\geq \frac{12\ell_1^2}{12\ell_1^2 + \sigma^2} \\
\delta &= \frac{1}{16MKR} \\
\alpha^2 &= \min\crl*{\frac{\sigma^2}{2N\ell_2^2\prn*{\frac{160}{p} + \frac{32}{\delta}}},\ \frac{\zeta^2}{N^2}}
\end{align}
By \pref{lem:homogeneous-convex-generic-psi-construction} and \eqref{eq:third-order-smooth-lower-bound}, this ensures that $F$ is convex, $H$-smooth, $\nrm{\nabla^3 F(x)} \leq \beta$, and $\nrm{x^*}\leq B$. Furthermore, the stochastic gradient oracle variance is bounded by
\begin{equation}
\frac{6(1-p)\ell_1^2}{p} + \prn*{\frac{160}{p} + \frac{32}{\delta}}N\ell_2^2\alpha^2 \leq \frac{\sigma^2}{2} + \frac{\sigma^2}{2} = \sigma^2
\end{equation}
We also note that for $y \in [-\zeta,\zeta]$,
\begin{equation}
{\psi^*}'(y) = \frac{y}{\ell_2}
\end{equation}
Therefore, by \pref{lem:homogeneous-convex-generic-psi-construction} for $x$ such that $\prog{\alpha}(x) \leq \frac{N}{2}$, then
\begin{align}
F(x) - F^*
&\geq \psi'(\zeta)\brk*{N\zeta - \alpha - {\psi^*}'\prn*{\frac{2}{N}\psi'(\zeta)}} + \frac{N}{2}\brk*{\psi\prn*{{\psi^*}'\prn*{\frac{2}{N}\psi'(\zeta)}} - 2\psi(\zeta)} \\
&= \ell_2\zeta\brk*{N\zeta - \alpha - \frac{2\zeta}{N}} + \frac{N}{2}\brk*{\frac{2\ell_2\zeta^2}{N^2} - \ell_2\zeta^2} \\
&\geq \frac{\ell_2 N \zeta^2}{6} = \frac{H B^2}{24N^2} \label{eq:third-order-smooth-lower-bound-eq1}
\end{align}
The last inequality uses that $N > 2$, $\alpha \leq \frac{\zeta}{N}$, and $\zeta = \frac{B}{N^{3/2}}$.

Finally, because all of the algorithm's queries to the gradient oracle have norm bounded by $\gamma$ so on the way to applying \pref{lem:intermittent-communication-progress} we introduce a uniformly random orthogonal matrix $U \in \R^{D\times N}$ for 
\begin{equation}
D = N + \frac{2\gamma^2}{\alpha^2}\log(32MKRN)
\end{equation}
Then, since $g$ is an $(\alpha,p,\delta)$-robust-zero-chain, by \pref{lem:intermittent-communication-progress}, any intermittent communication algorithm that interacts with $Ug(U^\top x;z)$ will have progress at most
\begin{equation}
\prog{\alpha}(U^\top \hat{x}) \leq \min\crl*{KR,\ 8KRp + 12R\log M + 12R}
\end{equation}
with probability at least $\frac{5}{8} - 2MKR\delta = \frac{1}{2}$. We therefore take 
\begin{equation}
N = 2\ceil{\min\crl*{KR,\ 8KRp + 12R\log M + 12R}}
\end{equation}
which means that
\begin{equation}
\prog{\alpha}(U^\top \hat{x}) \leq \frac{N}{2}
\end{equation}
Therefore, by \eqref{eq:third-order-smooth-lower-bound-eq1} we conclude
\begin{align}
F(U^\top \hat{x}) - F^*
&\geq \frac{HB^2}{24(2\min\crl*{KR,\ 8KRp + 12R\log M + 12R}+1)^2} \\
&\geq \frac{HB^2}{\min\crl*{216(KR)^2,\ 1944(KR)^2p^2 + 4056R^2(1+\log M)^2}} \\
&\geq \frac{HB^2}{432(KR)^2} + \min\crl*{\frac{HB^2}{7776(KR)^2p^2},\ \frac{HB^2}{16224R^2(1+\log M)^2}} \label{eq:homogeneous-convex-lower-bound-third-order-smooth-eq2}
\end{align}
From here, we recall that $p$ needs to be chosen so that
\begin{equation}
p \geq \frac{12\ell_1^2}{12\ell_1^2 + \sigma^2} 
= \frac{12\prn*{\ell_2\zeta + \frac{\ell_2^2}{2\ell_3}}^2}{12\prn*{\ell_2\zeta + \frac{\ell_2^2}{2\ell_3}}^2 + \sigma^2} 
= \frac{3\prn*{\frac{HB}{N^{3/2}} + \frac{2H^2}{\beta}}^2}{3\prn*{\frac{HB}{N^{3/2}} + \frac{2H^2}{\beta}}^2 + 4\sigma^2}
\end{equation}
The difficulty here is that $N$ is defined in terms of $p$, however, we observe that
\begin{equation}
N \geq KRp
\end{equation}
therefore, choosing
\begin{equation}
p \geq \frac{3\prn*{\frac{HB}{(KRp)^{3/2}} + \frac{2H^2}{\beta}}^2}{3\prn*{\frac{HB}{(KRp)^{3/2}} + \frac{2H^2}{\beta}}^2 + 4\sigma^2}
\end{equation}
satisfies the requirement on $p$. Therefore, we observe that since $x\mapsto \frac{x}{x + 4\sigma^2}$ is increasing in $x$,
\begin{equation}
\frac{3\prn*{\frac{HB}{(KRp)^{3/2}} + \frac{2H^2}{\beta}}^2}{3\prn*{\frac{HB}{(KRp)^{3/2}} + \frac{2H^2}{\beta}}^2 + 4\sigma^2}
\leq \max\crl*{\frac{3H^2B^2}{3H^2B^2 + 4\sigma^2K^3R^3p^3},\ \frac{48H^4}{48H^4 + 4\sigma^2\beta^2}}
\end{equation}
Therefore, we set
\begin{equation}
p =\min\crl*{1,\ \max\crl*{\prn*{\frac{3H^2B^2}{4\sigma^2K^3R^3}}^{1/4},\ \frac{12H^4}{\sigma^2\beta^2}}}
\end{equation}
Therefore, returning to \eqref{eq:homogeneous-convex-lower-bound-third-order-smooth-eq2} we conclude
\begin{align}
F&(U^\top \hat{x}) - F^* \nonumber\\
&\geq \frac{HB^2}{432(KR)^2} + \min\crl*{\frac{HB^2}{7776(KR)^2\frac{\sqrt{3}HB}{2\sigma K^{3/2}R^{3/2}}},\ \frac{HB^2}{7776(KR)^2\frac{144H^8}{\sigma^4\beta^4}},\ \frac{HB^2}{16224R^2(1+\log M)^2}} \\
&\geq \frac{HB^2}{432(KR)^2} + \min\crl*{\frac{\sigma B}{6735\sqrt{KR}},\ \frac{\sigma^4\beta^4B^2}{1119744(KR)^2H^7},\ \frac{HB^2}{16224R^2(1+\log M)^2}}
\end{align}
From here, we note that this lower bound could also be instantiated for some $H' \leq H$, which would still ensure that $F$ is $H$-smooth. Therefore, taking
\begin{equation}
H' = \min\crl*{H, \prn*{\frac{\sigma^4\beta^4(1+\log M)^2}{K^2}}^{1/8}}
\end{equation}
we conclude
\begin{align}
F&(U^\top \hat{x}) - F^* \nonumber\\
&\geq \frac{HB^2}{432(KR)^2} + \min\crl*{\frac{\sigma B}{6735\sqrt{KR}},\ \frac{\sqrt{\sigma\beta}B^2}{1119744K^{1/4}R^2(1+\log M)^{7/4}},\ \frac{HB^2}{16224R^2(1+\log M)^2}}
\end{align}
Finally, by \pref{lem:statistical-term-lower-bound}, the min-max error is also lower bounded by
\begin{equation}
F(\hat{x}) - F^* \geq c\cdot\min\crl*{\frac{\sigma B}{\sqrt{MKR}},\ HB^2}
\end{equation}
with probability at least $\frac{1}{4}$, which completes the proof.
\end{proof}

\subsection{Proof of \pref{thm:heterogeneous-convex-lower-bound}, \pref{thm:heterogeneous-strongly-convex-lower-bound}, and \pref{thm:bounded-heterogeneous-convex-lower-bound}}\label{app:heterogeneous-convex-lower-bound}


In this section, we prove \pref{thm:heterogeneous-convex-lower-bound}, \pref{thm:heterogeneous-strongly-convex-lower-bound}, and \pref{thm:bounded-heterogeneous-convex-lower-bound} using essentially the same argument. Our lower bound construction includes a term $\frac{\lambda}{2}\nrm{x}^2$, so the lower bound applies to strongly convex functions, and we also bound the heterogeneity in order to prove \pref{thm:bounded-heterogeneous-convex-lower-bound}. In fact, we begin by proving \pref{thm:bounded-heterogeneous-convex-lower-bound}, and then prove \pref{thm:heterogeneous-convex-lower-bound} and \pref{thm:heterogeneous-strongly-convex-lower-bound} as simple corollaries by taking $\sdiff^2$ sufficiently large.

For this lower bound, we will construct just two local functions $F_1$ and $F_2$. For the case $M > 2$, $F_1$ will be assigned to the first $\lfloor M/2\rfloor$ machines, and $F_2$ to the next $\lfloor M/2\rfloor$ machines. If there is an odd number of machines, we simply assign the last machine $F_3(x) = \frac{\lambda}{2}\nrm{x}^2$, which will reduce the lower bound by a factor of at most $\frac{M-1}{M}$. Therefore, we proceed by focusing on the case $M=2$. 

The construction is quite similar to other lower bounds, notably those of \citet{arjevani2015communication,nesterov2004introductory}.
For a function $\psi:\R\to\R$ and constant $C \in [0,1]$ to be defined later, let
\begin{align}
\tilde{F}(x) &= \frac{1}{2}\prn*{\tilde{F}_1(x) + \tilde{F}_2(x)} \\
\tilde{F}_1(x) &= -\psi'(\zeta)x_1 + C\psi(x_d) + \sum_{i=1}^{d/2-1}\psi\prn*{x_{2i+1} - x_{2i}} \\
\tilde{F}_2(x) &= \sum_{i=1}^{d/2}\psi\prn*{x_{2i} - x_{2i-1}}
\end{align}

The key property of these functions is that:
\begin{equation}\label{eq:heterogeneous-construction-gradient-progress-property}
\begin{aligned}
x_{\textrm{even}} \in \spn\crl*{e_1,e_2,\dots,e_{2i}} &\implies \left\{
\begin{aligned}
\nabla \tilde{F}_1(x_{\textrm{even}}) &\in \spn\crl*{e_1,e_2,\dots,e_{2i+1}} \\
\nabla \tilde{F}_2(x_{\textrm{even}}) &\in \spn\crl*{e_1,e_2,\dots,e_{2i}}
\end{aligned}\right. \\
x_{\textrm{odd}} \in \spn\crl*{e_1,e_2,\dots,e_{2i-1}} &\implies \left\{
\begin{aligned}
\nabla \tilde{F}_1(x_{\textrm{odd}}) &\in \spn\crl*{e_1,e_2,\dots,e_{2i-1}} \\
\nabla \tilde{F}_2(x_{\textrm{odd}}) &\in \spn\crl*{e_1,e_2,\dots,e_{2i}}
\end{aligned}\right.
\end{aligned}
\end{equation}
Therefore, roughly speaking, gradient queries to $\tilde{F}_1$ only allow for making progress when the progress of the query is even, and queries to $\tilde{F}_2$ only when the progress is odd. Of course, there is some chance that an arbitrary randomized algorithm might ``guess'' its way to additional progress, but we will shortly argue that this happens only with a very small probability for a suitably chosen stochastic gradient oracle.

We define stochastic gradient oracles for $F_1$ and $F_2$ as
\begin{align}\label{eq:heterogeneous-stochastic-gradient-oracle}
z &= \begin{cases} 0 & \textrm{with probability } 1-\delta \\ 1 & \textrm{with probability } \delta \end{cases} \\
\tilde{g}_1(x;0) &= \sum_{i=1}^{2\floor{\prog{\alpha}(x)/2} + 1}e_ie_i^\top \brk*{\nabla \tilde{F}_1\prn*{[x_1,\dots,x_{2\floor{\prog{\alpha}(x)/2} + 1},0,\dots,0]}} \\
\tilde{g}_1(x;1) &= \frac{1}{\delta}\nabla \tilde{F}_1(x) - \frac{1-\delta}{\delta}\tilde{g}_1(x;0) \\
\tilde{g}_2(x;0) &= \sum_{i=1}^{2\ceil{\prog{\alpha}(x)/2}}e_ie_i^\top \brk*{\nabla \tilde{F}_2\prn*{[x_1,\dots,x_{2\ceil{\prog{\alpha}(x)/2}},0,\dots,0]}} \\
\tilde{g}_2(x;1) &= \frac{1}{\delta}\nabla \tilde{F}_2(x) - \frac{1-\delta}{\delta}\tilde{g}_2(x;0) 
\end{align}
These oracles are very similar to the one used in the proof of \pref{thm:homogeneous-convex-lower-bound}. Furthermore, as in the proof of \pref{thm:homogeneous-convex-lower-bound}, our argument will rely on applying a random rotation to the argument of $F$:

Let $U \in \R^{D \times d}$ be a uniformly random matrix with $U^\top U = I_{d\times d}$ and define
\begin{equation}\label{eq:heterogeneous-rotation}
\begin{aligned}
F^U(x) &= \tilde{F}(U^\top x) + \frac{\lambda}{2}\nrm{x}^2 \\
F_1^U(x) &= \tilde{F}_1(U^\top x) + \frac{\lambda}{2}\nrm{x}^2 \\
F_2^U(x) &= \tilde{F}_2(U^\top x) + \frac{\lambda}{2}\nrm{x}^2 \\
g_1^U(x;z) &= U\tilde{g}_1(U^\top x;z) + \lambda x \\
g_2^U(x;z) &= U\tilde{g}_2(U^\top x;z) + \lambda x
\end{aligned}
\end{equation}
We will drop the superscript $U$ when it is clear from context. The following lemma is an analogue of \pref{lem:intermittent-communication-progress} for the heterogeneous setting. It shows that if $z = 0$ for every stochastic gradient oracle query made by the algorithm, then with high probability the algorithm will only gain one coordinate of progress per round of communication because of the property \eqref{eq:heterogeneous-construction-gradient-progress-property}.
\begin{restatable}{lemma}{heterogeneousprogress}\label{lem:heterogeneous-progress}
Let $\hat{x}$ be the output of any intermittent communication algorithm whose oracle queries all have norm bounded by $\gamma$ and where all machines interact with either $g^U_1$ or $g^U_2$, and let $D \geq R + \frac{2\gamma^2}{\alpha^2}\log(d/\delta)$. Then 
\[
\P\prn*{\prog{\alpha}(U^\top \hat{x}) \leq R} \geq 1 - 4MKR\delta
\]
over the randomness in the draw of $U$ and the stochastic gradient oracles.
\end{restatable}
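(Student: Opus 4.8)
The plan is to mirror the structure of the homogeneous progress lemma (\pref{lem:intermittent-communication-progress}), but adapted to the two-function alternating structure. As in that proof, I would condition on the ``good event'' $Z = \crl{\forall_{m,k,r}\ z^m_{k,r} = 0}$ that no stochastic gradient query ever triggers the ``heavy'' branch $z=1$; by the union bound this has probability at least $(1-\delta)^{MKR} \geq 1 - MKR\delta$. Given $Z$, I need to track a progress counter. Define $S_r$ to be the number of coordinates of progress ``unlocked'' by the end of round $r$; the key structural fact \eqref{eq:heterogeneous-construction-gradient-progress-property}, together with the definitions of $\tilde g_1,\tilde g_2$ in \eqref{eq:heterogeneous-stochastic-gradient-oracle}, is that on the event $Z$, a query to $g_1$ with current progress $\leq 2i$ returns a gradient supported on $\spn\crl{e_1,\dots,e_{2i+1}}$, and a query to $g_2$ with progress $\leq 2i-1$ returns a gradient supported on $\spn\crl{e_1,\dots,e_{2i}}$. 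Crucially, within a single round of communication, a given machine only ever interacts with ONE of the two oracles, so it can only push the progress from an even value to the next odd value (if it has $g_1$) or from an odd value to the next even value (if it has $g_2$), but not both; hence the progress increments by at most one per round, giving $S_R \leq R$. This replaces the binomial-counting step (\pref{lem:sum-max-binomials}) of the homogeneous proof with a deterministic bound.

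Next, as in \pref{lem:robust-zero-chain-u-dependence}, I would argue that on the event $Z$ together with the ``no-surprises'' events $\mc{G}^m_{k,r} = \crl{\prog{\alpha}(U^\top x^m_{k,r}) \leq S_{r}}$ and their ancestor-conjunctions, every query $x^m_{k,r}$ is a measurable function of $\xi$ and only the first $S_r$ columns $U_{\leq S_r}$ of $U$. This uses the same rotation-invariance computation as before: if $\prog{\alpha}(U^\top x) \leq j$ and $z=0$, then $g^U_i(x;z) = g^{U_{\leq j'}}_i(x;z)$ for an appropriate $j' \leq S_r$, because both the zeroing-out of high coordinates in the query and the sparsity of the resulting gradient confine everything to the span of the first $j'$ columns. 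Then I would bound the failure probability via the same disjoint-union-plus-union-bound trick (following \citet{diakonikolas2019lower}): the probability that some $x^m_{k,r}$ (or the output $\hat x$) has $\prog{\alpha}(U^\top \cdot) > S_r$ while all its ancestors behaved well is at most a sum over coordinates $i > S_r$ of $\P(\abs{\inner{U_i}{\mathsf{A}(U_{\leq S_r},\xi)}} > \alpha)$. Since $U_i$ conditioned on $U_{\leq S_r}$ is uniform on a unit sphere of dimension $D - S_r \geq D - R$, and the queried point has norm $\leq \gamma$, concentration on the sphere \citep{ball1997elementary} bounds each such probability by $2\exp(-(D-S_r+1)\alpha^2/(2\gamma^2))$. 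With $D \geq R + \frac{2\gamma^2}{\alpha^2}\log(d/\delta)$ and $S_r \leq R \leq d$, this is at most $2\delta/d$, and summing over the $\leq MKR+1$ queries and $\leq d$ coordinates gives a total failure contribution of order $MKR\delta$. Combining with the $\P(Z) \geq 1 - MKR\delta$ bound and a little slack yields $\P(\prog{\alpha}(U^\top \hat x) \leq R) \geq 1 - 4MKR\delta$.

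The main obstacle I anticipate is making the ``progress increments by at most one per round'' argument fully rigorous in the presence of arbitrary (non-zero-respecting, randomized) query rules within a round. Within round $r$, machine $m$ makes $K$ sequential queries to the same oracle $g_i$, and on event $Z$ each such query's gradient is determined by at most $S_{r-1}+1$ (if $g_1$ and $S_{r-1}$ even, or $g_2$ and $S_{r-1}$ odd) of the $U$-columns — but the subtle point is that the algorithm could in principle use randomness to jump to a later column without a gradient revealing it, which is exactly what the spherical-concentration bound is there to rule out. So the argument is genuinely intertwined: the ``progress $\leq R$'' claim and the ``queries depend on few columns'' claim must be proved together by induction on the round $r$, analogously to how \pref{lem:intermittent-communication-progress} bundles the $\bar{\mc G}^m_{k,r}$ events. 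I would set up a single induction over rounds establishing simultaneously that (i) on the good event, $S_r \leq S_{r-1}+1$, and (ii) all round-$r$ queries are functions of $U_{\leq S_r}$; the base case is $S_0 = 0$ with all initial iterates equal to a fixed $x_0$, and the inductive step is the spherical-concentration estimate above. The remaining bookkeeping — handling the odd-$M$ case with the dummy third machine, and verifying the dimension requirement absorbs the $\log(d/\delta)$ factor — is routine.
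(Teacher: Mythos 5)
Your proposal is correct and follows essentially the same route as the paper's proof: condition on the event that every oracle draw takes the $z=0$ branch, exploit the even/odd alternation of \eqref{eq:heterogeneous-construction-gradient-progress-property} to get a deterministic "at most one new coordinate per round" bound (in place of the binomial counting of the homogeneous case), and run a joint induction over rounds — via the disjoint-union/union-bound decomposition and the measurability-plus-spherical-concentration argument with queries of norm at most $\gamma$ in dimension $D-R$ — exactly as in \pref{lem:intermittent-communication-progress}, with the same final accounting yielding $1-4MKR\delta$. The only cosmetic difference is that you track a single global progress counter while the paper tracks parity-specific per-machine bounds ($2\floor{r/2}+1$ for $F_1$-machines, $2\ceil{r/2}$ for $F_2$-machines), which amounts to the same thing.
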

\begin{proof}
First, let $Z$ denote the event that $z=0$ for every stochastic oracle query made by the algorithm. By the union bound, $\P(Z) \geq 1-MKR\delta$, and we will condition on this event for the rest of the proof.

Let $x^m_{k,r}$ denote the $k\mathth$ query during the $r\mathth$ round of communication on the $m\mathth$ machine. Furthermore, let $j(m) \in \crl{1,2}$ denote the index of the objective ($F_1$ or $F_2$) with which the $m\mathth$ machine interacts. In the context of the intermittent communication graph, we will use $\mc{X}^m_{k,r}$ to denote the queries made ancestors of the node $v^m_{k,r}$ and $\mc{G}^m_{k,r}$ to denote the stochastic gradient oracle responses.
\begin{align}
\mc{X}^m_{k,r} &= \crl*{x^m_{k',r}\,:\,k'<k} \cup \crl*{x^{m'}_{k',r'}\,:\,r'<r,k'\in[K],m'\in[M]} \\
\mc{G}^m_{k,r} &= \crl*{g^U_{j(m)}\prn*{x^m_{k',r};0}\,:\,k'<k} \cup \crl*{g^U_{j(m')}\prn*{x^{m'}_{k',r'};0}\,:\,r'<r,k'\in[K],m'\in[M]}
\end{align}

We will also abuse notation and write for $\mc{X}\subseteq\R^d$ 
\begin{equation}
\prog{\alpha}^U(\mc{X}) = \max\crl*{\prog{\alpha}(U^\top x)\,:\,x\in\mc{X}}
\end{equation}

We then define the following family of events
\begin{equation}
B_r = \bigcup_{m=1}^M \crl*{\prog{\alpha}^U\prn*{\mc{X}^m_{K,r}} > \begin{cases} 2\floor{r/2} + 1 & j(m) = 1 \\ 2\ceil{r/2} & j(m) = 2 \end{cases}}
\end{equation}
We will now argue that
$\P\prn*{\bigcup_{r=1}^R B_r\,\middle|\,Z}$
is small, which essentially proves the lemma. To prove this, we rewrite it with a disjoint union and apply the union bound:
\begin{align}
\P\prn*{\bigcup_{r=1}^R B_r\,\middle|\,Z}
&= \P\prn*{\bigcup_{r=1}^R \crl*{B_r \cap \brk*{\bigcup_{r'=1}^{r-1} B_{r'}}^c}\,\middle|\,Z} \\
&= \sum_{r=1}^R \P\prn*{B_r \cap \brk*{\bigcup_{r'=1}^{r-1} B_{r'}}^c\,\middle|\,Z} \\
&\leq \sum_{r=1}^R\sum_{m=1}^M \P\prn*{\prog{\alpha}^U\prn*{\mc{X}^m_{K,r}} > \begin{cases} 2\floor{r/2} + 1 & j(m) = 1 \\ 2\ceil{r/2} & j(m) = 2 \end{cases},\ \brk*{\bigcup_{r'=1}^{r-1} B_{r'}}^c\,\middle|\,Z}
\end{align}
We will show that each term in the final sum is small.

Fix $m$ and $r$. Conditioned on the event $Z$, each of gradient oracle responses corresponds to $z=0$. Let $x \in \mc{X}^m_{1,r}$ be an oracle query made on some machine $m'$, then under the event $\brk*{\bigcup_{r'=1}^{r-1} B_{r'}}^c$,
\begin{equation}
\prog{\alpha}^U(x) \leq \begin{cases} 2\floor{(r-1)/2} + 1 & j(m') = 1 \\ 2\ceil{(r-1)/2} & j(m') = 2 \end{cases}
\end{equation}
Consequently, using $U_{1:t} = \brk*{U_1,\dots,U_t,0,\dots,0}$ to denote the matrix $U$ with the $(t+1)\mathth$ through $D\mathth$ columns replaces by zeros, if $j(m') = 1$, we have
\begin{equation}\label{eq:heterogeneous-gradient-dependence}
\begin{aligned}
g^U_1(x;0) 
&= \lambda x + U\sum_{i=1}^{2\floor{\prog{\alpha}^U(x)/2} + 1}e_ie_i^\top \brk*{\nabla \tilde{F}_1\prn*{[U_1^\top x,\dots,U_{2\floor{\prog{\alpha}^U(x)/2} + 1}^\top x,0,\dots,0]}} \\
&= \lambda x + U_{1:2\floor{\prog{\alpha}^U(x)/2} + 1}\sum_{i=1}^{2\floor{\prog{\alpha}^U(x)/2} + 1}e_ie_i^\top \brk*{\nabla \tilde{F}_1\prn*{U_{1:2\floor{\prog{\alpha}^U(x)/2} + 1}^\top x}}
\end{aligned}  
\end{equation}
It is easy to see that is a measurable function of $x$ and $U_{1:2\floor{\prog{\alpha}^U(x)/2} + 1}$ and therefore also of $U_{1:2\floor{(r-1)/2} + 1}$. Similarly, it is straightforward to confirm that if $j(m') = 2$, then $g^U_2(x;0)$ is a measurable function of $U_{1:2\ceil{(r-1)/2}}$. 

By the definition of an intermittent communication algorithm, there exists a query functions $\mc{Q}^{m'}_{k',r'}$ such that for each $m',k',r'$
\begin{equation}
x^{m'}_{k',r'} = \mc{Q}^{m'}_{k',r'}\prn*{\mc{X}^{m'}_{k',r'}, \mc{G}^{m'}_{k',r'}, \xi}
\end{equation}
where $\xi$ are the random coins of the algorithm. It follows that conditional on $Z$ and $\brk*{\bigcup_{r'=1}^{r-1} B_{r'}}^c$, the queries $\mc{X}^m_{1,r}$ and gradients $\mc{G}^m_{1,r}$ are measurable functions of $\xi$, $U_{1:2\floor{(r-1)/2} + 1}$, and $U_{1:2\ceil{(r-1)/2}}$, and therefore of $\xi$ and $U_{1:r}$.

Suppose $j(m) = 1$. Then 
\begin{align}
\P&\prn*{\prog{\alpha}^U\prn*{\mc{X}^m_{K,r}} > 2\floor{r/2} + 1,\ \brk*{\bigcup_{r'=1}^{r-1} B_{r'}}^c\,\middle|\,Z} \nonumber\\
&= \P\prn*{\bigcup_{k=1}^K \brk*{\crl*{\prog{\alpha}(U^\top x^m_{k,r}) > 2\floor{r/2} + 1} \cap \bigcap_{k'=1}^{k-1} \crl*{\prog{\alpha}\prn*{U^\top x^m_{k',r}} \leq 2\floor{r/2} + 1}},\ \brk*{\bigcup_{r'=1}^{r-1} B_{r'}}^c\,\middle|\,Z} \\
&\leq \sum_{k=1}^K \P\prn*{\prog{\alpha}(U^\top x^m_{k,r}) > 2\floor{r/2} + 1,\ \bigcap_{k'=1}^{k-1} \crl*{\prog{\alpha}\prn*{U^\top x^m_{k',r}} \leq 2\floor{r/2} + 1},\ \brk*{\bigcup_{r'=1}^{r-1} B_{r'}}^c\,\middle|\,Z}
\end{align}
First, we note that $\mc{X}^m_{k,r} = \mc{X}^m_{1,r}\cup \crl*{x^m_{1,r},\dots,x^m_{k-1,r}}$ and $\mc{G}^m_{k,r} = \mc{G}^m_{1,r}\cup \crl*{g^U_1(x^m_{1,r};0),\dots,g^U_1(x^m_{k-1,r};0)}$. Furthermore, by the same argument as in \eqref{eq:heterogeneous-gradient-dependence} above, $\mc{X}^m_{k,r}$ and $\mc{G}^m_{k,r}$ are measurable functions of $\xi$ and $U_{1:2\floor{r/2} + 1}$ conditional on $Z$, $\bigcap_{k'=1}^{k-1} \crl*{\prog{\alpha}\prn*{U^\top x^m_{k',r}} \leq 2\floor{r/2} + 1}$ and $\brk*{\bigcup_{r'=1}^{r-1} B_{r'}}^c$. Therefore, there exists a measurable function $\mc{A}^m_{k,r}$ such that
\begin{align}
&\P\prn*{\prog{\alpha}(U^\top x^m_{k,r}) > 2\floor{r/2} + 1,\ \bigcap_{k'=1}^{k-1} \crl*{\prog{\alpha}\prn*{U^\top x^m_{k',r}} \leq 2\floor{r/2} + 1},\ \brk*{\bigcup_{r'=1}^{r-1} B_{r'}}^c\,\middle|\,Z} \nonumber\\
&= \P\prn*{\prog{\alpha}(U^\top \mc{A}^m_{k,r}(\xi,U_{1:2\floor{r/2} + 1})) > 2\floor{r/2} + 1,\ \bigcap_{k'=1}^{k-1} \crl*{\prog{\alpha}\prn*{U^\top x^m_{k',r}} \leq 2\floor{r/2} + 1},\ \brk*{\bigcup_{r'=1}^{r-1} B_{r'}}^c\,\middle|\,Z} \\
&\leq \P\prn*{\exists_{i > 2\floor{r/2} + 1}\abs*{\inner{U_i}{\mc{A}^m_{k,r}(\xi,U_{1:2\floor{r/2} + 1})}} > \alpha \,\middle|\,Z} \\
&\leq \sum_{i=2\floor{r/2} + 2}^d\P\prn*{\abs*{\inner{U_i}{\mc{A}^m_{k,r}(\xi,U_{1:2\floor{r/2} + 1})}} > \alpha \,\middle|\,Z}
\end{align}
From here, we observe that $U_i$ is independent of the algorithm's coins $\xi$ and the event $Z$, and conditional on $U_{1:2\floor{r/2} + 1}$, $U_i$ is uniformly distributed on the unit sphere in the $(D - 2\floor{r/2} - 1)$-dimensional subspace that is orthogonal to the range of $U_{1:2\floor{r/2} + 1}$. Therefore, for $\gamma \max\crl*{\nrm{x}\,:\,x\in\mc{X}^m_{k,r}}$, using standard concentration results on the sphere \citep{ball1997elementary},
\begin{equation}
\P\prn*{\abs*{\inner{U_i}{\mc{A}^m_{k,r}(\xi,U_{1:2\floor{r/2} + 1})}} > \alpha \,\middle|\,Z} \leq 2\exp\prn*{-\frac{(D - 2\floor{r/2})\alpha^2}{2\gamma^2}}
\end{equation}
We conclude that 
\begin{multline}
\P\prn*{\prog{\alpha}(U^\top x^m_{k,r}) > 2\floor{r/2} + 1,\ \bigcap_{k'=1}^{k-1} \crl*{\prog{\alpha}\prn*{U^\top x^m_{k',r}} \leq 2\floor{r/2} + 1},\ \brk*{\bigcup_{r'=1}^{r-1} B_{r'}}^c\,\middle|\,Z} \\
\leq 2d\exp\prn*{-\frac{(D - 2\floor{r/2})\alpha^2}{2\gamma^2}}
\end{multline}
and therefore,
\begin{equation}
\P\prn*{\prog{\alpha}^U\prn*{\mc{X}^m_{K,r}} > 2\floor{r/2} + 1,\ \brk*{\bigcup_{r'=1}^{r-1} B_{r'}}^c\,\middle|\,Z} 
\leq 2Kd\exp\prn*{-\frac{(D - 2\floor{r/2})\alpha^2}{2\gamma^2}}
\end{equation}
A nearly identical argument shows that when $j(m) = 2$,
\begin{equation}
\P\prn*{\prog{\alpha}^U\prn*{\mc{X}^m_{K,r}} > 2\ceil{r/2},\ \brk*{\bigcup_{r'=1}^{r-1} B_{r'}}^c\,\middle|\,Z} 
\leq 2Kd\exp\prn*{-\frac{(D - 2\ceil{r/2} + 1)\alpha^2}{2\gamma^2}}
\end{equation}
Therefore, we conclude
\begin{equation}
\P\prn*{\bigcup_{r=1}^R B_r\,\middle|\,Z} \leq 2MKRd\exp\prn*{-\frac{(D - R)\alpha^2}{2\gamma^2}}
\end{equation}
Finally, using the same arguements as above, conditional on the events $\brk*{\bigcup_{r=1}^R B_r}^c$ and $Z$, the algorithm's output
\begin{equation}
\hat{x} = \hat{X}\prn*{\bigcup_{m=1}^M\mc{X}^m_{K,R}, \bigcup_{m=1}^M\mc{G}^m_{K,R}, \xi}
\end{equation}
is a measurable function of $\xi$ and $U_{1:R}$ and therefore,
\begin{equation}
\P\prn*{\prog{\alpha}(U^\top\hat{x}) > R\,\middle|\,\brk*{\bigcup_{r=1}^R B_r}^c, Z} \leq 2d\exp\prn*{-\frac{(D - R + 1)\alpha^2}{2\gamma^2}}
\end{equation}
Therefore,
\begin{align}
\P\prn*{\prog{\alpha}(U^\top\hat{x}) > R} 
&\leq \P\prn*{\prog{\alpha}(U^\top\hat{x}) > R\,\middle|\,\brk*{\bigcup_{r=1}^R B_r}^c, Z} + \P\prn*{\bigcup_{r=1}^R B_r\,\middle|\,Z} + (1 - \P(Z)) \\
&\leq  3MKRd\exp\prn*{-\frac{(D - R)\alpha^2}{2\gamma^2}} + MKR\delta
\end{align}
Noting that $D \geq R + \frac{2\gamma^2}{\alpha^2}\log(d/\delta)$ completes the proof.
\end{proof}

We will use the fact that the algorithm's output will have progress less than $R$ in order to prove the lower bound. Before doing so, we will show that $F$ and the stochastic gradient oracles satisfy the necessary regularity properies:
\begin{lemma}\label{lem:heterogeneous-function-properties}
Let $\psi(x) = \frac{\ell_2}{2}x^2$ for $\ell_2 \leq \frac{H-\lambda}{4}$. In addition, let $\beta = \sqrt{1 + \frac{2\ell_2}{\lambda}}$, $q = \frac{\beta - 1}{\beta + 1}$, and $C = 1 - q$. Finally, let $\alpha^2 \leq \min\crl*{\frac{\zeta^2 q^{2d}}{4(1-q)^2},\ \frac{\sigma^2\delta}{8d\ell_2^2(1-\delta)}}$ and $d \geq R + \frac{\log 2}{2\log(1/q)}$. Then for all $U$
\begin{enumerate}
\item $F$, $F_1$, and $F_2$ are $\lambda$-strongly convex and $H$-smooth
\item $\nrm{x^*}^2 = \frac{\zeta^2q^2(1-q^{2d})}{(1-q)^2(1-q^2)}$
\item $F(0) - F^* \leq \frac{q\ell_2\zeta^2}{4(1-q)}$
\item $\prog{\alpha}(x) \leq R \implies F(x) - F^* \geq \frac{\zeta^2\lambda q^2}{16(1-q)^2(1-q^2)}q^{2R}$
\item $\E_z g_1(x;z) = \nabla F_1(x)$ and $\E_z g_2(x;z) = \nabla F_2(x)$
\item $\E_z\nrm*{g_1(x;z) - \nabla F_1(x)}^2 \leq \sigma^2$ and $\E_z\nrm*{g_2(x;z) - \nabla F_2(x)}^2 \leq \sigma^2$
\item $\frac{1}{2}\prn*{\nrm*{\nabla F_1(x^*)}^2 + \nrm*{\nabla F_2(x^*)}^2} \leq \frac{\zeta^2\ell_2^2(1+q)}{4(1-q)}$
\end{enumerate}
\end{lemma}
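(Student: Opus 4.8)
The plan is to verify each of the seven claimed properties of the construction \eqref{eq:heterogeneous-rotation}, most of which reduce to computing the minimizer of the quadratic $\tilde F$. First I would observe that since $\psi(x) = \frac{\ell_2}{2}x^2$, the rotated objective $F^U(x) = \tilde F(U^\top x) + \frac{\lambda}{2}\nrm{x}^2$ is a quadratic whose Hessian has the form $U\tilde H U^\top + \lambda I$, where $\tilde H$ is the Hessian of $\tilde F$. For \textbf{property 1}, the strong convexity is immediate from the $\frac{\lambda}{2}\nrm{x}^2$ term, and smoothness follows by bounding $\nrm{\tilde H}_{\textrm{op}} \leq 4\ell_2$ using the Gershgorin/tridiagonal argument as in \pref{lem:homogeneous-convex-generic-psi-construction} (each of $\tilde F_1, \tilde F_2$ has a chain-structured Hessian with entries bounded by $\ell_2$, and since $U$ is orthonormal the spectrum is preserved); combined with $\ell_2 \leq \frac{H-\lambda}{4}$, this gives $H$-smoothness. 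The individual $F_1, F_2$ are handled the same way.

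For \textbf{properties 2--4} I would explicitly solve $\nabla \tilde F(x^*) = 0$, i.e. $\frac{\lambda}{x^*} + \tilde H \tilde F$... more precisely solve the coupled first-order conditions. Because $\tilde F$ averages $\tilde F_1$ and $\tilde F_2$, and because the quadratic $\psi$ makes these the classical Nesterov-type chain, the first-order optimality conditions become a linear recurrence in the coordinates of $\tilde U^\top x^*$ with characteristic root $q = \frac{\beta-1}{\beta+1}$ where $\beta = \sqrt{1 + 2\ell_2/\lambda}$ — this is exactly the standard computation underlying accelerated lower bounds. Solving it gives $(U^\top x^*)_i \propto q^i$ up to the constant $C = 1-q$ chosen precisely to make the boundary condition at coordinate $d$ consistent, yielding $\nrm{x^*}^2 = \frac{\zeta^2 q^2(1-q^{2d})}{(1-q)^2(1-q^2)}$ (geometric sum), establishing property 2. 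Property 3 follows by evaluating $F(0) - F^* = -\tilde F(U^\top x^*) - \frac{\lambda}{2}\nrm{x^*}^2$ and bounding the resulting geometric series. Property 4 is the crucial ``hardness'' step: if $\prog{\alpha}(x) \leq R$, then the coordinates $(U^\top x)_i$ for $i > R$ are all at most $\alpha$ in absolute value, so $x$ is far (in a weighted sense) from $x^*$, whose tail coordinates decay like $q^i$ but are nontrivial; quantitatively one lower-bounds $F(x) - F^* \geq \frac{\lambda}{2}\nrm{x - x^*}^2 \geq \frac{\lambda}{2}\sum_{i>R}((U^\top x^*)_i - \alpha)^2$ and uses $\alpha^2 \leq \frac{\zeta^2 q^{2d}}{4(1-q)^2}$ together with $d \geq R + \frac{\log 2}{2\log(1/q)}$ to absorb the $\alpha$ perturbation into half the main term, leaving $\frac{\zeta^2\lambda q^2}{16(1-q)^2(1-q^2)}q^{2R}$.

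For \textbf{properties 5--6}, I would check directly from the definition \eqref{eq:heterogeneous-stochastic-gradient-oracle} that $\E_z \tilde g_j(x;z) = (1-\delta)\tilde g_j(x;0) + \delta\big(\tfrac1\delta \nabla\tilde F_j(x) - \tfrac{1-\delta}{\delta}\tilde g_j(x;0)\big) = \nabla \tilde F_j(x)$, which gives unbiasedness after adding $\lambda x$; this is the same telescoping identity used for \eqref{eq:oracle-homogeneous-convex-lower-bound}. For the variance, $\E_z\nrm{g_j(x;z) - \nabla F_j(x)}^2 = \frac{1-\delta}{\delta}\nrm{\tilde g_j(x;0) - \nabla\tilde F_j(x)}^2$, and since $\tilde g_j(x;0)$ agrees with $\nabla\tilde F_j$ on all but at most two coordinates (the chain structure of $\psi' $ being $\ell_2$-Lipschitz with the truncation at coordinates near $\prog{\alpha}(x)$, exactly analogous to the bound in part 6 of \pref{lem:homogeneous-convex-generic-psi-construction}), the discrepancy is $O(d\ell_2^2\alpha^2)$; the choice $\alpha^2 \leq \frac{\sigma^2\delta}{8d\ell_2^2(1-\delta)}$ then makes this at most $\sigma^2$. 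Property 7 is another explicit computation: $\nabla\tilde F_1(U^\top x^*)$ and $\nabla\tilde F_2(U^\top x^*)$ are not individually zero (only their average is), and since $(U^\top x^*)_i \propto q^i$ one evaluates $\nrm{\nabla\tilde F_j(U^\top x^*)}^2$ as a geometric series in $q$, yielding the stated $\frac{\zeta^2\ell_2^2(1+q)}{4(1-q)}$ bound.

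The main obstacle I anticipate is property 4 — getting the exponent and constant right in the lower bound on suboptimality for low-progress points. One has to be careful that the weighting implicit in ``$F(x) - F^*$ is large'' is the Euclidean one (coming from $\lambda$-strong convexity), not the one induced by $\tilde H$, and then argue that the algorithm, having progress at most $R$ but $d$ considerably larger than $R$, is forced to leave a geometric tail $\sum_{i>R} q^{2i} \approx \frac{q^{2R}}{1-q^2}$ of ``unreached'' mass in $x^*$; the perturbation bookkeeping to show $\alpha$ is small enough relative to this tail (which is where $d \geq R + \frac{\log 2}{2\log(1/q)}$ enters, ensuring $q^d$ is small compared to the relevant scale) is the delicate part. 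Everything else is geometric-series algebra that I would not grind through here. Once \pref{lem:heterogeneous-function-properties} is established, \pref{thm:bounded-heterogeneous-convex-lower-bound} follows by feeding properties 1--4 and the progress bound \pref{lem:heterogeneous-progress} into an optimization over the free parameters $\ell_2, \zeta, \delta$, and \pref{thm:heterogeneous-convex-lower-bound} and \pref{thm:heterogeneous-strongly-convex-lower-bound} are corollaries obtained by taking $\sdiff^2$ large (via property 7) so the heterogeneity constraint becomes vacuous, plus the statistical term from \pref{lem:statistical-term-lower-bound}.
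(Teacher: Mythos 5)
Your proposal is correct and follows essentially the same route as the paper's proof: bound the (tridiagonal plus $\lambda I$) Hessian for property 1, solve the first-order conditions as a linear recurrence with root $q$ to get $x^*_i \propto q^i$ and evaluate geometric series for properties 2, 3, and 7, use $\lambda$-strong convexity plus the tail $\sum_{i>R}q^{2i}$ (with $\alpha \le \tfrac12 x^*_d$ and the condition on $d$ absorbing the remainder) for property 4, and the standard unbiasedness/variance telescoping with the $O(d\ell_2^2\alpha^2)$ discrepancy bound for properties 5 and 6. The only algebra you defer (the exact constants in properties 4 and 7) is exactly the geometric-series bookkeeping the paper carries out, so there is no gap in the approach.
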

\begin{proof}
We will prove each point individually. 

\textbf{1)} 
For a unit vector $v$, let $v_i = \inner{U_i}{v}$ and $x_i = \inner{U_i}{x}$ for each $i \in [d]$. Then,
\begin{equation}
v^\top \nabla F_1(x) v 
= \ell_2 v_d^2 + \ell_2\sum_{i=1}^{d/2-1}\prn*{v_{2i+1} - v_{2i}}^2 + \lambda\nrm{v}^2 
\in [\lambda, 2\ell_2 + \lambda]
\end{equation}
Therefore, $F^U_1$ is $\lambda$-strongly convex and $(4\ell_2 + \lambda) \leq H$-smooth. It is also easy to confirm that $F^U_2$ is $\lambda$-strongly convex and $H$-smooth with the same argument. Finally, since $F$ is the mean of $F_1$ and $F_2$, it is also $\lambda$-strongly convex and $H$-smooth.

For the next points, we compute $x^* = \argmin_x F(x)$, which satisfies $\nabla F(U^\top x^*) = 0$. Letting $x^*_i = \inner{U_i}{x^*}$, this is equivalent to
\begin{equation}
\begin{aligned}\label{eq:heterogeneous-optimal-grad-zero}
0 &= -\frac{1}{2}\psi'(\zeta) + \lambda x^*_1 - \frac{1}{2}\psi'(x^*_2 - x^*_1) = -\frac{\ell_2\zeta}{2} + \prn*{\frac{\ell_2}{2} + \lambda} x^*_1 + \frac{\ell_2}{2} x^*_2 \\
0 &= \lambda x^*_i + \frac{1}{2}\psi'(x^*_{i} - x^*_{i-1}) - \frac{1}{2}\psi'(x^*_{i+1} - x^*_{i}) = -\frac{\ell_2}{2}x^*_{i-1} + \prn*{\ell_2 + \lambda}x^*_i - \frac{\ell_2}{2}x^*_{i+1} \qquad 2 \leq i \leq d-1 \\
0 &= \lambda x^*_d + \frac{C}{2}\psi'(x^*_{d}) + \frac{1}{2}\psi'(x^*_{d} - x^*_{d-1}) = -\frac{\ell_2}{2}x^*_{d-1} + \prn*{\frac{(1+C)\ell_2}{2} + \lambda}x^*_d
\end{aligned}
\end{equation}
Let $q$ be the smaller root of the quadratic equation
\begin{equation}\label{eq:heterogeneous-quadratic-equation}
-\frac{\ell_2}{2} + \prn*{\ell_2 + \lambda}q - \frac{\ell_2}{2}q^2 = 0
\end{equation}
so for $\beta = \sqrt{1 + \frac{2\ell_2}{\lambda}}$, 
\begin{equation}
q = 1 + \frac{\lambda}{\ell_2} - \sqrt{\prn*{1 + \frac{\lambda}{\ell_2}}^2 - 1} = \frac{\beta - 1}{\beta + 1}
\end{equation}
Returning to \eqref{eq:heterogeneous-optimal-grad-zero}, it is straightforward to confirm that 
\begin{equation}
x^* = \frac{\zeta}{1-q}\sum_{i=1}^d q^i U_i
\end{equation}
satisfies $\nabla F(x^*) = 0$, and therefore it is the minimizer of $F$.

\textbf{2)}
The minimizer has norm
\begin{equation}
\nrm{x^*}^2 = \frac{\zeta^2}{(1-q)^2} \sum_{i=1}^d q^{2i} = \frac{\zeta^2q^2(1-q^{2d})}{(1-q)^2(1-q^2)}
\end{equation}

\textbf{3)}
The minimum value of $F$ is
\begin{align}
F^*
&= -\frac{\ell_2\zeta^2q}{2(1-q)} + \frac{(1-q)\ell_2\zeta^2q^{2d}}{4(1-q)^2} + \frac{\ell_2\zeta^2}{4(1-q)^2}\sum_{i=1}^{d-1}\prn*{q^{i+1} - q^i}^2 + \frac{\lambda}{2}\nrm{x^*}^2 \\
&= \frac{\ell_2\zeta^2}{4(1-q)^2}\prn*{-2q(1-q) + (1-q)q^{2d} + (1-q)^2\sum_{i=1}^{d-1}q^{2i} + \frac{2\lambda}{\gamma}\sum_{i=1}^{d}q^{2i}} \\
&= \frac{\ell_2\zeta^2}{4(1-q)^2}\prn*{-2q(1-q) + (1-q)q^{2d} - (1-q)^2q^{2d} + \prn*{(1-q)^2 + \frac{2\lambda}{\gamma}}\sum_{i=1}^{d}q^{2i}} \\
&= \frac{\ell_2\zeta^2}{4(1-q)^2}\prn*{-2q(1-q) + (q-q^2)q^{2d} + \frac{(1-q)^2(1 + q)}{q}\cdot \frac{q^2(1 - q^{2d})}{1 - q^2}} \\
&= \frac{\ell_2\zeta^2}{4(1-q)^2}\prn*{-2q(1-q) + (q-q^2)q^{2d} + (q-q^2)(1 - q^{2d})} \\
&= \frac{-q\ell_2\zeta^2}{4(1-q)} 
\end{align}
for the fourth equality, we used that \eqref{eq:heterogeneous-quadratic-equation} implies $\frac{2\lambda}{\ell_2} = \frac{(1-q)^2}{q}$. Observing that $F(0) = 0$ proves claim 3.

\textbf{4)}
Let $x$ be a vector such that $\prog{\alpha}(x) \leq R$ and let $x_i = \inner{U_i}{x}$ for each $i \leq d$. Then by the $\lambda$-strong convexity of $F$,
\begin{align}
F(x) - F^* 
&\geq \frac{\lambda}{2}\nrm{x - x^*}^2 \\
&\geq \frac{\lambda}{2}\sum_{i=R+1}^{d}(x_i - x^*_i)^2 \\
&\geq \frac{\zeta^2\lambda}{8(1-q)^2}\sum_{i=R+1}^{d}q^{2i} \\
&= \frac{\zeta^2\lambda}{8(1-q)^2}\cdot\frac{q^2(q^{2R} - q^{2d})}{1 - q^2} \\
&\geq \frac{\zeta^2\lambda q^2}{16(1-q)^2(1-q^2)}q^{2R}
\end{align}
For the third inequality, we used that $\alpha \leq \frac{\zeta q^d}{2(1-q)} \leq x^*_d$. For the fourth inequality, we used that $d \geq R + \frac{\log 2}{2\log(1/q)}$ which implies $q^{2R} \geq 2q^{2d}$

\textbf{5)}
This is a straightforward computation
\begin{equation}
\E_z g^U_1(x;z) = (1-\delta)g^U_1(x;0) + \delta g^U_1(x;1) = (1-\delta)g^U_1(x;0) + \delta\brk*{\frac{1}{\delta}\nabla F^U_1(x) - \frac{1-\delta}{\delta}g^U_1(x;0)} = \nabla F^U_1(x)
\end{equation}
and
\begin{equation}
\E_z g^U_2(x;z) = (1-\delta)g^U_2(x;0) + \delta g^U_2(x;1) = (1-\delta)g^U_2(x;0) + \delta\brk*{\frac{1}{\delta}\nabla F^U_2(x) - \frac{1-\delta}{\delta}g^U_2(x;0)} = \nabla F^U_2(x)
\end{equation}

\textbf{6)}
For any $x$,
\begin{align}
\E_z \nrm*{g^U_1(x;z) - \nabla F^U_1(x)}^2 
&= (1-\delta)\nrm*{\tilde{g}_1(U^\top x;0) - \nabla \tilde{F}_1(U^\top x)}^2 + \delta\nrm*{\tilde{g}_1(U^\top x;1) - \nabla \tilde{F}_1(U^\top x)}^2 \\
&= \frac{1-\delta}{\delta}\nrm*{\tilde{g}_1(U^\top x;0) - \nabla \tilde{F}_1(U^\top x)}^2
\end{align}
Similarly,
\begin{align}
\E_z \nrm*{g^U_2(x;z) - \nabla F^U_2(x)}^2 
&= (1-\delta)\nrm*{\tilde{g}_2(U^\top x;0) - \nabla \tilde{F}_2(U^\top x)}^2 + \delta\nrm*{\tilde{g}_2(U^\top x;1) - \nabla \tilde{F}_2(U^\top x)}^2 \\
&= \frac{1-\delta}{\delta}\nrm*{\tilde{g}_2(U^\top x;0) - \nabla \tilde{F}_2(U^\top x)}^2
\end{align}
Let $x$ be such that $\prog{\alpha}(U^\top x) = j$, and let $x_i = \inner{U_i}{x}$ for each $i$. Then
\begin{align}
&\tilde{g}_1(U^\top x;0) - \nabla \tilde{F}_1(U^\top x)\nonumber\\
&= \sum_{i=1}^{2\floor{j/2} + 1} e_ie_i^\top \brk*{\nabla \tilde{F}_1\prn*{[x_1,\dots,x_{2\floor{j/2} + 1},0,\dots,0]}} - \tilde{F}_1(U^\top x) \\
&= -\sum_{i=2\floor{j/2} + 2}^d e_ie_i^\top \tilde{F}_1(U^\top x) \\
&= -\ell_2\sum_{i=\floor{j/2} + 1}^d \prn*{x_{2i+1} - x_{2i}}(e_{2i+1} - e_{2i})
\end{align}
Since $\prog{\alpha}(U^\top x) = j$, $\abs{x_{2i+1}} \leq \alpha$ and $\abs{x_{2i}} \leq \alpha$ for $i \geq \floor{j/2} + 1$. Therefore, (following a nearly identical argument for $g_2$)
\begin{align}
\E_z \nrm*{g^U_1(x;z) - \nabla F^U_1(x)}^2 
&\leq \frac{8d\ell_2^2\alpha^2(1-\delta)}{\delta} \leq \sigma^2 \\
\E_z \nrm*{g^U_2(x;z) - \nabla F^U_2(x)}^2 
&\leq \frac{8d\ell_2^2\alpha^2(1-\delta)}{\delta} \leq \sigma^2
\end{align}
Here, we used $\alpha^2 \leq \frac{\sigma^2\delta}{8d\ell_2^2(1-\delta)}$.

\textbf{7)}
First, we compute
\begin{align}
&\nrm*{\nabla F^U_2(x^*)}^2 \nonumber\\
&= \nrm*{\nabla \tilde{F}_2(U^\top x^*) + \lambda x^*}^2 \\
&= \frac{\zeta^2\ell_2^2}{(1-q)^2}\nrm*{(q-1)\sum_{i=1}^{d/2}q^{2i-1}(e_{2i} - e_{2i-1}) + \frac{\lambda}{\ell_2}\sum_{i=1}^d q^ie_i}^2 \\
&= \frac{\zeta^2\ell_2^2}{(1-q)^2}\nrm*{\sum_{i=1}^{d/2}\prn*{\frac{\lambda}{\ell_2}q + q - 1}q^{2i-1}e_{2i} + \sum_{i=1}^{d/2}\prn*{\frac{\lambda}{\ell_2} + 1 - q}q^{2i-1}e_{2i-1}}^2 \\
&= \frac{\zeta^2\ell_2^2}{(1-q)^2}\prn*{\prn*{\frac{\lambda}{\ell_2}q + q - 1}^2 + \prn*{\frac{\lambda}{\ell_2} + 1 - q}^2}\sum_{i=1}^{d/2}q^{4i-2} \\
&= \frac{\zeta^2\ell_2^2}{(1-q)^2}(1 + q^2)\prn*{\frac{\lambda}{\ell_2} + 1 - q}^2\frac{q^2(1 - q^{2d})}{1 - q^4} \\
&= \frac{\zeta^2\ell_2^2}{(1-q)^2}\prn*{\frac{(1-q)^2}{2q} + 1 - q}^2\frac{q^2(1 - q^{2d})}{1 - q^2} \\
&= \frac{\zeta^2\ell_2^2}{4}\prn*{1 + q}^2\frac{(1 - q^{2d})}{1 - q^2} \\
&\leq \frac{\zeta^2\ell_2^2(1+q)}{4(1-q)}
\end{align}
We used that \eqref{eq:heterogeneous-quadratic-equation} implies $\frac{\lambda}{\ell_2}q + q - 1 = -q\prn*{\frac{\lambda}{\ell_2} + 1-q}$ and $\frac{\lambda}{\ell_2} = \frac{(1-q)^2}{2q}$.
Also,
\begin{equation}
\frac{1}{2}\nabla F^U_1(x^*) + \frac{1}{2}\nabla F^U_2(x^*) = \nabla F^U(x^*) = 0 \implies \nrm*{\nabla F^U_1(x^*)}^2 = \nrm*{\nabla F^U_2(x^*)}^2
\end{equation}
This completes the proof.
\end{proof}
We are now prepared to prove the theorem:
\boundedheterogeneousconvexlowerbound*
\begin{proof}
Since the algorithm's queries have norm bounded by $\gamma$, we can take the dimension $D \geq R + \frac{2\gamma^2}{\alpha^2}\log(d/\delta)$ and conclude by \pref{lem:heterogeneous-progress} that with probability at least $1 - 4MKR\delta$, 
\begin{equation}
\prog{\alpha}(U^\top \hat{x}) \leq R
\end{equation}
We set some of the parameters of our construction as
\begin{align}
\delta &= \frac{1}{8MKR} \\
\beta &= \sqrt{1 + \frac{2\ell_2}{\lambda}} \\
q &= \frac{\beta - 1}{\beta + 1} \\
d &= R + \beta > R + \frac{\log 2}{2\log\prn*{\frac{\beta + 1}{\beta - 1}}} = R + \frac{\log 2}{2\log\prn*{1/q}} \\
\alpha^2 &= \min\crl*{\frac{\zeta^2q^{2d}}{4(1-q)^2},\ \frac{\sigma^2\delta}{8d\ell_2^2(1-\delta)}} \\
\ell_2 &\leq \frac{H-\lambda}{4} 
\end{align}
Therefore, by \pref{lem:heterogeneous-function-properties}, $F$, $F_1$, and $F_2$ are $\lambda$-strongly convex, $H$-smooth, and their associated stochastic gradient oracles are unbiased estimates of their gradients with variance at most $\sigma^2$. We now consider the convex and strongly convex cases separately, and establish lower bounds in each case by selecting the remaining parameters $\zeta$, $\ell_2$, and in the convex case $\lambda$.

\paragraph{The convex case:}
By \pref{lem:heterogeneous-function-properties}, in the event that $\prog{\alpha}(U^\top \hat{x}) \leq R$,
\begin{align}
F(\hat{x}) - F^* &\geq \frac{\lambda \zeta^2 q^2}{16(1-q)^2(1-q^2)}q^{2R} \\
\nrm{x^*}^2 &\leq \frac{\zeta^2q^2}{(1-q)^2(1-q^2)} \\
\frac{1}{2}\prn*{\nrm*{\nabla F_1(x^*)}^2 + \nrm*{\nabla F_2(x^*)}^2} &\leq \frac{\zeta^2\ell_2^2}{2(1-q)}
\end{align}
To establish the lower bound, we first note that when 
\begin{equation}
\zeta^2 = \min\crl*{\frac{(1-q)^2(1-q^2)B^2}{q^2},\ \frac{2(1-q)\sdiff^2}{\ell_2^2}}
\end{equation}
then $\nrm{x^*} \leq B$ and $\frac{1}{2}\prn*{\nrm*{\nabla F_1(x^*)}^2 + \nrm*{\nabla F_2(x^*)}^2} \leq \sdiff^2$ so that the objective satisfies all of the necessary regularity conditions. Furthermore with this choice, we have
\begin{align}
F(\hat{x}) - F^* 
&\geq \frac{\lambda \zeta^2 q^2}{16(1-q)^2(1-q^2)}q^{2R} \\
&= \min\crl*{\frac{\lambda B^2}{16},\ \frac{\lambda\sdiff^2q^2}{8\ell_2^2(1-q)(1-q^2)}}q^{2R} \\
&= \min\crl*{\frac{\lambda B^2}{16},\ \frac{\lambda\sdiff^2(\beta + 1)(\beta - 1)^2}{16\ell_2^2((\beta + 1)^2-(\beta - 1)^2)}}\exp\prn*{-2R\log\prn*{\frac{\beta + 1}{\beta - 1}}} \\
&\geq \min\crl*{\frac{\lambda B^2}{16},\ \frac{\lambda\sdiff^2(\beta - 1)^2}{64\ell_2^2}}\exp\prn*{-\frac{4R}{\beta - 1}}
\end{align}
From here, we will choose $\lambda = \frac{\ell_2}{64R^2}$ which ensures $\beta \geq 2$ and allows us to lower bound
\begin{align}
F(\hat{x}) - F^* 
&\geq \min\crl*{\frac{\ell_2 B^2}{1024R^2},\ \frac{\lambda\sdiff^2\beta^2}{256\ell_2^2}}\exp\prn*{-\frac{8R}{\beta}} \\
&\geq \min\crl*{\frac{\ell_2 B^2}{1024R^2},\ \frac{\sdiff^2}{128\ell_2}}\exp\prn*{-\frac{8R\sqrt{\lambda}}{\sqrt{\ell_2}}} \\
&\geq \min\crl*{\frac{\ell_2 B^2}{1024eR^2},\ \frac{\sdiff^2}{128e\ell_2}}
\end{align}
Finally, we set
\begin{equation}
\ell_2 = \min\crl*{\frac{H}{5}\ \frac{\sdiff R}{B}}
\end{equation}
which ensures that $\ell_2 \leq \frac{H - \lambda}{4}$ and that
\begin{equation}
F(\hat{x}) - F^* \geq \min\crl*{\frac{HB^2}{5120eR^2},\ \frac{\sdiff B}{1024e R}}
\end{equation}
We also note that by \pref{lem:statistical-term-lower-bound}, even when the objective is homogeneous, with probability at least $\frac{1}{4}$ any algorithm's output will have
\begin{equation}
F(\hat{x}) - F^* \geq c\cdot\min\crl*{\frac{\sigma B}{\sqrt{MKR}},\ HB^2}
\end{equation}
Finally, after plugging in all of the parameters, we have a lower bound on the necessary dimension of
\begin{equation}
D \geq R + c\cdot\gamma^2\max\crl*{\frac{R^3}{B^2},\, \frac{H^2R}{\sdiff},\, \frac{H^2MKR^2}{\sigma^2}}\log(MKR)
\end{equation}
Combining these lower bounds completes our proof in the convex case.

\paragraph{The strongly convex case:}
By \pref{lem:heterogeneous-function-properties}, in the event that $\prog{\alpha}(\hat{x}) \leq R$,
\begin{align}
F(\hat{x}) - F^* &\geq \frac{\lambda \zeta^2 q^2}{16(1-q)^2(1-q^2)}q^{2R} \\
F(0) - F^* &\leq \frac{\zeta^2 q \ell_2}{4(1-q)} \\
\frac{1}{2}\prn*{\nrm*{\nabla F_1(x^*)}^2 + \nrm*{\nabla F_2(x^*)}^2} &\leq \frac{\zeta^2\ell_2^2}{2(1-q)}
\end{align}
Therefore, we set
\begin{equation}
\zeta^2 = \min\crl*{\frac{4(1-q)\Delta}{q\ell_2},\ \frac{2(1-q)\sdiff^2}{\ell_2^2}}
\end{equation}
which ensures $F(0) - F^* \leq \Delta$, $\frac{1}{2}\prn*{\nrm*{\nabla F_1(x^*)}^2 + \nrm*{\nabla F_2(x^*)}^2} \leq \sdiff^2$, and
\begin{align}
F(\hat{x}) - F^* 
&\geq \frac{\lambda \zeta^2 q^2}{16(1-q)^2(1-q^2)}q^{2R} \\
&= \min\crl*{\frac{\lambda \Delta q}{4\ell_2(1-q)(1-q^2)},\ \frac{\lambda q^2 \sdiff^2}{8\ell_2^2(1-q)(1-q^2)}}  \exp\prn*{-2R\log\prn*{\frac{\beta + 1}{\beta - 1}}} \\
&= \min\crl*{\frac{\Delta}{2(1+q)},\ \frac{q \ell_2\sdiff^2}{4(1+q)}}\exp\prn*{-2R\log\prn*{\frac{\beta + 1}{\beta - 1}}} \\
&\geq \min\crl*{\frac{\Delta}{4},\ \frac{(\beta - 1)\ell_2\sdiff^2}{8\beta}}\exp\prn*{-\frac{4R}{\beta - 1}}
\end{align}
Using the fact that $9\lambda \leq H$, it is possible to choose $\ell_2$ such that $2\lambda \leq \ell_2 \leq \frac{H-\lambda}{4}$, which ensures that $\beta \geq 2$ and 
\begin{align}
F(\hat{x}) - F^* 
&\geq \min\crl*{\frac{\Delta}{4},\ \frac{\ell_2\sdiff^2}{16}}\exp\prn*{-\frac{8R\sqrt{\lambda}}{\sqrt{\ell_2}}}
\end{align}
Finally, we take $\ell_2 = \frac{H-\lambda}{4} \geq \frac{H}{5}$ from which we conclude that
\begin{equation}
F(\hat{x}) - F^* 
\geq \min\crl*{\frac{\Delta}{4},\ \frac{H\sdiff^2}{80}}\exp\prn*{-\frac{18R\sqrt{\lambda}}{\sqrt{H}}}
\end{equation}
Finally, by \pref{lem:statistical-term-lower-bound}, even when the objectives are homogeneous, with probability at least $1/4$, any algorithm's output will have
\begin{equation}
F(\hat{x}) - F^* \geq c\cdot\min\crl*{\frac{\sigma^2}{\lambda MKR},\ \Delta}
\end{equation}
Plugging in the various parameters gives a lower bound on the necessary dimension of
\begin{equation}
D \geq R + c\cdot\gamma^2\max\crl*{\frac{\sqrt{H\lambda}}{\Delta\prn*{1-\sqrt{\frac{\lambda}{H}}}^{2R}},\,\frac{H\sqrt{H\lambda}}{\sdiff^2\prn*{1-\sqrt{\frac{\lambda}{H}}}^{2R}},\,\frac{H^2MKR^2}{\sigma^2}}\log(MKR)
\end{equation}
This completes the proof.
\end{proof}

In light of \pref{thm:bounded-heterogeneous-convex-lower-bound}, \pref{thm:heterogeneous-convex-lower-bound} and \pref{thm:heterogeneous-strongly-convex-lower-bound} follow immediately by taking $\sdiff^2$ sufficiently large that the corresponding terms in the $\min$'s in \pref{thm:bounded-heterogeneous-convex-lower-bound} are never active.

\subsection{Proof of \pref{thm:homogeneous-non-convex-MSS-lower-bound}} \label{app:intermittent-communication-homogeneous-non-convex-MSS-lower-bound}

The proof of \pref{thm:homogeneous-non-convex-MSS-lower-bound} uses the same construction as for \pref{thm:homogeneous-non-convex-lower-bound}, just with a different stochastic gradient oracle that satisfies $L$-mean square smoothness. 
We recall the construction of \citet{carmon2017lower1}, for each $T$ we define
\begin{equation}
F_T(x) = -\Psi(1)\Phi(x_1) + \sum_{i=2}^T \bigg[ \Psi(-x_{i-1})\Phi(-x_i) - \Psi(x_{i-1})\Phi(x_i) \bigg]
\end{equation}
where
\begin{align}
\Psi(x) &= \begin{cases} 0 & x \leq \frac{1}{2} \\ \exp\prn*{1 - \frac{1}{(2x-1)^2}} & x > \frac{1}{2} \end{cases} \\
\Phi(x) &= \sqrt{e}\int_{-\infty}^x e^{-\frac{1}{2}t^2}dt
\end{align}
We also recall \pref{lem:non-convex-homogeneous-construction-properties} which summarizes the relevant properties of $F_T$:
\nonconvexconstruction*

In addition to the function $F_T$, we also define a stochastic gradient oracle $g_T(x;z)$ which satisfies $L$-mean square smoothness. To do so, we first introduce the smoothed indicator function $\Theta_i(x)$:
\begin{align}
\Lambda(t) &= \begin{cases}
0 & t \not\in \prn*{\frac{1}{4},\frac{1}{2}} \\
\exp\prn*{-\frac{1}{100\prn*{t-\frac{1}{4}}\prn*{\frac{1}{2}-t}}} & t \in \prn*{\frac{1}{4},\frac{1}{2}}
\end{cases} \\
\Gamma(t) &= \frac{\int_{1/4}^t \Lambda(\tau) d\tau}{\int_{1/4}^{1/2} \Lambda(\tau) d\tau} \\
\Theta_i(x) &= \Gamma\prn*{1 - \prn*{\sum_{j=i}^T \Gamma(\abs{x_j})^2}^{\frac{1}{2}}}
\end{align}
The important properties of $\Theta_i$ are summarized as follows:
\begin{lemma}\label{lem:theta-properties}
For any $i$ and $x$, $\Theta_i(x)$ satisfies
\begin{enumerate}
\item $\Theta_i(x) \in [0,1]$
\item $\Theta_i(x)$ is $36$-Lipschitz
\item $\indicator{i > \prog{\frac{1}{4}}(x)} \leq \Theta_i(x) \leq \indicator{i > \prog{\frac{1}{2}}(x)}$
\end{enumerate}
\end{lemma}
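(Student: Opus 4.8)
\textbf{Proof proposal for \pref{lem:theta-properties}.}

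The plan is to verify each of the three claims about $\Theta_i(x)$ directly from its definition in terms of the building blocks $\Lambda$, $\Gamma$, and the aggregate $\rho_i(x) := \prn*{\sum_{j=i}^T \Gamma(\abs{x_j})^2}^{1/2}$. First I would record the elementary properties of the one-dimensional bump $\Lambda$ and its normalized antiderivative $\Gamma$: that $\Lambda \geq 0$ with support in $(1/4,1/2)$, so $\Gamma$ is nondecreasing, $\Gamma(t) = 0$ for $t \leq 1/4$, $\Gamma(t) = 1$ for $t \geq 1/2$, and $\Gamma \in [0,1]$ everywhere. I would also bound $\sup_t \Gamma'(t) = \sup_t \Lambda(t) / \int_{1/4}^{1/2}\Lambda(\tau)d\tau$, which is a fixed numerical constant; a crude estimate (the numerator is at most $e^{-1/(100 \cdot (1/8)^2)} \cdot$ wait, more carefully $\max_t \Lambda(t) = e^{-16/100}$ attained at the midpoint, and the denominator is a positive constant one can lower-bound by integrating over a subinterval) suffices to show $\Gamma$ is $L_\Gamma$-Lipschitz for some explicit $L_\Gamma$; I expect $L_\Gamma \leq 6$ works with room to spare, which is what feeds the ``$36 = 6^2$''-type constant in claim 2.

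For claim 1, since $\Theta_i(x) = \Gamma(1 - \rho_i(x))$ and $\Gamma$ takes values in $[0,1]$, the bound $\Theta_i(x) \in [0,1]$ is immediate. For claim 3, the key observation is that $\Gamma(\abs{x_j}) = 0$ whenever $\abs{x_j} \leq 1/4$ and $\Gamma(\abs{x_j}) = 1$ whenever $\abs{x_j} \geq 1/2$. Hence if $i > \prog{1/2}(x)$ then $\abs{x_j} \leq 1/2$ for all $j \geq i$; this does not immediately give $\rho_i(x) = 0$, so instead I would argue: if $i > \prog{1/4}(x)$ then $\abs{x_j} \leq 1/4$ for all $j \geq i$, so every term $\Gamma(\abs{x_j}) = 0$, whence $\rho_i(x) = 0$ and $\Theta_i(x) = \Gamma(1) = 1$. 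Conversely, if $i \leq \prog{1/2}(x)$, there is some $j \geq i$ with $\abs{x_j} > 1/2$, so $\Gamma(\abs{x_j}) = 1$, forcing $\rho_i(x) \geq 1$, hence $1 - \rho_i(x) \leq 0$ and $\Theta_i(x) = \Gamma(1 - \rho_i(x)) = 0$. These two implications are exactly the contrapositives of the desired inequalities $\Theta_i(x) \geq \indicator{i > \prog{1/4}(x)}$ and $\Theta_i(x) \leq \indicator{i > \prog{1/2}(x)}$ (the latter because $\Theta_i(x) \leq 1$ always, and equals $0$ when $i \leq \prog{1/2}(x)$), so combined with claim 1 this gives the sandwich.

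For claim 2, the Lipschitz bound, I would compose Lipschitz estimates: $x \mapsto (\Gamma(\abs{x_i}),\dots,\Gamma(\abs{x_T}))$ is $L_\Gamma$-Lipschitz from $\R^d$ to $\R^{T-i+1}$ (each coordinate map $x_j \mapsto \Gamma(\abs{x_j})$ is $L_\Gamma$-Lipschitz since $\abs{\cdot}$ is $1$-Lipschitz and the coordinates are orthogonal, so the joint map has the same constant), the Euclidean norm $v \mapsto \nrm{v}$ is $1$-Lipschitz, the map $\rho \mapsto 1 - \rho$ is $1$-Lipschitz, and $\Gamma$ is again $L_\Gamma$-Lipschitz; composing gives $\Theta_i$ is $L_\Gamma^2$-Lipschitz, and with $L_\Gamma = 6$ this is $36$. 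The main obstacle is pinning down the numerical constant: I need a clean lower bound on $\int_{1/4}^{1/2}\Lambda(\tau)d\tau$ and an upper bound on $\max_t \Lambda(t)$ so that $L_\Gamma = \max_t\Lambda(t)/\int_{1/4}^{1/2}\Lambda(\tau)d\tau \leq 6$; one can get the lower bound by restricting the integral to, say, $[0.3, 0.45]$ where $\Lambda$ is bounded below by a computable positive value, and then check the arithmetic. This is routine but it is the only place requiring care, since everything else is qualitative composition of Lipschitz and monotone maps. If the constant $6$ turns out slightly too tight, one either sharpens the integral estimate or, since $\Theta_i$ is used only through its being Lipschitz with a fixed constant, absorbs the discrepancy into a larger smoothness constant downstream — but I expect $36$ to be achievable as stated.
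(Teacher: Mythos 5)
Your proposal follows essentially the same route as the paper: claims 1 and 3 are verified exactly as in the paper's proof (including the correct observation that one must go through the $\tfrac14$-threshold for the lower indicator, and pick the coordinate $j\ge i$ with $\abs{x_j}>\tfrac12$ for the upper one), and claim 2 is the same composition argument — the paper's use of the reverse triangle inequality for $\abs{\rho_i(x)-\rho_i(y)}$ is just your ``norm is $1$-Lipschitz'' step written out.

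The one place where your write-up as stated does not close is the numerical constant $L_\Gamma\le 6$. You assert $\max_t\Lambda(t)=e^{-16/100}$, but at the maximizer $t=3/8$ one has $(t-\tfrac14)(\tfrac12-t)=\tfrac1{64}$, so $\max_t\Lambda(t)=e^{-64/100}\approx 0.527$ (your first, ``crude'' expression was actually the right one). This matters: with $e^{-0.16}\approx 0.85$ in the numerator, even the true value of the integral ($\approx 0.0886$, which the paper quotes as $\int_{1/4}^{1/2}\Lambda \ge 0.088615$) only gives $L_\Gamma\lesssim 9.6$, not $6$. Moreover, the crude lower bound you sketch — restricting to $[0.3,0.45]$ and using the endpoint value $\Lambda(0.3)=\Lambda(0.45)=e^{-1}\approx 0.368$ — yields only about $0.15\cdot 0.368\approx 0.055$, and $e^{-0.64}/0.055\approx 9.6>6$, so that single-interval estimate is not enough either; you need a somewhat finer lower Riemann sum (or the paper's direct bound $\frac{d}{dt}\Gamma(t)\le e^{-0.64}/0.088615\le 6$). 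These are arithmetic repairs rather than conceptual ones, and, as you note, any larger explicit constant could be absorbed downstream, but as written the chain of estimates does not yield the stated constant $36$.
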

\begin{proof}
The fact that $\Lambda(t) \geq 0$ implies that $\Gamma(t) \geq 0$ and that $\int_{1/4}^t \Lambda(\tau)d\tau \leq \int_{1/4}^{1/2} \Lambda(\tau)d\tau$ so that $\Gamma(t) \leq 1$. Finally, $\Theta_i(x)$ is $\Gamma$ applied to some function of $x$, so $\Theta_i(x) \in [0,1]$ too.

First, we note that if $t \not\in \prn*{\frac{1}{4},\frac{1}{2}}$, then 
\begin{equation}
\frac{d}{dt}\Gamma(t) = \frac{\Lambda(t)}{\int_{1/4}^{1/2} \Lambda(\tau) d\tau} = 0
\end{equation}
Evaluating the integral, we have that
\begin{equation}
\int_{1/4}^{1/2} \exp\prn*{-\frac{1}{100\prn*{\tau-\frac{1}{4}}\prn*{\frac{1}{2}-\tau}}} d\tau 
\geq 0.088615
\end{equation}
Therefore, if $t \in \prn*{\frac{1}{4},\frac{1}{2}}$, then
\begin{align}
\frac{d}{dt}\Gamma(t) 
&= \frac{\Lambda(t)}{\int_{1/4}^{1/2} \Lambda(\tau) d\tau} \\
&\leq \frac{1}{0.088615}\exp\prn*{-\frac{1}{100\prn*{\frac{3}{8}-\frac{1}{4}}\prn*{\frac{1}{2}-\frac{3}{8}}}} \\
&= \frac{1}{0.088615}\exp\prn*{-\frac{64}{100}} \leq 6
\end{align}
Therefore, $\Gamma$ is $6$-Lipschitz, so for any $x,y$
\begin{align}
\abs{\Theta_i(x) - \Theta_i(y)}
&= \abs*{\Gamma\prn*{1 - \prn*{\sum_{j=i}^T \Gamma(\abs{x_j})^2}^{\frac{1}{2}}} - \Gamma\prn*{1 - \prn*{\sum_{j=i}^T \Gamma(\abs{y_j})^2}^{\frac{1}{2}}}} \\
&\leq 6\abs*{\prn*{\sum_{j=i}^T \Gamma(\abs{x_j})^2}^{\frac{1}{2}} - \prn*{\sum_{j=i}^T \Gamma(\abs{y_j})^2}^{\frac{1}{2}}} \\
&\leq 6\prn*{\sum_{j=i}^T \prn*{\Gamma(\abs{x_j}) - \Gamma(\abs{y_j})}^2}^{\frac{1}{2}} \\
&\leq 36\prn*{\sum_{j=i}^T \prn*{\abs{x_j} - \abs{y_j}}^2}^{\frac{1}{2}} \\
&\leq 36\prn*{\sum_{j=i}^T \prn*{x_j - y_j}^2}^{\frac{1}{2}} \\
&\leq 36\nrm{x-y}
\end{align}
For the second inequality, we applied the reverse triangle inequality for the L2 norm. This shows that $\Theta_i$ is $36$-Lipschitz.

Finally, $\Theta_i(x) \geq 0$ and
\begin{equation}
\prog{\frac{1}{4}}(x) < i \implies \forall_{j \geq i}\ \Gamma(\abs{x_j}) = 0 \implies \sum_{j=i}^T \Gamma(\abs{x_j})^2 = 0 \implies \Theta_i(x) = \Gamma(1) = 1 
\end{equation}
so $\Theta_i(x) \geq \indicator{i > \prog{\frac{1}{4}}(x)}$. Similarly, $\Theta_i(x) \leq 1$ and
\begin{equation}
\prog{\frac{1}{2}}(x) \geq i \implies \sum_{j=i}^T \Gamma(\abs{x_j})^2 \geq \Gamma(\abs{x_i})^2 \geq 1 \implies \Theta_i(x) = \Gamma(0) = 0
\end{equation}
so $\Theta_i(x) \leq \indicator{i > \prog{\frac{1}{2}}(x)}$.
\end{proof}

Using the smoothed indicator functions $\Theta_i$, we are ready to define the stochastic gradient oracle:
\begin{align}
z &= \begin{cases} 0 & \textrm{with probability } 1-p \\ 1 & \textrm{with probability } p \end{cases} \\
g_T(x;0) &= \sum_{i=1}^{T}(1 - \Theta_i(x)) e_ie_i^\top \nabla F_T\prn*{\brk*{(1 - \Theta_1(x))x_1,\dots,(1 - \Theta_i(x))x_i,\dots,(1 - \Theta_d(x))x_d}} \\
g_T(x;1) &= \frac{1}{p}\nabla F_T(x) - \frac{1-p}{p}g_T(x;0)
\end{align}
The following lemma confirms that $g_T$ has the desired properties:
\begin{lemma}\label{lem:homogeneous-non-convex-oracle-properties-MSS}
For any $p$, the stochastic gradient oracle $g_T$ satisfies
\begin{enumerate}
\item $\E_z g_T(x;z) = \nabla F_T(x)$
\item $\E_z \nrm*{g_T(x;z) - \nabla F_T(x)}^2 \leq \frac{4232(1-p)}{p}$
\item $g_T$ is a $(\frac{1}{4},p,0)$-robust-zero-chain oracle
\item $g_T$ is $\prn*{152^2 + \frac{1457187328(1-p)}{p}}$-mean square smooth
\end{enumerate}
\end{lemma}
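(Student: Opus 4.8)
The plan is to verify the four claimed properties of the stochastic gradient oracle $g_T$ defined via the smoothed indicator functions $\Theta_i$. Properties 1--3 follow essentially the same template as their analogues in \pref{lem:homogeneous-non-convex-oracle-properties} for the non-smooth oracle, so the bulk of the work is property 4, the mean-square-smoothness bound, which is genuinely new because it is precisely what the smoothed construction was designed to achieve.

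First I would handle property 1: a direct computation gives $\E_z g_T(x;z) = (1-p)g_T(x;0) + p\prn*{\frac{1}{p}\nabla F_T(x) - \frac{1-p}{p}g_T(x;0)} = \nabla F_T(x)$, identical to before. For property 2, as in the earlier lemma, $\E_z \nrm*{g_T(x;z) - \nabla F_T(x)}^2 = \frac{1-p}{p}\nrm*{g_T(x;0) - \nabla F_T(x)}^2$, so it suffices to bound $\nrm*{g_T(x;0) - \nabla F_T(x)}^2$. Writing $\tilde{x}$ for the vector with $i$th coordinate $(1 - \Theta_i(x))x_i$, I would argue that coordinates where $\Theta_i(x) = 0$ and $\Theta_{i-1}(x)=\Theta_{i+1}(x)=0$ (which holds for all $i \le \prog{\frac14}(x)$ by part 3 of \pref{lem:theta-properties}, using the tridiagonal dependence of $\nabla F_T$) contribute nothing, and coordinates $i$ with $i-1 > \prog{\frac12}(x)$ (where $\tilde x$ is zero in a neighborhood and $\nabla F_T(\tilde x)_i$ vanishes by part 4 of \pref{lem:non-convex-homogeneous-construction-properties}) also contribute nothing; the discrepancy is localized to a constant number of coordinates around the ``active'' region, and part 3 of \pref{lem:non-convex-homogeneous-construction-properties} bounds each such coordinate by $23$, giving $\nrm*{g_T(x;0) - \nabla F_T(x)}^2 \le 8 \cdot 23^2$-ish, which after numerical bookkeeping yields the stated $\frac{4232(1-p)}{p}$. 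For property 3, with $\mc{Z}_0 = \crl{0}$, $\mc{Z}_1 = \crl{1}$, the probabilities match the definition of a $(\frac14,p,0)$-robust-zero-chain; the zero-chain structure follows because $\prog{\frac14}(x) \le i$ forces $\Theta_j(x) = 1$ for $j > i$, hence $\tilde x$ depends only on $x_1,\dots,x_i$, and an argument mirroring the one in \pref{lem:homogeneous-non-convex-oracle-properties} (checking the $(\prog{\frac14}(x)+1)$st coordinate separately using $\Psi,\Psi'$ vanishing on $(-\infty,\frac12]$) completes the verification.

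The main obstacle is property 4. The plan is to bound $\E_z\nrm*{g_T(x;z) - g_T(y;z)}^2$ by splitting into the $z=0$ and $z=1$ contributions: $\E_z\nrm*{g_T(x;z) - g_T(y;z)}^2 \le \frac{1}{p}\nrm*{\nabla F_T(x) - \nabla F_T(y)}^2 + \frac{2(1-p)}{p}\nrm*{g_T(x;0) - g_T(y;0)}^2$ (after expanding the $z=1$ branch and using $\nrm*{a+b}^2 \le 2\nrm a^2 + 2\nrm b^2$). The first term is controlled by the $152$-smoothness of $F_T$ (part 2 of \pref{lem:non-convex-homogeneous-construction-properties}), giving $152^2\nrm{x-y}^2$. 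The harder term is the Lipschitz constant of the map $x \mapsto g_T(x;0)$. I would write $g_T(x;0)_i = (1-\Theta_i(x))\brk*{\nabla F_T(\tilde x)}_i$ and use the product rule: the variation splits into a piece where $\Theta_i$ changes (controlled by the $36$-Lipschitzness of $\Theta_i$ from \pref{lem:theta-properties} times $\sup\nrm{\nabla F_T}_\infty \le 23$) and a piece where $\tilde x$ changes (controlled by $152$-smoothness of $F_T$ composed with the Lipschitz constant of $x \mapsto \tilde x$, which is itself bounded since each coordinate map $x \mapsto (1-\Theta_i(x))x_i$ is Lipschitz with a constant depending on $36$ and the bound $\nrm{\nabla F_T}_\infty$-region; here one needs that $\tilde x$ stays in a bounded region, which holds because $(1-\Theta_i(x))x_i$ is nonzero only when $\Theta_i(x) < 1$, forcing $\abs{x_i} \le \frac12$ by part 3 of \pref{lem:theta-properties}). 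Summing the squared coordinate-wise Lipschitz bounds over $i$ and being careful that the $\Theta_i$-variations are not independent across $i$ (so one picks up an extra factor from Cauchy--Schwarz over the $T$ coordinates, or better, uses that $\nabla\Theta_i$ is supported on overlapping but controlled coordinate blocks) gives a constant $C$ with $\nrm*{g_T(x;0) - g_T(y;0)}^2 \le C\nrm{x-y}^2$; tracking constants yields $\frac{2(1-p)C}{p} = \frac{1457187328(1-p)}{p}$. Combining the two terms gives the claimed $\prn*{152^2 + \frac{1457187328(1-p)}{p}}$-mean-square-smoothness. I expect the bookkeeping of the numerical constant in this last step to be the most tedious part, but conceptually it is just repeated application of the product rule together with the three bounds (Lipschitzness of $\Theta_i$, smoothness of $F_T$, boundedness of $\nabla F_T$) already in hand.
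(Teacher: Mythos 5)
Your treatment of properties 1--3 matches the paper's, and your bound for property 2 already contains the key structural fact (the oracle response $g_T(x;0)$ agrees with $\nabla F_T(x)$ on every coordinate except $i\in\crl*{\prog{\frac12}(x),\prog{\frac12}(x)+1}$). The genuine gap is in property 4. Your plan is to bound the global Lipschitz constant of the map $x\mapsto g_T(x;0)$, i.e.\ of $x\mapsto\prn*{(1-\Theta_i(x))[\nabla F_T(\tilde x)]_i}_{i=1}^T$, by summing squared coordinate-wise Lipschitz bounds. Each coordinate map is $O(1)$-Lipschitz, but every $\Theta_i$ depends on all of $x_i,\dots,x_T$, so perturbing a single coordinate of $x$ can move $\Theta_i$, and hence the $i$th output, for many indices $i$ simultaneously; summing per-coordinate bounds, or invoking ``Cauchy--Schwarz over the $T$ coordinates'' as you suggest, yields a constant of order $T$, which is fatal here: the mean-square-smoothness constant must be dimension-free, since downstream $T\sim KR$ and $\gamma$ is calibrated so that $2\gamma^2\cdot(\textrm{MSS constant})\leq L^2$. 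Your fallback that ``$\nabla\Theta_i$ is supported on controlled coordinate blocks'' is not justified: the sensitive set of $\Theta_i$ is $\crl*{j\geq i : \abs{x_j}\in(\tfrac14,\tfrac12)}$, which need not be a bounded block (one can make each individual sensitivity small while the sensitive range of indices is long; converting that trade-off into a dimension-free Jacobian bound would be a genuinely different and more delicate argument that the proposal does not supply).

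The missing idea is to reuse, for property 4, exactly the localization you used for property 2: since $[g_T(x;0)]_i=[\nabla F_T(x)]_i$ for all $i\notin\crl*{\prog{\frac12}(x),\prog{\frac12}(x)+1}$ (and likewise at $y$), the difference $g_T(x;0)-g_T(y;0)$ coincides with $\nabla F_T(x)-\nabla F_T(y)$ outside at most four coordinates $\crl*{i_x,i_x+1,i_y,i_y+1}$. On those four coordinates one splits $(1-\Theta_i(x))[\nabla F_T(\tilde x)]_i-(1-\Theta_i(y))[\nabla F_T(\tilde y)]_i$ into a $\Theta$-variation piece (handled by the $36$-Lipschitzness of $\Theta_i$ from \pref{lem:theta-properties} and $\nrm{\nabla F_T}_\infty\leq 23$) and a $\tilde x$-variation piece, and the latter requires a short case analysis showing $\prn*{(1-\Theta_j(x))x_j-(1-\Theta_j(y))y_j}^2\lesssim (x_j-y_j)^2+\nrm{x-y}^2$, using that $\Theta_j(x)<1$ forces $\abs{x_j}<\tfrac12$ --- a step your proposal gestures at but does not carry out. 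A secondary issue: your opening decomposition places a factor $\tfrac1p$ (in fact $\tfrac2p$ after the triangle inequality) on $\nrm{\nabla F_T(x)-\nabla F_T(y)}^2$, so it cannot produce the stated $p$-free term $152^2$; the paper instead uses the exact identity $\E_z \nrm{g_T(x;z)-g_T(y;z)}^2 = \nrm{\nabla F_T(x)-\nabla F_T(y)}^2 + \E_z\nrm{(g_T(x;z)-\nabla F_T(x))-(g_T(y;z)-\nabla F_T(y))}^2$, valid by unbiasedness. This second point only costs constant factors, but together with the first it means the lemma as stated does not follow from the proposal as written.
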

\begin{proof}
A simple calculation shows that
\begin{equation}
\E_z g_T(x;z) = (1-p)g_T(x;0) + p\prn*{\frac{1}{p}\nabla F_T(x) - \frac{1-p}{p}g_T(x;0)} = \nabla F_T(x)
\end{equation}
Furthermore, 
\begin{align}
\E_z &\nrm*{g_T(x;z) - \nabla F_T(x)}^2\nonumber\\
&= (1-p)\nrm*{g_T(x;0) - \nabla F_T(x)}^2 + p\nrm*{\frac{1}{p}\nabla F_T(x) - \frac{1-p}{p}g_T(x;0) - \nabla F_T(x)}^2 \\
&= \frac{1-p}{p}\nrm*{g_T(x;0) - \nabla F_T(x)}^2
\end{align}
Each coordinate of the gradient is given by
\begin{equation}\label{eq:grad-coord}
\brk*{\nabla F_T(x)}_i = - \Psi(-x_{i-1})\Phi'(-x_i) - \Psi(x_{i-1})\Phi'(x_i) - \Psi'(-x_i)\Phi(-x_{i+1}) - \Psi'(x_i)\Phi(x_{i+1})
\end{equation}
so the $i\mathth$ coordinate of $\nabla F_T$ only depends on the $i-1$, $i$ and $i+1$ coordinates of $x$. Furthermore, most of the coordinates of $g_T(x;0)$ are equal to the corresponding coordinates of $\nabla F_T(x)$. Specifically, let 
\begin{equation}
\tilde{x} = \brk*{(1 - \Theta_1(x))x_1,\dots,(1 - \Theta_i(x))x_i,\dots,(1 - \Theta_d(x))x_d}
\end{equation}
By \pref{lem:theta-properties}, for $i \leq \prog{\frac{1}{2}}(x)$, $1-\Theta_i(x) \geq 1 - \indicator{i > \prog{\frac{1}{2}}(x)} = 1$, so $\tilde{x}_i = x_i$. Therefore, for $i < \prog{\frac{1}{2}}(x)$,
\begin{align}
g_T(x;0)_i &= (1-\Theta_i(x))\brk*{\nabla F_T(\tilde{x})}_i \\
&= - \Psi(-\tilde{x}_{i-1})\Phi'(-\tilde{x}_i) - \Psi(\tilde{x}_{i-1})\Phi'(\tilde{x}_i) - \Psi'(-\tilde{x}_i)\Phi(-\tilde{x}_{i+1}) - \Psi'(\tilde{x}_i)\Phi(\tilde{x}_{i+1}) \\
&= - \Psi(-x_{i-1})\Phi'(-x_i) - \Psi(x_{i-1})\Phi'(x_i) - \Psi'(-x_i)\Phi(-x_{i+1}) - \Psi'(x_i)\Phi(x_{i+1}) \\
&= \brk*{\nabla F_T(x)}_i
\end{align}
Furthermore, for $i > \prog{\frac{1}{2}}(x) + 1$, $\Psi(-x_{i-1}) = \Psi(x_{i-1}) = \Psi'(-x_{i-1}) = \Psi'(x_{i-1}) = 0$, and since $\prog{\frac{1}{2}}(\tilde{x}) \leq \prog{\frac{1}{2}}(x)$, we conclude $\brk*{\nabla F_T(x)}_i = \brk*{\nabla F_T(\tilde{x})}_i = 0$.

Therefore, $[g(x;0)]_i = \brk*{\nabla F_T(\tilde{x})}_i$ for all $i$ except possibly $i \in \crl*{\prog{\frac{1}{2}}(x),\prog{\frac{1}{2}}(x)+1}$. Recalling that $\sup_x\nrm{\nabla F_T(x)}_\infty \leq 23$ by \pref{lem:non-convex-homogeneous-construction-properties}, we have for $i = \prog{\frac{1}{2}}(x)$
\begin{align}
&\nrm*{g_T(x;0) - \nabla F_T(x)}^2\nonumber\\
&= \prn*{(1-\Theta_i(x))\brk*{\nabla F_T(\tilde{x})}_i - \brk*{\nabla F_T(x)}_i}^2 + \prn*{(1-\Theta_{i+1}(x))\brk*{\nabla F_T(\tilde{x})}_{i+1} - \brk*{\nabla F_T(x)}_{i+1}}^2 
\leq 2\cdot(2\cdot23)^2
\end{align}
We conclude that 
\begin{equation}
\E_z \nrm*{g_T(x;z) - \nabla F_T(x)}^2 \leq \frac{4232(1-p)}{p}
\end{equation}

We will now show that $g_T$ is a $(\frac{1}{4},p,0)$-robust zero chain.
For $\mc{Z}_0 = \crl{0}$, $\mc{Z}_1 = \crl{1}$, it is clear that $\P(z \in \mc{Z}_0\cup\mc{Z}_1) = 1$ and $\P(z \in \mc{Z}_0\,|\,z\in\mc{Z}_0\cup\mc{Z}_1) = 1-p$. Furthermore, by \pref{lem:theta-properties}, $1-\Theta_i(x) \leq 1 - \indicator{i>\prog{\frac{1}{4}}(x)}$, so
\begin{align}
g(x;0) 
&= \sum_{i=1}^{T}(1 - \Theta_i(x)) e_ie_i^\top \nabla F_T\prn*{\brk*{(1 - \Theta_1(x))x_1,\dots,(1 - \Theta_i(x))x_i,\dots,(1 - \Theta_d(x))x_d}}  \\
&= \sum_{i=1}^{\prog{\frac{1}{4}}(x)}(1 - \Theta_i(x)) e_ie_i^\top \nabla F_T\prn*{\brk*{(1 - \Theta_1(x))x_1,\dots,(1 - \Theta_{\prog{\frac{1}{4}}(x)}(x))x_{\prog{\frac{1}{4}}(x)},0,\dots,0}}
\end{align}
Furthermore, since $\Gamma(t) = 0$ for $t \leq \frac{1}{4}$,
\begin{equation}\label{eq:homogeneous-non-convex-g-is-robust-zc-MSS}
\Theta_i(x) = \Gamma\prn*{1 - \prn*{\sum_{j=i}^T \Gamma(\abs{x_j})^2}^{1/2}} = \Gamma\prn*{1 - \prn*{\sum_{j=i}^{\prog{\frac{1}{4}}(x)} \Gamma(\abs{x_j})^2}^{1/2}} = \Theta_i\prn*{\brk*{x_1,\dots,x_{\prog{\frac{1}{4}}(x)},0,\dots,0}}
\end{equation}
So, when $z = 0 \in \mc{Z}_0$, $\prog{0}(g(x;z)) \leq \prog{\frac{1}{4}}(x)$ and $g(x;0)$ depends only on $x_1,\dots,x_{\prog{\frac{1}{4}}(x)}$. Alternatively, when $z = 1 \in \mc{Z}_1$, $g(x;1) = \frac{1}{p}\nabla F_T(x) - \frac{1-p}{p}g_T(x;0)$. The second term depends only on $x_1,\dots,x_{\prog{\frac{1}{4}}(x)}$ by \eqref{eq:homogeneous-non-convex-g-is-robust-zc-MSS}. Furthermore, it was shown in the proof of \pref{lem:homogeneous-non-convex-oracle-properties} that $\prog{0}(\nabla F_T(x)) \leq \prog{\frac{1}{2}}(x) + 1$ and
\begin{equation}
\nabla F_T(x) = \nabla F_T([x_1,\dots,x_{\prog{\frac{1}{2}}(x) + 1},0,\dots,0])
\end{equation}
Since $\prog{\frac{1}{2}}(x) \leq \prog{\frac{1}{4}}(x) \leq \prog{0}(x)$, this implies that $\prog{\frac{1}{4}}(g(x;1)) \leq \prog{\frac{1}{4}}(x) + 1$ and $g(x;1)$ depends only on $x_1,\dots,x_{\prog{\frac{1}{4}}+1}$. We conclude that $g_T$ is a $\prn*{\frac{1}{4},p,0}$-robust zero chain.

Finally, we show that $g_T$ is mean square smooth. For arbitrary $x,y$, 
\begin{align}
\E_z\nrm*{g_T(x;z) - g_T(y;z)}^2
&= \E_z\nrm*{g_T(x;z) - \nabla F_T(x) - g_T(y;z) + \nabla F_T(y)}^2 + \nrm*{\nabla F_T(x) - \nabla F_T(y)}^2\\
&\leq \E_z\nrm*{g_T(x;z) - \nabla F_T(x) - g_T(y;z) + \nabla F_T(y)}^2 + 152^2\nrm{x-y}^2
\end{align}
For the final inequality, we used that $F_T$ is $152$-smooth by \pref{lem:non-convex-homogeneous-construction-properties}.
We recall from earlier in this proof that for any $i < \prog{\frac{1}{2}}(x)$ or $i > \prog{\frac{1}{2}}(x) + 1$, $[g_T(x;0)]_i = [\nabla F_T(x)]_i$, and therefore
\begin{equation}
[g_T(x;1)]_i = \frac{1}{p}[\nabla F_T(x)]_i - \frac{1-p}{p}[g_T(x;0)]_i = \frac{1}{p}[\nabla F_T(x)]_i - \frac{1-p}{p}[\nabla F_T(x)]_i = [\nabla F_T(x)]_i
\end{equation}
Let
\begin{align}
\tilde{x} &= \brk*{(1 - \Theta_1(x))x_1,\dots,(1 - \Theta_i(x))x_i,\dots,(1 - \Theta_d(x))x_d} \\
\tilde{y} &= \brk*{(1 - \Theta_1(y))y_1,\dots,(1 - \Theta_i(y))y_i,\dots,(1 - \Theta_d(y))y_d}
\end{align}
and $i_x = \prog{\frac{1}{2}}(x)$ and $i_y = \prog{\frac{1}{2}}(y)$. Then for each $i \in \crl*{i_x,i_x+1,i_y,i_y+1}$
\begin{align}
\E_z&\prn*{[g_T(x;z)]_i - [\nabla F_T(x)]_i - [g_T(y;z)]_i + [\nabla F_T(y)]_i}^2 \nonumber\\
&= (1-p)\prn*{[g(x;0)]_i - [\nabla F_T(x)]_i - [g(y;0)]_i + [\nabla F_T(y)]_i}^2 \nonumber\\
&+ \frac{(1-p)^2}{p}\prn*{[g(x;0)]_i - [\nabla F_T(x)]_i - [g(y;0)]_i + [\nabla F_T(y)]_i}^2 \\
&= \frac{1-p}{p}\prn*{[g(x;0)]_i - [\nabla F_T(x)]_i - [g(y;0)]_i + [\nabla F_T(y)]_i}^2 \\
&= \frac{1-p}{p}\prn*{(1-\Theta_i(x))[\nabla F_T(\tilde{x})]_i - [\nabla F_T(x)]_i - (1-\Theta_i(y))[\nabla F_T(\tilde{y})]_i + [\nabla F_T(y)]_i}^2 \\
&\leq \frac{2(1-p)}{p}\prn*{\prn*{(1-\Theta_i(x))[\nabla F_T(\tilde{x})]_i - (1-\Theta_i(y))[\nabla F_T(\tilde{y})]_i}^2 + \prn*{[\nabla F_T(x)]_i - [\nabla F_T(y)]_i}^2}
\end{align}
We now bound the first term
\begin{align}
&\prn*{(1-\Theta_i(x))[\nabla F_T(\tilde{x})]_i - (1-\Theta_i(y))[\nabla F_T(\tilde{y})]_i}^2\nonumber\\
&= \prn*{(\Theta_i(y)-\Theta_i(x))[\nabla F_T(\tilde{x})]_i + (1-\Theta_i(y))\prn*{[\nabla F_T(\tilde{x})]_i - [\nabla F_T(\tilde{y})]_i}}^2 \\
&\leq 2\prn*{[\nabla F_T(\tilde{x})]_i}^2(\Theta_i(y)-\Theta_i(x))^2 + 2(1-\Theta_i(y))^2\prn*{[\nabla F_T(\tilde{x})]_i - [\nabla F_T(\tilde{y})]_i}^2 \\
&\leq 2\cdot23^2\cdot36^2\cdot\nrm{x-y}^2 + 2\prn*{[\nabla F_T(\tilde{x})]_i - [\nabla F_T(\tilde{y})]_i}^2
\end{align}
Here, we used that $\nrm{\nabla F_T(x)}_\infty \leq 23$ by \pref{lem:non-convex-homogeneous-construction-properties} and that $\Theta_i(y) \in [0,1]$ and $\Theta_i$ is $36$-Lipschitz by \pref{lem:theta-properties}. To summarize so far, we have shown that
\begin{align}
&\E_z\nrm*{g_T(x;z) - g_T(y;z)}^2\nonumber\\
&\leq 152^2\nrm{x-y}^2 + \sum_{i\in\crl*{i_x,i_x+1,i_y,i_y+1}}\frac{2(1-p)}{p}\bigg(2\cdot23^2\cdot36^2\cdot\nrm{x-y}^2 \nonumber\\
&\qquad + 2\prn*{[\nabla F_T(\tilde{x})]_i - [\nabla F_T(\tilde{y})]_i}^2 + \prn*{[\nabla F_T(x)]_i - [\nabla F_T(y)]_i}^2\bigg) \\
&\leq \prn*{152^2 + \frac{16\cdot23^2\cdot36^2(1-p)}{p}}\nrm{x-y}^2 + \frac{2(1-p)}{p}\nrm*{\nabla F_T(x) - \nabla F_T(y)}^2 \nonumber\\
&\quad+\sum_{i\in\crl*{i_x,i_x+1,i_y,i_y+1}}\frac{4(1-p)}{p}\prn*{[\nabla F_T(\tilde{x})]_i - [\nabla F_T(\tilde{y})]_i}^2 \\
&\leq \prn*{152^2 + \frac{(16\cdot23^2\cdot36^2 + 4\cdot152^2)(1-p)}{p}}\nrm{x-y}^2 + \sum_{i\in\crl*{i_x,i_x+1,i_y,i_y+1}}\frac{4(1-p)}{p}\prn*{[\nabla F_T(\tilde{x})]_i - [\nabla F_T(\tilde{y})]_i}^2\label{eq:mss-bound-eq1}
\end{align}
We recall from \eqref{eq:grad-coord} that the $i\mathth$ coordinate of $\nabla F_T(\tilde{x})$ depends only on $\tilde{x}_{i-1}$, $\tilde{x}_i$, and $\tilde{x}_{i+1}$ and similarly for $\nabla F_T(\tilde{x})$. This, coupled with the fact that $\nabla F_T$ is $152$-smooth by \pref{lem:non-convex-homogeneous-construction-properties}, implies that for each $i$
\begin{align}
\prn*{[\nabla F_T(\tilde{x})]_i - [\nabla F_T(\tilde{y})]_i}^2
&\leq 152^2\sum_{j=i-1}^{i+1}\prn*{\tilde{x}_j - \tilde{y}_j}^2 \\
&= 152^2\sum_{j=i-1}^{i+1}\prn*{(1-\Theta_j(x))x_j - (1-\Theta_j(y))y_j}^2
\end{align}
We now consider the quantity $\prn*{(1-\Theta_j(x))x_j - (1-\Theta_j(y))y_j}^2$ in three cases:

Case 1: If $\Theta_j(x) = \Theta_j(y)$, then 
\begin{equation}
\prn*{(1-\Theta_j(x))x_j - (1-\Theta_j(y))y_j}^2 = (1-\Theta_j(x))^2\prn*{x_j - y_j}^2 \leq \prn*{x_j - y_j}^2
\end{equation}

Case 2: If $\Theta_j(y) = 1$ and  $\Theta_j(x) < 1$, then
\begin{align}
\prn*{(1-\Theta_j(x))x_j - (1-\Theta_j(y))y_j}^2 
&= (1-\Theta_j(x))^2\prn*{x_j}^2 \\
&\leq \frac{1}{4}(1-\Theta_j(x))^2 \\
&\leq \frac{1}{4}(0 + 36\nrm{x-y})^2 \\
&= 324\nrm{x-y}^2
\end{align}
For the first inequality, we used that $\Theta_j(x) < 1 \implies \abs{x_j} < \frac{1}{2}$. For the second inequality, we used that $\Theta_j$ is $36$-Lipschitz by \pref{lem:theta-properties}. If $\Theta_j(y) < 1$ and  $\Theta_j(x) = 1$ the same arguement with $x$ and $y$ switched shows the same upper bound.

Case 3: If $\Theta_j(y) = 0$ and $1 > \Theta_j(x) > 0$, then
\begin{align}
\prn*{(1-\Theta_j(x))x_j - (1-\Theta_j(y))y_j}^2 
&= \prn*{(1-\Theta_j(x))x_j - y_j}^2 \\
&\leq 2(x_j - y_j)^2 + \frac{1}{4} \Theta_j(x)^2 \\
&\leq 2(x_j - y_j)^2 + \frac{1}{4} \prn*{0 + 36\nrm{x-y}}^2 \\
&\leq 2(x_j - y_j)^2 + 324\nrm{x-y}^2
\end{align}
For the first inequality, we used that $\Theta_j(x) < 1 \implies \abs{x_j} < \frac{1}{2}$. For the second inequality, we used that $\Theta_j$ is $36$-Lipschitz by \pref{lem:theta-properties}. Again, the same argument applies when $x$ and $y$ are reversed.

We conclude that in any case,
\begin{align}
\prn*{[\nabla F_T(\tilde{x})]_i - [\nabla F_T(\tilde{y})]_i}^2
&\leq 152^2\sum_{j=i-1}^{i+1}326\nrm{x-y}^2 \\
&\leq 152^2\cdot326\cdot12\nrm{x-y}^2
\end{align}
Plugging this back into \eqref{eq:mss-bound-eq1}, we conclude that
\begin{align}
&\E_z\nrm*{g_T(x;z) - g_T(y;z)}^2\nonumber\\
&\leq \prn*{152^2 + \frac{(16\cdot23^2\cdot36^2 + 4\cdot152^2 + 16\cdot152^2\cdot326\cdot12)(1-p)}{p}}\nrm{x-y}^2 \\
&= \prn*{152^2 + \frac{1457187328(1-p)}{p}}\nrm{x-y}^2
\end{align}
This completes the proof.
\end{proof}

We will proceed to combine \pref{lem:non-convex-homogeneous-construction-properties} and \pref{lem:homogeneous-non-convex-oracle-properties-MSS} with \pref{lem:intermittent-communication-progress} allows us to prove the lower bound. However, one of the conditions of \pref{lem:intermittent-communication-progress} is that the norm of the oracle queries is bounded. To enforce the we introduce an additional modification to the objective along with the random rotation. Specifically, we introduce the soft projection
\begin{equation}
\rho(x) = \frac{x}{\sqrt{1 + \frac{\nrm{x}^2}{\beta^2}}}
\end{equation}
where $\beta = \frac{240\sqrt{T}}{\zeta}$ and we define
\begin{align}
\hat{F}_{T,U}(x) &= \frac{\gamma}{\zeta^2} F_T(\zeta U^\top \rho(x)) + \frac{\gamma}{10\zeta^2}\nrm{\zeta x}^2 \\
\hat{g}_{T,U}(x;z) &= \frac{\gamma}{\zeta}\nabla\rho(x)U g_T(\zeta U^\top \rho(x);z) + \frac{\gamma}{5}x
\end{align}
We now verify that $\hat{F}_{T,U}$ and $\hat{g}_{T,U}(x;z)$ satisfy essentially the same properties as $F_T$ and $g_T$:
\begin{lemma}\label{lem:soft-projection-properties-MSS}
For any $T \geq 1$, $\gamma,\zeta \geq 0$, and $U$ with $U^\top U = I_{d\times{}d}$,
\begin{enumerate}
\item $\hat{F}_{T,U}(0) - \min_x \hat{F}_{T,U}(x) \leq \frac{12\gamma T}{\zeta^2}$
\item $F_{T,U}$ is $154\gamma$-smooth
\item $\E_z \hat{g}_{T,U}(x;z) = \nabla \hat{F}_{T,U}(x)$
\item $\E_z \nrm*{\hat{g}_{T,U}(x;z) - \nabla \hat{F}_{T,U}(x)}^2 \leq \frac{4232\gamma^2(1-p)}{\zeta^2 p}$
\item $\hat{g}_{T,U}$ is $2\gamma^2\prn*{152^2 + \frac{1457187329(1-p)}{p}}$-mean square smooth
\item For any $x$, $\prog{\frac{1}{2}}(\zeta U^\top \rho(x)) < T \implies \nrm{\nabla \hat{F}_{T,U}(x)} \geq \frac{\gamma}{2\zeta}$
\end{enumerate}
\end{lemma}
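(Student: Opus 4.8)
The plan is to verify the six claimed properties of $\hat{F}_{T,U}$ and $\hat{g}_{T,U}$ by reducing each to the corresponding property of $F_T$ and $g_T$ established in \pref{lem:non-convex-homogeneous-construction-properties} and \pref{lem:homogeneous-non-convex-oracle-properties-MSS}, exactly mirroring the structure of \pref{lem:soft-projection-properties}. The only genuinely new work is property 5, the mean squared smoothness bound, since the remaining properties have already been proven verbatim in \pref{lem:soft-projection-properties} with $g_T$ replaced by its non-MSS analogue (and the variance constant updated from $1058$ to $4232$ via \pref{lem:homogeneous-non-convex-oracle-properties-MSS}).

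For properties 1, 2, 3, 4, and 6 I would simply transcribe the arguments from the proof of \pref{lem:soft-projection-properties}: property 1 uses $\hat{F}_{T,U}(0) = \frac{\gamma}{\zeta^2}F_T(0) \leq 0$ together with the fact that $\rho$ maps into the ball of radius $\zeta\beta$, so minimizing $F_T$ over that ball is no smaller than $\min_x F_T(x) \geq -12T$; property 2 follows from the same decomposition of $\nabla\hat{F}_{T,U}$ into a $\nabla\rho \cdot U \cdot \nabla F_T$ term plus a $\frac{\gamma}{5}x$ term, using $\nrm{\nabla\rho(x)}_{\mathrm{op}} \leq 1$, the Lipschitz estimate $\nrm{\nabla\rho(x) - \nabla\rho(y)}_{\mathrm{op}} \leq 3/\beta$, the $152$-smoothness of $F_T$, and the bound $\nrm{U\nabla F_T}_\infty \leq 23$ from part 3 of \pref{lem:non-convex-homogeneous-construction-properties}; property 3 is immediate from unbiasedness of $g_T$; property 4 uses $\nrm{\nabla\rho(x)}_{\mathrm{op}} \leq 1$ to pass the variance bound through; and property 6 repeats the two-case analysis ($\nrm{x} \leq \beta/2$, using $\nabla\rho(x) \succeq \frac{7}{10}I$ and part 5 of \pref{lem:non-convex-homogeneous-construction-properties}, versus $\nrm{x} > \beta/2$, using $\frac{\gamma}{5}\nrm{x}$ dominating). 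I would note that part 5 of \pref{lem:non-convex-homogeneous-construction-properties} gives a lower bound with threshold $\prog{1/2}$, matching what we need in property 6 (the new $g_T$ is a $\frac14$-robust-zero-chain, but the function $F_T$ and its gradient lower bound are unchanged).

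The main obstacle is property 5. The plan is to start from the orthogonal decomposition
\[
\E_z\nrm*{\hat{g}_{T,U}(x;z) - \hat{g}_{T,U}(y;z)}^2 = \E_z\nrm*{\hat{g}_{T,U}(x;z) - \nabla\hat{F}_{T,U}(x) - \hat{g}_{T,U}(y;z) + \nabla\hat{F}_{T,U}(y)}^2 + \nrm*{\nabla\hat{F}_{T,U}(x) - \nabla\hat{F}_{T,U}(y)}^2,
\]
bounding the second term by $(154\gamma)^2\nrm{x-y}^2$ via property 2. For the first term I would write $\hat{g}_{T,U}(x;z) - \nabla\hat{F}_{T,U}(x) = \frac{\gamma}{\zeta}\nabla\rho(x)U\prn*{g_T(\zeta U^\top\rho(x);z) - \nabla F_T(\zeta U^\top\rho(x))}$, so after subtracting the analogous expression at $y$ and splitting off the difference of the $\nabla\rho\cdot U$ prefactors (controlled using $\nrm{\nabla\rho(x)-\nabla\rho(y)}_{\mathrm{op}} \leq 3/\beta$ and the crude bound $\E_z\nrm{g_T - \nabla F_T}^2 \leq 4232$), the essential remaining quantity is
\[
\frac{\gamma^2}{\zeta^2}\,\E_z\nrm*{g_T(\zeta U^\top\rho(x);z) - \nabla F_T(\zeta U^\top\rho(x)) - g_T(\zeta U^\top\rho(y);z) + \nabla F_T(\zeta U^\top\rho(y))}^2.
\]
This is bounded by $\frac{\gamma^2}{\zeta^2}\prn*{152^2 + \frac{1457187328(1-p)}{p}}\zeta^2\nrm{U^\top\rho(x) - U^\top\rho(y)}^2 \leq \gamma^2\prn*{152^2 + \frac{1457187328(1-p)}{p}}\nrm{x-y}^2$, using the MSS bound from \pref{lem:homogeneous-non-convex-oracle-properties-MSS} (part 4), the orthogonality of $U$, and the $1$-Lipschitzness of $\rho$, together with $\nrm{\nabla\rho(x)}_{\mathrm{op}} \leq 1$. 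Collecting the $154^2\gamma^2$ term from the gradient difference, the MSS contribution above, and the lower-order cross terms from the $\nabla\rho$ perturbation (which, using $\beta = 240\sqrt{T}/\zeta$, contribute at most an additive constant times $\gamma^2(1-p)/p$), I would absorb everything into the stated constant $2\gamma^2\prn*{152^2 + \frac{1457187329(1-p)}{p}}$ — the slight bump from $1457187328$ to $1457187329$ inside the parenthesis is precisely the slack that swallows the $154^2$ term and the cross terms. The arithmetic bookkeeping of these constants is tedious but routine; the conceptual content is entirely contained in the observation that $\rho$ is $1$-Lipschitz with $1$-bounded, $3/\beta$-Lipschitz Jacobian, so composing with it preserves both smoothness and mean squared smoothness up to constants.
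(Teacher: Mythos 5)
Your handling of properties 1--4 and 6 matches the paper exactly (the paper simply cites \pref{lem:soft-projection-properties} for 1, 2, 6, the chain rule and unbiasedness for 3, and the $\nrm{\nabla\rho}_{\mathrm{op}}\leq 1$ argument with the new constant $4232$ for 4), so there is nothing to add there. For property 5 you take a genuinely different route: you first apply the bias--variance identity at the level of $\hat{g}_{T,U}$, bound the gradient-difference term by $(154\gamma)^2\nrm{x-y}^2$ via property 2, and then control the centered (noise) difference. The paper instead bounds $\E_z\nrm{\hat{g}_{T,U}(x;z)-\hat{g}_{T,U}(y;z)}^2$ directly by a product-rule split, $(\nabla\rho(x)-\nabla\rho(y))\,U g_T(\cdot\,;z)$ plus $\nabla\rho(y)\,U\,(g_T(\cdot\,;z)-g_T(\cdot\,;z))$, using the full MSS constant of $g_T$ on the second piece and a second-moment bound $\E_z\nrm{g_T}^2 \leq \tfrac{4(1-p)}{p}23^2T$ together with $\beta = 240\sqrt{T}/\zeta$ on the first. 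Your decomposition has the advantage of automatically carrying the $\tfrac{\gamma}{5}(x-y)$ regularizer term inside the $\nabla\hat{F}$ difference (the paper's displayed first line silently drops it), but it is slightly more wasteful because the smoothness of $F_T$ gets counted twice.

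That double counting is where your argument as written breaks. You bound the centered difference of $g_T$ by the \emph{full} MSS constant $152^2 + \tfrac{1457187328(1-p)}{p}$ from part 4 of \pref{lem:homogeneous-non-convex-oracle-properties-MSS}, and then claim the bump from $1457187328$ to $1457187329$ ``swallows the $154^2$ term.'' It cannot: the slack that bump buys is $2\gamma^2\tfrac{1-p}{p}\nrm{x-y}^2$, which vanishes as $p\to 1$, while your extra $154^2\gamma^2\nrm{x-y}^2$ from property 2 does not; at $p=1$ your steps give at least $(154^2 + 2\cdot 152^2)\gamma^2 > 2\cdot 152^2\gamma^2$, exceeding the stated constant. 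The fix is easy and stays within your approach: the proof of part 4 of \pref{lem:homogeneous-non-convex-oracle-properties-MSS} itself proceeds by the orthogonal decomposition, so the \emph{centered} difference there is bounded by $\tfrac{1457187328(1-p)}{p}\nrm{a-b}^2$ alone (the $152^2$ in the stated constant comes precisely from $\nrm{\nabla F_T(a)-\nabla F_T(b)}^2$, which your centered term does not contain). Using that, your tally becomes $154^2\gamma^2 \leq 2\cdot 152^2\gamma^2$ for the $p$-independent part, and the $\nabla\rho$-difference cross term (of order $\tfrac{2\cdot 9\cdot 4232}{240^2 T}\gamma^2\tfrac{1-p}{p} < 2\gamma^2\tfrac{1-p}{p}$) plus $\tfrac{2\cdot 1457187328\,\gamma^2(1-p)}{p}$ fits under $\tfrac{2\cdot 1457187329\,\gamma^2(1-p)}{p}$, recovering the stated bound. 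Alternatively, adopt the paper's split, which only incurs the $152^2$ once.
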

\begin{proof}
Points 1, 2, and 6 follow immediately from \pref{lem:soft-projection-properties}. Point 3 follows from \pref{lem:homogeneous-non-convex-oracle-properties-MSS} and the chain rule.
For property 4, we bound
\begin{align}
\sup_x&\E_z \nrm*{\hat{g}_{T,U}(x;z) - \nabla \hat{F}_{T,U}(x)}^2\nonumber\\
&= \sup_x \E_z \nrm*{\frac{\gamma}{\zeta}\nabla \rho(x)U g_T(\zeta U^\top\rho(x);z) + \frac{\gamma}{5}x - \frac{\gamma}{\zeta}\nabla \rho(x)U\nabla F_T(\zeta U^\top\rho(x)) - \frac{\gamma}{5}x}^2 \\
&\leq \frac{\gamma^2}{\zeta^2}\sup_x \E_z \nrm*{g_T(\zeta U^\top\rho(x);z) - \nabla F_T(\zeta U^\top\rho(x))}^2 \\
&\leq \frac{\gamma^2}{\zeta^2}\sup_y \E_z \nrm*{g_T(y;z) - \nabla F_T(y)}^2 \\
&\leq \frac{4232\gamma^2(1-p)}{\zeta^2 p}
\end{align}
where the last line follows from \pref{lem:homogeneous-non-convex-oracle-properties-MSS}.

For property 5, we note that for any $x$
\begin{equation}
\nrm*{\nabla \rho(x)}_{\textrm{op}} = \nrm*{\frac{1}{\sqrt{1 + \frac{\nrm{x}^2}{\beta^2}}}I - \frac{xx^\top}{\beta^2\prn*{1 + \frac{\nrm{x}^2}{\beta^2}}^{3/2}}}_{\textrm{op}} \leq 1
\end{equation}
Likewise, we define $h(t) = \frac{1}{\sqrt{1+t^2}}$, which is $1$-Lipschitz, and bound:
\begin{align}
&\nrm*{\nabla \rho(x) - \nabla \rho(y)}_{\textrm{op}}\nonumber\\
&= \nrm*{h\prn*{\frac{\nrm{x}}{\beta}}I - h\prn*{\frac{\nrm{x}}{\beta}}\frac{\rho(x)\rho(x)^\top}{\beta^2} - h\prn*{\frac{\nrm{y}}{\beta}}I + h\prn*{\frac{\nrm{y}}{\beta}}\frac{\rho(y)\rho(y)^\top}{\beta^2}}_{\textrm{op}} \\
&\leq h\prn*{\frac{\nrm{y}}{\beta}}\nrm*{\frac{\rho(x)\rho(x)^\top}{\beta^2} - \frac{\rho(y)\rho(y)^\top}{\beta^2}}_{\textrm{op}} + \abs*{h\prn*{\frac{\nrm{x}}{\beta}} - h\prn*{\frac{\nrm{y}}{\beta}}}\nrm*{I - \frac{\rho(x)\rho(x)^\top}{\beta^2}}_{\textrm{op}} \\
&\leq \nrm*{\frac{\rho(x)\rho(x)^\top}{\beta^2} - \frac{\rho(y)\rho(y)^\top}{\beta^2}}_{\textrm{op}} + \abs*{\frac{\nrm{x}}{\beta} - \frac{\nrm{y}}{\beta}} \\
&\leq \frac{1}{\beta}\nrm{x-y} + \sup_{v:\nrm{v}\leq 1}\nrm*{\frac{\rho(x)\rho(x)^\top}{\beta^2}v - \frac{\rho(y)\rho(y)^\top}{\beta^2}v} \\
&= \frac{1}{\beta}\nrm{x-y} + \sup_{v:\nrm{v}\leq 1}\nrm*{\prn*{\frac{\rho(x)}{\beta} - \frac{\rho(y)}{\beta}}\inner{\frac{\rho(x)}{\beta}}{v} + \frac{\rho(y)}{\beta}\inner{\frac{\rho(x)}{\beta} - \frac{\rho(y)}{\beta}}{v}} \\
&\leq \frac{1}{\beta}\nrm{x-y} + \sup_{v:\nrm{v}\leq 1}\nrm*{\frac{\rho(x)}{\beta} - \frac{\rho(y)}{\beta}}\nrm*{\frac{\rho(x)}{\beta}}\nrm{v} + \nrm*{\frac{\rho(x)}{\beta} - \frac{\rho(y)}{\beta}}\nrm*{\frac{\rho(y)}{\beta}}\nrm*{v} \\
&\leq \frac{3}{\beta}\nrm{x-y}
\end{align}
Therefore,
\begin{align}
&\E_z\nrm*{\hat{g}_{T,U}(x;z) - \hat{g}_{T,U}(y;z)}^2\nonumber\\
&= \E_z\nrm*{\frac{\gamma}{\zeta}\nabla \rho(x)g_T(\zeta U^\top \rho(x);z) - \frac{\gamma}{\zeta}\nabla \rho(y)g_T(\zeta U^\top \rho(y);z)}^2 \\
&\leq \frac{2\gamma^2}{\zeta^2}\prn*{\E_z\nrm*{\prn*{\nabla \rho(x) - \nabla \rho(y)}g_T(\zeta U^\top \rho(x);z)}^2 + \E_z\nrm*{\nabla \rho(y)\prn*{g_T(\zeta U^\top \rho(x);z) - g_T(\zeta U^\top \rho(y);z)}}^2} \\
&\leq  \frac{2\gamma^2}{\zeta^2}\prn*{\frac{9\nrm{x-y}^2}{\beta^2}\E_z\nrm*{g_T(\zeta U^\top \rho(x);z)}^2 + \E_z\nrm*{g_T(\zeta U^\top \rho(x);z) - g_T(\zeta U^\top \rho(y);z)}^2} \\
&\leq \frac{2\gamma^2}{\zeta^2}\prn*{\frac{9\nrm{x-y}^2}{\beta^2}\E_z\nrm*{g_T(\zeta U^\top \rho(x);z)}^2 + \prn*{152^2 + \frac{1457187328(1-p)}{p}}\nrm*{\zeta U^\top \rho(x) - \zeta U^\top \rho(y)}^2} \\
&\leq \frac{2\gamma^2}{\zeta^2}\prn*{\frac{9\nrm{x-y}^2}{\beta^2}\E_z\nrm*{g_T(\zeta U^\top \rho(x);z)}^2 + \prn*{152^2\zeta^2 + \frac{1457187328(1-p)\zeta^2}{p}}\nrm*{x - y}^2}
\end{align}
For the final inequality, we used \pref{lem:homogeneous-non-convex-oracle-properties-MSS}. Finally, we have
\begin{align}
\E_z\nrm*{g_T(\zeta U^\top \rho(x);z)}^2
&\leq \sup_x \E_z\nrm*{g_T(x;z)}^2 \\
&= \sup_x \brk*{(1-p)\nrm*{g(x;0)}^2 + p\nrm*{\frac{1}{p}\nabla F_T(x) - \frac{1-p}{p}g(x;0)}^2} \\
&\leq \sup_x \brk*{\frac{3(1-p)}{p}\nrm*{g(x;0)}^2 + \frac{1}{p}\nrm*{\nabla F_T(x)}^2} \\
&\leq \sup_x \frac{3(1-p)}{p}\nrm*{\nabla F_T(x)}^2 + \frac{1}{p}\sup_x \nrm*{\nabla F_T(x)}^2 \\
&\leq \frac{4(1-p)}{p}\cdot23^2 T
\end{align}
Therefore, using that $\beta = \frac{240\sqrt{T}}{\zeta}$, we have
\begin{align}
&\E_z\nrm*{\hat{g}_{T,U}(x;z) - \hat{g}_{T,U}(y;z)}^2\nonumber\\
&\leq \frac{2\gamma^2}{\zeta^2}\prn*{\frac{19044\gamma^2T(1-p)}{\zeta^2\beta^2p} + 152^2\zeta^2 + \frac{1457187328(1-p)\zeta^2}{p}}\nrm{x-y}^2 \\
&\leq 2\gamma^2\prn*{152^2 + \frac{1457187329(1-p)}{p}}\nrm{x-y}^2
\end{align}
This completes the proof.
\end{proof}

\homogeneousnonconvexMSSlowerbound*
\begin{proof}
We prove the lower bound using $\hat{F}_{T,U}$ and $\hat{g}_{T,U}$ for a uniformly random orthogonal $U \in \R^{D\times T}$ for 
\begin{equation}
D = T + 32\zeta^2\beta^2\log(32MKRT)
\end{equation}
For the oracle
\begin{equation}
\hat{g}_{T,U}(x;z) = \frac{\gamma}{\zeta}\nabla\rho(x)U g_T(\zeta U^\top \rho(x);z) + \frac{\gamma}{5}x
\end{equation}
an intermittent communication algorithm that interacts with $\hat{g}_{T,U}(x;z)$ is precisely equivalent to one that interacts with $U g_T(U^\top x;z)$ using queries of norm less than $\zeta\beta$ since $\rho(x)$, $\nabla \rho(x)$, and $x$ are invertible and ``known'' to the algorithm. Therefore, since $g_T$ is a $(\frac{1}{4},p,0)$-robust-zero-chain by \pref{lem:homogeneous-non-convex-oracle-properties-MSS}, \pref{lem:intermittent-communication-progress} ensures that with probability at least $\frac{5}{8}$, the output of the intermittent communication algorithm, $\hat{x}$, will have progress at most
\begin{equation}
\prog{\frac{1}{2}}(\zeta U^\top \rho(\hat{x})) \leq \prog{\frac{1}{4}}(\zeta U^\top \rho(\hat{x})) \leq \min\crl*{KR,\ 8KRp + 12R\log M + 12R}
\end{equation}
We therefore take
\begin{equation}
T = \min\crl*{KR,\ 8KRp + 12R\log M + 12R} + 1
\end{equation} 
Therefore, by \pref{lem:soft-projection-properties-MSS}, with probability at least $\frac{5}{8}$,
\begin{equation}
\nrm*{\nabla \hat{F}_{T,U}(\hat{x})} \geq \frac{\gamma}{2\zeta}
\end{equation}
In light of \pref{lem:soft-projection-properties-MSS}, it is easy to confirm that if we take 
\begin{align}
\gamma &= \frac{L\sqrt{p}}{53986} \\
\zeta^2 &= \frac{12\gamma T}{\Delta} \\
p &\geq \frac{L\Delta\sqrt{p}}{149\sigma^2 T + L\Delta\sqrt{p}} \label{eq:mss-p-constraint}
\end{align}
then $\hat{F}_{T,U}(0) - \min_x \hat{F}_{T,U}(x) \leq \Delta$, $\hat{F}_{T,U}$ is $L$-smooth, $\hat{g}_{T,U}$ is an unbiased estimate of $\nabla \hat{F}_{T,U}$, the variance of $\hat{g}_{T,U}$ is bounded by $\sigma^2$, and $\hat{g}_{T,U}$ is $L^2$-mean square smooth. With these parameters, the lower bound is
\begin{equation}
\E\nrm*{\nabla \hat{F}_{T,U}(\hat{x})} 
\geq \frac{5\gamma}{16\zeta} 
\geq \frac{\sqrt{\gamma\Delta}}{12\sqrt{T}} 
\geq \frac{p^{1/4}\sqrt{L\Delta}}{2789\sqrt{T}}
\end{equation}

We now consider several cases:

Case 1: If $K \leq 24(1 + \log M)$, then we take $p=1$ which satisfies the constraint \eqref{eq:mss-p-constraint}. In this case, we upper bound $T \leq KR+1$ and conclude that for some constant $c$ (which may change from line to line)
\begin{equation}
\E\nrm*{\nabla \hat{F}_{T,U}(\hat{x})} 
\geq c\frac{\sqrt{L\Delta}}{\sqrt{KR}} 
\geq c\frac{\sqrt{L\Delta}}{\sqrt{24R(1+\log M)+1}} 
\geq c\frac{\sqrt{L\Delta}}{\sqrt{R(1+\log M)}}
\end{equation}

Case 2: If $K > 24(1 + \log M)$ and $\sigma^2 \leq \frac{L\Delta}{74(KR+1)}$, then we again take $p=1$ and lower bound for some constant $c$ (which may change from line to line)
\begin{align}
\E\nrm*{\nabla \hat{F}_{T,U}(\hat{x})} 
&\geq c\frac{\sqrt{L\Delta}}{\sqrt{KR+1}} \\
&\geq c\frac{\sqrt{L\Delta}}{\sqrt{KR}} + c\prn*{\frac{L\Delta}{74(KR+1)}}^{1/6}\prn*{\frac{L\Delta}{KR+1}}^{1/3} \\
&\geq c\frac{\sqrt{L\Delta}}{\sqrt{KR}} + c\prn*{\frac{L\sigma\Delta}{KR}}^{1/3}
\end{align}

Case 3: If $K > 24(1 + \log M)$ and $\sigma^2 > \frac{L\Delta}{74(KR+1)}$, then we note that
\begin{equation}
K > 24(1 + \log M) \implies \frac{1}{2}KRp + 12R(1+\log M) + 1 \leq T
\end{equation}
Therefore, $p$ satisfies \eqref{eq:mss-p-constraint} if
\begin{align}
\frac{L\Delta\sqrt{p}}{149\sigma^2 \prn*{\frac{1}{2}KRp + 12R(1+\log M) + 1} + L\Delta\sqrt{p}}  & \leq p \\
\iff \sqrt{p}\prn*{\frac{1}{2}KRp + 12R(1+\log M) + 1} &\geq \frac{L\Delta}{149\sigma^2}(1-p)
\end{align}
We take $p = p(\mu) = \mu^4\prn*{\frac{L\Delta}{74\sigma^2KR}}^{2/3}$ for a parameter $\mu \geq 1$. We note that $p(1) \leq 1$ because $\sigma^2 > \frac{L\Delta}{74(KR+1)}$. This satisfies this inequality for any $\mu \geq 1$ and gives the lower bound for constant $c$ (which may change from line to line)
\begin{align}
\E\nrm*{\nabla \hat{F}_{T,U}(\hat{x})} 
&\geq c\frac{p^{1/4}\sqrt{L\Delta}}{\sqrt{8KRp + 12R(1+\log M) + 1}} \\
&= c\cdot \mu\prn*{\frac{L\Delta}{\sigma^2KR}}^{1/6} \frac{\sqrt{L\Delta}}{\sqrt{KR\mu^4\prn*{\frac{L\Delta}{\sigma^2KR}}^{2/3} + R(1+\log M)}} \\
&\geq c\cdot \mu\prn*{\frac{L\Delta}{\sigma^2KR}}^{1/6} \min\crl*{\frac{\sqrt{L\Delta}}{\mu^2\sqrt{KR}\prn*{\frac{L\Delta}{\sigma^2KR}}^{1/3}},\, \frac{\sqrt{L\Delta}}{\sqrt{R(1+\log M)}}} \\
&= c\cdot\min\crl*{\frac{\prn*{L\sigma\Delta}^{1/3}}{\mu\prn*{KR}^{1/3}},\, \frac{\mu\prn*{L\Delta}^{2/3}}{\sigma^{1/3}K^{1/6}R^{2/3}\sqrt{1+\log M}}}
\end{align}
Taking 
\begin{equation}
\mu = \min\crl*{1,\, \prn*{\frac{L\sigma \Delta}{KR}}^{1/6}\cdot\frac{\sigma^{1/6}K^{1/12}R^{1/3}(1+\log M)^{1/4}}{(L\Delta)^{1/3}}}
\end{equation}
allows us to further lower bound this as
\begin{align}
\E\nrm*{\nabla \hat{F}_{T,U}(\hat{x})} 
&\geq c\cdot\min\crl*{\frac{\prn*{L\sigma\Delta}^{1/3}}{\prn*{KR}^{1/3}},\, \frac{\sqrt{L\Delta}}{K^{1/4}\sqrt{R}(1+\log M)^{1/4}}} \\
&\geq c\cdot\min\crl*{\frac{\sqrt{L\Delta}}{\sqrt{KR}} + \frac{\prn*{L\sigma\Delta}^{1/3}}{\prn*{KR}^{1/3}},\, \frac{\sqrt{L\Delta}}{K^{1/4}\sqrt{R}(1+\log M)^{1/4}}}
\end{align}
For the second inequality, we used that $\sigma^2 > \frac{L\Delta}{74(KR+1)}$.

Together, cases 1-3 imply a lower bound of
\begin{equation}\label{eq:homogeneous-non-convex-MSS-lb-1}
\E\nrm*{\nabla \hat{F}_{T,U}(\hat{x})} \geq c\cdot \min\crl*{\frac{\sqrt{L\Delta}}{\sqrt{R(1+\log M)}},\,\frac{\sqrt{L\Delta}}{\sqrt{KR}} + \frac{\prn*{L\sigma\Delta}^{1/3}}{\prn*{KR}^{1/3}},\, \frac{\sqrt{L\Delta}}{K^{1/4}\sqrt{R}(1+\log M)^{1/4}}}
\end{equation}
This holds for any intermittent communication algorithm. We will now argue for one additional term in the lower bound.
The argument is simple: any intermittent communication algorithm with a given $M$, $K$, and $R$ can be implemented using $MKR$ sequential calls to the oracle, which is equivalent to a different intermittent communication setting with $M' = 1$, $K' = MKR$, and $R'= 1$. Querying the oracle $MKR$ times sequentially is clearly more powerful, so lower bounds in this setting apply also in ours.  Our lower bound for $M' = 1$, $K' = MKR$, and $R'= 1$ gives
\begin{align}
\nrm*{\nabla \hat{F}_{T,U}(\hat{x})} 
&\geq c\cdot \min\crl*{\sqrt{L\Delta},\,\frac{\sqrt{L\Delta}}{\sqrt{MKR}} + \frac{\prn*{L\sigma\Delta}^{1/3}}{\prn*{MKR}^{1/3}},\, \frac{\sqrt{L\Delta}}{(MKR)^{1/4}}} \\
&\geq c\cdot \min\crl*{\frac{\prn*{L\sigma\Delta}^{1/3}}{\prn*{MKR}^{1/3}},\, \frac{\sqrt{L\Delta}}{(MKR)^{1/4}}}
\end{align}
Combining this with \eqref{eq:homogeneous-non-convex-MSS-lb-1} completes the proof.
\end{proof}

\end{document}